\tikzset{commutative diagrams/.cd,every label/.append style = {font = \normalsize}}
\DeclareMathOperator{\Fr}{Fr}
\DeclareMathOperator{\co}{co}
\DeclareMathOperator{\rtop}{{\it k}} % root
\DeclareMathOperator{\arr}{arr}
\DeclareMathOperator{\rt}{{\it k}} % root
\DeclareMathOperator{\Gr}{Gr}
\DeclareMathOperator{\Mat}{Mat}
\DeclareMathOperator{\inn}{in}
\DeclareMathOperator{\out}{out}
\DeclareMathOperator{\sgn}{sgn}
\DeclareMathOperator{\Span}{span}
\DeclareMathOperator{\tw}{tw}
\DeclareMathOperator{\AFacet}{Froz}
\DeclareMathOperator{\refl}{refl}
\DeclareMathOperator{\cyc}{cyc}
\DeclareMathOperator{\op}{op}
\DeclareMathOperator{\pre}{pre}
\DeclareMathOperator{\inc}{inc}
\DeclareMathOperator{\Irr}{\xx}
\DeclareMathOperator{\cl}{cl}
\newcommand{\galp}{{\alpha}}
\newcommand{\gbet}{{\beta}}
\newcommand{\ggam}{{\gamma}}
\newcommand{\gdel}{{\delta}}
\newcommand{\geps}{{\varepsilon}}
\newcommand{\Ampl}{\mathcal{A}}
\def\CD{\mathcal{CD}_{n,k}}
\def\coord{\mathcal{C}}
\newcommand{\Grk}{\Gr_{k,n}^{\scriptscriptstyle \ge 0}}
\def\AA{\mathcal{A}_{n,k,m}(Z)}
\newcommand{\Ank}{\mathcal{A}_{n, k, 4}({Z})}
\def\tZ{\tilde{Z}}
\def\bcfw{\bowtie}
\def\mbcfw{\iota_{\bowtie}}
\def\4biddenprop{4-coindependent}
\newcommand{\lr}[1]{\langle #1 \rangle}
\newcommand{\llrr}[1]{\langle\!\langle #1 \rangle\!\rangle}
\newcommand{\pos}{\mathcal{P}}
\newcommand{\br}{\,|\,}
\newcommand{\gt}[1]{Z_{#1}}
\newcommand{\gto}[1]{Z_{#1}^\circ}
\def\ctop{D_{\rtop}}
\def\rchn{\nearrow}
\def\lchn{\nwarrow}
\DeclareMathOperator{\rightarrowp}{\widetilde{\rchn}}
\DeclareMathOperator{\leftarrowp}{\widetilde{\lchn}}
\newcommand{\rzeta}{\bar{\zeta}}
\newcommand{\ralpha}{\bar{\alpha}}
\newcommand{\rbeta}{\bar{\beta}}
\newcommand{\rgamma}{\bar{\gamma}}
\newcommand{\rdelta}{\bar{\delta}}
\newcommand{\repsilon}{\bar{\varepsilon}}
\DeclareMathOperator{\Mut}{Mut}
\DeclareMathOperator{\after}{after}
\DeclareMathOperator{\below}{below}
\DeclareMathOperator{\sticky}{sticky}
\DeclareMathOperator{\nonsticky}{nonsticky}
\newcommand{\czeta}{{\zeta}}
\newcommand{\calpha}{{\alpha}}
\newcommand{\cbeta}{{\beta}}
\newcommand{\cgamma}{{\gamma}}
\newcommand{\cdelta}{{\delta}}
\newcommand{\cepsilon}{{\varepsilon}}
\newcommand{\OPsi}{\overline{\Psi}}
\newcommand{\rPsi}{\overline{\Psi}}
\newcommand{\chR}{\Gr_{1,5}^{\scriptscriptstyle >0}}
\newcommand{\PP}{\mathbb{M}}
\newcommand{\Z}{\mathbb{Z}}
\newcommand{\C}{\mathbb{C}}
\newcommand{\CC}{\mathbb{C}}
\newcommand{\A}{\mathcal{A}}
\newcommand{\B}{\mathcal{B}}
\newcommand{\T}{\mathcal{T}}
\def\mtx{M}
\def\twmt{M^{\tw}}
\def\O{\mathcal{O}}
\def\Xcal{\mathcal{X}}
\def\Acal{\mathcal{A}}
\def\Rcal{\mathcal{R}}
\def\Fcal{\mathcal{F}}
\def\R{\mathbb{R}}
\def\F{\mathbb{F}}
\def\NN{\mathbb{N}}
\def\TT{\mathbb{T}}
\newcommand{\xx}{\mathbf{x}}
\newcommand{\txx}{\widetilde{\mathbf{x}}}
\newcommand{\bxx}{\overline{\mathbf{x}}}
\newcommand{\tSigma}{\tilde{\Sigma}}
\newcommand{\tQ}{\widetilde{Q}}
\newcommand{\overunder}[2]{
\!\begin{array}{c}
\scriptstyle{#1}\\[-.1in]
-\!\!\!-\!\!\!-\\[-.1in]
\scriptstyle{#2}
\end{array}
\!
}
\def\rcp{\mathfrak{r}}
\def\rcpp{\mathfrak{p}}
\def\st{\text{FStep}}
\newcommand{\bulR}{\triangleleft}
\newcommand{\bulL}{\triangleright}
\def\MSB#1{\textcolor{red}{[MSB: #1]}}
\def\RT#1{\textcolor{blue}{[RT: #1]}}
\newtheorem*{theorem*}{Theorem}
\newtheorem{theorem}{Theorem}[section]
\newtheorem{lemma}[theorem]{Lemma}
\newtheorem{proposition}[theorem]{Proposition}
\newtheorem{corollary}[theorem]{Corollary}
\newtheorem{conjecture}[theorem]{Conjecture}
\theoremstyle{definition}
\newtheorem{definition}[theorem]{Definition}
\newtheorem{example}[theorem]{Example}
\newtheorem{observation}[theorem]{Observation}
\newtheorem{remark}[theorem]{Remark}
\newtheorem{notation}[theorem]{Notation}
\newtheorem{claim}[theorem]{Claim}
\setlist[itemize]{leftmargin=*}
\setlist[enumerate]{leftmargin=*}
\begin{document}
	\begin{abstract}
		The \emph{amplituhedron} $\mathcal{A}_{n,k,m}(Z)$ is the
image of the
positive Grassmannian $\Gr_{k,n}^{\geq 0}$ under the
 map $\tilde{Z}: \Gr_{k,n}^{\geq 0} \to \Gr_{k,k+m}$ induced by a positive linear map 
$Z:\R^n \to \R^{k+m}$.
Motivated by a question of Hodges, Arkani-Hamed and Trnka introduced the amplituhedron as a geometric object whose \emph{tilings} conjecturally encode the BCFW recursion for computing scattering amplitudes.
More specifically, the expectation was that one 
can compute scattering amplitudes in $\mathcal{N}=4$ SYM 
by tiling the $m=4$ amplituhedron $\A_{n,k,4}(Z)$ --- 
that is, decomposing the amplituhedron into 
`tiles' (closures of images of $4k$-dimensional cells of $\Gr_{k,n}^{\geq 0}$
on which $\tilde{Z}$ is injective) --- and summing the `volumes' of the tiles.
In this article we prove two major conjectures about 
 the $m=4$ amplituhedron:
$i)$ the \emph{BCFW tiling conjecture}, which says that any way of iterating the BCFW recurrence
gives rise to a tiling of the amplituhedron~$\A_{n,k,4}(Z)$; $ii)$ the \emph{cluster adjacency conjecture} for BCFW tiles, which says that
 facets of tiles are cut out by collections of compatible cluster variables 
for $\Gr_{4,n}$.  
Moreover, we show that each BCFW tile is the 
subset of $\Gr_{k, k+4}$ where certain cluster variables have particular signs.
Along the way, we construct many explicit seeds for 
$\Gr_{4,n}$ comprised of 
high-degree cluster variables, which may be of independent interest in the study of 
cluster algebras.

	\end{abstract}
	
	\title{Cluster algebras and tilings for the $m=4$ amplituhedron}
	\date{\today}
	\author[C. Even-Zohar]{Chaim Even-Zohar}
        \address{Faculty of Mathematics, Technion, Haifa, Israel}
	\email{chaime@technion.ac.il}

	\author[T. Lakrec]{Tsviqa Lakrec}
	\address{Institute of Mathematics, University of Zurich, Switzerland}
	\email{tsviqa@gmail.com}
	\author[M. Parisi]{Matteo Parisi}
	\address{Institute for Advanced Study, Princeton, NJ; CMSA, Harvard University, Cambridge, MA}
 \email{mparisi@cmsa.fas.harvard.edu}
	
 \author[M. Sherman-Bennett]{Melissa Sherman-Bennett}
	\address{Department of Mathematics, MIT, Cambridge, MA}
	\email{msherben@mit.edu}
	\author[R. Tessler]{Ran Tessler}
	\address{Department of Mathematics, Weizmann Institute of Science, Israel}
	\email{ran.tessler@weizmann.ac.il}
	\author[L. Williams]{Lauren Williams}
	\address{Department of Mathematics, Harvard University, Cambridge, MA}
	\email{williams@math.harvard.edu}
	\maketitle
	
	\setcounter{tocdepth}{1}
	\tableofcontents

\section{Introduction}\label{sec:intro}

The (tree) \emph{amplituhedron} $\mathcal{A}_{n,k,m}(Z)$ is the image of the 
positive Grassmannian $\Gr_{k,n}^{\scriptscriptstyle\geq 0}$ under the 
\emph{amplituhedron map} $\tilde{Z}: \Gr_{k,n}^{\scriptscriptstyle\geq 0} \to \Gr_{k,k+m}$. 
It was introduced by 
Arkani-Hamed and Trnka \cite{arkani-hamed_trnka} in order  to give a 
geometric  interpretation of
\emph{scattering amplitudes} in $\mathcal{N}=4$ super Yang Mills theory (SYM):
in particular, one can conjecturally
 compute $\mathcal{N}=4$ SYM scattering amplitudes  by
`tiling' the $m=4$ amplituhedron $\A_{n,k,4}(Z)$
--- that is, by decomposing the amplituhedron into smaller 
`tiles' --- and summing the `volumes' of the tiles.
While the case $m=4$ is most important for physical applications\footnote{The 
$m=2$ amplituhedron is also closely
related to `loop-level' amplitudes
 \cite{Kojima:2020tjf} 
and to some correlators of determinant operators and form factors \cite{Caron-Huot:2023wdh,Basso:2023bwv} in planar $\mathcal{N} = 4$ SYM.}, the amplituhedron 
makes sense for any positive $n,k,m$ such that $k+m \leq n$, and 
has a very rich geometric and combinatorial
structure.  It generalizes cyclic polytopes (when $k=1$), 
cyclic hyperplane arrangements \cite{karpwilliams}
(when $m=1$), and the positive Grassmannian (when $k=n-m$), and it 
is connected to 
the hypersimplex and the positive tropical Grassmanian \cite{LPW,PSW} (when $m=2$).
Two of the guiding problems about the amplituhedron have been:
\begin{itemize}
\item the \emph{cluster adjacency conjecture}, which says that 
facets of tiles are cut out by collections of compatible cluster variables.

\item the \emph{BCFW tiling conjecture}, 
which says that any way of iterating the
BCFW recurrence gives rise to a collection of cells whose images  tile the
$m=4$ amplituhedron $\Ank$.

\end{itemize}

The first connection of \emph{cluster algebras}
to scattering 
amplitudes in $\mathcal{N}=4$ SYM
was made by Golden--Goncharov--Spradlin--Vergu--Volovich 
\cite{Golden:2013xva}, who showed 
that singularities of scattering amplitudes of planar $\mathcal{N}=4$ SYM at loop level can be described using cluster $\mathcal{X}$-variables.
\emph{Cluster adjacency} 
was introduced  by
Drummond--Foster--G\"urdo\u gan \cite{Drummond:2017ssj, Drummond:2018dfd},
who conjectured that 
the terms in tree-level $\mathcal{N}=4$ SYM amplitudes 
coming from the BCFW recursions are rational functions 
whose poles correspond to compatible cluster variables of the Grassmannian $\Gr_{4,n}$,
 see also \cite{Mago:2019waa}. The cluster adjacency
conjecture was subsequently
reformulated in terms of the $m=2$ and 
$m=4$ amplituhedron in 
\cite{Lukowski:2019sxw} and 
 \cite{Gurdougan2020ClusterPI}, then proved for all tiles of
the $m=2$ amplituhedron  by Parisi--Sherman-Bennett--Williams
\cite{PSW}. 

Meanwhile, the \emph{BCFW tiling conjecture}
arose alongside the 
{definition} of the amplituhedron
\cite{arkani-hamed_trnka}.
In 2005,
Britto--Cachazo--Feng--Witten \cite{BCFW} gave a recurrence which
expresses scattering amplitudes as sums of rational functions
of momenta; in this recurrence, the individual terms have 
``spurious poles,''  singularities not present in the amplitude.
Hodges \cite{hodges} later observed that in some cases
the amplitude is the volume of a polytope, with spurious poles
arising from internal boundaries of a triangulation of the polytope,
and asked if in general each amplitude is the volume of some geometric
object.  Arkani-Hamed and Trnka \cite{arkani-hamed_trnka}  found the amplitudron as the 
answer to this question, interpreting  the BCFW recurrence  
as giving collections of \emph{BCFW cells} whose images conjecturally tile
$\Ank$.
Subsequently 
BCFW-like tilings of the $m=1$ and $m=2$ amplituhedron 
were proved in \cite{karpwilliams}
and \cite{BaoHe}, building on prior work of \cite{AHTT}
and \cite{karp2020decompositions};
and  partial progress was made 
on the BCFW tiling conjecture
when $m=4$  \cite{karp2020decompositions}, including
an explicit description of the \emph{standard} BCFW cells,
those cells obtained by performing the BCFW recurrence in the 
canonical way.
Finally it was proved by 
Even-Zohar--Lakrec--Tessler 
\cite{even2021amplituhedron} 
that the standard BCFW cells give a tiling of the $m=4$ amplituhedron.

In this paper we build on \cite{PSW} and 
 \cite{even2021amplituhedron} 
 to give a very 
complete picture of the $m=4$
amplituhedron. We show that arbitrary BCFW cells satisfy the 
cluster adjacency conjecture, and to each standard BCFW cell
we associate an explicit cluster seed for the Grassmannian $\Gr_{4,n}$; 
this seed can be described using the combinatorics of \emph{chord diagrams}.
We also prove the BCFW tiling conjecture: we show that 
any way of iterating the BCFW recurrence gives rise to a collection of 
cells whose images 
tile the amplituhedron.

\subsection{Main results}
We now provide some background and explain our results in more detail. 
Given an $n \times (k+m)$ matrix $Z$ with all maximal minors positive,
the \emph{amplituhedron map} is the map 
$\tilde{Z}: \Grk \to \Gr_{k,k+m}$  induced by matrix multiplication by $Z$;
the \emph{amplituhedron} $\AA$ is then the image of 
the nonnegative Grassmannian $\Grk$ under this map.
One 
of the main ideas of 
Arkani-Hamed and Trnka \cite{arkani-hamed_trnka} is 
that the \emph{BCFW recurrence} \cite{BCFW} for computing 
scattering amplitudes in $\mathcal{N}=4$ super Yang Mills theory 
can be interpreted as a recurrence which produces
a collection of $N(n-3,k+1)$ 
$4k$-dimensional cells of $\Grk$, where $N(n-3,k+1)$ is the \emph{Narayana number} 
$\frac{1}{k+1}\binom{n-4}{k}\binom{n-3}{k}$.
The images of these cells under 
$\tilde{Z}$ should give a \emph{tiling} of the $m=4$ amplituhedron
$\A_{n,k,4}(Z)$.   

There is one particularly natural way to iterate
the BCFW recurrence, which leads to a collection of cells of $\Grk$ we call the 
\emph{standard BCFW cells}.  
Several descriptions of standard BCFW cells in terms of Narayana-enumerated objects were given in \cite{karp2020decompositions}. Another description in terms of \emph{chord diagrams} (see the top of Figure~\ref{fig:cd94}) was given in \cite{even2021amplituhedron}, and it is that one which we will primarily use here (see \cref{def:cd}).

Standard BCFW cells are a special case of \emph{(general) BCFW cells}
$S_{\rcp}$, which are certain  
cells in $\Grk$ of dimension $4k$. We construct general BCFW cells recursively, using the \emph{BCFW product}. A chord diagram can be interpreted as a recursive procedure for constructing
a standard BCFW cell (cf. \cref{def:standardfromCD}).

Our first main result 
(cf. \cref{thm:BCFW-tile-and-sign-description})
is the following.
\begin{theorem*}[BCFW tile theorem]
The amplituhedron map is injective on each general BCFW cell.
That is,
	the closure 
	$\gt{\rcp} :=
	\overline{\tilde{Z}(S_{\rcp})}$ of 
	the image of a general BCFW cell $S_{\rcp}$
 is a \emph{tile}, which we
	refer to as a \emph{general BCFW tile}. 
\end{theorem*}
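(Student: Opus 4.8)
The plan is to induct on $k$, following the recursive structure of general BCFW cells. Recall that $S_{\rcp}$ is either a base cell (for $k$ small, say $k\le 1$) or a BCFW product $S_{\rcp}=S_{\rcp_L}\bcfw S_{\rcp_R}$ of two BCFW cells supported on fewer points and with strictly smaller~$k$ (the dimensions add as $4k_L+4k_R+4=4k$). Correspondingly, a point $C\in S_{\rcp}$ is assembled from a point $C_L\in S_{\rcp_L}$, a point $C_R\in S_{\rcp_R}$, and a tuple of BCFW gluing parameters introduced by $\bcfw$; the content of the theorem is that this assembly, followed by $\tilde{Z}$, is injective. In the base case, $\tilde{Z}$ is injective essentially by the theory of cyclic polytopes (the $k\le 1$ amplituhedron), so the argument reduces to the inductive step.

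For the inductive step I would first establish a \emph{product formula} for the amplituhedron map compatible with $\bcfw$: there exist matrices $Z_L,Z_R$ — obtained from $Z$ by a ``promotion'' that depends only on $Z$ and a handful of twistor coordinates determined by the gluing parameters — such that $\tilde{Z}(C)=\bcfwp\bigl(\widetilde{Z_L}(C_L),\widetilde{Z_R}(C_R);\text{params}\bigr)$, where $\bcfwp$ is the amplituhedron-level BCFW product. When $\rcp$ is standard this is exactly the product formula of \cite{even2021amplituhedron}; the new input is that it persists for arbitrary $\rcp$, and that the promoted matrices $Z_L,Z_R$ remain positive enough that $\mathcal{A}_{\bullet,\bullet,4}(Z_L)$ and $\mathcal{A}_{\bullet,\bullet,4}(Z_R)$ are genuine (sub-)amplituhedra. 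Granting this, injectivity follows by inverting $\bcfwp$: from $Y=\tilde{Z}(C)$ one reads off the gluing parameters from prescribed minors of $Y$ against the reference flag; this determines $Z_L,Z_R$ and the ``halves'' $Y_L=\widetilde{Z_L}(C_L)$, $Y_R=\widetilde{Z_R}(C_R)$; the inductive hypothesis applied to $S_{\rcp_L}$ and $S_{\rcp_R}$ recovers $C_L,C_R$; and finally $C$ is reassembled from $C_L,C_R$ and the parameters.

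The main obstacle is that last chain of recoveries: one must show the gluing data is genuinely recoverable \emph{from $Y$ alone}, i.e.\ that the relevant twistor coordinates of $Y$ are locally constant in sign and nonvanishing on the image cell, so that the promoted matrices $Z_L,Z_R$ are well-defined and positive. This is where the sign description of the tile enters, and I expect the tile statement and the sign characterization of \cref{thm:BCFW-tile-and-sign-description} to be proved by a single simultaneous induction, with sign information on $\gt{\rcp_L}$ and $\gt{\rcp_R}$ feeding back into the reconstruction for $\gt{\rcp}$. A secondary difficulty is bookkeeping: the BCFW product relabels and shifts the point set and inserts new pivot rows, and the promotion of $Z$ must be tracked through the chord-diagram combinatorics of \cref{def:cd}; one must verify that, under this relabeling, the promoted data lands in the domain where the inductive hypothesis applies. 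Once these positivity and genericity facts are secured, the inversion of $\bcfwp$ is a finite, explicit computation, and injectivity — hence the tile property of $\gt{\rcp}=\overline{\tilde{Z}(S_{\rcp})}$ — follows.
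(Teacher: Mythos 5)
Your proposal matches the paper's proof in all essentials: the paper defines \emph{coordinate functionaries} by recursively applying product promotion to the gluing parameters of $S_{\rcp_L}$ and $S_{\rcp_R}$, inverts $\tilde Z$ by reading these off from twistor coordinates of $Y$ (each row of the preimage is recovered as the line $Y\cap\Span(v_{\alpha_i}Z,\dots,v_{\epsilon_i}Z)$ via \cref{lem:solve_params_5}, proceeding from the top row downward exactly as in your ``read off the parameters, then recurse'' scheme), and establishes the nonvanishing and sign-definiteness you correctly flag as the crux by precisely the simultaneous induction you anticipate (\cref{lem:sign_of_chord_twistors}, \cref{thm:signs-under-cyc-rot-pre}, \cref{prop:vanishing_and_sign_of_functionaries_under_promotion}). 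The only structural adjustment needed is to induct on the length of the recipe rather than on $k$, so that the steps $\cyc$, $\refl$, and $\pre_I$ occurring in a general recipe are handled as separate (easy) cases compatible with twistor coordinates, rather than being folded into the product step.
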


There is a natural notion of  \emph{facet} of a tile (see \cref{def:facet2}), 
generalizing the notion of facet of a polytope. 
We study facets by describing associated 
\emph{functionaries} -- 
polynomial functions in \emph{twistor coordinates} -- 
which vanish on them.

Our next main result 
(cf. \cref{thm:clusteradjacency})
is the following. For $I\in {[n]\choose 4}$, we use $\lr{I}$ 
and $\llrr{I}$ to denote the 
corresponding  Pl\"ucker coordinate of $\Gr_{4,n}$
and  
twistor coordinate on $\Gr_{k,k+4}$; see 
\cref{sec:Pluckertwistor} for the connection between 
Pl\"ucker coordinates and twistor coordinates.

\begin{theorem*}[Cluster adjacency for BCFW tiles]
	Let $\gt{\rcp}$ be a general BCFW tile of $\Ank$. Then for each facet $\gt{S}$ of $\gt{\rcp}$, there is a functionary $F_S (\llrr{I})$ which vanishes on $\gt{S}$, such that 
	the  set
	\[\{F_S(\lr{I}): \gt{S} \text{ a facet of } \gt{\rcp} \}\]
	 is a collection of compatible cluster variables for $\Gr_{4,n}$.
\end{theorem*}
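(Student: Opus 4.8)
The plan is to prove \cref{thm:clusteradjacency} by induction on the recursive structure of general BCFW cells furnished by the BCFW product $\bcfw$, modelled on the $m=2$ cluster adjacency proof of \cite{PSW}. At the base of the recursion the cell is low-dimensional and every facet of $\gt{\rcp}$ is cut out by a Pl\"ucker coordinate $\lr{I}$ with $I$ a cyclic interval; these are frozen variables of $\Gr_{4,n}$, so the collection is compatible for free. For the inductive step, write $S_{\rcp}=S_{\rcp_L}\bcfw S_{\rcp_R}$ as a BCFW product of two strictly smaller BCFW cells, living on $\Gr_{k_L,n_L}^{\ge 0}$ and $\Gr_{k_R,n_R}^{\ge 0}$. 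The first task is a recursive description of the facets of $\gt{\rcp}$: using our description of facets of BCFW tiles (cf.\ \cref{def:facet2} and the analysis behind \cref{thm:BCFW-tile-and-sign-description}), I expect them to fall into three families --- facets inherited from $\gt{\rcp_L}$, facets inherited from $\gt{\rcp_R}$, and a bounded number of \emph{new} facets created by the product. For each I want an explicit functionary $F_S(\llrr{I})$ vanishing on it: for the inherited facets it is the image of the factor's functionary under the index relabellings defining $\bcfw$, and for the new facets a specific low-degree polynomial in twistor coordinates, read off from the combinatorics of the product (in the standard case, from the chord being added to the chord diagram).

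The heart of the argument is to attach to each general BCFW cell $S_{\rcp}$ an explicit cluster seed $\Sigma_{\rcp}$ for $\Gr_{4,n}$ --- read directly off the chord diagram in the standard case (cf.\ \cref{def:cd}, \cref{def:standardfromCD}), built recursively in general --- so that every facet functionary $F_S(\lr{I})$ of $\gt{\rcp}$ is a cluster variable of $\Sigma_{\rcp}$. Concretely, I would first \emph{amalgamate} the seeds $\Sigma_{\rcp_L}$ and $\Sigma_{\rcp_R}$ along the coordinate inclusions $[n_L]\hookrightarrow[n]$, $[n_R]\hookrightarrow[n]$, using the Pl\"ucker/twistor dictionary of \cref{sec:Pluckertwistor} to make this compatible with the substitution $\llrr{I}\mapsto\lr{I}$; then perform an explicit bounded sequence of mutations that simultaneously repairs the amalgamated quiver into an honest seed of $\Gr_{4,n}$ and produces, as new cluster variables, exactly the functionaries of the new facets. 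Since all cluster variables appearing in one seed are pairwise compatible, the theorem follows at once: $\{F_S(\lr{I}) : \gt{S}\text{ a facet of }\gt{\rcp}\}$ lies inside the cluster of $\Sigma_{\rcp}$. The rest is bookkeeping --- matching frozen facets to frozen variables $\lr{I}$, inherited facets to the cluster variables of $\Sigma_{\rcp}$ coming (by the inductive hypothesis) from $\Sigma_{\rcp_L}$ and $\Sigma_{\rcp_R}$, and new facets to the variables produced by the mutation sequence --- plus the finite check that the inherited variables survive the repairing mutations and that distinct facets are never matched to incompatible variables.

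The hard part will be the explicit seed construction: identifying the non-frozen functionaries (genuine high-degree polynomials in twistors, not a priori cluster variables at all), proving that the recursively built $\Sigma_{\rcp}$ really is a seed for $\Gr_{4,n}$, and showing that this one seed contains all the facet functionaries at once. This is exactly the supply of ``explicit seeds for $\Gr_{4,n}$ comprised of high-degree cluster variables'' advertised in the abstract, and checking that the prescribed mutation sequence yields the claimed cluster variables is where the real effort goes. A secondary difficulty is that the BCFW product for \emph{general} (non-standard) cells is more flexible than the canonical iteration producing standard cells, so one must either verify that the recursive seed construction goes through for every valid BCFW product, or show that cluster adjacency is preserved under the moves relating general BCFW cells to standard ones. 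Once the seeds are in hand, compatibility and the passage $\llrr{I}\to\lr{I}$ are comparatively formal.
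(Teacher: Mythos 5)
Your high-level plan (induction on the BCFW product; show that each facet functionary sits inside a common cluster built recursively from the factor pieces) is aligned with the paper's strategy, but you are missing the one tool that makes the whole argument go through: \emph{product promotion} $\Psi_{ac}$. Your proposal to ``amalgamate the seeds $\Sigma_{\rcp_L}$ and $\Sigma_{\rcp_R}$ along the coordinate inclusions $[n_L]\hookrightarrow[n]$, $[n_R]\hookrightarrow[n]$'' and then ``repair'' with mutations does not account for the fact that the identification between $\Gr_{4,N_L}\times\Gr_{4,N_R}$ and $\Gr_{4,n}$ is not a coordinate inclusion. The index sets $N_L=\{1,\dots,a,b,n\}$ and $N_R=\{b,\dots,c,d,n\}$ overlap in $\{b,n\}$, and after the BCFW product those columns are not simply relabelled: $b$ (on the $N_L$ side) and $d,n$ (on the $N_R$ side) get replaced by genuine linear combinations such as $b \mapsto (ba)\cap(cdn)/\lr{acdn}$. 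So ``amalgamation'' as you describe it does not produce even a candidate cluster of $\Gr_{4,n}$, and there is no finite mutation sequence that repairs a wrong set of functions.

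The paper instead defines product promotion $\Psi=\Psi_{ac}:\CC(\widehat{\Gr}_{4,N_L})\times\CC(\widehat{\Gr}_{4,N_R})\to\CC(\widehat{\Gr}_{4,n})$ by exactly these substitutions, and proves (Theorem \ref{thm:promotion2}) that $\Psi$ is a cluster \emph{quasi-homomorphism}: it sends cluster variables to cluster variables (up to Laurent monomials in a small fixed set $\mathcal{T}'$ of frozen Pl\"ucker coordinates) and sends clusters to clusters. This is verified once, by exhibiting explicit seeds $\Sigma_0$ and $\Fr(\Sigma_1)$ and checking exchange ratios; after that, the general preservation of compatibility follows from Proposition \ref{prop:similar} with no mutation sequence to track. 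The rescaled promotion $\rPsi$ is then used to define the recursive set $\Irr(\rcp)$ of ``coordinate cluster variables,'' and Lemma \ref{lem:irr-is-compatible-cluster} shows $\Irr(\rcp)$ is a set of compatible cluster variables of $\Gr_{4,n}$. The facet matching in the paper is also done via a recursive boundary description: Lemma \ref{lem:boundaries_before_amp_map} splits $\partial\overline{S_L\bcfw S_R}$ into (a) cells of the form $S_L'\bcfw S_R'$ with $S_L'\subset\overline{S_L}$, $S_R'\subset\overline{S_R}$ (your ``inherited'' families) and (b) cells where a twistor $\llrr{I}$, $I\in\binom{\{a,b,c,d,n\}}{4}$, fails coindependence (your ``new'' facets), and Theorem \ref{thm:vanishing-for-all-ops} transports vanishing through $\pre_I,\cyc,\refl$ and promotion. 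So where you expect to spend effort on ``checking that the prescribed mutation sequence yields the claimed cluster variables,'' the paper instead spends it on verifying the quasi-homomorphism conditions once, and then never mutates again. Until you replace your amalgamation by the explicit promotion substitution and show that substitution is a quasi-homomorphism, the inductive step has no engine.
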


Strengthening the connection with cluster algebras, 
we associate to each general BCFW tile $\gt{\rcp}$ 
a larger 
collection of compatible cluster variables $\Irr(\rcp)$ for $\Gr_{4,n}$ 
(cf. \cref{def:generalcluster}). Interpreting each cluster variable as a functionary
we describe each general BCFW tile as the subset of the Grassmannian
$\Gr_{k,k+4}$  where these cluster variables take on particular signs.
The following theorem appears later as \cref{cor:cluster-sign-description}.
\begin{theorem*}[Sign description of BCFW tiles]
	Let $\gt{\rcp}$ be a general BCFW tile. For each element $x$ of $\Irr(\rcp)$, the functionary $x(Y)$ has a definite sign $s_x$ on $\gto{\rcp}$ and 
	\[\gto{\rcp}= \{Y \in \Gr_{k,k+4}: s_x \, x(Y) >0 \text{ for all } x \in \Irr(\rcp) \}.\]
\end{theorem*}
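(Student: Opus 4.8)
The plan is to bootstrap from the two preceding theorems together with the recursive structure of BCFW cells. First I would establish the statement for the ``$\subseteq$'' direction: for a general BCFW tile $\gt{\rcp}$ and each cluster variable $x \in \Irr(\rcp)$, I claim $x(Y)$ has a definite sign on the open dense part $\gto{\rcp}$. The cleanest route is induction on $k$ via the BCFW product. At the base case ($k=0$, or $k=1$), one checks directly that the relevant twistor coordinates are positive on the image of the corresponding cell, using positivity of $Z$ and of the cell parametrization. For the inductive step, one uses that $S_{\rcp}$ is built by the BCFW product from smaller cells $S_{\rcp_L}, S_{\rcp_R}$, that $\Irr(\rcp)$ is assembled from $\Irr(\rcp_L)$, $\Irr(\rcp_R)$ together with a bounded number of ``new'' cluster variables associated to the chord being added, and that the amplituhedron map interacts with the BCFW product in a controlled way (this is exactly the mechanism underlying the BCFW tile theorem). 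Since each $x \in \Irr(\rcp)$ is a functionary that either pulls back a functionary from one of the two smaller problems or is one of the new variables, definiteness of sign follows from the inductive hypothesis in the first case and from a direct computation (again using positivity of $Z$) in the second; fixing the signs $s_x$ so that $s_x\, x(Y) > 0$ on $\gto{\rcp}$ is then automatic.

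For the ``$\supseteq$'' direction — that the sign conditions actually \emph{cut out} the tile and not something strictly larger — I would argue by a dimension/degree count combined with the tiling statement. Let $U := \{Y \in \Gr_{k,k+4} : s_x\, x(Y) > 0 \text{ for all } x \in \Irr(\rcp)\}$. We have shown $\gto{\rcp} \subseteq U$, and both are open subsets of $\Gr_{k,k+4}$, which is irreducible of dimension $4k$; since $\gto{\rcp}$ is the image of the $4k$-dimensional cell $S_{\rcp}$ under the injective (by the BCFW tile theorem) amplituhedron map, $\overline{\gto{\rcp}} = \gt{\rcp}$ is a $4k$-dimensional tile. It therefore suffices to show $U$ contains no point outside $\gt{\rcp}$. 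Here I would invoke the full BCFW tiling of $\Ank$ (the BCFW tiling conjecture, now a theorem in the paper): the tiles $\{\gt{\rcp'}\}$ ranging over a complete BCFW collection cover the amplituhedron with disjoint interiors, and on each neighboring tile $\gt{\rcp'}$ sharing a facet $\gt{S}$ with $\gt{\rcp}$, the functionary $F_S$ vanishing on $\gt{S}$ (from the cluster adjacency theorem) changes sign — so at least one of the sign conditions defining $U$ fails on $\gto{\rcp'}$. Points of $\Gr_{k,k+4}$ outside the amplituhedron are separated from $\gt{\rcp}$ by the amplituhedron's own facet inequalities, which are (up to the physical-boundary functionaries $\llrr{i\,i{+}1\,j\,j{+}1}$) among the cluster variables in $\Irr(\rcp)$, or can be reached from inside only by crossing one of the $F_S$. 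Chaining these sign changes shows $U = \gto{\rcp}$.

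The \textbf{main obstacle} I anticipate is the ``$\supseteq$'' direction, specifically controlling points of $\Gr_{k,k+4}$ that lie \emph{outside} the amplituhedron entirely. Inside $\Ank$, the tiling plus cluster adjacency gives a clean combinatorial argument for why crossing any facet of $\gt{\rcp}$ violates some sign; but a priori the open region $U$ could bulge outside $\Ank$ in a direction not controlled by any facet functionary of $\gt{\rcp}$. Resolving this requires knowing that the collection $\Irr(\rcp)$ is rich enough: it must include (or have as consequences of its sign pattern) the inequalities $\llrr{i\, i{+}1\, j\, j{+}1} > 0$ and the other boundary functionaries of $\Ank$ that actually bound $\gt{\rcp}$. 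I would handle this by tracking, through the BCFW-product induction, precisely which amplituhedron-boundary functionaries appear among the ``new'' cluster variables at each step, and showing that the ones relevant to $\gt{\rcp}$ (i.e.\ those whose vanishing locus meets $\overline{\gto{\rcp}}$ in codimension one) are all captured. A secondary, more technical obstacle is ensuring that ``definite sign on $\gto{\rcp}$'' upgrades to the statement about the equality of sets rather than just of closures — i.e.\ handling the lower-dimensional strata where some $x$ might vanish; this is a standard but careful argument using that $\gto{\rcp}$ is connected and that the vanishing loci of the $x \in \Irr(\rcp)$ inside $\overline{\gto{\rcp}}$ are exactly the facets, which again leans on the cluster adjacency theorem.
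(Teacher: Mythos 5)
Your forward inclusion ($\subseteq$) is in the right spirit and matches the paper's route: it is \cref{cor:clust-var-strong-sign}, proved by induction through the BCFW product using \cref{thm:signs-under-cyc-rot-pre}, \cref{prop:vanishing_and_sign_of_functionaries_under_promotion}, and \cref{lem:sign_of_chord_twistors}. But the reverse inclusion ($\supseteq$) is where you diverge from the paper and where there is a genuine gap. Your proposed argument invokes the full BCFW tiling of $\Ank$ together with sign changes of facet functionaries across neighboring tiles. This is logically backwards in the paper's architecture: the tiling theorem (\cref{thm:BCFWtiling}) is a \emph{later} result whose disjointness half itself relies on the separation machinery built on top of \cref{thm:BCFW-tile-and-sign-description}, which is exactly the ingredient the paper uses here. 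Moreover, the obstacle you correctly flag --- controlling points of $\Gr_{k,k+4}$ lying entirely outside $\Ank$ --- is not resolved by your sketch: cluster adjacency tells you which variables vanish on facets of $\gt{\rcp}$, not that crossing a facet flips the sign, and ``chaining'' sign flips across a tiling that only covers the amplituhedron says nothing about the semialgebraic set $U$ outside of $\Ank$. Tracking which boundary functionaries appear at each BCFW step, as you propose, would essentially require re-proving the tile theorem.

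The idea you are missing is much more algebraic and local. The reverse inclusion follows directly from two ingredients already in hand. First, \cref{thm:BCFW-tile-and-sign-description} gives the exact characterization
$\gto{\rcp} = \{Y \in \Gr_{k,k+4} : \czeta_i^{\rcp}(Y) > 0 \text{ for all coordinate functionaries}\}$;
this is established by explicitly inverting the amplituhedron map on the cell via the twistor matrix $\twmt_\rcp(Y)$ (\cref{prop:preimage-if-coord-fcn-pos}), which is what globally controls membership in the tile without any reference to the tiling or to the amplituhedron's facets. Second, comparing \cref{def:twistor-mtx} with \cref{def:generalcluster}, each coordinate functionary $\czeta_i^{\rcp}(Y)$ is a \emph{signed Laurent monomial} in the coordinate cluster variables $\rzeta_j^{\rcp}(Y)$ (the sign and the Laurent monomial in elements of $\mathcal{T}'$ are exactly the difference between $\Psi$ and the rescaled $\rPsi$, plus the explicit signs in the base case and the $\cyc/\refl$ steps). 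Hence if $Y$ satisfies $s_x\, x(Y) > 0$ for all $x \in \Irr(\rcp)$, then each $\czeta_i^{\rcp}(Y)$ has a determined sign; comparing with any actual point of $\gto{\rcp}$, where both the $s_x x$ are positive \emph{and} the $\czeta_i^{\rcp}$ are positive, forces that determined sign to be $+$, whence $Y \in \gto{\rcp}$. This one-paragraph argument replaces your entire tiling-and-boundary discussion.
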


Our strategy to prove the cluster adjacency theorem above is to 
give a recursive description of general BCFW cells in terms of the 
\emph{BCFW product}, 
 and to show that the polynomials cutting out the corresponding BCFW tiles
 are related by \emph{product promotion} (cf. \Cref{pro-twistors2}), which is a \emph{cluster 
quasi-homomorphism} (cf. \Cref{thm:promotion2}).
The fact that product promotion is a cluster quasi-homomorphism may be of independent interest
in the study of the cluster structure on $\Gr_{4,n}$, as it allows us 
to map cluster variables (respectively, clusters) from  
$\C(\widehat{\Gr}_{4,\{1,2,\dots,a,b,n\}})\times \C(\widehat{\Gr}_{4,\{b,c,d,\dots,n\}})$
to $\C(\widehat{\Gr}_{4,n})$, where $a<b$ and $c<d<n$ are consecutive.

Using the combinatorics of chord diagrams, 
we also give an explicit description of standard BCFW tiles and associated 
cluster structures, including:
a characterization of each facet of a standard BCFW tile, together with 
the functionary which vanishes on the facet (\cref{prop:standard_facets});
a description of the seed associated to a standard BCFW tile, 
including both the quiver (\cref{def:seed}, \cref{thm:quiver})
and the cluster variables (\cref{prop:explicit}, \cref{thm:cluster}).
We expect that most of these results can be extended to general BCFW tiles.

We note that much of the work thus far on the cluster structure of the Grassmannian
has focused on cluster variables which are Pl\"ucker coordinates 
or are polynomials in Pl\"ucker coordinates
with low degree; by constrast, the cluster variables we obtain can have arbitrarily high degree
in Pl\"ucker coordinates, see \cref{prop:explicit} for their explicit descriptions.

\begin{figure}[h]
\begin{center}
\tikz[line width=0.8,scale=0.9]{
\draw (0.5,0) -- (15.5,0);
\foreach \i in {1,2,...,15}{
\def\x{\i}
\draw (\x,-0.1)--(\x,+0.1);
\node at (\x,-0.5) {\i};}
\foreach \i/\j in {1/8, 3/4.8, 5.2/7.75, 8.25/13, 9/12.2, 10/11.8}{
\def\x{\i+0.5}
\def\y{\j+0.5}
\draw[line width=1.5,-stealth] (\x,0) -- (\x,0.25) to[in=90,out=90] (\y,0.25) -- (\y,0);
}
\node at(1.5,1.5) {$D_3$};
\node at(3.5,0.875) {$D_1$};
\node at(5.75,1) {$D_2$};
\node at(9,1.5) {$D_6$};
\node at(9.5,1) {$D_5$};
\node at(10.5,0.8125) {$D_4$};
}
\end{center}
\caption{\label{fig:intr-chord} The  chord diagram for a standard BCFW tile of $\mathcal{A}_{14,6,4}(Z)$.}
\end{figure}
\begin{figure}
\centering
\includegraphics[width=0.5\textwidth]{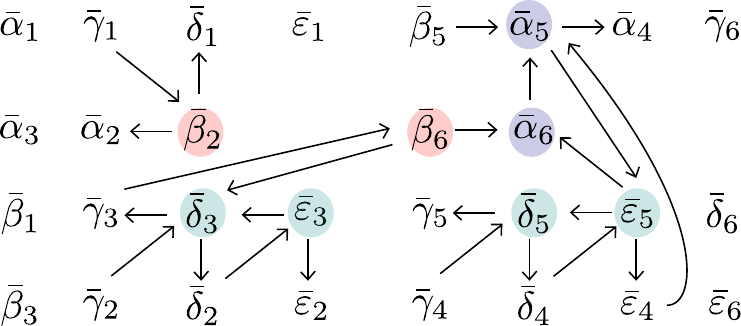}
\caption{The seed associated to the standard BCFW tile in \cref{fig:intr-chord}. The mutable variables are circled; all other variables are frozen.
 The colors (red, green, blue) indicate the different cases of \cref{def:seed}.
	\label{fig:cd94}}
\vspace{1em}
\end{figure}

The latter part of this paper is devoted to 
generalizing the main result of \cite{even2021amplituhedron}.
In particular,
we characterize the boundaries of BCFW cells
(cf. \cref{lem:boundaries_before_amp_map}),  
we analyze the signs of functionaries on general BCFW cells
(cf. \cref{prop:vanishing_and_sign_of_functionaries_under_promotion}),
and we prove 
that the amplituhedron map is injective on general BCFW cells.
We use these ingredients to prove our 
last main result (cf \cref{thm:BCFWtiling}):

\begin{theorem*}[BCFW tiling theorem]
	Every collection $\{S_{\rcp} \}$ of general BCFW cells obtained by iterating the 
	BCFW recurrence (cf. \cref{def:BCFWoutput}) gives rise to a tiling 
	$\{Z_{\rcp} \}$ 
	of the amplituhedron
$\Ank$.  That is:
	\begin{itemize}
		\item  the amplituhedron map is injective on each general BCFW cell
			$S_{\rcp}$, i.e. 
			$Z_{\rcp}:=\overline{\tZ(S_{\rcp})}$ 
			is a \emph{tile};
		\item  the open tiles
			$\{\gto{\rcp} \}$  
			are pairwise disjoint;
		\item  and the tiles in 
			$\{Z_{\rcp} \}$  cover the amplituhedron $\Ank$.
	\end{itemize}
\end{theorem*}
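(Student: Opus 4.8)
The first bullet is already in hand: the injectivity of the amplituhedron map on a general BCFW cell $S_{\rcp}$ --- equivalently, that $\gt{\rcp}=\overline{\tZ(S_{\rcp})}$ is a tile --- is precisely the BCFW tile theorem (\cref{thm:BCFW-tile-and-sign-description}). So the plan is to establish the remaining two bullets, disjointness of the open tiles and covering, by induction on $n$ keyed to the recursive structure of \cref{def:BCFWoutput}. The base cases ($k=0$, or $n=k+4$, where $\Ank$ is a point or a single tile) are immediate; in all other cases, \emph{every} collection $\{S_{\rcp}\}$ produced by \cref{def:BCFWoutput} is obtained by making one choice of how to perform the first step of the BCFW recurrence and then iterating the recurrence on strictly smaller amplituhedra. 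Concretely, $\{S_{\rcp}\}$ decomposes as a disjoint union $\bigsqcup_i C_i$ of sub-collections, where each $C_i$ is the image, under a BCFW product map, of a pair of collections which are themselves valid iterated-BCFW outputs for smaller parameters; by the inductive hypothesis each such factor collection tiles its (smaller) amplituhedron.

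The geometric heart of the argument, and the step I expect to be the main obstacle, is the matching decomposition of the amplituhedron itself: for \emph{any} valid first step of the recurrence one must show $\Ank=\bigcup_i R_i$, where $R_i:=\overline{\tZ(\bigcup_{\rcp\in C_i}S_{\rcp})}$ is the locus inside $\Ank$ cut out by prescribed signs of the ``dividing'' functionaries attached to that step, and the interiors of the $R_i$ are pairwise disjoint. For the canonical iteration this is essentially \cite{even2021amplituhedron}; the new content is to run it for an arbitrary choice of first step. The tools for this are \cref{lem:boundaries_before_amp_map}, which identifies the boundary strata of the cells in $C_i$ precisely enough that a point of $\Ank$ failing to lie in the interior of some $R_i$ must lie on one of the dividing functionaries (a finite union of lower-dimensional loci), together with \cref{prop:vanishing_and_sign_of_functionaries_under_promotion}, which controls how functionaries and their signs transform under the product promotion relating $C_i$ to its two smaller factors. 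Granting this decomposition, disjointness of open tiles is immediate from the sign description \cref{cor:cluster-sign-description}: for $\rcp\neq\rcp'$ in the collection, either they lie in different sub-collections $C_i\neq C_j$, and then a dividing functionary of the first step separates $R_i$ from $R_j$ hence $\gto{\rcp}$ from $\gto{\rcp'}$; or they lie in a common $C_i$, and then product promotion transports the separating cluster variable supplied by the inductive hypothesis on the smaller amplituhedron to an element of $\Irr(\rcp)\cap\Irr(\rcp')$ on which $\gto{\rcp}$ and $\gto{\rcp'}$ carry opposite signs, so $\gto{\rcp}\cap\gto{\rcp'}=\emptyset$.

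Covering then follows by a soft argument once the decomposition $\Ank=\bigcup_iR_i$ is available. Since $\Ank$ is compact, being the continuous image of $\Grk$, and since $\Ank$ is the closure of its (full-dimensional) interior, it suffices to show that $\bigcup_{\rcp}\gto{\rcp}$ is dense in $\Ank$: then $\bigcup_{\rcp}\gt{\rcp}=\bigcup_{\rcp}\overline{\gto{\rcp}}=\overline{\bigcup_{\rcp}\gto{\rcp}}=\Ank$, the last union being finite and each tile lying in $\Ank$. Density closes the induction: a point $Y$ in the interior of $\Ank$ lying off the zero loci of all dividing functionaries appearing at every level of the recursion lies in the interior of a unique $R_i$; transporting $Y$ through the product promotion identification of (the interior of) $R_i$ with a product of smaller amplituhedra or their BCFW images, the inductive hypothesis places $Y$ in a unique open tile of $C_i$, hence in a unique $\gto{\rcp}$. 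Thus a dense open subset of $\Ank$ is covered --- indeed covered exactly once, which re-derives disjointness --- and the three bullets follow. Two points will need care in the write-up: checking that product promotion restricts to a bijection between the relevant open subsets which is compatible with the cell decompositions on both sides, so that ``unique tile below'' really does transport to ``unique tile above'' (here \cref{thm:promotion2} and the injectivity in \cref{thm:BCFW-tile-and-sign-description} re-enter), and the combinatorial bookkeeping that every output of \cref{def:BCFWoutput} genuinely fits the ``one first step, then recurse'' template, so that the induction is well-founded.
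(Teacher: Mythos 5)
Your skeleton matches the paper's: the first bullet is \cref{thm:BCFW-tile-and-sign-description}, the geometric input is the coarse decomposition of $\Ank$ into $\tZ(C_{\pre})$ and the $\tZ(C_{k_L,k_R,b,n})$ (imported wholesale from \cite{even2021amplituhedron} as \cref{thm:coarse_decomposition} --- note this is one fixed decomposition over \emph{all} $b,k_L,k_R$, not something re-derived per ``choice of first step''), and your treatment of two cells in a common sub-collection $C_i$ --- promote the separating functionaries supplied by induction and use \cref{prop:vanishing_and_sign_of_functionaries_under_promotion} --- is exactly the paper's \cref{prop:separation}, Case (2). However, the two points you defer to ``care in the write-up'' are where the paper's new work actually lives, and as sketched they do not go through.

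First, covering is not soft. Product promotion is a homomorphism of coordinate rings, not a geometric identification of $\tZ(C_{k_L,k_R,b,n})$ with a product of smaller amplituhedra, so there is no map along which to ``transport $Y$'' to a point of $\mathcal{A}_{N_L,k_L,4}\times\mathcal{A}_{N_R,k_R,4}$ and invoke the inductive hypothesis. The paper instead uses the one-sided cells $S_L^{\bulL}=S_L\bcfw\Gr^{>0}_{k_R,N_R}$ and $S_R^{\bulR}=\Gr^{>0}_{k_L,N_L}\bcfw S_R$: since $S_R^{\bulR}$ is built from $S_R$ by a fixed sequence of $\pre_i,\inc_i,x_i(\R_+),y_i(\R_+)$ (\cref{lem:9_6_elt}), the Bao--He transfer results (\cref{thm:BaoHe}) show a tiling $\{\gt{S_R^i}\}$ of the small amplituhedron yields a covering of $\gt{C_{k_L,k_R,b,n}}$ by $\{\gt{(S_R^i)^{\bulR}}\}$, and symmetrically on the left; the key containment $\overline{\gto{S_L^{\bulL}}\cap\gto{S_R^{\bulR}}}\subseteq\gt{S_L\bcfw S_R}$ (\cref{lem:BCFW_cell_as_intersection}, proved via the sign description) then intersects the two coverings to cover each coarse piece by the product tiles. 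Second, for two tiles in \emph{different} coarse pieces, the sign conditions of \cref{thm:coarse_decomposition} hold on the open images $\tZ(C_i)$, while each open tile is only contained in the \emph{closure} $\overline{\tZ(C_i)}$; signs may degenerate to zero there, so ``a dividing functionary separates $R_i$ from $R_j$'' does not immediately separate the open tiles. The paper closes this with \cref{lem:opennes}, a submersion argument showing $\tZ(C_{\pre})$ and $\tZ(C_{k_L,k_R,b,n})$ are open, so a nonempty open intersection of two tiles would force a nonempty intersection of the open coarse images, contradicting \cref{thm:coarse_decomposition}. You would need both of these ingredients (or substitutes for them) to complete the argument.
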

This generalizes the main result
of \cite{even2021amplituhedron}, which proved the same result for the standard BCFW cells, and proves
the main conjecture of \cite{arkani-hamed_trnka}.

\subsection{Further motivation and context}

From the point of view of physics,  
 it is important to  understand  
 facets of amplituhedron tiles, because 
 they correspond to poles of \emph{Yangian invariants}, 
   the rational functions 
   that are the building blocks of scattering amplitudes,
  see \cref{rk:yangianpromotion}.  Studying tiles can then
 give a geometric interpretation of the notion
of  \emph{cluster adjacency} in physics.
   Moreover, it would be interesting to show that 
   tiles are  \emph{positive geometries} 
   \cite{Arkani-Hamed:2017tmz,Lam:2022yly}.

From the point of view of cluster algebras, 
the study of tiles for the amplituhedron $\mathcal{A}_{n,k,m}$ is useful
because it is closely related to the 
cluster structure on the Grassmannian $\Gr_{m,n}$, as was shown for 
$m=2$ in 
\cite{PSW} and as this paper demonstrates for $m=4$.  In particular, for $m=4$,
the \emph{BCFW product} (\cref{sec:bcfw-and-products}) used to recursively build tiles 
(\cref{thm:BCFW-tile-and-sign-description}) has a cluster quasi-homomorphism 
counterpart called \emph{product promotion} (\cref{sec:quasihom}),
that can be used to recursively construct 
cluster variables and seeds in $\Gr_{4,n}$ (\cref{thm:promotion2}). Secondly, inverting the amplituhedron map on tiles is related to solving the `$C \cdot z$ equations' which generates cluster variables for the \emph{symbol alphabet} of loop amplitudes, see \cref{rk:CZ_eq}. 

In the closely related field of \emph{total positivity}, one prototypical 
problem is to give an efficient characterization of
the ``positive part'' of a space as the subset
where a certain minimal collection of functions take on positive values \cite{fz-intel}; 
this is sometimes called a ``positivity test.''
For example,  if we define
the \emph{positive Grassmannian} $\Gr_{k,n}^{>0}$ as the subset of the real Grassmannian
where all Pl\"ucker coordinates are strictly positive, then 
 for any extended cluster $\txx$ 
for $\Gr_{k,n}$ \cite{scott}, we have 
\begin{equation*} \label{eq:pos_test}
	\Gr^{> 0}_{k,n}= \{Y \in \Gr_{k, n}: x(Y)>0 \text{ for all }x \in \txx \}.
\end{equation*}
We think of our ``Sign description of BCFW tiles'' theorem 
(see \cref{cor:cluster-sign-description} and  \cref{thm:sign-definite})
as ``positivity tests'' 
for membership in a BCFW tile of the amplituhedron. 
See \cite[Theorem 6.8]{PSW} for an analogous result for $m=2$, 
and \cref{conj:clusteradjmain} for some conjectures for general $m$.

From the point of view of discrete geometry, it is interesting to study tiles 
and more generally
$\tZ$-images of positroid cells because
one can think of them as a generalization of polytopes in the Grassmannian.  
In particular, our sign description of BCFW tiles
can be thought of as 
analogues of the hyperplane description of polytopes. 
Note, however, that the inequalities defining ``facets'' of the of tiles---which correspond to frozen variables---are not enough to cut out the tiles as a subset of $\Gr_{k, k+4}$.  As for the positivity test for $\mbox{Gr}^{>0}_{k,n}$, in addition to the frozen variables, we need to 
include some additional `mutable' cluster variables, hence the crucial role of cluster algebras.

\subsection{Organization}
The structure of this paper is as follows. 
\cref{sec:background-amp} provides background on the positive
Grassmannian and the amplituhedron, and \cref{sec:background-cluster}
provides background on cluster algebras, including the notion of 
cluster quasi-homomorphism, and the cluster algebra structure of 
$\C[\widehat{\Gr}_{k,n}]$.
In \cref{sec:quasihom} we introduce \emph{product promotion} 
and prove that it is a cluster quasi-homomorphism.
In \cref{sec:BCFWmap} we define the \emph{BCFW map} and \emph{BCFW product},
which we then use in 
\cref{def:BCFWcells} to define the standard and general BCFW cells.
In \cref{sec:BCFWtilesfacets} we analyze the images of BCFW cells 
and prove the cluster adjacency theorem for general BCFW tiles.
In \cref{sec:clustervariable} we deepen the connection to 
cluster algebras: we give explicit formulas for the cluster variables
associated to standard BCFW tiles, and describe 
the sign that each such cluster variable takes on the tile.
In \cref{sec:quiver} we then associate an explicit seed
(a quiver plus cluster variables) to each standard BCFW tile, using the 
combinatorics of chord diagrams.
In \cref{sec:BCFWcells} 
and \cref{sec:imagesBCFWcells}
we provide proofs of some 
technical results about BCFW cells and BCFW tiles, including
the fact that 
the amplituhedron map is injective on each (general) BCFW cell.
These results are then used in \cref{sec:BCFWtilings}, where
we show that each collection of BCFW cells  gives rise to a 
tiling of the amplituhedron.
We end with
\cref{app:plabic}, which provides necessary background on 
plabic graphs.

\vskip .2cm

\noindent{\bf Acknowledgements:~} 
The authors would like to thank Nima Arkani-Hamed for many
inspiring conversations.
TL is supported by SNSF grant Dynamical Systems, grant no.~188535.
MP is supported by the CMSA at Harvard University and at the Institute for Advanced Study by the U.S. Department of Energy under the grant number DE-SC0009988. 
MSB is supported by the National Science Foundation under Award No.~DMS-2103282.
RT (incumbent of the Lillian and George Lyttle Career Development Chair) was supported by the ISF grant No.~335/19 and 1729/23.
RT would like to thank Yoel Groman for discussions related to this work.
LW is supported by the National Science Foundation under Award No. 
DMS-1854316 and DMS-2152991. Any opinions, findings, and conclusions or recommendations expressed in this material are
those of the author(s) and do not necessarily reflect the views of the National Science
Foundation.  The authors would also like to thank 
Harvard University, 
the Institute for Advanced Study, 
and the `Research in Paris' program at the 
Institut Henri Poincar\'e, 
where some of this 
work was carried out.

\section{Background on the amplituhedron (tiles, facets, functionaries)}\label{sec:background-amp}

\subsection{The (positive) Grassmannian}\label{sec:posGrass}

The \emph{Grassmannian} $\Gr_{k,n}(\F)$
is the space of all $k$-dimensional subspaces of
an $n$-dimensional vector space $\F^n$.
Let $[n]$ denote $\{1,\dots,n\}$, and $\binom{[n]}{k}$ denote the set of all $k$-element 
subsets of $[n]$.
We can
 represent a point $V \in
\Gr_{k,n}(\F)$  as the row-span 
of
a full-rank $k\times n$ matrix $C$ with entries in
$\F$.  
Then for $I=\{i_1 < \dots < i_k\} \in \binom{[n]}{k}$, we let $\lr{I}_V=\lr{i_1\,i_2\,\dots\,i_k}_V$ be the $k\times k$ minor of $C$ using the columns $I$. 
The $\lr{I}_V$ 
are called the {\itshape Pl\"{u}cker coordinates} of $V$, and are independent of the choice of matrix
representative $C$ (up to common rescaling). The \emph{Pl\"ucker embedding} 
$V \mapsto \{\lr{I}_V\}_{I\in \binom{[n]}{k}}$
embeds  $\Gr_{k,n}(\F)$ into
projective space. When it does not cause confusion, we will identify $C$ with its row-span and drop the subscript $V$ on Pl\"ucker coordinates. 

If $C$ has columns $v_1, \dots, v_n$, we may also identify $\lr{i_1\,i_2\,\dots\,i_k}_V$ with the element $v_{i_1} \wedge v_{i_2} \wedge \dots \wedge v_{i_k}$, hence the Pl\"ucker coordinates are \emph{alternating} in the indices, e.g. $\lr{i_1\,i_2\,\dots\,i_k}=- \lr{i_2\,i_1\,\dots\,i_k}$. 
\begin{remark}\label{rem:convention} 
For convenience, if $I$ is a subset (rather than a sequence) of positive integers,
then we let $\lr{I}$ denote the Pl\"ucker coordinate obtained
by listing 
the elements of $I$ in \emph{increasing} order. 
\end{remark}

In this paper we will often be working with the 
\emph{real} Grassmannian $\Gr_{k,n}=
\Gr_{k,n}(\R)$.

\begin{definition}[Positive Grassmannian]\label{def:positroid}\cite{lusztig, postnikov}
We say that $V\in \Gr_{k,n}$ is \emph{totally nonnegative}
     if (up to a global change of sign)
       $\lr{I}_V \geq 0$ for all $I \in \binom{[n]}{k}$.
Similarly, $V$ is \emph{totally positive} if $\lr{I}_V >0$ for all $I
      \in \binom{[n]}{k}$.
We let $\Grk$ and $\Gr_{k,n}^{>0}$ denote the set of
totally nonnegative and totally positive elements of $\Gr_{k,n}$, respectively.  
$\Grk$ is called the \emph{totally nonnegative}  \emph{Grassmannian}, or
       sometimes just the \emph{positive Grassmannian}.
\end{definition}

If we partition $\Grk$ into strata based on which Pl\"ucker coordinates are strictly
positive and which are $0$, we obtain a cell decomposition of $\Grk$
into \emph{positroid cells} \cite{postnikov}.
Each positroid cell $S$ gives rise to a matroid, whose bases are precisely
the $k$-element subsets $I$ such that the Pl\"ucker coordinate
$\lr{I}$ does not vanish on $S$; this matroid 
is called a \emph{positroid}.

Both $\Gr_{k,n}$ and $\Grk$ admit an action of the dihedral group, 
which will be useful to us.

\begin{definition}[Dihedral group on the Grassmannian]\label{def:dihedral} Let $[\cyc_{k,n}]$ denote the $n \times n$ matrix
	\[[\cyc_{k,n}]= \begin{bmatrix}
		0 & 1 & \dots & 0\\
		\vdots  & 0  &       \ddots   & \vdots\\
	 0 & 0  &      \cdots     & 1\\
		(-1)^{k-1}& 0& \dots & 0\\ 
	\end{bmatrix}.\]
	We define the \emph{cyclic shift}
 as the map $\cyc: \Mat_{k, n} \to \Mat_{k,n}$ which sends
	\begin{align*}
	\begin{bmatrix}
		| & |& & |\\
		v_1 & v_2 & \cdots & v_n\\
		| & |& & |
	\end{bmatrix} \mapsto 	\begin{bmatrix}
	| & |& & |\\
	v_1 & v_2 & \cdots & v_n\\
	| & |& & |
\end{bmatrix} \cdot [\cyc_{k,n}]=
\begin{bmatrix}
	| & |& & |\\
	(-1)^{k-1}v_n & v_1 & \cdots & v_{n-1}\\
	| & |& & |
\end{bmatrix}. \end{align*}

Let $[\refl_n]$ denote the antidiagonal $n \times n$ matrix with 1's along the antidiagonal and let $P_{k,i, s}$ denote the $k \times k$ diagonal matrix with $i$th diagonal entry $(-1)^s$ and all others equal to 1. We define \emph{reflection} as the map $\refl:\Mat_{k, n} \to \Mat_{k,n}$ which sends
	\begin{align*}
	\begin{bmatrix}
	| & |& & |\\
	v_1 & v_2 & \cdots & v_n\\
	| & |& & |
\end{bmatrix} \mapsto 
P_{k,1, \binom{k}{2}} \cdot	\begin{bmatrix}
	| & |& & |\\
	v_1 & v_2 & \cdots & v_n\\
	| & |& & |
\end{bmatrix} \cdot [\refl_n]=
P_{k,1, \binom{k}{2}}\begin{bmatrix}
	| & |& & |\\
	v_n & v_{n-1} & \cdots & v_{1}\\
	| & |& & |
\end{bmatrix} \end{align*}
		The maps $\cyc, \refl$ descend to automorphisms on $\Gr_{k,n}$ and $\Grk$, which we denote in the same way.
 Note that
\begin{align}\label{eq:cyc-refl-on-plucker}\lr{I}_{\cyc M} = \lr{I-1}_{M}   \text{ and } \lr{I}_{\refl M}
	= \lr{n+1-I}_{M}, \end{align}
		where $I-1$ is the subset obtained from $I$ by 
		subtracting $1$ (modulo $n$) from each element\footnote{Note that according to the convention in \cref{rem:convention}, the elements of $I-1$ are listed in \emph{increasing} order in $\lr{I-1}$.} and $n+1-I$ is obtained from $I$ by subtracting each element from $n+1$.
 That is, the pullbacks are
\[\cyc^*: \lr{I} \mapsto \lr{I-1} \quad \text{and} \quad \refl^*:\lr{I} \mapsto \lr{n+1-I}.\]
We use the notation $\cyc^{-*}:=(\cyc^{-1})^*$.
\end{definition}

There is another operation which will be useful to us, which embeds $\Gr_{k,n}$ into a larger Grassmannian.
\begin{definition}\label{def:pre}
	Choose an index set $N$ and let $I \subset \NN$ be disjoint from $N$. The map $\pre_I: \Mat_{k,N} \to \Mat_{k, N \cup I}$ inserts zero columns at  positions $I$, where $\Mat_{k,N}$ consists of matrices with rows indexed by $1, \dots, k$ and columns indexed by $N$. If $I=\{i\}$, we write $\pre_i:= \pre_{\{i\}}$. The map $\pre_I$ descends to a map on $\Gr_{k,N}$ and $\Gr_{k,N}^{\geq 0}$, which we denote in the same way.
\end{definition}

\subsection{Chain polynomials}\label{poly:Plucker}
In what follows, we will be studying cluster variables for the Grassmannian which are certain polynomials
in the Pl\"ucker coordinates, so we introduce some distinguished polynomials here.

\begin{definition}[Chain polynomials]\label{def:chain_polynomials} We introduce the quadratic polynomials
\begin{align}
	\lr{a\,b\,c \br d\,e \br f\,g\,h}  &= 
\lr{a\,b\,c\,d}\,\lr{e\,f\,g\,h} - \lr{a\,b\,c\,e}\,\lr{d\,f\,g\,h} \label{eq1:quadratic} \\  
	&= -\lr{a\,b\,c\,f} \lr{d\,e\,g\,h}+\lr{a\,b\,c\,g} 
        \lr{d\,e\,f\,h}-\lr{a\,b\,c\,h} \lr{d\,e\,f\,g}. \label{eq2:quadratic}
\end{align}
More generally, we define polynomials of degree~$s+1$ as follows:
\begin{align*}
	&  \lr{a_0\,b_0\,c_0 \br d_{1,0}\,d_{1,1} \br b_1\,c_1 \br d_{2,0}\,d_{2,1} \br b_2\,c_2 \br \dots \br d_{s,0}\,d_{s,1} \br b_s\,c_s\,a_s} \\[0.25em] 
	& 
	\;=\; \sum_{t \in \{0,1\}^s}\;(-1)^{t_1+\dots+t_s} \,\lr{a_0\,b_0\,c_0\,d_{1,{t_1}}}\, \lr{d_{1,{1-t_1}}\,b_1\,c_1\,d_{2,{t_2}}} \, \lr{d_{2,{1-t_2}}\,b_2\,c_2\,d_{3,{t_3}}} \, \cdots \, \lr{d_{s,{1-t_s}}\,b_s\,c_s\,a_s}
\end{align*}
In the notation 
	$\lr{a_0\,b_0\,c_0 \br d_{1,0}\,d_{1,1} \br b_1\,c_1 \br d_{2,0}\,d_{2,1} \br b_2\,c_2 \br \dots \br d_{s,0}\,d_{s,1} \br b_s\,c_s\,a_s}$, the vertical bars separate the indices into collections which we call \emph{clauses}, e.g. each of $a_0 b_0 c_0$ and $d_{1,0}\,d_{1,1}$ is a clause.
\end{definition}

\begin{remark} 
	Permuting blocks
	introduces a sign coming from the 
	sign of the permutation, e.g. 
	$$ \lr{a\,b\,c \br d\,e \br f\,g\,h} = 
	- \lr{f\,g\,h \br d\,e \br a\,b\,c}.$$
	Also note
	that permuting the indices in a single block (e.g. permuting the numbers $abc$)
	multiplies the polynomial by the sign of the permutation. 
\end{remark}

\begin{remark}\label{rem:factors}
These polynomials may factor or terms may vanish if there are 
overlaps in the indices.  E.g. in 
\eqref{eq1:quadratic},
$\lr{a\,b\,c \br d\,e \br f\,g\,h}$ will factor as a product of 
two Pl\"ucker coordinates if
$|\{a,b,c\} \cap \{f,g,h\}| = 2$ or $|\{d,e\} \cap (\{a,b,c\} \setminus \{f,g,h\})|=1$. Concretely, if $a<b<c<d<e<f<g<h$,
\begin{align*}
	\lr{a\, b\, d\br d\,e \br f\,g\,h}&= -\lr{abde}\lr{dfgh}\\
	\lr{a\, b\, c\br d\,e \br a\,b\,h}&= \lr{abcd}\lr{eabh}-\lr{abce}\lr{dabh}
	=\lr{abch}\lr{abde}.
\end{align*}
\end{remark}

\begin{remark}[Notations in physics]
In the physics literature, 
	\eqref{eq1:quadratic} 
	is usually
denoted by
        $\langle a, b, c, \, (de)\cap (fgh) \rangle$ or simply
        $\langle a b c \, (de)\cap (fgh) \rangle$,
        where $(de)\cap (fgh)$ denotes the vector $v_d \langle efgh\rangle - v_e \langle dfgh\rangle $, which spans the intersection 
	between the two-plane $(de)$ spanned by $v_d,v_e$ and the three-plane $(fgh)$ spanned by $v_f, v_g, v_h$. 
	Similarly, \eqref{eq1:quadratic} 
	can also be rewritten as $\langle (a b c)\cap (de) fgh \rangle$ and using \eqref{eq2:quadratic} as $\lr{de (fgh) \cap (abc)}$, where $(fgh) \cap (abc)$ denotes $v_f \wedge v_g \lr{h abc}-v_f \wedge v_h \lr{g abc}+v_g \wedge v_h \lr{f abc}$.
This notation was introduced in \cite{Arkani-Hamed:2010zjl}, see also \cite[Section~2]{Golden:2013xva}.
\end{remark}

\begin{definition}
[Pure functions and degree]
\label{def:pure}
Let $\widehat{\Gr}_{k,n}$ denote the affine cone over the Grassmannian in its Pl\"ucker
embedding.
Recall that the homogeneous coordinate ring 
$\CC[\widehat{\Gr}_{k,n}]$ 
of the 
Grassmannian is generated by the Pl\"ucker coordinates, and 
is $\Z_{>0}^n$-graded; equivalently,
its relations are preserved by the torus action
on the Grassmannian. 
Concretely, $F \in \CC[\widehat{\Gr}_{k,n}]$ 
lies in the component with grading $(a_1, \dots, a_n) \in \Z_{>0}^n$ if it can be written as a polynomial in Pl\"ucker coordinates where the index $i$ appears $a_i$ times in each term. In this case, we call $F$ \emph{pure} and define $\deg_i F= a_i$. An element of the ring of rational functions 
$\C(\Gr_{k,n})$ 
of the Grassmannian  
is \emph{pure} if it can be written as the ratio $F/G$ where $F,G \in \C[\widehat{\Gr}_{k,n}]$ are both pure. We set $\deg_i F/G = \deg_i F - \deg_i G$. Throughout
this paper we will only be considering functions which are pure. 
\end{definition}

\subsection{The amplituhedron, tiles, tilings, and facets}

Building on \cite{abcgpt},
Arkani-Hamed and Trnka
\cite{arkani-hamed_trnka}
introduced
the \emph{(tree) amplituhedron}, which they defined as 
the image of the positive Grassmannian under a positive linear map.
Let $\Mat_{n,p}^{>0}$ denote the set of $n\times p$ matrices whose maximal minors
are positive.

\begin{definition}[Amplituhedron]\label{defn_amplituhedron}
Let $Z\in \Mat_{n,k+m}^{>0}$, where $k+m \leq n$. %, which we also regard
    The \emph{amplituhedron map}
$\tilde{Z}:\Gr_{k,n}^{\ge 0} \to \Gr_{k,k+m}$
        is defined by
        $\tilde{Z}(C):=CZ$,
    where
 $C$ is a $k \times n$ matrix representing an element of
        $\Gr_{k,n}^{\ge 0}$,  and  $CZ$ is a $k \times (k+m)$ matrix representing an
         element of $\Gr_{k,k+m}$.
        The \emph{amplituhedron} $\mathcal{A}_{n,k,m}(Z) \subset \Gr_{k,k+m}$ is the image
$\tilde{Z}(\Gr_{k,n}^{\ge 0})$.
\end{definition}

In this paper we will be concerned with the case where $m=4$. 

\begin{notation}\label{rem:Z}
Throughout this paper we will often use the phrase ``for all $Z$''
as shorthand for 
``for all $Z\in \Mat_{n,k+4}^{>0}$,'' when the $k$ and $n$ are understood. 
\end{notation}

\begin{definition}[Tiles]
\label{defn_tile}
	Fix $k, n, m$ with $k+m \leq n$ and choose
	$Z\in \Mat_{n,k+m}^{>0}$.  
	Given a positroid cell $S$ of 
	$\Gr_{k,n}^{\ge 0}$, we let 
	$\gto{S} := \tilde{Z}(S)$ and 
	$\gt{S}: = \overline{
		\tilde{Z}(S)} = \tilde{Z}(\overline{S})$.
	If $D$ is an element of an indexing set and $S_D$ is the corresponding cell, we also write $\gto{D}$ and $\gt{D}$ for $\gto{S_D}$ and $\gt{S_D}$.
	We call
	$\gt{S}$  and $\gto{S}$
	a \emph{tile}  and an \emph{open tile}
	for $\AA$ 
	if $\dim(S) =km$ and 
	$\tilde{Z}$ is injective on $S$. 
\end{definition}

\begin{definition}[Tilings]\label{def:tiling}
	A \emph{tiling} 
                of $\AA$ is
                a collection
		$\{Z_{S} \ \vert \ S\in \mathcal{C}\}$
                 of tiles, such that:
                 \begin{itemize}
                         \item {their union equals $\AA$}, and 
                         \item {the open tiles $\gto{S},\gto{S'}$ are pairwise disjoint.}
                 \end{itemize}
\end{definition}

\begin{definition}[Facet of a cell and a tile]\label{def:facet2}
Given two positroid cells $S'$ and $S$, we say that 
$S'$ is a \emph{facet} of $S$ if 
$S' \subset \overline{S}$ and $S'$ has codimension $1$ in $\overline{S}$.
If $S'$ is a facet of $S$ and $Z_S$ is a tile of $\AA$, we say that $Z_{S'}$ is a
	\emph{facet} 
 of $Z_{S}$ if 
         $\gt{S'} \subset \partial \gt{S}$ and has codimension 1 in $\gt{S}$.
\end{definition}

\subsection{Twistor coordinates and functionaries}

\begin{definition}[Twistor coordinates]
Fix
  $Z \in \Mat_{n,k+m}^{>0}$
   with rows  $Z_1,\dots, Z_n \in \R^{k+m}$.
   Given $Y \in \Gr_{k,k+m}$
   with rows $y_1,\dots,y_k$,
   and $\{i_1,\dots, i_m\} \subset [n]$,
   we define the \emph{twistor coordinate}, denoted
	$$\llrr{Y Z_{i_1} Z_{i_2} \cdots Z_{i_m}} \text{ or }
	\llrr{{i_1} {i_2} \cdots {i_m}}$$
        to be  the determinant of the
        matrix with rows
        $y_1, \dots,y_k, Z_{i_1}, \dots, Z_{i_m}$.
\end{definition}
  Note that using Laplace expansion, we can express 
the twistor coordinates of $Y$ as linear functions in the Pl\"ucker coordinates of $Y$, whose coefficients are the $m \times m$ minors of $Z$.

\begin{lemma}
	[Twistor expansion lemma]
	\label{rk:Lapexpansion} Let $C \in \Grk$ and set $Y:=CZ$.
Then
 \begin{align}\label{eq:cauchy-binet}
	 \llrr{Y~Z_{i_1} \dots Z_{i_m}} &=
                \sum_{J=\{j_1<\dots<j_k\} \in {[n] \choose k}} \lr{J}_C \, \lr{j_1, \dots, j_k, i_1,\dots, i_m}_Z\\ 
		& =\sum_{J \in {[n] \choose k}, \  J \cap I = \emptyset} \lr{J}_C \, 
		\lr{j_1, \dots, j_k, i_1,\dots, i_m}_Z,
        \end{align}
	where the second line follows from the first by noting that 
		$\lr{j_1, \dots, j_k, i_1,\dots, i_m}_Z = 0$
		whenever $\{j_1,\dots,j_k\}$
		and $\{i_1,\dots,i_m\}$ have an element in common.
\end{lemma}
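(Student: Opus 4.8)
The plan is to prove this by a direct multilinear expansion of the determinant defining the twistor coordinate; this is essentially the Cauchy--Binet formula combined with a Laplace expansion, and I would organize it through exterior powers so that the sign bookkeeping is automatic. Note that positivity of $C$ plays no role here --- the identity is a purely algebraic one.

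First I would write the rows of $Y = CZ$ as $y_a = \sum_{j=1}^n C_{aj} Z_j$ for $a = 1, \dots, k$, where $Z_1, \dots, Z_n \in \R^{k+m}$ are the rows of $Z$. By definition, $\llrr{Y\,Z_{i_1}\cdots Z_{i_m}}$ is the determinant of the matrix with rows $y_1, \dots, y_k, Z_{i_1}, \dots, Z_{i_m}$, which is the coefficient of
\[
y_1 \wedge \cdots \wedge y_k \wedge Z_{i_1} \wedge \cdots \wedge Z_{i_m}
\]
in the one-dimensional space $\bigwedge^{k+m}\R^{k+m}$ relative to the basis $e_1 \wedge \cdots \wedge e_{k+m}$. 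Expanding the first factor multilinearly gives $y_1 \wedge \cdots \wedge y_k = \sum_{j_1,\dots,j_k} C_{1 j_1}\cdots C_{k j_k}\, Z_{j_1}\wedge\cdots\wedge Z_{j_k}$; grouping terms by the underlying set $J = \{j_1 < \cdots < j_k\}$ and recognizing the resulting coefficient as the Pl\"ucker coordinate $\lr{J}_C$, this equals $\sum_{J \in \binom{[n]}{k}} \lr{J}_C\, Z_{j_1}\wedge\cdots\wedge Z_{j_k}$ with $j_1 < \cdots < j_k$.

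Wedging on the right with $Z_{i_1} \wedge \cdots \wedge Z_{i_m}$ and reading off the coefficient in $\bigwedge^{k+m}\R^{k+m}$ then yields $\sum_{J} \lr{J}_C\, \lr{j_1\,\dots\,j_k\,i_1\,\dots\,i_m}_Z$, since the coefficient of $Z_{j_1}\wedge\cdots\wedge Z_{j_k}\wedge Z_{i_1}\wedge\cdots\wedge Z_{i_m}$ is by definition the determinant of the matrix with those rows in that order, i.e. $\lr{j_1\,\dots\,j_k\,i_1\,\dots\,i_m}_Z$. This is the first equality. For the second equality I would observe that if $J \cap I \neq \emptyset$, say $j \in J \cap I$, then $Z_{j_1}\wedge\cdots\wedge Z_{j_k}\wedge Z_{i_1}\wedge\cdots\wedge Z_{i_m}$ contains the repeated vector $Z_j$ and hence vanishes --- equivalently, the minor $\lr{j_1\,\dots\,j_k\,i_1\,\dots\,i_m}_Z$ has a repeated row --- so the sum may be restricted to $J$ disjoint from $I$.

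There is no substantive obstacle; the only subtlety is matching the sign of the minor $\lr{j_1\,\dots\,j_k\,i_1\,\dots\,i_m}_Z$ (whose rows are listed in the non-sorted order $j_1, \dots, j_k, i_1, \dots, i_m$) with the expansion, which is exactly what the exterior-power formulation handles cleanly: the reordering sign needed to sort $J \cup I$ and the sign produced by the Laplace expansion cancel automatically once one works with wedge products rather than with an explicit Cauchy--Binet sum over $(k+m)$-subsets.
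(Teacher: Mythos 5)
Your proof is correct, and it is the same argument the paper has in mind: the paper does not spell out a proof but labels the result \texttt{rk:Lapexpansion} and attributes it to Laplace expansion, which is exactly the multilinear/Cauchy--Binet expansion you carry out via exterior powers. Nothing to add.
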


\begin{definition}[Functionaries and rational functionaries]
We refer to a homogeneous polynomial in twistor coordinates as a \emph{functionary}, and 
 to a ratio of functionaries as a \emph{rational functionary}.
\end{definition}
By a slight abuse of notation, we will sometimes consider a functionary as a function on the domain Grassmannian $\Gr_{k,n}$, by composing with the amplituhedron map.

\begin{definition}\label{def:signs}
  For $S \subset \Grk$, we say a rational functionary $F$ has \emph{sign $s \in \{0, \pm 1\}$ on the image of $S$} if for all $Z\in \Mat_{n,k+4}^{>0}$ and for all $Y \in \gto{S}$, $F(Y)$ has sign $s$.  If $F$ has sign 0, we also say it \emph{vanishes} on the image of $S$.
	In these cases, we say that $F$ has \emph{fixed sign} on 
	$\gto{S}$.
\end{definition}

As we will see  in \Cref{sec:Pluckertwistor}, we can identify twistor coordinates with Pl\"ucker coordinates
of a related matrix. We therefore use parallel notation to that of \Cref{poly:Plucker} 
in what follows.

\begin{definition}[Chain functionaries]
We let \begin{equation}\label{eq:quadratic}
\llrr{a\,b\,c \br d\,e \br f\,g\,h} \;=\; 
\llrr{a\,b\,c\,d}\,\llrr{e\,f\,g\,h} - \llrr{a\,b\,c\,e}\,\llrr{d\,f\,g\,h}, 
	\end{equation}
and more generally, we define \emph{chain functionaries} of degree~$s+1$ as follows:
\begin{align*}
&  \llrr{a_0\,b_0\,c_0 \br d_{1,0}\,d_{1,1} \br b_1\,c_1 \br d_{2,0}\,d_{2,1} \br b_2\,c_2 \br \dots \br d_{s,0}\,d_{s,1} \br b_s\,c_s\,a_s} \\[0.25em] 
& 
\;=\; \sum_{t \in \{0,1\}^s}\;(-1)^{t_1+\dots+t_s} \,\llrr{a_0\,b_0\,c_0\,d_{1,{t_1}}}\, \llrr{d_{1,{1-t_1}}\,b_1\,c_1\,d_{2,{t_2}}} \, \llrr{d_{2,{1-t_2}}\,b_2\,c_2\,d_{3,{t_3}}} \, \cdots \, \llrr{d_{s,{1-t_s}}\,b_s\,c_s\,a_s}
\end{align*}
\end{definition}

\subsection{Pl\"ucker coordinates versus twistor coordinates}	
\label{sec:Pluckertwistor}

In this section we explain how we can talk interchangeably about 
twistor coordinates for the amplituhedron and 
Pl\"ucker coordinates for $\Gr_{4,n}$. In particular, 
the amplituhedron is homeomorphic to the \emph{B-amplituhedron},
and the homeomorphism identifies twistor coordinates of the amplituhedron
with Pl\"ucker coordinates of the B-amplituhedron.
The construction of the \emph{B-amplituhedron} is motivated by the observation that 
since $\AA \subset \Gr_{k,k+m}$, when $m$ is small, it is sometimes more convenient
to take orthogonal complements and work with $\Gr_{m,k+m}$ instead of $\Gr_{k,k+m}$.

\begin{definition}[{\cite[Definition 3.8]{karpwilliams}}]
        \label{def:B}
       Choose $Z \in \Mat_{n,k+m}^{>0}$,
        and let
        $W\in \Gr_{k+m,n}^{>0}$
        be the column span of $Z$.
        We define the \emph{B-amplituhedron} to be
        $$\B_{n,k,m}(W):=\{V^{\perp} \cap W \ \vert \ V\in \Grk\} \subset
        \Gr_{m}(W),$$
        where $\Gr_m(W)$ denotes the Grassmannian of $m$-planes
        in $W$.
\end{definition}

\begin{proposition} [{\cite[Lemma 3.10 and Proposition 3.12]{karpwilliams}}]
\label{prop:AB}
     Choose $Z \in \Mat_{n,k+m}^{>0}$,
and let
        $W\in \Gr_{k+m,n}^{>0}$
         be the column span of $Z$.
        We 
         define a map $f_Z: \Gr_m(W) \to \Gr_{k,k+m}$ by
        $$f_Z(X):= Z(X^{\perp}) = \{Z(x): x\in X^{\perp}\}.$$
        Then
$f_Z$ restricts to a homeomorphism from
        $\B_{n,k,m}(W)$ onto $\AA$, sending
        $V^{\perp} \cap W$ to $\tilde{Z}(V)$ for all
        $V\in \Grk$. Moreover, $f_Z$ restricts to a diffeomorphism from an open neighborhood of $\B_{n,k,m}(W)$ in $\Gr_m(W)$ to a neighborhood of $\AA$ in $\Gr_{k,k+m}.$  
\end{proposition}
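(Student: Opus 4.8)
The plan is to identify $f_Z$ with a composition of two standard diffeomorphisms of Grassmannians; everything then follows with little extra work. Viewing $Z$ as the linear surjection $\R^n\to\R^{k+m}$, $v\mapsto vZ$, its kernel is exactly $W^\perp$ (the orthogonal complement, in the standard inner product on $\R^n$, of the column span $W$). First I would record, for $X\in\Gr_m(W)$, the orthogonal decomposition $X^\perp=X^{\perp_W}\oplus W^\perp$ inside $\R^n$, where $X^{\perp_W}:=X^\perp\cap W$ is the orthogonal complement of $X$ taken \emph{inside} $W$, a $k$-plane. Since $Z$ kills $W^\perp$, this gives $f_Z(X)=Z(X^\perp)=Z\bigl(X^{\perp_W}\bigr)$; and as $Z$ restricts to a linear isomorphism $Z|_W\colon W\xrightarrow{\ \sim\ }\R^{k+m}$ (because $\ker(Z|_W)=W\cap W^\perp=0$ and $\dim W=k+m$), in particular $\dim f_Z(X)=k$, so $f_Z$ is a well-defined map $\Gr_m(W)\to\Gr_{k,k+m}$. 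Then I would write $f_Z=\Phi\circ\psi$, where $\psi\colon\Gr_m(W)\to\Gr_k(W)$, $X\mapsto X^{\perp_W}$, is orthogonal complementation within $W$, and $\Phi\colon\Gr_k(W)\to\Gr_k(\R^{k+m})=\Gr_{k,k+m}$ is the pushforward of $k$-planes along $Z|_W$. Both factors are diffeomorphisms (complementation relative to a fixed inner product; pushforward along a linear isomorphism), so $f_Z$ is a diffeomorphism $\Gr_m(W)\xrightarrow{\ \sim\ }\Gr_{k,k+m}$.

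Next I would check that this diffeomorphism matches up the two descriptions. For $V\in\Grk$ one has $(V^\perp\cap W)^\perp=V+W^\perp$, hence $f_Z(V^\perp\cap W)=Z(V+W^\perp)=Z(V)=\tilde Z(V)$, the last step because $Z$ carries the row span of a representative $C$ to the row span of $CZ$. To read this as a statement inside $\Gr_m(W)$ one needs $\dim(V^\perp\cap W)=m$, i.e. $V\cap W^\perp=0$; but $V\cap W^\perp=0$ says exactly that the rows of $CZ$ are independent, which is the positivity input that makes the amplituhedron map of \cref{defn_amplituhedron} well defined on $\Grk$ (for totally nonnegative $V$ and totally positive $W$). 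Granting it, $\B_{n,k,m}(W)\subseteq\Gr_m(W)$ is a genuine subset, $f_Z$ maps it onto $\{\tilde Z(V):V\in\Grk\}=\AA$ sending $V^\perp\cap W\mapsto\tilde Z(V)$, and since $f_Z$ is a global diffeomorphism its restriction is a homeomorphism $\B_{n,k,m}(W)\to\AA$. The ``moreover'' clause is then immediate: $\Gr_m(W)$ itself is an open neighborhood of $\B_{n,k,m}(W)$ carried by $f_Z$ diffeomorphically onto $\Gr_{k,k+m}$, which is an open neighborhood of $\AA$.

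I do not expect a serious obstacle. The only non-formal ingredient is the positivity fact $V\cap W^\perp=0$, and the one mildly clever move is spotting the factorization $f_Z=\Phi\circ\psi$, which reduces the whole statement to the two elementary facts above about Grassmannians. If one preferred to avoid noticing that $f_Z$ is a global diffeomorphism, the neighborhood statement could instead be obtained by verifying that $\mathrm{d} f_Z$ is invertible at each point of $\B_{n,k,m}(W)$ (source and target both have dimension $km$) and combining this with the easy global injectivity of $f_Z$; that route is longer but equally routine.
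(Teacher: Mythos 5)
Your argument is correct and is essentially the argument of Karp--Williams (the paper cites their Lemma~3.10 and Proposition~3.12 for this and gives no proof of its own): one observes that $\ker Z=W^\perp$, so $f_Z$ factors as orthogonal complementation inside $W$ followed by pushforward along the isomorphism $Z|_W\colon W\xrightarrow{\sim}\R^{k+m}$, hence is a global diffeomorphism $\Gr_m(W)\to\Gr_{k,k+m}$, and the identity $(V^\perp\cap W)^\perp=V+W^\perp$ then yields $f_Z(V^\perp\cap W)=Z(V)=\tilde Z(V)$. The only place you lean a little is the claim that $V\cap W^\perp=0$ for $V\in\Grk$; as you note this is exactly the well-definedness of $\tilde Z$, and it follows from Cauchy--Binet applied to $CZ$ ($Z$ totally positive, $C$ totally nonnegative of rank $k$ forces all $k\times k$ minors of $CZ$ to be positive), so it would have been cleaner to spell that out rather than refer to it obliquely, but there is no gap.
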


\cref{lem:coordinates} shows that the
twistor coordinates of the amplituhedron
$\AA \subset \Gr_{k,k+m}$
are equal to the Pl\"ucker coordinates of the B-amplituhedron
$\B_{n,k,m}(W) \subset \Gr_{m,n}$.
\begin{lemma} [{\cite[(3.11)]{karpwilliams}}]
        \label{lem:coordinates}
        If we let $Y:=f_Z(X)$ in \cref{prop:AB},
        we have
        \begin{equation}\label{eq:identify}
        \lr{I}_X = \llrr{Y Z_{I}}
                \qquad \text{ for all } \quad
                I \in \binom{[n]}{m}.
        \end{equation}
\end{lemma}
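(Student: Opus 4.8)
\textbf{Proof proposal for Lemma~\ref{lem:coordinates}.}

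The plan is to unwind the definitions of $f_Z$, twistor coordinates, and Pl\"ucker coordinates, and reduce the claimed identity to a single linear-algebra fact about how determinants behave under the orthogonality pairing built from $Z$ (equivalently from $W$). First I would fix a point $X \in \Gr_m(W)$ and choose a concrete matrix representative: pick a $k\times n$ matrix $V$ (a complement so that $V^\perp \cap W = X$, i.e. the row span of $V$ is the $k$-plane whose orthogonal complement meets $W$ in $X$), so that $Y = f_Z(X) = \tilde Z(V) = VZ$ by \cref{prop:AB}. Then $\llrr{Y Z_I}$, for $I = \{i_1 < \dots < i_m\}$, is by definition the determinant of the $(k+m)\times(k+m)$ matrix whose rows are the $k$ rows of $VZ$ followed by $Z_{i_1},\dots,Z_{i_m}$. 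By the Twistor expansion lemma (\cref{rk:Lapexpansion}), this equals $\sum_{J \in \binom{[n]}{k}} \lr{J}_V \,\lr{J \cup I}_Z$ (with appropriate sign conventions absorbed into $\lr{J\cup I}_Z$ via \cref{rem:convention}), so the left side of the asserted equation, $\lr{I}_X$, must be shown to equal this same sum.

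Next I would compute $\lr{I}_X$ directly in terms of $V$ and $Z$. Since $W$ is the column span of $Z$, write $W$ as the image of $Z: \R^n \to \R^{k+m}$ — more precisely, $W \subset \R^n$ is the row span of $Z^T$, an $(k+m)$-dimensional subspace of $\R^n$. The subspace $X = V^\perp \cap W$ is cut out inside $W$ by the $k$ linear equations coming from the rows of $V$. The key computation is to produce an explicit $m\times n$ matrix representative of $X$ and take its Pl\"ucker coordinate in columns $I$; the natural choice realizes $X$ as $\{w \in W : Vw^T = 0\}$, and one parametrizes $W$ using $Z$ itself. Carrying this through, a Cramer's-rule / Cauchy–Binet manipulation expresses $\lr{I}_X$ as (up to a global nonzero scalar independent of $I$, which is harmless projectively) the same bilinear sum $\sum_J \lr{J}_V\,\lr{J\cup I}_Z$. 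This is essentially the content of the cited results \cite[Lemma 3.10, Proposition 3.12, (3.11)]{karpwilliams}, and the cleanest route is to invoke the standard duality identity: for a subspace given as the kernel/complement construction above, its Pl\"ucker coordinates are, up to sign and global scaling, the complementary Pl\"ucker coordinates of the "defining" data, and the pairing with $Z$ converts $\R^n$-indices into the mixed $J\cup I$ minors.

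The main obstacle — really the only subtle point — is bookkeeping with signs and the global projective rescaling: both $\lr{I}_X$ and $\llrr{Y Z_I}$ are only defined up to an overall scalar (depending on the choice of matrix representatives of $X$ and $Y$, but not on $I$), so one must check that the \emph{same} representatives on both sides produce literally equal families $\{\cdot\}_{I \in \binom{[n]}{m}}$, and that the alternating-in-$I$ sign conventions in \eqref{eq:cauchy-binet} match the convention of \cref{rem:convention} for $\lr{I}_X$. I would handle this by working with the distinguished representative of $Y$ furnished by $Y = VZ$ and the matching representative of $X$ obtained from the explicit kernel parametrization, so that the proportionality constant is pinned down to $1$; alternatively, since we only ever use these coordinates projectively, it suffices to note the two families agree up to a common nonzero scalar, which already follows from the two Cauchy–Binet expansions above having the same "shape" $\sum_J \lr{J}_V \lr{J\cup I}_Z$. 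Either way the proof is short once the representatives are chosen compatibly; no genuinely hard input is needed beyond \cref{prop:AB} and \cref{rk:Lapexpansion}.
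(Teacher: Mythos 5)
The paper itself gives no proof of this lemma; it simply cites \cite[(3.11)]{karpwilliams}. Your proposal reconstructs the natural computational argument, which is the one used in the cited reference, so there is no ``paper's own proof'' to diverge from. Your outline is essentially correct: both sides of \eqref{eq:identify} are determinant-valued functions of $I$, the right-hand side expands via \cref{rk:Lapexpansion} as $\sum_J \lr{J}_V\,\lr{J\cup I}_Z$, and the left-hand side is shown to produce the same family by Cauchy--Binet together with the standard Pl\"ucker duality between a subspace and its orthogonal complement.

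Two small points you should tighten. First, you write ``$Y = f_Z(X) = \tilde Z(V) = VZ$ by \cref{prop:AB}'' after choosing a $V$ with $V^\perp\cap W = X$, but \cref{prop:AB} asserts $f_Z(V^\perp\cap W) = \tilde Z(V)$ only for $V\in\Grk$, whereas \cref{lem:coordinates} is stated for arbitrary $X\in\Gr_m(W)$. You either need to remark that the identity $f_Z(V^\perp\cap W)=VZ$ holds for any full-rank $V$ (a direct algebraic check, independent of positivity), or avoid $V$ altogether by choosing an $m\times(k+m)$ matrix $U$ with $X=\mathrm{rowspan}(UZ^T)$, noting that then $Y=\ker U=(\mathrm{rowspan}\,U)^\perp$, and deducing $\det\!\begin{pmatrix}A\\Z_I\end{pmatrix}=\mathrm{const}\cdot\det(UZ_I^T)$ directly from $AU^T=0$ by writing $Z_I$ in the basis formed by the rows of $A$ and $U$; this route never needs $V$ and pins down the constant explicitly. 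Second, the crucial step ``Carrying this through, a Cramer's-rule / Cauchy--Binet manipulation expresses $\lr{I}_X$ as the same bilinear sum'' is where the real content lies, and your sketch leaves it as an assertion; spelling out the Laplace expansion of $\det\!\begin{pmatrix}A\\Z_I\end{pmatrix}$ against the complementary-minor duality $\lr{S}_U\propto\pm\lr{[k+m]\setminus S}_A$ is what actually closes the argument, including the sign bookkeeping you rightly flag as the only subtle point.
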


We say that a rational functionary $F$ is \emph{pure} if the corresponding
rational function in Pl\"ucker coordinates is pure (cf. \Cref{def:pure}) and define $\deg_i F$ in the same way.
Throughout this paper, the rational functionaries that we consider will be pure.

\begin{definition}\label{def:irreduciblefunctionary}
We say that a functionary is \emph{irreducible}
if and only if the corresponding polynomial in
Pl\"ucker coordinates in ${\Gr}_{m,n}$
defined by the map which takes $\llrr{hijl}\to\lr{hijl},$ is  irreducible.
\end{definition}
Our notion of when a functionary is irreducible may not
be the same as the notion of when its expansion
 in the Pl\"ucker coordinates of $C$ or $Y$ and minors $Z$ is irreducible.

\section{Background on cluster algebras}\label{sec:background-cluster}
	
\subsection{Background on cluster algebras}
Cluster algebras were introduced by Fomin and Zelevinsky in \cite{FZ1}; see \cite{FWZ} for 
an introduction.
We give a quick definition of cluster algebras from quivers here.
All such cluster algebras are cluster algebras of \emph{geometric type}.

\begin{definition}
[\emph{Quiver}]
	A \emph{quiver} $Q$ is an oriented graph given by a finite set of
	vertices $Q_0$, a finite set of arrows $Q_1$, and two maps
$s: Q_1 \to Q_0$ and $t: Q_1 \to Q_0$ taking an arrow to its source
and target, respectively.
\end{definition}

A \emph{loop} of a quiver is an arrow $\alpha$ whose
source and target coincide.  A \emph{$2$-cycle} of a quiver is a pair of distinct
arrows $\beta$ and $\gamma$ such that $s(\beta) = t(\gamma)$
and $t(\beta) = s(\gamma)$.

\begin{definition}
[\emph{Quiver Mutation}]
Let $Q$ be a quiver without loops or $2$-cycles.
Let $k$ be a vertex of $Q$.
Following \cite{FZ1}, we define the
\emph{mutated quiver} $\mu_k(Q)$ as follows:
it has the same set of vertices as $Q$,
and its set of arrows is obtained by the following procedure:
\begin{enumerate}
\item for each subquiver $i \to k \to j$, add a new arrow $i \to j$;
\item reverse all allows with source or target $k$;
\item remove the arrows in a maximal set of pairwise
disjoint $2$-cycles.
\end{enumerate}
\end{definition}

It is not hard to check that mutation is an involution, that is,
$\mu_k^2(Q) = Q$ for each vertex $k$.

\begin{definition}
[\emph{Labeled seeds}]
\label{def:seed0}
Choose $s\geq r$ positive integers.
Let $\Fcal$ be an \emph{ambient field}
of rational functions
in $r$ independent
variables
over
$\CC(x_{r+1},\dots,x_s)$.
A \emph{labeled seed} in~$\Fcal$ is
a pair $(\txx, Q)$, where
\begin{itemize}
\item
$\txx = (x_1, \dots, x_s)$ forms a free generating
set for
$\Fcal$,
and
\item
$Q$ is a quiver on vertices
$1, 2, \dots,r, r+1, \dots, s$,
whose vertices $1,2, \dots, r$ are called
\emph{mutable}, and whose vertices $r+1,\dots, s$ are called \emph{frozen}.
\end{itemize}
We call~$\txx$  the (labeled)
\emph{extended cluster} of a labeled seed $(\txx, Q)$.
The variables $\{x_1,\dots,x_r\}$ are called \emph{cluster
variables}, and the variables $c=\{x_{r+1},\dots,x_s\}$ are called
	\emph{frozen} (or \emph{coefficient variables}).
We let $\PP$ denote the group of Laurent monomials in the frozen variables, which we call the \emph{frozen group}.
\end{definition}

\begin{definition}
[\emph{Seed mutations}]
\label{def:seed-mutation0}
Let $(\txx, Q)$ be a labeled seed in $\Fcal$,
and let $k \in \{1,\dots,r\}$.
The \emph{seed mutation} $\mu_k$ in direction~$k$ transforms
$(\txx, Q)$ into the labeled seed
$\mu_k(\txx,  Q)=(\txx', \mu_k(Q))$, where the cluster
$\txx'=(x'_1,\dots,x'_s)$ is defined as follows:
$x_j'=x_j$ for $j\neq k$,
whereas $x'_k \in \Fcal$ is determined
by the \emph{exchange relation}
\begin{equation}
\label{exchange relation0}
x'_k\ x_k =
 \ \prod_{\substack{\alpha\in Q_1 \\ s(\alpha)=k}} x_{t(\alpha)}
+ \ \prod_{\substack{\alpha\in Q_1 \\ t(\alpha)=k}} x_{s(\alpha)} \, .
\end{equation}
\end{definition}

Note that arrows between two frozen vertices of a quiver do not
affect seed mutation; therefore we often omit
arrows between two frozen vertices. 

\begin{definition}
[\emph{Patterns}]
\label{def:patterns0}
Consider the \emph{$r$-regular tree}~$\TT_r$
whose edges are labeled by the numbers $1, \dots, r$,
so that the $r$ edges emanating from each vertex receive
different labels.
A \emph{cluster pattern}  is an assignment
of a labeled seed $\Sigma_t=(\txx_t, Q_t)$
to every vertex $t \in \TT_n$, such that the seeds assigned to the
endpoints of any edge $t \overunder{k}{} t'$ are obtained from each
other by the seed mutation in direction~$k$.
The components of
$\txx_t$ are written as $\txx_t = (x_{1;t}\,,\dots,x_{s;t}).$
\end{definition}

Clearly, a cluster pattern is uniquely determined
by an arbitrary  seed.

\begin{definition}
[\emph{Cluster algebra}]
\label{def:cluster-algebra0}
Given a cluster pattern, we denote
\begin{equation}
\label{eq:cluster-variables0}
\Xcal
= \bigcup_{t \in \TT_r} \txx_t
= \{ x_{i,t}\,:\, t \in \TT_r\,,\ 1\leq i\leq r \} \ ,
\end{equation}
the union of clusters of all the seeds in the pattern.
The elements $x_{i,t}\in \Xcal$ are called \emph{cluster variables}.
Let $\CC[c^{\pm 1}]$ be the \emph{ground ring} consisting
of Laurent polynomials in the frozen variables. The
\emph{cluster algebra} $\Acal$ associated with a
given pattern is the $\CC[c^{\pm 1}]$-subalgebra of the ambient field $\Fcal$
generated by all cluster variables, 
with coefficients which are Laurent polynomials
in the frozen variables: $\Acal = \CC[c^{\pm 1}] [\Xcal]$.
We denote $\Acal = \Acal(\txx,  Q)$, where
$(\txx,Q)$
is any seed in the underlying cluster pattern.
We say that $\Acal$ has \emph{rank $r$} because each cluster contains
$r$ cluster variables. Cluster (or frozen) variables that belong to a common cluster are said to be \emph{compatible}.
\end{definition}

\begin{remark}\label{rmk:dif-ground-ring}
Another common convention is to choose the ring 
	$\CC[c]$ of polynomials in the frozen variables as the ground ring,
and define the cluster algebra 
as $\overline{\Acal} := \CC[c] [\Xcal]$.
\end{remark}

Some structural results on cluster algebras will be helpful for us.
\begin{theorem}[{\cite[Corollary 3.6]{CKLP}}] \label{thm:GSV}
Let $\Acal$ be a cluster algebra coming from a quiver. Then every seed is uniquely determined by its 
cluster.
\end{theorem}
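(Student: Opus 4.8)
The plan is to show that the quiver $Q_t$ of a seed $(\txx_t, Q_t)$ occurring in the cluster pattern is determined by the underlying cluster $\txx_t \subset \Fcal$ alone; since the frozen variables are common to every seed and arrows between two frozen vertices do not affect seeds, it suffices to recover the extended exchange matrix of $Q_t$ up to such arrows. I would do this through the theory of $g$-vectors and $c$-vectors. First I would reduce to the case of principal coefficients: fix an initial seed $\Sigma_{t_0}=(\txx_{t_0},Q_{t_0})$ and pass to the cluster algebra $\Acal^{\bullet}$ with principal coefficients at $t_0$, whose ambient ring is the ring of Laurent polynomials in the initial cluster variables $x_1,\dots,x_r$ and the principal coefficients $y_1,\dots,y_r$.

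The heart of the proof is that the cluster at $t$ determines the $g$-matrix $G_t$. By Fomin--Zelevinsky's separation-of-additions formula, in $\Acal^{\bullet}$ every cluster variable $x_{\ell;t}$, written in the initial data, is homogeneous for the $\Z^{r}$-grading in which $\deg x_i=e_i$ and $\deg y_j=-b_j$ (the $j$th column of the principal part of $Q_{t_0}$), and its degree is exactly the $g$-vector $g_{\ell;t}$. Hence the $g$-vector of a cluster variable is intrinsic to it as an element of the ambient ring, and the cluster $\txx_t$ (a free generating set) determines the collection $\{g_{1;t},\dots,g_{r;t}\}$, i.e.\ the matrix $G_t$ up to a permutation of its columns. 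Matching the cluster variables of two seeds $\Sigma_t,\Sigma_{t'}$ having the same cluster then matches their $g$-matrices.

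Next, $G_t$ determines the quiver. By the tropical duality of Nakanishi--Zelevinsky, in a cluster algebra with principal coefficients the $c$-matrix $C_t$ is recovered from $G_t$ --- for a quiver, $C_t=(G_t^{-1})^{T}$ --- and the exchange matrix $B_t$ is in turn recovered from $C_t$ together with the initial matrix; moreover the coefficient rows of the extended exchange matrix of $\Acal^{\bullet}$ at $t$ are exactly $C_t$. Thus the whole seed of $\Acal^{\bullet}$ at $t$, in particular its quiver, is determined by its cluster. Finally I would undo the reduction: the seed of $\Acal$ at $t$ has the same principal exchange matrix $B_t$ as the seed of $\Acal^{\bullet}$ at $t$, and its coefficient rows are transported from the principal-coefficient picture by the standard coefficient-specialization argument, so $Q_t$ is pinned down up to arrows between frozen vertices, which is the assertion.

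The one deep input, which I would cite rather than reprove, is the sign-coherence of $c$-vectors (for cluster algebras coming from quivers this is due to Derksen--Weyman--Zelevinsky, via representations of quivers with potential; in general to Gross--Hacking--Keel--Kontsevich); it underlies both the homogeneity of cluster variables used to extract $g$-vectors and the tropical-duality identities. The main obstacle in writing the proof out carefully is therefore bookkeeping rather than a new idea: one must check that the $\Z^{r}$-grading is well defined (which uses that a cluster is a free generating set of $\Fcal$), and, in the coefficient-specialization step, that the frozen rows of the extended exchange matrix of $\Acal$ --- not just the mutable part $B_t$ --- are determined. For quivers there is also an alternative categorical route: realizing $\Acal$ via a suitable $2$-Calabi--Yau category so that clusters correspond to cluster-tilting objects $T$ and $Q_t$ is the Gabriel quiver of $\mathrm{End}(T)$; since $T$ is determined by its set of indecomposable summands, i.e.\ by the cluster, the seed is determined by the cluster --- at the cost of invoking the construction of that categorification.
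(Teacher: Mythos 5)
The paper offers no proof of this statement: it is quoted from \cite{CKLP}, where the argument runs through the additive categorification by quivers with potential --- clusters correspond to cluster-tilting objects and the quiver of a seed is read off as the Gabriel quiver of the endomorphism algebra. That is exactly the ``alternative categorical route'' you relegate to your final sentence, so your fallback is in fact the proof on record; the issue is with your primary route.

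The gap in the $g$-vector argument is the reduction to principal coefficients, and it is not bookkeeping. Separation of additions and homogeneity show that a cluster variable of the principal-coefficients algebra $\mathcal{A}^{\bullet}$, \emph{as an element of} $\mathbb{Z}[x_1^{\pm1},\dots,x_r^{\pm1},y_1,\dots,y_r]$, determines its $g$-vector. But your hypothesis is that two seeds $\Sigma_t,\Sigma_{t'}$ of the \emph{original} algebra $\mathcal{A}$ have equal clusters inside its ambient field $\mathcal{F}$. The correspondence between seeds of $\mathcal{A}$ and seeds of $\mathcal{A}^{\bullet}$ is made through mutation paths from $t_0$, not through elements of $\mathcal{F}$, and the equality $x_{\ell;t}=x_{m;t'}$ in $\mathcal{F}$ does not formally imply $X_{\ell;t}=X_{m;t'}$ in $\mathcal{A}^{\bullet}$: coefficient specialization only gives $x_{\ell;t}$ as the specialization of $X_{\ell;t}$ divided by the evaluated $F$-polynomial, and two distinct numerators could a priori specialize to the same element. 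In the coefficient-free case the difficulty is visible immediately: the $\mathbb{Z}^{r}$-grading you need is simply not defined on $\mathcal{F}$, because Laurent expansions of cluster variables are not homogeneous in the $x_i$ alone. The assertion ``a cluster variable of $\mathcal{A}$ determines its $g$-vector'' is precisely the hard content of the theorem --- it is essentially the injectivity statement proved in \cite{CKLP} via the categorification (or via scattering diagrams in later work) --- and it does not follow from sign-coherence plus homogeneity. Granting that input, the remainder of your argument is sound: tropical duality $C_t=(G_t^{-1})^{T}$, recovery of $B_t$ from $G_tB_t=B_{t_0}C_t$, and the identification of the coefficient rows with $C_t$ all go through. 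It is also worth noting that for the Grassmannian cluster algebras actually used in this paper the extended exchange matrix has full rank, in which case the older argument of Gekhtman--Shapiro--Vainshtein already yields the conclusion without the deep input --- presumably the reason for the theorem's label.
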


\begin{definition}
	For a ring $\Rcal$ with $1$, let $\Rcal^{\times}$ be the set of invertible
	elements in $\Rcal$. Recall that a ring without zero divisors is called
	an \emph{integral domain}.  A non-invertible element $a$ in an integral 
	domain $\Rcal$ is \emph{irreducible} if it cannot be written
	as a product $a=bc$ with $b,c\in \Rcal$ both non-invertible.  
\end{definition}
Note that every cluster algebra is an integral domain, since it is by definition
a subring of a field.

\begin{theorem}[{\cite[Theorem 1.3]{GLS}}]\label{thm:GLS}
	Consider a cluster algebra $\Acal = \Acal(\txx,Q)$, with 
	ground ring the Laurent polynomials $\CC[x_{r+1}^{\pm 1}, \dots, x_s^{\pm 1}]$ in the 
	frozen variables.
	\begin{itemize}
		\item We have $\Acal(\txx,Q)^{\times} = 
		\{\lambda x_{r+1}^{a_{r+1}} \dots x_s^{a_s} \ \vert \ 
		\lambda \in \CC^{\times}, a_i \in \Z\}.$
		\item Every cluster variable in a cluster algebra is irreducible.
	\end{itemize}
\end{theorem}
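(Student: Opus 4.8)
The plan is to derive both statements from two standard inputs: the \emph{Laurent phenomenon} \cite{FZ1}, which says that every cluster variable --- hence every element of $\Acal$ --- lies in the Laurent polynomial ring $\mathcal{L}_{\txx} := \CC[c^{\pm 1}][x_1^{\pm 1},\dots,x_r^{\pm 1}]$ attached to any single seed $(\txx,Q)$; and the elementary fact that the units of a Laurent polynomial ring over a field are exactly the scalar multiples of Laurent monomials. Throughout one assumes (as holds automatically for the seeds of $\Gr_{4,n}$ used in this paper) that no mutable vertex of $Q$ is isolated, so that every exchange polynomial is a genuine binomial in the remaining variables; an isolated mutable vertex would produce a cluster variable equal to a unit, and the second bullet would fail as stated.

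First I would prove the description of $\Acal^\times$. Fix a seed $(\txx,Q)$ and let $u\in\Acal^\times$. Since $u,u^{-1}\in\mathcal{L}_{\txx}$ by the Laurent phenomenon, $u=\lambda\prod_{i=1}^s x_i^{a_i}$ for some $\lambda\in\CC^\times$ and $a_i\in\Z$. To see that $a_k=0$ for every mutable $k$, pass to the adjacent seed $\mu_k(\txx)$, in which $x_k=(M_++M_-)/x_k'$ where $M_++M_-$ is a binomial involving only $\{x_j:j\ne k\}$ (and not $x_k$ or $x_k'$), hence a non-unit of $\mathcal{L}_{\mu_k(\txx)}$. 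Substituting, $u=\lambda(M_++M_-)^{a_k}x_k'^{-a_k}\prod_{j\ne k}x_j^{a_j}$; since $u\in\mathcal{L}_{\mu_k(\txx)}$ again by the Laurent phenomenon and the monomial factors are units there, we must have $a_k\ge 0$, and the same applied to $u^{-1}$ gives $a_k\le 0$. Thus $u$ is a scalar times a monomial in the frozen variables; the reverse inclusion is clear because the frozen variables are invertible in the ground ring $\CC[c^{\pm 1}]$.

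For the irreducibility of cluster variables, let $x=x_1$ be a cluster variable, realized in a seed $(\txx,Q)$ with $\txx=(x_1,\dots,x_s)$ (so vertex $1$ is mutable), and suppose for contradiction that $x_1=ab$ with $a,b\in\Acal$ both non-invertible. In $\mathcal{L}_{\txx}$ the element $x_1$ is itself a unit, so $a$ and $b$ are units of $\mathcal{L}_{\txx}$; hence $a=\lambda\,x_1^{c_1}\prod_{i\ge 2}x_i^{c_i}$ and $b=\lambda^{-1}x_1^{\,1-c_1}\prod_{i\ge 2}x_i^{-c_i}$, each a scalar times a single Laurent monomial in $x_1,\dots,x_s$. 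Now I would run the adjacent-seed argument of the previous paragraph on $a$ and on $b$ separately: if $\nu\prod_{i=1}^s x_i^{e_i}\in\Acal$, then passing to $\mu_k(\txx)$ and using that the exchange binomial at $k$ is a non-unit forces $e_k\ge 0$ for every mutable $k$. Applied to $a$ this gives $c_1\ge 0$ and $c_k\ge 0$ for mutable $k\ne 1$; applied to $b$ it gives $1-c_1\ge 0$ and $-c_k\ge 0$ for mutable $k\ne 1$. Hence $c_k=0$ for every mutable $k\ne 1$ and $c_1\in\{0,1\}$, so one of $a,b$ is a scalar times a monomial in the frozen variables only --- a unit of $\Acal$ by the first part --- contradicting our assumption. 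Therefore $x_1$ is irreducible.

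The step I expect to be the main obstacle, or at least to demand the most care, is the adjacent-seed reduction: one must verify that the exchange polynomial $M_++M_-$ is genuinely not a monomial in the remaining cluster variables (this is exactly where the no-isolated-mutable-vertex hypothesis enters, and where the choice of $\CC[c^{\pm 1}]$ versus $\CC[c]$ as ground ring matters), and that a nontrivial power of it in the denominator is a real obstruction to membership in $\mathcal{L}_{\mu_k(\txx)}$ --- i.e.\ that it cannot be cancelled by the monomial factors $x_k'^{-e_k}\prod_{j\ne k}x_j^{e_j}$. The last point holds because $\mathcal{L}_{\mu_k(\txx)}$ is a UFD in which every $x_j$ ($j\ne k$) and $x_k'$ is a unit while $M_++M_-$ is not; once this is in hand, both bullets follow formally from the Laurent phenomenon. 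This is essentially the argument of \cite{GLS}.
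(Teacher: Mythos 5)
This theorem appears in the paper only as a citation of \cite[Theorem~1.3]{GLS}, with no in-paper proof, so there is no internal argument to compare against. Your reconstruction is correct and is essentially the Geiss--Leclerc--Schr\"oer argument: classify units of $\Acal$ via the Laurent phenomenon together with the description of units of a Laurent polynomial ring, and then deduce irreducibility by showing any factorization of a cluster variable forces one factor to be a Laurent monomial in the frozen variables, hence a unit. The adjacent-seed substitution device --- passing to $\mu_k(\txx)$, rewriting $x_k=(M_++M_-)/x_k'$, and observing that a negative exponent on $x_k$ would force the non-unit $M_++M_-$ to divide a unit in the Laurent ring of the mutated seed --- is exactly the mechanism GLS use to pin down the exponents of mutable variables. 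Your caveat about isolated mutable vertices is the right nondegeneracy hypothesis (stated in \cite{GLS} as the exchange matrix having no zero column), and your observation that a quiver without $2$-cycles makes $M_+$ and $M_-$ coprime monomials --- so $M_++M_-$ is a monomial only in the isolated case --- is what makes the whole argument go through.
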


\subsection{The cluster structure on the Grassmannian}\label{sec:Grass}

The Grassmannian
has a cluster structure \cite{scott},
which we recall here, following the exposition of \cite{FW6}.
We also discuss several operations on the Grassmannian which are compatible
with the cluster structure.

\begin{definition}\label{def:frozen}
Given a $k$-element subset $J=\{j_1,j_2, \dots, j_k\}\subset\{1,2,\dots, n\}$
and a positive integer $i$,
we define
\[
(J+i)\bmod n: = \{j_1+i, j_2+i, \dots, j_k+i\},
\]
where the sums are taken modulo~$n$.
We often write $J+i$ if the $n$ is clear
from context.

The set of frozen variables for the cluster structure on the Grassmannian
consists of the $n$ \emph{cyclically consecutive} 
Pl\"ucker coordinates 
	\begin{equation} \label{eq:Grfacets}
	    f_1:=\lr{[k]} \text{ and } f_i:=\lr{[k]+(i-1) \bmod n} \text{ for }2\leq i \leq n.
     \end{equation}
\end{definition}

For example, the frozen variables  for $\Gr_{4,7}$ are the Pl\"ucker coordinates
\[
\lr{1234}, \lr{2345}, \lr{3456}, \lr{4567}, \lr{1567}, \lr{1267}, \lr{1237}.
\]

A particularly nice seed for the Grassmannian cluster structure is 
the 
 \emph{rectangles seed} $\Sigma_{k,n}$.

\begin{definition}[Rectangles seed $\Sigma_{k,n}$] \label{def:rectangles}
We construct a quiver $Q_{k,n}$ whose vertices are labeled by the
rectangles contained in an $k \times (n-k)$ rectangle~$R$,
including the empty rectangle~$\varnothing$.
The frozen vertices of $Q_{k,n}$ are labeled by
the rectangles of sizes $k\times j$ (with $1\le j\le n-k$),
rectangles of sizes $i\times (n-k)$ (with $1\le i\le k$), and the empty rectangle.
The arrows from an $i\times j$ rectangle go to rectangles of sizes $i\times (j+1)$,
$(i+1)\times j$, and $(i-1)\times (j-1)$ (assuming those rectangles have nonzero
dimensions, fit inside~$R$, and the arrow does not connect two frozen vertices).
There is also an arrow from the frozen vertex labeled by~$\varnothing$
to the vertex labeled by the $1\times 1$ rectangle.
See  \Cref{fig:G37-Le-quiver}, left.

We map each rectangle $r$ contained in the $k \times (n-k)$ rectangle
$R$ to an $k$-element subset of
$\{1,2,\dots, n\}$ (representing a Pl\"ucker coordinate), as follows.
We justify $r$
so that its upper left corner coincides with the upper left corner
of $R$.  There is a path of length $n$ from the northeast corner of
$R$ to the southwest corner of $R$ which cuts out the
smaller rectangle $r$;  we label the steps of this path from $1$ to~$n$.
We then map $r$ to the set of labels $J(r)$ of the vertical steps on this path.
This construction allows us to assign to each vertex of the quiver $Q_{k,n}$
a particular Pl\"ucker coordinate.
We set
\begin{equation*}
\txx^{k,n} =
	\{\lr{J(r)} \ \vert \ \text{ $r$ is a rectangle contained in an
$k \times (n-k)$ rectangle}\} ,
\end{equation*}
and then define the \emph{rectangles seed} $\Sigma_{k,n}=(\txx^{k,n}, Q_{k,n})$.
See Figure~\ref{fig:G37-Le-quiver}.
\end{definition}

\begin{figure}[t] % was h
\vspace{1em}
\begin{center}
\setlength{\unitlength}{2pt}
\hspace{-1.5cm}
\begin{picture}(60,75)(0,-21)
\put(20,20){\makebox(0,0){${\ydiagram{1,1}}$}}
\put(20,40){\makebox(0,0){${\ydiagram{1}}$}}
\put(40,20){\makebox(0,0){${\ydiagram{2,2}}$}}
\put(40,40){\makebox(0,0){${\ydiagram{2}}$}}
\put(60,20){\makebox(0,0){$\boxed{\ydiagram{3,3}}$}}
\put(60,40){\makebox(0,0){$\boxed{\ydiagram{3}}$}}
\put(20,-1){\makebox(0,0){$\ydiagram{1,1,1}$}}
\put(40,-1){\makebox(0,0){$\ydiagram{2,2,2}$}}
\put(60,-1){\makebox(0,0){$\boxed{\ydiagram{3,3,3}}$}}
\put(20, -21){\makebox(0,0){$\boxed{\ydiagram{1,1,1,1}}$}}
\put(40, -21){\makebox(0,0){$\boxed{\ydiagram{2,2,2,2}}$}}
\put(60, -21){\makebox(0,0){$\boxed{\ydiagram{3,3,3,3}}$}}
\put(5,55){\makebox(0,0){$\boxed{\scriptstyle\varnothing}$}}

\put(20,16){\vector(0,-1){12}}
\put(40,36){\vector(0,-1){12}}

\put(20,36){\vector(0,-1){12}}

\put(26,20){\vector(1,0){8}}
\put(26,-1){\vector(1,0){8}}
\put(46,-1){\vector(1,0){8}}
\put(46,40){\vector(1,0){8}}

\put(26,40){\vector(1,0){8}}

\put(46,20){\vector(1,0){8}}

\put(10,50){\vector(1,-1){7}}

\put(40,16){\vector(0,-1){12}}
\put(20,-6){\vector(0,-1){7}}
\put(40,-6){\vector(0,-1){7}}

\put(36, 4){\vector(-1,1){12}}
\put(54, 6){\vector(-1,1){10}}
\put(36, 24){\vector(-1,1){12}}
\put(54, 26){\vector(-1,1){10}}
\put(54, -14){\vector(-1,1){10}}
\put(34, -14){\vector(-1,1){10}}
\end{picture}
\hspace{1cm}
\begin{picture}(60,75)(0,-21)
	\put(20,20){\makebox(0,0){$3467$}}
	\put(20,40){\makebox(0,0){$3567$}}
	\put(40,20){\makebox(0,0){${2367}$}}
	\put(40,40){\makebox(0,0){${2567}$}}
	\put(60,20){\makebox(0,0){$\boxed{1267}$}}
	\put(60,40){\makebox(0,0){$\boxed{1567}$}}
	\put(20,-1){\makebox(0,0){$3457$}}
	\put(40,-1){\makebox(0,0){$2347$}}
	\put(60,-1){\makebox(0,0){$\boxed{1237}$}}
	\put(20, -21){\makebox(0,0){$\boxed{3456}$}}
	\put(40, -21){\makebox(0,0){$\boxed{2345}$}}
	\put(60, -21){\makebox(0,0){$\boxed{1234}$}}

	\put(5,55){\makebox(0,0){$\boxed{4567}$}}

	\put(20,16){\vector(0,-1){12}}
	\put(40,36){\vector(0,-1){12}}
	
	\put(20,36){\vector(0,-1){12}}

	\put(26,20){\vector(1,0){8}}
	\put(26,-1){\vector(1,0){8}}
	\put(46,-1){\vector(1,0){7}}
	\put(46,40){\vector(1,0){7}}
	
	\put(26,40){\vector(1,0){8}}
	
	\put(46,20){\vector(1,0){7}}
	
	\put(10,50){\vector(1,-1){7}}
	
	\put(40,16){\vector(0,-1){12}}
	\put(20,-6){\vector(0,-1){7}}
	\put(40,-6){\vector(0,-1){7}}
	
	\put(36, 4){\vector(-1,1){12}}
	\put(54, 6){\vector(-1,1){10}}
	
	\put(36, 24){\vector(-1,1){12}}
	\put(54, 26){\vector(-1,1){10}}
	\put(54, -14){\vector(-1,1){10}}
	\put(34, -14){\vector(-1,1){10}}
\end{picture}
\hspace{1cm}
\begin{picture}(60,75)(0,-21)
	\put(20,20){\makebox(0,0){$1457$}}
	\put(20,40){\makebox(0,0){$1467$}}
	\put(40,20){\makebox(0,0){${1347}$}}
	\put(40,40){\makebox(0,0){${1367}$}}
	\put(60,20){\makebox(0,0){$\boxed{1237}$}}
	\put(60,40){\makebox(0,0){$\boxed{1267}$}}
	\put(20,-1){\makebox(0,0){$1456$}}
	\put(40,-1){\makebox(0,0){$1345$}}
	\put(60,-1){\makebox(0,0){$\boxed{1234}$}}
	\put(20, -21){\makebox(0,0){$\boxed{4567}$}}
	\put(40, -21){\makebox(0,0){$\boxed{3456}$}}
	\put(60, -21){\makebox(0,0){$\boxed{2345}$}}

	\put(5,55){\makebox(0,0){$\boxed{1567}$}}

	\put(20,16){\vector(0,-1){12}}
	\put(40,36){\vector(0,-1){12}}
	
	\put(20,36){\vector(0,-1){12}}

	\put(26,20){\vector(1,0){8}}
	\put(26,-1){\vector(1,0){8}}
	\put(46,-1){\vector(1,0){7}}
	\put(46,40){\vector(1,0){7}}
	
	\put(26,40){\vector(1,0){8}}
	
	\put(46,20){\vector(1,0){7}}
	
	\put(10,50){\vector(1,-1){7}}
	
	\put(40,16){\vector(0,-1){12}}
	\put(20,-6){\vector(0,-1){7}}
	\put(40,-6){\vector(0,-1){7}}
	
	\put(36, 4){\vector(-1,1){12}}
	\put(54, 6){\vector(-1,1){10}}
	
	\put(36, 24){\vector(-1,1){12}}
	\put(54, 26){\vector(-1,1){10}}
	\put(54, -14){\vector(-1,1){10}}
	\put(34, -14){\vector(-1,1){10}}
\end{picture}

\end{center}
\vspace{1em}
\caption{
Left: the quiver $Q_{4,7}$ with vertices  labeled by rectangles contained in a $4 \times 3$ rectangle. 
Middle: the rectangles seed $\Sigma_{4,7}$, where we identify $4$-element subsets of $[7]$ with Pl\"ucker coordinates. Frozen variables are boxed. Right: the cyclically shifted  rectangles seed $\Sigma_{4,7}^1$. 
The following mutation sequence maps us from the seed in the middle, to the seed at the right:
mutate at $2567, 3567, 2367, 3467, 2347, 3457$.
}
\label{fig:G37-Le-quiver}
\end{figure}
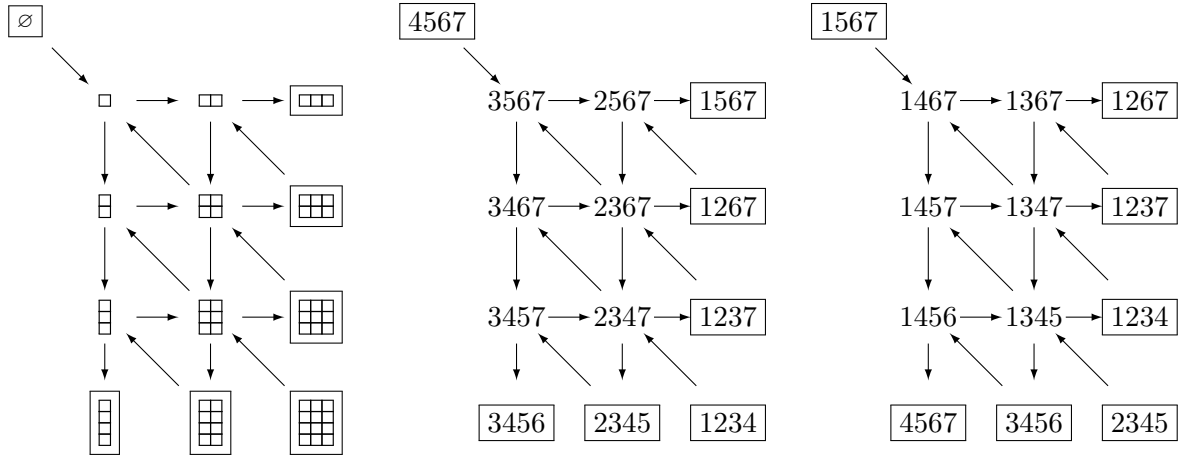

The rectangles seed gives rise to a cluster
structure on the (complex) Grassmannian.

\begin{theorem}[\cite{scott}]\label{thm:scott}
The coordinate ring $\C[\widehat{\Gr}_{k,n}]$ of the affine cone over
the Grassmannian equals the cluster algebra $\overline{\Acal}(\Sigma_{k,n})$ (see \cref{rmk:dif-ground-ring}). 
Alternatively,
if we let 
${\Gr}_{k,n}^\circ$ denote the open subset of the Grassmannian where the frozen variables
don't vanish, then 
the coordinate ring $\C[\widehat{\Gr}_{k,n}^\circ]$ is the cluster algebra
 $\A(\Sigma_{k,n})$.
\end{theorem}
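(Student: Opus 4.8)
The plan is to prove, inside the field $\Fcal$ of rational functions on the affine cone $\widehat{\Gr}_{k,n}$ (which we take as the ambient field), the two inclusions $\C[\widehat{\Gr}_{k,n}] \subseteq \overline{\Acal}(\Sigma_{k,n})$ and $\overline{\Acal}(\Sigma_{k,n}) \subseteq \C[\widehat{\Gr}_{k,n}]$; the ``alternatively'' statement $\C[\widehat{\Gr}_{k,n}^\circ] = \A(\Sigma_{k,n})$ then follows by inverting the frozen variables $f_1,\dots,f_n$ on both sides. As preliminaries I would record the classical facts that $\C[\widehat{\Gr}_{k,n}]$ is a finitely generated normal domain generated by the Plücker coordinates (with defining ideal the Plücker ideal), and that the rectangle Plücker coordinates $\txx^{k,n}$ are algebraically independent with $|\txx^{k,n}| = \dim \widehat{\Gr}_{k,n}$; together with the observation that the quiver $Q_{k,n}$ of \cref{def:rectangles} has no loops or $2$-cycles, this makes $(\txx^{k,n}, Q_{k,n})$ a bona fide seed in $\Fcal$. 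Of the two inclusions, the first is the combinatorially substantive one and the second is essentially formal.

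For $\C[\widehat{\Gr}_{k,n}] \subseteq \overline{\Acal}(\Sigma_{k,n})$: since the left-hand ring is generated by Plücker coordinates, it suffices to show that every Plücker coordinate $\lr{I}$ occurs as a cluster variable in some seed mutation-equivalent to $\Sigma_{k,n}$. The starting observation is that a single mutation at a mutable vertex $r$ of $Q_{k,n}$ has an exchange relation (\cref{def:seed-mutation0}) which is precisely a three-term (short) Plücker relation: the two monomials on its right-hand side are products of rectangle Plückers differing from $r$ by one box, and the new variable is again a Plücker coordinate. These relations let us navigate within the family of Plücker coordinates; to reach all of them I would induct on $n$ and $k$. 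Deleting the column $n$ and projecting from the index $n$ relate $\Gr_{k,n}$ to $\Gr_{k,n-1}$ and $\Gr_{k-1,n-1}$ and carry rectangles seeds to rectangles seeds up to mutation, so by induction every Plücker coordinate ``coming from'' these smaller Grassmannians is a cluster variable; the remaining ones (those involving $n$ essentially) are produced by an explicit mutation sequence — concretely, the cyclic-shift sequence of \cref{fig:G37-Le-quiver} carrying $\Sigma_{k,n}$ to the rotated rectangles seed $\Sigma_{k,n}^1$, iterated around the circle. Combining this with the preliminaries gives $\C[\widehat{\Gr}_{k,n}^\circ] \subseteq \A(\Sigma_{k,n})$, and clearing denominators of the frozens yields $\C[\widehat{\Gr}_{k,n}] \subseteq \overline{\Acal}(\Sigma_{k,n})$.

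For $\overline{\Acal}(\Sigma_{k,n}) \subseteq \C[\widehat{\Gr}_{k,n}]$: by the Laurent phenomenon \cite{FZ1} every cluster variable $x_{i,t}$ is a Laurent polynomial in $\txx^{k,n}$, hence regular on the open set $\widehat{\Gr}_{k,n}^\circ$, so $x_{i,t} \in \C[\widehat{\Gr}_{k,n}^\circ]$. To promote this to $x_{i,t} \in \C[\widehat{\Gr}_{k,n}]$ one must rule out a pole of $x_{i,t}$ along each frozen divisor $\{f_j = 0\}$; for this I would use that for each $j$ some seed in the cyclic-shift sequence above (a suitable rotation of $\Sigma_{k,n}$) contains $f_j$ as a \emph{mutable} variable, so applying the Laurent phenomenon relative to the seed obtained by mutating $f_j$ away writes $x_{i,t}$ as a Laurent polynomial in a cluster not containing $f_j$, ruling out the pole. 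Thus $x_{i,t}$ is regular in codimension one on the normal variety $\widehat{\Gr}_{k,n}$, hence regular everywhere, and since the frozens are Plücker coordinates we get $\overline{\Acal}(\Sigma_{k,n}) \subseteq \C[\widehat{\Gr}_{k,n}]$. (Alternatively, this inclusion can be packaged via a ``starfish''-type criterion: normality of $\C[\widehat{\Gr}_{k,n}]$, membership of the initial cluster and its single mutations, and their irreducibility and pairwise non-associateness, the last following from \cref{thm:GLS}.) The main obstacle is the reachability claim of the previous paragraph — that \emph{every} Plücker coordinate is a cluster variable mutation-equivalent to $\Sigma_{k,n}$ — which genuinely requires the explicit rotation mutation sequences, the compatibility of column-deletion and projection with the rectangles seeds, and the bookkeeping of the double induction on $n$ and $k$; everything else is classical Grassmannian geometry or a formal consequence of the Laurent phenomenon and normality.
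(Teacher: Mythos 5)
The paper does not prove \cref{thm:scott}; it is cited verbatim from \cite{scott}, so there is no in-paper proof to compare against. Judged on its own merits, your outline of the ``easy'' direction (all Plücker coordinates are cluster variables, via the cyclic rotations of \cref{lem:cyclic} and an induction on $n$ and $k$ using deletion/\cref{lem:add-marker-embed-gr}) is a plausible reconstruction of Scott-type arguments, though the bookkeeping you flag as ``the main obstacle'' would indeed need to be carried out.

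The gap is in the containment $\overline{\Acal}(\Sigma_{k,n}) \subseteq \C[\widehat{\Gr}_{k,n}]$. You propose to rule out a pole of a cluster variable $x_{i,t}$ along $\{f_j = 0\}$ by finding ``some seed in the cyclic-shift sequence'' in which $f_j$ is mutable, and then applying the Laurent phenomenon after mutating $f_j$ away. This cannot work: frozen variables are frozen in every seed of a given cluster pattern, and in every $\Sigma_{k,n}^i$ the frozen set is exactly the same cyclically-invariant collection $\{f_1,\dots,f_n\}$ (compare the middle and right quivers in \cref{fig:G37-Le-quiver}, whose boxed vertices are identical as a set). So the Laurent phenomenon only gives regularity on $\widehat{\Gr}_{k,n}^\circ$, not on $\widehat{\Gr}_{k,n}$, and your normality argument does not reach the frozen divisors. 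The parenthetical fallback via a starfish-type criterion has the same problem in reverse: the starfish lemma (normality + coprime initial cluster + adjacent cluster variables in $R$) is engineered to prove $\C[\widehat{\Gr}_{k,n}] \subseteq \A(\Sigma_{k,n})$ (i.e.\ the other containment), and does not by itself bound the far-away cluster variables that arise after many mutations. Repairing this direction requires a genuinely different ingredient --- e.g.\ Scott's explicit identification of adjacent cluster variables as polynomials in Plücker coordinates together with an upper-bound/upper-cluster-algebra comparison, or a separate regularity argument specific to $\widehat{\Gr}_{k,n}$.
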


Whenever we refer to ``the cluster structure for" or ``a seed for" $\widehat{\Gr}_{k,n}, \widehat{\Gr}_{k,n}^{\circ}$ or their coordinate rings, we are refering to the cluster algebra, respectively a seed in the cluster algebra, in \cref{thm:scott}. Note in this context, we are discussing the complex Grassmannian.

\subsection{Operations compatible with the cluster structure on the Grassmannian}\label{sec:operation}

\begin{definition}
We define the
\emph{cyclically shifted rectangles seed} $\Sigma_{k,n}^i$
by replacing each vertex
labeled
$J$ in $Q_{k,n}$ by $(J+i)\bmod n$.  See e.g. the right of \cref{fig:G37-Le-quiver}.
\end{definition}

\begin{lemma}[\protect{\cite[Exercise 6.7.7]{FW6}}]\label{lem:cyclic}
The seeds $\Sigma_{k,n}^i$ (for all $i$) are mutation equivalent.
More specifically, 
let 
	$\pmb{\cyc} = 
	\pmb{\cyc_{k,n}}$ 
	be the mutation sequence in which we 
start from the seed $\Sigma_{k,n}$ and mutate at each of the mutable variables
of $Q_{k,n}$ exactly once, in the following order:
mutate each row from right to left, starting from the top row and ending 
at the bottom row.  
	Then $\pmb{\cyc}(\Sigma_{k,n}) = 
	\Sigma_{k,n}^{1}$. 
Furthermore, let 
	$\pmb{\cyc^{-1}} = 
	\pmb{\cyc^{-1}_{k,n}}$ 
	be the mutation sequence in which we 
start from the seed $\Sigma_{k,n}$ and mutate at each of the mutable variables
of $Q_{k,n}$ exactly once, in the following order:
mutate each row from left to right, starting from the bottom row and ending 
at the top row.  
	Then $\pmb{\cyc^{-1}}(\Sigma_{k,n}) = 
	\Sigma_{k,n}^{-1}$. 
\end{lemma}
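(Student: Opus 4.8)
The statement to prove is \cref{lem:cyclic}, which asserts that the mutation sequence $\pmb{\cyc}$ (mutate each row right-to-left, top row to bottom row) transforms the rectangles seed $\Sigma_{k,n}$ into the cyclically shifted seed $\Sigma_{k,n}^1$, and symmetrically for $\pmb{\cyc^{-1}}$.

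My plan is to proceed by direct computation, tracking what happens to each mutable cluster variable under the prescribed mutation sequence, and to show the quiver transforms correctly as well. First I would set up notation: label the mutable vertices by rectangles $i \times j$ with $1 \le i \le k-1$ and $1 \le j \le n-k-1$, with Pl\"ucker label $J(i,j)$. A clean way to organize the argument is to perform the mutations row by row (i.e. for fixed $i$, from $j = n-k-1$ down to $j=1$), and prove by induction on the row index $i$ that after completing rows $1, \dots, i$, the seed restricted to the ``already-mutated'' region looks like the corresponding part of $\Sigma_{k,n}^1$, while the un-mutated rows still look like $\Sigma_{k,n}$, with controlled arrows across the interface. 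The key local computation is a single exchange relation: when we mutate at the $i \times j$ rectangle at the appropriate moment, its neighbors in the quiver (after the earlier mutations in this sweep) are arranged so that the exchange relation is a three-term Pl\"ucker relation, producing exactly the Pl\"ucker coordinate $J(i,j) + 1$ (shift by one mod $n$). I would verify this by explicitly writing $J(i,j)$ in terms of the lattice-path description and checking that the two monomials on the right-hand side of \eqref{exchange relation0} are products of (already updated) neighbors whose Pl\"ucker labels satisfy the short Pl\"ucker three-term relation with $J(i,j)$ and $J(i,j)+1$.

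The main obstacle — and the part requiring real care — is bookkeeping the quiver arrows during the sweep. At the moment we mutate vertex $v = (i,j)$, some of its neighbors have already been mutated (those above it, and those to its right in the same row) and some have not (those below and to its left). I would need to show that at that moment $v$ has exactly two in-neighbors and two out-neighbors whose variables multiply to the two terms of the desired Pl\"ucker relation, plus possibly arrows to frozens that don't affect the exchange. This means carefully analyzing which arrows get created/reversed/cancelled by steps (1)--(3) of quiver mutation as the sweep progresses. I expect the cleanest approach is to guess the ``interpolating quiver'' $Q^{(i,j)}$ present just before mutating $(i,j)$ — it should interpolate between $Q_{k,n}$ and $Q_{k,n}^1$ in a predictable way — and verify by a local check that mutating at $(i,j)$ advances $Q^{(i,j)}$ to $Q^{(i,j-1)}$ (or to the next row). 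Because all the arrows in $Q_{k,n}$ are ``short'' (between rectangles differing by a unit step), these local checks involve only a bounded neighborhood and are finite casework (corner cases: top row, bottom row, leftmost/rightmost columns, the $\varnothing$ vertex and its single arrow to the $1\times 1$ box).

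Once the cluster-variable computation and the quiver computation are both in hand, I would assemble them: after the full sweep $\pmb{\cyc}$, every mutable label $J(i,j)$ has become $J(i,j)+1 \bmod n$, the frozen labels are unchanged, and by \eqref{eq:cyc-refl-on-plucker} shifting every Pl\"ucker index by $1$ is exactly $(\cyc^{-1})^*$; combined with the quiver computation this gives $\pmb{\cyc}(\Sigma_{k,n}) = \Sigma_{k,n}^1$. Actually, since by \cref{thm:GSV} a seed in a cluster algebra from a quiver is determined by its cluster alone, it would suffice to verify just the cluster-variable side of the computation and invoke \cref{thm:GSV} to conclude the quivers agree — this simplifies the write-up considerably, reducing the work to the exchange-relation computation plus the observation that the resulting cluster is precisely $\txx^{k,n}$ shifted by $1$. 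The statement for $\pmb{\cyc^{-1}}$ then follows by the symmetric argument (or by observing it is the reverse sweep, which should invert $\pmb{\cyc}$ up to relabeling); alternatively, that the two sequences give mutation-equivalent seeds $\Sigma_{k,n}^i$ for all $i$ follows immediately by iterating. I would double-check the boundary behavior where indices wrap around modulo $n$, since that is where sign and ordering conventions (\cref{rem:convention}) are most likely to trip up the Pl\"ucker-relation identification.
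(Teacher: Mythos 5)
The paper does not prove this lemma; it cites \cite[Exercise 6.7.7]{FW6} and offers no argument of its own, so there is no ``paper's proof'' against which to compare. Your proposal is the standard computational approach and is sound in outline: perform the sweep row by row, track the quiver across the interface between mutated and unmutated rows, and verify that each exchange relation is a three-term Pl\"ucker relation whose output is the cyclic shift of the vertex's label. The first step in the $\Gr_{4,7}$ example confirms the pattern ($\lr{2567}\lr{1367} = \lr{1267}\lr{3567} + \lr{1567}\lr{2367}$, with $\lr{1367} = \lr{2567}+1 \bmod 7$), matching the caption of Figure~\ref{fig:G37-Le-quiver}. Reducing the $\pmb{\cyc^{-1}}$ case to the $\pmb{\cyc}$ case by observing that the two sweeps are literally mutual inverses (reverse sequences of involutions) and then conjugating by the index shift $\lr{I}\mapsto\lr{I-1}$ is also a correct and economical idea, although you should spell out that mutation commutes with the ring automorphism $\cyc^*$, which is what makes the ``up to relabeling'' step legitimate.

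There is, however, a subtle circularity in the proposed use of \cref{thm:GSV} to ``conclude the quivers agree.'' That theorem says two seeds \emph{of the same cluster algebra} with the same cluster coincide. After the sweep you indeed have a seed $\Sigma' = (\{J(r)+1\}, Q')$. To conclude $Q' = Q_{k,n}$ via \cref{thm:GSV}, you must already know that $\Sigma_{k,n}^1 = (\{J(r)+1\}, Q_{k,n})$ is a seed of the cluster algebra --- but that is essentially what the lemma asserts, and it is not available a priori without either (a) invoking that $\cyc^*$ is a cluster automorphism (which in this paper is derived \emph{from} this lemma, see \cref{cor:cycrefl}), or (b) verifying the quiver directly. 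In practice (b) is free: to compute the exchange relations during the sweep you must in any case track the quiver at every step, so at the end you know $Q'$ and can compare it to $Q_{k,n}$; the \cref{thm:GSV} shortcut does not actually spare you the quiver bookkeeping you identify as the main obstacle. Beyond this, the proposal remains a plan rather than a proof: the interpolating quiver $Q^{(i,j)}$, which is the real content, is described as ``guessed'' but never exhibited, and the local case analysis at boundary rows/columns and at the $\varnothing$ vertex is not carried out. If you make the interpolating quiver explicit and verify the finitely many local mutation templates, the argument closes; as written it does not.
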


\begin{definition}\label{def:corectangles}
We define the 
\emph{reflected rectangles seed} or the 
\emph{corectangles seed} $\Sigma_{k,n}^{\co}$
by replacing each vertex
labeled
$J=\{j_1 < \dots < j_k\}$ in $Q_{k,n}$ by 
$\{n-j_k+1,\dots ,n-j_2+1, n-j_1+1\},$
then reversing each arrow of the quiver.
We call this the ``corectangles'' (or complement of rectangles) seed because the Pl\"ucker coordinates
are exactly those labeling the vertical steps of the Young diagrams
obtained from a $k \times (n-k)$ rectangle by removing
a rectangle from the lower right.
\end{definition}

\begin{proposition}[{\cite[Theorem 11.17]{marshscott}}]\label{lem:reflect}
The seeds $\Sigma_{k,n}$ and $\Sigma_{k,n}^{\co}$ are 
mutation equivalent via the explicit mutation sequence in
	\cite[Definition 11.4]{marshscott}.
\end{proposition}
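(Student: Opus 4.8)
The plan is to exhibit an explicit mutation sequence from $\Sigma_{k,n}$ to $\Sigma_{k,n}^{\co}$ and show it does the job, following the reference \cite[Definition 11.4]{marshscott}. The key structural observation is that the corectangles seed is obtained from the rectangles seed by the \emph{composition} of the reflection symmetry $\refl$ of the Grassmannian with some power of the twist/cyclic automorphism. Concretely, $\refl$ acts on Pl\"ucker coordinates by $\lr{I} \mapsto \lr{n+1-I}$, and applying this to the labels $J(r)$ of $\Sigma_{k,n}$ produces, up to the arrow-reversal bookkeeping, exactly the labels $\{n-j_k+1,\dots,n-j_1+1\}$ appearing in \cref{def:corectangles}. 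So the first step is to identify $\Sigma_{k,n}^{\co}$ as $\refl^*(\Sigma_{k,n})$ (with reversed arrows), and then to recall that $\refl$ is an automorphism of $\Gr_{k,n}$ preserving the cluster structure, hence sends seeds to seeds.

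The main work is then to connect $\refl^*(\Sigma_{k,n})$ back to $\Sigma_{k,n}$ itself via mutations. Here I would use the following: the automorphism group of the Grassmannian cluster structure (including reflections and twists) acts on the set of seeds, and for the rectangles seed one can write down the orbit explicitly. In particular \cref{lem:cyclic} already tells us the cyclically shifted seeds $\Sigma_{k,n}^i$ are all mutation equivalent to $\Sigma_{k,n}$ via the explicit sequences $\pmb{\cyc}^{\pm 1}$. It remains to handle the reflection. The approach is to check that $\refl$ maps the rectangles seed to a \emph{cyclic shift} of the corectangles seed (this is a direct combinatorial computation on Young diagram labels: reflecting the complement of a rectangle from the lower-right corresponds to reflecting the complement itself, plus a cyclic relabeling of the boundary path steps), and then to invoke \cite[Definition 11.4]{marshscott} for the explicit mutation sequence realizing $\Sigma_{k,n} \rightsquigarrow \Sigma_{k,n}^{\co}$ directly.

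The cleanest route, which I would actually carry out, is: (i) verify the label identity — that the map $J \mapsto \{n+1-j : j\in J\}$ carries the rectangle-labels $\{J(r)\}$ bijectively onto the corectangle-labels, and that under this map the quiver $Q_{k,n}$ with reversed arrows matches the quiver underlying $\Sigma_{k,n}^{\co}$; (ii) conclude that $\Sigma_{k,n}^{\co}$ is the image of $\Sigma_{k,n}$ under a symmetry of $\widehat{\Gr}_{k,n}$ that fixes the frozen variables setwise, hence lies in the same cluster algebra; and (iii) cite \cite[Definition 11.4, Theorem 11.17]{marshscott} for the explicit realizing mutation sequence, checking only that the endpoints agree with our $\Sigma_{k,n}$ and $\Sigma_{k,n}^{\co}$. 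Steps (i) and (ii) reduce the proposition to a statement already in the literature.

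The main obstacle I anticipate is the bookkeeping in step (i): the rectangles seed and the corectangles seed are built from \emph{left-justified} versus effectively \emph{right-justified} sub-rectangles, and the boundary-path labeling is sensitive to orientation, so one must be careful that the reflection $\lr{I}\mapsto\lr{n+1-I}$ really does reverse all the quiver arrows (not just permute the vertices) and sends horizontal/vertical steps to the correct step types. A secondary subtlety is matching the \emph{frozen} vertices: the rectangles seed's frozen variables are the cyclically consecutive Pl\"uckers $\lr{[k]+i}$, and one must confirm reflection permutes these among themselves, which follows from \eqref{eq:cyc-refl-on-plucker} and the identity $n+1-([k]+i) = [k] + (n-k-i) \bmod n$ up to rotation. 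Once these identifications are in place, the mutation-equivalence itself is immediate from \cref{thm:scott} (both seeds live in $\A(\Sigma_{k,n})$, which is a connected cluster algebra) together with the explicit sequence of \cite{marshscott} for the quantitative ``via the explicit mutation sequence'' claim.
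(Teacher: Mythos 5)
Your plan has a circularity that undercuts the logical skeleton, even though the endpoint (cite Marsh--Scott) coincides with what the paper does, since \cref{lem:reflect} is stated as a direct import of their theorem.

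The circularity: in step (ii) you invoke ``$\refl$ is an automorphism of $\Gr_{k,n}$ preserving the cluster structure, hence sends seeds to seeds.'' In this paper that assertion is precisely \cref{cor:cycrefl}, which is \emph{derived from} \cref{lem:cyclic} together with \cref{lem:reflect} --- the very proposition you are trying to prove. So you cannot use it as a stepping stone here. If you want an independent argument that $\refl^*$ (with arrows reversed) sends seeds in the $\Gr_{k,n}$ cluster pattern to seeds in the same pattern, you would have to supply one, e.g.\ via the action of reflection on reduced plabic graphs together with the fact that all reduced plabic graphs with fixed decorated permutation give mutation-equivalent seeds; but that would be a genuinely different route from the paper's, and would still not furnish the ``explicit mutation sequence'' that the proposition asserts.

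Separately, the claim ``the mutation-equivalence is immediate from \cref{thm:scott} (both seeds live in $\A(\Sigma_{k,n})$, which is a connected cluster algebra)'' is not a valid inference. Two labeled seeds whose entries all happen to be elements of a fixed cluster algebra need not lie in the same cluster pattern; a cluster algebra is defined \emph{from} a pattern, and exhibiting $\Sigma_{k,n}^{\co}$ as a vertex of that pattern requires actually producing a mutation path (or an a priori argument that some symmetry preserves the pattern). There is no general ``connectedness'' principle that makes this automatic; proving it is exactly the content of Marsh--Scott's theorem. What survives of your plan is steps (i) and (iii): verify that your $\Sigma_{k,n}^{\co}$ coincides with the seed Marsh--Scott reach, and then cite \cite[Definition 11.4, Theorem 11.17]{marshscott} directly. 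Step (ii) should simply be deleted; as written it either assumes the conclusion (via \cref{cor:cycrefl}) or rests on an invalid general principle.
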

While we do not need it in this paper, 
we note that the mutation sequence above is a \emph{maximal green sequence} \cite{green} and passes
only through Pl\"ucker coordinates.

Recall the maps $\cyc$ and $\refl$ from \cref{def:dihedral}, it is easy to see that $\refl$ is an involution and $\cyc^{-1}=\cyc^{n-1}$. Their pullbacks are automorphisms of $\CC[\widehat{\Gr}_{k,n}]$ which interact nicely with the cluster structure. 
From \cref{lem:cyclic} and \cref{lem:reflect}, we have the following.

\begin{corollary}\label{cor:cycrefl}
        If $(\txx, Q)$ is a seed for $\Gr_{k,n}$, then so is $(\txx \circ \cyc, Q)$ and $(\txx \circ \refl, Q^{\op})$, where $Q^{\op}$ is $Q$ with every arrow reversed. Thus the maps $\cyc^*, \refl^*: \CC[\widehat{\Gr}_{k,n}] \to \CC[\widehat{\Gr}_{k,n}]$ take cluster variables to cluster variables and preserve compatibility and exchange relations.
\end{corollary}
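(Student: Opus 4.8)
The plan is to deduce \cref{cor:cycrefl} directly from the two structural facts established just above: \cref{lem:cyclic}, which says the cyclically shifted rectangles seeds $\Sigma_{k,n}^i$ are all mutation equivalent to $\Sigma_{k,n}$ via the explicit sequences $\pmb{\cyc}$ and $\pmb{\cyc^{-1}}$, and \cref{lem:reflect}, which says $\Sigma_{k,n}^{\co}$ is mutation equivalent to $\Sigma_{k,n}$. First I would observe that by \eqref{eq:cyc-refl-on-plucker} the pullback $\cyc^*$ sends the Pl\"ucker coordinate $\lr{J}$ to $\lr{J-1}$, so applying $\cyc^*$ to every label of the rectangles seed $\Sigma_{k,n}=(\txx^{k,n},Q_{k,n})$ produces exactly the shifted seed $\Sigma_{k,n}^{-1}$ (shifting each subset down by $1$), with the same underlying quiver $Q_{k,n}$. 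Since $\cyc^*$ is a ring automorphism of $\C(\Gr_{k,n})$, it carries the whole cluster pattern emanating from $\Sigma_{k,n}$ to the cluster pattern emanating from $(\txx^{k,n}\circ\cyc, Q_{k,n})=\Sigma_{k,n}^{-1}$; by \cref{lem:cyclic} the latter is mutation equivalent to $\Sigma_{k,n}$, hence generates the same cluster algebra $\C[\widehat{\Gr}_{k,n}]$ (or $\C[\widehat{\Gr}_{k,n}^\circ]$). Therefore $\cyc^*$ is a cluster algebra automorphism: it sends seeds to seeds, cluster variables to cluster variables, frozen variables to frozen variables, and preserves exchange relations and compatibility (since these are all defined in terms of the cluster pattern, which $\cyc^*$ respects).

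For the reflection, the argument is parallel but with a quiver reversal. By \eqref{eq:cyc-refl-on-plucker}, $\refl^*$ sends $\lr{J}$ to $\lr{n+1-J}$; applying it to the labels of $\Sigma_{k,n}$ gives a seed whose Pl\"ucker coordinates are $\{\lr{n+1-J(r)}\}$. Comparing with \cref{def:corectangles}, these are precisely the labels of the corectangles seed $\Sigma_{k,n}^{\co}$. The one subtlety is the orientation of the arrows: I would check that the pullback of a cluster transformation under an anti-automorphism of the combinatorial data reverses arrows, which matches the prescription in \cref{def:corectangles} that $\Sigma_{k,n}^{\co}$ has $Q_{k,n}$ with all arrows reversed — i.e. that $(\txx^{k,n}\circ\refl, Q_{k,n}^{\op})=\Sigma_{k,n}^{\co}$. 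Actually the cleanest way to see this is to note that the exchange relation \eqref{exchange relation0} is symmetric under simultaneously reversing all arrows and swapping the two monomials, so mutation commutes with arrow reversal; hence $\refl^*$ intertwines the cluster pattern of $(\txx^{k,n}, Q_{k,n})$ with that of $(\txx^{k,n}\circ\refl, Q_{k,n}^{\op})$, and by \cref{lem:reflect} the latter is the cluster structure of $\Gr_{k,n}$. So $\refl^*$ is likewise a cluster algebra automorphism (now combined with the arrow-reversal symmetry), giving the claim for $\refl$.

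Finally I would assemble the two pieces into the stated conclusion. Given an arbitrary seed $(\txx, Q)$ for $\Gr_{k,n}$, it is mutation equivalent to $\Sigma_{k,n}$; applying the ring automorphism $\cyc^*$ (respectively $\refl^*$) commutes with mutation — using the arrow-reversal symmetry of \eqref{exchange relation0} in the reflection case — so $(\txx\circ\cyc, Q)$ (respectively $(\txx\circ\refl, Q^{\op})$) is mutation equivalent to $\cyc^*(\Sigma_{k,n})=\Sigma_{k,n}^{-1}$ (respectively $\refl^*(\Sigma_{k,n})$ with reversed arrows $=\Sigma_{k,n}^{\co}$), which by \cref{lem:cyclic} and \cref{lem:reflect} is again a seed for $\Gr_{k,n}$. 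Since cluster variables, compatibility, and exchange relations are all intrinsic to the cluster pattern, and $\cyc^*, \refl^*$ map this pattern isomorphically to itself (up to a global reindexing of seeds), they preserve all of these notions. I expect the only real point requiring care — the "main obstacle," though it is minor — is bookkeeping the quiver orientation under $\refl^*$: one must be careful that pulling back along an order-reversing relabeling of columns reverses the arrows of the quiver, which is exactly why $Q^{\op}$ (rather than $Q$) appears in the statement, and to confirm this is consistent with the $\Sigma_{k,n}^{\co}$ convention of \cref{def:corectangles}.
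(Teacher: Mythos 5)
Your proof is correct and takes the same route the paper intends: the paper states this corollary as an immediate consequence of \cref{lem:cyclic} and \cref{lem:reflect} with no further argument, and your write-up supplies exactly the implicit details. In particular, you correctly identify $\cyc^*(\Sigma_{k,n})$ with $\Sigma_{k,n}^{-1}$ and $\refl^*(\Sigma_{k,n})$ (with reversed arrows) with $\Sigma_{k,n}^{\co}$, and you correctly isolate the one real subtlety — that mutation commutes with arrow reversal because the exchange relation \eqref{exchange relation0} is symmetric under swapping its two monomials — which is precisely why $Q^{\op}$ appears in the statement.
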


In \cref{sec:quasihom},
we will need to work with cluster structures on Grassmannians
$\Gr_{4,J}$ of $4$-planes in a vector space with basis indexed by a set $J\subset \Z$, as opposed to $[n]$.
We will therefore need the following generalization of
 \cref{def:frozen}.
\begin{definition}\label{def:frozen2}
Consider the Grassmannian $\Gr_{4,J}$, where 
$J=\{j_1< \dots < j_{\ell}\} \subset [n]$.
We let $f_{j_i}^J$ denote the Pl\"ucker coordinate associated to the 
subset 
$\{j_i,j_{i+1}, j_{i+2}, j_{i+3}\}$,
where the addition in the subscripts
is modulo $\ell$.
If we are discussing 
the cluster algebra structure on $\Gr_{4,n}$, we will refer to the 
frozen variables as either
$\{f_1^{[n]},\dots, f_n^{[n]}\}$, or $\{f_1,\dots,f_n\}$.
\end{definition}

\cref{lem:add-marker-embed-gr} gives an inclusion of Grassmannian cluster structures
that will be useful in \cref{sec:bcfw-and-products}.
 \begin{lemma}\label{lem:add-marker-embed-gr} 
         Let $J \subset [n]$. Choose $j \in J$ and set $I := J \setminus \{j\}$. Consider the natural inclusion $\iota: \C[\Gr_{k, I}] \to \C[\Gr_{k, J}]$ given by $\lr{A} \mapsto \lr{A}$. Then $\iota$ maps cluster variables to cluster variables and preserves compatibility and exchange relations.
        \end{lemma}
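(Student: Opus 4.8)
The plan is to reduce the statement to the invariance of the Grassmannian cluster structure under the operations already catalogued in this section, together with the rectangles seed. First I would record the combinatorial setup: write $J = \{j_1 < \dots < j_\ell\}$ and $I = J \setminus \{j\}$, so $|I| = \ell - 1$, and note that $\iota\colon \C[\widehat{\Gr}_{k,I}] \to \C[\widehat{\Gr}_{k,J}]$ sends $\lr{A}$ to $\lr{A}$ for $A \in \binom{I}{k}$; it is clear that $\iota$ is a well-defined injective algebra homomorphism, since the Pl\"ucker relations among the $\lr{A}$ for $A\subseteq I$ are a subset of those holding in $\C[\widehat{\Gr}_{k,J}]$. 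Up to relabelling the ground set order-isomorphically, we may assume $I = [\ell - 1]$ and $J = [\ell-1] \cup \{j\}$ for some $j$; and after applying the cyclic symmetry $\cyc$ on $\Gr_{k,J}$ (which by \cref{cor:cycrefl}, suitably adapted to the ground set $J$, preserves cluster variables and compatibility), it suffices to treat the case where the inserted index $j$ is the last one, i.e.\ $I = [n-1]$ and $J = [n]$ where $n = \ell$.

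The key step is then to exhibit a seed for $\Gr_{k,n}$ in which the seed $\Sigma_{k,n-1}$ for $\Gr_{k,n-1}$ sits as a ``full subquiver with extra frozens.'' Concretely, I would take the rectangles seed $\Sigma_{k,n}$ of \cref{def:rectangles} and identify inside it the sub-collection of rectangles $r$ whose associated $k$-subset $J(r)$ is contained in $[n-1]$ (equivalently, does not use the last step label $n$); I claim these are exactly the rectangles that ``don't touch the right edge'' in the appropriate normalization, their Pl\"ucker labels are precisely a copy of $\txx^{k,n-1}$, and the induced subquiver is exactly $Q_{k,n-1}$ — with the caveat that some vertices that are mutable in $Q_{k,n-1}$ may have become frozen in $Q_{k,n}$, and there may be extra arrows from these vertices to the remaining (new) vertices of $Q_{k,n}$. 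This is a restriction/freezing phenomenon: freezing a set of mutable vertices of a seed, and then restricting attention to the subalgebra they generate together with the original frozens, yields a cluster algebra whose cluster variables and compatible collections are a subset of those of the ambient one. I would cite or re-derive the standard fact that if $(\txx, Q)$ is a seed and $Q'$ is obtained by declaring some mutable vertices frozen, then every seed of $\A(\txx, Q')$ obtained by mutating only at vertices still mutable in $Q'$ is the restriction of a seed of $\A(\txx, Q)$; hence cluster variables map to cluster variables and compatibility and exchange relations are preserved. Applying this to the sub-seed realizing $\Sigma_{k,n-1}$ inside $\Sigma_{k,n}$ gives the lemma on this seed, and then \cref{thm:scott} (the rectangles seed generates the whole cluster structure) propagates it to all cluster variables.

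The main obstacle I expect is the bookkeeping in the second step: verifying precisely that the subquiver of $Q_{k,n}$ spanned by rectangles with labels avoiding the index $n$ is isomorphic to $Q_{k,n-1}$ as a quiver (after deleting arrows among frozens, which don't matter by the remark after \cref{def:seed-mutation0}), and that the extra arrows go only \emph{out of} the newly-frozen vertices, never creating new arrows among the sub-seed's mutable vertices. One has to be slightly careful about which rectangle in the $k\times(n-1)$ shape corresponds to which in the $k\times n$ shape — the natural identification is ``same Young-diagram shape, justified to the upper-left corner'' — and to check that the path-reading rule for $J(r)$ is consistent under this identification; the columns indexed $\le k$ of the boundary path behave the same way in both shapes, and the only steps affected are near the new rightmost column. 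An alternative to the explicit subquiver analysis, which I would fall back on if the combinatorics gets unwieldy, is to instead use the ``deletion'' projection $\C[\widehat{\Gr}_{k,n}] \to \C[\widehat{\Gr}_{k,n-1}]$ setting all Pl\"uckers through $n$ to a specialization, and argue via a known quasi-homomorphism/projection compatibility; but the cleanest route is the direct freezing argument above, and I would present that.
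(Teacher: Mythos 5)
Your overall strategy — realize $\Sigma_{k,n-1}$ inside a cyclic rotation of the rectangles seed for $\Gr_{k,n}$ after freezing some vertices, then invoke the general principle that freezing preserves cluster variables, compatibility, and exchange relations — is exactly the route the paper takes. However, the concrete combinatorial identification in your ``key step'' is wrong, and it is not mere bookkeeping as you suggest.

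After reducing (via $\cyc$) to $j=n$ you propose to take the rectangles $r$ in $\Sigma_{k,n}$ with $J(r)\subseteq[n-1]$ and claim these realize a copy of $Q_{k,n-1}$. But in the rectangles labeling of \cref{def:rectangles}, the step labeled $n$ sits at the SW corner of the $k\times(n-k)$ box and is a vertical step precisely when $r$ has height $<k$; so $J(r)\not\ni n$ iff $r$ has full height $k$. That is the bottom frozen row only, $n-k$ vertices, nowhere near the $1+k(n-1-k)$ vertices of $Q_{k,n-1}$. (Concretely, for $\Gr_{4,7}$ the only labels avoiding $7$ are $3456,2345,1234$, whereas $Q_{4,6}$ has $9$ vertices.) So the proposed sub-quiver does not exist, and the equivalence ``$J(r)\subseteq[n-1]$ iff $r$ does not touch the right edge'' is false: the right-edge rectangles are the full-width ones $i\times(n-k)$, whose labels all \emph{contain} $1$.

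The fix is to choose the other end of the dihedral reduction: reduce to $j=1$ rather than $j=n$ (or, equivalently, keep $j$ arbitrary and use the appropriate cyclic rotation $\Sigma_{k,n}^{i}$ of the rectangles seed, as the paper does). In $\Sigma_{k,n}$ the Pl\"ucker coordinates containing $1$ are precisely the $k$ frozen variables of the rightmost column (the full-width rectangles $i\times(n-k)$). Delete that column and freeze the second-from-rightmost column; one checks (e.g.\ for $\Gr_{4,7}$) that the remaining labels and induced quiver are exactly a cyclic rotation of $\Sigma_{k,n-1}$ on the index set $[n]\setminus\{1\}$, and that every arrow incident to the deleted column goes to or from the newly-frozen column, so deletion does not disturb any remaining exchange ratio. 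With that correction, your freezing argument in the last paragraph carries through and proves the lemma.
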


        \begin{proof}
        In an appropriate cyclic rotation of the rectangles seed for $\Gr_{k, J}$, the Pl\"ucker coordinates containing $j$ are exactly the frozen variables in the rightmost column. The cluster variables in the second-from-rightmost column are $f_{j-k+1}^I, f_{j-k+2}^I,\dots, f_{j-1}^I, f_{j+1}^I$. Deleting the cluster variables in the rightmost column and freezing the cluster variables in the second-from-rightmost column gives a cyclic rotation of the rectangles seed for $\Gr_{k, I}$. This implies that for every seed $(\txx, Q)$ for $\Gr_{k, I}$, there is a seed $(\txx', Q')$ for $\Gr_{k,J}$ where $\txx \subset \txx'$ and $Q$ is the induced subquiver of $Q'$ using the variables of $\txx$. This in turn implies the lemma.
        \end{proof}

\begin{remark}\label{rmk:cyc-refl-on-functionaries}
	In an abuse of notation, we use $\cyc^*, \refl^*$ to denote maps on twistor coordinates and (rational) functionaries on $\Gr_{k, k+4}$ given by identifying twistor coordinates $\llrr{Y Z_I}$ with Pl\"ucker coordinates $\lr{I}$ for $\Gr_{4, n}$ and using \eqref{eq:cyc-refl-on-plucker}. That is, $\cyc^*\llrr{Y Z_I} := \llrr{Y Z_{I-1}}$ and $\refl^*\llrr{Y Z_I} := \llrr{Y Z_{n+1-I}}$, where $I, I-1, n+1-I$ are written in increasing order. This is different from the maps $\llrr{Y Z_I} \mapsto \llrr{(\cyc Y) Z_I}$ and $\llrr{Y Z_I} \mapsto \llrr{(\refl Y) Z_I}$, which we will never use.
\end{remark}

\subsection{Quasi-homomorphisms of cluster algebras}
	
In this section we define quasi-homomorphisms of cluster
algebras, following \cite{Fraser} and \cite{Fraser2}.

	\begin{definition}[Exchange ratios] Given a cluster seed $\Sigma=((x_1,\dots,x_s), Q)$ 
		for a cluster algebra of rank $r\leq s$,
		and a mutable
		variable $x_i$ (so that $1 \leq i \leq r$), 
		the \emph{exchange ratio} of
		$x_i$ (with respect to $\Sigma$) is 
		\begin{equation}
			\hat{y}_{\Sigma}(x_i) = 
			\frac{\prod_{j: i\to j} x_j^{\# \arr(i\to j)}}
			{\prod_{j: j\to i} x_j^{\# \arr(j\to i)}},
		\end{equation}
		where $\arr(i\to j)$ denotes the number of arrows from 
		$i$ to $j$ in the quiver $Q$.
	\end{definition}

Given a cluster algebra $\A$, we let $\PP$ denote its frozen group, that is the group of Laurent monomials
in the frozen variables.  For elements $x,y\in \A$, we say that 
\emph{$x$ is proportional to $y$}, writing $x \propto y$, if $x=My$ for some Laurent monomial
$M\in \PP$. We then refer to $M$ as a \emph{frozen factor}.

\begin{definition}[Cluster quasi-homomorphism]\label{def:quasi}\cite[Definition 3.1 and Proposition 3.2]{Fraser}
Let $\A$ and $\overline{\A}$ be two cluster algebras of the same rank $r$,
and with respective frozen groups $\PP$ and $\overline{\PP}$.
	Then an algebra homomorphism $f:\A \to \overline{\A}$ that satisfies 
	$f(\PP)\subseteq \overline{\PP}$ is called a \emph{(cluster) quasi-homomorphism} from $\A$ to $\overline{\A}$
	if there are seeds
$\Sigma=((x_1,\dots,x_s),Q)$ and $\overline{\Sigma}=((\bar{x}_1,\dots,\bar{x}_{\bar{s}}), \bar{Q})$ 
	for $\A$ and $\overline{\A}$, such that 
	\begin{enumerate}
		\item $f(x_i) \propto \bar{x}_i$ for $1\leq i \leq r$
		\item $f(\hat{y}_{\Sigma}(x_i)) = \hat{y}_{\overline{\Sigma}}(\bar{x}_i)$ for $1\leq i \leq r$.
		\item the map $i \mapsto \bar{i}$ of mutable nodes in $Q$ and $\bar{Q}$ extends to 
                    an isomorphism of the corresponding induced subquivers. 
	\end{enumerate}
	If conditions (1), (2), and (3) hold, we say that 
	$\Sigma$ is \emph{similar} to $\overline{\Sigma}$, and 
	we write $f(\Sigma) \propto \overline{\Sigma}$.

Note that conditions (1) and (2) above imply condition (3), 
as they allow one to read off adjacencies of mutable nodes;
however, we choose to include condition (3) in the definition 
since it is readily 
	checkable and hence useful when looking $\Sigma$ and $\overline{\Sigma}$.

	\end{definition}

\begin{proposition}
\cite[Proposition 3.2]{Fraser} \label{prop:similar}
If a  seed
$\Sigma$ is similar to the seed $\overline{\Sigma}$, then 
each seed of the seed pattern containing $\Sigma$ is similar to the 
	corresponding seed of the seed pattern containing $\overline{\Sigma}$.
\end{proposition}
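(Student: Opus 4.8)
\textbf{Proof proposal for \cref{prop:similar}.}

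The plan is to prove this by induction on the distance in the seed pattern tree (equivalently, the $r$-regular tree $\TT_r$) between the base seed $\Sigma$ and an arbitrary seed $\Sigma_t$. The base case, distance zero, is exactly the hypothesis that $\Sigma$ is similar to $\overline{\Sigma}$. For the inductive step, it suffices to show that if $\Sigma_t = ((x_1,\dots,x_s),Q)$ is similar to $\overline{\Sigma}_t = ((\bar x_1,\dots,\bar x_{\bar s}),\bar Q)$ via the homomorphism $f$, and $k \in \{1,\dots,r\}$ is a mutable direction, then $\mu_k(\Sigma_t)$ is similar to $\mu_k(\overline{\Sigma}_t)$. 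So the whole content is local: propagating similarity across a single mutation.

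First I would verify condition (3) for the mutated seeds. Since the induced subquivers on mutable vertices of $Q$ and $\bar Q$ are isomorphic via $i \mapsto \bar i$, and quiver mutation $\mu_k$ is defined purely combinatorially (add arrows $i\to j$ for each path $i\to k\to j$, reverse arrows at $k$, cancel $2$-cycles), mutation commutes with this isomorphism on the level of mutable-vertex induced subquivers; one caveat is that arrows to/from frozen vertices could in principle create $2$-cycles between mutable vertices after mutation, but the exchange-ratio bookkeeping in condition (2) tracks exactly the signed arrow multiplicities between mutable vertices, so this is consistent. Next, condition (1): I must show $f(x'_k) \propto \bar x'_k$, where $x'_k$ and $\bar x'_k$ are the new cluster variables from the exchange relation \eqref{exchange relation0}. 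Write the exchange relation for $x'_k$ as $x'_k x_k = P^+ + P^-$ where $P^{\pm}$ are the two monomials in the cluster variables of $\Sigma_t$. Dividing through, $x'_k = x_k^{-1}P^-(1 + \hat y_{\Sigma_t}(x_k))$ up to reorganizing, and similarly for $\bar x'_k$. Applying $f$: using condition (1) at $\Sigma_t$ we get $f(x_k) \propto \bar x_k$ and $f(x_j) \propto \bar x_j$ for all mutable $j$, while $f$ sends frozen variables to Laurent monomials in $\overline{\PP}$; so $f(P^{\pm})$ is a frozen multiple of the corresponding monomial $\bar P^{\pm}$ \emph{up to} the possibility that the two frozen factors (for $P^+$ and for $P^-$) differ. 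This is where condition (2) enters: $f(\hat y_{\Sigma_t}(x_k)) = \hat y_{\overline{\Sigma}_t}(\bar x_k)$ says precisely that $f(P^+)/f(P^-)$ and $\bar P^+/\bar P^-$ agree, i.e. the frozen discrepancies in the two monomials are \emph{equal}, so they factor out of the sum $f(P^+)+f(P^-)$ as a single frozen monomial. Hence $f(x'_k x_k) \propto \bar x'_k \bar x_k$, and since $f(x_k) \propto \bar x_k$ with $\bar x_k$ a nonzero element of the integral domain $\overline{\A}$, we may cancel to conclude $f(x'_k) \propto \bar x'_k$.

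Finally, condition (2) for the mutated seed: one needs $f(\hat y_{\mu_k(\Sigma_t)}(x'_i)) = \hat y_{\mu_k(\overline{\Sigma}_t)}(\bar x'_i)$ for every mutable $i$. Here I would use the standard transformation rule for exchange ratios (the ``$\hat y$-variables'') under mutation, which expresses each $\hat y_{\mu_k(\Sigma_t)}(x'_i)$ as a Laurent monomial in the old exchange ratios $\hat y_{\Sigma_t}(x_j)$ governed by the quiver $Q$ — this is the same mutation rule as for coefficients/$Y$-seeds in \cite{FZ1}, and is exactly why Fraser phrases the definition in terms of exchange ratios. Since $f$ is an algebra homomorphism, it commutes with these Laurent-monomial expressions; combined with condition (2) at $\Sigma_t$ (giving the base values) and condition (3)/the quiver isomorphism (guaranteeing the two mutation rules use matching combinatorial data), we get the desired equality at $\mu_k(\Sigma_t)$. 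The main obstacle — and the step deserving the most care — is the cancellation-of-frozen-factors argument in condition (1): making precise that condition (2) is exactly the statement needed so that the ``$\propto$'' relations for $P^+$ and $P^-$ can be combined additively rather than just individually. Once that is pinned down, the induction closes, and by connectedness of $\TT_r$ every seed of the pattern of $\Sigma$ is similar to the corresponding seed of the pattern of $\overline{\Sigma}$. \qed
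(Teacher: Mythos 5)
Your argument is correct and follows the same strategy as Fraser's original proof of \cite[Proposition~3.2]{Fraser}, which the paper cites without reproving: reduce to a single mutation step, propagate condition (3) from the observation that the mutable-mutable part of the exchange matrix mutates autonomously, use the exchange-ratio equality (2) to show the frozen factors of $f(P^+)$ and $f(P^-)$ coincide so they pull out of the sum (giving (1) for the new variable $x'_k$), and use the $\hat y$-mutation rule together with the ring-homomorphism property of $f$ to propagate (2). The only small inaccuracy is the ``caveat'' about arrows to/from frozen vertices creating $2$-cycles between mutable vertices: this cannot happen, since the rule $b'_{ij} = b_{ij} + \tfrac{1}{2}\bigl(|b_{ik}|b_{kj} + b_{ik}|b_{kj}|\bigr)$ for mutable $i,j \ne k$ involves only mutable-mutable entries, so condition (3) is automatic and needs no help from condition (2); this does not affect the correctness of the rest of the proof.
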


\section{Product promotion is a cluster quasihomomorphism}	\label{sec:quasihom}
		In this section we introduce the notion of \emph{product promotion}, which is 
a homomorphism from 
$\C(\widehat{\Gr}_{4,N_L})\times \C(\widehat{\Gr}_{4,N_R})$
to $\C(\widehat{\Gr}_{4,n})$,
where $N_L$ and $N_R$ are defined in 
\cref{not:LR_cluster}.
The  main result of this section is that product promotion is a 
cluster quasi-homomorphism. Therefore if we apply  product promotion to 
a cluster variable (resp. subset of a cluster),
we obtain a cluster variable (resp. subset of a cluster)  of 
$\Gr_{4,n}$, up to a Laurent monomial in 
the usual frozen variables plus three more Pl\"ucker coordinates.
We will later use product promotion to 
inductively give a description of the BCFW tiles as semi-algebraic sets in $\Gr_{k,k+4}$, see
\cref{thm:BCFW-tile-and-sign-description}. Product promotion is also a key tool in the proof of cluster adjacency (cf. \cref{thm:clusteradjacency}).

\subsection{Product promotion}

Given an element $V$ of the Grassmannian $\Gr_{k,n}$ 
represented by a $k\times n$ matrix
with columns $v_1,\dots,v_n$, 
recall that we identify the Pl\"ucker coordinate 
$\lr{i_1,\dots,i_k}$ with $v_{i_1} \wedge \dots \wedge v_{i_k}$. In what follows, in an abuse of notation, we write $v_{i}$ as just $i$. 

\begin{notation}\label{not:LR_cluster}
Choose integers $1<a<b<c<d<n$ such that $a,b$ and $c,d,n$ are consecutive.
	Let $N_L = \{1,2,\dots,a,b,n\}$ and $N_R = \{b, \dots, c, d, n\}$.
We let  $\widehat{\Gr}^{\circ}_{4, N_L}$,
$\widehat{\Gr}^{\circ}_{4, N_R}$,
and $\widehat{\Gr}^{\circ}_{4,n}$ 
denote the (affine cones over the complex) Grassmannians in vector spaces with bases labeled by 
$N_L$, $N_R$ and 
	$\{1,2,\dots,n\}$, respectively,\footnote{
Note that e.g. we are overloading notation and letting $n$
	index a element of a vector space basis for three different
	vector spaces; however, in what follows, the meaning should
	be clear from context.}
	where we
	removed the locus where the frozen variables vanish.
\end{notation}

\begin{definition}[Product promotion]
\label{pro-twistors2}

Using \cref{not:LR_cluster}, \emph{product promotion} is the homomorphism 
	$$\Psi_{ac} = \Psi: \C(\widehat{\Gr}_{4,N_L})\times \C(\widehat{\Gr}_{4,N_R}) \to \C(\widehat{\Gr}_{4,n})$$ 
induced 
by the following substitution:
\begin{align}\label{eq:promotionvectors1}
&  \text{on $\widehat{\Gr}_{4,N_L}$:} && b \;\mapsto\; b - \frac{\lr{b\,c\,d\,n}}{\lr{a\,c\,d\,n}}\,a 
&&\;=\; \frac{(ba)\cap (cdn)}{\lr{a\,c\,d\,n}} \\
& \text{on $\widehat{\Gr}_{4,N_R}$:} && n \;\mapsto\; n - \frac{\lr{a\,b\,c\,n}}{\lr{a\,b\,c\,d}}\,d + \frac{\lr{a\,b\,d\,n}}{\lr{a\,b\,c\,d}}\,c  \;=\; \frac{\lr{a\,c\,d\,n}}{\lr{a\,b\,c\,d}}\,b - \frac{\lr{b\,c\,d\,n}}{\lr{a\,b\,c\,d}}\,a  &&\;=\; 
\frac{(ba)\cap (cdn)}{\lr{a\,b\,c\,d}} 
\label{eq:promotionvectors2}\\ 
& \text{on $\widehat{\Gr}_{4,N_R}$:} && d \;\mapsto\; d - \frac{\lr{a\,b\,d\,n}}{\lr{a\,b\,c\,n}}\,c 
&&\;=\; \frac{(dc)\cap (abn)}{\lr{a\,b\,c\,n}} \label{eq:promotionvectors3}
\end{align} 

\end{definition}

While $\Psi_{ac} = \Psi$ depends on the choice of integers $a$ and $c$,
we will usually drop the subscripts when it is clear from context.

\begin{remark}
Equivalently, product promotion is the 
	homomorphism which acts on 
$\CC(\widehat{\Gr}_{4,N_L})$ as follows. Given distinct $i,j,\ell \in \{1,2,\ldots,a-1\}$, we have
\begin{enumerate}[leftmargin=1.5cm,itemsep=0.5em,topsep=0.5em]
\item[(a1)]
	$\Psi(\lr{i\,j\,\ell\,b}) =
	\frac{ \lr{i\,j\,\ell \br b\,a \br c\,d\,n}}{\lr{a\,c\,d\,n}}$ 
\item[(a2)]	$\Psi(\lr{i\,j\,b\,n}) =
	\frac{ \lr{i\,j\,n \br a\,b \br c\,d\,n}}{\lr{a\,c\,d\,n}}$
 \end{enumerate}
and $\Psi$ acts as the identity on all other Pl\"ucker coordinates of $\widehat{\Gr}_{4,N_L}$. Product promotion acts on $\CC(\widehat{\Gr}_{4,N_R})$ as follows. 
Given distinct $i,j,\ell \in \{b,b+1,\ldots,c\}$, we have 
\begin{enumerate}[leftmargin=1.5cm,itemsep=0.5em,topsep=0.5em]
\item[(b1)]
		$\Psi(\lr{i\,j\,\ell\,d}) = 
		\frac{ \lr{i\,j\,\ell \br d\,c \br a\,b\,n}}{\lr{a\,b\,c\,n}} = 
		\frac{ \lr{c d \br i\,j\,\ell \br  a\,b\,n}}{\lr{a\,b\,c\,n}}$
\item[(b2)]
		$\Psi(\lr{i\,j\,d\,n}) = 
		\frac{ \lr{i\,j\,n \br c\,d \br a\,b\,n}}{\lr{a\,b\,c\,n}} = 
		%\frac{ \lr{d c \br i\,j\,n \br  a\,b\,n}}{\lr{a\,b\,c\,n}}= 
		\frac{\lr{n\,a\,b\br i\,j\br c\,d\,n}}{\lr{a\,b\,c\,n}}$
\item[(b3)]
		$\Psi(\lr{i\,j\,\ell\,n}) = 
	\frac{ \lr{i\,j\,\ell \br b\,a \br c\,d\,n}}{\lr{a\,b\,c\,d}} = 
	\frac{ \lr{a b \br i\,j\,\ell \br c\,d\,n}}{\lr{a\,b\,c\,d}}$
\end{enumerate}
and $\Psi$ acts as the identity on all other Pl\"ucker coordinates of $\widehat{\Gr}_{4,N_R}$.
Note that since Pl\"ucker coordinates are 
antisymmetric 
one can, e.g., regard Case (a2) as a special case of (a1).  
	However, we prefer to  keep Cases (a1) and (a2) separate for the reader's convenience.
\end{remark}

\begin{definition}[Upper promotion]\label{def:upper-promotion}
In many cases, we extend Definition~\ref{pro-twistors2}
	and allow $a=1$, so that $(a,b,c,d)=(1,2,n-2,n-1)$. Then $\widehat{\Gr}^{\circ}_{4, N_L}=\widehat{\Gr}^{\circ}_{4, \{1 2 n\}}$ is empty and $\C(\widehat{\Gr}_{4,N_L})$ is trivial.  The homomorphism,
	which we call \emph{upper promotion},
	reduces to $\Psi:\C(\widehat{\Gr}_{4,N_R}) \to \C(\widehat{\Gr}_{4,n})$, following
	rules (b1), (b2), (b3).
        \end{definition}

\begin{remark}
Some numerators obtained in promotion
may factor,
	see e.g. \cref{rem:factors}.
\end{remark}

\begin{remark}[Relation to Physics] \label{rk:yangianpromotion}
Product promotion is related to an operation on the building blocks of scattering amplitudes in the physics literature. Using BCFW bridges, \cite[Section 2.5]{Arkani-Hamed:2010zjl} illustrates a procedure to build a \emph{Yangian invariant} $Y_L \otimes_{BCFW} Y_R$ from products of two Yangian invariants $Y_L$ and $Y_R$. Yangian invariants depend on \emph{momentum twistors}. 
If we have $n$ particle scattering, the corresponding collection
of momentum twistors gives a point in ${\Gr}_{4,n}$, so Yangian invariants can be interpreted as functions of Pl\"ucker coordinates for $\Gr_{4,n}$. 
% Up to relabelling indices, the substitutions in \cite[Eq. (2.12)]{Arkani-Hamed:2010zjl} coincide with product promotion as in equations \eqref{eq:promotionvectors1}, \eqref{eq:promotionvectors2}, \eqref{eq:promotionvectors3}, up to a sign and a factor of the inverse of a Pl\"ucker coordinate which don't change the corresponding Yangian invariant. 
To connect with \cite{Arkani-Hamed:2010zjl}, we identify their indices $(j,j+1,n-1,n,1)$ for momentum twistors with our indices $(a,b,c,d,n)$ in \cref{not:LR_cluster}. Then the `left' Yangian invariant $Y_L$ depends on the momentum twistors with indices in $1,\ldots,j,I$, which can be identified with the indices $n,1,\ldots,a,b$ of $\widehat{\Gr}_{4,N_L}$. The `right' Yangian invariant $Y_R$ depends on the momentum twistors with indices $I,j+1,\ldots,n-1,n$, which can be identified with the indices $n,b,\ldots,c,d$ of $\widehat{\Gr}_{4,N_R}$. To produce $Y_L \otimes_{BCFW} Y_R$, one makes the following substitutions in $Y_L$ and $Y_R$:
\begin{align} \label{eq:promotionvectorsphy1}
	I \;\mapsto\; (j \, j+1) \cap (n-1 \, n \, 1), &\text{ or in our notation, } b \;\mapsto\; (a \, b)\cap (c \, d \, n) \text{ on }\widehat{\Gr}_{4,N_L}\\
	I \;\mapsto\; (j \, j+1) \cap (n-1 \, n \, 1), &\text{ or in our notation, } n \;\mapsto\; (a \, b)\cap (c \, d \, n) \text{ on }\widehat{\Gr}_{4,N_R} \label{eq:promotionvectorsphy2}\\
	n \;\mapsto\; \hat{n}:= (n-1 \, n) \cap (j \, j+1 \, 1), &\text{ or in our notation, } d \;\mapsto\; (c \, d)\cap (a \, b \, n) \text{ on }\widehat{\Gr}_{4,N_R}, \label{eq:promotionvectorsphy3}
\end{align}
to obtain $\hat{Y}_L$ and $\hat{Y}_R$. Then using our indices
$$Y_L \otimes_{BCFW} Y_R:= [a,b,c,d,n]\, \hat{Y}_L \hat{Y}_R$$
where $[a,b,c,d,n]$ is the unique Yangian invariant (up to multiplication by a scalar) in the indices $a,b,c,d,n$ (it is often called `R-invariant').
As indicated in the right hand sides of \eqref{eq:promotionvectorsphy1}, \eqref{eq:promotionvectorsphy2}, \eqref{eq:promotionvectorsphy3}, these operations coincide with product promotion defined in equations \eqref{eq:promotionvectors1}, \eqref{eq:promotionvectors2}, \eqref{eq:promotionvectors3}, respectively, up to a sign and a factor of the inverse of a Pl\"ucker coordinate, which do not change $Y_L \otimes_{BCFW} Y_R$.
\end{remark}

\subsection{Product promotion is a cluster quasihomomorphism}

In this section we state and prove our main theorem about product promotion (see \cref{thm:promotion2}).
In what follows, we use the notation for frozen variables
from \cref{def:frozen2}.

Since  
$\C[\widehat{\Gr}_{4,N_L}^{\circ}]$ and $\C[\widehat{\Gr}_{4,N_R}^{\circ}]$ are cluster algebras, the product 
$\C[\widehat{\Gr}_{4,N_L}^{\circ}] \times \C[\widehat{\Gr}_{4,N_R}^{\circ}]$ is also a cluster algebra, where each  seed 
is the disjoint union of a seed for $\C[\widehat{\Gr}_{4,N_L}^{\circ}]$ and 
for $\C[\widehat{\Gr}_{4,N_R}^{\circ}]$.

\begin{theorem}[Product promotion is a cluster quasihomomorphism]\label{thm:promotion2}
Using the notation of \cref{not:LR_cluster}, let $\Sigma_0=\Sigma_0^L \sqcup \Sigma_0^R$ be the seed for the  cluster structure on 
$\C[\widehat{\Gr}_{4,N_L}^{\circ}] \times \C[\widehat{\Gr}_{4,N_R}^{\circ}]$ shown 
in
 \Cref{fig:seed_sigma_zero},
and let
	$\Fr(\Sigma_1)=\Fr(\Sigma_1^{a,c})$ be the seed for the  cluster structure on 
$\C[\widehat{\Gr}_{4,n}^{\circ}]$ shown in
 \Cref{fig:seed_sigma_one},
where we have additionally
frozen the cluster variables
$$\mathcal{T}:=\{ \lr{1\,2\,n \br a\,b \br c\,d\,n},\lr{a-1\,a\,b\,n},
\lr{1\,a\,b\,n},
\lr{a\,b\,b+1\,n},\lr{a\,b\,d\,n},
\lr{a\,b\,c\,n},
\lr{a\,b\,c\,d},
\lr{b\,c\,d\,n}, 
\lr{a\,c\,d\,n}\}.$$

Then we have that:
\begin{enumerate}
\item 	Product promotion 
$\Psi: \C(\widehat{\Gr}_{4,N_L})\times \C(\widehat{\Gr}_{4,N_R}) \to 
\C(\widehat{\Gr}_{4,n})$ 
is a quasi-homomorphism of cluster
algebras from 
$\mathcal{A}(\Sigma_0)$ to 
$\mathcal{A}(\Fr(\Sigma_1))$.

\item If we apply product promotion to a cluster of 	
$\mathcal{A}(\Sigma_0)$, we obtain a cluster of 
$\mathcal{A}(\Fr(\Sigma_1))$, up to Laurent
monomials in the elements of 
$$\mathcal{T'}:=\{ 
\lr{a\,b\,c\,n},
\lr{a\,b\,c\,d},
\lr{b\,c\,d\,n}, 
\lr{a\,c\,d\,n}\} \subset \mathcal{T}.$$
\item If we apply product promotion to a frozen variable of $\A(\Sigma_0)$, we obtain a frozen variable of $\A(\Fr(\Sigma_1))$ (which is a cluster or frozen variable for $\Gr_{4,n}$) times a Laurent monomial in $\mathcal{T'}$.
\end{enumerate}
\end{theorem}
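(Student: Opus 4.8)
The plan is to verify Definition~\ref{def:quasi} directly for the seeds $\Sigma_0$ and $\Fr(\Sigma_1)$ exhibited in Figures~\ref{fig:seed_sigma_zero} and~\ref{fig:seed_sigma_one}, and then deduce parts (2) and (3) as formal consequences. First I would pin down the correspondence between the mutable vertices of $\Sigma_0 = \Sigma_0^L \sqcup \Sigma_0^R$ and those of $\Fr(\Sigma_1)$: since $\Fr(\Sigma_1)$ is obtained from a cyclic rotation of the rectangles seed for $\Gr_{4,n}$ by freezing the nine variables in $\mathcal{T}$, the remaining mutable vertices should be in explicit bijection with (mutable vertices of $\Sigma_0^L$) $\sqcup$ (mutable vertices of $\Sigma_0^R$), and I would check condition (3) by comparing the two induced subquivers vertex by vertex (this is the ``readily checkable'' part). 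Then for condition (1), I would compute $\Psi$ applied to each extended-cluster variable of $\Sigma_0$ using the explicit rules (a1), (a2), (b1), (b2), (b3) of the Remark after Definition~\ref{pro-twistors2}: each such image should be, up to a Laurent monomial in $\mathcal{T}$, exactly the corresponding variable of $\Fr(\Sigma_1)$. The chain-polynomial identities of Definition~\ref{def:chain_polynomials} together with Remark~\ref{rem:factors} should let me recognize the relevant images (e.g.\ $\lr{i\,j\,\ell\br b\,a\br c\,d\,n}$) as cluster variables in the rotated rectangles seed, possibly after factoring.

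\textbf{Deducing (2) and (3).} Once conditions (1)--(3) are established for the pair $(\Sigma_0, \Fr(\Sigma_1))$, part (1) of the theorem follows, and Proposition~\ref{prop:similar} propagates similarity to every seed of the cluster pattern of $\Sigma_0$. For part (2): if $\txx$ is the extended cluster of a seed $\Sigma$ in the pattern of $\Sigma_0$, similarity gives $\Psi(\Sigma) \propto \Sigma'$ for the corresponding seed $\Sigma'$ of $\Fr(\Sigma_1)$, meaning each mutable $\Psi(x_i)$ equals $\bar x_i$ times a frozen monomial of $\mathcal{A}(\Fr(\Sigma_1))$. The frozen group of $\mathcal{A}(\Fr(\Sigma_1))$ is generated by the usual $n$ Grassmannian frozens together with $\mathcal{T}$; I would then argue — tracking the explicit frozen factors appearing in rules (a1)--(b3), which only ever involve $\lr{a\,c\,d\,n}$, $\lr{a\,b\,c\,n}$, $\lr{a\,b\,c\,d}$, $\lr{b\,c\,d\,n}$ — that the frozen monomials arising are in fact Laurent monomials in the smaller set $\mathcal{T'}$, and that $\Psi$ sends each genuine frozen variable of $\mathcal{A}(\Sigma_0)$ to a (cluster or frozen) variable for $\Gr_{4,n}$ times a Laurent monomial in $\mathcal{T'}$; this is part (3). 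The key point is that the ``extra'' frozens of $\mathcal{T} \setminus \mathcal{T'}$ are themselves images under $\Psi$ of frozen variables of $\mathcal{A}(\Sigma_0)$ (so they get absorbed), leaving only $\mathcal{T'}$ as genuinely new denominators/numerators.

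\textbf{Main obstacle.} The hard part will be verifying condition (2) of Definition~\ref{def:quasi}, namely that $\Psi$ intertwines the exchange ratios $\hat y_{\Sigma_0}(x_i)$ and $\hat y_{\Fr(\Sigma_1)}(\bar x_i)$ for \emph{every} mutable variable simultaneously. Equivalently, I must check that applying $\Psi$ to the exchange relation \eqref{exchange relation0} at each mutable vertex of $\Sigma_0$ produces (up to a frozen monomial) the exchange relation at the matching vertex of $\Fr(\Sigma_1)$; this requires that the two monomials on the right-hand side of \eqref{exchange relation0} get scaled by the \emph{same} frozen factor. Concretely this comes down to a finite but somewhat intricate case analysis organized by the position of the mutable vertex relative to the indices $a, b, c, d, n$ — vertices ``purely on the left'' (involving only $1,\dots,a-1$), ``purely on the right'' (involving only $b,\dots,c$), and the small number of vertices near the gluing region where the rules (a1)/(a2) versus (b1)/(b2)/(b3) interact. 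Establishing that the frozen-factor discrepancies cancel pairwise at each such vertex — which is forced by condition (3) once a single $\hat y$ matches, but still needs to be checked at a base vertex in each region — is the computational heart of the argument, and is where the chain-polynomial identities and the factorizations from Remark~\ref{rem:factors} do the real work.
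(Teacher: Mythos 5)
Your overall strategy matches the paper's, but there are two genuine gaps.

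\textbf{First gap: verifying that $\Sigma_1$ is a seed.} You write that ``$\Fr(\Sigma_1)$ is obtained from a cyclic rotation of the rectangles seed for $\Gr_{4,n}$ by freezing the nine variables in $\mathcal{T}$,'' but this is not the case. The extended cluster of $\Sigma_1$ contains the degree-two chain polynomial $\lr{1\,2\,n \br a\,b \br c\,d\,n}$, which is not a Pl\"ucker coordinate, so $\Sigma_1$ is not a rotation of the rectangles seed and you cannot get $\Fr(\Sigma_1)$ by just freezing part of such a seed. Establishing that $\Sigma_1$ is a seed for $\C[\Gr_{4,n}^{\circ}]$ at all is a nontrivial step that your proposal skips entirely. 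The paper handles this by mutating $\Sigma_1$ at $\lr{c\,d\,n \br a\,b \br n\,1\,2}$ to get a purely Pl\"ucker seed $\Sigma_1'$ and then exhibiting a reduced plabic graph whose target-labeled face variables realize $\Sigma_1'$; this must be done before one can even speak of $\Fr(\Sigma_1)$.

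\textbf{Second gap: passing from $\mathcal{T}$ to $\mathcal{T}'$ in parts (2) and (3).} Your argument that the extra frozens of $\mathcal{T}\setminus\mathcal{T}'$ ``are themselves images under $\Psi$ of frozen variables of $\mathcal{A}(\Sigma_0)$ (so they get absorbed)'' is an intuition, not a proof. The quasi-homomorphism machinery (conditions (1)--(3) plus \cref{prop:similar}) guarantees only that frozen factors lie in the frozen group of $\A(\Fr(\Sigma_1))$, which is generated by all of $\mathcal{T}$ together with the usual Grassmannian frozens. ``Tracking the explicit frozen factors'' in the initial seed does not control what happens after arbitrary sequences of mutations, where exchange relations can accumulate new factors. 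The paper's proof constructs two auxiliary cluster algebras $\A(\Lambda)$ and $\A(\Omega)$ whose frozen groups are \emph{exactly} generated by $\mathcal{T}'$, proves their cluster variables are pairwise proportional with proportionality factors in $\mathcal{T}'$ (by matching initial clusters and exchange ratios and using that proportionality is preserved under mutation), and then identifies the cluster variables of $\A(\Psi(\Sigma_0))$ and $\A(\Fr(\Sigma_1))$ with cluster variables of $\A(\Lambda)$ and $\A(\Omega)$ respectively. That extra layer of scaffolding is precisely what is missing from your proposal and is the actual content of parts (2) and (3).

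Your plan for condition (2) of \cref{def:quasi} --- case analysis by the position of the mutable vertex relative to $a,b,c,d,n$, with cancellation of common frozen factors in the two monomials of each exchange relation --- is compatible with what the paper does (the paper phrases the cancellation as: for any variable not in the first column of the $N_R$ part, the number of arrows into and out of each row is equal, so row-wise common factors $r_n, r_d$ cancel in $\hat{y}$). That part of your proposal is fine.
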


We note that there are some special cases of
\cref{thm:promotion2}, namely
 upper promotion ($a=1$), $a=2$, $a=3$, and $c=b+1$,
 which require some clarification, 
see \cref{sec:degenerate}.

Since 
by \cref{thm:promotion2}, $\Psi$ maps
cluster variables to cluster variables times a frozen factor, it is convenient to define the following map.
\begin{definition}[Rescaled product promotion]\label{def:rPsi} Using the notation of \cref{not:LR_cluster},
	let $x$ be a cluster or frozen variable of $\C[\widehat{\Gr}_{4,N_L}^{\circ}]$ or $\C[\widehat{\Gr}_{4,N_R}^{\circ}]$. We let $\rPsi(x)=\rPsi_{ac}(x)$ denote the cluster or frozen variable of $\Gr_{4,n}$ obtained from $\Psi_{ac}(x)$ by removing the Laurent monomial in $\mathcal{T'}$ (c.f. \Cref{thm:promotion2} (2) and (3)). (If $x= \lr{bcdn}$, then $\Psi(\lr{bcdn})= \lr{bcdn}$ is a cluster variable for $\C[\widehat{\Gr}^{\circ}_{4,n}]$ and the relevant Laurent monomial in $\mathcal{T'}$ is equal to 1, so $\rPsi(\lr{bcdn})= \lr{bcdn}$.) We call $\rPsi(x)$ the \emph{rescaled product promotion} of $x$.
\end{definition}

\begin{example}
   Consider the cluster variables $\{x\}$ in the red box of \cref{fig:seed_sigma_zero}. If we apply product promotion $\Psi_{ac}$ to $\{x\}$, we obtain the cluster variables $\{\Psi_{ac}(x)\}$ in the red box of \cref{fig:seed_sigma_zero_promoted}. If we instead apply the rescaled product promotion $\rPsi_{ac}$ to $\{x\}$, we obtain the cluster variables $\{\rPsi_{ac}(x)\}$ in the red box of \cref{fig:seed_sigma_one}. Note that $\{\rPsi_{ac}(x)\}$ are obtained from $\{\Psi_{ac}(x)\}$ by removing the factors $r_n, r_d$, which are Laurent monomials in $\lr{b \, c\, d\, n}, \lr{a \, b\, c\, n}, \lr{a \, b\, c\, d}$. 
\end{example}

\begin{figure}[h]
\includegraphics[width=1.00\textwidth]{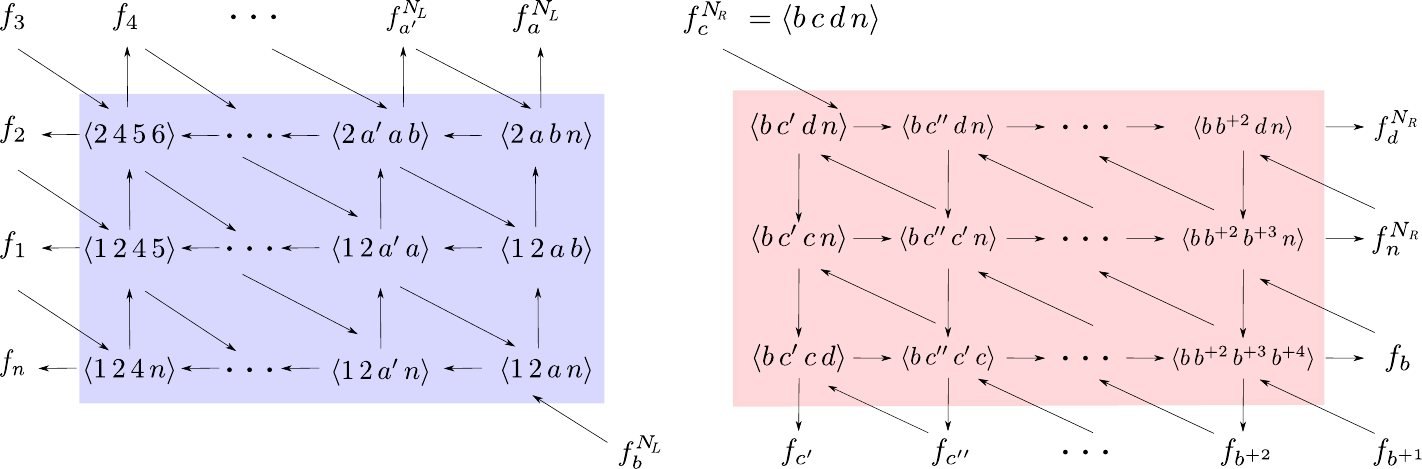}
	\caption{The initial seed $\Sigma_0$ for $\C[\widehat{\Gr}_{4,N_L}]\times \C[\widehat{\Gr}_{4,N_R}]$, which is a disjoint union of a seed $\Sigma_0^{L}$ for $\C[\widehat{\Gr}_{4,N_L}]$ (left) and a seed $\Sigma_0^R$ for $\C[\widehat{\Gr}_{4,N_R}]$ (right). We use the notation $x':=x-1,x'':=x-2$ and $x^{+i}:= x+i$. Mutable variables are in the shaded regions, the others are all frozen.}
\label{fig:seed_sigma_zero}
\end{figure}

\begin{figure}[h]
\includegraphics[width=1.00\textwidth]{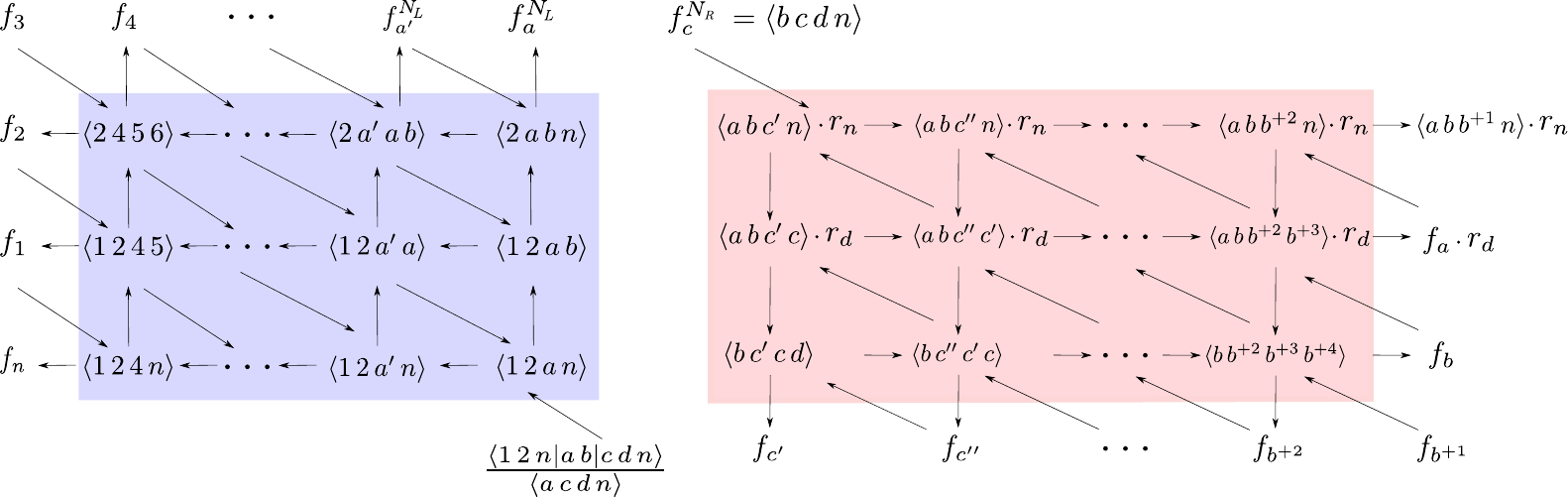}
	\caption{
	The result $\Psi(\Sigma_0)$ of applying the product promotion
		to $\Sigma_0$. 
	We use the notation $r_x:= \langle b \, c\, d\, n\rangle / \langle a \, b\, c\, x\rangle$ and $x':=x-1,x'':=x-2,x^{+i}:= x+i$.
 }
\label{fig:seed_sigma_zero_promoted}
\end{figure}

\begin{figure}[h]
\includegraphics[width=0.97\textwidth]{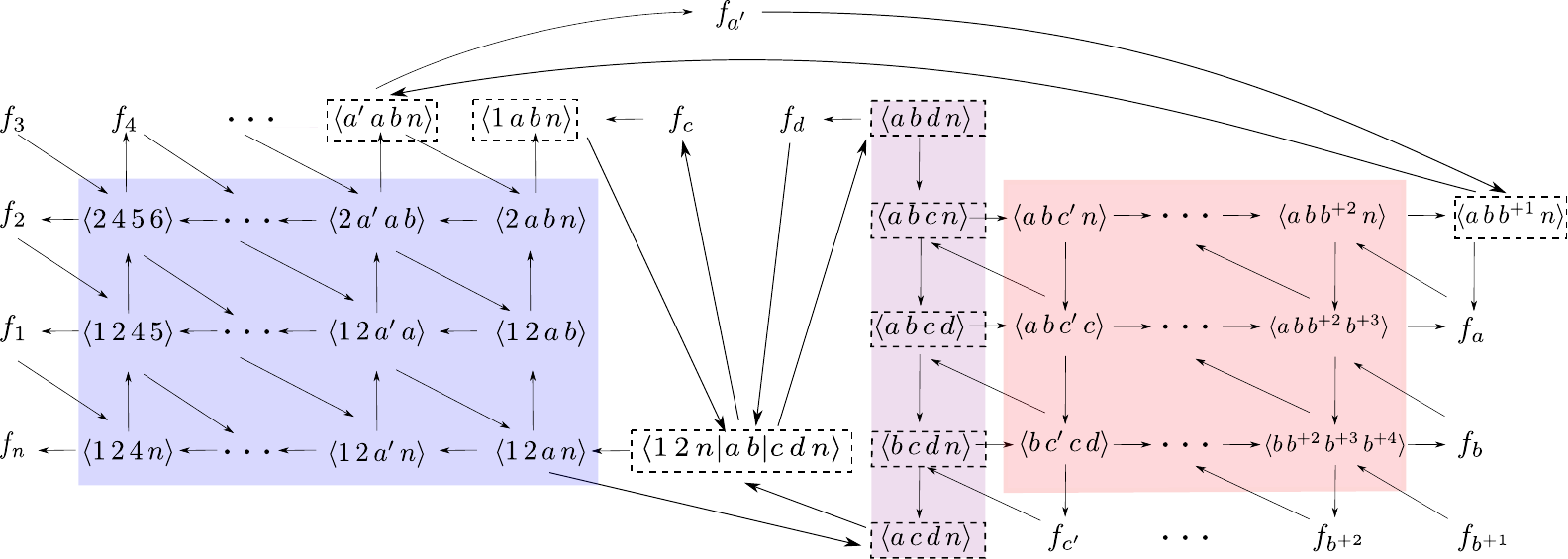}
	\caption{ 
		The
		seed $\Fr(\Sigma_1)=\Fr(\Sigma_1^{a,c})$ for $\C[\Gr_{4,n}]$. $\Fr(\Sigma_1)$ is obtained from $\Sigma_1$ by freezing the variables in the dashed boxes. We use the notation $x':=x-1$ and $x^{+i}:= x+i$.}
\label{fig:seed_sigma_one}
\end{figure}

\begin{proof}
	[Proof of \Cref{thm:promotion2}]
	Let $\Sigma_0$ denote the initial seed for $\C[\widehat{\Gr}^{\circ}_{4,N_L}]\times \C[\widehat{\Gr}^{\circ}_{4,N_R}]$ shown in
\Cref{fig:seed_sigma_zero}.  We note that $\Sigma_0$ is indeed a seed because 
each connected component is a cyclic shift of the {rectangles seed}
	for $\C[\widehat{\Gr}^{\circ}_{4,N_L}]$ and $\C[\widehat{\Gr}^{\circ}_{4,N_R}]$.
If we apply  product promotion to $\Sigma_0$, we obtain the ``promoted seed''
	$\Psi(\Sigma_0)$, shown in \Cref{fig:seed_sigma_zero_promoted}.

	Next we claim that the seed $\Sigma_1$ 
	shown in 
	\Cref{fig:seed_sigma_one} is a seed
	for $\C[\Gr_{4,n}^{\circ}]$.
	To prove this,  
	note that in $\C[\Gr_{4,n}^{\circ}]$, we have 
	\begin{equation}
		\lr{c\,d\,n \br a\,b \br n\,1\,2} 
	\lr{1\,a\,d\,n} = 
		\lr{1\,2\,d\,n} \lr{1\,a\,b\,n} 
        \lr{a\,c\,d\,n} 
		+
		\lr{1\,2\,a\,n} 
		\lr{1\,c\,d\,n}
		\lr{a\,b\,d\,n}. 
	\end{equation}
So to prove that 
$\Sigma_1$ is a seed for $\C[\Gr_{4,n}^{\circ}]$, it suffices to show
that 
the seed $\Sigma'_1$ 
obtained from $\Sigma_1$ by 
mutating 
	at $ \lr{c\,d\,n \br a\,b \br n\,1\,2}$ 
	is a seed for $\C[\Gr_{4,n}^{\circ}]$.

\begin{figure}[h]
\includegraphics[width=0.93\textwidth]{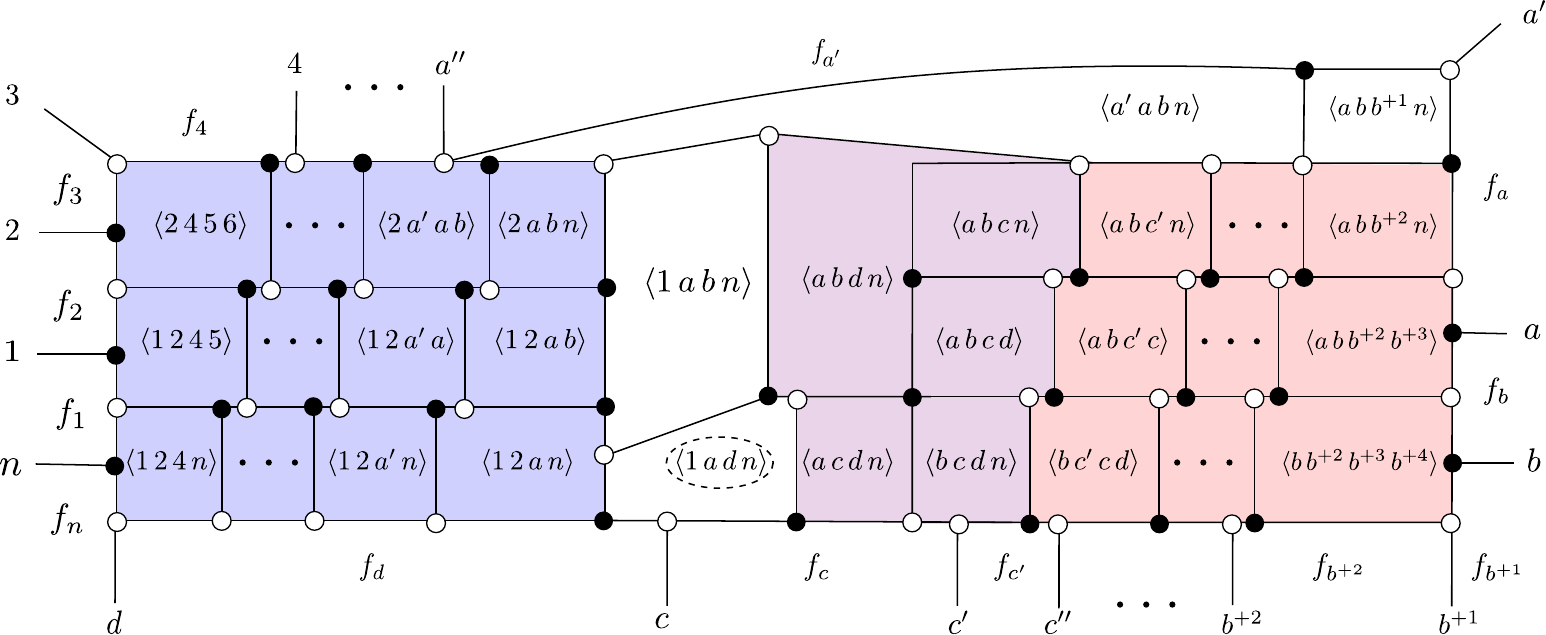}
	\caption{The plabic graph for $\Gr_{4,n}$ whose corresponding seed is $\Sigma_1'$ in the proof of \cref{thm:cluster}. We use the notation $x':=x-1,x'':=x-2$ and $x^{+i}:= x+i$.  Mutating at the face $\lr{1 \, a \, d\, n}$ (dashed circle) gives the seed $\Sigma_1$.}
\label{fig:promotion-plabic}
\end{figure}
	
	Since all cluster and frozen variables of $\Sigma'_1$ 
	are Pl\"ucker coordinates, we can verify that $\Sigma'_1$ 
	is a seed for $\C[\Gr_{4,n}^{\circ}]$
	by constructing a reduced plabic graph $G$ whose target-labeled seed is $\Sigma'_1$.
	Such a reduced plabic graph is shown in \cref{fig:promotion-plabic}.

	Now let $\Fr(\Sigma_1)$ be 	
	the seed obtained from $\Sigma_1$ 
	by freezing the cluster variables
	 $\lr{1\,2\,n \br a\,b \br c\,d\,n}$,
	$\lr{1\,a\,b\,n}$,
	$\lr{a-1\,a\,b\,n}$,
	$\lr{a\,b\,d\,n}$,
	$\lr{a\,b\,b+1\,n}$,
	$\lr{a\,b\,c\,n}$,
	$\lr{a\,b\,c\,d}$,
	$\lr{b\,c\,d\,n}$, and
	$\lr{a\,c\,d\,n}$ (see \Cref{fig:seed_sigma_one}).

 We will show that the 
product promotion, viewed as a map
$\Psi: 
\mathcal{A}(\Sigma_0) \to 
\mathcal{A}(\Fr(\Sigma_1))$,
is a quasi-homomorphism of cluster algebras.  
More specifically, we will show that the
     two seeds 
       $\Sigma_0$ and 
       $\Fr(\Sigma_1)$  
	satisfy the 
	conditions of \Cref{def:quasi}.   These two seeds are shown in \Cref{fig:seed_sigma_zero,fig:seed_sigma_one}.

Looking at 
\Cref{fig:seed_sigma_zero_promoted} and 
\Cref{fig:seed_sigma_one},
we see immediately that  conditions (1) and (3) of \cref{def:quasi} hold:
the images $\Psi(x_i)$ of mutable variables (shown in
 \Cref{fig:seed_sigma_zero_promoted}) 
differ from their counterparts 
(shown in \Cref{fig:seed_sigma_one}),
 only by the frozen variables
$\{ 
\lr{a\,b\,c\,n},
\lr{a\,b\,c\,d},
\lr{b\,c\,d\,n}\}.$
The induced subquivers obtained by restricting to the mutable cluster
variables  agree.
   
   Condition (2), that the corresponding exchange
       ratios agree, holds for the mutable variables in the left ``$N_L$'' connected component of  $\Psi(\Sigma_0)$. Indeed, this holds by inspection for $\lr{12an}$, and the neighborhoods of all other mutable variables are identical in the two seeds.

We now show condition (2) holds for the remaining exchange ratios. We first consider the leftmost column of mutable cluster variables 
from the ``$N_R$'' connected 
component of $\Psi (\Sigma_0)$ and the corresponding ones in $\Fr(\Sigma_1)$. We have:
 \begin{align*}
	 \hat{y}_{\Psi (\Sigma_0)} \left( \lr{a\, b \, c' \, n} \cdot r_n \right) &= \hat{y}_{\Fr(\Sigma_1)} \left(\lr{a\, b \, c' \, n} \right) = \frac{ \lr{a \, b \, c'' \, n} \lr{a \,b\,c'\,c}}{ \lr{a\,b\,c\,n} \lr{a\,b\,c''\,c'}}\\
	 \hat{y}_{\Psi (\Sigma_0)} \left(\lr{a\,b\,c'\,c} \cdot r_d \right) &= \hat{y}_{\Fr(\Sigma_1)} \left(\lr{a\,b\,c'\,c}\right) = \frac{\lr{a\,b\,c''\,c'}\lr{b\,c'\,c\,d}\lr{a \, b \, c \, n}}{\lr{b\,c''\,c'\,c}  \lr{a\,b\,c'\,n} \lr{a \, b \, c \, d}}\\ 
	 \hat{y}_{\Psi (\Sigma_0)} \left( \lr{b\,c'\,c\,d} \right) &= \hat{y}_{\Fr(\Sigma_1)} \left(\lr{b\,c'\,c\,d} \right) = \frac{\lr{b\,c''\,c'\,c}\lr{a \, b \, c \, d} f_{c'}}{\lr{a\,b\,c'\,c}  \lr{b \, c \, d \, n} f_{c''}}
 \end{align*}
 We now consider the exchange ratios for the other cluster variables. The cluster variables of the first and second rows (in the $N_R$ part) of $\Psi (\Sigma_0)$ differ from the cluster variables of the corresponding rows of $\Fr(\Sigma_1)$ just by common factors ($r_n$ and $r_d$ respectively). Moreover, for each cluster variable $x_i$ in $\Psi (\Sigma_0)$ not in its first column, the number of out-going arrows $i \rightarrow j$ such that $x_j$ is in row $r$ equals the number of in-going arrows $l \rightarrow i$ with $x_l$ from the same row $r$. Therefore, any common factor among the row $r$ cancels in $\hat{y}(x_i)$. Hence $\Psi (\Sigma_0)$ and $\Fr(\Sigma_1)$ have the same exchange ratios.

Therefore 
 we have a quasi-homomorphism of cluster
algebras $\Psi: 
\mathcal{A}(\Sigma_0) \to 
\mathcal{A}(\Fr(\Sigma_1))$, where 
 $\A(\Fr(\Sigma_1))$ 
is a subcluster algebra of $\C[\Gr_{4,n}^{\circ}]$, in the sense of 
\cite[Definition 4.2.6]{FWZ4}.  
But then by definition of quasi-homomorphism, 
if we promote a cluster variable (respectively, cluster)
of $\C[\widehat{\Gr}_{4,N_L}^{\circ}] \times \C[\widehat{\Gr}_{4,N_R}^{\circ}]$, we obtain a cluster variable (respectively, cluster) of 
$\A(\Fr(\Sigma_1))$, up to multiplication by Laurent monomials in 
the frozen variables of 
$\A(\Fr(\Sigma_1))$. 

The final claim we need to prove is that if we promote a cluster variable 
of $\C[\widehat{\Gr}_{4,N_L}^{\circ}] \times \C[\widehat{\Gr}^{\circ}_{4,N_R}]$, we obtain a cluster variable 
of $\C[\Gr_{4,n}^{\circ}]$, up to multiplication 
by a Laurent monomial in the 
set $\mathcal{T'}$. To show this, we consider 2 slight modifications of $\Psi(\Sigma_0)$ and $\Fr(\Sigma_1)$.

\begin{figure}
	\centering
	\includegraphics[width=\textwidth]{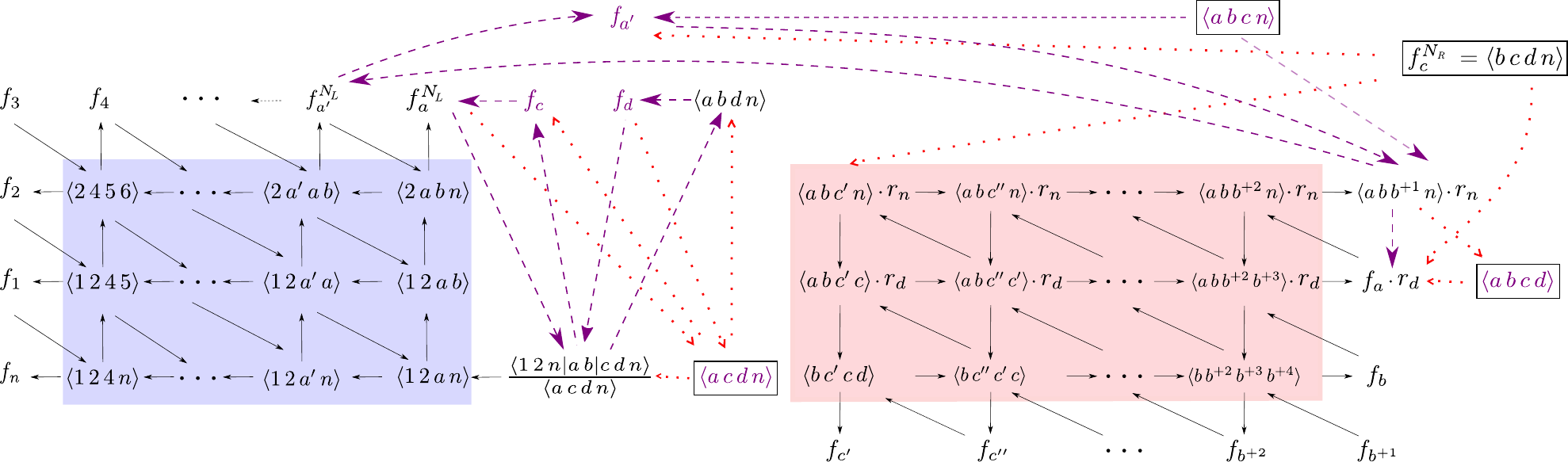}
	\caption{\label{fig:promoted-seed-extended} 
To construct the seed $\Lambda$ from $\Psi(\Sigma_0)$, 
add the variables 
$f_c=\lr{1cdn}, f_d=\lr{12dn},\lr{abdn},\lr{abcn}, \lr{abcd}, \lr{acdn}$,
then the dashed purple and dotted red arrows. 
The purple dashed arrows are all present in $\Omega$, while the red dotted arrows make the exchange ratios of the mutable variables of $\Lambda$ 
match up with those in $\Omega$. The frozen variables of $\Lambda$ are boxed. 
}
\end{figure}

Let $\Omega$ be the seed obtained from $\Fr(\Sigma_1)$ by unfreezing all its frozen variables which are not in $\mathcal{T}'$.  
Let $\Lambda$ be the seed obtained from $\Psi(\Sigma_0)$ by:
\begin{itemize}
	\item adding the variables 
$f_c=\lr{1cdn}, f_d=\lr{12dn},\lr{abdn},\lr{abcn}, \lr{abcd}, \lr{acdn}$; 
\item freezing all variables in $\mathcal{T}'$ and declaring all other variables mutable; 
\item adding the purple dashed
arrows shown in \cref{fig:promoted-seed-extended}; 
		\item and then finally 
adding the red dotted
arrows between elements of $\mathcal{T'}$ and mutable variables so that all 
the newly mutable variables have the same exchange ratios as the corresponding newly mutable
variables in $\Omega$. 
\end{itemize}
		Notice that we did not add any arrow incident to 
a mutable variable of $\Psi(\Sigma_0)$, so we did not affect any of the exchange ratios
that we previously analyzed.  The final step  of adding the red dotted arrows 
is possible because after we added the purple dashed  arrows in \cref{fig:promoted-seed-extended}, the exchange ratios for newly mutable variables (those which were not mutable in $\Psi(\Sigma_0)$)
differ from the exchange ratios in $\Omega$ by Laurent monomials in $\mathcal{T}'$.

We've now constructed two cluster algebras 
$\A(\Lambda)$ and $\A(\Omega)$ whose cluster variables are in bijection,
and whose frozen variables 
%(in both cases) 
are precisely $\mathcal{T'}$.
Moreover, each initial cluster variable of $\A(\Lambda)$ is proportional to the corresponding
cluster variable of $\A(\Omega)$, and the exchange ratios around corresponding initial cluster variables 
agree.  
These properties are preserved under mutation, so each cluster variable of $\A(\Lambda)$
is proportional to the corresponding cluster variable of $\A(\Omega)$, with frozen factor  
a Laurent monomial in $\mathcal{T'}$.
The cluster variables of $\A(\Psi(\Sigma_0))$ are cluster variables of $\A(\Lambda)$ because the variables we added to $\Lambda$ are adjacent only to variables which are frozen in $\Psi(\Sigma_0)$. Similarly, cluster variables of $\A(\Fr(\Sigma_1))$ are cluster variables of $\A(\Omega)$, since the seeds only differ by freezing. So the frozen factors of variables in $\A(\Psi(\Sigma_0))$ are Laurent monomials in $\mathcal{T'}$.
\end{proof}

As a consequence of \Cref{pro-twistors2} and \Cref{thm:promotion2}, 
we obtain the following. 
\begin{corollary}\label{cor:pos}
The polynomial 
${\lr{i\,j\,n \br a\,b \br c\,d\,n}}
={\lr{a\,b\,n \br c\,d \br i\,j\,n}}={\lr{c\,d\,n \br i\,j \br a\,b\,n}}$ 
is a cluster variable and hence positive on $\Gr^{>0}_{4,n}$ whenever
 $i<j<a<b<c<d<n$. 
And the polynomial 
${\lr{i\,j\,\ell \br b\,a \br c\,d\,n}}$
is a cluster variable and hence positive on $\Gr^{>0}_{4,n}$ whenever
\begin{itemize}
\item  $i<j<\ell < a<b<c<d<n$ or 
\item $c<d<i<j<\ell<a<b<n$ or 
\item $a<b<i<j<\ell<c<d<n$.
\end{itemize}
\end{corollary}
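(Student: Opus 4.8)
The plan is to derive this from \cref{pro-twistors2} and \cref{thm:promotion2}. The point is that each of the displayed polynomials is, up to a single Pl\"ucker factor, the image $\Psi(\lr{P})$ of a Pl\"ucker coordinate $\lr{P}$ of $\widehat{\Gr}_{4,N_L}$ or $\widehat{\Gr}_{4,N_R}$ under a suitable instance of product promotion; since $\rPsi$ carries cluster and frozen variables to cluster and frozen variables, this identifies our polynomials with cluster (or frozen) variables of $\Gr_{4,n}$, after which positivity on $\Gr^{>0}_{4,n}$ is automatic. First I would note that the three expressions written for the first polynomial --- and the analogous rewritings used implicitly for the second --- are genuinely equal as polynomials in Pl\"ucker coordinates, by the chain-polynomial identities of \cref{def:chain_polynomials} and the remarks following it (the two displayed forms of the quadratic, permutation of clauses, and cyclic reordering inside a clause). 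So it suffices to treat one representative of each family.

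Fix such a polynomial and let $\hat N \subseteq [n]$ be a set containing all indices occurring in it (the minimal such set has seven or eight elements, but a few extra ``spectator'' indices may be convenient). Relabelling the elements of $\hat N$ in cyclic order by $\{1,\dots,|\hat N|\}$ identifies the cluster structure of $\Gr_{4,\hat N}$ with that of $\Gr_{4,|\hat N|}$, and I would choose this relabelling (and, if needed, a reflection; \cref{cor:cycrefl}) so that our polynomial becomes, up to sign and reordering within clauses, one of the product-promotion outputs listed after \cref{pro-twistors2}: rule (a2) (equivalently (b2)) for $\lr{i\,j\,n \br a\,b \br c\,d\,n}$; rule (a1) for $\lr{i\,j\,\ell \br b\,a \br c\,d\,n}$ with $i<j<\ell<a<b<c<d<n$; rules (b1)/(b3) (upper promotion, \cref{def:upper-promotion}) for $\lr{i\,j\,\ell \br b\,a \br c\,d\,n}$ with $a<b<i<j<\ell<c<d<n$; and, for the remaining range $c<d<i<j<\ell<a<b<n$, a cyclic relabelling turning the polynomial into an (a1)-type output. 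Here $N_L,N_R$ are determined as in \cref{not:LR_cluster} by the resulting parameters inside $\{1,\dots,|\hat N|\}$; when no spectator indices are available these parameters may fall into one of the degenerate cases ($a\le 3$, $c=b+1$, or upper promotion) of \cref{sec:degenerate}, which are handled there.

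After this matching, our polynomial equals $\lr{Q}\cdot\Psi(\lr{P})$ with $\lr{Q}\in\mathcal{T}'$ and $\lr{P}$ a cluster or frozen variable of $\widehat{\Gr}_{4,N_L}$ or $\widehat{\Gr}_{4,N_R}$. By \cref{thm:promotion2} and \cref{def:rPsi}, $\rPsi(\lr{P})$ is a cluster or frozen variable of $\Gr_{4,\hat N}$ and $\Psi(\lr{P})=M\,\rPsi(\lr{P})$ for a Laurent monomial $M$ in $\mathcal{T}'$; hence our polynomial equals $M'\,\rPsi(\lr{P})$ for a Laurent monomial $M'$ in $\mathcal{T}'$. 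Now $\rPsi(\lr{P})$ is irreducible (it is a cluster or frozen variable; \cref{thm:GLS}), while our polynomial is a genuine degree-two polynomial in Pl\"ucker coordinates which, by the criterion of \cref{rem:factors} (the indices being strictly ordered, none of the listed degeneracies occur), does not factor, so it is irreducible and is neither a scalar multiple of nor divisible by any element of $\mathcal{T}'$; comparing irreducible factorizations forces $M'$ to be a unit. Thus our polynomial is, up to a harmless scalar, the cluster or frozen variable $\rPsi(\lr{P})$ of $\Gr_{4,\hat N}$. Reversing the relabelling and then iterating \cref{lem:add-marker-embed-gr} to adjoin the indices of $[n]\setminus\hat N$ one at a time shows it is a cluster or frozen variable of $\Gr_{4,n}$. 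Finally, every cluster and frozen variable of $\Gr_{4,n}$ lies in some extended cluster, and every element of an extended cluster is positive on $\Gr^{>0}_{4,n}$; hence our polynomial is positive on $\Gr^{>0}_{4,n}$.

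The step I expect to be the main obstacle is the case analysis of the second paragraph: exhibiting, for each range in the statement, an explicit cyclic relabelling of $\hat N$ together with an honest choice of $N_L,N_R$ (using spectator indices where needed) that realizes the polynomial as a product-promotion output, and in particular matching up the unavoidable degenerate instances with \cref{sec:degenerate}. Once that matching is nailed down, the rest is bookkeeping on top of the black box \cref{thm:promotion2}.
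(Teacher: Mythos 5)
Your proposal is correct and follows essentially the route the paper intends: \cref{cor:pos} is stated as an immediate consequence of the explicit promotion formulas (a1)--(b3) in \cref{pro-twistors2} together with \cref{thm:promotion2}, exactly as you argue via restriction to $\hat N$, relabelling, and \cref{lem:add-marker-embed-gr}. The only soft spot is your appeal to \cref{rem:factors} as an irreducibility ``criterion'' (it only lists sufficient conditions for factoring), but the non-divisibility of the numerator by elements of $\mathcal{T}'$ that you actually need can be checked by the same multidegree/evaluation arguments the paper uses in \cref{lem:when-promote-var-factor}, so this is a minor imprecision rather than a gap.
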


\begin{remark}
Note that if $\{i,j,\ell,q,r,s,t\} 
\subset \{1,2,3,\dots,a-1, n\}$, we have that 
	$$\Psi(\lr{i\,j\,\ell \br r\,q \br s\,t\,b}) =
	\frac{ \lr{i\,j\,\ell \br r\,q \br s\,t \br b\,a \br c\,d\,n}}{\lr{a\,c\,d\,n}}.$$
  Therefore we can extend \Cref{cor:pos} to higher degree chain polynomials, and conclude e.g. that if 
	$i<j<\ell<q<r<s<t<b$, then 
	$\lr{i\,j\,\ell \br r\,q| s\,t \br b\,a \br c\,d\,n} = 
	\lr{i\,j\,\ell \br q\,r| s\,t \br a\,b \br c\,d\,n}$
is a cluster variable and hence positive on 
	$\Gr^{>0}_{4,n}$.
\end{remark}

\begin{figure}
\includegraphics[width=\textwidth]{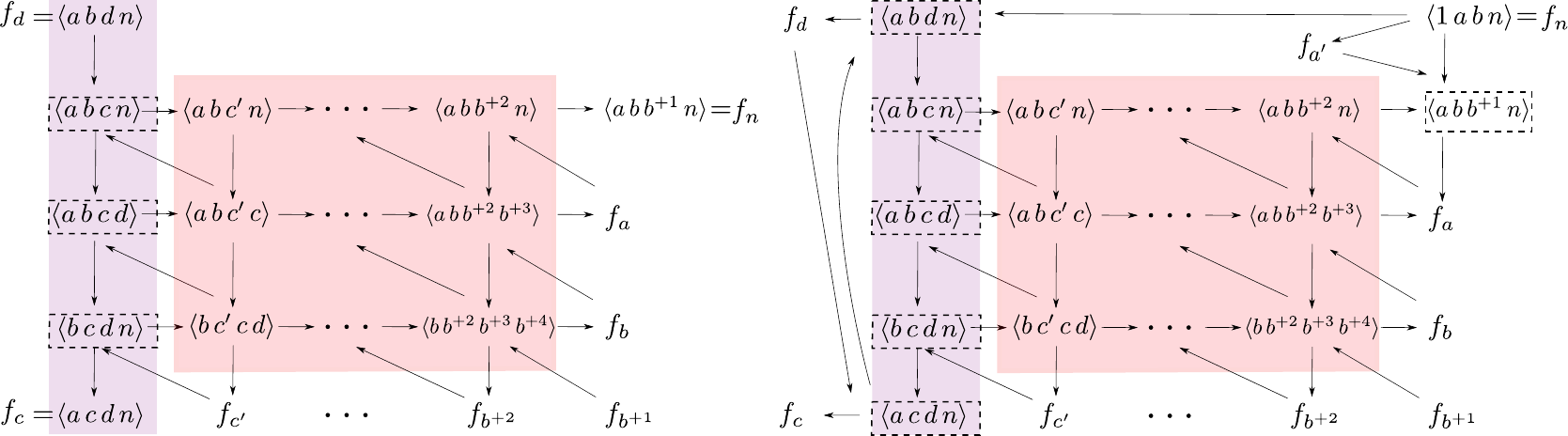}
\caption{Left: the seed $\Fr(\Sigma_1)$ in the case of upper promotion, where $a=1$ and $b=2$. Right: the seed $\Fr(\Sigma_1)$ when $a=2$ and $b=3$. We use the notation $x':= x-1$ and $x^{+i}:= x+i$.}
\label{fig:fr-sigma-one-special}
\end{figure}

\subsection{
Degenerate cases of \cref{thm:promotion2}} \label{sec:degenerate}

There are a few special cases of \cref{thm:promotion2} that warrant discussion. 
The proofs are all analogous to those of \cref{thm:promotion2} so we omit them.
\begin{itemize}
%\item[]
\item \cref{thm:promotion2} holds also for upper promotion (c.f. \cref{def:upper-promotion}), where\footnote{In this case, $\lr{acdn}= \lr{1cdn}=f_c$ and $\lr{abdn}= \lr{12dn}= f_d$ so are already frozen in $\CC[\Gr_{4,n}]$.} 
		$$\mathcal{T} = \mathcal{T'}=\{\lr{abcn}, \lr{abcd}, \lr{bcdn}\}.$$
		In this case, $\Sigma_0$ and $\Psi(\Sigma_0)$ consist only of the right connected component of the quiver in \cref{fig:seed_sigma_zero,fig:seed_sigma_zero_promoted}; the seed $\Fr(\Sigma_1)$ is shown on the left in \cref{fig:fr-sigma-one-special}.
		\item If $a=2$,
		then \cref{thm:promotion2} holds for $\mathcal{T} =\{\lr{abdn}, \lr{abcn}, \lr{abcd}, \lr{bcdn}, \lr{acdn}, \lr{a\,b\,b+1\,n}\}$ and $ \mathcal{T'}=\{\lr{abcn}, \lr{abcd}, \lr{bcdn}\}$. In this case, the left connected component of $\Sigma_0$ and $\Psi(\Sigma_0)$ consist of the single frozen variable $\lr{123n}$; the seed $\Fr(\Sigma_1)$ is shown on the right in \cref{fig:fr-sigma-one-special}.
		\item If $a=3$ or $c=b+1$, then some of the Pl\"ucker coordinates in \cref{fig:seed_sigma_one} coincide. If $a=3$, then $\lr{2abn}= \lr{a-1 \, a\, b\, n}$; if $c=b+1$ then $\lr{abcn}=\lr{a\,b\,b+1\,n}$. To obtain the quiver for $\Fr(\Sigma_1)$ in these cases, one should identify the vertices labeled by equal Pl\"ucker coordinates.
	\end{itemize}

\label{sec:bcfw-and-products}
    
\section{The BCFW map and the BCFW product}\label{sec:BCFWmap}

In this section
we define the \emph{BCFW map} on nonnegative Grassmannians, and define the closely related \emph{BCFW product}
on positroid cells in terms of plabic graphs. We will use these to define BCFW cells in \cref{sec:BCFWcells}. We will later see that the BCFW product is closely related to product promotion (see for example \cref{def:twistor-mtx}, \cref{thm:vanishing-for-all-ops}, and \cref{prop:vanishing_and_sign_of_functionaries_under_promotion}.)

\begin{notation}\label{not:bcfwmap}
Fix $1 \leq a< b< c< d<n$ such that $a,b$ and $c,d,n$ are consecutive, and let $N_L= \{1, \dots, a, b, n\}$ and $N_R=\{b, \dots, c, d, n\}$, as in \cref{not:LR_cluster}. Also fix $k \leq n$ and two nonnegative integers $k_L \leq |N_L|$ and $ k_R\leq |N_R|$ such that $k_L + k_R +1=k$.
\end{notation}

\begin{definition}[BCFW Map]\label{def:bcfw-map} 
Using \cref{not:bcfwmap},
the \emph{BCFW map} is the rational map
\[\mbcfw\;
:\;{\Gr}^{\scriptscriptstyle\ge0}_{k_L, N_L}\times\; \Gr_{1,5}^{\scriptscriptstyle>0}\;\times\;{\Gr}^{\scriptscriptstyle\ge0}_{k_R,N_R}\;\dashrightarrow\;\Grk\]
where $(A, [\alpha: \beta: \gamma: \delta:\varepsilon], B)$ is mapped to the (row span of the) $k \times n$ matrix $(*)$ in \Cref{fig:promotion-matrix}. Here $[\alpha: \beta: \gamma: \delta:\varepsilon]$ denote the positive homogeneous coordinates of a point in $\Gr_{1,5}^{>0} \subset \R \mathbb{P}^4$.

\begin{figure}[h]
\includegraphics[width=0.6\textwidth]{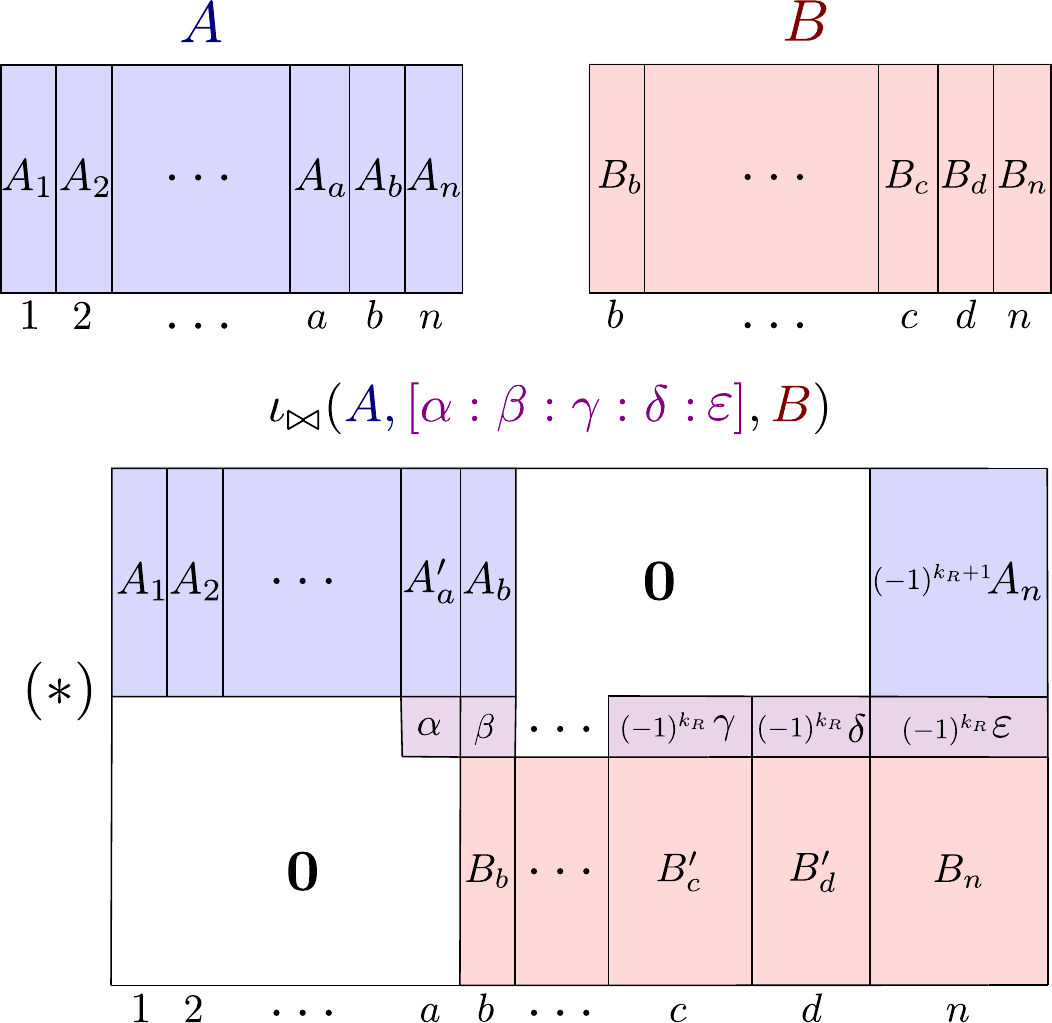}
\caption{The image of $(A, [\alpha: \beta: \gamma: \delta:\varepsilon], B)$ under the BCFW map $\mbcfw$. Here, $A'_a:= A_a + \frac{\alpha}{\beta}A_b$, $B_d':= B_d+ \frac{\delta}{\varepsilon} B_n$, $B_c':= B_c+ \frac{\gamma}{\delta} B_d'$, and in a standard abuse of notation we identify the matrix $(*)$ with its rowspan.
 }
\label{fig:promotion-matrix}
\end{figure}

\end{definition}

\begin{remark} \label{rmk:bcfw-map}
~
\begin{enumerate}
\item The definition of the BCFW map $\mbcfw$ depends on the choices of $a,c,n$, etc. in \cref{not:bcfwmap}.
Such choice will always be fixed in advance and, if it is clear, we will not mention it.
\item 
The rowspan of $(*)$ depends only on the values of $\alpha, \beta, \gamma, \delta, \varepsilon$ up to simultaneous rescaling.
Similarly, it only depends on the rowspans of $A$ and $B$ rather than the specific matrices. 
Additionally, for generic inputs the rank of $(*)$ is~$k$. Therefore the map  $\mbcfw$ is well defined.

\item The numbers $k_L$ or $k_R$ may be zero. If e.g. $k_L=0$ then $\Gr_{k_L, N_L}$ is a point and $A$ is a $0 \times (a+2)$ matrix. The matrix $(*)$ is $(k_R+1) \times n$ and the columns $1, \dots, a-1$ are zero columns.
\item
In the special case that $k_L=0$ and $a=1$, and hence $N_L=\{a,b,n\}$, we call $\mbcfw$ the \emph{upper BCFW map}. We omit the first argument and write
\[\mbcfw\;:\; \Gr_{1,5}^{\scriptscriptstyle>0}\;\times\;{\Gr}^{\scriptscriptstyle\ge0}_{k_R,N_R}\;\dashrightarrow\;\Grk.\]
\item For any set of indices $N \subset [n]$ ordered according to the usual order on integers, the definition of $\mbcfw$ naturally extends to the setting of $\Gr_{k,N}^{\scriptscriptstyle\geq 0}$ rather than $\Gr_{k,n}^{\scriptscriptstyle\geq 0}$. We replace $1$ and $n$ in the definition with the smallest and largest elements of $N$, respectively.
	\end{enumerate}
\end{remark}

We now introduce an operation on positroid cells we call \emph{BCFW product}, 
fixing
 notation as in 
	\cref{not:bcfwmap}.
We will define the operation in terms of \emph{plabic graphs}, 
see \cref{app:plabic}.

\begin{definition}[BCFW product]\label{def:butterfly}
Let $G_L, G_R$ be plabic graphs of respective ranks $k_L, k_R$ on $N_L, N_R$ as in \cref{not:bcfwmap}, let $\pos_L, \pos_R$ be their corresponding positroids and $S_L, S_R$ their corresponding positroid cells. The \emph{BCFW product} of $G_L$ and $G_R$, denoted $G_L \bcfw G_R$, is the plabic graph in the right-hand side of \Cref{fig:butterfly}. We also denote by $\pos_L \bcfw \pos_R$ and $S_L \bcfw S_R$ the positroid and positroid cell corresponding to $G_L \bcfw G_R$, and we call these the \emph{BCFW products} of $\pos_L, \pos_R$ and $S_L, S_R$.

\begin{figure}[h]
\centering
\includegraphics[width=1.0\textwidth]{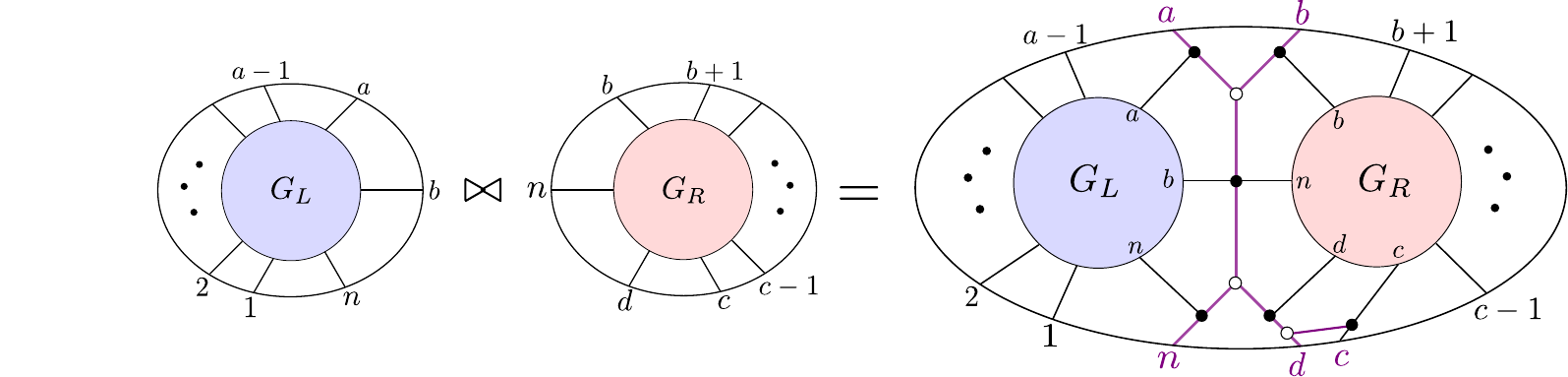}
\caption{The BCFW product $G_L \bcfw G_R$ of plabic graphs $G_L$ and $G_R$.
}
\label{fig:butterfly}
\end{figure}

 The notation $\bcfw$ is intended to echo the ``butterfly" 
 connecting $G_L, G_R$ 
 in the right-hand side of \Cref{fig:butterfly}.
 While we overload this notation, it will always be clear from context what kind of object it denotes. 
Note that whenever we apply a BCFW product, 
we will assume that the cells $S_L$ and $S_R$ use index sets $N_L$ and $N_R$ as in 
	\cref{not:bcfwmap}.\end{definition}

We will show the BCFW map on a pair of positroid cells gives their BCFW product, under appropriate assumptions. First, we need a definition.

\begin{definition}\label{def:coindependent}
Let $V \in \Gr_{k,N}$. A subset $J \subseteq N$ is \emph{coindependent\footnote{This name comes from matroid theory. The nonzero Pl\"ucker coordinates of $V$ determine a matroid $M$. The subset $J$ is coindependent for $V$ exactly when $J$ is independent in the matroid dual to $M$.} for $V$} if $V$ has a nonzero Pl\"ucker coordinate $\lr{I}_V$, such that $I \cap J = \emptyset$. By convention, if $k=0$ then all subsets are coindependent for $V$. If $S \subset  \Gr_{k,n}$, then $J$ is \emph{coindependent for} $S$ if $J$ is coindependent for all~$V \in S$. 
\end{definition}
\begin{remark}\label{rem:co}
By \cref{prop:positroid} and \cref{thm:positroidcell},
$J$ is coindependent for a cell $S$ %of $\Gr_{k,n}$ 
if and only if
	every plabic graph $G$ for $S$ has a perfect orientation where all boundary vertices in $J$ are sinks.
\end{remark}

\begin{notation}[Coindependence]\label{not:coindipendence}
    We use \cref{not:bcfwmap} and fix positroid cells $S_L \subset {\Gr}^{\ge0}_{k_L, N_L}$ and $S_R \subset {\Gr}^{\ge0}_{k_R, N_R}$ such that $\{a, b, n\}$ is coindependent for $S_L$ and $\{b,c,d,n\}$ is coindependent for $S_R$.
\end{notation} 

We will prove 
\cref{prop:butterfly-matrix-same} (1) below, and defer the proof of (2) to 
 \cref{prop:bcfw-map-injective}. 
\begin{proposition}\label{prop:butterfly-matrix-same}
Fix $S_L, S_R$ as in \cref{not:coindipendence}.
Then we have the following:
	\begin{enumerate}
		\item The map $\mbcfw$ is well-defined on $S_L~\times~\chR~\times~S_R$ 
and its image is $S_L \bcfw S_R.$
	\item The map $\mbcfw$ is injective on $S_L~\times~\chR~\times~S_R$, and hence
	 we have an isomorphism
\[\mbcfw:S_L \times\Gr_{1,5}^{\scriptscriptstyle>0} \times S_R \;\xrightarrow{\sim}\; S_L \bcfw S_R.\]
\end{enumerate}
It follows that $\dim (S_L \bcfw S_R) = \dim S_L + \dim S_R +4$.
\end{proposition}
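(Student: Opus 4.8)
The plan is to prove the two claims in part (1) --- that $\mbcfw$ is well-defined on $S_L \times \chR \times S_R$, and that its image is exactly $S_L \bcfw S_R$ --- by passing from the matrix description of $\mbcfw$ in Figure~\ref{fig:promotion-matrix} to the plabic-graph description of $G_L \bcfw G_R$ in Figure~\ref{fig:butterfly}, and checking that these produce the same positroid cell. First I would set up the building blocks. Choose perfect orientations / matrix representatives for $S_L$ and $S_R$: by the coindependence hypotheses of \cref{not:coindipendence}, $\{a,b,n\}$ is coindependent for $S_L$ and $\{b,c,d,n\}$ is coindependent for $S_R$, so (via \cref{rem:co}) we may take plabic graphs $G_L, G_R$ whose perfect orientations make the relevant boundary vertices into sinks. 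Concretely this lets us write representative matrices $A$ for points of $S_L$ and $B$ for points of $S_R$ in which the columns $a, b, n$ (resp. $b, c, d, n$) are ``free'' in the sense needed for the block structure of the matrix $(*)$ to have full rank $k = k_L + k_R + 1$. This is exactly what is needed for well-definedness: the coindependence assumption guarantees the denominators appearing in $A'_a, B'_d, B'_c$ (i.e. the relevant maximal minors of $A$ and $B$) do not vanish on $S_L, S_R$, and that the rows of $(*)$ are linearly independent, so $(*)$ genuinely represents a point of $\Grk$. (For the degenerate cases $k_L = 0$ or $k_R = 0$, or $a=1$, one checks the analogous statements directly from \cref{rmk:bcfw-map}(3),(4); there the corresponding factor is trivial and the claim simplifies.)

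Next I would identify the image. The natural approach is to recognize the matrix $(*)$ as (a representative of) the boundary measurement matrix of the plabic graph $G_L \bcfw G_R$. The ``butterfly'' in Figure~\ref{fig:butterfly} glues $G_L$ and $G_R$ together along added edges/vertices built from the BCFW bridge on the five special boundary points $\{a,b,c,d,n\}$; the point $[\alpha:\beta:\gamma:\delta:\varepsilon] \in \chR = \Gr_{1,5}^{>0}$ records the edge weights of this bridge. I would track, step by step, how the operations $A_a \mapsto A'_a = A_a + \tfrac{\alpha}{\beta} A_b$, $B_d \mapsto B'_d = B_d + \tfrac{\delta}{\varepsilon} B_n$, $B_c \mapsto B'_c = B_c + \tfrac{\gamma}{\delta} B'_d$, together with the new row introduced in $(*)$, correspond exactly to adding the butterfly edges with weights governed by $\alpha,\beta,\gamma,\delta,\varepsilon$ and then reading off boundary measurements. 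Since $G_L$ represents $S_L$ and $G_R$ represents $S_R$, and the bridge parameters range over all of $\Gr_{1,5}^{>0}$, this shows $\mbcfw(S_L \times \chR \times S_R) \subseteq S_L \bcfw S_R$; and conversely every point of the cell $S_L \bcfw S_R$ arises from some choice of edge weights on $G_L \bcfw G_R$, which can be reorganized into a triple $(A, [\alpha:\cdots:\varepsilon], B)$, giving the reverse inclusion. (Total nonnegativity is automatic: $G_L \bcfw G_R$ is a plabic graph, so its boundary measurement map lands in $\Grk$, and conversely the block structure of $(*)$ built from nonnegative $A$, $B$ and positive bridge weights is visibly totally nonnegative.)

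Finally, the dimension count $\dim(S_L \bcfw S_R) = \dim S_L + \dim S_R + 4$ follows once part (2) is known (injectivity of $\mbcfw$, deferred to \cref{prop:bcfw-map-injective}): $\mbcfw$ is then a rational bijection between $S_L \times \Gr_{1,5}^{>0} \times S_R$ and $S_L \bcfw S_R$, and $\dim \Gr_{1,5}^{>0} = 4$. Alternatively one can count faces in $G_L \bcfw G_R$ directly --- the butterfly adds four new faces beyond those of $G_L$ and $G_R$ --- but the clean statement is just to quote (2). \textbf{The main obstacle} I anticipate is the bookkeeping in the second step: matching the precise edge-weight parametrization of the butterfly in Figure~\ref{fig:butterfly} with the explicit entries and the extra row of the matrix $(*)$ in Figure~\ref{fig:promotion-matrix}, including getting all the signs and the $[\alpha:\beta:\gamma:\delta:\varepsilon]$-to-edge-weight dictionary right, and handling the boundary cases ($k_L = 0$, $k_R = 0$, $a = 1$) uniformly. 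The coindependence hypothesis is what makes everything go through, so I would be careful to invoke it exactly where the denominators and rank conditions are used.
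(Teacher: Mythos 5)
Your proposal follows essentially the same route as the paper's proof: choose perfect orientations of $G_L$, $G_R$ making the designated boundary vertices sinks (enabled by the coindependence hypothesis via \cref{rem:co} and \cref{acycliclemma}), build a perfect orientation of $G_L \bcfw G_R$ whose source set is $\{a\}$ together with the sources of $\mathcal{O}_L$, $\mathcal{O}_R$, normalize butterfly edge weights by gauge transformations, compute the resulting path matrix, and then show by row operations and the substitutions $t = \beta/\alpha$, $u = \varepsilon/\alpha$, $v = \delta/\varepsilon$, $w = \gamma/\delta$ that one lands exactly on the matrix $(*)$. The reverse inclusion argument (varying the bridge parameters and the weights on $G_L, G_R$ is the same as varying all weights of $G_L \bcfw G_R$) and the deferral of injectivity to \cref{prop:bcfw-map-injective} also match. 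So the plan is sound.

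One factual slip worth correcting: you assert that the denominators in $A'_a = A_a + \frac{\alpha}{\beta}A_b$, $B'_d = B_d + \frac{\delta}{\varepsilon}B_n$, $B'_c = B_c + \frac{\gamma}{\delta}B'_d$ are ``the relevant maximal minors of $A$ and $B$'' and that coindependence makes them nonzero. Looking at \cref{fig:promotion-matrix}, the denominators are just $\beta, \varepsilon, \delta$, which are BCFW parameters from $\chR = \Gr_{1,5}^{>0}$ and hence automatically positive; coindependence plays no role there. Where coindependence genuinely enters is exactly the step you state first: it guarantees perfect orientations of $G_L, G_R$ with $\{a,b,n\}$ (resp.\ $\{b,c,d,n\}$) as sinks, which is what makes the path-matrix computation of $G_L\bcfw G_R$ decompose into the block shape of $(*)$ and yields rank $k = k_L + k_R + 1$ for free (the path matrix of a plabic graph is always full rank). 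Your proposal would work if you drop the erroneous side remark about denominators and lean entirely on the perfect-orientation argument for well-definedness; the paper establishes well-definedness precisely by exhibiting the explicit full-rank path matrix and showing it equals $(*)$ after row operations, rather than by a separate independence check.
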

We note that when $k_L=0$ and $a=1$ above, $\{a,b,n\}$ is coindependent for $S_L$, so this proposition covers the case that $\mbcfw$ is the 
upper BCFW map.  

\begin{proof}
We first show that $\mbcfw$ is well-defined on $S_L \times \Gr_{1,5}^{\scriptscriptstyle>0} \times S_R$ and that the image is $S_L \bcfw S_R$.
	
	Choose reduced plabic graphs $G_L, G_R$ for $S_L, S_R$. The coindependence assumption 
plus \cref{rem:co}
implies that $G_L$, respectively $G_R$, has a perfect orientation $\O_L$, respectively $\O_R$, where $\{a, b, n\}$, respectively $\{b,c,d,n\}$, are sinks. We may assume that both $\O_L$ and $\O_R$ are acyclic using the coindependence assumption and \cref{acycliclemma}.
Now, orient the edges of $G_L \bcfw G_R$ according to $\O_L$, $\O_R$ and orient all other edges according to \cref{fig:orientation-butterfly}.

\begin{figure}[h]
\centering
\includegraphics[width=0.5\textwidth]{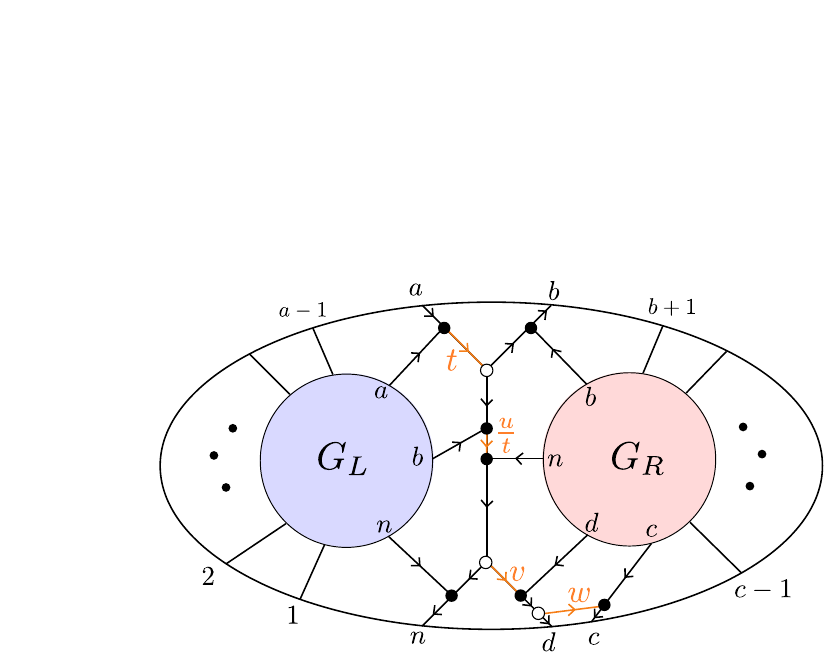}
\caption{The perfect orientation and edge weighting of $G_L \bcfw G_R$ used in the proof of \cref{prop:butterfly-matrix-same}. 
All weights  are positive, and 
edge weights of 1 are omitted. 
}
\label{fig:orientation-butterfly}
\end{figure}
 
This gives a perfect orientation of $G_L \bcfw G_R$ 
whose sources are $a$ together with 
the sources of $\O_L$ and $\O_R$. Thus $S_L \bcfw S_R$ is rank $k=1+ k_L + k_R$.

Now, we will analyze the path matrix of $G_L \bcfw G_R$ 
	(cf \cref{def:pathmatrix})
	and show that one can perform invertible row operations to obtain precisely  the matrix $(*)$ of  \cref{fig:promotion-matrix}. 
First, by using gauge transformations 
(cf \cref{lem:gauge}) 
	of the $10$ internal black/white vertices
shown in 
\cref{fig:orientation-butterfly}, 
	as well as the seven internal vertices of $G_L$ and $G_R$
	which are incident to the boundary vertices
	$\{a,b,n\}$ of $G_L$ and $\{b,c,d,n\}$ of $G_R$, we can assume that the
	$21$ oriented edges in 
\cref{fig:orientation-butterfly} are weighted as in the figure,
	where edge weights of $1$ are omitted.

Now, let $A, B$ be path matrices of $G_L, G_R$ with respect to the perfect orientations $\O_L, \O_R$. Note that $A, B$ represent elements of $S_L, S_R$. By inspection of the paths in \cref{fig:orientation-butterfly}, the path matrix of $G_L \bcfw G_R$ is 
\[ \begin{bmatrix}
	%		| & & |& |& | & & \vdots& & |\\
	A_1& \cdots & A_{a-1} & 0& -tA_a &  0& \cdots& 0& (-1)^{k_R+1}uvw A_a'&  (-1)^{k_R+1}uv A_a'  & 	(-1)^{k_R+1}A_n'\\
	%			| & &|&  |& | & & \vdots& & |\\
	0 & \cdots & 0 & 1 & t & 0& \cdots &0 & (-1)^{k_R} uvw& (-1)^{k_R}uv& (-1)^{k_R}u\\
	%		& \vdots& &  & | & & |& | &|  \\
	0 & \cdots& \cdots&0 & B_b & B_{b+1}& \cdots&B_{c-1}&B_c' & B_d '& B_n\\
	%			&\vdots &  & & | & &|& |& |\\
\end{bmatrix}\]
where 
$$ A'_a:= A_a + \tfrac{1}{t}A_b,\;\;\; B_d':= B_d+ vB_n,\;\;\; B_c':= B_c+ w B_d',\;\;\; A_n' = A_n + uA_a', $$
and the ``middle" row is indexed by the source $a$. By adding a suitable multiple of the row indexed by $a$ to the rows above, one obtains a full-rank matrix of the form
\begin{equation}\label{eq:path-mtx} \begin{bmatrix}
	%		| & & |& |& | & & \vdots& & |\\
	A_1& \cdots & A_{a-1} & A_a'& A_b&  0& \cdots& 0& 0&  0  & 	(-1)^{k_R+1} A_n\\
	%			| & &|&  |& | & & \vdots& & |\\
	0 & \cdots & 0 & 1 & t & 0& \cdots &0 & (-1)^{k_R} uvw& (-1)^{k_R}uv& (-1)^{k_R}u\\
	%		& \vdots& &  & | & & |& | &|  \\
	0 & \cdots& \cdots&0 & B_b & B_{b+1}& \cdots&B_{c-1}&B_c' & B_d '& B_n\\
	%			&\vdots &  & & | & &|& |& |\\
\end{bmatrix}.\end{equation}
Setting $t= \frac{\beta}{\alpha}, u=\frac{\varepsilon}{\alpha}, v=\frac{\delta}{\varepsilon}, w=\frac{\gamma}{\delta}$ and rescaling the row indexed by $a$ by $\alpha$ in \eqref{eq:path-mtx} gives a full-rank matrix exactly of the form 
	$(*)$ in \cref{fig:promotion-matrix}. Thus, 
$\mbcfw$ is well-defined on $(A, [\alpha: \beta: \gamma:\delta: \varepsilon], B)$ and $\mbcfw$ sends this triple to the rowspan of the matrix in \eqref{eq:path-mtx} with the variable substitutions mentioned above.
	 The rowspan of \eqref{eq:path-mtx} lies in $S_L \bcfw S_R$ for any positive parameter values. This shows that $\mbcfw$ is well-defined on $S_L \times \Gr_{1,5}^{>0} \times S_R$ and that $\mbcfw(S_L \times  \Gr_{1,5}^{>0} \times S_R) \subset S_L \bcfw S_R$. To see the reverse inclusion, note that varying $[\alpha: \beta: \delta: \gamma: \varepsilon]$ and the values of the weights in $A, B$ is the same as varying the values of the weights on $G_L \bcfw G_R$, and so the matrices in \eqref{eq:path-mtx} will vary over all points in $S_L \bcfw S_R$.

The statement that $\mbcfw$ is injective on $S_L \times \Gr_{1,5}^{>0} \times S_R$ is \cref{prop:bcfw-map-injective}. 

The dimension of $S_L \times \Gr_{1,5}^{>0} \times S_R$ is $\dim S_L + \dim S_R +4$ and $S_L \bcfw S_R$ is the image of this set under a bijective map, so has the same dimension.
\end{proof}

\begin{remark}[BCFW map and product in physics]
An analogous matrix representation of the BCFW map $\mbcfw$ (as in \cref{fig:promotion-matrix}) appeared in \cite[Equation (3.6)]{Bourjaily:2010wh}.
The first appearance of the ``butterfly'' plabic graph from 
\cref{fig:butterfly} is in \cite[Equation (3.3)]{Bai:2014cna}.
Note that the ``butterfly" plabic graph $G_L \bcfw G_R$ is related to 
the BCFW bridge recurrence in momentum space  
(see \cite[Equation (2.26)]{abcgpt}) by two applications of the
	\emph{inverse T-duality} operation on plabic graphs (see \cite[Definition 8.7, Remark 8.9]{PSW}). See \cref{fig:t-duality} for an illustration.
\end{remark}

\begin{figure}
	\includegraphics[width=\textwidth]{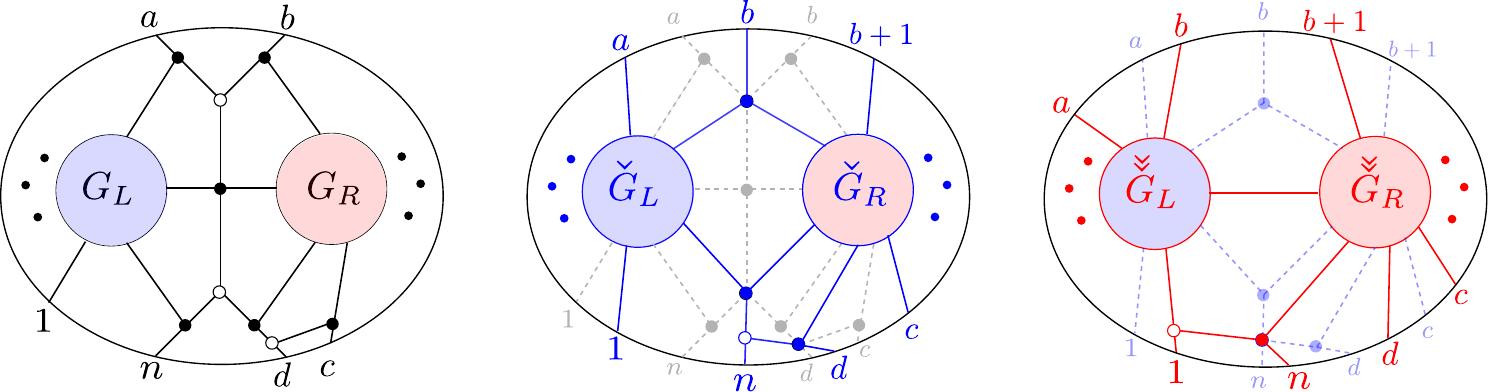}
	\caption{Applying inverse T-duality twice to $G_L \bcfw G_R$ gives the BCFW bridge recurrence in `momentum space' \cite[Equation (2.26)]{abcgpt}. We use the notation $\check{G}$ for the inverse T-dual of $G$. Left: $G_L \bcfw G_R$; center: application of inverse T duality to $G_L \bcfw G_R$; right: application of inverse T duality twice to $G_L \bcfw G_R$.}
	\label{fig:t-duality}
\end{figure}

\section{Standard and general BCFW cells}\label{def:BCFWcells}

In this section we define the standard and general BCFW cells. We then introduce \emph{chord diagrams} (cf. \cref{def:cd}) and \emph{recipes} (cf. \cref{def:recipe}), which label standard and general BCFW cells respectively. Finally, we define the \emph{BCFW matrix} for each BCFW cell, a convenient representative matrix, and use it to show that BCFW cells in $\Grk$ are $4k$-dimensional.

First, we define BCFW cells. Recall the dihedral group action on $\Grk$ from \cref{def:dihedral}. We refer to the following collection of cells as \emph{general BCFW cells}, or 
simply as \emph{BCFW cells}.

\begin{definition}[General BCFW cells]\label{def:BCFW_cell}
The set of \emph{general BCFW cells} is defined recursively:
\begin{enumerate}[align=left]
\itemsep0.25em
\item[(Base case)]
For $k=0$, the trivial cell $\Gr^{\scriptscriptstyle>0}_{0,n}$ is a general BCFW cell.
\item[(Insert zero)]
If $S$ is a general BCFW cell, then so is any cell obtained by inserting a zero column.
\item[(Cyclic shift $+$ reflect)]
If $S$ is a general BCFW cell, then so is any cyclic shift or reflection of $S$.
\item[(Product)] Fix the quantities in \cref{not:bcfwmap}. If $S_L$ and $S_R$ are general BCFW cells on $N_L$ and $N_R$, then so is their BCFW product $S_L \bcfw S_R$. 
\end{enumerate}
\end{definition}

\begin{remark}
By \cref{cor:4bidden}, general BCFW cells satisfy the coindependence assumptions of \cref{prop:butterfly-matrix-same}. Thus, taking the BCFW product of two BCFW cells is the same as applying the BCFW map to them. So the ``(Product)'' step in \cref{def:BCFW_cell} could be equivalently stated as: if $S_L$ and $S_R$ are general BCFW cells, then so is $\mbcfw(S_L \times \chR \times S_R)$.
\end{remark}

The \emph{standard} BCFW cells, 
which we define below, are 
a particularly nice subset of BCFW cells.
The images of the standard BCFW cells 
yield a tiling of the amplituhedron~\cite{even2021amplituhedron}. 

\begin{definition}[Standard BCFW cells]\label{def:std-bcfw-cells}
	\emph{Standard} BCFW cells are defined recursively as follows.
	\begin{enumerate}[align=left]
			\itemsep0.25em
			\item[(Base case)]
			For $k=0$, the trivial cell $\Gr_{0,n}$ is a standard BCFW cell.
			\item[(Insert zero')]
			If $S$ is a standard BCFW cell, then so is the cell obtained by inserting a zero column in the penultimate position.
			\item[(Product)] Fix \cref{not:bcfwmap}. If $S_L$ and $S_R$ are standard BCFW cells on $N_L$ and $N_R$, then their BCFW product $S_L \bcfw S_R$ is a standard BCFW cell. 
	\end{enumerate}
\end{definition}

\begin{remark}
The standard BCFW cells defined here are the same as those in \cite{karp2020decompositions,even2021amplituhedron}, 
though \cref{def:std-bcfw-cells} uses different terminology 
than those two papers. Indeed, \cite{karp2020decompositions} obtains standard BCFW cells  by applying
the ``BCFW bridge recurrence in momentum space"(see \cref{fig:t-duality}, 
right), then applying T-duality twice. This is the same as repeatedly applying the BCFW product.
	%or, in an edge case, adding a black lollipop in the penultimate position. 
	\cite{karp2020decompositions} also gives an explicit
construction of standard BCFW cells in terms of pairs of noncrossing 
lattice paths, while \cite{even2021amplituhedron} gives an explicit
construction of standard BCFW cells in terms of chord diagrams,
and shows that the two constructions  agree \cite[Proposition 2.28]{even2021amplituhedron}.
\end{remark}

\begin{figure} 
\includegraphics[width=0.4\textwidth]{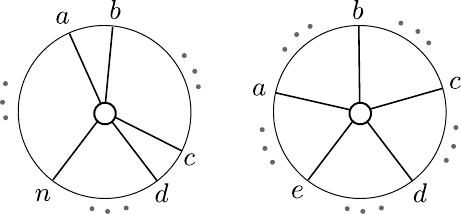} 
\caption{ \label{fig:bcfwcellsk1} Standard BCFW cells (left) and general BCFW cells (right) in $\Gr^{\scriptscriptstyle\geq 0}_{1,n}$, where the $\ldots$ denote black lollipops in the remaining indices.}
\end{figure}

\begin{example} \label{ex:bcfw_cells_k1}
For $k=1$, the BCFW cells in $\Gr_{1,n}^{\scriptscriptstyle \geq 0}$ are indexed by the elements of $\binom{\scriptscriptstyle[n]}{5}$. For $I=\{a,b,c,d,e\} \in \binom{\scriptscriptstyle[n]}{5}$, the corresponding BCFW cell consists of points with Pl\"ucker coordinates $\lr{a},\lr{b},\lr{c},\lr{d},\lr{e}>0$ and all others zero. The plabic graph has a 5-valent white vertex adjacent to boundary vertices $a,b,c,d,e$ and all other vertices are black lollipops, see \cref{fig:bcfwcellsk1}. The standard BCFW cells for $k=1$ are only those where $a,b$ and $c,d$ are consecutive and $e=n$. For $k=n-4$, the \emph{totally} positive Grassmannian $\Gr_{n-4,n}^{\scriptscriptstyle>0}$ is the only BCFW cell. It can be obtained from the point $\Gr_{0,4}^{>0}$ by repeatedly applying the upper BCFW map.
\end{example}

\begin{remark}
\label{kRkLconditions}
It follows from the above definition that if $k>0$ then $k\leq n-4$ for a BCFW cell in $\Grk$. 
Hence, when one generates a BCFW cell with the (Product) step, $k_R \leq |N_R|-4$ and $k_L \leq \max(0,|N_L|-4)$. In terms of \cref{not:bcfwmap}, $k_L \leq a-2$ and $k_R \leq c-b-1$, with the additional possibility that $k_L=0$ and $a=1$, which is the case of the upper BCFW map.
\end{remark}

\subsection{Standard BCFW cells from chord diagrams}
\label{sec:quord}

In this section we introduce \emph{chord diagrams}, and show 
how each gives an algorithm for constructing  a standard BCFW cell.
In \cref{sec:recipes} we then give a  generalization of this algorithm, called a \emph{recipe}, for 
constructing a general BCFW cell.  

\begin{definition}[Chord diagram \cite{even2021amplituhedron}]\label{def:cd} 
Let $k,n \in \mathbb{N}$. A~\emph{chord diagram} $D \in\mathcal{CD}_{n,k}$ is a set of $k$~quadruples named \emph{chords}, of integers in the set $\{1,\dots,n\}$ named \emph{markers}, of the following form:
	$$ D \;=\; \{(a_1,b_1,c_1,d_1),\dots,(a_k,b_k,c_k,d_k)\} \;\;\text{ where }\;\;
	b_i=a_i+1 \text{ and }d_i=c_i+1$$ such that every 
	chord $D_i=(a_i,b_i,c_i,d_i) \in D$ satisfies
$ 1 \;\leq\; a_i \;<\; b_i \;<\; c_i \;<\; d_i \;\leq\; n-1 $
and \emph{no} two chords $D_i,D_j \in D$ satisfy
$ a_i \;=\; a_j$ or $a_i \;<\; a_j \;<\; c_i \;<\; c_j.$
\end{definition}

It follows from this definition that the number of different chord diagrams with $n$ markers and $k$ chords is the  
Narayana number~$N(n-3,k+1)$:
$ \left|\mathcal{CD}_{n,k}\right| \;=\; \frac{1}{k+1}\binom{n-4}{k}\binom{n-3}{k}$. 
As we
explain in \cref{def:standardfromCD} below, 
each chord diagram gives rise to a standard BCFW cell by
encoding a sequence of BCFW products and penultimate zero column insertions.

\begin{figure}
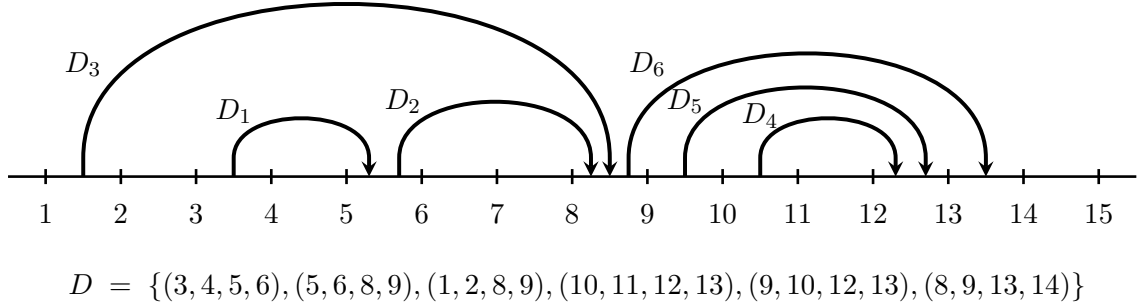

\begin{center}
\tikz[line width=1,scale=1]{
\draw (0.5,0) -- (15.5,0);
\foreach \i in {1,2,...,15}{
\def\x{\i}
\draw (\x,-0.1)--(\x,+0.1);
\node at (\x,-0.5) {\i};}
\foreach \i/\j in {1/8, 3/4.8, 5.2/7.75, 8.25/13, 9/12.2, 10/11.8}{
\def\x{\i+0.5}
\def\y{\j+0.5}
\draw[line width=1.5,-stealth] (\x,0) -- (\x,0.25) to[in=90,out=90] (\y,0.25) -- (\y,0);
}
\node at(1.5,1.5) {$D_3$};
\node at(3.5,0.875) {$D_1$};
\node at(5.75,1) {$D_2$};
\node at(9,1.5) {$D_6$};
\node at(9.5,1) {$D_5$};
\node at(10.5,0.8125) {$D_4$};
}
\vspace{0.5em}
$$ D \;=\; \{(3,4,5,6),(5,6,8,9),(1,2,8,9),(10,11,12,13),(9,10,12,13),(8,9,13,14)\} $$
\end{center}
\caption{
A chord diagram $D$ with $k=6$ chords $n=15$ markers.
} 
\label{cd-example}
\end{figure}

See Figure~\ref{cd-example}, where we visualize such a chord diagram $D$ in the plane as a horizontal line with $n$ markers labeled $\{1,\dots,n\}$ from left to right, and $k$ nonintersecting chords above it, whose \emph{start} and \emph{end} lie in the segments $(a_i,b_i)$ and $(c_i,d_i)$ respectively. The definition imposes restrictions on the chords: they cannot start before $1$, end after $n-1$, or start or end on a marker. Two chords cannot start in the same segment $(s,s+1)$, and one chord cannot start and end in the same segment, nor in adjacent segments. Two chord cannot cross. Pictorially, the \emph{forbidden} configurations are:
\begin{center}
\vspace{1em}
\tikz[line width=1,scale=1.05]{
\draw (0.6,0) -- (1.4,0);
\draw (1,-0.1)--(1,+0.1);
\node at (1,-0.5) {$i$};
\draw[line width=1.5,-stealth] (1,0.1) -- (1,0.25) to[in=180,out=90] (1.5,0.5)}
\hfill
\tikz[line width=1,scale=1.05]{
\draw (0.6,0) -- (1.4,0);
\draw (1,-0.1)--(1,+0.1);
\node at (1,-0.5) {$i$};
\draw[line width=1.5,stealth-] (1,0.1) -- (1,0.25) to[in=0,out=90] (0.5,0.5)}
\hfill
\tikz[line width=1,scale=1.05]{
\draw (0.5,0) -- (1.5,0);
\draw (1,-0.1)--(1,+0.1);
\node at (1,-0.5) {$1$};
\draw[line width=1.5,-stealth] (0.75,0) -- (0.75,0.25) to[in=180,out=90] (1.5,0.5)}
\hfill
\tikz[line width=1,scale=1.05]{
\draw (0.25,0) -- (1.25,0);
\draw (0.5,-0.1)--(0.5,+0.1);
\draw (1,-0.1)--(1,+0.1);
\node at (0.375,-0.5) {$n{-}1$};
\node at (1,-0.5) {$n$};
\draw[line width=1.5,stealth-] (0.75,0) -- (0.75,0.25) to[in=0,out=90] (0.25,0.5)}
\hfill
\tikz[line width=1,scale=1.05]{
\draw (0.25,0) -- (1.25,0);
\draw (0.5,-0.1)--(0.5,+0.1);
\draw (1,-0.1)--(1,+0.1);
\node at (0.375,-0.5) {$n{-}1$};
\node at (1,-0.5) {$n$};
\draw[line width=1.5,stealth-] (1.125,0) -- (1.125,0.25) to[in=0,out=90] (0.5,0.5)}
\hfill
\tikz[line width=1,scale=1.05]{
\draw (0.25,0) -- (1.25,0);
\draw (0.5,-0.1)--(0.5,+0.1);
\draw (1,-0.1)--(1,+0.1);
\node at (0.5,-0.5) {$i$};
\node at (1.125,-0.5) {$i{+}1$};
\draw[line width=1.5,-stealth] (0.625,0) -- (0.625,0.25) to[in=180,out=90] (1.25,0.6);
\draw[line width=1.5,-stealth] (0.875,0) -- (0.875,0.25) to[in=180,out=90] (1.35,0.4)}
\hfill
\tikz[line width=1,scale=1.05]{
\draw (0.25,0) -- (1.25,0);
\draw (0.5,-0.1)--(0.5,+0.1);
\draw (1,-0.1)--(1,+0.1);
\node at (0.5,-0.5) {$i$};
\node at (1.125,-0.5) {$i{+}1$};
\draw[line width=1.5,-stealth] (0.6,0) -- (0.6,0.25) to[in=90,out=90] (0.9,0.25) -- (0.9,0);}
\hfill
\tikz[line width=1,scale=1.05]{
\draw (-0.1,0) -- (1.1,0);
\draw (0,-0.1)--(0,+0.1);
\draw (0.5,-0.1)--(0.5,+0.1);
\draw (1,-0.1)--(1,+0.1);
\node at (-0.125,-0.5) {$i{-}1$};
\node at (0.5,-0.5) {$i$};
\node at (1.125,-0.5) {$i{+}1$};
\draw[line width=1.5,-stealth] (0.25,0) -- (0.25,0.25) to[in=90,out=90] (0.75,0.25) -- (0.75,0);}
\hfill
\tikz[line width=1,scale=1.05]{
\draw (-0.15,0) -- (0.15,0);
\draw (0.35,0) -- (0.65,0);
\draw (0.85,0) -- (1.15,0);
\draw (1.35,0) -- (1.65,0);
\node at (0.75,-0.5) {crossing};
\draw[line width=1.5,-stealth] (0.0,0) -- (0.0,0.25) to[in=90,out=90] (1.0,0.25) -- (1.0,0);
\draw[line width=1.5,-stealth] (0.5,0) -- (0.5,0.25) to[in=90,out=90] (1.5,0.25) -- (1.5,0);}
\hfill
\end{center}

We say that a chord is a \emph{top chord} if there is no chord above it, e.g. $D_3$ and $D_6$ in Figure~\ref{cd-example}. One natural way to label
the chords is by 
$D_1,\dots,D_k$ such that for all $1 \leq j \leq k$, $D_j$ is the rightmost top chord among the set of chords $\{D_1,\dots,D_j\}$ as in \cref{cd-example}. This is equivalent to sorting 
the chords according to their ends.
The visualization of chord diagrams provides us with useful terminology,
which we illustrate in \Cref{cd-example}. This terminology will primarily be used in \cref{sec:clustervariable}.

\begin{definition}[Terminology for chords]
\label{cd-terminology}
A~chord is a~\emph{top} chord 
if there is no chord above it,
 and otherwise it is a \emph{descendant} of the chords above it, called its \emph{ancestors}, and in particular a~\emph{child} of the chord immediately above it, which is called its~\emph{parent}. For example, $D_4$ has parent $D_5$ and ancestors $D_5$ and $D_6$.
Two chords are \emph{siblings} if they are either top chords or children of a common parent; for example, $D_1$ and $D_2$ are siblings, and $D_3$ and $D_6$ are siblings.
Two chords are \emph{same-end} 
if their ends occur in a common segment $(e,e+1)$, are \emph{head-to-tail} if the first ends in the segment where the second starts, and are \emph{sticky} if their 
starts lie in consecutive segments $(s,s+1)$ and~$(s+1,s+2)$.
For example, chords $D_2$ and $D_3$ are same-end, chords $D_1$ and 
	$D_2$ are head-to-tail, and chords $D_5$ and $D_6$ are sticky.
\end{definition}

\begin{remark}
\label{index-sets}
The definition of a chord diagram naturally extends to general index sets with a total order. 
	Thus, we will sometimes work with a finite set of markers 
$N \subset \{1,\dots,n\}$ rather than $\{1,\dots,n\}$, and a set $K$ of chord indices rather than $\{1,\dots,k\}$, and denote these diagrams by $\mathcal{CD}_{N,K}$. 
We will always have that the largest marker is 
$n\in N$, the starts and ends of chords will be consecutive pairs in~$N$ (and also $\mathbb{N}$) and the rightmost top chord will be denoted by $D_{\rtop} = D_{\max K}$. 
\end{remark}

\begin{definition}[Left and right subdiagrams]
\label{def:leftright}
Let $D$ be a chord diagram in $\mathcal{CD}_{N,K}$.
A \emph{subdiagram} is obtained by restricting to a subset of the chords and a 
subset of the markers which contains both these chords and the marker~$n$.
Let
$\ctop = (a,b,c,d)$ be the rightmost top chord of~$D$, where $1\leq a<b<c<d<n$, and moreover $a,b$ and $c,d$ are consecutive in $N$. 

In the case that $d,n$ are consecutive as well
we define $D_L$, the \emph{left subdiagram} of $D$, on the markers $N_L=\{1,2,\dots,a,b,n\}$
and the \emph{right subdiagram} $D_R$ on~$N_R=\{b,\dots,c,d,n\}$. The subdiagram $D_L$ contains all chords that are to the left of $D_{\rtop}$, and $D_R$ contains the descendants of~$D_{\rtop}$. 
\end{definition}

\begin{example}
\label{right-left-diagrams}
For the chord diagram $D$ in \cref{cd-example}, the rightmost top chord is $D_6 = (8,9,13,14)$, so $N_L = \{1,\dots,9,15\}$ and $D_L = \{D_1,D_2,D_3\}$, while $N_R = \{9,\dots,15\}$ and $D_R = \{D_4,D_5\}$.
\end{example}

\begin{definition}[Standard BCFW cell from a chord diagram]
\label{def:standardfromCD}
Let $D\in \mathcal{CD}_{N,K}$ be a chord diagram. 
We recursively construct from $D$ a standard BCFW cell $S_D$ in ${\Gr}^{\scriptscriptstyle\ge0}_{K, N}$ as follows:
\begin{enumerate}[align=left]
\itemsep0.125em
\item 
If $k=0$, then the BCFW cell is the trivial cell $S_D:=\Gr^{\scriptscriptstyle\ge0}_{0,N}$.
\item Otherwise, let $D_{\rt}=(a,b,c,d)$ be the rightmost top chord of~$D$ and let $p$ denote the penultimate marker in $N$.
\begin{enumerate}
\itemsep0.125em
\item 
If $d\neq p$, let $D'$ be the subdiagram on $N \setminus \{p\}$ with the same chords as $D$, and let $S_{D'}$ be the standard BCFW cell associated to $D'$.  Then, we define $S_D := \pre_{p} S_{D'}$, which denotes the standard BCFW cell obtained from $S_{D'}$ by inserting a zero column in the penultimate position~$p$. 
\item If $d=p$, let $S_L$ and $S_R$ be the standard BCFW cells on $N_L$ and $N_R$ associated to 
the left and right subdiagrams	$D_L$ and $D_R$ of $D$.  Then, we let $S_D := S_L \bcfw S_R$, the standard BCFW cell which is their BCFW product as in \cref{def:butterfly}.
\end{enumerate}
\end{enumerate}
\end{definition}

\begin{figure}
	\includegraphics[width=0.7\textwidth]{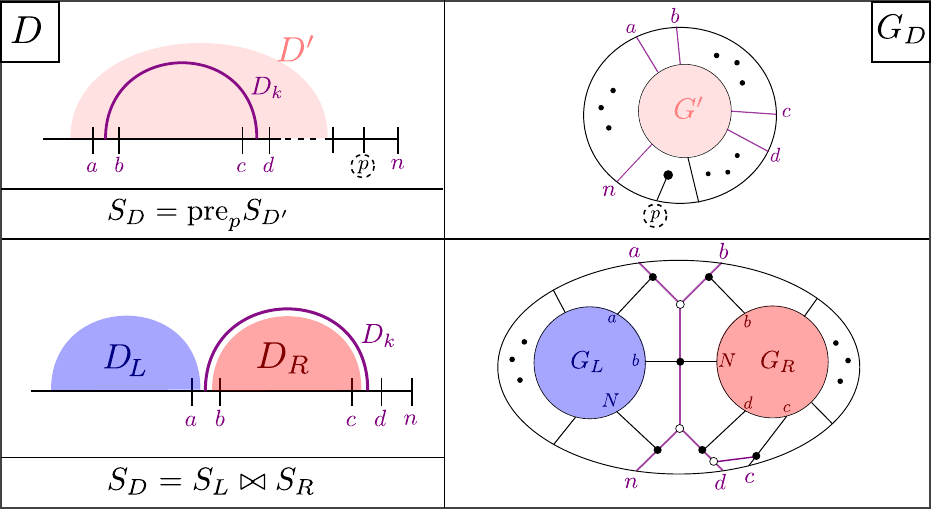}
	\caption{Recursive construction of a standard BCFW cell from a chord diagram as in \cref{def:standardfromCD}. 
 Top left (right): construction of $D$ ($G_D$) from $D'$ ($G'$) as in $(1a)$; bottom left (right) construction of $D$ ($G_D$) from $D_L,D_R$ ($G_L,G_R$) as in $(1b)$.}
	\label{fig:bcfw_chord}
\end{figure}

\begin{example}
The standard BCFW cell $S_D$ of the chord diagram $D$ in \cref{cd-example} is $S_L \bcfw S_R$ where the chord subdiagrams $D_L,D_R$ are as in \cref{right-left-diagrams}. One can keep applying the recursive definition and obtain:
\begin{align*}
S_L \;& =\;
\Gr_{0,\{1,2,15\}}
\bcfw
\left( 
\left(
\Gr_{0,\{2,3,4,15\}}
\bcfw
\Gr_{0,\{4,5,6,15\}}
\right)
\bcfw
\Gr_{0,\{6,7,8,9,15\}}
\right)
\\
S_R \;& =\;
\pre_{14} 
\left(
\Gr_{0,\{9,10,15\}}
\bcfw
\left(
\Gr_{0,\{10,11,15\}}
\bcfw
\Gr_{0,\{11,12,13,15\}}
\right)
\right)
\end{align*}
\end{example}

\begin{remark}
It is not hard to see that every standard BCFW cell arises from a chord diagram.  Moreover
two distinct chord diagrams give rise to different cells \cite{even2021amplituhedron}.
Therefore every standard BCFW cell arises from a unique chord diagram.
\end{remark}

\begin{remark}\label{rem:abuse}
In \cref{def:standardfromCD} we have taken care to work with chord diagrams on 
general index sets~$N$,  but in what follows we often slightly abuse notation and consider the index set $[n]$, and the extension to general index sets is implied.
\end{remark}

\subsection{General BCFW cells from recipes}\label{sec:recipes}

In this section, we establish conventions for labeling general BCFW cells. We also define BCFW matrices, which are distinguished representatives for the elements of general BCFW cells, obtained by repeatedly applying $\pre_I,\cyc, \refl$ and $\mbcfw$.

Each general BCFW cell may be specified 
by a list of operations from \cref{def:BCFW_cell}. 
The class of general BCFW cells includes the standard BCFW cells, but 
is additionally closed under the operations
of cyclic shift, reflection, and inserting a zero column anywhere (see \cref{def:dihedral}), at any stage of the recursive generation.  
Since any sequence of these operations can be expressed as $\pre_I$ followed by $\cyc^r$ followed by $\refl^s$ for some $I, r, s$,
we can specify in a concise form which ones take place after each BCFW product. We will record the generation of a BCFW cell using the formalism of \emph{recipe} in \cref{def:recipe}, which will be convenient for the proofs to come.

\begin{definition}[General BCFW cell from a recipe] \label{def:recipe} 
A \emph{step-tuple} on 
a finite index set $N\subset \NN$ is a $4$-tuple 
\[((a_i, b_i, c_i, d_i, n_i),\pre_{I_i}, \cyc^{r_i}, \refl^{s_i}),\]
where $I_i \subseteq N$ such that $n_i$ is the largest element in $N \setminus I_i$, 
$a_i<b_i$ and $c_i<d_i<n_i$ are both consecutive in $N\setminus I_i$,  $0 \leq r_i < |N|$, and $s_i \in \{0,1\}$. 
A step-tuple records a BCFW product of two cells using indices $(a_i, b_i, c_i, d_i, n_i)$; then zero column insertions in positions $I_i$; then
applying the cyclic shift $r_i$ times; and then applying reflection $s_i$ times. Note that some of these operations may be the identity. Each operation in a step-tuple which is not the identity is called a~\emph{step}. 

A \emph{recipe} $\rcp$ on $N$ is either the empty set (the \emph{trivial recipe} on $N$),
or a recipe $\rcp_L$ on $N_L$ followed by a recipe $\rcp_R$ on $N_R$ followed by a step-tuple
$((a_k, b_k, c_k, d_k, n_k),\pre_{I_k}, \cyc^{r_k}, \refl^{s_k})$ on $N$,
where $N_L = (N\setminus I_k) \cap \{n_k, \dots, a_k,b_k\}$ and 
$N_R = (N\setminus I_k) \cap \{b_k,\dots, c_k,d_k,n_k\}$.
We let $S_{\rcp}$ denote the general BCFW cell on $N$ obtained by applying the sequence of 
	operations specified
by $\rcp$. If $\rcp$ consists of $k$ step-tuples, then $S_{\rcp} \subset \Gr_{k, N}^{\scriptscriptstyle\geq 0}$.
\end{definition}

\begin{example} \label{ex:recipe}
Consider the recipe $\rcp$ consisting of the following sequence of $4$ step-tuples:
\begin{equation*}
    ((3,4,5,6,12),\pre_{2}), ((1,2,5,6,12), \cyc^{2}, \refl)), ((6,7,8,9,11),\pre_{10,12}), ((5,6,10,11,12), \cyc^{4}, \refl).
\end{equation*}
\cref{fig:bcfw_tile} shows the plabic graph of the general BCFW cell $S_\rcp$ obtained from $\rcp$ following \cref{def:recipe}. 
\end{example}

\begin{figure}
	\includegraphics[width=\textwidth]{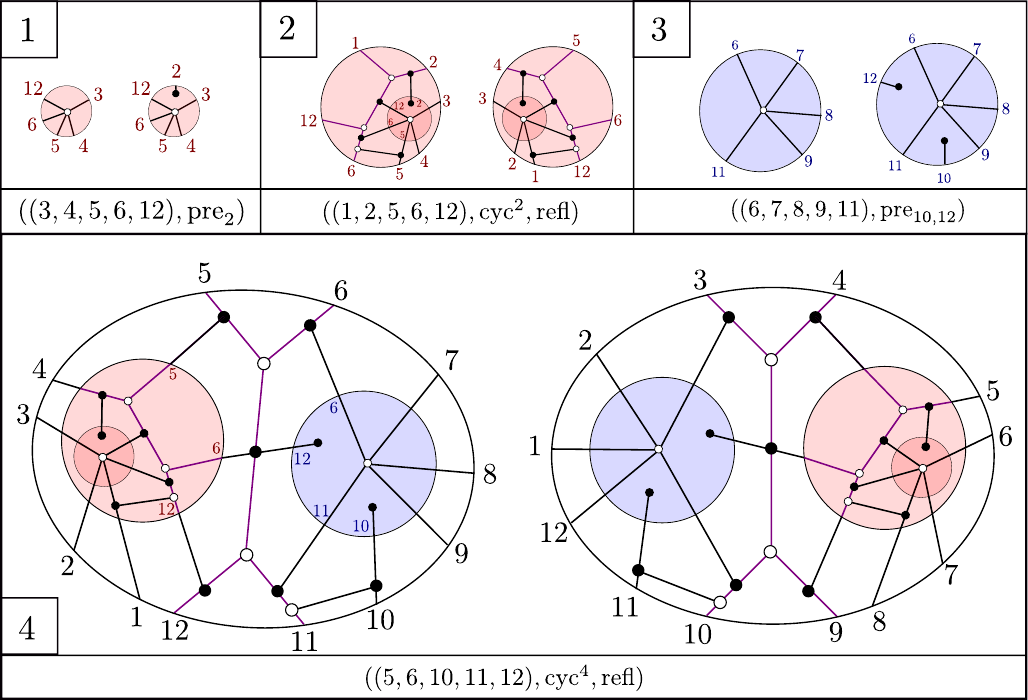}
	\caption{Illustration of building up a BCFW cell using the recipe $\rcp$ of \cref{ex:recipe}. Box $i$ shows the result after the first $i$ step-tuples. The result of the step $(a_i,b_i,c_i,d_i,n_i)$ is shown on the left in each box, and the results of the steps $\mbox{pre}_{I_i}, \mbox{cyc}^{r_i}$ and $\mbox{refl}^{s_i}$ are shown on the right.}
 
	\label{fig:bcfw_tile}
\end{figure}

Because our arguments are frequently recursive, we need some notation for the BCFW cells obtained by deleting the final step of a recipe. We use the following notation throughout.

\begin{notation}\label{not:L-and-R}
Let $\rcp$ be a recipe for a BCFW cell $S \in \Gr_{k, N}^{\scriptscriptstyle\geq 0}$. Let $\st$
denote the final step, which is either $(a_k,b_k,c_k,d_k,n_k), \pre_{I_k}, \cyc$ or $\refl$. If $\st \neq (a_k, b_k, c_k, d_k, n_k)$, then we let $\rcpp$ denote the recipe obtained by replacing $\st$ with the identity. Note that $S_{\rcpp}$ is again a BCFW cell. If $\st=(a_k, b_k, c_k, d_k, n_k)$, let $\rcp_L$ and $\rcp_R$ denote the recipes on $N_L$ and $N_R$ as in \cref{def:recipe}. Then $\rcp_L, \rcp_R$ are recipes for BCFW cells $S_L \subset \Gr_{k_L, N_L}^{\scriptscriptstyle\geq 0}$ and $S_R\subset \Gr_{k_R, N_R}^{\scriptscriptstyle\geq 0}$ and $S = S_L \bcfw S_R$. Note that to avoid clutter, we will usually use $L,R$ as subscripts rather than writing $S_{\rcp_L}, S_{\rcp_R}$.
\end{notation}

\begin{remark}
In contrast with the bijective correspondence between standard BCFW cells and chord diagrams, multiple recipes could give rise to the same general BCFW cell. 
Even the sets of 5 indices that are involved in the BCFW products are not uniquely determined by the resulting cell.
\end{remark}

We now construct a representative matrix for elements of a BCFW cell. We will use this matrix in the next section to invert the $\tZ$-map on BCFW cells (see \cref{def:twistor-mtx} and \cref{thm:BCFW-tile-and-sign-description}).

\begin{definition}[BCFW parameters and matrix] \label{def:BCFW-coords-mtx}
Let $\rcp$ be a recipe on $N$ for a BCFW cell $S_\rcp$. 
We define the \emph{BCFW parameters}
$\coord_\rcp$ and \emph{BCFW matrix} $\mtx_\rcp$ of $S_\rcp$ recursively. If $\rcp$ is the trivial recipe, then $\coord_\rcp = \varnothing$ and $\mtx_\rcp$ is a $0 \times N$ matrix. If $\st= (a_k, b_k, c_k, d_k, n_k)$, then let $\coord_L,\coord_R$ and $\mtx_L, \mtx_R$ be the BCFW parameters and matrices of $S_L, S_R$. The BCFW parameters and matrix of $S_{\rcp}$ are 
\begin{align*}
&\coord_{\rcp}\;:=\;\coord_{L }\cup \{\calpha_k, \cbeta_k, \cdelta_k, \cgamma_k, \cepsilon_k\} \cup \coord_{R} \;=\; \{\calpha_i, \cbeta_i, \cdelta_i, \cgamma_i, \cepsilon_i\}_{i=1}^k
\\
& \mtx_\rcp\;:=\;\mbcfw(\mtx_L, [\calpha_k: \cbeta_k: \cdelta_k: \cgamma_k: \cepsilon_k], \mtx_R).
\end{align*}
where in a slight abuse of notation, here $\mbcfw(\dots)$ denotes the matrix $(*)$ from \cref{fig:promotion-matrix} rather than its rowspan.
	If $\st \in \{\pre_{I_k}, \cyc, \refl\}$, then we define $\coord_\rcp:= \coord_{\rcpp}$ and $\mtx_{\rcp}:= \st(\mtx_{\rcpp})$.
	
We can think of the BCFW parameters
$\calpha_i, \cbeta_i,\cgamma_i, \cdelta_i, \cepsilon_i$ (for $1\leq i \leq k$) 
as abstract variables, but in 
\cref{cor:dim}, we will show that when they range over the positive real numbers,
the corresponding BCFW matrices 
	sweep out the BCFW cell $S_{\rcp}$.

\end{definition}

In words, $\mtx_\rcp$ is precisely the matrix obtained by following the sequence of operations given by~$\rcp$, and the BCFW parameters are exactly the $5k$ parameters used in the applications of $\mbcfw$ as in \cref{def:bcfw-map}. The entries of $\mtx_\rcp$ are rational functions of the BCFW parameters $\coord_\rcp$ and may depend on the recipe $\rcp$ chosen. Each row of the BCFW matrix is naturally indexed by a step-tuple in~$\rcp$. We will frequently use $\zeta_i \in \{\calpha_i, \cbeta_i,\cgamma_i, \cdelta_i, \cepsilon_i\}$ (for $1\leq i \leq k$) to denote one BCFW parameter.

\begin{example}[BCFW parameters and matrix] \label{ex:bcfw_matrix}
We build the BCFW matrix $M_\rcp$ of the cell $S_\rcp$ from \cref{ex:recipe} in terms of the BCFW parameters $\{\czeta_i\}$. We will denote $(10,11,12)$ as $(A,B,C)$.

\begin{itemize}
    \item (Step $1$): after last step of the first step-tuple: 
    \begin{equation*}
  M_L^{(1)}=      \begin{blockarray}{cccccc}
2 & 3 & 4 & 5 & 6 & C \\
\begin{block}{(cccccc)}
  0 & \calpha_1 & \cbeta_1 & \cgamma_1 & \cdelta_1 & \cepsilon_1 \\
\end{block}
\end{blockarray}
    \end{equation*}
  \item (Step $2$): after the first step of the second step-tuple $M_L^{(2)}=\iota_{\bowtie}(0,[\calpha_2: \cbeta_2 :\cgamma_2 : \cdelta_2:\cepsilon_2], M_L^{(1)})$:   

 \begin{equation*}
  M_L^{(2)}= \begin{blockarray}{ccccccc}
1 & 2 & 3 & 4 & 5 & 6 & C \\
\begin{block}{(ccccccc)}
\calpha_2 & \cbeta_2 & 0 & 0 & -\cgamma_2 & -\cdelta_2 & -\cepsilon_2 \\
  0 & 0 & \calpha_1 & \cbeta_1 & \cgamma'_1 & \cdelta'_1 & \cepsilon_1 \\
\end{block}
\end{blockarray},
    \end{equation*}
where: $\cdelta'_1=\cdelta_1+\frac{\cdelta_2}{\cepsilon_2}\cepsilon_1$ and $\cgamma'_1=\cgamma_1+\frac{\cgamma_2}{\cdelta_2}\cdelta'_1$. In Step $3$ we apply $\mbox{cyc}^2$ and $\mbox{refl}$ to $M_L^{(2)}$ and get $M_L^{(3)}$.
\item (Step $4$): after the last step of the third step-tuple: 
    \begin{equation*}
  M_R=      \begin{blockarray}{ccccccc}
6 & 7 & 8 & 9 & A & B & C \\
\begin{block}{(ccccccc)}
 \calpha_3 & \cbeta_3 & \cgamma_3 & \cdelta_3 & 0 & \cepsilon_3 & 0\\
\end{block}
\end{blockarray}
    \end{equation*}

\item (Step $5$):  after the first step of the fourth step-tuple $M^{(5)}=\iota_{\bowtie}(M_L^{(3)},[\calpha_4: \cbeta_4 :\cgamma_4 : \cdelta_4:\cepsilon_4], M_R):$

\begin{equation*}
  M^{(5)}= \begin{blockarray}{cccccccccccc}
1 & 2 & 3 & 4 & 5 & 6 & 7 & 8 & 9 & A & B &C \\
\begin{block}{(cccccccccccc)}
-\cgamma_2 & 0 & 0 & \cbeta_2 & \calpha'_2 & \cepsilon_2 & 0 & 0 & 0 & 0 & 0 &\cdelta_2 \\
\cgamma'_1 & \cbeta_1 & \calpha_1 & 0& -\mu_1 & -\cepsilon_1 & 0 & 0 & 0 & 0 & 0 &-\cdelta'_1 \\
0 & 0 & 0 & 0& \calpha_4 & \cbeta_4 & 0 & 0 & 0 & -\cgamma_4 & -\cdelta_4 &-\cepsilon_4 \\
0 & 0 & 0 & 0 & 0 & \calpha_3 & \cbeta_3 & \cgamma_3 & \cdelta_3 & \nu_3 & \cepsilon_3 & 0\\
\end{block}
\end{blockarray},
    \end{equation*}
where: $\calpha'_2=\calpha_2+\frac{\calpha_4}{\cbeta_4}\cepsilon_2$, $\mu_1=\frac{\calpha_4}{\cbeta_4}\cepsilon_1$, and $\nu_3=\frac{\cgamma_4}{\cdelta_4}\cepsilon_3$. Finally, in order to get $M_{\rcp}$, we apply $\mbox{cyc}^4$ and $\mbox{refl}$ to $M^{(5)}$.    
\end{itemize}  
\end{example}

We now show that allowing the BCFW parameters to vary in $\mtx_\rcp$ gives a parametrization of $S_\rcp$.

\begin{proposition}\label{cor:dim}
Let $\rcp$ be a recipe for a BCFW cell $S_\rcp$ with BCFW matrix $\mtx_\rcp$. Then the map
$$ \left(\chR\right)^k \;\to\; S_\rcp $$
$$ \left([\calpha_i: \cbeta_i:\cgamma_i: \cdelta_i: \cepsilon_i]\right)_{i=1}^k \;\mapsto\; M_\rcp
$$
\\[-1em]
sending the collection of BCFW parameters to the BCFW matrix $M_{\rcp}$ is a bijection, where as usual we identify $M_{\rcp}$ with its row span.
	In particular, $S_\rcp$ has dimension 4k.
\end{proposition}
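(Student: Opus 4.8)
The plan is to argue by induction on $k$, the number of step-tuples in the recipe $\rcp$. In the base case $k=0$ the recipe is trivial, so $S_\rcp = \Gr^{\scriptscriptstyle\ge0}_{0,N}$ is a point, $(\chR)^0$ is a point, $\mtx_\rcp$ is the empty $0\times N$ matrix, the map is vacuously a bijection, and $\dim S_\rcp = 0 = 4k$.

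For the inductive step I would peel off the final step $\st$ of $\rcp$ (as in \cref{not:L-and-R}) and split into cases according to \cref{def:BCFW-coords-mtx}. If $\st$ is one of $\pre_{I_k}$, $\cyc$, or $\refl$, then $\coord_\rcp=\coord_{\rcpp}$, $\mtx_\rcp = \st(\mtx_{\rcpp})$, and $S_\rcp = \st(S_{\rcpp})$. Here $\cyc$ and $\refl$ are automorphisms of the Grassmannian (\cref{def:dihedral}) and $\pre_{I_k}$ is the zero-column insertion (\cref{def:pre}), which is a closed embedding; in all three cases $\st$ restricts to a dimension-preserving bijection from $S_{\rcpp}$ onto $S_\rcp$. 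Composing such a bijection with the inductive bijection $(\chR)^k \to S_{\rcpp}$, $(\czeta_i)_{i} \mapsto \mtx_{\rcpp}$, and using $\mtx_\rcp = \st(\mtx_{\rcpp})$, gives the desired bijection $(\chR)^k \to S_\rcp$, and $\dim S_\rcp = \dim S_{\rcpp} = 4k$.

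The substantive case is $\st = (a_k,b_k,c_k,d_k,n_k)$, where $S_\rcp = S_L \bcfw S_R$ for general BCFW cells $S_L \subset \Gr^{\scriptscriptstyle\ge0}_{k_L,N_L}$, $S_R \subset \Gr^{\scriptscriptstyle\ge0}_{k_R,N_R}$ with $k_L+k_R+1=k$, and $\mtx_\rcp = \mbcfw\bigl(\mtx_L,[\calpha_k:\cbeta_k:\cgamma_k:\cdelta_k:\cepsilon_k],\mtx_R\bigr)$ (the matrix $(*)$ of \cref{fig:promotion-matrix}). First I would invoke \cref{cor:4bidden} to see that $S_L$ and $S_R$ satisfy the coindependence hypothesis of \cref{not:coindipendence}, so that \cref{prop:butterfly-matrix-same} applies: $\mbcfw$ is well defined on $S_L\times\chR\times S_R$ and restricts to a bijection
\[
\mbcfw\colon S_L\times\chR\times S_R \xrightarrow{\ \sim\ } S_L\bcfw S_R = S_\rcp ,
\]
with $\dim S_\rcp = \dim S_L + \dim S_R + 4$. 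By the inductive hypothesis the parameter maps $(\chR)^{k_L}\to S_L$ and $(\chR)^{k_R}\to S_R$ given by $\mtx_L, \mtx_R$ are bijections, so taking their product with the identity on the $\chR$-factor recording $[\calpha_k:\cbeta_k:\cgamma_k:\cdelta_k:\cepsilon_k]$ gives a bijection $(\chR)^k \to S_L\times\chR\times S_R$. Composing with $\mbcfw$ and matching against the recursive formula for $\mtx_\rcp$ in \cref{def:BCFW-coords-mtx} — using \cref{rmk:bcfw-map}(2), so that passing between the matrix $(*)$ and its row span is harmless — shows that $(\czeta_i)_{i=1}^k \mapsto \mtx_\rcp$ is the claimed bijection onto $S_\rcp$; and $\dim S_\rcp = 4k_L + 4k_R + 4 = 4k$ by induction.

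I expect the only genuine input to be \cref{prop:butterfly-matrix-same}, whose hard part (injectivity of the BCFW map) is isolated and deferred to \cref{prop:bcfw-map-injective}; so within this proof the main thing to be careful about is purely combinatorial bookkeeping — verifying that the recursive construction of $\mtx_\rcp$ in \cref{def:BCFW-coords-mtx} literally realizes the composite of the inductively given parametrizations with $\mbcfw$ (and with $\pre_{I_k},\cyc,\refl$), and that the coindependence hypotheses needed for \cref{prop:butterfly-matrix-same} hold at every product step, which is exactly what \cref{cor:4bidden} supplies.
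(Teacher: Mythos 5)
Your proposal is correct and follows essentially the same route as the paper's proof: induction on the number of step-tuples, with the $\pre_{I_k}$, $\cyc$, $\refl$ cases handled by the fact that these operations restrict to bijections on cells, and the product case handled by \cref{prop:butterfly-matrix-same} together with \cref{cor:4bidden} to verify the coindependence hypotheses. Your write-up merely spells out in more detail the bookkeeping that the paper leaves implicit.
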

Given a recipe $\rcp$, the BCFW parameters $\coord_{\rcp}$ of a point $V \in S_\rcp$ are given (up to rescaling) by the preimage of $V$ under the bijection above.

\begin{proof} We proceed by induction on the number of steps in $\rcp$. The base case is when $\rcp$ is trivial, and is trivially true. If $\st= \pre_{I_k}, \cyc$ or $\refl$, the proposition for $S_\rcp$ easily follows from the proposition for $S_{\rcpp}$. If $\st=(a_k,b_k,c_k,d_k,n_k)$, then this follows from the inductive hypothesis on $S_L, S_R$ and \cref{prop:butterfly-matrix-same}, which we may apply because \cref{cor:4bidden} shows that BCFW cells always satisfy the coindependence assumptions of \cref{not:coindipendence}.
\end{proof}

\section{BCFW tiles and cluster adjacency}\label{sec:BCFWtilesfacets}
In this section we prove that the (closures of the) images of BCFW cells are 
tiles, and then state and prove the cluster adjacency theorem for general
BCFW tiles. We note that our proof relies on a 
few technical ingredients that we defer to later sections.

\subsection{BCFW tiles}

Recall the definition of the amplituhedron map $\tZ$ from \cref{defn_amplituhedron},
where $Z\in \Mat_{n,k+m}^{\scriptscriptstyle>0}$, and the definition of a tile from \cref{defn_tile}. 
Our goal for this section is to use the BCFW matrix and parameters to invert the amplituhedron map 
on the image $\gto{\rcp} = \tZ(S_{\rcp})$ of each BCFW cell. Since \cref{cor:dim} shows that BCFW cells in $\Grk$ have dimension $4k$, this will show that $\gt{\rcp}$ is a tile. 

The first step is to define one rational functionary for each BCFW parameter $\czeta_i \in \coord_\rcp$
where $\czeta \in\{\calpha,\cbeta,\cgamma,\cdelta,\cepsilon\}$. The product promotion map $\Psi_{ac}$ (\cref{pro-twistors2}) plays an integral role. The signs in the following definition are introduced such that each coordinate functionary is positive on the corresponding open tile (see \cref{thm:BCFW-tile-and-sign-description}).

Recall from \cref{def:pure} the degree $\deg_i F$ of a functionary $F$ in an index~$i$, and the operations $\refl^*, \cyc^{-*}$ on functionaries (cf. \cref{def:dihedral}, \cref{rmk:cyc-refl-on-functionaries}). 

\begin{definition}[Coordinate functionaries of BCFW cells]\label{def:twistor-mtx}
Let $S_\rcp \subset \Grk$ be a BCFW cell. For each BCFW parameter $\czeta_i \in \coord_\rcp$ we define the \emph{coordinate functionary} $\czeta^\rcp_i (Y)$ to be the function on $\Gr_{k,k+4}$ given by the following recursive definition: 
\begin{itemize}
\itemsep0.25em
\item If $\st= (a_k, b_k, c_k, d_k, n_k)$, then we define
\[\calpha_k^{\rcp}(Y) \,:=\, (-1)^{k_R}\llrr{b_k\, c_k \,d_k\, n_k} \quad \quad  \cbeta_k^{\rcp}(Y)\,:=\, (-1)^{k_R+1}\llrr{a_k\, c_k\, d_k\, n_k}\]
\[ \cgamma_k^{\rcp}(Y)\,:=\,  \llrr{a_k\, b_k\, d_k\, n_k} \quad \quad \cdelta_k^{\rcp}(Y)\,:=\, -\llrr{a_k\, b_k\, c_k\, n_k} \quad \quad \cepsilon_k^{\rcp}(Y)\,:=\,\llrr{a_k\, b_k\, c_k\, d_k},\]
and for $i \neq k$ \vspace{1em}\\
$\czeta^\rcp_i (Y) \;:=\; \begin{cases}
(-1)^s	\Psi_{a_k c_k}(\czeta_i^{L})(Y)\\
\Psi_{a_k c_k}(\czeta_i^{R})(Y)
\end{cases}$ if the $i$th step-tuple is in $\begin{cases}
  \rcp_L\\
  \rcp_R\\
\end{cases},$ where $s= (k_R+1) \deg_{n_k} \czeta^{L}_i (Y)$.

% $$\czeta^\rcp_i (Y) \;:=\; \begin{cases}
% (-1)^s	\Psi_{a_k c_k}(\czeta_i^{L})(Y)& \text{ if the $i$th step-tuple is in } \rcp_L\\[0.2em]
% \Psi_{a_k c_k}(\czeta_i^{R})(Y)& \text{ if the $i$th step-tuple is in } \rcp_R
% \end{cases},$$
% where $s= (k_R+1) \deg_{n_k} \czeta^{L}_i (Y)$.
\vspace{1em}
\item If $\st=$ 
$\begin{cases}
    \refl \\
    \cyc \\
    \pre_{I_k}
\end{cases}$ then $\czeta^\rcp_i (Y) := \begin{cases}
    \refl^* \czeta^{\rcpp}_i (Y) \\
    (-1)^{s'}\cyc^{-*} \czeta^{\rcpp}_i (Y) \\
    \czeta^{\rcpp}_i (Y)
\end{cases}$, where $s'= k \deg_n \czeta^{\rcpp}_i (Y)$.

% $\st=\refl$ then $\czeta^\rcp_i (Y) := \refl^* \czeta^{\rcpp}_i (Y)$.
% \item If $\st= \cyc$ then $\czeta^\rcp_i (Y) := (-1)^{s'}\cyc^{-*} \czeta^{\rcpp}_i (Y)$, where $s'= k \deg_n \czeta^{\rcpp}_i (Y)$.
% \item If $\st= \pre_{I_k}$ then $\czeta^\rcp_i (Y) := \czeta^{\rcpp}_i (Y)$.
\end{itemize}
We define the 
	\emph{twistor matrix} $\twmt_\rcp(Y)$ be the matrix $\mtx_\rcp$ of \cref{def:BCFW-coords-mtx} where each BCFW parameter $\czeta_i$ is set to equal the coordinate functionary $\czeta_i^\rcp(Y)$. Its entries are rational functionaries.
\end{definition}

\begin{example}[Coordinate functionaries] \label{ex:coord_func}
We will build recursively the coordinate functionaries $\{\czeta_i^\rcp(Y)\}$ of the cell $S_\rcp$ from \cref{ex:recipe}. We will follow the same steps as in \cref{ex:bcfw_matrix}. We will denote $(10,11,12)$ as $(A,B,C)$ and $\{\calpha_i(Y),\cbeta_i(Y),\cgamma_i(Y),\cdelta_i(Y),\cepsilon_i(Y)\}$ as $\{\czeta_i(Y)\}$ for brevity.

\begin{itemize}
    \item (Step $1$): The coordinate functionaries $\{\zeta^{(1)}_1(Y)\}$ are respectively: 
    \begin{equation*}
        \llrr{4 \, 5 \,6 \,C}, -\llrr{3 \,5 \,6 \,C},\llrr{3 \,4 \,6 \,C}, -\llrr{3\,4 \,5 \,C}, \llrr{3\,4\,5\,6}.
    \end{equation*}
    \item (Step $2$): The coordinate functionaries $\{\czeta^{(2)}_2(Y)\}$ are respectively:
    \begin{equation*}
      -\llrr{2\,5\,6\,C}, \llrr{1\,5\,6\,C},-\llrr{1\,2\,6\,C}, \llrr{1\,2\,5\,C}, -\llrr{1\,2\,5\,6}.
    \end{equation*}
$\{\czeta^{(2)}_1(Y)\}$ are obtained as $\czeta^{(2)}_1=\Psi_{1,5}(\czeta^{(1)}_1)$ and are respectively:
    \begin{equation*}
\llrr{4\,5\,6\,C}, -\llrr{3\,5\,6\,C}, \frac{\llrr{3\,4\,C \br 5 \,6 \br 1\,2\,C}  }{\llrr{1\,2\,5\,C}},-\frac{\llrr{3\,4\,5 \br 2 \,1 \br 5\,6\,C}  }{\llrr{1\,2\,5\,6}}, \llrr{3\,4\,5\,6}.
    \end{equation*}

\item (Step $3$): $\{\czeta^{(3)}_1(Y)\}$ and $\{\czeta^{(3)}_2(Y)\}$ are obtained by performing $\cyc^{2}$ and $\refl$ on the respective coordinate functionaries of Step $2$.
    \item (Step $4$): The coordinate functionaries $\{\czeta^{(4)}_3(Y)\}$ are respectively:
     \begin{equation*}
        \llrr{7 \, 8\,9 \,B}, -\llrr{6 \,8 \,9 \,B},\llrr{6 \,7 \,9 \,B}, -\llrr{6\,7 \,8 \,B}, \llrr{6\,7\,8\,9}.
    \end{equation*}

\item (Step $5$):  The coordinate functionaries $\{\czeta^{(5)}_4(Y)\}$ are respectively:
       \begin{equation*}
       -\llrr{6 \, A\,B \,C}, \llrr{5 \, A\,B \,C},-\llrr{5 \, 6 \,B \,C}, \llrr{5 \, 6\,A \,C}, -\llrr{5 \, 6\,A \,B}.
    \end{equation*}
$\{\czeta^{(5)}_3(Y)\}$ are obtained as $\czeta^{(5)}_3=\Psi_{5,10}(\czeta^{(4)}_3)$ and are respectively: 
    \begin{equation*}
     \frac{\llrr{7\,8\,9 \br B \,A \br 5\,6\,C}  }{\llrr{5\,6\,A\,C}},
      -\frac{\llrr{6\,8\,9 \br B \,A \br 5\,6\,C}}{\llrr{5\,6\,A\,C}},
     \frac{\llrr{6\,7\,9 \br B \,A \br 5\,6\,C}}{\llrr{5\,6\,A\,C}},
    -\frac{\llrr{6\,7\,8 \br B \,A \br 5\,6\,C}}{\llrr{5\,6\,A\,C}},
 \llrr{6\,7\,8\,9}.
    \end{equation*}

$\{\czeta^{(5)}_2(Y)\}$ are obtained as $\czeta^{(5)}_2=\Psi_{5,10}(\czeta^{(3)}_2)$ and are respectively: 
    \begin{equation*}
-\frac{\llrr{1\,4\,C \br 5 \,6 \br A\,B\,C}  }{\llrr{5\,A\,B\,C}},
      \llrr{1\,5\,6\,C},
    - \llrr{4\,5\,6\,C},
   -\llrr{1\,4\,5\,6},
 \llrr{1\,4\,5\,C}.
    \end{equation*}
$\{\czeta^{(5)}_1(Y)\}$ are obtained as $\czeta^{(5)}_1=\Psi_{5,10}(\czeta^{(3)}_1)$ and are respectively:
\begin{align*}
    \frac{\llrr{1\,2\,C \br 5 \,6 \br A\,B\,C}}{\llrr{5\,6\,B\,C}}, -\frac{\llrr{1\,3\,C \br 5\,6 \br A \,B \, C}}{\llrr{5\, A \, B \, C}}, \\ - \frac{\llrr{A\,B\,C \br 5\,6 \br 2\, 3 \br 1\,C \br 4 \,5 \, 6}}{\llrr{1\,4\,5 \,6} \llrr{5\, A \, B \, C}},
    \frac{\llrr{1\,2\,3 \br 4\,5 \br 1\, 2 \br 5\,6 \br A \,B \, C}}{\llrr{1\,4\,5 \,C} \llrr{5\, A \, B \, C}}, -\llrr{1\,2\,3\,C}.
\end{align*}

Finally, $\{\czeta^{\rcp}_i(Y)\}$ are obtained by applying  $\cyc^{4},\refl$ on the coordinate functionaries of Step $5$.     
\end{itemize}  
The twistor matrix $\twmt_\rcp(Y)$ is obtained from the BCFW matrix $M_{\rcp}$ in \cref{ex:bcfw_matrix} by setting each BCFW parameter $\czeta^{\rcp}_i$ equal to the corresponding coordinate functionary $\czeta^{\rcp}_i(Y)$ computed above.
\end{example}

It is also convenient to define the following variant of the coordinate functionaries, which will be cluster variables for $\Gr_{4,n}$. The definition is based on rescaled product promotion $\rPsi$ (\cref{def:rPsi}). 

\begin{definition}[Coordinate cluster variables of BCFW cells]\label{def:generalcluster}
Let $S_\rcp \subset \Grk$ be a BCFW cell. 
For each BCFW parameter $\czeta_i \in \coord_\rcp$
the \emph{coordinate cluster variables} $\rzeta^\rcp_i$ 
is defined as follows:
\begin{itemize}
\itemsep0.25em
\item If $\st= (a_k, b_k, c_k, d_k, n_k)$, then we define 

\[\ralpha_k^{\rcp} \,:=\, \lr{b_k\, c_k \,d_k\, n_k} \quad  \quad \rbeta_k^{\rcp} \,:=\, \lr{a_k\, c_k\, d_k\, n_k} \]
\[\rgamma_k^{\rcp} \,:=\,  \lr{a_k\, b_k\, d_k\, n_k} \quad\quad \rdelta_k^{\rcp} \,:=\, \lr{a_k\, b_k\, c_k\, n_k} \quad\quad \repsilon_k^{\rcp} \,:=\, \lr{a_k\, b_k\, c_k\, d_k},\vspace{1em}\] 
 and for $i \neq k$, \quad
$\rzeta^\rcp_i \;:=\; \begin{cases}
\rPsi_{a_k c_k}(\rzeta_i^{L})\\
\rPsi_{a_k c_k}(\rzeta_i^{R})
\end{cases}$ if the $i$th step-tuple is in $\begin{cases}
  \rcp_L\\
  \rcp_R\\
\end{cases}.$
% \[\rzeta^\rcp_i \;:=\; \begin{cases}
% \rPsi_{a_k c_k}(\rzeta_i^{L})& \text{ if the $i$th step-tuple is in } \rcp_L\\[0.2em]
% \rPsi_{a_k c_k}(\rzeta_i^{R})& \text{ if the $i$th step-tuple is in } \rcp_R.
% \end{cases},\]
\vspace{1em}
\item If $\st=$ 
$\begin{cases}
    \refl \\
    \cyc \\
    \pre_{I_k}
\end{cases}$ then $\rzeta^\rcp_i := \begin{cases}
    \refl^* \rzeta^{\rcpp}_i  \\
    \cyc^{-*} \rzeta^{\rcpp}_i  \\
    \rzeta^{\rcpp}_i 
\end{cases}$.
% \item If $\st=\refl$ then $\rzeta^\rcp_i := \refl^* \rzeta^{\rcpp}_i$.
% \item If $\st= \cyc$ then $\rzeta^\rcp_i := \cyc^{-*} \rzeta^{\rcpp}_i$.
% \item If $\st= \pre_{I_k}$ then $\rzeta^\rcp_i := \rzeta^{\rcpp}_i$.
\end{itemize}
We denote the set of coordinate cluster variables for $S_\rcp$ by $\Irr(\rcp)$. Note that $\Irr(\rcp)$ depends on the recipe $\rcp$ rather than just the BCFW cell.
\end{definition}

\begin{example}[Coordinate cluster variables] \label{ex:coord_clust}
We will build recursively the coordinate cluster variables $\{\rzeta_i^\rcp\}$ of the cell $S_\rcp$ from \cref{ex:recipe}. We will follow the same steps as in \cref{ex:bcfw_matrix} and \cref{ex:coord_func}. We will denote $(10,11,12)$ as $(A,B,C)$ and $\{\ralpha_i,\rbeta_i,\rgamma_i,\rdelta_i,\repsilon_i\}$ as $\{\rzeta_i\}$ for brevity.
\begin{itemize}
    \item (Step $1$): The coordinate cluster variables $\{\rzeta^{(1)}_1\}$ are respectively:
    \begin{equation*}
        \lr{4 \, 5 \,6 \,C}, \lr{3 \,5 \,6 \,C},\lr{3 \,4 \,6 \,C}, \lr{3\,4 \,5 \,C}, \lr{3\,4\,5\,6}.
    \end{equation*}
    \item (Step $2$): The coordinate cluster variables $\{\rzeta^{(2)}_2\}$ are respectively:
    \begin{equation*}
        \lr{2\,5\,6\,C}, \lr{1\,5\,6\,C},\lr{1\,2\,6\,C}, \lr{1\,2\,5\,C}, \lr{1\,2\,5\,6}.
    \end{equation*}
$\{\rzeta^{(2)}_1\}$ are obtained as $\rzeta^{(2)}_1=\rPsi_{1,5}(\rzeta^{(1)}_1)$ and are respectively:
    \begin{equation*}
     \lr{4\,5\,6\,C}, \lr{3\,5\,6\,C}, \lr{3\,4\,C \br 5 \,6 \br 1\,2\,C},\lr{3\,4\,5 \br 2 \,1 \br 5\,6\,C}, \lr{3\,4\,5\,6}.
    \end{equation*}
\item (Step $3$): $\{\rzeta^{(3)}_1\}$ and $\{\rzeta^{(3)}_2\}$ are obtained by performing $\cyc^{2}$ and $\refl$ on the respective coordinate cluster variables of Step $2$.
    \item (Step $4$): The coordinate cluster variables $\{\rzeta^{(4)}_3\}$ are respectively:
     \begin{equation*}
        \lr{7 \, 8\,9 \,B}, \lr{6 \,8 \,9 \,B},\lr{6 \,7 \,9 \,B}, \lr{6\,7 \,8 \,B}, \lr{6\,7\,8\,9}.
    \end{equation*}
    \item (Step $5$):  The coordinate cluster variables $\{\rzeta^{(5)}_4\}$ are respectively:
       \begin{equation*}
       \lr{6 \, A\,B \,C}, \lr{5 \, A\,B \,C},\lr{5 \, 6 \,B \,C}, \lr{5 \, 6\,A \,C}, \lr{5 \, 6\,A \,B}.
    \end{equation*}
$\{\rzeta^{(5)}_3\}$ are obtained as $\rzeta^{(5)}_3=\rPsi_{5,10}(\rzeta^{(4)}_3)$ and are respectively: 
    \begin{equation*}
     \lr{7\,8\,9 \br B \,A \br 5\,6\,C},
     \lr{6\,8\,9 \br B \,A \br 5\,6\,C},
     \lr{6\,7\,9 \br B \,A \br 5\,6\,C},
    \lr{6\,7\,8 \br B \,A \br 5\,6\,C},
 \lr{6\,7\,8\,9}.
    \end{equation*}

$\{\rzeta^{(5)}_2\}$ are obtained as $\rzeta^{(5)}_2=\rPsi_{5,10}(\rzeta^{(3)}_2)$ and are respectively: 
    \begin{equation*}
     \lr{1\,4\,C \br 5 \,6 \br A\,B\,C},
      \lr{1\,5\,6\,C},
     \lr{4\,5\,6\,C},
   \lr{1\,4\,5\,6},
 \lr{1\,4\,5\,C}.
    \end{equation*}
$\{\rzeta^{(5)}_1\}$ are obtained as $\rzeta^{(5)}_1=\rPsi_{5,10}(\rzeta^{(3)}_1)$ and are respectively:
\begin{align*}
    \lr{1\,2\,C \br 5 \,6 \br A\,B\,C}, \lr{1\,3\,C \br 5\,6 \br A \,B \, C}, \\
    \lr{A\,B\,C \br 5\,6 \br 2\, 3 \br 1\,C \br 4 \,5 \, 6},
    \lr{1\,2\,3 \br 4\,5 \br 1\, C \br 5\,6 \br A \,B \, C}, \lr{1\,2\,3\,C}.
\end{align*}
Finally, $\{\rzeta^{\rcp}_i\}$ are obtained by applying  $\cyc^{4},\refl$ on the coordinate cluster variables of Step $5$.    
\end{itemize}  
\end{example}

\begin{remark}[Chain polynomials and beyond]
In \cref{prop:explicit}, we will show that coordinate cluster variables for standard BCFW cells are always chain polynomials (cf. \cref{def:chain_polynomials}). However, this is not the case for general BCFW cells. To describe a specific example $S_\rcp$ on $N = \{1,\dots,\mathrm{C}\}$, we start with the standard BCFW cell corresponding to the chord diagram 
$$ D' \;=\; ((3,7,8,9),(8,9,\mathrm{A},\mathrm{B}),(1,2,\mathrm{A},\mathrm{B})) \quad \text{with} \quad N' = \{1,2,3,7,8,9,\mathrm{A},\mathrm{B},\mathrm{C}\}. $$
Using \cref{prop:explicit}, we have 
$ \rgamma_1^{D'} \;=\; \lr{\mathrm{C}\,\mathrm{A}\,\mathrm{B} \br 2\,1 \br 3\,7 \br 8\,9 \br \mathrm{A}\,\mathrm{B}\,\mathrm{C}} \,. $ 
This variable is preserved after an application of $\pre_{\{4,6\}}$. Then, after suitable reflections and rotations, we can apply upper promotion, which in terms of the original indices of $D'$ takes place at 
$ (a,b,c,d,n) \;=\; (5,4,8,7,6). $
The above cluster variable promotes using the rule $7 \mapsto 7 - \tfrac{\lr{5\,4\,7\,6}}{\lr{5\,4\,8\,6}}8$, and yields
$$ \rgamma_1^{\rcp} \;=\; \rPsi(\rgamma_1^{D'}) \;=\; \lr{\mathrm{C}\,\mathrm{A}\,\mathrm{B} \br 2\,1 \br 3\,7 \br 8\,9 \br \mathrm{A}\,\mathrm{B}\,\mathrm{C}} \lr{4\,5\,6\,8}+\lr{\mathrm{C}\,\mathrm{A}\,\mathrm{B} \br 2\,1 \br 3\,7 \, 9} \lr{ 8\, \mathrm{A}\,\mathrm{B}\,\mathrm{C}} \lr{4\,5\,6\,7}. $$
This expression further expands to an irreducible quartic polynomial in Pl\"ucker coordinates with $8$ terms, which cannot be written as a chain polynomial.
\end{remark}

\begin{lemma}\label{lem:irr-is-compatible-cluster}
	Let $\rcp$ be a recipe for a BCFW cell. Then the coordinate cluster variables $\Irr(\rcp)$ are a collection of compatible cluster variables for $\Gr_{4,n}$.
\end{lemma}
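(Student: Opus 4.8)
The plan is to prove \cref{lem:irr-is-compatible-cluster} by induction on the number of step-tuples in the recipe $\rcp$, mirroring the recursive structure of \cref{def:generalcluster}. The base case ($\rcp$ trivial) is vacuous since $\Irr(\rcp) = \varnothing$. For the inductive step I would split according to the nature of the final step $\st$ of $\rcp$.

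First I would handle the easy cases $\st \in \{\pre_{I_k}, \cyc, \refl\}$. Here $\Irr(\rcp)$ is obtained from $\Irr(\rcpp)$ by applying $\refl^*$, $\cyc^{-*}$, or the identity, respectively. By the inductive hypothesis, $\Irr(\rcpp)$ is a collection of compatible cluster variables for $\Gr_{4,N\setminus I_k}$ (or $\Gr_{4,N}$ in the $\cyc,\refl$ cases), and then \cref{cor:cycrefl} (for $\cyc,\refl$) and \cref{lem:add-marker-embed-gr} (for $\pre_{I_k}$, iterated over the elements of $I_k$) show that the image is again a collection of compatible cluster variables, now for $\Gr_{4,n}$. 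Since these maps take clusters to subsets of clusters, compatibility is preserved.

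The main case is $\st = (a_k,b_k,c_k,d_k,n_k)$, where $\Irr(\rcp)$ is the disjoint union of $\{\ralpha_k^\rcp, \rbeta_k^\rcp, \rgamma_k^\rcp, \rdelta_k^\rcp, \repsilon_k^\rcp\} = \{\lr{b_k c_k d_k n_k}, \lr{a_k c_k d_k n_k}, \lr{a_k b_k d_k n_k}, \lr{a_k b_k c_k n_k}, \lr{a_k b_k c_k d_k}\}$ together with $\rPsi_{a_k c_k}(\Irr(\rcp_L))$ and $\rPsi_{a_k c_k}(\Irr(\rcp_R))$. By induction, $\Irr(\rcp_L)$ is a set of compatible cluster variables for $\Gr_{4,N_L}$ and $\Irr(\rcp_R)$ for $\Gr_{4,N_R}$, so $\Irr(\rcp_L) \sqcup \Irr(\rcp_R)$ is a set of compatible cluster variables for the product cluster algebra $\C[\widehat{\Gr}_{4,N_L}^\circ] \times \C[\widehat{\Gr}_{4,N_R}^\circ]$; choose a seed $\Sigma$ of this product algebra containing all of them. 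Now \cref{thm:promotion2} says product promotion is a cluster quasi-homomorphism, so applying it to a cluster of $\mathcal{A}(\Sigma_0)$ yields a cluster of $\mathcal{A}(\Fr(\Sigma_1))$ up to Laurent monomials in $\mathcal{T}'$; stripping those monomials (which is exactly what $\rPsi$ does, by \cref{def:rPsi}) gives compatible cluster (or frozen) variables of $\Gr_{4,n}$. The one subtlety is that $\Sigma$ may not be the particular seed $\Sigma_0$ in the hypothesis of \cref{thm:promotion2}, but \cref{prop:similar} guarantees that similarity of seeds propagates through the whole cluster pattern, so the quasi-homomorphism property holds for the seed $\Sigma$ as well; hence $\rPsi_{a_k c_k}(\Irr(\rcp_L) \sqcup \Irr(\rcp_R))$ lies in a single cluster of $\mathcal{A}(\Fr(\Sigma_1))$, a subcluster algebra of $\C[\Gr_{4,n}^\circ]$.

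The remaining — and I expect this to be the genuine obstacle — is to check that the five new Plücker coordinates $\lr{b_k c_k d_k n_k}$, $\lr{a_k c_k d_k n_k}$, $\lr{a_k b_k d_k n_k}$, $\lr{a_k b_k c_k n_k}$, $\lr{a_k b_k c_k d_k}$ are \emph{compatible with} the promoted variables $\rPsi(\Irr(\rcp_L) \sqcup \Irr(\rcp_R))$, i.e. that all of $\Irr(\rcp)$ lie in one common cluster of $\Gr_{4,n}$. The key observation is that these five Plücker coordinates are, up to the frozen set, precisely the elements of $\mathcal{T}'$ together with the extra variables $\lr{a_k b_k d_k n_k}$ and $\lr{a_k b_k c_k n_k}$ appearing in the seed $\Fr(\Sigma_1)$ of \cref{fig:seed_sigma_one} — indeed $\lr{a_k b_k c_k n_k}$, $\lr{a_k b_k c_k d_k}$, $\lr{b_k c_k d_k n_k}$, $\lr{a_k c_k d_k n_k}$ are exactly $\mathcal{T}'$, and $\lr{a_k b_k d_k n_k}$ is one of the frozen variables in $\mathcal{T}$. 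So I would argue that $\Fr(\Sigma_1)$ (or the modified seed $\Omega$ from the proof of \cref{thm:promotion2}, whose frozen set is $\mathcal{T}'$) can be taken as the target seed: the image $\rPsi(\Sigma)$ of our chosen source seed is a cluster of $\mathcal{A}(\Omega)$, whose frozen variables $\mathcal{T}'$ are among our five Plücker coordinates, and the remaining two, $\lr{a_k b_k d_k n_k}$ and $\lr{a_k b_k c_k n_k}$, are mutable cluster variables appearing in the fixed part of every seed in the pattern through which $\rPsi$ is defined — hence simultaneously present. This forces all of $\Irr(\rcp)$ into a single extended cluster for $\Gr_{4,n}$, which is what compatibility means. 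I would write this last point carefully, tracking through \cref{fig:seed_sigma_one} and \cref{fig:promoted-seed-extended} which of the five coordinates are frozen, which are mutable-but-untouched, and invoking \cref{thm:GSV} if needed to conclude that a cluster determines its seed.
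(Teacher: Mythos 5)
Your proof is correct and follows essentially the same route as the paper: induction on step-tuples, with \cref{cor:cycrefl} and \cref{lem:add-marker-embed-gr} handling $\cyc,\refl,\pre$, and \cref{thm:promotion2} handling the product step. The only difference is that your final paragraph is more labored than necessary: all five Pl\"ucker coordinates $\lr{a_kb_kc_kd_k},\lr{a_kb_kc_kn_k},\lr{a_kb_kd_kn_k},\lr{a_kc_kd_kn_k},\lr{b_kc_kd_kn_k}$ lie in $\mathcal{T}$ and are therefore frozen in $\Fr(\Sigma_1)$, so they belong to every extended cluster of $\A(\Fr(\Sigma_1))$ and are automatically compatible with the promoted variables --- no appeal to $\Omega$ or to the distinction between $\mathcal{T}$ and $\mathcal{T}'$ is needed.
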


\begin{proof}
	We prove this by induction on the number of step-tuples in $\rcp$. The base case is trivial. If the final step is $\cyc, \refl$ or $\pre_i$, the statement follows from the inductive hypothesis and \cref{cor:cycrefl,lem:add-marker-embed-gr}, which assert that $\cyc^{-*}, \refl^*$ and the inclusion map send cluster variables to cluster variables and preserve compatibility. If the final step is $(a,b,c,d,n)$, then the definition of $\rPsi$ implies that $\Irr(\rcp)$ consists of cluster variables for $\Gr_{4,n}$. To see compatibility, note that by induction, $\Irr(\rcp_L) \sqcup \Irr(\rcp_R)$ is a set of compatible cluster variables for $\CC[\widehat{\Gr}_{4, N_L}] \times \CC[\widehat{\Gr}_{4, N_R}]$. By \cref{thm:promotion2} and the definition of $\rPsi$, applying $\rPsi$ to $\Irr(\rcp_L) \sqcup \Irr(\rcp_R)$ gives a collection of compatible cluster variables for $\Gr_{4,n}$. Moreover, these cluster variables are compatible with the Pl\"ucker coordinates 
		$\lr{a b c d}, \lr{a b c n}, \lr{a b d n}, \lr{a c d n}, \lr{b c d n}$,
	which are all frozen in $\Fr(\Sigma_1)$. This completes the proof of the lemma.
\end{proof}

We now state one of our main theorems. It says that the amplituhedron map $\tZ$ is injective on each BCFW cell $S_\rcp$, giving an inverse map in terms of the coordinate functionaries. This
shows that $\gt{\rcp}$ is a tile. Moreover, it  describes each open tile $\gt{\rcp}$ as a semi-algebraic set in $\Gr_{k, k+4}$. It will be proved in \cref{subsec:inverse_problem}..

\begin{theorem}[General BCFW cells give tiles] \label{thm:BCFW-tile-and-sign-description}
	Let 
		$S_\rcp$ be a general BCFW cell with recipe
		$\rcp$.  Then
	for all $Z \in \Mat_{n,k+4}^{>0}$, $\tZ$ is injective on 
		$S_\rcp$ and thus $\gt{\rcp}$ is a tile. 
		In particular,
		given $Y \in \tZ(S_\rcp)$,  the unique preimage of $Y$ in $S_\rcp$ is given by (the rowspan of) of the twistor matrix $\twmt_\rcp(Y)$. 
		Moreover,
		$$\gto{\rcp}= \{Y \in \Gr_{k, k+4}: \czeta_i^\rcp(Y)>0 \text{ for all coordinate functionaries of }S_\rcp\}.$$
\end{theorem}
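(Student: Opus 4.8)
The plan is to prove \Cref{thm:BCFW-tile-and-sign-description} by induction on the number of step-tuples in the recipe $\rcp$, mirroring the recursive structure of \Cref{def:twistor-mtx} and \Cref{def:BCFW-coords-mtx}. The base case ($k=0$) is trivial since $\gto{\rcp}$ is a point. For the inductive step, the key is to track how the amplituhedron map and the coordinate functionaries interact with each of the four recipe operations --- $\pre_{I_k}$, $\cyc$, $\refl$, and the BCFW product --- and to reduce injectivity and the sign description for $S_\rcp$ to the corresponding statements for $S_{\rcpp}$ (or $S_L, S_R$).

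First I would handle the ``easy'' steps. For $\pre_{I_k}$, $\cyc$, and $\refl$: inserting zero columns does not change the image under $\tZ$ after a corresponding modification of $Z$, and $\cyc, \refl$ act on $\Grk$ by the dihedral action (\Cref{def:dihedral}), which is compatible via \eqref{eq:cyc-refl-on-plucker} with the operations $\cyc^{-*}, \refl^*$ on twistor coordinates (\Cref{rmk:cyc-refl-on-functionaries}). Since the coordinate functionaries $\czeta_i^\rcp$ are defined by applying exactly these operations to $\czeta_i^{\rcpp}$ (up to an overall sign $(-1)^{s'}$ which does not affect the strict inequality after the sign is absorbed correctly --- this is precisely why the signs were inserted in \Cref{def:twistor-mtx}), injectivity of $\tZ$ on $S_\rcp$ and the semialgebraic description both follow immediately from the same facts for $S_{\rcpp}$, together with the fact that $\twmt_\rcp(Y) = \st(\twmt_{\rcpp}(Y))$ realizes the unique preimage.

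The substantive case is the BCFW product step $\st = (a_k,b_k,c_k,d_k,n_k)$. Here I would argue as follows. Given $Y \in \tZ(S_\rcp) = \tZ(\mbcfw(S_L \times \chR \times S_R))$, one wants to recover the BCFW parameters $\{\calpha_k, \cbeta_k, \cgamma_k, \cdelta_k, \cepsilon_k\}$ and the data of $S_L, S_R$ from $Y$. The last five parameters are recovered directly: by \Cref{rk:Lapexpansion} and inspection of the matrix $(*)$ in \Cref{fig:promotion-matrix}, the twistor coordinates $\llrr{b_k c_k d_k n_k}, \llrr{a_k c_k d_k n_k}, \llrr{a_k b_k d_k n_k}, \llrr{a_k b_k c_k n_k}, \llrr{a_k b_k c_k d_k}$ of $Y = CZ$ compute (up to the sign factors and the Plücker minors of $Z$, which are positive) exactly the quantities $\calpha_k, \cbeta_k, \cgamma_k, \cdelta_k, \cepsilon_k$ of the point $C \in S_\rcp$ --- this uses the coindependence of $\{a_k,b_k,n_k\}$ for $S_L$ and $\{b_k,c_k,d_k,n_k\}$ for $S_R$ (\Cref{cor:4bidden}), which forces many Plücker coordinates to vanish. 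Once these five are known, one can ``undo'' the butterfly: the remaining rows of $\twmt_\rcp(Y)$ should reproduce (row-reduced forms of) $\tZ$-preimages in $S_L$ and $S_R$, but now with respect to the \emph{promoted} vector basis of \eqref{eq:promotionvectors1}--\eqref{eq:promotionvectors3}. This is exactly where product promotion enters: the coordinate functionaries $\czeta_i^\rcp$ for $i \ne k$ are $\Psi_{a_k c_k}(\czeta_i^L)$ or $\Psi_{a_k c_k}(\czeta_i^R)$, and one must check that substituting the promoted vectors into the twistor expansion (\Cref{rk:Lapexpansion}) for the $L$ and $R$ blocks is compatible with applying $\Psi_{a_k c_k}$ to the coordinate functionaries --- essentially a ``functoriality'' statement relating $\tZ$ on the factors (with promoted $Z$) to $\tZ$ on the product. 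Granting this compatibility, injectivity of $\tZ$ on $S_\rcp$ reduces to injectivity on $S_L$ and $S_R$ (inductive hypothesis) plus the bijectivity of $\mbcfw$ from \Cref{prop:butterfly-matrix-same}(2); and the sign description $\gto{\rcp} = \{Y : \czeta_i^\rcp(Y) > 0\ \forall i\}$ reduces to the sign descriptions for $S_L, S_R$ together with the positivity of $\Psi_{a_k c_k}$-images on $\Gr^{>0}$ (which is where positivity of the relevant Plücker coordinates, e.g. from \Cref{cor:pos} and \Cref{thm:promotion2}, guarantees that the $\propto$-factors are positive and do not flip signs).

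The main obstacle, as flagged above, is establishing the precise compatibility between the amplituhedron map and product promotion --- i.e.\ that the $\tZ$-image of a BCFW product, restricted to the ``$L$-part'' and ``$R$-part'' of the twistor matrix, is governed by the promoted substitution. This requires a careful bookkeeping of sign conventions (the $(-1)^{k_R}$, $(-1)^{k_R+1}$, $(-1)^s$ with $s = (k_R+1)\deg_{n_k}\czeta_i^L$ factors in \Cref{def:twistor-mtx}) and of how the Plücker minors of $Z$ appearing via Cauchy--Binet interact with the rational expressions \eqref{eq:promotionvectors1}--\eqref{eq:promotionvectors3}. I expect this to be handled by deferring to the technical results promised in \Cref{sec:imagesBCFWcells} --- in particular \Cref{thm:vanishing-for-all-ops} and \Cref{prop:vanishing_and_sign_of_functionaries_under_promotion}, which analyze exactly the vanishing and signs of functionaries under promotion --- and by invoking \Cref{prop:butterfly-matrix-same} to pin down the matrix form. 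Everything else is then routine induction, and the theorem follows; the full details are carried out in \Cref{subsec:inverse_problem}.
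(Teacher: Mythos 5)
Your two-step outline --- first invert $\tZ$ explicitly and then show the coordinate functionaries are positive on the image --- matches the paper's architecture, and your treatment of the second step (induction on the recipe, propagating \emph{strong} signs through $\pre_I$, $\cyc$, $\refl$ via \cref{thm:signs-under-cyc-rot-pre} and through the BCFW product via \cref{lem:sign_of_chord_twistors} and \cref{prop:vanishing_and_sign_of_functionaries_under_promotion}(2)) is essentially the paper's argument. The gap is in the injectivity/preimage direction. You propose to recover $[\calpha_k:\cdots:\cepsilon_k]$ from the five twistors and then ``undo the butterfly,'' reducing to injectivity of $\tZ$ on $S_L$ and $S_R$. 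That reduction is not supplied by the results you defer to: \cref{thm:vanishing-for-all-ops} and \cref{prop:vanishing_and_sign_of_functionaries_under_promotion} are statements about vanishing and signs of functionaries, not a mechanism for producing, from $Y\in\Gr_{k,k+4}$ alone, points $Y_L\in\Gr_{k_L,k_L+4}$, $Y_R\in\Gr_{k_R,k_R+4}$ together with \emph{positive} matrices $Z_L,Z_R$ (built from the promoted vectors of \eqref{eq:promotionvectors1}--\eqref{eq:promotionvectors3}) such that the inductive inverses on $S_L,S_R$ can be applied. Setting up that quotient/intersection construction and verifying positivity of the induced $Z$'s is exactly the delicate content (compare the proof of \cref{lem:opennes}, which needs it just to show openness), and it is never carried out.

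The paper avoids this reduction entirely. The linchpin you are missing is \cref{lem:solve_params_5}: for any five rows $Z_{i_1},\dots,Z_{i_5}$, the intersection $Y\cap\Span(Z_{i_1},\dots,Z_{i_5})$ is spanned by the explicit vector $\sum_j(-1)^{j+1}\llrr{Y Z_{I\setminus i_j}}Z_{i_j}$. Combined with \cref{lem:coord-functionaries-as-twistors} (proved by induction on the recipe), which identifies every coordinate functionary $\czeta_i^\rcp(Y)$ with a twistor of the recursively defined vectors $v_{\czeta_i}Z$, this shows in one stroke that each row $v_iZ$ of $\twmt_\rcp(Y)\cdot Z$ spans $Y\cap\Span(v_{\calpha_i}Z,\dots,v_{\cepsilon_i}Z)$ and hence lies in $Y$; uniqueness follows by descending induction on the row index, since $v_{\czeta_i}$ depends only on parameters with index $>i$. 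Note also that your justification for reading off $[\calpha_k:\cdots:\cepsilon_k]$ (``by \cref{rk:Lapexpansion} and inspection of $(*)$'') is too quick: each twistor $\llrr{b_kc_kd_kn_k}$, etc., is a Cauchy--Binet sum over many bases, and the assertion that the five twistors are the five parameters times a \emph{common} positive factor is precisely what \cref{lem:solve_params_5} delivers and is not visible termwise.
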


\cref{thm:BCFW-tile-and-sign-description} allows us to make the following definition.

\begin{definition}[BCFW tiles] \label{def:bcfwTiles}
	A \emph{general} (respectively, \emph{standard})  \emph{BCFW tile} $Z_S$
	is the closure 
	$\overline{\tZ(S)}$ of the image of a general (respectively, standard) BCFW cell
	$S$. 
If the BCFW cell has the form 
	$S_\rcp$ (respectively, $S_D$) for a recipe $\rcp$ (respectively, chord diagram $D$), then we also denote the corresponding
	tile by
	$Z_\rcp$ (respectively, $Z_D$).
\end{definition}

\begin{example}
 Given the cell $S_{\rcp} \subset \Gr_{4,12}$ obtained by the recipe of \cref{ex:recipe}, then $Z_\rcp$ is a tile for $\mathcal{A}_{12,4,4}(Z)$. 
  Moreover, $\gto{\rcp}$ is the region in 
  $\Gr_{4,8}$ where all the coordinate functionaries $\{\czeta^{\rcp}_i(Y)\}$ in \cref{ex:coord_func} are positive. 
\end{example}

\begin{example}
For $k=1$, the amplituhedron $\mathcal{A}_{n,1,4}(Z)$ is a cyclic polytope in $\mathbb{P}^4$ whose vertices are the $n$ rows of $Z$. The $k=1$ BCFW tiles are exactly the full-dimensional simplices whose vertices are vertices of the cyclic polytope $\mathcal{A}_{n,1,4}(Z)$. In particular, the tile corresponding to the BCFW cell on the right in \cref{fig:bcfwcellsk1} is the simplex with vertices $Z_a,Z_b,Z_c,Z_d,Z_e$.
\end{example}

\begin{table}[h] 
\centering 
\begin{tabular}{|l||c|c|c|c|c|c|c|c|c|c|} % creating 10 columns
\hline % inserting double-line
\diagbox{$n$}{$k$} & $0$ & $1$ & $2$ & $3$ & $4$ & $5$  & $ 6$ & $ 7$ & $ 8$ & $ 9$ \\
\hline\hline % inserts single-line
$5$ & $1$ & $1$ & $ $ & $ $ & $ $ & $ $ & $ $ & $ $ & $ $ & $ $ \\
\hline
 $6$ & $1$ & $ 6$ & $ 1$ & $ $ & $ $ & $ $ & $ $ & $ $ & $ $ & $ $ \\
 \hline % inserts single-line
 $7$ & $1$ & $21$ & $ 21$ & $ 1$ & $ $ & $ $ & $ $ & $ $ & $ $ & $ $\\
\hline % inserts single-line
 $8$ & $1$ & $56$ & $176$ & $56 $ & $ 1$ & $ $ & $ $ & $ $ & $ $ & $ $\\
\hline % inserts single-line
 $9$ & $1$ & $126$ & $939$ & $939$ & $126$ & $ 1$ & $ $ & $ $ & $ $ & $ $ \\
\hline 
$10$ & $1$ & $252$ & $3785$ & $8690 $ & $ 3785$ & $ 252$ & $ 1$ & $ $ & $ $ & $ $ \\
\hline  $11$ & $1$ & $462$ & $12562$ & $55847 $ & $ 55847$ & $ 12562$ & $ 462$ & $ 1$ & $ $ & $ $ \\
\hline  $12$ & $1$ & $792$ & $36102$ & $279148 $ & $ 534258$ & $ 279148$ & $ 36102$ & $ 792$ & $ 1$ & $ $ \\
\hline $13$ & $1$ & $1287$ & $92807$ & $1158664$ & $3795064$ & $3795064$ & $1158664$ & $92807$ & $1287$ & $1$ 
\\
\hline
\end{tabular}
\vspace{1em}
\caption{Number of BCFW tiles of $\mathcal{A}_{n,k,4}(Z)$. For comparison, the standard BCFW tiles of $\mathcal{A}_{13,4,4}(Z)$ consists of 
5292 out of the 3795064 general BCFW tiles.} 
\label{tab:Schroeder}
\end{table}

\begin{notation} \label{not:pluckfunc}
Given a cluster variable $x$ in $\Gr_{4,n}$, we will denote as $x(Y)$ the functionary on $\Gr_{k,k+4}$ obtained by identifying Pl\"ucker coordinates $\lr{I}$ on $\Gr_{4,n}$ with twistor coordinates $\llrr{I}$ on $\Gr_{k,k+4}$, see \cref{sec:Pluckertwistor}. When it is clear from the context, we will talk about cluster variables and the corresponding functionaries interchangeably. 
\end{notation}

\cref{thm:BCFW-tile-and-sign-description} gives a description of each open BCFW tile as a semi-algebraic set in $\Gr_{k,k+4}$, cut out by the positivity of the coordinate functionaries. Each coordinate functionary is a (signed) Laurent monomial in the coordinate cluster variables, and vice versa. Using \cref{thm:BCFW-tile-and-sign-description} and the ingredients in its proof, we can obtain a description of the open tile using cluster variables.

\begin{corollary}[Sign description for general BCFW tiles]\label{cor:cluster-sign-description} 
	Let $\gt{\rcp}$ be a general BCFW tile. For each element $x$ of $\Irr(\rcp)$, the functionary $x(Y)$ has a definite sign $s_x$ on $\gto{\rcp}$ and 
	\[\gto{\rcp}= \{Y \in \Gr_{k,k+4}: s_x \, x(Y) >0 \text{ for all } x \in \Irr(\rcp) \}.\]
\end{corollary}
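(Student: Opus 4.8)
\textbf{Proof proposal for Corollary~\ref{cor:cluster-sign-description}.}

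The plan is to deduce the sign description from \Cref{thm:BCFW-tile-and-sign-description} by translating the statement about coordinate functionaries $\czeta_i^\rcp(Y)$ into a statement about the coordinate cluster variables $\bar\zeta_i^\rcp$. The key observation, which I would isolate as a lemma (or extract from the proof of \Cref{thm:BCFW-tile-and-sign-description}), is that for each BCFW parameter $\zeta_i\in\coord_\rcp$ the coordinate functionary $\czeta_i^\rcp(Y)$ and the coordinate cluster variable $\bar\zeta_i^\rcp$ (viewed as a functionary via \Cref{not:pluckfunc}) differ only by a signed Laurent monomial in the elements of $\mathcal{T}'$-type frozen variables that appear along the recursion. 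This is built into the definitions: comparing \Cref{def:twistor-mtx} with \Cref{def:generalcluster}, the base case differs only by signs $(-1)^{k_R}$, $(-1)^{k_R+1}$, etc.; the $\refl,\cyc,\pre$ cases differ only by the signs $(-1)^s$, $(-1)^{s'}$; and the product case replaces $\Psi_{a_kc_k}$ by $\rPsi_{a_kc_k}$, which by \Cref{def:rPsi} and \Cref{thm:promotion2}(2) differ precisely by a Laurent monomial in $\mathcal{T}'=\{\lr{abcn},\lr{abcd},\lr{bcdn},\lr{acdn}\}$ together with the tracked sign. So by induction on the number of step-tuples in $\rcp$, each $\czeta_i^\rcp(Y) = \epsilon_i \cdot M_i(Y)\cdot \bar\zeta_i^\rcp(Y)$, where $\epsilon_i\in\{\pm1\}$ and $M_i$ is a Laurent monomial in (twistor versions of) frozen/coordinate-cluster variables of $\Gr_{4,n}$.

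Next I would argue that every frozen variable appearing in the monomials $M_i$ lies in $\Irr(\rcp)$, or is a frozen variable of $\Gr_{4,n}$ that is itself an element of $\Irr(\rcp)$; this follows because the variables in $\mathcal{T}'$ at each product step are exactly $\lr{a_k b_k c_k d_k}=\repsilon_k^\rcp$, $\lr{a_k b_k c_k n_k}=\rdelta_k^\rcp$ (up to sign), $\lr{b_k c_k d_k n_k}=\ralpha_k^\rcp$, $\lr{a_k c_k d_k n_k}=\rbeta_k^\rcp$, which are among the coordinate cluster variables introduced at that very step, and these are carried along by $\refl^*,\cyc^{-*},\pre$ in exactly the same way as the other coordinate cluster variables. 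By \Cref{lem:irr-is-compatible-cluster}, $\Irr(\rcp)$ is a collection of compatible cluster variables, so in particular they are irreducible (\Cref{thm:GLS}) and nonzero on the relevant locus; combined with \Cref{thm:BCFW-tile-and-sign-description}, which asserts all $\czeta_i^\rcp(Y)>0$ on $\gto{\rcp}$, this forces each $\bar\zeta_i^\rcp(Y)$ to have a constant nonzero sign $s_x$ on $\gto{\rcp}$ (it cannot vanish there since $\czeta_i^\rcp(Y)\neq0$, and it cannot change sign since $\gto{\rcp}$ is connected — being the continuous image of the connected cell $S_\rcp$, or of $(\chR)^k$ via \Cref{cor:dim}). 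This establishes that every $x\in\Irr(\rcp)$ has a definite sign $s_x$ on $\gto{\rcp}$.

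For the set equality, the inclusion $\subseteq$ is immediate from the previous paragraph. For $\supseteq$, suppose $Y\in\Gr_{k,k+4}$ satisfies $s_x\,x(Y)>0$ for all $x\in\Irr(\rcp)$. Using the relations $\czeta_i^\rcp(Y)=\epsilon_i\,M_i(Y)\,\bar\zeta_i^\rcp(Y)$ and the fact that the signs $s_x$ were defined precisely so that the right-hand side is positive whenever all $s_x x(Y)>0$ (here I would check that $\epsilon_i$ times the product of the signs of the frozen factors equals $1$ — this is a bookkeeping consequence of the fact that on $\gto{\rcp}$ both sides are positive, so the sign identity holds as an identity of signs), we conclude $\czeta_i^\rcp(Y)>0$ for all $i$, and then $Y\in\gto{\rcp}$ by \Cref{thm:BCFW-tile-and-sign-description}. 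I expect the main obstacle to be the careful sign/monomial bookkeeping in the inductive claim $\czeta_i^\rcp(Y)=\epsilon_i M_i(Y)\bar\zeta_i^\rcp(Y)$ — in particular verifying that the frozen factors removed by $\rPsi$ and the signs $(-1)^s,(-1)^{s'}$ assemble coherently through reflections and rotations, and that no "new" irreducible factor outside $\Irr(\rcp)$ ever appears — but this is precisely the content already needed for \Cref{thm:BCFW-tile-and-sign-description}, so it should be available from its proof.
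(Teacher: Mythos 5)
Your overall route is the same as the paper's: both proofs rest on the observation that each coordinate functionary $\czeta_i^\rcp(Y)$ is a signed Laurent monomial in the coordinate cluster variables $\bar\zeta_j^\rcp(Y)$ (this is exactly how \cref{def:twistor-mtx} and \cref{def:generalcluster} diverge), and both deduce the inclusion $\supseteq$ by the same sign-matching trick: the signs $s_x$ determine the sign of each $\czeta_i^\rcp(Y)$, and comparison with any $Y'\in\gto{\rcp}$ (where the $\czeta_i^\rcp$ are positive and the $x$'s have signs $s_x$) forces that sign to be $+1$, so $Y\in\gto{\rcp}$ by \cref{thm:BCFW-tile-and-sign-description}.

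Where you diverge, and where there is a gap, is in the first claim that each $x\in\Irr(\rcp)$ has a definite sign on $\gto{\rcp}$. The paper obtains this by citing \cref{cor:clust-var-strong-sign}, whose proof in \cref{sec:imagesBCFWcells} is an independent induction using the \emph{strong sign} machinery (\cref{def:strongly_positive}, \cref{thm:signs-under-cyc-rot-pre}, \cref{prop:vanishing_and_sign_of_functionaries_under_promotion}). You instead try to derive it from the relation $\czeta_i^\rcp(Y)=\epsilon_i\,M_i(Y)\,\bar\zeta_i^\rcp(Y)$ plus connectedness of $\gto{\rcp}$, arguing that $\bar\zeta_i^\rcp(Y)\neq 0$ because $\czeta_i^\rcp(Y)\neq 0$. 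This step does not immediately follow: $M_i$ is a Laurent monomial, so if its \emph{denominator} vanishes at some $Y_0$ the equality $\czeta_i^\rcp(Y_0)=\epsilon_i M_i(Y_0)\bar\zeta_i^\rcp(Y_0)$ is consistent with $\bar\zeta_i^\rcp(Y_0)=0$ (a zero of $\bar\zeta_i^\rcp$ cancelling a pole of $M_i$). To rule this out you need the factors of $M_i$ — which are themselves elements of $\Irr(\rcp)$ — to be nonvanishing on $\gto{\rcp}$, which is precisely the statement you are in the middle of proving. You acknowledge the bookkeeping is delicate and guess it is "available from the proof of \cref{thm:BCFW-tile-and-sign-description}," but that theorem's proof only establishes positivity of the coordinate functionaries, not nonvanishing of the coordinate \emph{cluster variables}; the latter is the separate content of \cref{cor:clust-var-strong-sign}. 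So the cleanest repair is to cite that corollary for the definite-sign claim, as the paper does, and then your argument for the set equality goes through unchanged. (Alternatively, one can close your induction by ordering the cluster variables so that the frozen factors of $M_i$ appear only among variables already handled — the step-$k$ Pl\"ucker coordinates and $\rPsi$-images of earlier frozen factors — but spelling this out essentially reconstructs the strong-sign induction.)
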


\begin{proof}
	\cref{cor:clust-var-strong-sign} shows that for each element $x$ of $\Irr(\rcp)$, the functionary $x(Y)$ has a definite sign $s_x$ on $\gto{\rcp}$. This implies that 
		$\gto{\rcp}\subset \{Y \in \Gr_{k,k+4}: s_x \, x(Y) >0 \text{ for all } x \in \Irr(\rcp) \},$
		so all that remains is the reverse inclusion. By \cref{thm:BCFW-tile-and-sign-description}, $\gto{\rcp}$ is the subset of $\Gr_{k,k+4}$ where the coordinate functionaries are positive, so it suffices to show that the coordinate functionaries are positive on the points in the right hand set.
		
		Suppose $Y \in \Gr_{k,k+4}$ satisfies $s_x x(Y)>0$ for all coordinate cluster variables.
 Comparing the definition of the coordinate functionaries in \cref{def:twistor-mtx} to the definition of the coordinate cluster variables in \cref{def:generalcluster}, we see that each coordinate functionary is a signed Laurent monomial in the coordinate cluster variables. Thus, the signs $s_x$ of the coordinate cluster variables imply that each coordinate functionary $\zeta$ also has a particular sign on $Y$. This sign must be positive, since $x(Y')$ has sign $s_x$ on any $Y'$ in the tile $\gto{\rcp}$ and every coordinate functionary is positive on $Y'$.
\end{proof}

One may compute the signs $s_x$ in \cref{cor:cluster-sign-description}
 recursively using \cref{thm:signs-under-cyc-rot-pre,prop:vanishing_and_sign_of_functionaries_under_promotion,rem:extend}. 

\begin{remark}[Relevance to physics] \label{rk:CZ_eq}
A connection between certain positroid cells in $\Gr^{\geq 0}_{k,n}$ and cluster variables in $\Gr_{4,n}$ was explored in \cite{Mago:2020kmp, He:2020uhb}. The authors consider a positive parametrisation of positroid cells using edge or face variables of an associated plabic graph. Then by solving the `$C \cdot z$ equations', they express these variables as ratios of elements of $\CC[\Gr_{4,n}]$. 
They observe that most of the polynomials appearing are cluster variables for $\Gr_{4,n}$ and are relevant for the \emph{symbol alphabet} of loop amplitudes in $\mathcal{N}=4$ SYM. However, they also note that there are polynomials which are \emph{not} cluster variables.
We observe that for a positroid cell $S$ which gives a tile $Z_S$, solving the `$C \cdot z$ equations' is equivalent to inverting the amplituhedron map $\tilde{Z}$ on $Z_S$. In this section (see \cref{def:generalcluster}, \cref{thm:BCFW-tile-and-sign-description} and \cref{cor:cluster-sign-description}) we showed that, parametrising a BCFW cell $S_\rcp$ using BCFW parameters $\zeta_i \in \mathcal{C}_\rcp$, one may express all $\zeta_i$ as ratios of cluster variables in $\Gr_{4,n}$ (thought as functionaries in $\mbox{Gr}_{k,k+4}$). Morover, changing the recipe $\rcp$ for a BCFW cell may give a different set of cluster variables.
It would be interested to explore the relevance of all such cluster variables for the symbol alphabet and loop amplitudes.    
\end{remark}

\begin{example}[Sign description of BCFW tiles]
 Consider the tile $Z_\rcp$ in $\mathcal{A}_{12,4,4}(Z)$, where $\rcp$ is the recipe from \cref{ex:recipe}.
  Then, $\gto{\rcp}$ is the region in 
  $\Gr_{4,8}$ where all the coordinate cluster variables $\{\rzeta^{\rcp}_i\}$ in \cref{ex:coord_clust} -- thought as functionaries -- have a certain definite sign. Note that their signs is determined by the positivity of the coordinate functionaries in \cref{ex:coord_func} (and vice-versa).
\end{example}

\subsection{Cluster adjacency for BCFW tiles}
Now we turn to cluster adjacency for general BCFW tiles. Recall from \cref{def:facet2} the notion of a facet of $\gt{\rcp}$. We will use the following theorem (a special case of \cref{thm:signs-under-cyc-rot-pre,prop:vanishing_and_sign_of_functionaries_under_promotion}) to find functionaries which vanish on facets of $\gt{\rcp}$. 
Recall \cref{rem:Z} and the definition of product promotion $\Psi$ from \cref{pro-twistors2}.

\begin{theorem}\label{thm:vanishing-for-all-ops}
	\begin{enumerate}
		\item 		Let $S \subset \Grk$ be a positroid cell. Suppose that $F$ is a pure rational functionary which vanishes on $\gt{S}$ for all $Z$. Then
		\[
		\begin{cases}
			F\\
			\cyc^{-*}F\\
			\refl^* F
		\end{cases} \text{ vanishes on }
		\begin{cases}
			\gt{\pre_I S}\\
			\gt{\cyc S}\\
			\gt{\refl S}
		\end{cases}, \quad \mbox{for all } Z.
		\]
		\item 	Let $S_L \subset \Gr_{k_L, N_L}$ and $S_R \subset \Gr_{k_R, N_R}$ be positroid cells as in \cref{not:coindipendence}. Let $F$ be a pure functionary with indices contained in $N_L$ (resp. $N_R$) which vanishes on $\gt{S_L}$ (resp. $\gt{S_R}$) for all $Z$. Then $\Psi (F)$ vanishes on $\gt{S_L \bcfw S_R}$ for all $Z$.
	\end{enumerate}
\end{theorem}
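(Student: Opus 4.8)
\textbf{Proof plan for \cref{thm:vanishing-for-all-ops}.}

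The plan is to treat the two parts separately, with part (1) being essentially formal bookkeeping and part (2) being the substantive statement that relies on \cref{thm:promotion2}. For part (1), I would observe that for any positroid cell $S$ and any operation $\st \in \{\pre_I, \cyc, \refl\}$, the amplituhedron map intertwines $\st$ on the domain Grassmannian with the corresponding (pulled-back) operation on twistor coordinates. Concretely, $\tZ(\pre_I S)$ is cut out by the same functionaries as $\tZ(S)$ since inserting a zero column does not change which twistor coordinates appear (here one uses the Twistor expansion lemma, \cref{rk:Lapexpansion}, together with the fact that the relevant minors of $Z$ are unaffected); for $\cyc$ and $\refl$, one uses \eqref{eq:cyc-refl-on-plucker} and \cref{rmk:cyc-refl-on-functionaries} to see that if $F$ vanishes on $\gt{S}$ then $\cyc^{-*}F$ vanishes on $\gt{\cyc S}$ and $\refl^* F$ vanishes on $\gt{\refl S}$. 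The key point is that $Y' \in \gt{\cyc S}$ iff its preimage lies in $\overline{\cyc S}$, and pulling back $F$ by the cyclic shift on twistor coordinates tracks exactly this relabeling. Since the hypothesis is ``for all $Z$'', and the dihedral action on $\Mat_{n,k+4}^{>0}$ (up to sign adjustments as in \cref{def:dihedral}) is a bijection, the ``for all $Z$'' in the conclusion is automatic.

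For part (2), the approach is to use the matrix description from \cref{prop:butterfly-matrix-same}. A point $Y \in \gt{S_L \bcfw S_R}$ has a preimage $C \in \overline{S_L \bcfw S_R}$, which by \cref{prop:butterfly-matrix-same}(1) is (a limit of points) of the form $\mbcfw(A, [\alpha:\beta:\gamma:\delta:\varepsilon], B)$ for $A \in \overline{S_L}$, $B \in \overline{S_R}$. The heart of the argument is to relate $\llrr{YZ_I}$ for $Y = \tZ(C)$ to twistor coordinates computed from $A$ and $B$. I would argue that the twistor coordinates of $Y$ with indices in $N_L$ (resp.\ $N_R$) are, up to the substitutions \eqref{eq:promotionvectors1}--\eqref{eq:promotionvectors3} and a nonzero scalar, the twistor coordinates of $\tZ(A)$ (resp.\ $\tZ(B)$) for the smaller amplituhedron --- this is precisely the geometric content that makes product promotion $\Psi = \Psi_{a_kc_k}$ the right algebraic operation. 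In other words, there is a commuting square: applying $\Psi$ to a functionary $F$ on $\Gr_{4,N_L}$ (or $\Gr_{4,N_R}$) and then evaluating on $\gt{S_L \bcfw S_R}$ gives the same answer (up to a nonvanishing frozen factor) as evaluating $F$ on $\gt{S_L}$ (or $\gt{S_R}$) after the appropriate change of coordinates coming from the butterfly structure. If $F$ vanishes on $\gt{S_L}$ (resp.\ $\gt{S_R}$) for all $Z$, then $\Psi(F)$ vanishes on $\gt{S_L \bcfw S_R}$.

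The main obstacle will be making the ``commuting square'' precise: one must carefully check that, when $Y = \tZ(\mbcfw(A,[\alpha:\dots:\varepsilon],B))$ and $I \subset N_L$, the twistor coordinate $\llrr{YZ_I}$ equals (up to scalar) the twistor coordinate of $\tZ(A)$ with the vector $Z_b$ replaced by $Z_b - \tfrac{\lr{bcdn}_Z}{\lr{acdn}_Z}Z_a$ --- i.e.\ that the amplituhedron map transports the column operations appearing in the BCFW matrix \eqref{eq:path-mtx} (namely $A'_a, B'_c, B'_d$, and the shift producing $A'_n$) exactly to the substitutions defining $\Psi$ on the $Z$-side. This is a determinant computation using the block structure of the matrix $(*)$ in \cref{fig:promotion-matrix} together with the Twistor expansion lemma, and one has to be careful about the signs $(-1)^{k_R}$ and the roles of the rows indexed by $a$ versus the rows coming from $A$ and $B$. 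I expect this is exactly the computation carried out in the proof of \cref{prop:vanishing_and_sign_of_functionaries_under_promotion}, of which this theorem is stated to be a special case, so the cleanest route is to invoke that proposition directly once it is available; the plan above is the self-contained version.
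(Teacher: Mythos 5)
Your sketch for part (1) is correct and matches the paper's route: \cref{thm:signs-under-cyc-rot-pre} (via \cref{lem:twistor-after-pre-cyc-rot}) constructs, for each $C' \in S'$ and positive $Z'$, matrices $C \in S$ and $Z$ positive so that the twistor coordinates transform by exactly the relabeling you describe, and setting the sign $s=0$ gives the vanishing statement. The paper gives no separate proof of \cref{thm:vanishing-for-all-ops}; it is declared a special case of \cref{thm:signs-under-cyc-rot-pre} and \cref{prop:vanishing_and_sign_of_functionaries_under_promotion}, proved in \cref{sec:proof1} and \cref{sec:proof2}.

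For part (2), your ``commuting square'' intuition is the right geometric picture, and you correctly note that the clean route is to invoke \cref{prop:vanishing_and_sign_of_functionaries_under_promotion}(1). However, your self-contained sketch glosses over two genuine difficulties. First, the phrase ``the twistor coordinates of $\tZ(A)$ for the smaller amplituhedron'' requires a \emph{new} positive matrix $Z' \in \Mat_{N_L, k_L+4}^{>0}$ (resp.\ $\Mat_{N_R, k_R+4}^{>0}$): this has fewer \emph{columns} than $Z$, not just fewer rows, so replacing $Z_b$ by $Z_b - \tfrac{\lr{bcdn}_Z}{\lr{acdn}_Z} Z_a$ does not suffice. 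One must also pass to a quotient of $\R^{k+4}$ that eliminates the extra columns; this is the substantive content behind \cref{lem:evaluation-after-bullet}, whose proof cites \cite[Section 4]{even2021amplituhedron} (in particular Lemmas 4.18, 4.21, 4.24, 4.28 and 4.35 there). Calling this ``a determinant computation using the block structure'' understates it. Second, the paper's proof does not work directly on $S_L \bcfw S_R$: it works on the auxiliary cell $S_R^\bulR := \Gr_{k_L,N_L}^{>0} \bcfw S_R$, which has the clean recursive description of \cref{lem:9_6_elt} precisely because the left factor is the top cell, and then uses $S_L \bcfw S_R \subset \overline{S_R^\bulR}$ plus continuity. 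Working directly on $S_L \bcfw S_R$ with general $S_L$ is possible in principle but would require redoing the analysis for each intermediate operation with a variable left factor; the $\bulR$ detour is what makes the inductive construction tractable. If you want the self-contained version, you should prove the analogue of \cref{lem:evaluation-after-bullet} for $S_R^\bulR$ (not $S_L \bcfw S_R$) and then descend.
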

 
 Using \cref{thm:vanishing-for-all-ops} and \cref{lem:boundaries_before_amp_map}, 
 %which describes the boundary of $S_L \bcfw S_R$, 
 we can prove the following theorem.

\begin{theorem}[Cluster adjacency for general BCFW tiles]
	\label{thm:clusteradjacency}
	Let $\gt{\rcp}$ be a general BCFW tile of 
	$\Ank$. Each facet $\gt{S}$ of $\gt{\rcp}$ lies on a hypersurface cut out by a functionary $F_S(\llrr{I})$ such that $F_S(\lr{I}) \in \Irr(\rcp)$. Thus $\{F_S(\lr{I}): \gt{S} \text{ a facet of }\gt{\rcp}\}$ consists  of compatible cluster variables of $\Gr_{4,n}$.
\end{theorem}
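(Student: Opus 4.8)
The plan is to induct on the number of step-tuples in the recipe $\rcp$, mirroring the recursive structure of \Cref{def:recipe} and using \Cref{thm:vanishing-for-all-ops} together with \Cref{lem:boundaries_before_amp_map} (the characterization of the boundaries of BCFW cells). The compatibility claim at the end will follow for free from \Cref{lem:irr-is-compatible-cluster}, which already shows that all of $\Irr(\rcp)$ is a set of compatible cluster variables for $\Gr_{4,n}$; so the real content is to show that for \emph{each} facet $\gt{S}$ of $\gt{\rcp}$ there is a functionary $F_S$ which vanishes on $\gt{S}$ and whose underlying polynomial in Pl\"ucker coordinates lies in $\Irr(\rcp)$. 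In fact I would prove the slightly stronger statement that each facet of the \emph{cell} $S_\rcp$ is cut out (after applying $\tZ$) by the vanishing of one of the coordinate functionaries $\czeta_i^\rcp(Y)$ — since by \Cref{def:twistor-mtx} and \Cref{def:generalcluster} each $\czeta_i^\rcp(Y)$ is (up to sign) the functionary $\rzeta_i^\rcp(Y)$ times a Laurent monomial in frozen variables, and the frozen variables are nonvanishing on the relevant locus.

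\textbf{Key steps.} First, the base case $k=0$ is vacuous (the cell is a point, no facets). For the inductive step, I would split according to the final step $\st$ of the recipe. If $\st = \pre_{I_k}$, then $S_\rcp = \pre_{I_k}S_{\rcpp}$; the facets of $S_\rcp$ are exactly the images under $\pre_{I_k}$ of the facets of $S_{\rcpp}$ (inserting zero columns does not create new boundary divisors of positroid cells of the relevant type — this is where I lean on \Cref{lem:boundaries_before_amp_map}), and by induction each such facet has a vanishing functionary $F \in \Irr(\rcpp)$; since $\pre$ acts as the identity on coordinate cluster variables and on the twistor identification, the same $F$ works, and $F \in \Irr(\rcp)$. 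If $\st = \cyc$ or $\refl$, the facets of $S_\rcp$ are the $\cyc$- or $\refl$-images of facets of $S_{\rcpp}$; by \Cref{thm:vanishing-for-all-ops}(1), $\cyc^{-*}F$ (resp.\ $\refl^*F$) vanishes on the corresponding facet, and by \Cref{def:generalcluster} these are exactly the coordinate cluster variables of $\rcp$. The substantive case is $\st = (a_k,b_k,c_k,d_k,n_k)$, where $S_\rcp = S_L \bcfw S_R$. Here \Cref{lem:boundaries_before_amp_map} must tell us that the facets of $S_L\bcfw S_R$ are of two types: (i) images under $\mbcfw$ of $(\text{facet of }S_L)\times \chR\times S_R$ or $S_L\times\chR\times(\text{facet of }S_R)$, and (ii) the ``new'' facets where one of the five BCFW parameters $\zeta_k\in\{\alpha_k,\beta_k,\gamma_k,\delta_k,\varepsilon_k\}$ degenerates to a boundary of $\chR$. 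For type (i), by induction there is a vanishing functionary $F$ for the facet of $S_L$ (resp.\ $S_R$), and by \Cref{thm:vanishing-for-all-ops}(2) the functionary $\Psi_{a_kc_k}(F)$ vanishes on $\gt{S_L\bcfw S_R}$; stripping the frozen monomial as in \Cref{def:rPsi} gives $\rPsi(F) \in \Irr(\rcp)$. For type (ii), the facet is where $\zeta_k^\rcp(Y)$ vanishes, and by \Cref{def:twistor-mtx} these are, up to sign, the twistor coordinates $\llrr{b_kc_kd_kn_k}$, $\llrr{a_kc_kd_kn_k}$, $\llrr{a_kb_kd_kn_k}$, $\llrr{a_kb_kc_kn_k}$, $\llrr{a_kb_kc_kd_k}$ — i.e.\ exactly $\ralpha_k^\rcp,\ldots,\repsilon_k^\rcp \in \Irr(\rcp)$.

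\textbf{Main obstacle.} The crux is the type-(i)/type-(ii) classification of facets of $S_L\bcfw S_R$ and, correspondingly, the guarantee that the functionary produced by product promotion actually \emph{vanishes to the right order} and \emph{cuts out} the facet (rather than just a superset). This is precisely the content I would extract from \Cref{lem:boundaries_before_amp_map} and the sign/vanishing analysis of \Cref{prop:vanishing_and_sign_of_functionaries_under_promotion}; in particular one must rule out that $\Psi_{a_kc_k}(F)$ vanishes identically on the whole tile (which is why \Cref{thm:vanishing-for-all-ops}(2) is stated as vanishing on the \emph{boundary}, not identically) and confirm it vanishes exactly on the promoted boundary divisor. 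A secondary subtlety: some promoted numerators factor (cf.\ \Cref{rem:factors}), so $\rPsi(F)$ might a priori be a product rather than an irreducible cluster variable; but \Cref{thm:promotion2} guarantees $\rPsi(F)$ is a single cluster variable after removing the $\mathcal{T}'$-monomial, and \Cref{lem:irr-is-compatible-cluster} guarantees the whole collection is compatible, so this resolves itself. Finally, after handling all cases, the concluding sentence follows: $\{F_S : \gt{S}\text{ a facet}\} \subseteq \Irr(\rcp)$, which is a compatible collection by \Cref{lem:irr-is-compatible-cluster}. \qed
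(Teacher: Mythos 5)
Your proposal is correct and follows essentially the same route as the paper: induct on the recipe, handle $\pre$, $\cyc$, $\refl$ via \cref{thm:vanishing-for-all-ops}(1), and in the product step use \cref{lem:boundaries_before_amp_map} to split boundary cells into promoted boundaries of $S_L$ or $S_R$ (handled by promotion plus \cref{lem:sign_of_chord_twistors} to strip the frozen monomial) and cells where coindependence of some $4$-subset of $\{a,b,c,d,n\}$ fails (handled by the corresponding twistor coordinate, i.e.\ your ``type (ii)''). One cosmetic slip: in the type-(i) step you wrote that $\Psi_{a_kc_k}(F)$ vanishes on $\gt{S_L\bcfw S_R}$ where you mean $\gt{S_L'\bcfw S_R}$ for $S_L'$ the facet — but your ``main obstacle'' paragraph makes clear you intend the latter, and the theorem as stated only asks for containment of the facet in the vanishing locus, so the ``cuts out exactly'' concern you raise is not actually needed.
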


\begin{proof}
	Note that by \cref{lem:irr-is-compatible-cluster}, $\Irr(\rcp)$ consists of compatible cluster variables for $\Gr_{4,n}$ (under the usual identification of Pl\"ucker coordinates and twistor coordinates). So the final sentence of the theorem statement follows from the second sentence. To prove the second sentence, we will show the following claim.
 
 \noindent \textbf{Claim:} Let $S \subset \overline{S_\rcp}$ be a cell in the boundary of $S_\rcp$. Then for all $Z$, $\gt{S}$ lies in the vanishing locus of an element of $\Irr(\rcp)$.
 
 We prove the claim by induction on the number of steps in $\rcp$. We will use the notation of \cref{not:L-and-R}. For the base case ($k=0$ and $\rcp=\emptyset$), there is nothing to prove. If $\st=\pre_I$, then the map $\pre_I$ is a stratification-preserving homeomorphism from $\overline{S_\rcpp}$ to $\overline{S_\rcp}$. That is, any cell in the boundary of $S_\rcp$ is of the form $\pre_I (S)$ for some $S$ in the boundary of $S_\rcpp$. By the inductive hypothesis, there is some functionary $F \in \Irr(\rcpp)$ which vanishes on $\gt{S}$ for all $Z$. By \cref{thm:vanishing-for-all-ops}, $F$ also vanishes on $\gt{\pre_I S}$ for all $Z$. By \cref{def:generalcluster}, $F$ is also an element of $\Irr(\rcp)$, so the claim holds in this case. The arguments for $\st=\cyc$ or $\st=\refl$ are analogous.
 
 If $\st=(a,b,c,d,n)$, then by \cref{lem:boundaries_before_amp_map} 
	the boundary of $S_\rcp$ consists of (1) cells of the form $S_L' \bcfw S_R'$ where $S_L' \subseteq \overline{S_L}, S_R' \subseteq \overline{S_R}$, $(S_L', S_R') \neq (S_L, S_R)$ and $\{a,b,n\}, \{b,c,d,n\}$ are coindependent for $S_L', S_R'$, respectively; and 
 (2) cells for which some element of $\binom{\{a,b,c,d,n\}}{4}$  fails to be coindependent.
 
 Consider a cell $S_L' \bcfw S_R'$ appearing in (1) and suppose $S_L' \neq S_L$. By the inductive hypothesis on $S_L$, there is a functionary $F \in \Irr(L)$ which vanishes on $\gt{S_L'}$ for all $Z$. By \cref{thm:vanishing-for-all-ops}, the promotion $\Psi(F)$ vanishes on $\gt{S_L' \bcfw S_R'}$ for all $Z$. By \cref{lem:sign_of_chord_twistors}, the twistor coordinates $\llrr{bcdn}$, $\llrr{acdn}$, $\llrr{abdn}$, $\llrr{abcn}$, $\llrr{abcd}$ are nonvanishing on $\gt{S_L' \bcfw S_R'}$. So in fact the rescaled product promotion $\rPsi(F)$ vanishes on $\gt{S_L' \bcfw S_R'}$ for all $Z$. By \cref{def:generalcluster}, $\rPsi(F)$ is also an element of $\Irr(\rcp)$. This proves the claim for boundary cells of the form $S_L' \bcfw S_R'$ where $S_L' \neq S_L$. An analogous argument %using $S_R'$ rather than $S_L'$ 
	shows the claim for the boundary cells of the form $S_L \bcfw S_R'$. 
 
Now, let $S$ be a cell in (2), and say $I$ is an element of $\binom{\{a,b,c,d,n\}}{4}$ which is not coindependent for $S$. This means that every basis of $S$ has nonempty intersection with $I$. Consider the twistor coordinate $\llrr{I}$. for all $Z$, every term in the sum \eqref{eq:cauchy-binet} is equal to zero. So $\llrr{I}$ vanishes on $\gt{S}$ for all $Z$. By \cref{def:generalcluster}, $\llrr{I}$ is in $\Irr(\rcp)$ (it is one of the five twistor coordinates added). This concludes the proof of the claim.

	Now, the claim implies the theorem. Indeed, any facet of $\gt{\rcp}$ is by definition $\gt{S}$ for some cell $S \subset \overline{S_\rcp}$. By the claim, $\gt{S}$ lies in the vanishing locus of an element of $\Irr(\rcp)$ for any choice of $Z$.
\end{proof}

We believe that one can generalize
\cref{thm:clusteradjacency} 
and \cref{cor:cluster-sign-description}  as follows.

\begin{conjecture}[Cluster adjacency and positivity tests for tiles] \label{conj:clusteradjmain}
		Let $Z_{S'}$ be a tile of the amplituhedron 
		$\mathcal{A}_{n,k,m}(Z)$.  
  \begin{enumerate}[label=(\roman*)]
      \item (Cluster adjacency): Then for each facet
		$Z_S$ of $Z_{S'}$, there is a functionary
		$F_S(\llrr{I})$ which vanishes on $Z_S$, such that the collection
		$$\mathcal{F} = \{F_S(\lr{I}): Z_S \text{ a facet of }Z_{S'}\}$$
		is a collection of compatible cluster variables for 
		$\Gr_{m,n}$. 
  \item (Positivity test): Moreover for even $m$, given any extended cluster $\txx$ 
		for $\Gr_{m,n}$ containing
		$\mathcal{F}$, there are signs
		$s_x\in \{1,-1\}$ for each $x \in \txx$, such that 
		\begin{equation*}
			\gto{S}= \{Y \in \Gr_{k, k+m}: s_x \cdot x(Y)>0 \text{ for all }x \in \txx \},
		\end{equation*}
		where as in \cref{not:pluckfunc}, $x(Y)$ is the functionary corresponding to the cluster variable $x$. 
  \end{enumerate}
		\end{conjecture}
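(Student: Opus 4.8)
As \cref{conj:clusteradjmain} is open even for $m=4$, the plan is to set up a program that reduces it to two ingredients: (A) a structural statement that every tile of $\AA$ is obtained from simpler tiles by a controlled list of operations, and (B) a verification, one value of $m$ at a time, that each such operation is compatible with the cluster structure of $\C[\widehat{\Gr}_{m,n}]$. The guiding principle is the one behind \cref{thm:clusteradjacency} and \cref{cor:cluster-sign-description}: attach to a tile a recursive construction, and propagate both statements of the conjecture along it. The operations one expects to need are those of \cref{def:BCFW_cell} -- inserting a zero column, cyclic shift, reflection, and a ``product'' step -- together with the flips relating the tilings inside a fixed tiling complex. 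The first four already act on $\C[\widehat{\Gr}_{m,n}]$ by sending clusters to clusters: cyclic shift and reflection by \cref{cor:cycrefl}, index insertion by \cref{lem:add-marker-embed-gr}, and the product step (for $m=4$) by \emph{product promotion}, which is a cluster quasi-homomorphism (\cref{thm:promotion2}). Thus for $m=2$ ingredient (B) is supplied by \cite{PSW} and for $m=4$ by the present paper; ingredient (A), even in the weak form ``every tile of $\Ank$ is connected to a BCFW tile through cluster-compatible moves,'' is the part that is missing.

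\textbf{Part (i).} Suppose a tile $\gt{S'}$ of $\AA$ is given. Since $\dim\Gr_{k,k+m}=km=\dim\gt{S'}$, a facet $\gt{S}$ (\cref{def:facet2}) has codimension one in $\Gr_{k,k+m}$, so its Zariski closure is a hypersurface and is cut out by a single irreducible polynomial; the first task is to show this polynomial can be taken to be a functionary $F_S(\llrr{I})$. For facets lying on $\partial\AA$ this is automatic -- some $I\in\binom{[n]}{m}$ fails to be coindependent for $S$ and $\llrr{I}$ vanishes on $\gt S$, exactly as in the proof of \cref{thm:clusteradjacency}. For internal (``spurious'') facets one uses the recursion: as in \cref{lem:boundaries_before_amp_map}, a codimension-one boundary cell of a product $S'=S_L'\bcfw S_R'$ is either a product of boundary cells of $S_L',S_R'$, or has some $I\in\binom{\{a,b,c,d,n\}}{4}$ non-coindependent, and the defining functionary is obtained by pushing forward, via $\rPsi$, a functionary for the factor (\cref{thm:vanishing-for-all-ops}); the dihedral and insertion steps are transparent. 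Compatibility of $\mathcal{F}=\{F_S(\lr I)\}$ then follows by the induction of \cref{thm:clusteradjacency} and \cref{lem:irr-is-compatible-cluster}: $\rPsi$ lands in a single cluster of $\Gr_{m,n}$, stays compatible with the frozen Pl\"ucker coordinates cutting out the new boundary facets, and \cref{cor:cycrefl}, \cref{lem:add-marker-embed-gr} dispose of the remaining steps. For $m\neq 4$ one substitutes the appropriate product operation and its quasi-homomorphism.

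\textbf{Part (ii).} Assume $m$ is even and fix an extended cluster $\txx$ for $\Gr_{m,n}$ with $\mathcal{F}\subseteq\txx$. The claim has two halves. First, each $x\in\txx$, read through \cref{not:pluckfunc} as a functionary $x(Y)$, has a definite sign $s_x$ on $\gto{S'}$: a cluster variable is irreducible (\cref{thm:GLS}), so by Part (i) its zero locus meets $\overline{\gt{S'}}$ only along facets, and $\gto{S'}$ is connected, so $x(Y)$ cannot change sign -- this is the argument behind \cref{cor:cluster-sign-description}. Second, the conditions $s_x\,x(Y)>0$ cut out $\gto{S'}$ and nothing more. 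Here one would reproduce \cref{thm:BCFW-tile-and-sign-description}: a tile comes with a rational inverse to $\tZ$, which should be expressible through finitely many cluster variables forming a compatible set $\Irr(S')\supseteq\mathcal F$ whose signs determine membership; then one checks that $\Irr(S')$ can be absorbed into $\txx$, up to frozen factors and mutations within a single seed. Evenness of $m$ enters precisely here, since for even $m$ the tile is a full-dimensional region on which the $\Gr^{>0}_{m,n}$-style positivity test localizes, whereas for odd $m$ the amplituhedron is a union of chambers of a hyperplane arrangement and one expects at best a chamberwise version of the test.

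\textbf{Main obstacle.} The real difficulty is ingredient (A). There is no classification of tiles of $\Ank$, and non-BCFW tiles -- which, as \cref{tab:Schroeder} shows, vastly outnumber the standard ones -- need not carry any product/dihedral recipe, so one cannot simply transport $\Irr$ and $\mathcal F$ along a construction. Overcoming this means either proving that the tiling complex of $\Ank$ is connected under local flips \emph{and} that each flip is realized by a cluster mutation -- which would let one start from the BCFW tiles treated here and propagate (i)--(ii) to all tiles -- or giving a direct, non-recursive description of the facet functionaries of an arbitrary tile. For $m\notin\{2,4\}$ there is the further obstruction that no analogue of product promotion is known, so even ingredient (B) is open; the natural staging is therefore to settle $m=4$ first by establishing (A), and then to seek an $m=2\ell$ generalization of product promotion built on the $m=2$ quasi-homomorphism of \cite{PSW}.
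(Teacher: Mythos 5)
The statement you are asked to prove is \cref{conj:clusteradjmain}, which the paper itself leaves open: it is stated as a conjecture, and the authors only establish it for BCFW tiles (part (i) via \cref{thm:clusteradjacency}, part (ii) for standard BCFW tiles via \cref{thm:sign-definite}) and, in a companion paper, for one non-BCFW tile. Your proposal correctly recognizes this and offers a program rather than a proof; for the cases that are actually proved, your route (recursive construction of the tile, facet functionaries via \cref{lem:boundaries_before_amp_map} and \cref{thm:vanishing-for-all-ops}, compatibility via product promotion, \cref{cor:cycrefl} and \cref{lem:add-marker-embed-gr}) is essentially the paper's. Your identification of the main obstruction — the absence of any classification or recursive description of general tiles, and of an analogue of product promotion for $m\notin\{2,4\}$ — is also accurate.

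There is, however, one step in your sketch of Part (ii) that would not survive being made precise. You argue that each $x\in\txx$ has a definite sign on $\gto{S'}$ because $x$ is irreducible and ``by Part (i) its zero locus meets $\overline{\gt{S'}}$ only along facets.'' Part (i) does not give you this: it asserts that each facet lies on the zero locus of \emph{some} compatible cluster variable, not that the zero locus of an \emph{arbitrary} cluster variable in an extended cluster containing $\mathcal F$ avoids the interior of the tile. If $x(Y)$ changed sign on the connected set $\gto{S'}$, its vanishing locus would cut through the interior, and nothing in (i) forbids that. The paper's actual mechanism is entirely different and is the reason Sections 8--9 exist: sign-definiteness of the coordinate cluster variables is proved by propagating \emph{strong signs} through the recursion (\cref{def:strongly_positive}, \cref{prop:vanishing_and_sign_of_functionaries_under_promotion}, \cref{cor:clust-var-strong-sign}), and is then extended from the initial seed $\Sigma_D$ to every cluster variable of $\A(\Sigma_D)$ by the \emph{signed seed} property (\cref{def:signedseed}, \cref{prop:signed-seed}), which shows that in each exchange relation the two monomials on the right-hand side carry the same sign. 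Any serious attack on part (ii), even for a single new tile, would need a substitute for this machinery rather than the irreducibility-plus-connectedness shortcut.
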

We will prove \cref{conj:clusteradjmain} for standard BCFW tiles in \cref{sec:quiver} (see \cref{thm:sign-definite}), 
and for a non-BCFW tile (the `spurion') in our 
companion paper \cite{companion} for the $m=4$ amplituhedron.

\section{Cluster variables for standard BCFW tiles}
\label{sec:clustervariable}
   
In this section, we focus on standard BCFW tiles $\gt{D}$. We give explicit formulas for the coordinate cluster variables $\Irr(D)$ (cf.~\cref{def:generalcluster}) in \cref{prop:explicit}. 
We give explicit formulas for the sign of each element of $\Irr(D)$ on the open tile $\gto{D}$ in \cref{prop:domino-var-signs-on-tile}.
\subsection{Domino Cluster Variables}

We first rewrite \cref{def:generalcluster} in the standard BCFW case using the combinatorics of chord diagrams. For a standard BCFW tile $\gt{D}$, we call the coordinate cluster variables \emph{domino cluster variables} or simply \emph{domino variables.}
The terminology comes from 
\cite{karp2020decompositions} and \cite{even2021amplituhedron}, 
where some related variables were used for
 entries of matrices parametrizing BCFW cells.
 The term ``domino'' was used for such variables, since the 
 vectors and matrices tended to have adjacent pairs of nonzero same-sign entries, such as $\alpha,\beta$ and $\gamma,\delta$ in Figure~\ref{fig:promotion-matrix}.

\begin{definition}[Domino cluster variables]
\label{domino-var} 
Let $D$ be a chord diagram in $\CD$, and let 
$\ctop = (a,b,c,d)$ denote the rightmost top chord, where $1\leq a<b<c<d<n$,
and moreover $a,b$ and $c,d$ are consecutive.
We define the \emph{domino (cluster) variables }
	$\xx(D)=\{\ralpha_i, \rbeta_i, \rgamma_i, \rdelta_i, \repsilon_i\ \vert \ 1 \leq i \leq k\}$
recursively as follows, using
the rescaled product promotion $\rPsi$ from \Cref{def:rPsi}.

\begin{itemize}
\item[1.] (Removing the penultimate marker) If $d<n-1$, let $D'$ be the chord diagram on $[n] \setminus \{n-1\}$ 
obtained from $D$ by removing the marker $n-1$.
For any chord $D_j$ of $D$, and for 
$\rzeta\in \{\ralpha, \rbeta, \rgamma, \rdelta, \repsilon\}$:
$$ \rzeta_j^D \;=\; \rzeta_j^{D'} $$

\item[2.] (Removing the rightmost top chord) 
Otherwise, if $d=n-1$, 

the domino variables
of $D$ are obtained from those of the left and right subdiagrams $D_L, D_R$ (cf \cref{def:leftright}) as follows.

\begin{itemize}
\item[$\bullet$] 
The domino variables associated to the top chord $\ctop$ are given by the five Pl\"ucker coordinates:
$$ \ralpha_{\rtop}^D = \lr{b\,c\,d\,n} \quad
\rbeta_{\rtop}^D = \lr{a\,c\,d\,n} \quad 
\rgamma_{\rtop}^D = \lr{a\,b\,d\,n}  \quad
\rdelta_{\rtop}^D = \lr{a\,b\,c\,n} \quad 
\repsilon_{\rtop}^D = \lr{a\,b\,c\,d} $$
\item[$\bullet$]
For $\rzeta\in \{\ralpha, \rbeta, \rgamma, \rdelta, \repsilon\}$,and for $D'=D_L$ or $D_R$, with $D_j\in D'$, we define:
\begin{equation*}
\rzeta_j^D \;=\; 
\rPsi(\rzeta_j^{D'}).
\end{equation*}

\end{itemize}
\end{itemize}
We note that when the chord diagram $D$ is understood, we will often use 
$\rzeta_j$ to denote $\rzeta_j^D$.
\end{definition}

\begin{remark}\label{rmk:beta=alpha}
Suppose 
that the chord diagram $D$ contains a chord $D_j = (b,b+1,c,d)$, i.e., a~sticky same-end child of the top chord $\ctop$ according to the terminology in Definition~\ref{cd-terminology}. Then by \cref{def:rPsi}, $\rbeta_j^{D'} =\lr{bcdn}=\rbeta_j^D= \ralpha_{\rtop}^D$. This implies that in any chord diagram $D$, if $D_p$ has a sticky same-end child $D_j$, then $\rbeta_j^D= \ralpha_p^D$.
\end{remark}

\cref{domino-var} gives rise to  explicit formulas for the domino variables,
see 
	\cref{prop:explicit}.
These formulas have different cases depending on whether certain chords are head-to-tail siblings, same-end parent and child, or sticky parent and child, using the terms introduced in Definition~\ref{cd-terminology}. We use the notation $x':= x-1$ and use the adjective ``sticky" to mean ``is a sticky child of some chord".

\begin{notation}\label{notation:arrow}
Let $D \in\CD$ be a chord diagram with chords $(a_1,b_1,c_1,d_1),\dots,(a_k,b_k,c_k,d_k)$,
and consider any chord $D_i = (a_i,b_i,c_i,d_i)$. 
We use the following notation for pieces of chain polynomials:
\begin{align*}
|\ldots \,x\,y \nearrow_i n \rangle \;&=\;
 |\ldots\,x\,y\br b_{(1)}\,a_{(1)} \br c_{(1)}\,d_{(1)} \br b_{(2)}\,a_{(2)} \br c_{(2)}\,d_{(2)} \br \cdots \br b_{(m)}\,a_{(m)} \br c_{(m)}\,d_{(m)} \,n \rangle
\\
\langle \,n\nwarrow_i x\,y\, \ldots | \;&=\; \langle n \, c_{(m)}\,d_{(m)} \br b_{(m)}\,a_{(m)} \br \cdots \br c_{(2)}\,d_{(2)} \br b_{(2)}\,a_{(2)}  \br c_{(1)}\,d_{(1)} \br b_{(1)}\,a_{(1)} \br x\,y\, \ldots |,
\end{align*}
where the chords $D_{(1)}=(a_{(1)},b_{(1)},c_{(1)},d_{(1)})$, $\dots$, $D_{(m)}=(a_{(m)},b_{(m)},c_{(m)},d_{(m)})$ are the following possibly-empty chain of ancestors of $D_i$, ordered bottom to top: $D_{(1)}$ is the lowest ancestor of $D_i$ that does not end in $(x,y)$, and $D_{(r+1)}$ is the lowest ancestor of $D_{(r)}$ that does not end at $(c_{(r)}, d_{(r)})$, i.e. is not same-end with $D_{(r)}$. 

We also set
\begin{equation*}
\rightarrowp_\ell \, n \;=\; \begin{cases}
\nearrow_\ell n & \text{ if } D_\ell \text{ is not sticky}\\
-a'_{\ell} & \text{ otherwise  }

\end{cases}, \qquad \qquad n \, \leftarrowp_\ell \;=\; \begin{cases}
n\,\nwarrow_\ell & \text{ if } D_\ell \text{ is not sticky}\\
-a'_{\ell} & \text{ otherwise  }

\end{cases}.
\end{equation*}
\end{notation}

\begin{theorem}[Domino cluster variables as chain polynomials]
	\label{prop:explicit}
Let $D \in\CD$ be a chord diagram 
and use \cref{notation:arrow}.
%with chords $(a_1,b_1,c_1,d_1),\dots,(a_k,b_k,c_k,d_k)$,
%and consider any chord $D_i = (a_i,b_i,c_i,d_i)$. 
The domino cluster variables $\xx(D)$ are given by the following chain polynomials:

\begin{align*}
\ralpha_i \;&=\;
\lr{b_i\,c_i\,d_i \nearrow_i n}
\\
%\rbeta_i \;&=\;
%\begin{cases}
%\lr{a_i\,c_i\,d_i \nearrow_i n} & \text{if $D_i$ not sticky 
%	}
%\\
%\lr{a_i'\,a_i\,c_i\,d_i} & \text{if $D_i$ is a sticky but not same-end child}
%\\
%\ralpha_p
%&
%\text{if $D_i$ is a sticky same-end child of another chord $D_p$}
%\end{cases}
\rbeta_i \;&=\;
\begin{cases}
\ralpha_p & \text{if $D_i$ has sticky same-end parent $D_p$}
%\end{cases} 
\\[0.2em]
%\begin{cases}
\lr{a_i\,c_i\,d_i \rightarrowp_i n} \;\;\;\;\;\;\;\;\;\;\;\;\;\;\;\;\;\; & \text{otherwise}
\end{cases}\\
%\rgamma_i \;&=\;
%\begin{cases}
%\lr{n \nwarrow_i a_i\,b_i \br c_i\,d_i \br c_j\,d_j \nearrow_i n} & \text{if $D_i$ has sibling $D_j=(c_i, d_i, c_j, d_j)$ and $D_i$ not sticky}
%\\
%\lr{a_i\,a_i'\,b_i \br c_i\,d_i \br c_j\,d_j \nearrow_i n} & \text{if $D_i$ has sibling $D_j=(c_i, d_i, c_j, d_j)$ and $D_i$ sticky}
%\\
%\lr{n \nwarrow_p a_p\,b_p \br a_i\,b_i \br c_i\,d_i \nearrow_p n}
%& \text{if  $D_i$ has same-end parent $D_p$, and $D_i, D_p$ not sticky}
%\\
%\lr{a_p\,a_p'\,b_p \br a_i\,b_i \br c_i\,d_i \nearrow_p n}
%& \text{if $D_i$ has same-end parent $D_p$, $D_i$ not sticky, $D_p$ sticky}
%\\
%\lr{a_i'\,a_i\,b_i \nearrow_p n}
%& \text{if $D_i$ has same-end parent $D_p$, $D_i$ sticky, $D_p$ not sticky}
%\\
%\lr{a_p'\,a_i'\,a_i\,b_i}
%& \text{if $D_i$ has same-end parent $D_p$, and $D_i,D_p$ sticky}
%\\
%\lr{a_i\,b_i\,d_i \nearrow_i n} \;\;\;\;\;\;\;\;\;\;\;\;\;\;\;\;\;\; & \text{otherwise, if $D_i$ not sticky}
%\\
%\lr{a_i'\,a_i\,b_i\,d_i} & \text{otherwise, if $D_i$ sticky}
%\end{cases}
\rgamma_i \;&=\;
\begin{cases}
\lr{n \leftarrowp_i a_i\,b_i \br c_i\,d_i \br c_j\,d_j \nearrow_j n} & \text{if $D_i$ has head-to-tail sibling $D_j=(c_i, d_i, c_j, d_j)$}
%\end{cases} 
\\[0.2em]
%\vspace{0.5em}
%\begin{cases}
\lr{n \leftarrowp_p a_p\,b_p \br a_i\,b_i \br c_i\,d_i \nearrow_i n}
& \text{if  $D_i$ has same-end parent $D_p$ and $D_i$ not sticky}
\\[0.2em]
\lr{n \leftarrowp_p \,b_i \,a_i  \, a_i'}
& \text{if $D_i$ has same-end parent $D_p$ and $D_i$ sticky}
%\end{cases} 
\\[0.2em]
%\begin{cases}
\lr{a_i\,b_i\,d_i \rightarrowp_i n} \;\;\;\;\;\;\;\;\;\;\;\;\;\;\;\;\;\; & \text{otherwise}
\end{cases}
%\rdelta_i \;&=\; \begin{cases}
%\lr{a_i\,b_i\,c_i \nearrow_i n} & \text{if $D_i$ not sticky}
%\\
%\lr{a_i'\,a_i\,b_i\,c_i} & \text{if $D_i$ sticky}
%\end{cases}
\\
\rdelta_i \;&=\; 
\lr{a_i\,b_i\,c_i \rightarrowp_i n}.
\\
\repsilon_i \;&=\; \lr{a_i\,b_i\,c_i\,d_i}
\end{align*}
\end{theorem}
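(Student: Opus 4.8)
The plan is to prove \cref{prop:explicit} by induction on the number of chords $k$, following the recursive structure of \cref{domino-var}, exactly mirroring the two cases there (removing the penultimate marker, and removing the rightmost top chord). The base case $k=0$ is vacuous since there are no domino variables. For the inductive step, I would first dispense with Case 1 of \cref{domino-var}: if $d<n-1$, then the chord diagram $D'$ obtained by deleting the marker $n-1$ has one fewer marker but the same chords, and $\rzeta_j^D=\rzeta_j^{D'}$ by definition. Since none of the chains of ancestors used in \cref{notation:arrow} involve the marker $n-1$ (all chord endpoints are among the markers $1,\dots,n-1$, which survive in $D'$, and $n$ is relabeled to the new largest marker), the chain polynomial formulas for $D'$ translate verbatim to formulas for $D$. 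So the substance is in Case 2.

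For Case 2, write $\ctop=(a,b,c,d)$ with $d=n-1$, and let $D_L,D_R$ be the left and right subdiagrams on $N_L=\{1,\dots,a,b,n\}$ and $N_R=\{b,\dots,c,d,n\}$. The domino variables of the top chord $D_{\rt}$ are the five Pl\"ucker coordinates $\lr{bcdn},\lr{acdn},\lr{abdn},\lr{abcn},\lr{abcd}$, which match the claimed formulas since $D_{\rt}$ has no ancestors and is not a sticky child (the $\nearrow_{\rt}n$ chain is empty). For a chord $D_j$ in $D_L$ or $D_R$, we have $\rzeta_j^D=\rPsi(\rzeta_j^{D'})$ where $D'\in\{D_L,D_R\}$. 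By the inductive hypothesis, $\rzeta_j^{D'}$ is given by the chain polynomial formula of \cref{prop:explicit} applied within $D'$ (with its own largest marker $n$). The crux is then a \emph{compatibility lemma}: applying rescaled product promotion $\rPsi=\rPsi_{ac}$ to each chain polynomial appearing in the $D'$-formula produces exactly the chain polynomial appearing in the $D$-formula. Concretely, $\rPsi$ substitutes $b\mapsto (ba)\cap(cdn)$ on $\widehat{\Gr}_{4,N_L}$ and $n\mapsto (ba)\cap(cdn)$, $d\mapsto (dc)\cap(abn)$ on $\widehat{\Gr}_{4,N_R}$. Using rules (a1)--(a2) and (b1)--(b3) of the remark after \cref{pro-twistors2} — which describe precisely how $\rPsi$ extends a Pl\"ucker coordinate into a clause — together with the higher-degree extension in the remark after \cref{cor:pos} (namely $\Psi(\lr{i\,j\,\ell\br r\,q\br s\,t\,b}) = \lr{i\,j\,\ell\br r\,q\br s\,t\br b\,a\br c\,d\,n}/\lr{a\,c\,d\,n}$ and its analogues), I would show that $\rPsi$ acts on a chain polynomial by appending the clauses $\br b\,a\br c\,d\,n$ (on the $N_L$ side) or $\br c\,d\br a\,b\,n$ / $\br a\,b\br c\,d\,n$ (on the $N_R$ side), after removing the $\mathcal{T}'$-monomial. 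This is exactly the operation of prepending $D_{\rt}=(a,b,c,d)$ to the ancestor chain of $D_j$, which is what \cref{notation:arrow} records — since after Case 2, $D_{\rt}$ becomes the (new) lowest-or-next ancestor of $D_j$, being the rightmost top chord removed.

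The bookkeeping then splits into the subcases of the theorem, and each must be checked against which clause of the $D'$-formula gets extended and whether $D_{\rt}$ is same-end with $D_j$ or $D_j$ is sticky relative to $D_{\rt}$. For instance: if $D_j\in D_R$ is $(b,b+1,c,d)$, a sticky same-end child of $D_{\rt}$, then $\rbeta_j^{D'}=\lr{bcdn}$ is already a frozen variable of $\widehat{\Gr}_{4,N_R}$ that equals $\rPsi^{-1}$ of $\lr{bcdn}=\ralpha_{\rt}^D$ (this is \cref{rmk:beta=alpha}), matching the first case of the $\rbeta_i$ formula; the ``sticky'' modifications $\rightarrowp_\ell n = -a'_\ell$ arise exactly when the chain would otherwise begin $\br b_\ell\,a_\ell\br\cdots$ with $D_\ell$ sticky, and one checks via the factorization behavior in \cref{rem:factors} that $\lr{\cdots\br b_\ell\,a_\ell\,a'_\ell}$ degenerates appropriately. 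The cases for $\rgamma_i$ — head-to-tail sibling, same-end non-sticky parent, same-end sticky parent, generic — each correspond to a different clause structure in $\rPsi$'s image, and I would verify each by direct expansion using \eqref{eq1:quadratic}--\eqref{eq2:quadratic} and the block-permutation sign rules in the remark after \cref{def:chain_polynomials}. The main obstacle I anticipate is not any single identity but the sheer case analysis of matching $\rPsi$'s action on every clause type against every configuration (top chord, head-to-tail, same-end, sticky, and their interactions across the $N_L/N_R$ split), ensuring the ancestor-chain definition in \cref{notation:arrow} is reproduced \emph{exactly} — including the subtle point that $\br c_{(r)}\,d_{(r)}$ clauses are \emph{merged} when consecutive ancestors are same-end (so promotion appends a single clause pair, not two), which is why the chain in \cref{notation:arrow} skips same-end ancestors. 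Getting this merging right, and confirming it is consistent with the fact that $\rPsi(\lr{bcdn})=\lr{bcdn}$ contributes trivial frozen factor, is the delicate part; once the clause-extension lemma is established cleanly, each subcase follows mechanically.
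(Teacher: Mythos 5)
Your overall strategy---induction following the recursive structure of \cref{domino-var}, with the inductive step reduced to understanding how $\rPsi_{ac}$ acts clause-by-clause on the chain polynomials of $D_L$ and $D_R$---is exactly the paper's. The gap is in your ``compatibility lemma.'' You assert that $\rPsi$ acts by appending the clauses $\br b\,a \br c\,d\,n$ (and their $N_R$-side analogues) ``after removing the $\mathcal{T}'$-monomial,'' and propose to handle the degenerate cases via \cref{rem:factors}. But determining that monomial is the entire difficulty: by \cref{def:rPsi}, $\rPsi(x)$ is the numerator of $\Psi(x)$ with its frozen factor stripped off, so to show $\rPsi(\rzeta^{D'})$ equals the claimed chain polynomial you must prove that this numerator is \emph{irreducible} in the generic case (so the frozen factor is trivial and the numerator is literally the extended chain polynomial), and in the exceptional configurations (sticky child of $\ctop$, sticky same-end child, same-end child of a sticky chord) you must identify the frozen factor exactly---otherwise $\rPsi(\rzeta^{D'})$ could differ from your claimed formula by a factor of $\ralpha_{\rtop}$ or a power of $\repsilon_{\rtop}$. \cref{rem:factors} only describes factorizations of degree-two chain polynomials with overlapping indices; it says nothing about whether the numerator of a promoted higher-degree chain polynomial is divisible by $\lr{b\,c\,d\,n}$, $\lr{a\,c\,d\,n}$, $\lr{a\,b\,c\,n}$, or $\lr{a\,b\,c\,d}$. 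The paper isolates precisely this as \cref{lem:when-promote-var-factor}: irreducibility in the degree-one cases comes from \cref{cor:pos} (the numerators are cluster variables, hence irreducible by \cref{thm:GLS}); non-divisibility by $\lr{a\,c\,d\,n},\lr{a\,b\,c\,n},\lr{a\,b\,c\,d}$ in higher degree is shown by evaluating at degenerate points of the Grassmannian; and the criterion for divisibility by $\lr{b\,c\,d\,n}$ (namely, that $b$ and $n$ share a clause of $\rzeta^{D'}$) requires a further three-case analysis of where the index $b$ can occur in a chain polynomial. None of this appears in your outline, and without it the inductive step does not close.

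A secondary omission: the induction must carry along the structural invariant of \cref{prop:invariant} (in a domino variable of degree $\geq 2$ the marker $n$ occurs only in end clauses, and $d$ occurs only together with $c$ and $n$), since this is what determines which clauses the substitutions \eqref{eq:promotionvectors1}--\eqref{eq:promotionvectors3} actually touch. Your clause-matching implicitly assumes this, so it should be stated and proved alongside the formulas. With \cref{lem:when-promote-var-factor} and \cref{prop:invariant} established, the remainder of your plan---matching each configuration of $D_j$ (top chord, head-to-tail, same-end, sticky, and their interactions across the $N_L/N_R$ split) against the corresponding clause extension, including the merging of same-end ancestors in \cref{notation:arrow}---does go through essentially as the paper carries it out.
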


\begin{example}[Domino cluster variables] 
\label{domino-variables-formulas}
Using \cref{prop:explicit}, the domino cluster variables $\xx(D)$ for the chord diagram $D$ in \cref{cd-example} are as follows. We will denote $(10,11,12,13,14,15)$ as $(A,B,C,D,E,F)$. For the rightmost top chord $D_6 = (8,9,D,E)$:
\begin{align*}
& \ralpha_6 = \lr{9\,D\,E\,F} 
&& \rbeta_6 = \lr{8\,D\,E\,F}
&& \rgamma_6 = \lr{8\,9\,E\,F}
&& \rdelta_6 = \lr{8\,9\,D\,F}
&& \repsilon_6 = \lr{8\,9\,D\,E}
\end{align*}
which is also immediate from Definition~\ref{domino-var}. The other top chord $D_3 = (1,2,8,9)$ follows the same cases, except for $\rgamma_3$ which is affected by its head-to-tail sibling $D_6$ on the right:
\begin{align*}
& \ralpha_3 = \lr{2\,8\,9\,F} 
&& \rbeta_3 = \lr{1\,8\,9\,F}
&& \rgamma_3 = \lr{F\,1\,2\,|\,8\,9\,|\,D\,E\,F}
&& \rdelta_3 = \lr{1\,2\,8\,F}
&& \repsilon_3 = \lr{1\,2\,8\,9}
\end{align*}
The chord $D_5 = (9,A,C,D)$ is sticky. The variable $\ralpha_5$ demonstrates the $\nearrow_5$ arrow notation, while $\rbeta_5$, $\rgamma_5$, $\rdelta_5$ are affected by $D_5$ being sticky, e.g.
$\rbeta_5=\lr{9\,C\,D \rightarrowp_5 F}=
\lr{9\,C\,D\,(-8)}=\lr{8\,9\,C\,D}.$
\begin{align*}
& \ralpha_5 = \lr{A\,C\,D\,|\,9\,8\,|\,D\,E\,F} 
&& \rbeta_5 = \lr{8\,9\,C\,D}
&& \rgamma_5 = \lr{8\,9\,A\,D}
&& \rdelta_5 = \lr{8\,9\,A\,C}
&& \repsilon_5 = \lr{9\,A\,C\,D}
\end{align*}
Then, $D_4 = (A,B,C,D)$ is a sticky child of its same-end parent $D_5$, which shows the third case for $\rgamma_4=\lr{n \, \leftarrowp_5 \, b_4 \, a_4 \, a'_4} =-\lr{8,B,A,9}$, where $n \,\leftarrowp_5=-a'_5$ in this case as $D_5$ is sticky:
\begin{align*}
& \ralpha_4 = \lr{B\,C\,D\,|\,9\,8\,|\,D\,E\,F} 
&& \rbeta_4 = \ralpha_5
&& \rgamma_4 = \lr{8\,9\,A\,B}
&& \rdelta_4 = \lr{9\,A\,B\,C}
&& \repsilon_4 = \lr{A\,B\,C\,D}
\end{align*}
Note that $D_5$ is omitted from the chain denoted by $\nearrow_4$ in $\ralpha_4$ since it ends in $(C,D)$. 

The next chord $D_2 = (5,6,8,9)$ is a nonsticky child of its same-end parent~$D_3$, and hence illustrates the second case for $\rgamma_2$:
\begin{align*}
& \ralpha_2 = \lr{6\,8\,9\,F}
&& \rbeta_2 = \lr{5\,8\,9\,F}
&& \rgamma_2 = \lr{F\,1\,2\,|\,5\,6\,|\,8\,9\,F}
&& \rdelta_2 = \lr{5\,6\,8\,|\,2\,1\,|\,8\,9\,F}
&& \repsilon_2 = \lr{5\,6\,8\,9}
\end{align*}
Finally, for the chord $D_1 = (3,4,5,6)$, the variable $\rgamma_1$ is given by a cubic chain polynomial, as $D_1$ is head-to-tail with $D_2$ and below $D_3$; moreover, $\langle n \, \leftarrowp_1|= \langle n \nwarrow_1|$ as $D_1$ is not sticky:
$$ \rgamma_1 \,=\, \lr{F\,\nwarrow_1\,3\,4\,|\,5\,6\,|\,8\,9\,\nearrow_1\,F} \,=\, \lr{F\,8\,9\,|\,2\,1\,|\,3\,4\,|\,5\,6\,|\,8\,9\,F} $$
and the remaining domino cluster variables are given by:
\begin{align*}
& \ralpha_1 = \lr{4\,5\,6\,|\,2\,1\,|\,8\,9\,F}
&& \rbeta_1 = \lr{3\,5\,6\,|\,2\,1\,|\,8\,9\,F}
&& \rdelta_1 = \lr{3\,4\,5\,|\,2\,1\,|\,8\,9\,F}
&& \repsilon_1 = \lr{3\,4\,5\,6}
\end{align*}
\end{example}

%\begin{remark} 
%\label{rk:arrowprime_notation} 
%The above formulas may be condensed using the following abbreviated notation:
%\begin{equation*}
%\rightarrowp_\ell \, n \;=\; \begin{cases}
%\nearrow_\ell n & \text{ if } D_\ell \text{ is not sticky}\\
%-a'_{\ell} & \text{ otherwise.  }

%\end{cases} \qquad \qquad n \, \leftarrowp_\ell \;=\; \begin{cases}
%n\,\nwarrow_\ell & \text{ if } D_\ell \text{ is not sticky}\\
%-a'_{\ell} & \text{ otherwise.  }

%\end{cases}
%\end{equation*}

%\noindent
%Then the expressions for $\rbeta_i, \rgamma_i, \rdelta_i$ 
%in \cref{prop:explicit} can be written more concisely: 
%\begin{align*}
%\rbeta_i \;&=\;
%\lr{a_i\,c_i\,d_i \rightarrowp_i n}
%\\
%\rgamma_i \;&=\;
%\begin{cases}
%\lr{n \leftarrowp_i a_i\,b_i \br c_i\,d_i \br c_j\,d_j \nearrow_j n} & \text{if $D_i$ has head-to-tail sibling $D_j=(c_i, d_i, c_j, d_j)$}
%\\[0.2em]
%\lr{n \leftarrowp_p a_p\,b_p \br a_i\,b_i \br c_i\,d_i \nearrow_i n}
%& \text{if  $D_i$ has same-end parent $D_p$ and $D_i$ not sticky}
%\\[0.2em]
%\lr{n \leftarrowp_p \,b_i \,a_i  \, a_i'}
%& \text{if $D_i$ has same-end parent $D_p$ and $D_i$ sticky}
%\\[0.2em]
%\lr{a_i\,b_i\,d_i \rightarrowp_i n} \;\;\;\;\;\;\;\;\;\;\;\;\;\;\;\;\;\; & \text{otherwise}
%\end{cases}
%\\
%\rdelta_i \;&=\; 
%\lr{a_i\,b_i\,c_i \rightarrowp_i n}.
%\\
%\end{align*}  
%\end{remark}

From \cref{prop:explicit}, we can conclude the following corollary, which will be useful in proofs.

\begin{corollary}\label{prop:invariant} 
	Let $D$ be a chord diagram on $\{1,\dots,c,d,n\}$ as in 
	\Cref{domino-var}. Then each domino {cluster} variable $\rzeta$  can be represented as 
	a chain polynomial which satisfies the following properties:
	\begin{enumerate}
		\item If $\rzeta$ has degree $1$, then 
		$d$ cannot appear without at least one of $n$ and $c$.
		\item If $\rzeta$ has degree $ \geq 2$, then
		\begin{enumerate}
			\item $n$ can only occur at the two end clauses:
			$\lr{n\ast \ast \br \dots }$ or 
			$\lr{\dots \br \ast \ast\ n}$.
			\item If $d$ occurs in a clause so do both $c$ and $n$. Thus $d$ occurs in an end clause.
		\end{enumerate}
	\end{enumerate}
\end{corollary}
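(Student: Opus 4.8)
The plan is to induct on the number of chords (equivalently step-tuples) in $D$, following exactly the recursive structure of \cref{domino-var}, and to verify that the three structural constraints are preserved by each of the two operations: (1) removing the penultimate marker, and (2) removing the rightmost top chord $\ctop = (a,b,c,d)$ with $d = n-1$. For the base case $k=0$ there are no domino variables, so there is nothing to check. The key observation making the induction work is that the claimed constraints concern only \emph{which markers appear in which clauses}, not the precise numerical values, so one really only needs to track how $\rPsi$ (equivalently, $\Psi$) moves indices around.

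First I would handle Case 1 of \cref{domino-var}. Here the marker $n-1$ is simply deleted, and by \cref{prop:explicit} (or directly from $\rzeta_j^D = \rzeta_j^{D'}$) the chain polynomial representative of each domino variable is literally unchanged, with $n$ still the largest marker. So constraints (1), (2a), (2b) transfer verbatim from $D'$ to $D$. The only subtlety is that the hypotheses of the corollary are stated for a chord diagram on $\{1,\dots,c,d,n\}$, i.e.\ for the \emph{subdiagram form} used in $\rPsi$; but since we are proving a statement about representatives of $\rzeta$, and deleting $n-1$ does not disturb the clause structure, this case is immediate.

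The substance is Case 2. The five domino variables of $\ctop$ itself are the Pl\"ucker coordinates $\lr{bcdn}, \lr{acdn}, \lr{abdn}, \lr{abcn}, \lr{abcd}$: these are degree $1$, and one checks directly that $d$ never appears without $c$ or $n$ — indeed $d$ appears only in $\lr{bcdn},\lr{acdn},\lr{abdn}$, and in each of these $c$ or $n$ (in fact all but the last, both) is present — so constraint (1) holds for them, and (2) is vacuous. For a variable $\rzeta_j^D = \rPsi(\rzeta_j^{D'})$ with $D_j$ in a subdiagram $D' \in \{D_L, D_R\}$, I would argue by cases on the promotion rules (a1), (a2), (b1), (b2), (b3) of \cref{pro-twistors2}, using the inductive hypothesis that $\rzeta_j^{D'}$ already satisfies the constraints \emph{relative to the largest marker of $D'$} (which is $n$ for $D_L$ on $N_L = \{1,\dots,a,b,n\}$ and for $D_R$ on $N_R=\{b,\dots,c,d,n\}$; note in $D_R$ the marker $d$ plays, before promotion, the role that "$d$" plays in the statement, and the constraint there is about $d$ together with $c,n$, matching). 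Rule (b3) replaces a clause containing $n$ by $\lr{\dots \br b\,a \br c\,d\,n}$: since by the inductive hypothesis $n$ occurred in an end clause of $\rzeta_j^{D'}$, after promotion the new end clause is $\lr{c\,d\,n}$ — which contains $d$ together with both $c$ and $n$ — and the chain still has $n$ only at its (single) end clause. Rules (b1), (b2) replace a clause containing $d$ by clauses of the chain $\lr{\dots \br d\,c \br a\,b\,n}$ or insert $\lr{c\,d}$; in (b1), $d$ ends up in a non-end clause $\lr{d\,c\,\dots}$ paired with $c$, but $n$ then appears in the new end clause $\lr{a\,b\,n}$, so I need to check that $c,n,d$ all end up in end or adjacent-to-end clauses — this is where the bookkeeping is most delicate, and where I'd lean on the precise formulas in \cref{prop:explicit} (the $\rightarrowp_i n$, $n \leftarrowp_i$ notation) to see that after promotion $d$ can only reappear at the clause adjacent to the $n$-end clause, consistent with (2b). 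Rules (a1), (a2) act on $D_L$ and involve only the marker $b$ and the \emph{new} largest indices $a,b,c,d,n$; since $D_L$ uses indices $< a$ together with $b$ and $n$, the only clause affected is the one containing $b$, which gets extended by $\br b\,a \br c\,d\,n$, again putting $n$ at an end clause and $d$ next to it.

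The main obstacle I anticipate is the $\rgamma_i$ variable, whose formula in \cref{prop:explicit} already has four cases (head-to-tail sibling, nonsticky same-end child, sticky same-end child, generic) and is the one variable that genuinely becomes a higher-degree chain polynomial with $n$ appearing at \emph{both} ends (e.g.\ $\rgamma_1 = \lr{F\,8\,9 \br 2\,1 \br 3\,4 \br 5\,6 \br 8\,9\,F}$ in \cref{domino-variables-formulas}, where $n=F$ sits at both ends). I will need to verify carefully that in every such case the two end clauses are exactly the places $n$ lives, that the clause adjacent to an $n$-end clause is the only other possible home of $d$, and — for the sticky sub-cases where $n \leftarrowp_i = -a_i'$ degenerates the left end — that the degenerate forms $\lr{n \leftarrowp_p \, b_i\, a_i\, a_i'}$ also obey (1) (they are degree-1-like after the substitution, and $d$ does not appear). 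A clean way to organize this is to prove a slightly stronger invariant by induction directly in the language of \cref{notation:arrow}: namely that in the chain-polynomial representative, reading from the $n$-end(s) inward, the clause sequence is $n \,|\, (\text{pairs } c_{(j)} d_{(j)} \text{ or } b_{(j)} a_{(j)}) \,|\, \dots$, so that $d$-type markers only ever occur paired with their $c$-partner in an interior "$c\,d$" clause or in the clause immediately next to $n$; then (1), (2a), (2b) all follow as corollaries of this invariant, and the promotion rules manifestly preserve it. I would write the proof in that form.
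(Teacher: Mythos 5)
Your refined plan in the final paragraph — prove a stronger structural invariant by simultaneous induction and read off (1), (2a), (2b) as corollaries — is exactly what the paper does: \cref{prop:invariant} is stated as a direct consequence of \cref{prop:explicit}, and \cref{prop:explicit} is proved by an induction on the marker set whose inductive hypothesis explicitly bundles \cref{prop:explicit}, \cref{prop:invariant}, and \cref{lem:when-promote-var-factor} together for $D_L,D_R$. So your ``clean way to organize this'' is the right route, and once the explicit chain-polynomial formulas are established the corollary is just inspection.

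The body of your proposal, however, has a gap you flag but do not resolve, and it is the crux. In Case~2 you track clauses by applying the substitution rules (a1)--(b3) directly to $\rzeta_j^{D'}$. That is fine when $\rPsi$ and $\Psi$ agree up to the frozen denominator, but precisely when the new rightmost top chord has a sticky child the \emph{numerator} of $\Psi(\rzeta_j^{D'})$ factors (\cref{lem:when-promote-var-factor}, cases (1)--(3)), so $\rzeta_j^{D} = \rPsi(\rzeta_j^{D'})$ is that numerator divided by one of $\ralpha_{\rtop}$, $\repsilon_{\rtop}$, $\rdelta_{\rtop}$; the chain-polynomial shape of the quotient is not visible from the naive substitution, and properties (2a)/(2b) cannot be verified for it without the explicit formula. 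You say you would ``lean on the precise formulas in \cref{prop:explicit}'', but those are proved using \cref{prop:invariant}, so that is circular unless, as in your last paragraph, you bundle them into one induction. Two smaller loose ends: with the inductive hypothesis used aggressively, several promotion cases you budget for cannot occur. For $D_L$, rule (a1) (acting on $\lr{i\,j\,\ell\,b}$ with $i,j,\ell<a$) is ruled out by property (1) for $D_L$, where $a,b,n$ play the roles of $c,d,n$, since that Pl\"ucker has $b$ without both $a$ and $n$; and for $D_R$, property (1) forces $c\in\{i,j,\ell\}$ in rule (b1), in which case $\Psi$ fixes the Pl\"ucker coordinate outright. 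So the genuinely nontrivial cases are (a2), (b2), (b3) plus the sticky factorizations — and the sticky factorizations are exactly where you need the explicit formulas.
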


We will prove \cref{prop:explicit} by induction on the number of chords, using the following key lemma.

\begin{lemma} \label{lem:when-promote-var-factor}
	Let $D \in \CD$ be a chord diagram,
	with rightmost top chord $\ctop=(a,b,c,d)$ and subdiagrams $D_L, D_R$ 
	 as in \cref{def:leftright}.  Suppose the formulas in \Cref{prop:explicit} 
	 and \cref{prop:invariant} 
	hold for the domino variables $\rzeta^{D'}$ for $D' \in \{D_L, D_R\}$. Then the numerator of $\Psi_{ac}(\rzeta^{D'})$ is irreducible and thus
	equals $\rzeta^D$ in all cases except
	\begin{enumerate}
		\item if $\ctop$ has a same-end sticky child $D_i$, then 
		\[\Psi_{ac}(\rgamma_i^{D'})= \dfrac{\ralpha_{\rtop}\rgamma_i^{D}}{\rdelta_{\rtop}} \qquad \text{and} \qquad \Psi_{ac}(\rdelta_i^{D'})= \dfrac{\ralpha_{\rtop}\rdelta_i^{D}}{\repsilon_{\rtop}},\]
		\item if $\ctop$ has a sticky child $D_i$ which is not same end, then
		\[\Psi_{ac}(\rbeta_i^{D'})=\dfrac{\ralpha_{\rtop}\rbeta_i^{D}}{\repsilon_{\rtop}}, \qquad \Psi_{ac}(\rgamma_i^{D'})=\dfrac{\ralpha_{\rtop}\rgamma_i^{D}}{(\repsilon_{\rtop})^s}, \qquad \Psi_{ac}(\rdelta_i^{D'})=\dfrac{\ralpha_{\rtop}\rdelta_i^{D}}{\repsilon_{\rtop}},\]
\item if $\ctop$ has a sticky child $D_i$ and $D_i$ has a same-end 
	
	child $D_j$, then
\[\Psi_{ac}(\rgamma_j^{D'})
	=\dfrac{\ralpha_{\rtop}\rgamma_j^{D}}{(\repsilon_{\rtop})^s}.\]
	\end{enumerate}
where $s \in \{1,2\}$ is the number of clauses of $\rzeta^{D'}$ where $n$ appears without $d$.
\end{lemma}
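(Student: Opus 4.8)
The plan is to prove \cref{lem:when-promote-var-factor} by carefully applying the substitution rules (a1), (a2), (b1), (b2), (b3) that define product promotion $\Psi_{ac}$ to the explicit chain-polynomial formulas for the domino variables $\rzeta^{D'}$ given in \cref{prop:explicit}, and then using \cref{prop:invariant} to control which Pl\"ucker indices can appear in each chain polynomial. The first step is to recall that $D_L$ is a chord diagram on $N_L = \{1,\dots,a,b,n\}$ and $D_R$ is a chord diagram on $N_R = \{b,\dots,c,d,n\}$, so that a domino variable $\rzeta^{D'}$ for $D' = D_L$ or $D_R$ has all its indices in $N_L$ or $N_R$ respectively. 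In particular, by \cref{prop:invariant}, the index $b$ (the role of ``$n$'' in $N_L$, one past the role of $n$ for the purposes of the subdiagram) occurs in $\rzeta^{D_L}$ only in end clauses, and the index $d$ (if present) occurs only with $c$ and the ambient top marker. This tells us exactly which terms of a chain polynomial $\rzeta^{D'}$ are affected by promotion, since $\Psi_{ac}$ acts nontrivially only on Pl\"ucker coordinates containing $b$ (for $D_L$) or containing $d$ or $n$ (for $D_R$).

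Next I would organize the argument by the type of domino variable and whether $D_i$ is a child of $\ctop$. For a domino variable $\rzeta^{D'}$ whose chain-polynomial expression contains no affected index at all, $\Psi_{ac}$ acts as the identity and there is nothing to prove. For the generic case where $\rzeta^{D'}$ contains exactly one clause touching an affected index, the substitution rule turns that clause (a $4$-index Pl\"ucker coordinate $\lr{xyzb}$ or $\lr{xyzd}$, possibly with $b$ or $d$ at another position) into a degree-one-higher chain polynomial, as spelled out in (a1)--(b3); concatenating this with the unaffected clauses exactly reproduces the chain-polynomial formula for $\rzeta^D$ from \cref{prop:explicit} with the arrow $\rightarrowp_i n$ or $\nearrow_i n$ now passing through the new top chord $\ctop$. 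The key point here is to match the arrow bookkeeping: the chain of ancestors of $D_i$ in $D$ is precisely the chain of ancestors in $D'$ followed by $\ctop$, and the promotion substitution inserts exactly the clauses $\br b\,a \br c\,d\,n$ (or $\br a\,b \br c\,d\,n$ depending on orientation, cf. the remark following \cref{def:chain_polynomials} on signs from permuting blocks), matching \cref{notation:arrow}. The irreducibility claim follows from \cref{rem:factors}: since by \cref{prop:invariant} the indices in the affected clause of $\rzeta^{D'}$ do not overlap with $\{a,b,c,d,n\}$ in the forbidden ways, no factorization occurs, and an irreducible chain polynomial with the right index multiset is forced to equal $\rzeta^D$ (up to frozen factor, which is what $\rPsi$ strips off).

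The exceptional cases (1), (2), (3) arise exactly when $D_i$ is a \emph{sticky} child of $\ctop$, meaning $a_i, b_i$ are the markers $b, b+1$ — so the index $b$ (playing the role of the largest marker $n$ within $N_R$'s perspective, no: within $D_R$ the relevant structure is that $D_i$ starts at $b$) appears in a position that causes the promoted chain polynomial to \emph{factor} off a copy of $\ralpha_{\rtop} = \lr{bcdn}$, $\rdelta_{\rtop} = \lr{abcn}$, or $\repsilon_{\rtop} = \lr{abcd}$. Concretely, I would take the formula for, say, $\rgamma_i^{D'}$ in the same-end-sticky case — which by \cref{prop:explicit} has the third-case form $\lr{n \leftarrowp_p \,b_i\,a_i\,a_i'}$ but now with $b_i = b+1, a_i = b, a_i' = b-1$ wait, more carefully: before promotion $\rgamma_i^{D_R}$ is computed inside $D_R$ where the sticky structure means $\rbeta_i^{D_R} = \lr{bcdn}$ (see \cref{rmk:beta=alpha}) — apply (b1)/(b2)/(b3) as appropriate, and then observe that the resulting degree-$(s+1)$ chain polynomial has a repeated-index pattern forcing the factorization stated, with $s \in \{1,2\}$ counting the clauses where $n$ appears without $d$, exactly as in \cref{prop:invariant}(2)(a). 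The main obstacle will be this bookkeeping for the sticky cases: one must track precisely how many powers of $\repsilon_{\rtop}$ (one or two, hence the exponent $s$) are produced, which requires carefully counting via \cref{prop:invariant} how many of the (at most two) end clauses of $\rzeta^{D'}$ simultaneously contain $n$ and fail to contain $d$, and then verifying that after dividing by that frozen monomial the quotient $\rgamma_i^D$ is the irreducible chain polynomial predicted by \cref{prop:explicit}'s sticky cases. The sign tracking (from permuting blocks and from the parity factors $(-1)^{\dots}$ in \cref{def:twistor-mtx}, though here we work with the unsigned $\rzeta$'s) is routine but must be done consistently; I expect the bulk of the write-up to be a case analysis with roughly six to eight subcases, each a short direct computation once the index constraints from \cref{prop:invariant} are in hand.
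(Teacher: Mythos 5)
Your plan correctly identifies the overall combinatorial structure: apply the substitution rules (a1)--(b3) to the chain-polynomial expressions from \cref{prop:explicit} for $D_L, D_R$, use \cref{prop:invariant} to locate the clauses that promotion touches, and organize the case analysis by how $D_i$ sits relative to $\ctop$. That matching of promoted clauses against the arrow bookkeeping of \cref{notation:arrow} is indeed where most of the write-up goes, and your outline of it is reasonable.

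The gap is in the irreducibility claim, which is the real content of the lemma. You assert that ``no factorization occurs'' because, by \cref{prop:invariant}, the affected-clause indices avoid the overlaps listed in \cref{rem:factors}. But \cref{rem:factors} only records a few \emph{sufficient} conditions for a degree-2 chain polynomial to collapse into a product of two Pl\"uckers; it is not a criterion for irreducibility, and it says nothing about whether $\lr{bcdn}$, $\lr{acdn}$, $\lr{abcn}$, or $\lr{abcd}$ can divide a chain polynomial of degree $\geq 2$ via some less visible cancellation. By \cref{thm:promotion2} and \cref{def:rPsi}, the numerator $\nu$ of $\Psi_{ac}(\rzeta^{D'})$ equals a cluster variable times a Laurent monomial in $\mathcal{T}' = \{\lr{bcdn}, \lr{acdn}, \lr{abcn}, \lr{abcd}\}$, so everything reduces to pinning down that monomial, and you have no mechanism for doing so. The paper's proof does this with an evaluation argument: for each $f \in \mathcal{T}'$ it constructs a specific matrix (columns $1, \dots, a-1$ zero, columns in $N_R$ totally positive, column $a$ or $b$ set equal to column $n$, etc.) on which $f$ vanishes but $\nu$ does not, showing $f \nmid \nu$; and it then proves the key claim that, in degree $\geq 2$, $\lr{bcdn} \mid \nu$ precisely when $b$ and $n$ share a clause of $\rzeta^{D'}$, again by evaluation at such points. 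None of this appears in your proposal, and without it (or a substitute) the proof does not close.

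A lesser point: your reasoning that ``an irreducible chain polynomial with the right index multiset is forced to equal $\rzeta^D$ (up to frozen factor, which is what $\rPsi$ strips off)'' is internally inconsistent --- if there is no factorization then there is no frozen factor to strip --- and it inverts the intended logic. In the paper, \cref{lem:when-promote-var-factor} is an ingredient in the \emph{inductive} proof of \cref{prop:explicit}; the lemma determines the frozen monomial, which by definition of $\rPsi$ determines $\rzeta^D$, and only then is $\rzeta^D$ matched against the asserted chain-polynomial formula. Treating the target formula for $D$ as already available to compare against assumes part of what the induction is establishing.
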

\begin{remark}
In \cref{lem:when-promote-var-factor}, 
we have that $s=2$ in (2) if $D_i$ has a head-to-tail sibling which is not same-end to $\ctop$; 
	and $s=2$ in (3) if $D_i, D_j$ are not same-end to $\ctop$.
\end{remark}

\begin{proof} [Proof of \cref{lem:when-promote-var-factor}]
	Recall from \cref{def:rPsi} that the 
	numerator $\nu$ of $\Psi_{ac}(\rzeta^{D'})$ 
	is equal to $\rzeta^D$ (a cluster variable for $\CC[\Gr_{4,n}]$) times a monomial in $\mathcal{T}'= \{\ralpha_{\rtop},\rbeta_{\rtop}, \rdelta_{\rtop}, \repsilon_{\rtop}  \}= \{\lr{bcdn}, \lr{acdn}, \lr{abcn}, \lr{abcd}\}$. If $\Psi(\rzeta^{D'})=\rzeta^{D'}$, then $\Psi(\rzeta^{D'})$ has no denominator and is equal to $\nu$. Because $\rzeta^{D'}$ is a cluster variable for $\CC[\Gr_{4,N_L}]$ or $\CC[\Gr_{4,N_R}]$, by \cref{lem:add-marker-embed-gr} it is also a cluster variable for $\CC[\Gr_{4,n}]$. Thus we have $\nu= \rzeta^{D}$.
	
So we restrict our attention to domino variables on which promotion is not the identity. 
We will characterize when $\nu$ factors, while matching up these
	cases with certain configurations of chords.

	First, suppose $D'=D_L$, a chord diagram on $\{1, 2, \dots, a, b, n\}$. We may assume that  $b$ appears in some clause of $\rzeta^{D'}$ without $a$, since otherwise promotion (in particular \eqref{eq:promotionvectors1}) sends $\rzeta^{D'}$ to itself. By \cref{prop:invariant}, $\rzeta^{D'}= \lr{ijbn}$ for some $i<j<a$. Applying (a2), we have 
	\[\Psi(\rzeta^{D'})= \frac{ \lr{i\,j\,n \br a\,b \br c\,d\,n}}{\lr{a\,c\,d\,n}} \quad \text{so} \quad \nu = \lr{i\,j\,n \br a\,b \br c\,d\,n}.\]
	By \cref{cor:pos}, $\nu$ is a cluster variable for $\Gr_{4,n}$ and thus is irreducible.
	
	Now, suppose that $D'=D_R$, a chord diagram on $\{b, b+1, \dots, c, d, n\}$. Notice that the substitution \eqref{eq:promotionvectors3} has an effect on $\rzeta^{D'}$ if and only if in some clause, $d$ appears without $c$, which by \cref{prop:invariant}, occurs if and only if $\rzeta^{D'}=\lr{ijdn}$ for $b \leq i<j <c$. Applying (b2) we obtain
	\[\Psi(\lr{i\,j\,d\,n}) = 
	\frac{\lr{n\, a\,b\br i\,j\br c\,d\,n}}{\lr{a\,b\,c\,n}} \quad \text{and} \quad \nu= \lr{n\, a\,b\br i\,j\br c\,d\,n}.\]
	If $i=b$, $\nu=-\lr{nabj}\lr{bcdn}= \ralpha_{\rtop} \lr{abjn}$. Otherwise, by \cref{cor:pos}, $\nu$ is a cluster variable for $\Gr_{4,n}$. Using \cref{prop:explicit}, $\rzeta^{D'}= \lr{bjdn}$ for $j \neq c$ precisely when $\rzeta=\rgamma_i$ for $D_i$ a sticky and same-end child of $\ctop$. So the preceding paragraph gives the factorization for $\rgamma_i$ in (1).
	
	For all other $\rzeta^{D'}$, promotion acts only by \eqref{eq:promotionvectors2}. We first consider the case of degree $1$, when $\rzeta^{D'}= \lr{ij \ell n}$ for $b\leq i<j<\ell<d$. Applying (b3)
	yields $\nu= \lr{ij\ell \br ba \br cdn}$. If $i=b$, then $\nu =-\lr{b j \ell a} \lr{bcdn}= \lr{abj\ell} \ralpha_{\rtop}$. Otherwise, $\nu$ does not factor by \cref{cor:pos}. 
	
	Using \cref{prop:explicit}, $\rzeta^{D'}= \lr{bj \ell n}$ for $j<\ell <d$ precisely when $\rzeta= \rdelta_i$ for $D_i$ a sticky child of $\ctop$; $\rzeta= \rbeta_i$ for $D_i$ a sticky, not same-end child of $\ctop$; $\rzeta= \rgamma_i$ for $D_i$ a sticky, not same-end child of $\ctop$ with no head-to-tail sibling; and $\rzeta= \rgamma_i$ where $D_i$ has a sticky same-end parent $D_p$ which is a sticky child of $\ctop$. So the preceding paragraph gives the factorization for $\rdelta_i$ in (1) and (2), the factorization for $\rbeta_i$ in (2), and some cases of the factorizations for $\rgamma_i$ in (2) and (3). The remaining cases for $\rgamma_i$ in (2) is when $D_i$ is a sticky, not same-end child of $\ctop$ with a head-to-tail sibling; for (3), it is when $D_i$ is a non-sticky same-end child of $D_p$, which is a sticky child of $\ctop$.
	
	Now we assume $\rzeta^{D'}$ has degree at least 2,
	and that $n$ appears in at least one clause without $d$ 
	(otherwise by \cref{prop:invariant}, promotion fixes $\rzeta^{D'}$). Note that by \cref{prop:invariant}, the index $n$ only appears in an end clause of $\rzeta^{D'}$, say $\br ijn \rangle$ or $\langle nij\br$. In this case, when we apply 
	\eqref{eq:promotionvectors2} 
	to obtain the numerator $\nu$ of $\Psi(\rzeta^{D'})$ only, 
	we replace $\br ijn \rangle$ by $\br ij \br ba \br cdn \rangle$ and replace 
	$\langle nij\br$ by $\langle ncd \br ba \br ij \br$.  Note that 
	if $n$ appears in 
	an end clause of $\nu$, $d$ appears in the same clause.
	
	We first show that $\lr{acdn}, \lr{abcn}$ do not divide $\nu$. Consider $C \in \Gr_{4,n}$ such that columns $C_1, \dots, C_{a-1}$ are zero vectors, columns $C_b, \dots, C_{n}$ give an element of $\Gr_{4,N_R}^{>0}$ and $C_a = C_n$. Clearly $\lr{acdn}, \lr{abcn}$ vanish on $C$, so it suffices to show that $\nu$ does not vanish on $C$. Notice that when evaluated on $C$, a chain polynomial $\lr{\cdots \br ij \br ba \br cdn}$ simplifies to $-\lr{\cdots \br ija} \lr{bcdn}= - \lr{bcdn} \lr{\cdots \br ijn}$
	and similarly $\lr{ncd\br ba \br ij \br \cdots}$ simplifies to $- \lr{bcdn} \lr{
	nij \br \cdots} $. Applying this observation to $\nu$, we obtain
	\[\nu(C)= - \lr{bcdn}_C \cdot \rzeta^{D'}(C)  \quad \text{or} \quad \nu(C)= \lr{bcdn}_C^2 \cdot \rzeta^{D'}(C)
\] 

	depending on whether one or both end clauses of $\rzeta^{D'}$ are affected by \eqref{eq:promotionvectors2}.
	In either case, $\nu(C)$ does not vanish on $C$, since both factors are cluster variables
	for $\Gr_{4,N_R}$ and thus do not vanish when evaluated on $C_b, \dots, C_n$.

	Now we show $\lr{abcd}$ does not divide $\nu$. Let $C\in\Gr_{4,n}$ be as in the last paragraph and let $B$ be the matrix obtained by replacing $C_d$ with $C_n$ and $C_n$ with $-C_d$. Since $B_a$ and $B_d=C_n$ are equal, $\lr{abcd}_B=0$. On the other hand, as we noted earlier,
	each end clause of $\nu$ containing $n$ also contains $d$, so 
	by \cref{prop:invariant}, 
	$\nu(B)=\nu(C)\neq 0$.  
	
	\begin{claim}\label{claim:divisible}
	Suppose that $\rzeta^{D'}$ as above has degree at least 2.  Then 
 $\lr{bcdn}$ divides 
	the numerator $\nu$ of $\Psi_{ac}(\rzeta^{D'})$ 
	 if and only if $b$ and $n$ appear together
	in the same clause of $\rzeta^{D'}$. 
	\end{claim}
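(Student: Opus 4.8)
The plan is to prove Claim~\ref{claim:divisible} by a careful bookkeeping argument on the chain polynomial representation of $\rzeta^{D'}$, combined with the kind of specialization (evaluation on a carefully chosen matrix $C$) already used in the surrounding proof. Recall from \cref{prop:invariant} that $n$ appears only in an end clause of $\rzeta^{D'}$, and that whenever $n$ appears in a clause so does $d$ \emph{unless} that clause is exactly the one that promotion \eqref{eq:promotionvectors2} will act upon. First I would dispose of the case when $n$ appears with $d$ in every end clause: then promotion \eqref{eq:promotionvectors2} does not touch $\rzeta^{D'}$, so $\Psi_{ac}(\rzeta^{D'}) = \rzeta^{D'}$, and since this is a cluster variable for $\C[\Gr_{4,n}]$ (via \cref{lem:add-marker-embed-gr}), it is irreducible and not divisible by $\lr{bcdn}$; also $b$ cannot appear with $n$ in such a clause (it appears with $d$ and $c$ and $n$ only if there's an end clause $\br b\, c\, d\, n\rangle$, but then that clause contains $d$, so promotion still doesn't act $\ldots$ actually this sub-subtlety needs to be handled: if an end clause is $\br i\, j\, n\rangle$ with no $d$, promotion \emph{does} act). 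So the relevant regime is precisely when $\rzeta^{D'}$ has an end clause $\br i\, j\, n\rangle$ (or $\langle n\, i\, j\br$) with $d\notin\{i,j\}$.

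The heart of the argument is the direction ``$b$ and $n$ together $\Rightarrow \lr{bcdn}\mid \nu$.'' Here I would use the same specialization as earlier in the proof: take $C\in\Gr_{4,n}$ with $C_1=\cdots=C_{a-1}=0$, with $C_b,\dots,C_n$ spanning an element of $\Gr_{4,N_R}^{>0}$, and $C_a=C_n$. When the affected end clause of $\rzeta^{D'}$ is $\br i\, j\, n\rangle$, promotion replaces it by $\br i\, j\br b\, a\br c\, d\, n\rangle$; since $C_a=C_n$, on $C$ this chain-polynomial factor simplifies. The key computation is: if $i=b$ (i.e.\ $b$ and $n$ share the clause), the term $\br b\, j\br b\, a\br c\, d\, n\rangle$ is actually a Pl\"ucker coordinate up to sign on the nose --- expanding \eqref{eq1:quadratic}, $\lr{b\,j\,b\,a}=0$ kills one summand and $\lr{b\,j\,b\,c}=0$ the relevant cross-terms in \eqref{eq2:quadratic}, leaving (after the sign conventions) a clean factorization exhibiting $\lr{bcdn}$ as a factor. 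More precisely I would show, paralleling the factorizations already displayed in the proof (``$\nu = \lr{abj\ell}\ralpha_{\rtop}$'' etc.), that the presence of $b$ in the end clause forces $\lr{bcdn}=\ralpha_{\rtop}$ out as an explicit factor of $\nu$, using the chain-polynomial identities in \cref{rem:factors} and the remark after \cref{def:chain_polynomials} about overlapping indices.

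For the converse, ``$b$ not with $n$ $\Rightarrow \lr{bcdn}\nmid\nu$,'' I would argue as in the last two paragraphs preceding the claim in the excerpt, i.e.\ by finding a matrix on which $\lr{bcdn}$ vanishes but $\nu$ does not. A natural choice: take $C$ with $C_a=C_b$ (or some degeneration making $\lr{bcdn}$ vanish while keeping the remaining Pl\"uckers generic), and then observe that since $b$ appears in $\rzeta^{D'}$ only together with $a$ in interior clauses --- by \cref{prop:invariant} and the hypothesis that $b,n$ are not in a common clause --- the promotion substitution \eqref{eq:promotionvectors2}, which introduces the block $\br b\, a\, |\, c\, d\, n\rangle$ at an end clause, does not create a $\lr{bcdn}$ factor; one checks $\nu$ remains nonzero under the specialization because it equals, up to units and the $\rzeta$-factor, a cluster variable for $\Gr_{4,n}$ by \cref{cor:pos} (whose hypotheses are met precisely because $i\neq b$). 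I expect the main obstacle to be the combinatorial case analysis of \emph{which} end clause(s) of $\rzeta^{D'}$ get modified by promotion and in which of them $b$ can appear: one must track, via \cref{prop:invariant} and \cref{prop:explicit}, that $b$ appearing with $n$ can only happen in the specific configurations (sticky children of $\ctop$ in the $D_R$ side) that produce the listed factorizations in parts (1)--(3) of \cref{lem:when-promote-var-factor}, so that the claim dovetails exactly with the enumeration of exceptional cases. Getting that correspondence airtight --- i.e.\ verifying that ``$b$ and $n$ in a common clause of $\rzeta^{D'}$'' is equivalent to ``$\rzeta^{D'}$ is one of $\rbeta_i,\rgamma_i,\rdelta_i$ for a sticky child $D_i$ of $\ctop$ (resp.\ its relatives in (3))'' --- is the bookkeeping-heavy step, but it is routine given the explicit formulas.
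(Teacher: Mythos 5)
Your overall strategy mirrors the paper's, but two technical details are off in ways that would derail the argument if you tried to push them through.

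First, and most critically: the specialization you propose for the ``$b$ and $n$ not in the same clause $\Rightarrow \lr{bcdn}\nmid\nu$'' direction does not work. Neither $C_a=C_b$ nor anything involving the column $C_a$ makes $\lr{bcdn}$ vanish, since $\lr{bcdn}$ does not involve the index $a$ at all. The specialization the paper uses is: take $C$ with $C_1,\dots,C_{a-1}=0$ and $C_a,\dots,C_n$ giving an element of $\Gr^{>0}_{4,\{a,\dots,n\}}$, then set $B$ to be $C$ with column $b$ replaced by column $n$. That does kill $\lr{bcdn}$. The subsequent evaluation $\nu(B)$ does not reduce to a single appeal to \cref{cor:pos}; you have to chase through a genuine case split on \emph{how} $b$ enters the degree-$\geq 2$ chain polynomial $\rzeta^{D'}$ (as $a_i'$, as $a_p'$, or as $a_{(m)}$ in a chain of ancestors), and in each case $\nu(B)$ factors into known-positive domino variables of a subdiagram times a nonzero Pl\"ucker coordinate. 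Your summary that ``$b$ appears in $\rzeta^{D'}$ only together with $a$ in interior clauses'' is not accurate --- $a\notin N_R$, so $a$ never appears in $\rzeta^{D'}$ at all; what appears is $b$ as the first marker of a top chord of $D_R$ (possibly shifted as $a_i'$, $a_p'$, or $a_{(m)}$), and untangling those configurations is exactly the content of the paper's Cases 1--3.

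Second, for the ``$b$ and $n$ in a common clause $\Rightarrow \lr{bcdn}\mid\nu$'' direction, your instinct that chain-polynomial collapse (repeated index forcing a factor of $\ralpha_{\rtop}=\lr{bcdn}$) is the right mechanism is correct, and that is what the paper does. But the specialization $C_a=C_n$ you invoke first is a red herring: $\lr{bcdn}$ is \emph{positive} on that matrix, so it tells you nothing about divisibility. Divisibility has to be read off the formal chain-polynomial identity directly, and to do that cleanly you must first pin down exactly which domino variables can have $b,n$ in a common clause of degree $\geq 2$ --- the paper shows this forces $\rzeta=\rgamma_i$ with $D_i$ either a sticky child of $\ctop$ with a right head-to-tail sibling, or a non-sticky same-end child of a sticky $D_p$ starting at $b$. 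You acknowledge this identification is needed but leave it as ``routine bookkeeping''; it is in fact the step that makes the divisibility immediate.
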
 
We start with the forward direction of the claim.
	We assume that $b$ and $n$ do not appear together in the same clause of $\rzeta^{D'}$.
	 We will again construct an element of the Grassmannian on which $\lr{bcdn}$ vanishes but $\nu$ does not (which will imply that $\lr{bcdn}$ does not divide $\nu$). Choose $C \in \Gr_{4,n}^{\ge 0}$ such that columns $C_1, \dots, C_{a-1}$ are zero vectors and columns $C_{a}, \dots, C_{n}$ give an element of $\Gr_{4, \{a, \dots, n\}}^{>0}$. Let $B$ be the matrix obtained from $C$ by replacing $C_b$ with $C_n$. Then 
	$\lr{bcdn}$ vanishes on $B$, and 
	 we have 
	\[ \nu(B)= \lr{acdn} \rzeta^{D'}(B) \quad \text{or} \quad \nu(B)= \lr{acdn}^2 \rzeta^{D'}(B). \]

	If $b$ does not occur in $\rzeta^{D'}$, then $\rzeta^{D'}(B)= \rzeta^{D'}(C)$, which is positive,
hence $\nu(B)$ is non-vanishing. 
	
If $b$ does occur in $\rzeta^{D'}$, it must come from 
the first marker of a top chord in $D_R$. Recall we are assuming $b$ does not appear in a clause with $n$ and $\rzeta^{D'}$ has degree at least 2. Using \cref{prop:explicit} and its notation, we may have $b=a_{(m)}$ in a string $$yz \rchn_i n = \cdots \br b_{(m)} a_{(m)} \br c_{(m)} d_{(m)} n \rangle \quad \text{or} \quad n \lchn_i yz= \langle n c_{(m)} d_{(m)} \br b_{(m)} a_{(m)} \br \cdots$$ or we may have $b$ as one of $a_i, a_i', a_p$ or $a_p'$. If $b=a_i$ or $b=a_p$ then $\rzeta^{D'}$ is a Pl\"ucker coordinate, so we need only consider the case 
	where 
	$b=a_{(m)}$ as above, or $b=a_i'$ or $b=a_p'$ and $\rzeta=\rgamma_i$ for some chord.
	
	\textbf{Case 1.} Suppose $b$ appears in $\rzeta^{D'}= \rgamma_i^{D'}$ as $a_i'$, in the notation of \cref{prop:explicit}. Then we must be in the second of the eight cases of $\rgamma$:
	$D_i$ has a head-to-tail sibling $D_j= (u,v,w,x)$, and $D_i$ is the sticky not same-end child 
	of parent $D_p=(b,s,y,z)$, 
	which is a top chord. So $\rgamma_i^{D'}= \lr{a_i b b_i \br c_i d_i \br c_j d_j \br a_i b \br c_p d_p n}$. Then
	$\nu=\lr{a_i b b_i \br c_i d_i \br c_j d_j \br a_i b \br c_p d_p \br ba \br cd n}$ and on $B$ it simplifies to 
	\[\nu(B)=-\lr{a_i b b_i \br c_i d_i \br c_j d_j b}\lr{a_i c_p d_p b} \lr{acdn}= \lr{n a_i b_i \br c_i d_i \br c_j d_j n}_C\lr{a_i c_p d_p n}_C \lr{acdn}_C. \]

	The first factor is $\rgamma_i^{D''}$ where $D''$ consists of the chords beneath $D_p$. 
	In particular the first term is positive on $C$, as are the other terms, so 
	$\nu(B) \neq 0$.
	
	\textbf{Case 2.} Suppose $b$ appears in $\rzeta^{D'}= \rgamma_i^{D'}$ as $a_p'$. Then $D_i$ is a non-sticky same-end child of $D_p$, and $D_p$ has a sticky parent $D_q=(b, a_p', c_q, d_q)$ which may or may not be same end. Then
	\[\nu=\lr{ a_p b b_p \br a_i b_i \br c_i d_i \br ba \br cdn} \quad \text{ or } \quad \nu =\lr{a_p b b_p \br a_i b_i \br c_i d_i \br a_p' b \br c_q d_q \br ba \br cdn}\]
	depending on if $D_p$ is a same-end child or not. Evaluating on $B$ we obtain
	\[\nu(B) 
	=- \lr{na_p b_p \br a_i b_i \br c_i d_i n}_C\lr{acdn}_C \quad \text{ or } \quad \nu(B)
	= \lr{n a_p b_p \br a_i b_i \br c_i d_in}_C \lr{a_p'  c_q d_q n}_C \lr{acdn}_C.\]
	The first factor is $\rgamma_i^{D''}$, where $D''$ consists of the chords beneath $D_q$, so is positive on $C$.  Therefore $\nu(B) \neq 0$.
	
	\textbf{Case 3}. Suppose $b$ appears in $\rzeta^{D'}$ only as $a_{(m)}$ in a string $``yz \rchn_j n"$ or $ ``n \lchn_j yz"$. This means that $D_R$ has a top chord $(b, f, g, h)$ which is an ancestor of the chord corresponding to $\rzeta^{D'}$. After applying \eqref{eq:promotionvectors2}, we see e.g.
	$$\nu = \lr{\cdots \br f b \br gh \br ba \br cdn} \quad \text{or} \quad \nu= \lr{n cd \br ba \br gh \br f b\br \cdots}.$$
	Evaluating on $B$, we see factorizations
	\[\nu(B) =\lr{ \cdots n }_C \lr{fgh n}_C \lr{acdn} \quad \text{or} \quad \nu(B)= \lr{ncda} \lr{n gh f}_C \lr{ n \cdots}_C.\]
	or possibly both. The first and last factors above (or the ``middle factor" if both factorizations occur) are precisely the domino variable $\rzeta^{D''}$ where $D''$ is the diagram of chords beneath $(b,f,g,h)$. Thus $\nu(B)$ is nonzero.
	
	Now, we show the backwards direction of \cref{claim:divisible}. Suppose 
	that $b$ and $n$ appear in the same clause of $\rzeta^{D'}$, 
	which has degree at least $2$. Then using \cref{prop:explicit} and its notation, 
	we must have that $\rzeta=\rgamma_i$, where $D_i$ is either a sticky child of $\ctop$ with a head-to-tail sibling to its right; 
	or $D_i$ is a non-sticky, same-end child of $D_p=(b,*,*,*)$, which is a sticky child of $\ctop$. 
	It is immediate that $\nu$ has a factor of $\ralpha_{\rtop}$, and the polynomial $\nu/\ralpha_{\rtop}$ agrees with the formula in \cref{prop:explicit} for $\rgamma_i^D$. We have already shown that polynomials of this form are not divisible by $\lr{bcdn}$ in Cases 1 and 2 above. This completes the proof of the factorization of $\rPsi(\rgamma_i^{D'})$ in the remaining cases in (2), (3).
	
\end{proof}

\begin{proof}[Proof of \cref{prop:explicit}]
	We proceed by induction on the size of the marker set. The base case, with three markers, is trivial as there are no chords.
	
Let $D_{\rtop}=(a,b,c,d)$ be the rightmost top chord of $D$. First, suppose $d< n-1$. As in \cref{domino-var}, let $D'$ be the chord diagram on $N' = \{1,\dots,n-2,n\}$ obtained from $D$ by erasing the marker $n-1$. In this case, by definition $\rzeta_j^D = \rzeta_j^{D'}$ for all domino variables.
Further, all the formulas for $\rzeta_i^D$ in the statement of \cref{prop:explicit} are identical to those for $D'$, which hold by the inductive hypothesis. Hence, the theorem holds for $D$.

Now suppose $d= n-1$. It is immediate from the definition that \cref{prop:explicit} holds for the domino variables $\ralpha_{\rtop},\rbeta_{\rtop},\rgamma_{\rtop},\rdelta_{\rtop},\repsilon_{\rtop}$ indexed by the top chord.  So we turn to the remaining domino variables $\rzeta^D$. Let $D_L, D_R$ be as in \cref{def:leftright}. By the inductive hypothesis, \cref{prop:explicit} and \cref{prop:invariant} hold for $D_L$ and $D_R$, and so in particular, we may apply \cref{lem:when-promote-var-factor} to $D$.

Consider a domino variable $\rzeta_j^D$ where the chord $D_j$ is in $D_L$. By definition, $\rzeta_j^D = \rPsi_{ac}(\rzeta_j^{D_L})$. By \cref{prop:invariant}, $\Psi_{ac}$ fixes $\rzeta_j^{D_L}$, and thus $\rzeta_j^D= \rzeta_j^{D_L}$, unless $\rzeta_j^{D_L}= \lr{xybn}$ for $x<y<a$. Using \cref{prop:explicit} for $D_L$, the latter equality occurs only if $D_j= (x,y,a,b)$ is a top (rightmost) chord of $D_L$ and $\rzeta=\rgamma$; in this case, $\rgamma_j^D= \lr{n xy \br ab \br cdn}$ by direct calculation (\cref{lem:when-promote-var-factor} ensures there is no factorization). 

On the other hand, the statement of \cref{prop:explicit} gives the same formula for $\rzeta_j^D$ and $ \rzeta_j^{D_L}$ for all variables except $\rgamma_j$ where $D_j= (x,y,a,b)$ is a top chord. This is because the ancestors and descendants of $D_j$ are the same in $D_L$ and in $D$ for all $D_j$ in $D_L$, and the siblings of $D_j$ are the same in both chord diagrams unless $D_j$ is a sibling of $D_k$. Now,if $ D_j = (x,y,a,b)$ is a top chord, then the formula in \cref{prop:explicit} for $\rgamma_j^{D}$ is $\lr{xyn \br ab \br cdn}$, which agrees with the formula in the above paragraph. So by induction (and direct calculation), \cref{prop:explicit} holds for $\rzeta_j^D$.

We now turn to domino variables $\rzeta_j^D$ where $D_j$ is in $D_R$. Suppose first that $\Psi_{ac}$ fixes $\rzeta_j^{D_R}$ and thus $\rzeta_j^D= \rzeta_j^{D_R}$. Using \cref{prop:invariant}, this means that either (1) the clauses of $\rzeta_j^{D_R}$ containing $n$ also contain $c,d$; or (2) $\rzeta_j^{D_R}$ is a Pl\"ucker coordinate which avoids $d,n$; or (3) $\rzeta_j^{D_R}= \lr{x~y~c~d}$ with $x, y<c$. In case (1), by \cref{prop:explicit}, the clause $\br cdn\rangle$ in $\rzeta_j^{D_R}$ is there because $D_j$ ends at $c,d$ or an ancestor of $D_j$ ends at $c,d$ and contributes to $\rchn_j$ or $\lchn_j$. This implies that $D_k$ will not contribute to $\rchn_j$. So \cref{prop:explicit} gives the same formulas for  $\rzeta_j^D$ and $ \rzeta_j^{D_R}$ in the former case and thus \cref{prop:explicit} holds by induction for these domino variables. In cases (2) and (3), $\rzeta_j^{D_R}$ is equal to $\repsilon_j$ or $D_j$ is a sticky child of some chord, and the formulas of \cref{prop:explicit} are again the same for $\rzeta_j^{D_R}$ and $\rzeta_j^{D}$.  

If $\Psi_{ac}$ does not fix $\rzeta_j^{D_R}$, then either $\rzeta_j^{D_R}= \lr{xydn}$ or an end clause of $\rzeta_j^{D_R}$ contains $n$ but not $c$ or $d$. In the first case, the fourth paragraph of \cref{lem:when-promote-var-factor}'s proof gives $\rzeta_j^{D}$ in various cases, which direct comparison shows is equal to the formula given in \cref{prop:explicit}. In the second case, the numerator of $\Psi_{ac}(\rzeta_j^{D_R})$ is obtained from $\rzeta_j^{D_R}$ by (11), which changes $\br xyn \rangle$ to $\br xy \br ba \br cdn \rangle$. If this numerator does not factor, then it is equal to $\rzeta_j^{D}$. The change $\br xyn \rangle \mapsto \br xy \br ba \br cdn \rangle$ is exactly the effect of adding $D_k$ to the chain of ancestors contributing to $\rchn_j$ and $\lchn_j$, which is the only difference in the formulas in \cref{prop:explicit} for $\rzeta_j^{D_R}$ and $\rzeta_j^D$. If the numerator of $\Psi_{ac}(\rzeta_j^{D_R})$ does factor, then $\rzeta_j^{D}$ is computed in the proof of \cref{lem:when-promote-var-factor}, and direct comparison shows that this is the same as the formula in \cref{prop:explicit}.

\end{proof}

\subsection{Signs of domino variables on tiles }
\label{sec:signs}

\cref{cor:cluster-sign-description} asserts that for each coordinate cluster variable $x \in \Irr(\rcp)$, the corresponding functionary $x(Y)$ has a fixed sign on the general BCFW tile $\gto{\rcp}$. For standard BCFW tiles, we give here an explicit combinatorial rule for this sign.

\begin{proposition}
\label{prop:domino-var-signs-on-tile} 
Let $D \in \CD$ be a chord diagram. The domino variables $\xx(D)$ have the following fixed signs on $\gto{D}$: 
\begin{align*}
\sgn(\ralpha_i(Y)) \;&=\; (-1)^{\after(D_i)+1} \\
\sgn(\rbeta_i(Y)) \;&=\; \begin{cases}
(-1)^{\after(D_i) \cdot \nonsticky(D_i)} & \text{if $D_i$ is not a sticky same-end child} \\
\sgn(\ralpha_p(Y)) & \text{if $D_i$ is a sticky same-end child of $D_p$}
\end{cases} \\
\sgn(\rgamma_i(Y)) \;&=\; \begin{cases}
(-1)^{\after(D_i) \cdot \sticky(D_i)} & \text{if $D_i$ has a right head-to-tail sibling} \\
(-1)^{\after(D_i) \cdot \sticky(D_i) + \after(D_p) \cdot \sticky(D_p) + 1} & \text{if $D_i$ is a same-end child of $D_p$} \\
(-1)^{\after(D_i) \cdot \nonsticky(D_i) + \below(D_i) + 1} & \text{otherwise}
\end{cases} \\
\sgn(\rdelta_i(Y)) \;&=\; (-1)^{\after(D_i) \cdot \nonsticky(D_i) + \below(D_i)} \\
\sgn(\repsilon_i(Y)) \;&=\; +1,
\end{align*}
where   
\begin{align*}
\after(D_i) \;&=\; \left|\left\{j : a_i \leq a_j \right\}\right| 
\text{ is the number of chords starting to the right of $D_i$ including $D_i$}\\
\below(D_i) \;&=\; \left|\left\{j : a_i < a_j < c_j \leq c_i \right\}\right| \text{ is the number of chords below $D_i$} \\
\sticky(D_i) \;&=\; \left|\left\{j : a_j = a_i - 1 \right\}\right|
\text{ is $1$ if 
$D_i$ is a sticky child of another chord and $0$ otherwise}\\
\nonsticky(D_i)&= 1 -\sticky(D_i).
\end{align*}
\end{proposition}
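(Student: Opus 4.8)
The plan is to prove \cref{prop:domino-var-signs-on-tile} by induction on the number of markers, mirroring the recursive structure of \cref{domino-var} and paralleling the inductive proof of \cref{prop:explicit}. Recall from \cref{thm:BCFW-tile-and-sign-description} that each coordinate functionary $\czeta_i^D(Y)$ is positive on $\gto{D}$, and from comparing \cref{def:twistor-mtx} with \cref{def:generalcluster} that each domino variable $\rzeta_i(Y)$ differs from the corresponding coordinate functionary by an explicit sign (coming from the $(-1)^{k_R}$, $(-1)^s$, $(-1)^{s'}$ factors) times a Laurent monomial in the ``frozen'' twistor coordinates $\llrr{bcdn},\llrr{acdn},\llrr{abdn},\llrr{abcn},\llrr{abcd}$. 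So the strategy is: first pin down the signs of these five distinguished twistor coordinates on $\gto{D}$ (this is essentially \cref{lem:sign_of_chord_twistors}/\cref{lem:sign_of_chord_twistors} referenced in the cluster adjacency proof, giving $\sgn \llrr{bcdn} = (-1)^{\after(D_{\rtop})+1}$ etc.), and then track how the sign bookkeeping propagates through the recursion.

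First I would handle the base case (three markers, no chords), which is vacuous, and the ``remove penultimate marker'' case ($d < n-1$), where by \cref{domino-var} the domino variables are literally unchanged and all the combinatorial quantities $\after, \below, \sticky, \nonsticky$ are also unchanged under erasing marker $n-1$ (which lies to the right of all chord starts and ends), so the formulas transfer verbatim from $D'$. The substantive case is $d = n-1$, where $\rzeta_j^D = \rPsi_{ac}(\rzeta_j^{D'})$ for $D' \in \{D_L, D_R\}$. Here I would proceed in two stages. Stage one: record the base-case signs for the top chord, $\sgn(\ralpha_{\rtop}) = \sgn\llrr{bcdn}$, $\sgn(\rbeta_{\rtop}) = \sgn\llrr{acdn}$, etc.; since $D_{\rtop}$ is a top chord it is non-sticky, has no same-end parent and no right head-to-tail sibling, and $\below(D_{\rtop}) = $ (number of chords in $D_R$), $\after(D_{\rtop}) = 1 + |D_R|$ — so I need to check these match the claimed formulas once the twistor signs are computed. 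Stage two: for $D_j$ in $D_L$ or $D_R$, use \cref{prop:vanishing_and_sign_of_functionaries_under_promotion} (the sign-under-promotion result referenced in \cref{thm:vanishing-for-all-ops}) together with \cref{thm:signs-under-cyc-rot-pre} to express $\sgn(\rPsi_{ac}(\rzeta_j^{D'})(Y))$ on $\gto{D}$ in terms of $\sgn(\rzeta_j^{D'}(Y'))$ on $\gto{D'}$ (known by induction) and the signs of the frozen twistor factors picked up in passing from $\Psi$ to $\rPsi$ (read off from \cref{lem:when-promote-var-factor}).

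The bookkeeping then splits along exactly the case division in \cref{prop:explicit}/\cref{lem:when-promote-var-factor}: for a chord $D_j$ on which promotion is the identity the sign is inherited unchanged, and I must verify that the combinatorial exponents for $D_j$ relative to $D$ agree with those relative to $D'$ — the only chords whose ancestry/sibling structure changes are those adjacent to $D_{\rtop}$, namely the top chords of $D_L$ and $D_R$ and the children of $D_{\rtop}$, exactly the chords for which \cref{lem:when-promote-var-factor} records a nontrivial promotion. For each such chord I would match the sign change produced by the promotion (a factor of $\ralpha_{\rtop}/\rdelta_{\rtop}$, $\ralpha_{\rtop}/\repsilon_{\rtop}$, $\ralpha_{\rtop}/(\repsilon_{\rtop})^s$, etc., whose sign on $\gto{D}$ I know from the top-chord base signs) against the change in the claimed formula. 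For instance, when $D_j$ is a sticky same-end child of $D_{\rtop}$, the formula asserts $\sgn(\rgamma_j^D)$ involves $\after(D_{\rtop})\cdot\sticky(D_{\rtop})$ and $\after(D_{\rtop})$-type terms; these should drop out of the promotion formula $\Psi(\rgamma_j^{D'}) = \ralpha_{\rtop}\rgamma_j^D/\rdelta_{\rtop}$ once I substitute $\sgn(\ralpha_{\rtop}) = (-1)^{\after(D_{\rtop})+1}$, $\sgn(\rdelta_{\rtop}) = (-1)^{\below(D_{\rtop})}$ (note $\nonsticky(D_{\rtop}) = 1$, $\after(D_{\rtop})\cdot\nonsticky(D_{\rtop}) = \after(D_{\rtop})$ but $\rdelta_{\rtop} = \lr{abcn}$ has the simpler sign) — wait, here I must be careful that the top chord's own $\rdelta$-sign formula specializes correctly.

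The main obstacle I anticipate is precisely this last point: reconciling the sign of the frozen-factor Laurent monomial (a product of powers of $\ralpha_{\rtop},\rbeta_{\rtop},\rdelta_{\rtop},\repsilon_{\rtop}$, with the exponent $s \in \{1,2\}$ from \cref{lem:when-promote-var-factor} depending on subtle configuration data — whether $D_j$ has a head-to-tail sibling not same-end to $D_{\rtop}$, etc.) with the combinatorial $(-1)$-exponent in the target formula, across all eight cases of $\rgamma$ and the corresponding cases of $\rbeta, \rdelta$. In particular, the appearance of $\below(D_p)$, $\sticky(D_p)$ terms in the $\rgamma$-formula when $D_i$ has a same-end parent must be shown to arise exactly from the difference between $\below(D_i), \sticky(D_i)$ computed in $D_R$ versus in $D$ together with the frozen factor; verifying this requires carefully expanding, for each case, both $s$ (hence the number of frozen factors) and the change in $\after$/$\below$/$\sticky$ under adding the top chord $D_{\rtop}$ to the diagram. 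I expect this to be a somewhat lengthy but mechanical case check once the top-chord twistor signs and the promotion sign rule are in hand; I would organize it as a table indexed by the cases of \cref{lem:when-promote-var-factor}, and for the $\cyc$, $\refl$, $\pre_I$ steps (needed since standard cells are built with penultimate $\pre$) invoke \cref{thm:signs-under-cyc-rot-pre} to see those operations preserve the formulas (with $\refl$ reversing orientation consistently in all the combinatorial quantities, and $\pre$ and $\cyc$ acting trivially on the relevant counts).
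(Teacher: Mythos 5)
Your proposal is correct and follows essentially the same route as the paper, which deduces the proposition from the recursive definition of the domino variables, the factorizations in \cref{lem:when-promote-var-factor}, and \cref{lem:signs} (encapsulating the base signs of the five twistors $\llrr{bcdn},\dots,\llrr{abcd}$ and the $(-1)^{(k_R+1)\deg_n F}$ sign rule under promotion). The only caveat is the tentative claim $\sgn(\rdelta_{\rtop})=(-1)^{\below(D_{\rtop})}$ in your worked example, which you rightly flag as suspect: the correct specialization is $(-1)^{\after(D_{\rtop})+\below(D_{\rtop})}=(-1)^{1+2k_R}=-1$, consistent with $-\llrr{abcn}>0$ from \cref{lem:sign_of_chord_twistors}.
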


\begin{example}[Signs of domino variables] \label{ex:stdBCFW_signs}
Let $\gt{D}$ be the tile corresponding to the chord diagram~$D$ in Figure~\ref{cd-example}. The following $A$ domino variables are negative on $\gto{D}$:
$$ \ralpha_2,\; \ralpha_3,\;  
\ralpha_5 = 
\rbeta_4,\;  \rbeta_1,\;  \rbeta_6,\;  \rgamma_2,\;  \rdelta_1,\;  \rdelta_5,\;  \rdelta_6. $$
The remaining domino variables are positive on $\gto{D}$.
\end{example}

\cref{prop:domino-var-signs-on-tile} can be deduced from the recursive definition of the domino variables (cf. \cref{domino-var}), the factorizations in \cref{lem:when-promote-var-factor}, and the following lemma detailing how signs of domino variables on tiles change after a BCFW step.

\begin{lemma}\label{lem:signs}
	Using \cref{not:bcfwmap}, let $S_L \subset \Gr_{k_L, N_L}^{\ge 0}$ and 
	$S_R \subset \Gr_{k_R, N_R}^{\ge 0}$ be BCFW cells with chord diagrams $D_L, D_R$.
	Let $F \in \xx(D_L)$ (resp. $F \in \xx(D_R)$).
	If $F$ has sign $s \in \{\pm 1\}$ on $\gto{L}$ (resp. $\gto{R}$) for all $Z$, then $$
	(-1)^{r} \cdot s \cdot \Psi_{a c}(F)  
	> 0 \qquad \text{(resp. }
	s \cdot \Psi_{a c}(F) > 0)$$ on $Z_{S_L \bcfw S_R}^{\circ}$ for all $Z$, where $r=(k_R+1) \deg_n F$. Moreover, the twistors
		\[(-1)^{k_R}\llrr{bcdn}, \quad (-1)^{k_R+1}\llrr{acdn}, \quad \llrr{abdn},\quad -\llrr{abcn}, \quad \llrr{abcd} \quad \mbox{are positive on } \gto{S} \mbox{ for all } Z.\]
\end{lemma}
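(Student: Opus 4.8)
\textbf{Proof proposal for \cref{lem:signs}.}
The plan is to prove the statement by induction on the total number of step-tuples used to generate $S_L$ and $S_R$, running the induction simultaneously with \cref{prop:domino-var-signs-on-tile} (or, if one wants to keep the two lemmas logically separate, to use only the recursive structure of \cref{domino-var} and the sign-tracking in \cref{def:twistor-mtx}). First I would dispose of the ``moreover'' clause, which is really the base case for all the sign bookkeeping: by \cref{thm:BCFW-tile-and-sign-description} and \cref{def:twistor-mtx}, the coordinate functionaries $\calpha_k^\rcp(Y),\dots,\cepsilon_k^\rcp(Y)$ of the last BCFW step are positive on $\gto{S_L\bcfw S_R}$ for all $Z$, and by \cref{def:twistor-mtx} these are exactly $(-1)^{k_R}\llrr{b\,c\,d\,n}$, $(-1)^{k_R+1}\llrr{a\,c\,d\,n}$, $\llrr{a\,b\,d\,n}$, $-\llrr{a\,b\,c\,n}$, $\llrr{a\,b\,c\,d}$, which gives the displayed positivity statement. (One should note that this invokes \cref{thm:BCFW-tile-and-sign-description}, whose proof is deferred to a later section but does not depend on \cref{lem:signs}; alternatively one can cite \cref{lem:sign_of_chord_twistors} directly, which is the statement we want.)

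For the main claim, the key observation is the comparison between \cref{def:twistor-mtx} and \cref{def:generalcluster}: the coordinate functionary $\czeta_i^\rcp(Y)$ and the coordinate cluster variable $\rzeta_i^\rcp$ are related by $\czeta_i^\rcp(Y) = \pm\,\rzeta_i^\rcp(Y)$ up to a \emph{signed} Laurent monomial in the elements of $\mathcal{T}'$, and by the ``moreover'' clause all of those are positive on $\gto{S_L\bcfw S_R}$. So the sign of $\Psi_{ac}(F)$ on the tile is governed entirely by the explicit sign in the recursive rule of \cref{def:twistor-mtx}. Concretely: if the $i$th step-tuple lives in $\rcp_L$, then $\czeta_i^\rcp(Y) = (-1)^s\,\Psi_{a_kc_k}(\czeta_i^L)(Y)$ with $s = (k_R+1)\deg_{n_k}\czeta_i^L$, and $\deg_{n_k}\czeta_i^L = \deg_n F$ since $F = \rzeta_i^L$ differs from $\czeta_i^L$ only by $\pm 1$ and hence has the same degree in the marker $n$. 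Because $\czeta_i^L(Y)$ is positive on $\gto{L}$ for all $Z$ (by \cref{thm:BCFW-tile-and-sign-description} applied to the smaller cell $S_L$), and $F = s_F\cdot\czeta_i^L(Y)$ with $s_F$ the sign of $F$, we get $s_F\cdot(-1)^{(k_R+1)\deg_n F}\,\Psi_{ac}(F) > 0$ on $\gto{S_L\bcfw S_R}$, up to positive frozen factors — which is exactly $(-1)^r\cdot s\cdot\Psi_{ac}(F) > 0$ with $r = (k_R+1)\deg_n F$. The case where the $i$th step-tuple lives in $\rcp_R$ is identical except that the rule in \cref{def:twistor-mtx} carries no sign, giving $s\cdot\Psi_{ac}(F) > 0$ directly. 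The only subtlety is that $\Psi_{ac}(F)$ may differ from $\rPsi_{ac}(F)$ (equivalently, from $\czeta_i^\rcp(Y)$) by a Laurent monomial in $\mathcal{T}'$; but since every element of $\mathcal{T}'$ is positive on the tile by the ``moreover'' clause, this does not affect the sign.

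The step I expect to be the main obstacle is making the degree bookkeeping airtight — specifically, verifying that $\deg_{n_k}F$ (the degree in the \emph{new} marker $n_k$ of the product step) is the quantity that appears, and that it equals $\deg_n F$ where $n$ is the largest marker of $N_L$ (these are literally the same index under \cref{not:LR_cluster}, since $n\in N_L$ plays the role of $n_k$), and also that product promotion does not change this degree in a way that breaks the sign computation — here one wants to use \cref{prop:invariant} and the explicit factorizations in \cref{lem:when-promote-var-factor} to control exactly which frozen factors in $\mathcal{T}'$ get peeled off, and to confirm those are all positive on the tile. A clean way to organize this is to first establish the $\rzeta$-vs-$\czeta$ dictionary as a standalone sublemma (each $\czeta_i^\rcp(Y)$ equals $\rzeta_i^\rcp(Y)$ times a signed Laurent monomial in $\mathcal{T}'$-type frozens, with an explicitly tracked sign), prove that by the same induction, and then read off \cref{lem:signs} and \cref{prop:domino-var-signs-on-tile} as corollaries by unwinding the signs $(-1)^{(k_R+1)\deg_n F}$ through the recursion and matching them against the closed-form exponents $\after(D_i),\below(D_i),\sticky(D_i)$ in the proposition. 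The matching of exponents is a finite case analysis over the cases of \cref{prop:explicit}, but it is routine given the recursion.
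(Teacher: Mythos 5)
Your overall strategy is genuinely different from the paper's. The paper proves this lemma as an immediate consequence of three results: \cref{lem:sign_of_chord_twistors} (which is exactly the ``moreover'' clause), \cref{cor:clust-var-strong-sign} (which upgrades the hypothesis ``$F$ has sign $s$'' to ``$F$ has \emph{strong} sign $s$'' in the sense of \cref{def:strongly_positive}), and \cref{prop:vanishing_and_sign_of_functionaries_under_promotion}(2), which states verbatim that a functionary with strong sign $s$ on the image of $S_L$ (resp.\ $S_R$) promotes to one with strong sign $(-1)^{(k_R+1)\deg_n F}s$ (resp.\ $s$) on the image of $S_L\bcfw S_R$. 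The hard content lives in \cref{prop:vanishing_and_sign_of_functionaries_under_promotion}, whose proof sandwiches $S_L\bcfw S_R$ between the cells $S_\bulR$ and $S_R^\bulR$ and uses the strong-sign notion precisely because a bare sign is not stable under the closure arguments needed there. Your route instead goes through \cref{thm:BCFW-tile-and-sign-description} and the dictionary between coordinate functionaries and coordinate cluster variables; since \cref{thm:BCFW-tile-and-sign-description} is itself proved from \cref{prop:vanishing_and_sign_of_functionaries_under_promotion} and \cref{lem:sign_of_chord_twistors}, you are not avoiding that machinery, only obscuring where it enters.

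There are two concrete problems. First, your claim that ``every element of $\mathcal{T}'$ is positive on the tile by the `moreover' clause'' is false: the moreover clause says $-\llrr{abcn}>0$, $(-1)^{k_R+1}\llrr{acdn}>0$ and $(-1)^{k_R}\llrr{bcdn}>0$, so $\llrr{abcn}$ is always negative and two of the four elements of $\mathcal{T}'$ have $k_R$-dependent signs. Hence the Laurent monomial relating $\Psi_{ac}(F)$ to $\rPsi_{ac}(F)$ (equivalently to $\czeta_i^{\rcp}(Y)$) \emph{does} affect the sign, and you would have to track its exact exponents via \cref{lem:when-promote-var-factor} and verify that the resulting product of signs collapses to $(-1)^{(k_R+1)\deg_n F}$; this is a nontrivial case analysis, not a step that ``does not affect the sign.'' Second, the identity $F=s_F\cdot\czeta_i^L(Y)$ only holds up to a Laurent monomial $M_L$ in frozen variables accumulated during the construction of $D_L$, so $\Psi_{ac}(\czeta_i^L)=\pm\,\Psi_{ac}(F)\cdot\Psi_{ac}(M_L)$, and to extract the sign of $\Psi_{ac}(F)$ on $\gto{S_L\bcfw S_R}$ you must determine the sign of $\Psi_{ac}(M_L)$ there --- but that is itself an instance of the statement you are proving, applied to frozen variables. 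So your induction must be strengthened to cover those frozen factors (this is what \cref{cor:clust-var-strong-sign} and \cref{rmk:lots-strong-sign} do in the paper), and at that point the cleanest fix is simply to invoke \cref{prop:vanishing_and_sign_of_functionaries_under_promotion}(2) directly on $F$, which is the paper's proof.
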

\cref{lem:signs} follows from
\cref{lem:sign_of_chord_twistors}, \cref{prop:vanishing_and_sign_of_functionaries_under_promotion} and \cref{cor:clust-var-strong-sign}, applied to standard BCFW cells.

\section{The cluster algebra of a standard BCFW tile}\label{sec:quiver}

In this section we explicitly describe a cluster algebra 
$\Acal(\Sigma_D)$ associated to a standard
BCFW tile $Z_D$.  
We show that each cluster variable 
is a regular function on the Grassmannian and has a fixed sign on the tile.
Moreover, we give \emph{many} descriptions of each open BCFW tile $\gto{D}$ as a semialgebraic set in $\Gr_{k,k+4}$ using cluster variables in \emph{any} fixed extended cluster of $\Acal(\Sigma_D)$.

Finally, we give an algorithm for constructing the quiver associated to 
each tile $Z_D$.
A useful tool in our proof is the notion of a \emph{signed seed} of a cluster
algebra, see \Cref{def:signedseed}, together with the fact that 
the property of being a signed seed
is preserved under mutation.

\subsection{The seed of a standard BCFW tile}

We will inductively construct a cluster from a chord diagram $D$, 
by repeatedly
 removing either the penultimate marker
or the rightmost top chord $\ctop$.  In the latter case, we build our cluster 
 based on those of the left and right subdiagrams of $D$.  

Recall that in \cref{domino-var} we associated a set $\xx(D)$ of domino cluster variables 
to each chord diagram $D$.  The following result, which 
is a consequence of \Cref{thm:promotion2}, enlarges the set $\xx(D)$  to an extended cluster $\txx(D)$ for $\Gr_{4,n}$. By \cref{thm:GSV}, there is a unique seed $\widetilde{\Sigma}_D$ for $\Gr_{4,n}$ whose extended cluster is $\txx(D)$.

\begin{theorem}[Extended cluster for a standard BCFW tile]\label{thm:cluster}
Let $D$ be a chord diagram in $\CD$, and let
$D_k = (a_k,b_k,c_k,d_k)$ denote the rightmost top chord, where $1\leq a_k<b_k<c_k<d_k<n$,
where $a_k,b_k$ and $c_k,d_k$ are consecutive.

We inductively construct an extended cluster ${\txx}(D)$ for $\Gr_{4,n}$, 
with frozen variables $f_1,\dots,f_n$, as:
\begin{itemize}
\item[1.] (Base case) If $n=4$ then $\txx(D)$ consists of the single Pl\"ucker coordinate
	$f_1 = \lr{1\,2\,3\,4}$.
\item[2.] (Remove penultimate marker) If $d<n-1$, then let $D'$ be the chord
  diagram on $\{1,2,\dots,n-2,n\}$ obtained from $D$ by removing the marker $n-1$.
	We let $\txx(D)$ be $\txx(D')$ together with the four frozen variables in 
		$\Gr_{4,n}$ 
	involving $n-1$,  
		that is,
	$$\txx(D)= \txx(D') \cup \{f_{n-4},f_{n-3},f_{n-2},f_{n-1}\}.$$
\item[3.] (Remove rightmost top chord) Otherwise, if $d=n-1$,
let $D_L$ and $D_R$ be the left and right subdiagrams as in \cref{def:leftright}.
	
   Recall that 
		$\overline{\Psi}=\overline{\Psi}_{ac}$ denotes rescaled product promotion, as in \Cref{def:rPsi}.
		We let $\txx(D)$ be 
		$$\txx(D) = 
\{\rbeta_k, \rgamma_k, \rdelta_k,\repsilon_k\} \cup 
		\OPsi(\txx(D_L) \cup \txx(D_R)) \cup 
\{f_c, f_d, f_{a-1}\},$$ the union of the new domino variables,
the promotions of the old variables, 
		and three frozen variables in $\Gr_{4,n}$.
		(We note that $\ralpha_k$ will already be an element of $\OPsi(\txx(D_R))$.)
\end{itemize}

Then $\txx(D)$ is the extended cluster 
of a unique seed $\widetilde{\Sigma}_D= (\txx(D), \tQ_D)$ 
for the cluster algebra structure on 
$\Gr_{4,n}$.
\end{theorem}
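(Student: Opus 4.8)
The plan is to prove the statement by induction on the number of markers $|N|=n$, following the three cases in the recursive construction of $\txx(D)$. In every case, once we know that $\txx(D)$ is the extended cluster of \emph{some} seed, uniqueness of $\widetilde{\Sigma}_D=(\txx(D),\tQ_D)$ is immediate from \cref{thm:GSV}, so the real content is existence. The base case $n=4$ is trivial: $\C[\widehat{\Gr}_{4,4}]$ is generated by the single frozen variable $f_1=\lr{1\,2\,3\,4}$, and $\{f_1\}$ is the extended cluster of its unique seed.

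For the ``remove the penultimate marker'' case I would apply \cref{lem:add-marker-embed-gr} with $J=[n]$, $j=n-1$, and $I=\{1,2,\dots,n-2,n\}$. By the inductive hypothesis $\txx(D')$ is the extended cluster of a seed $\widetilde{\Sigma}_{D'}$ for $\Gr_{4,I}$; by (the proof of) that lemma, adjoining to $\widetilde{\Sigma}_{D'}$ the four cyclically consecutive Pl\"ucker coordinates of $\Gr_{4,n}$ containing the marker $n-1$ — which are exactly $f_{n-4},f_{n-3},f_{n-2},f_{n-1}$ — produces a seed for $\Gr_{4,n}$ whose extended cluster is $\txx(D')\cup\{f_{n-4},f_{n-3},f_{n-2},f_{n-1}\}=\txx(D)$, as required.

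The ``remove the rightmost top chord'' case is the crux, and it is where \cref{thm:promotion2} enters. Here $\ctop=(a,b,c,d)$ with $a,b$ consecutive and, since $d=n-1$, also $c,d,n$ consecutive, so $N_L=\{1,\dots,a,b,n\}$ and $N_R=\{b,\dots,c,d,n\}$ are exactly the index sets of \cref{not:LR_cluster} and product promotion $\Psi=\Psi_{ac}$ (and its rescaled version $\OPsi$, cf.\ \cref{def:rPsi}) is available. By induction, $\widetilde{\Sigma}_{D_L}$ and $\widetilde{\Sigma}_{D_R}$ are seeds for $\Gr_{4,N_L}$ and $\Gr_{4,N_R}$, so $\Sigma:=\widetilde{\Sigma}_{D_L}\sqcup\widetilde{\Sigma}_{D_R}$ is a seed for $\A(\Sigma_0)=\C[\widehat{\Gr}^{\circ}_{4,N_L}]\times\C[\widehat{\Gr}^{\circ}_{4,N_R}]$ and hence lies in the cluster pattern of $\Sigma_0$. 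By \cref{thm:promotion2}, $\Sigma_0$ is similar to $\Fr(\Sigma_1)$ under $\Psi$, so \cref{prop:similar} produces a seed $\overline{\Sigma}$ in the cluster pattern of $\Fr(\Sigma_1)$ that is similar to $\Sigma$ and is obtained from $\Fr(\Sigma_1)$ by a mutation sequence that never touches a frozen vertex. By \cref{thm:promotion2}(2)--(3) and \cref{def:rPsi}, applying $\OPsi$ to the extended cluster $\txx(D_L)\sqcup\txx(D_R)$ of $\Sigma$ produces precisely the mutable variables of $\overline{\Sigma}$, together with $\OPsi$ applied to the cyclically consecutive Pl\"ucker coordinates of $\Gr_{4,N_L}$ and $\Gr_{4,N_R}$ — and this latter set of Pl\"ucker coordinates does not depend on $D_L,D_R$. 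A direct computation using the substitution rules (a1)--(a2) and (b1)--(b3) of product promotion shows that these $\OPsi$-images are exactly $\{f_i: i\in[n],\ i\neq a-1,c,d\}$ together with the five coordinates $\lr{1\,2\,n\br a\,b\br c\,d\,n}$, $\lr{a-1\,a\,b\,n}$, $\lr{1\,a\,b\,n}$, $\lr{a\,b\,b+1\,n}$, $\lr{b\,c\,d\,n}$ of $\mathcal{T}$ (the last of which equals $\ralpha_k$, which is why $\ralpha_k$ is not adjoined a second time in the construction of $\txx(D)$). Since $\rbeta_k=\lr{a\,c\,d\,n}$, $\rgamma_k=\lr{a\,b\,d\,n}$, $\rdelta_k=\lr{a\,b\,c\,n}$, $\repsilon_k=\lr{a\,b\,c\,d}$ are exactly the remaining four elements of $\mathcal{T}$, and $f_{a-1},f_c,f_d$ are exactly the three missing $f_i$, we get
\begin{multline*}
\txx(D)=\{\rbeta_k,\rgamma_k,\rdelta_k,\repsilon_k\}\cup\OPsi\big(\txx(D_L)\cup\txx(D_R)\big)\cup\{f_c,f_d,f_{a-1}\}\\
=(\text{mutable variables of }\overline{\Sigma})\cup\{f_1,\dots,f_n\}\cup\mathcal{T},
\end{multline*}
which is precisely the extended cluster of $\overline{\Sigma}$ (whose frozens are exactly $\{f_1,\dots,f_n\}\cup\mathcal{T}$). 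Finally, unfreezing $\mathcal{T}$ turns $\overline{\Sigma}$ into the corresponding seed of the cluster pattern of $\Sigma_1$, i.e.\ a seed for $\A(\Sigma_1)=\C[\widehat{\Gr}^{\circ}_{4,n}]$, with the same extended cluster $\txx(D)$. This completes the induction, and \cref{thm:GSV} gives uniqueness.

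The main obstacle is this frozen-variable bookkeeping in the last case: one must verify, using the rules (a1)--(a2) and (b1)--(b3), exactly which element of $\{f_1,\dots,f_n\}\cup\mathcal{T}$ each cyclically consecutive Pl\"ucker coordinate of $\Gr_{4,N_L}$ and $\Gr_{4,N_R}$ is sent to under $\OPsi$, and confirm that the complement of that image inside $\{f_1,\dots,f_n\}\cup\mathcal{T}$ is precisely the seven adjoined variables $\rbeta_k,\rgamma_k,\rdelta_k,\repsilon_k,f_{a-1},f_c,f_d$. One must also handle the degenerate cases $a=1$ (upper promotion), $a=2$, $a=3$, and $c=b+1$ separately, using the corresponding variants of \cref{thm:promotion2} recorded in \cref{sec:degenerate}; there $\mathcal{T}$ is smaller and some of the Pl\"ucker coordinates above coincide, but the argument applies verbatim after making the appropriate identifications.
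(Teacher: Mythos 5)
Your proof is correct and follows essentially the same route as the paper: the paper also justifies the inductive step by transporting the seed $\widetilde{\Sigma}_{D_L}\sqcup\widetilde{\Sigma}_{D_R}$ along the quasi-homomorphism of \cref{thm:promotion2} via \cref{prop:similar} (this is exactly the content of \cref{prop:combinesequence}), handles the penultimate-marker case by adjoining the four frozen variables containing $n-1$ (cf.\ \cref{cor:n-1} and \cref{lem:add-marker-embed-gr}), and concludes uniqueness from \cref{thm:GSV}. The frozen-variable bookkeeping you flag as the main obstacle checks out as you state it, and the degenerate cases are indeed handled by the variants in \cref{sec:degenerate}.
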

\begin{remark}
While \cref{thm:cluster}  is stated for markers $[n]$, 
it naturally extends to general index sets, e.g. as the ones in \Cref{index-sets}. It can also naturally be extended to general BCFW cells $\gt{\rcp}$ by applying $\cyc^{-*}$ or $\refl^*$ for each $\cyc$ or $\refl$ step in the recipe $\rcp$.
\end{remark}

\begin{remark}
We note that in Case (3) of \Cref{thm:cluster}, if $a=1$ and $b=2$, then 
$D_L$ is the chord diagram on 3 markers with no chords and 
 $$\txx(D) = 
\{\rbeta_k, \rgamma_k, \rdelta_k,\repsilon_k\} \cup 
	\OPsi(\txx(D_L) \cup \txx(D_R)) \cup 
\{f_c, f_d, f_{a-1}\} = 
\{\rbeta_k, \rgamma_k, \rdelta_k,\repsilon_k\} \cup 
\OPsi(\txx(D_R)),$$
because in this case, 
$f_c = \rbeta_k$, $f_d=\rgamma_k$, and $f_{a-1} = f_n = \rPsi(\lr{2,3,d,n})$.
\end{remark}

We now focus on the domino variables; our goal will be to explicitly describe the arrows
connecting them in the quiver $\widetilde{Q}_D$ of $\widetilde{\Sigma}_D$. 
We start by partitioning them into mutable and frozen domino variables.
Geometrically,
the frozen domino variables correspond
to facets of $Z_D$.

\begin{definition}[Mutable and frozen domino variables]\label{def:frozmut}
Let $D \in \mathcal{CD}_{n,k}$ be a chord diagram, corresponding to a
standard BCFW tile $Z_D$ in $\A_{n,k,4}$.
Let $\Mut(Z_D)$ denote the following collection of domino cluster variables,
        where $1 \leq i \leq k$:
        \begin{itemize}
                \itemsep0.25em
                \item
                $\ralpha_i$ where $D_i$ has a sticky child
                \item
                $\rbeta_i$ where $D_i$ starts where another chord ends or $D_i$ has a same-end sticky parent. 
                \item
                $\rdelta_i$ where $D_i$ has a same-end child.
                \item
                $\repsilon_i$ where $D_i$ has a same-end child.
        \end{itemize}
Let $\AFacet(Z_D)$ denote
the complementary collection of domino cluster variables, that is:
        \begin{itemize}
                \itemsep0.25em
                \item
                $\ralpha_i$ unless $D_i$ has a sticky child
                \item
                 $\rbeta_i$ unless $D_i$ starts where another chord ends or $D_i$ has a same-end sticky parent. 
\item   $\rgamma_i$ in all cases. 
   \item
                $\rdelta_i$ unless $D_i$ has a same-end child.
                \item
                $\repsilon_i$ unless $D_i$ has a same-end child.
        \end{itemize}
\end{definition}

\begin{remark}
Recall from \cref{rmk:beta=alpha} that if $D_i$ has a same-end sticky parent $D_p$, then $\rbeta_i=\ralpha_p$.    
\end{remark}

\begin{example}[Mutable and frozen domino variables]
Let $Z_D$ be the tile with the chord diagram~$D$ from 
\cref{cd-example} and domino variables as in \cref{domino-variables-formulas}. Among those, the mutable variables
are:
$$ \ralpha_5,\, \ralpha_6,\, \rbeta_2,\, \rbeta_4,\, \rbeta_6,\, \rdelta_3,\, \rdelta_5,\, \repsilon_3,\, \repsilon_5 \;\in\; \Mut(Z_D). $$
Hence
$\AFacet(Z_D)$ consists of the
remaining $21$ domino variables. Note that $\ralpha_5 = \rbeta_4$ by \cref{rmk:beta=alpha}.
\end{example}

\begin{theorem}[Frozen variables as facets]\label{prop:standard_facets}
Let $D \in \mathcal{CD}_{n,k}$ be a chord diagram, corresponding to a
standard BCFW tile $Z_D$ in $\Ank$.
Then for each cluster variable $\rzeta_i\in \AFacet(Z_D)$,
there is a unique facet of $\gt{D}$
which lies in the zero locus of the functionary $\rzeta_i(Y)$.
Moreover, for 
any $Z$, there are no other facets of $\gt{D}$.
\end{theorem}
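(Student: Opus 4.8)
The plan is to prove \cref{prop:standard_facets} by induction on the number of markers, mirroring the recursive structure of \cref{domino-var} and \cref{thm:cluster}, and using the injectivity of $\tZ$ on $S_D$ (\cref{thm:BCFW-tile-and-sign-description}) together with the boundary analysis \cref{lem:boundaries_before_amp_map}. The key observation is that facets of $\gt{D}$ come from codimension-one cells $S'$ in $\partial S_D$ with $\tZ$ still ``injective enough'' that $\gt{S'}$ has codimension one; since $\tZ$ is injective on all of $S_D$ (hence on $\overline{S_D}$, by \cref{thm:BCFW-tile-and-sign-description}), the facets of $\gt{D}$ are exactly the images $\gt{S'}$ of the facets $S'$ of $S_D$. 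So the statement reduces to: (i) each facet $S'$ of $S_D$ lies in the zero locus of a \emph{unique} $\rzeta_i(Y) \in \AFacet(Z_D)$; (ii) distinct facets give distinct such cluster variables; and (iii) no $\rzeta_i \in \AFacet(Z_D)$ cuts out a non-facet (i.e.\ codimension $\geq 2$) locus.

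First I would set up the recursion. For the step-tuple cases $\pre_{n-1}$ (Case (2) of \cref{thm:cluster}): inserting a zero column in the penultimate position is a stratification-preserving homeomorphism $\overline{S_{D'}} \to \overline{S_D}$, so facets of $S_D$ biject with facets of $S_{D'}$ plus possibly new facets created by the zero-column insertion; one checks that the new facets are cut out precisely by the four new frozen variables $f_{n-4}, f_{n-3}, f_{n-2}, f_{n-1}$ (the twistor coordinates that become identically zero when column $n-1$ is zero, by \cref{rk:Lapexpansion}), and that on old facets the functionary from $\txx(D')$ still works (this is \cref{thm:vanishing-for-all-ops}(1)). For the BCFW-product case $d = n-1$ (Case (3)): by \cref{lem:boundaries_before_amp_map}, $\partial S_D$ consists of (a) cells $S_L' \bcfw S_R'$ with $(S_L', S_R') \neq (S_L, S_R)$, where by induction the relevant functionary on $\gt{S_L'}$ or $\gt{S_R'}$ promotes (via $\rPsi_{ac}$) to an element of $\AFacet(Z_D)$ using \cref{thm:vanishing-for-all-ops}(2) and \cref{lem:sign_of_chord_twistors}; and (b) cells where some element of $\binom{\{a,b,c,d,n\}}{4}$ fails to be coindependent, which correspond to the vanishing of one of the five twistors $\llrr{bcdn}, \llrr{acdn}, \llrr{abdn}, \llrr{abcn}, \llrr{abcd}$ --- and exactly those among $\ralpha_k, \rbeta_k, \rgamma_k, \rdelta_k, \repsilon_k$ that lie in $\AFacet(Z_D)$ (as opposed to $\Mut(Z_D)$) should correspond to genuine facets. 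This is where the precise combinatorial definition of $\Mut$ vs.\ $\AFacet$ in \cref{def:frozmut} must be matched up: a boundary cell $S_L' \bcfw S_R'$ need not be codimension one in $\overline{S_D}$, and one must verify, case by case on the chord configuration (sticky child, same-end child, head-to-tail sibling), that the codimension-one boundary cells are \emph{exactly} those whose associated promoted cluster variable lands in $\AFacet(Z_D)$.

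The bookkeeping for uniqueness (parts (ii) and (iii)) I would handle by tracking dimensions: $\dim S_D = 4k$, and a facet has dimension $4k - 1$. Using the parametrization of $S_D$ by BCFW parameters (\cref{cor:dim}) and the explicit BCFW matrix $M_\rcp$, a facet corresponds to sending exactly one BCFW parameter $\zeta_i$ to $0$ or $\infty$ while the others stay generic; the coordinate functionary $\zeta_i^D(Y)$ (and hence, up to a frozen factor, the coordinate cluster variable $\rzeta_i(Y)$, by comparing \cref{def:twistor-mtx} and \cref{def:generalcluster}) is what vanishes there. So the facets are indexed by a subset of the $5k$ domino variables --- and the content of the theorem is that this subset is exactly $\AFacet(Z_D)$. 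The variables in $\Mut(Z_D)$ must be shown \emph{not} to cut out facets: geometrically, when $D_i$ has, say, a sticky child, sending $\ralpha_i \to 0$ forces additional Pl\"ucker coordinates to vanish (because of the factorizations in \cref{lem:when-promote-var-factor}, e.g.\ $\rbeta_j = \ralpha_i$ for a sticky same-end child $D_j$, or the shared factors of $\ralpha_{\rtop}$), so the corresponding boundary locus has codimension $\geq 2$ in $\gt{D}$.

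\textbf{Main obstacle.} I expect the hard part to be the matching in (b)/part (i)--(iii): showing that the combinatorial dichotomy $\Mut$ vs.\ $\AFacet$ of \cref{def:frozmut} coincides exactly with the geometric dichotomy ``does not cut out a facet'' vs.\ ``cuts out a unique facet.'' This requires a careful case analysis of how $\rPsi_{ac}$ interacts with the boundary strata --- in particular using \cref{lem:when-promote-var-factor} to see precisely when the numerator of a promoted domino variable picks up a factor of $\ralpha_{\rtop} = \lr{bcdn}$, which is the mechanism by which a would-be facet ``collapses'' to higher codimension --- together with \cref{prop:invariant} to control which indices can appear. The bijectivity (no two facets share a cutting functionary, no functionary is wasted) should then follow from a dimension count, but making that rigorous needs the injectivity of $\tZ$ on $\overline{S_D}$ (\cref{thm:BCFW-tile-and-sign-description}) to ensure that the codimension of $\gt{S'}$ inside $\gt{D}$ equals the codimension of $S'$ inside $\overline{S_D}$, which is \cref{def:facet2} applied in the BCFW setting.
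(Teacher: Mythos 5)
First, a point of reference: the paper does not prove this theorem in-text. Immediately after the statement it says the result ``is proved in our companion paper,'' so there is no in-paper argument to compare yours against; your proposal has to stand on its own. It does not, because it rests on a false premise at the very first reduction. \cref{thm:BCFW-tile-and-sign-description} gives injectivity of $\tZ$ on the \emph{open} cell $S_D$ only; it says nothing about $\overline{S_D}$, and in general the amplituhedron map drops dimension or identifies strata on the boundary. Consequently facets of $\gt{D}$ are \emph{not} simply the images of the facets of $S_D$: a codimension-one positroid cell $S'\subset\partial\overline{S_D}$ may have image of codimension $\geq 2$, several distinct boundary cells may map onto the same facet of the tile, and \cref{lem:boundaries_before_amp_map} shows the number of codimension-one boundary cells of $S_D$ generally exceeds $|\AFacet(Z_D)|$, which already rules out the bijection you assert. \cref{cor:bdry-covered-by-bdry-image} gives only the containment $\partial\gt{D}\subset\tZ(\partial\overline{S_D})$, not a correspondence of facets, so your dimension count for uniqueness and exhaustiveness does not go through.

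Second, both remaining directions need substantive input the sketch does not supply. For existence and uniqueness you must show, for each $\rzeta_i\in\AFacet(Z_D)$, that some boundary cell contained in $\{\rzeta_i(Y)=0\}$ has image of dimension exactly $4k-1$ lying in $\partial\gt{D}$, and that all such cells share a common $(4k-1)$-dimensional closure; ``sending one BCFW parameter to $0$ or $\infty$'' does not enumerate the boundary strata of a positroid cell, so this is not a parametrization argument. For excluding $\Mut(Z_D)$, the mechanism you invoke (the factorizations of \cref{lem:when-promote-var-factor}) concerns the algebraic form of promoted polynomials, not dimension drops of $\tZ$ on boundary cells; what is needed is a geometric argument that the strata where a mutable domino variable vanishes map to codimension $\geq 2$ or coincide with loci already accounted for by frozen variables. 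A smaller but telling inconsistency: in the $\pre_{n-1}$ step you claim new facets are cut out by $f_{n-4},\dots,f_{n-1}$, but those Pl\"ucker coordinates are not elements of $\AFacet(Z_D)$ (which by \cref{def:frozmut} consists only of domino variables), and inserting a zero column in $C$ does not make the corresponding twistor coordinates vanish (cf. \cref{rk:Lapexpansion}); the theorem in fact forces the facet count to be unchanged under $\pre$. These are precisely the points deferred to the companion paper, and your proposal does not close them.
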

\cref{prop:standard_facets} is proved in our 
companion paper \cite{companion}.

We next define arrows between the domino variables, then show that these are 
 arrows in $\widetilde{Q}_D$.

\begin{definition}[The seed $\Sigma_D$ of a BCFW tile $Z_D$]
\label{def:seed}
Let $D \in \CD$ be a chord diagram,
and $\gt{D}$ the corresponding BCFW tile.
Recall the definition of
$\Mut(Z_D)$ from \cref{def:frozmut}.
We define a seed $\Sigma_D=(\bxx(D), Q_D)$ (which will be a 
subseed of $\widetilde{\Sigma}_D$ cf. \cref{thm:quiver}) 
as follows.
The extended cluster $\bxx(D)$ is obtained from $\txx(D)$  by 
freezing some variables; more specifically, the mutable cluster variables
	of $\bxx(D)$ will be precisely $\Mut(Z_D)$, and all other 
	elements of $\txx(D)$ are declared to be frozen. 
To obtain the quiver $Q_D$, we consider each chord $D_i$ in turn, 
check if it satisfies any 
of the conditions in the table below, and if so, we draw the corresponding arrows.
\vspace{0.5em}
\begin{center}
\begin{tabular}{|c|c|c|c|}
\hline
\rotatebox{90}{Condition \quad\;}&
\tikz[line width=1,scale=1]{
\def\r{1}
%\draw[dashed] (0.25*\r,0) -- (1*\r,0);
\draw (1*\r,0) -- (1.5*\r,0);
\draw[dashed] (1.5*\r,0) -- (2.75*\r,0);
\draw (2.75*\r,0) -- (3.5*\r,0);
\draw[dashed] (3.5*\r,0) -- (5*\r,0);
\draw (5*\r,0) -- (5.5*\r,0);
%\draw[dashed] (5.5*\r,0) -- (6.25*\r,0);
\foreach \i in {6,7}{
\def\x{\i/2*\r}
\draw (\x,-0.1)--(\x,+0.1);
}				
\foreach \i/\j in {2/5.833,6.166/10}{
\def\x{\i/2*\r+0.25*\r}
\def\y{\j/2*\r+0.25*\r}
\draw[line width=1,-stealth] (\x,0) -- (\x,0.15) to[in=90,out=90] (\y,0.15) -- (\y,0);}
\node at(2.2*\r,1*\r) {$j$};
\node at(4.3*\r,1*\r) {$i$};
}&
\tikz[line width=1,scale=1]{
%\draw[dashed] (0,0) -- (0.5,0);
\draw (0.75,0) -- (1.25,0);
\draw[dashed] (1.25,0) -- (2.25,0);
\draw (2.25,0) -- (2.75,0);
\draw[dashed] (2.75,0) -- (4,0);
\draw (4,0) -- (5,0);
%\draw[dashed] (5,0) -- (5.5,0);
\foreach \i in {8.5,9.5}{
\def\x{\i/2}
\draw (\x,-0.1)--(\x,+0.1);
}
\foreach \i/\j in {1/4.5+0.066, 2.5/4.5-0.066}{
\def\x{\i}
\def\y{\j}
\draw[line width=1,-stealth] (\x,0) -- (\x,0.15) to[in=90,out=90] (\y,0.15) -- (\y,0);}
\node at(2.4,0.6) {$j$};
\node at(2.7,1.5) {$i$};
}&
\tikz[line width=1,scale=1]{
%\draw[dashed] (0,0) -- (0.5,0);
\draw (0.5,0) -- (1.75,0);
\draw[dashed] (1.75,0) -- (3.25,0);
\draw (3.25,0) -- (3.75,0);
\draw[dashed] (3.75,0) -- (4.75,0);
\draw (4.75,0) -- (5.25,0);
%\draw[dashed] (4.6,0) -- (5,0);
\foreach \i in {1.5,2.5,3.5}{
\def\x{\i/2}
\draw (\x,-0.1)--(\x,+0.1);
}
\foreach \i/\j in {1/5, 1.5/3.5}{
\def\x{\i}
\def\y{\j}
\draw[line width=1,-stealth] (\x,0) -- (\x,0.15) to[in=90,out=90] (\y,0.15) -- (\y,0);}
\node at(3,1) {$j$};
\node at(2,1.5) {$i$};
}\\
& head-to-tail left sibling $D_j$
& same-end child $D_j$
& sticky child $D_j$
\\[0.5em]
\hline
\rotatebox{90}{\quad Arrows \quad\;\;}&
\tikz[line width=0.75,scale=1,minimum size=18pt,inner sep=0pt, outer sep=0pt,fill=lightgray!25]{
\node        (c1) at (5,5) {$\rgamma_j$};
\node        (d1) at (6,5) {$\rdelta_j$};
\node        (a2) at (5,4) {$\ralpha_i$};
\node[fill,draw,circle] (b2) at (6,4) {$\rbeta_i$};
\path[very thick,->] (b2) edge (d1);
\path[very thick,->] (c1) edge (b2);
\path[very thick,->] (b2) edge (a2);
}
&
\tikz[line width=0.75,scale=1,minimum size=18pt,inner sep=0pt, outer sep=0pt,fill=lightgray!25]{
\node        (c5) at (12,2) {$\rgamma_i$};
\node[fill,draw,circle] (d5) at (13,2) {$\rdelta_i$};
\node[fill,draw,circle] (e5) at (15,2) {$\repsilon_i$};
\node        (c4) at (12,1) {$\rgamma_j$};
\node        (d4) at (13,1) {$\rdelta_j$};
\node        (e4) at (15,1) {$\repsilon_j$};
\path[very thick,->] (e5) edge (d5);
\path[very thick,->] (d5) edge (c5);
\path[very thick,->] (c4) edge (d5);
\path[very thick,->] (d4) edge (e5);
\path[very thick,->] (d5) edge (d4);
\path[very thick,->] (e5) edge (e4);
}
&
\tikz[line width=0.75,scale=1,minimum size=18pt,inner sep=0pt, outer sep=0pt,fill=lightgray!25]{
\node[fill,draw,circle] (a5) at (9,2) {$\ralpha_i$};
\node (b5) at (10,2) {$\rbeta_i$};
\node (e5) at (6.5,2) {$\repsilon_i$};
\node (a4) at (10,1) {$\ralpha_j$};
\node (e4) at (6.5,1) {$\repsilon_j$};
\node[gray] (text) at (7.5,2.5) {if same-end};
\path[very thick,->,dotted] (a5) edge (e5);
\path[very thick,->] (e4) edge (a5);
\path[very thick,->] (b5) edge (a5);
\path[very thick,->] (a5) edge (a4);
}
\\[0.5em]
\hline
\end{tabular}
\end{center}
\vspace{0.5em}
If $D_i$ has sticky same-end child $D_j$ then the dotted arrow from $\ralpha_i$ to $\repsilon_i$ appears, along with the usual arrows of the ``sticky'' and ``same-end'' cases. In view of \cref{rmk:beta=alpha}, in this case $\ralpha_i$ stands also for $\rbeta_j$ as they are equal.   
\end{definition}

\begin{figure}[h]
\begin{center}
\tikz[line width=1,scale=1]{
\draw (0.5,0) -- (15.5,0);
\foreach \i in {1,2,...,15}{
\def\x{\i}
\draw (\x,-0.1)--(\x,+0.1);
\node at (\x,-0.5) {\i};}
\foreach \i/\j in {1/8, 3/4.8, 5.2/7.75, 8.25/13, 9/12.2, 10/11.8}{
\def\x{\i+0.5}
\def\y{\j+0.5}
\draw[line width=1.5,-stealth] (\x,0) -- (\x,0.25) to[in=90,out=90] (\y,0.25) -- (\y,0);
}
\node at(1.5,1.5) {$D_3$};
\node at(3.5,0.875) {$D_1$};
\node at(5.75,1) {$D_2$};
\node at(9,1.5) {$D_6$};
\node at(9.5,1) {$D_5$};
\node at(10.5,0.8125) {$D_4$};
}
\\
\vspace{1em}
\includegraphics[width=0.6\textwidth]{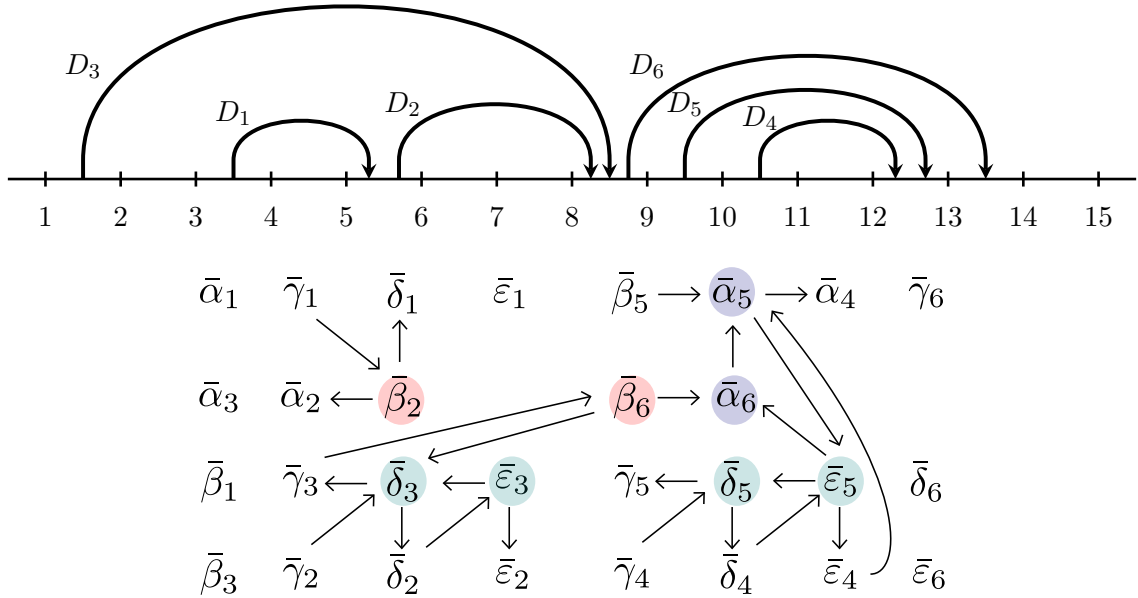}
\end{center}

\caption{The seed $\Sigma_D$ associated to the chord diagram $D$ above (also in \cref{cd-example}). The variables $\xx(D)$
	 are as in \cref{domino-variables-formulas}. 
	 The mutable variables $\Mut(Z_D)$ are circled; 
	 the other variables are the frozen variables $\AFacet(Z_D)$.
	 The colors (red, green, blue) indicate the different cases of \cref{def:seed}. 
\label{cd-example-again}}
\vspace{1em}
\end{figure}

\begin{example}[Seed of a standard BCFW tile]
The seed $\Sigma_D$ from \cref{cd-example-again} is built from \cref{def:seed} by applying the rules for the following conditions. Head-to-tail left siblings: $(i,j) \in \{(2,1),(6,3)\}$; same-end child: $(i,j) \in \{(3,2),(5,4)\}$; sticky child: $(i,j) \in \{(6,5),(5,4)\}$.
\end{example}

\cref{thm:quiver}
and \cref{thm:sign-definite} are the main results of this section.
They will be proved in 
\cref{sec:proofquiver} and 
\cref{sec:thmsign}.
	\begin{theorem}[The seed of a standard BCFW tile  is a subseed of a $\Gr_{4,n}$ seed]
	\label{thm:quiver}
Let $D \in \CD$. The seed $\Sigma_D=(\bxx(D), Q_D)$ is obtained from the
seed $\widetilde{\Sigma}_D= (\txx(D), \tQ_D)$ for $\Gr_{4,n}$ by 
freezing the variables not in the set $\Mut(Z_D)$ of
\cref{def:seed}.
Hence every cluster
variable (respectively, exchange relation) of 
$\Acal(\Sigma_D)$ is a cluster variable (resp., exchange relation) for
$\Gr_{4,n}$.

\end{theorem}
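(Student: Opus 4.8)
\textbf{Proof strategy for \cref{thm:quiver}.}
The plan is to prove \cref{thm:quiver} by induction on the number of markers of $D$, in parallel with the recursive construction of $\txx(D)$ in \cref{thm:cluster} and of $\Sigma_D = (\bxx(D), Q_D)$ in \cref{def:seed}. The key point is that the two constructions are compatible: in each recursive case, the variables of $\bxx(D)$ are exactly $\txx(D)$, the mutable ones being precisely $\Mut(Z_D)$, and the arrows drawn in \cref{def:seed} must be exactly the arrows of $\tQ_D$ among the vertices of $\Mut(Z_D)$. Since by \cref{thm:GSV} a seed is determined by its cluster, and by \cref{thm:cluster} $\widetilde{\Sigma}_D = (\txx(D), \tQ_D)$ is a genuine seed for $\Gr_{4,n}$, it then suffices to identify the induced subquiver of $\tQ_D$ on $\Mut(Z_D)$ with $Q_D$. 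The last sentence of the theorem is then immediate: freezing a subset of mutable vertices of a seed produces a subseed whose cluster variables and exchange relations are a subset of those of the ambient cluster algebra (here $\A(\widetilde{\Sigma}_D) \subseteq \C[\widehat{\Gr}_{4,n}^\circ]$).

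First I would set up the base case ($n = 4$, no chords, $\txx(D) = \{f_1\}$, nothing mutable), which is trivial. For the inductive step, in the ``remove penultimate marker'' case ($d < n-1$), the only change is the addition of four frozen Pl\"ucker coordinates involving $n-1$; I would check that these do not introduce arrows incident to vertices of $\Mut(Z_D)$ in $\tQ_D$ — this follows because the chord structure, hence $\Mut(Z_D)$ and the conditions of \cref{def:seed}, are unchanged, and the new frozen variables are ``far'' from the domino variables. The substantive case is ``remove rightmost top chord'' ($d = n-1$): here $\txx(D) = \{\rbeta_k, \rgamma_k, \rdelta_k, \repsilon_k\} \cup \OPsi(\txx(D_L) \cup \txx(D_R)) \cup \{f_c, f_d, f_{a-1}\}$. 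I would invoke \cref{thm:promotion2} — product promotion is a cluster quasi-homomorphism — together with \cref{prop:similar}, to transport the seed $\widetilde{\Sigma}_{D_L} \sqcup \widetilde{\Sigma}_{D_R}$ through $\Psi$ (up to frozen factors) into a seed for $\Gr_{4,n}$, then adjust by the extra mutations/freezings described in the setup of \cref{thm:promotion2} and in \cref{thm:cluster}. The inductive hypothesis gives the arrows among $\Mut(Z_{D_L})$ and among $\Mut(Z_{D_R})$; the new arrows are exactly those created by (i) the quasi-homomorphism's effect on exchange ratios, and (ii) the new domino variables $\rbeta_k, \rgamma_k, \rdelta_k, \repsilon_k$ and the new/rechanged mutability status of variables (a variable of $D_L$ or $D_R$ can become frozen or mutable in $D$, e.g. $\rgamma_i$ of a chord that acquires a head-to-tail sibling or same-end parent via $D_k$). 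I would then verify, case by case on the local chord configuration near $D_k$ (top chord, sticky child, same-end child, head-to-tail sibling, sticky same-end child), that these new arrows match exactly the three arrow-patterns in the table of \cref{def:seed}, including the dotted $\ralpha_i \to \repsilon_i$ arrow in the sticky same-end case and the identification $\rbeta_j = \ralpha_i$ of \cref{rmk:beta=alpha}.

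The main obstacle will be the bookkeeping in the ``remove rightmost top chord'' case: one must track precisely how the arrows of $\tQ_{D_L}, \tQ_{D_R}$ — not only among mutable vertices but also the arrows to frozen vertices, since a frozen vertex can be unfrozen or a mutable one frozen when passing to $D$ — transform under $\Psi$ and under the auxiliary mutation at $\lr{cdn\br ab\br n12}$ used to realize $\Sigma_1$ in \cref{thm:promotion2}, and then to reconcile the result with the explicit local rules of \cref{def:seed}. This requires a careful analysis of which promoted variables acquire frozen factors (governed by \cref{lem:when-promote-var-factor}) and how those factors shift exchange ratios, since a frozen-factor change alters the combinatorial arrows. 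I expect this to reduce, after the quasi-homomorphism machinery is applied, to a finite check over the chord-adjacency types listed in \cref{def:seed}; the role of \cref{prop:similar} is precisely to guarantee that once the initial seeds match in the required ``similar'' sense, all of their mutations do, so that it is enough to verify the match at the level of the constructed seeds rather than for every cluster. A secondary point to be careful about is confirming that the extra frozen variables $f_c, f_d, f_{a-1}$ (and the elements of $\mathcal{T}$ frozen in $\Fr(\Sigma_1)$) are genuinely compatible with all the promoted domino variables, which is exactly \cref{lem:irr-is-compatible-cluster} applied in this setting.
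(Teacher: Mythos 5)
Your strategy — induct on markers, match the induced subquiver of $\tQ_D$ on $\Mut(Z_D)$ against $Q_D$ via product promotion — is in the right spirit and would likely work, but the paper takes a genuinely different and slicker route that sidesteps the very bookkeeping you flag as the main obstacle. Rather than trying to control $\tQ_D$ directly (which is hard, since $\tQ_D$ is only characterized implicitly as the unique quiver attached to the cluster $\txx(D)$, and tracking its arrows through promotion plus the auxiliary freezings and the extra mutation at $\lr{cdn\br ab\br n12}$ is genuinely messy), the paper proves something ostensibly weaker: for each $x \in \Mut(Z_D)$, the binomial relation $R$ assigned to $x$ by \cref{def:seed} (made explicit in \cref{lem:explicit-reln}) \emph{is an exchange relation in some seed $\Sigma'$ for $\Gr_{4,n}$}. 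This is \cref{prop:exch-rel-true-on-Gr}, proved by the kind of promotion-stability induction you envision (base case: \cref{lem:quiver_top_chord} when $D_i$ is the rightmost top chord; inductive step: track how the binomial changes under $f_{ac}\circ\Psi_{ac}$ using \cref{lem:when-promote-var-factor} and a denominator-matching observation). The paper then closes the argument by citing a structural result about the Grassmannian cluster algebra, namely \cite[Theorem 10]{CL}: any two seeds of $\Gr_{4,n}$ containing a fixed set $\{x\}\cup U$ of variables are connected by mutations avoiding $\{x\}\cup U$, hence the exchange relation at $x$ is the same in both. Applying this with $U$ = neighbors of $x$ in $\Sigma_D$, one concludes that $\widetilde{\Sigma}_D$, which contains $\{x\}\cup U$, must also have exchange relation $R$ at $x$, forcing the local quiver to agree. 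This avoids ever having to compute or even describe $\tQ_D$, which is exactly what makes your proposed matching hard. Your approach also does not account cleanly for variables changing mutability status between $D_L\sqcup D_R$ and $D$ at the level of arrows — you mention it, but \cite[Theorem 10]{CL} is what makes it unnecessary to resolve that issue arrow by arrow. In short: your idea recovers the core inductive engine (promotion stability of local relations), but lacks the [CL] uniqueness result as the capstone, and without it the quiver-matching would require substantially more detailed analysis than your sketch acknowledges.
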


The following theorem characterizes the open BCFW tile
$\gto{D}$ in terms of \emph{any} extended cluster of $\Acal(\Sigma_D)$. It generalizes \cref{cor:cluster-sign-description} for standard BCFW tiles.
\begin{theorem}
	[Positivity tests for standard BCFW tiles]
	\label{thm:sign-definite}
Let $D \in \CD$. 
Using \cref{not:pluckfunc}, every cluster and frozen variable $x$ in 
 $\A(\Sigma_D)$ is such that $x(Y)$ has a definite sign $s_x \in \{1, -1\}$ on the open BCFW tile $\gto{D}$,
	where the signs of the domino variables are 
	given by \cref{prop:domino-var-signs-on-tile}. We have
 \begin{align*}
\gto{D} &= \{Y \in \Gr_{k,k+4}: s_x \cdot x(Y)>0 \text{ for all }x \in \xx(D)\}\\
&= \{Y \in \Gr_{k,k+4}: s_x \cdot x(Y)>0 \text{ for all }x \in \txx(D)\}\\
&= \{Y \in \Gr_{k, k+4}: s_x \cdot x(Y)>0 \text{ for all }x \text{ in any fixed extended cluster of } \A(\Sigma_D)\}.
\end{align*}
\end{theorem}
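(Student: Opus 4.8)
The plan is to prove \cref{thm:sign-definite} by combining three ingredients: (1) \cref{cor:cluster-sign-description}, which gives the sign description of the open tile $\gto{D}$ using the coordinate cluster variables $\Irr(D) = \xx(D)$; (2) \cref{thm:quiver}, which identifies $\A(\Sigma_D)$ as a subseed of a genuine $\Gr_{4,n}$-seed, so that every cluster variable of $\A(\Sigma_D)$ is a cluster variable for $\Gr_{4,n}$ and hence a well-defined functionary on $\Gr_{k,k+4}$; and (3) a general ``sign-definiteness propagation'' argument showing that if all the variables of one extended cluster have a definite sign on $\gto{D}$, then so does every cluster variable, and the tile is cut out by the sign conditions from any extended cluster. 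The key conceptual tool here is the notion of a \emph{signed seed}, alluded to in the introduction to \cref{sec:quiver}: a seed together with a choice of sign for each variable, such that the signs are compatible with the exchange relations in the sense that along any mutation $x_k' x_k = M_+ + M_-$ (where $M_\pm$ are the two monomials on the right-hand side), the sign of $x_k'$ is forced and consistent. The crucial fact is that the property of being a signed seed is preserved under mutation, because in an exchange relation $x_k' x_k = M_+ + M_-$ evaluated at a point of $\gto{D}$, both $M_+$ and $M_-$ have definite (known) signs, and if those signs agree then $x_k'$ inherits a definite sign, while if they disagree the relation is incompatible---so one must check that the signs always agree.

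The main steps, in order, would be: First, establish the base case, i.e.\ that $(\Sigma_D, \text{signs})$ is a signed seed, where the signs on the domino variables $\xx(D)$ are those from \cref{prop:domino-var-signs-on-tile} and the signs on the frozen variables of $\widetilde\Sigma_D$ (namely the Pl\"ucker coordinates $f_1, \dots, f_n$) are all $+1$ since these are positive on $\Grk^{\ge 0}$ and hence their images $\llrr{I}$ are positive twistor coordinates on the amplituhedron (this is essentially the content of \cref{lem:sign_of_chord_twistors} / \cref{lem:coordinates} combined with total positivity of $Z$). One must verify that for each exchange relation in $\widetilde Q_D$ incident to a mutable domino variable (the arrows are explicitly listed in \cref{def:seed}), the two monomials $M_+, M_-$ have the \emph{same} sign on $\gto{D}$; this is a finite check using the explicit arrows and the sign formulas of \cref{prop:domino-var-signs-on-tile}. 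Second, invoke \cref{prop:similar}-style propagation (or simply the elementary lemma that signed-seed-ness is mutation-invariant) to conclude that \emph{every} seed in the cluster pattern of $\A(\Sigma_D)$ is a signed seed; in particular every cluster variable $x$ of $\A(\Sigma_D)$ has a definite sign $s_x$ on $\gto{D}$. Third, for the set-equality statements: the inclusion $\gto{D} \subseteq \{Y: s_x x(Y) > 0 \ \forall x \in \txx(D)\}$ is immediate from the sign-definiteness just established (and the $+1$ signs on frozens); the reverse inclusion follows because $\txx(D) \supseteq \xx(D) = \Irr(D)$, so $\{Y : s_x x(Y) > 0 \ \forall x \in \txx(D)\} \subseteq \{Y : s_x x(Y) > 0 \ \forall x \in \Irr(D)\} = \gto{D}$ by \cref{cor:cluster-sign-description}. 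Fourth, for an arbitrary fixed extended cluster $\txx'$ of $\A(\Sigma_D)$: one shows $\{Y : s_x x(Y) > 0 \ \forall x \in \txx'\} = \gto{D}$ by the same two-sided argument, using that the Laurent phenomenon lets us write each element of $\xx(D)$ (equivalently, the defining coordinate cluster variables) as a \emph{subtraction-free} Laurent monomial combination---after absorbing the signs $s_x$---of the elements of $\txx'$. Here the signed-seed structure guarantees that the Laurent expansion of any cluster variable in terms of $\txx'$, once each variable is replaced by $s_x x$, is a Laurent polynomial with \emph{nonnegative} coefficients (this is the sign-coherence / positivity of the Laurent phenomenon, valid for cluster algebras from quivers), so positivity of all $s_x x(Y)$ for $x \in \txx'$ forces positivity of $s_y y(Y)$ for every cluster or frozen variable $y$, in particular for $y \in \xx(D)$.

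The main obstacle I expect is the base-case verification that $(\Sigma_D, \text{signs})$ is genuinely a signed seed---i.e.\ that along every exchange relation the two monomials $M_+, M_-$ carry the same sign on $\gto{D}$. This requires knowing the exact quiver $\widetilde Q_D$ around each mutable vertex (not just the arrows among domino variables given in \cref{def:seed}, but also the arrows to the frozen Pl\"ucker and chain-polynomial variables of $\widetilde\Sigma_D$), which in turn depends on \cref{thm:quiver} and on an inductive analysis of how product promotion transforms exchange relations. The cleanest route is probably to run the verification \emph{inductively alongside the construction} in \cref{thm:cluster}: in the ``remove penultimate marker'' step the new frozens are positive twistors and the check is trivial; in the ``remove rightmost top chord'' step one uses that $\OPsi = \rPsi$ is a cluster quasi-homomorphism (\cref{thm:promotion2}) together with \cref{lem:signs} (describing how signs of domino variables change under a BCFW step) to transport the signed-seed property of $\widetilde\Sigma_{D_L} \sqcup \widetilde\Sigma_{D_R}$ through promotion, noting that a quasi-homomorphism multiplies variables by frozen monomials $M \in \PP$ which are products of the twistors $\llrr{abcd}, \llrr{abcn}, \llrr{abdn}, \llrr{acdn}, \llrr{bcdn}$ whose signs on $\gto{D}$ are precisely recorded in \cref{lem:signs}. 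The remaining nontrivial point is confirming that an exchange relation, once promoted, stays subtraction-free after sign correction---equivalently that the frozen factor picked up by the promoted relation is consistent with the frozen factors picked up by the promoted cluster variables---which is exactly what \cref{thm:promotion2}(2)--(3) and the definition of $\rPsi$ guarantee. Modulo carefully organizing this induction, the rest of the proof is formal.
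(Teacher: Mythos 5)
Your proposal follows essentially the same route as the paper's proof: equip $\Sigma_D$ with the signs of \cref{prop:domino-var-signs-on-tile}, verify it is a \emph{signed seed} by checking the exchange relations for a rightmost top chord and then propagating through product promotion via \cref{thm:promotion2} and \cref{lem:signs} (this is \cref{prop:signed-seed}), use mutation-invariance of the signed-seed property to get sign-definiteness of every cluster variable, and obtain the reverse inclusions from \cref{cor:cluster-sign-description}. Two points in your write-up need repair, though neither changes the architecture.

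First, the frozen variables of $\tilde\Sigma_D$ do not all have sign $+1$, and they are not exhausted by the $f_i$: the extended cluster $\txx(D)$ also contains promoted frozen variables coming from the sets $\mathcal{T}$ at each BCFW step, and even among cyclically consecutive twistors one has that $\llrr{i,i{+}1,n,1}$ has sign $(-1)^{k-1}$ on the image of a BCFW cell (\cref{lem:sign_of_bdry_twistors}), not $+1$. What the theorem actually requires, and what the paper supplies via \cref{lem:sign_of_bdry_twistors} together with \cref{rmk:lots-strong-sign}, is only that each such variable has \emph{some} definite sign, obtained by tracking strong signs through repeated applications of $\rPsi$, $\cyc^{-*}$ and $\refl^*$. (For the signed-seed verification itself this does not matter, since every mutable vertex of $\Sigma_D$ is adjacent only to domino variables, but it does matter for the sign-definiteness claim and for the set equality involving $\txx(D)$.)

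Second, your argument for the equality with an arbitrary extended cluster $\txx'$ leans on positivity of the Laurent phenomenon ``after absorbing the signs $s_x$.'' That inference does not go through: if $y=L(\{x\})$ with $L$ a nonnegative Laurent polynomial, then substituting $x=s_x\cdot(s_x x)$ multiplies each Laurent monomial by $\prod_x s_x^{a_x}$, a sign that varies from monomial to monomial, so $s_y\,y$ need not be subtraction-free in the variables $\{s_x x\}_{x\in\txx'}$. The correct and simpler argument — for which you already have the tool — is mutation-by-mutation: at any $Y$ with $s_x\,x(Y)>0$ for all $x$ in one extended cluster, the signed-seed identity $\inn_\sigma=\out_\sigma$ forces the two monomials of each exchange relation to carry the same nonzero sign at $Y$, so the mutated variable takes its predicted sign; iterating shows the positivity locus is independent of the chosen extended cluster. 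This is precisely how the paper concludes the third equality, with the reverse inclusion into $\gto{D}$ then supplied, as you say, by \cref{cor:cluster-sign-description} applied to $\xx(D)=\Irr(D)$.
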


\begin{example}[Positivity test for the standard BCFW tiles] \label{ex:stdBCFW_postest}
    For the tile $Z_D$ with chord diagram $D$ in \cref{cd-example-again}, we have:
    \begin{equation*}
        \gto{D} = \{Y \in \Gr_{6,10}: s_x \cdot x(Y)>0 \text{ for all }x \in \xx(D)\},
    \end{equation*}
    with $\xx(D)$ as in \cref{domino-variables-formulas} and the respective signs $s_x$ as in \cref{ex:stdBCFW_signs}.
\end{example}

The following lemma provides a description of exchange relations which is 
equivalent to the local description given in \cref{def:seed}.  \cref{lem:explicit-reln}
will be useful in the proof of \cref{thm:quiver}.

\begin{lemma} \label{lem:explicit-reln}
	Let $D \in \CD$. The exchange relations of $\Sigma_D$ are as follows.
\\[0.5em]
\noindent
\textbf{R1.} (Sticky) 
Suppose that $D_i$ is a chord with a sticky child $D_j$. Then 
	
\begin{equation*}
		\ralpha_i \ralpha_i' = 
		(\rbeta_i) \repsilon_j (\ralpha_p) 
		+ \ralpha_j (\repsilon_i),
\end{equation*}
where $\rbeta_i$ appears unless $D_i$ has a sticky same-end parent, $\ralpha_p$ is present only if $D_i$ has a sticky parent $D_p$, and 	$\repsilon_i$ is present only if 	$D_j$ is same-end with $D_i$. 
	\begin{center}
		\begin{tabular}{cccc}
            &
			\tikz[line width=1,scale=0.6]{
				
				\draw[dashed] (0,0) -- (0.5,0);
				\draw (0.5,0) -- (2,0);
				\draw[dashed] (2,0) -- (3,0);
				\draw (3,0) -- (3.6,0);
				\draw[dashed] (3.6,0) -- (4,0);
				\draw (4,0) -- (4.6,0);
				\draw[dashed] (4.6,0) -- (5,0);
				
				\foreach \i/\j in {1.5/,2.5/,3.5/,
					6.2/,7.2/, 8.2/,9.2/}{
					\def\x{\i/2}
					\draw (\x,-0.1)--(\x,+0.1);
					\node at (\x,-0.25) {$\j$};
				}
				
				\foreach \i/\j in {1/4.35, 1.5/3.35}{
					\def\x{\i}
					\def\y{\j}
					\draw[line width=1,-stealth] (\x,0) -- (\x,0.15) to[in=90,out=90] (\y,0.15) -- (\y,0);}
				
				\node at(2.45,0.899) {\footnotesize$j$};
				\node at(2.7,1.4) {\footnotesize$i$};

    \node at (1,2) [shape=rectangle,draw,inner sep=2.5pt]{i.};
				
			}&
			
			\tikz[line width=1,scale=0.6]{
				
				\draw[dashed] (0,0) -- (0.5,0);
				\draw (0.5,0) -- (2,0);
				\draw[dashed] (2,0) -- (3,0);
				\draw (3,0) -- (4,0);
				\draw[dashed] (4,0) -- (4.5,0);
				
				\foreach \i/\j in {1.5/,2.5/,3.5/,6.5/,7.5/}{
					\def\x{\i/2}
					\draw (\x,-0.1)--(\x,+0.1);
					\node at (\x,-0.25) {$\j$};
				}
				
				\foreach \i/\j in {1/3.5+0.066, 1.5/3.5-0.066}{
					\def\x{\i}
					\def\y{\j}
					\draw[line width=1,-stealth] (\x,0) -- (\x,0.15) to[in=90,out=90] (\y,0.15) -- (\y,0);}
				\node at(2.45,0.5) {\footnotesize$j$};
				\node at(2.2,1.1) {\footnotesize$i$};

    \node at (1,2) [shape=rectangle,draw,inner sep=2.5pt]{ii.};
				
			} &
			\\
			&
			$\ralpha_i \ralpha'_i = \rbeta_i \repsilon_j + \ralpha_j$ &
			$\ralpha_i \ralpha'_i = \rbeta_i \repsilon_j + \ralpha_j \repsilon_i$&\\[1em]

			\tikz[line width=1,scale=0.6]{
				
				\draw[dashed] (0,0) -- (0.5,0);
				\draw (0.5,0) -- (2.5,0);
				\draw[dashed] (2.5,0) -- (3.5,0);
				\draw (3.5,0) -- (4.1,0);
				\draw[dashed] (4.1,0) -- (4.5,0);
				\draw (4.5,0) -- (5.1,0);
				\draw[dashed] (5.1,0) -- (5.5,0);
				\draw (5.5,0) -- (6.1,0);
				\draw[dashed] (6.1,0) -- (6.5,0);
				
				\foreach \i/\j in {1.5/,2.5/,3.5/,4.5/,
					7.2/,8.2/, 9.2/,10.2/, 11.2/,12.2/}{
					\def\x{\i/2}
					\draw (\x,-0.1)--(\x,+0.1);
					\node at (\x,-0.25) {$\j$};
				}
				
				\foreach \i/\j in {1/5.85, 1.5/4.85, 2/3.85}{
					\def\x{\i}
					\def\y{\j}
					\draw[line width=1,-stealth] (\x,0) -- (\x,0.15) to[in=90,out=90] (\y,0.15) -- (\y,0);}
				
				\node at(2.9,0.888) {\footnotesize$j$};
				\node at(3.2,1.33) {\footnotesize$i$};
				\node at(3.5,1.8) {\footnotesize$p$};

    \node at (1,2) [shape=rectangle,draw,inner sep=2.5pt]{iii.};
				
			}&
			
			\tikz[line width=1,scale=0.6]{
				
				\draw[dashed] (0,0) -- (0.5,0);
				\draw (0.5,0) -- (2.5,0);
				\draw[dashed] (2.5,0) -- (3.5,0);
				\draw (3.5,0) -- (4.5,0);
				\draw[dashed] (4.5,0) -- (5,0);
				\draw (5,0) -- (5.5,0);
				\draw[dashed] (5.5,0) -- (6,0);
				
				\foreach \i/\j in {1.5/,2.5/,3.5/,4.5/,7.5/,8.5/,10/,11/}{
					\def\x{\i/2}
					\draw (\x,-0.1)--(\x,+0.1);
					\node at (\x,-0.25) {$\j$};
				}
				
				\foreach \i/\j in {1/5.25 ,1.5/4+0.066, 2/4-0.066}{
					\def\x{\i}
					\def\y{\j}
					\draw[line width=1,-stealth] (\x,0) -- (\x,0.15) to[in=90,out=90] (\y,0.15) -- (\y,0);}
				
				\node at(2.95,0.48) {\footnotesize$j$};
				\node at(2.75,1.1) {\footnotesize$i$};
				\node at(3.1,1.6) {\footnotesize$p$};

    \node at (1,2) [shape=rectangle,draw,inner sep=2.5pt]{iv.};
			}&

			\tikz[line width=1,scale=0.6]{
				
				\draw[dashed] (0,0) -- (0.5,0);
				\draw (0.5,0) -- (2.5,0);
				\draw[dashed] (2.5,0) -- (3.5,0);
				\draw (3.5,0) -- (4.1,0);
				\draw[dashed] (4.1,0) -- (4.5,0);
				\draw (4.5,0) -- (5.1,0);
				\draw[dashed] (5.1,0) -- (5.5,0);
				%\draw (5.5,0) -- (6.1,0);
				%\draw[dashed] (6.1,0) -- (6.5,0);
				
				\foreach \i/\j in {1.5/,2.5/,3.5/,4.5/,
					7.2/,8.2/, 9.2/,10.2/}{
					\def\x{\i/2}
					\draw (\x,-0.1)--(\x,+0.1);
					\node at (\x,-0.25) {$\j$};
				}
				
				\foreach \i/\j/\z in {1/4.95/0.6, 1.5/4.75/0.25, 2/3.85/0.1}{
					\def\x{\i}
					\def\y{\j}
					%\draw[line width=1,-stealth] (\x,0) -- (\x,0.15) to[in=90,out=90] (\y,0.15) -- (\y,0);}
                    \draw[line width=1,-stealth] (\x,0) -- (\x,\z) to[in=90,out=90] (\y,\z) -- (\y,0);}
				
				\node at(3,0.9) {\footnotesize$j$};
				\node at(3.1,1.5) {\footnotesize$i$};
				\node at(3,2) {\footnotesize$p$};

    \node at (1,2) [shape=rectangle,draw,inner sep=2.5pt]{v.};
				
			}&
			
			\tikz[line width=1,scale=0.6]{
   %%%%%%%%%%%%%%%%%%%%%%%% FIX
				
				\draw[dashed] (0,0) -- (0.5,0);
				\draw (0.5,0) -- (2.5,0);
				\draw[dashed] (2.5,0) -- (3.5,0);
				\draw (3.5,0) -- (4.5,0);
				\draw[dashed] (4.5,0) -- (5,0);
				
				\foreach \i/\j in {1.5/,2.5/,3.5/,4.5/,7.5/,8.5/}{
					\def\x{\i/2}
					\draw (\x,-0.1)--(\x,+0.1);
					\node at (\x,-0.25) {$\j$};
				}
				
				\foreach \i/\j/\z in {1/4+0.1/0.6 ,1.5/4/0.35, 2/4-0.1/0.05}{
					\def\x{\i}
					\def\y{\j}
					%\draw[line width=1,-stealth] (\x,0) -- (\x,0.15) to[in=90,out=90] (\y,0.15) -- (\y,0);}
                    \draw[line width=1,-stealth] (\x,0) -- (\x,\z) to[in=90,out=90] (\y,\z) -- (\y,0);}
				
				\node at(2.8,0.8) {\footnotesize$j$};
				\node at(2.7,1.3) {\footnotesize$i$};
				\node at(2.7,1.65) {\footnotesize$p$};

                \node at (1,2) [shape=rectangle,draw,inner sep=2.5pt]{vi.};
			} \\
            $\ralpha_i \ralpha'_i = \rbeta_i \repsilon_j \ralpha_p + \ralpha_j$ &
			$\ralpha_i \ralpha'_i = \rbeta_i \repsilon_j \ralpha_p + \ralpha_j \repsilon_i$ & 
            $\ralpha_i \ralpha_i' = \repsilon_j \ralpha_p + \ralpha_j$ & 
            $\ralpha_i \ralpha_i' = \repsilon_j \ralpha_p + \ralpha_j \repsilon_i$\\
		\end{tabular}
	\end{center}

\textbf{R2.} (Head-to-tail) Suppose that $D_i$ has a head-to-tail sibling $D_j$ on its left.
		Then 
	\begin{center}
		\begin{tabular}{c}
			\tikz[line width=1,scale=0.7]{
				\def\r{1.3}
				
				\draw[dashed] (0.25*\r,0) -- (0.75*\r,0);
				\draw (0.75*\r,0) -- (1.75*\r,0);
				\draw[dashed] (1.75*\r,0) -- (2.75*\r,0);
				\draw (2.75*\r,0) -- (3.75*\r,0);
				\draw[dashed] (3.75*\r,0) -- (4.75*\r,0);
				\draw (4.75*\r,0) -- (5.75*\r,0);
				\draw[dashed] (5.75*\r,0) -- (6.25*\r,0);
				
				\foreach \i/\j in {2/{}, 3/{}, 6/{}, 7/{}, 10/{}, 11/{}}{
					\def\x{\i/2*\r}
					\draw (\x,-0.1)--(\x,+0.1);
					\node at (\x,-0.25) {$\j$};
				}
				
				\foreach \i/\j in {2/5.833,6.166/10}{
					\def\x{\i/2*\r+0.25*\r}
					\def\y{\j/2*\r+0.25*\r}
					\draw[line width=1,-stealth] (\x,0) -- (\x,0.15) to[in=90,out=90] (\y,0.15) -- (\y,0);}
				
				\node at(2.2*\r,0.9*\r) {\footnotesize $ j$};
				\node at(4.3*\r,0.9*\r) {\footnotesize $i$};
				
			}
		\end{tabular}
  \\
			$\rbeta_i \rbeta_i' = \rgamma_j+\ralpha_i \rdelta_j$
	\end{center}
%\item[R3a.] 
\textbf{R3a.} (Same end)
Suppose that $D_i$ has a same-end child $D_j$.  Then 
	\begin{equation*}
		\rdelta_i \rdelta_i' = 
		\rgamma_j \repsilon_i (\rdelta_p) (\rbeta_{\ell}) +
		\rgamma_i \rdelta_j (\repsilon_p),
	\end{equation*}
	where $\rdelta_p$ and $\repsilon_p$ are present only if 
	$D_i$ has a same-end parent $D_p$ and
	$\rbeta_{\ell}$ is present only if $D_i$
			has a head-to-tail sibling $D_{\ell}$ to its right. 
		
	\begin{center}
		\begin{tabular}{ccc}
			\tikz[line width=1,scale=0.8]{
				\def\r{1.5}
				\draw[dashed] (0.5*\r,0) -- (0.75*\r,0);
				\draw (0.75*\r,0) -- (1.35*\r,0);
				\draw[dashed] (1.35*\r,0) -- (1.8*\r,0);
				\draw (1.8*\r,0) -- (2.35*\r,0);
				\draw[dashed] (2.35*\r,0) -- (2.7*\r,0);
				\draw (2.7*\r,0) -- (3.25*\r,0);
				\draw[dashed] (3.25*\r,0) -- (3.5*\r,0);
				
				\foreach \i/\j in {1.75/,4.75/ ,5.5/,6.5/}{
					\def\x{\i/2*\r}
					\draw (\x,-0.1)--(\x,+0.1);
					\node at (\x,-0.25) {$\j$};
				}
				
				\foreach \i/\j/\z in {1.5/5.666/0.3, 3.75/5.333/0.15}{
					\def\x{\i/2*\r+0.25*\r}
					\def\y{\j/2*\r+0.25*\r}
					\draw[line width=1,-stealth] (\x,0) -- (\x,\z*\r) to[in=90,out=90] (\y,\z*\r) -- (\y,0);}
				
				\node at(2.5*\r,0.55*\r) {\footnotesize $j$};
				\node at(2*\r,1.05*\r) {\footnotesize $i$};

    \node at (1,2) [shape=rectangle,draw,inner sep=2.5pt]{i.};
			}
			&
			
			\tikz[line width=1,scale=0.8]{
				\def\r{1.5}
				
				\draw[dashed] (0.5*\r,0) -- (0.75*\r,0);
				\draw (0.75*\r,0) -- (1.35*\r,0);
				\draw[dashed] (1.35*\r,0) -- (1.8*\r,0);
				\draw (1.8*\r,0) -- (2.35*\r,0);
				\draw[dashed] (2.35*\r,0) -- (2.7*\r,0);
				\draw (2.7*\r,0) -- (3.25*\r,0);
				\draw[dashed] (3.25*\r,0) -- (3.75*\r,0);
				\draw (3.75*\r,0) -- (4.25*\r,0);
				\draw[dashed] (4.25*\r,0) -- (4.5*\r,0);
				
				\foreach \i/\j in {1.75/,4.75/ ,5.5/,6.5/, 7.5/,8.5/}{
					\def\x{\i/2*\r}
					\draw (\x,-0.1)--(\x,+0.1);
					\node at (\x,-0.25) {$\j$};
				}
				
				\foreach \i/\j/\z in {1.5/5.5/0.3, 3.75/5.25/0.15, 5.75/7.5/0.15}{
					\def\x{\i/2*\r+0.25*\r}
					\def\y{\j/2*\r+0.25*\r}
					\draw[line width=1,-stealth] (\x,0) -- (\x,\z*\r) to[in=90,out=90] (\y,\z*\r) -- (\y,0);}
				
				\node at(2.5*\r,0.55*\r) {\footnotesize $j$};
				\node at(2*\r,1.05*\r) {\footnotesize $i$};
				\node at(3.6*\r,0.55*\r) {\footnotesize $\ell$};

    \node at (1,2) [shape=rectangle,draw,inner sep=2.5pt]{ii.};
			}
			& 
			\tikz[line width=1,scale=0.8]{
				\def\r{1.5}
				
				\draw[dashed] (0.5*\r,0) -- (0.75*\r,0);
				\draw (0.75*\r,0) -- (1.1*\r,0);
				\draw[dashed] (1.1*\r,0) -- (1.5*\r,0);
				\draw (1.5*\r,0) -- (1.75*\r,0);
				
				\draw[dashed] (1.75*\r,0) -- (2.15*\r,0);
				\draw (2.15*\r,0) -- (2.5*\r,0);
				
				\draw[dashed] (2.5*\r,0) -- (2.9*\r,0);
				\draw (2.9*\r,0) -- (3.65*\r,0);
				\draw[dashed] (3.65*\r,0) -- (3.9*\r,0);
				
				\foreach \i/\j in {1.75/,5/,6/,7/}{
					\def\x{\i/2*\r}
					\draw (\x,-0.1)--(\x,+0.1);
					\node at (\x,-0.25) {$\j$};
				}
				\foreach \i/\j/\z in {1.5/6.25/.45, 2.75/6./.3, 4/5.75/.15}{
					\def\x{\i/2*\r+0.25*\r}
					\def\y{\j/2*\r+0.25*\r}
					\draw[line width=1,-stealth] (\x,0) -- (\x,\z*\r) to[in=90,out=90] (\y,\z*\r) -- (\y,0);
				}
				\node at(2.7*\r,0.55*\r) {\footnotesize $j$};
				\node at(2.4*\r,0.95*\r) {\footnotesize $i$};
				\node at(2.2*\r,1.3*\r) {\footnotesize $p$};

    \node at (1,2) [shape=rectangle,draw,inner sep=2.5pt]{iii.};
			}
			
			\\
			$\rdelta_i \rdelta'_i=\rgamma_j \repsilon_i + \rgamma_i \rdelta_j$ & 
			$\rdelta_i \rdelta'_i=\rgamma_j \repsilon_i \rbeta_{\ell} + \rgamma_i \rdelta_j$ & 			
			$\rdelta_i \rdelta'_i=\rgamma_j \repsilon_i \rdelta_p + \rgamma_i \rdelta_j \repsilon_p$ \\

		\end{tabular}
		
	\end{center}
	
\textbf{R3b.} (Same end)
	Suppose that $D_i$ has a same-end child $D_j$.  Then 
	\begin{equation*}
		\repsilon_i \repsilon_i' = 
		\rdelta_j (\ralpha_i) (\repsilon_p) +
		\rdelta_i \repsilon_j (\ralpha_p),
	\end{equation*}
	where $\ralpha_i$ is present only if $D_j$ is sticky to
	$D_i$, $\repsilon_p$ is present only if 
	$D_i$ has a same-end parent $D_p$, and $\ralpha_p$ is present
	only if $D_i$ has a sticky parent $D_p$.
	\begin{center}
		\begin{tabular}{cccc}
			\tikz[line width=1,scale=0.6]{
				
				%\draw[dashed] (0,0) -- (0.5,0);
				\draw (0.5,0) -- (1.5,0);
				\draw[dashed] (1.5,0) -- (2,0);
				\draw (2,0) -- (3,0);
				\draw[dashed] (3,0) -- (4,0);
				\draw (4,0) -- (5,0);
				%\draw[dashed] (5,0) -- (5.5,0);
				
				\foreach \i/\j in {1.5/,2.5/,4.5/,5.5/,8.5/,9.5/}{
					\def\x{\i/2}
					\draw (\x,-0.1)--(\x,+0.1);
					\node at (\x,-0.25) {$\j$};
				}
				
				\foreach \i/\j in {1/4.5+0.066, 2.5/4.5-0.066}{
					\def\x{\i}
					\def\y{\j}
					\draw[line width=1,-stealth] (\x,0) -- (\x,0.15) to[in=90,out=90] (\y,0.15) -- (\y,0);}
				
				\node at(2.4,0.6) {\footnotesize $j$};
				\node at(2.7,1.5) {\footnotesize $i$};

    \node at (1,2) [shape=rectangle,draw,inner sep=2.5pt]{i.};
				
			}
			&
			\tikz[line width=1,scale=0.6]{
				
				%\draw[dashed] (0,0) -- (0.5,0);
				\draw (0.5,0) -- (2,0);
				\draw[dashed] (2,0) -- (3,0);
				\draw (3,0) -- (4,0);
				%\draw[dashed] (4,0) -- (4.5,0);
				
				\foreach \i/\j in {1.5/,2.5/,3.5/,6.5/,7.5/}{
					\def\x{\i/2}
					\draw (\x,-0.1)--(\x,+0.1);
					\node at (\x,-0.25) {$\j$};
				}
				
				\foreach \i/\j in {1/3.5+0.066, 1.5/3.5-0.066}{
					\def\x{\i}
					\def\y{\j}
					\draw[line width=1,-stealth] (\x,0) -- (\x,0.15) to[in=90,out=90] (\y,0.15) -- (\y,0);}
				
				\node at(2.45,0.4) {\footnotesize$j$};
				\node at(2.3,1.2) {\footnotesize$i$};

    \node at (1,2) [shape=rectangle,draw,inner sep=2.5pt]{ii.};
			}
			&
			%$\repsilon'_i = \lr{t,t+1,c,n}$ &
			%$\repsilon'_i = \lr{b,b+1,c,n}$ \\
			
			\tikz[line width=1,scale=0.6]{
				
				%\draw[dashed] (0,0) -- (0.5,0);
				\draw (0.5,0) -- (1.5,0);
				\draw[dashed] (1.5,0) -- (2,0);
				\draw (2,0) -- (3,0);
				\draw[dashed] (3,0) -- (3.5,0);
				\draw (3.5,0) -- (4.5,0);
				\draw[dashed] (4.5,0) -- (5.5,0);
				\draw (5.5,0) -- (6.5,0);
				%\draw[dashed] (6.5,0) -- (7,0);
				
				\foreach \i/\j in {1.5/,2.5/, 4.5/,5.5/, 7.5/,8.5/, 11.5/,12.5/}{
					\def\x{\i/2}
					\draw (\x,-0.1)--(\x,+0.1);
					\node at (\x,-0.25) {$\j$};
				}
				
				\foreach \i/\j in {1/6+0.1, 2.5/6, 4/6-0.1}{
					\def\x{\i}
					\def\y{\j}
					\draw[line width=1,-stealth] (\x,0) -- (\x,0.15) to[in=90,out=90] (\y,0.15) -- (\y,0);}
				
				\node at(5,0.3) {\footnotesize$j$};
				\node at(3.25,1.3) {\footnotesize$i$};
				\node at(3.5,1.9) {\footnotesize$p$};

    \node at (1,2) [shape=rectangle,draw,inner sep=2.5pt]{iii.};
				
			}&
			\tikz[line width=1,scale=0.6]{
				
				%\draw[dashed] (0,0) -- (0.5,0);
				\draw (0.5,0) -- (1.5,0);
				\draw[dashed] (1.5,0) -- (2,0);
				\draw (2,0) -- (3.5,0);
				\draw[dashed] (3.5,0) -- (4.5,0);
				\draw (4.5,0) -- (5.5,0);
				%\draw[dashed] (5.5,0) -- (6,0);
				\foreach \i/\j in {1.5/,2.5/, 4.5/,5.5/,6.5/,9.5/,10.5/}{
					\def\x{\i/2}
					\draw (\x,-0.1)--(\x,+0.1);
					\node at (\x,-0.25) {$\j$};
				}
				
				\foreach \i/\j in {1/5+0.1, 2.5/5, 3/5-0.1}{
					\def\x{\i}
					\def\y{\j}
					\draw[line width=1,-stealth] (\x,0) -- (\x,0.15) to[in=90,out=90] (\y,0.15) -- (\y,0);}
				
				\node at(4,0.3) {\footnotesize$j$};
				\node at(3,1.0) {\footnotesize$i$};
				\node at(3,1.55) {\footnotesize$p$};

    \node at (1,2) [shape=rectangle,draw,inner sep=2.5pt]{iv.};
			}  \\
			$\repsilon_i \repsilon'_i = \rdelta_j + \rdelta_i \repsilon_j$ &
			$\repsilon_i \repsilon'_i = \rdelta_j \ralpha_i + \rdelta_i \repsilon_j$&
			$\repsilon_i \repsilon'_i = \rdelta_j \repsilon_p + \rdelta_i \repsilon_j$ &
			$\repsilon_i \repsilon'_i = \rdelta_j \ralpha_i \repsilon_p + \rdelta_i \repsilon_j$ \\[1em]
			
			\tikz[line width=1,scale=0.6]{
				
				%\draw[dashed] (0,0) -- (0.5,0);
				\draw (0.5,0) -- (2,0);
				\draw[dashed] (2,0) -- (2.5,0);
				\draw (2.5,0) -- (3.5,0);
				\draw[dashed] (3.5,0) -- (4.5,0);
				\draw (4.5,0) -- (5.5,0);
				%\draw[dashed] (5.5,0) -- (6,0);
				
				\foreach \i/\j in {1.5/,2.5/,3.5/,5.5/,6/,9.5/,10.5/}{
					\def\x{\i/2}
					\draw (\x,-0.1)--(\x,+0.1);
					\node at (\x,-0.25) {$\j$};
				}
				
				\foreach \i/\j in {1/5+0.1, 1.5/5, 3/5-0.1}{
					\def\x{\i}
					\def\y{\j}
					\draw[line width=1,-stealth] (\x,0) -- (\x,0.15) to[in=90,out=90] (\y,0.15) -- (\y,0);}
				
				\node at(4,0.4) {\footnotesize$j$};
				\node at(3.3,0.9) {\footnotesize$i$};
				\node at(3.1,1.6) {\footnotesize$p$};

    \node at (1,2) [shape=rectangle,draw,inner sep=2.5pt]{v.};
				
			}&
			\tikz[line width=1,scale=0.6]{
				
				%\draw[dashed] (0,0) -- (0.5,0);
				\draw (0.5,0) -- (2.5,0);
				\draw[dashed] (2.5,0) -- (3.5,0);
				\draw (3.5,0) -- (4.5,0);
				%\draw[dashed] (4.5,0) -- (5,0);
				
				\foreach \i/\j in {1.5/,2.5/,3.5/,4.5/,7.5/,8.5/}{
					\def\x{\i/2}
					\draw (\x,-0.1)--(\x,+0.1);
					\node at (\x,-0.25) {$\j$};
				}
				
				\foreach \i/\j/\z in {1/4+0.1/0.8, 1.5/4/0.45, 2/4-0.1/0.05}{
					\def\x{\i}
					\def\y{\j}
					\draw[line width=1,-stealth] (\x,0) -- (\x,\z) to[in=90,out=90] (\y,\z) -- (\y,0);}
				
				\node at(2.9,0.85) {\footnotesize$j$};
				\node at(2.75,1.4) {\footnotesize$i$};
				\node at(2.55,2) {\footnotesize$p$};

    \node at (1,2) [shape=rectangle,draw,inner sep=2.5pt]{vi.};
				
			}   &

   \tikz[line width=1,scale=0.6]{
				
				%\draw[dashed] (0,0) -- (0.5,0);
				\draw (0.5,0) -- (2,0);
				\draw[dashed] (2,0) -- (2.5,0);
				\draw (2.5,0) -- (3.5,0);
				\draw[dashed] (3.5,0) -- (4.5,0);
				\draw (4.5,0) -- (5.5,0);
				\draw[dashed] (5.5,0) -- (6.5,0);
                \draw (6.5,0) -- (7,0);
                %\draw[dashed] (7,0)--(7.5,0);
				
				\foreach \i/\j in {1.5/,2.5/,3.5/,5.5/,6/,9.5/,10.5/,13/,14/}{
					\def\x{\i/2}
					\draw (\x,-0.1)--(\x,+0.1);
					\node at (\x,-0.25) {$\j$};
				}
				
				\foreach \i/\j in {1/6.75, 1.5/5+0.0833, 3/5-0.10833}{
					\def\x{\i}
					\def\y{\j}
					\draw[line width=1,-stealth] (\x,0) -- (\x,0.15) to[in=90,out=90] (\y,0.15) -- (\y,0);}
				
				\node at(3.4,0.9) {\footnotesize$j$};
				\node at(3.3,1.5) {\footnotesize$i$};
				\node at(4,2.1) {\footnotesize$p$};

    \node at (1,2) [shape=rectangle,draw,inner sep=2.5pt]{vii.};
				
			}&
			\tikz[line width=1,scale=0.6]{
   % FIX ------------------------------------------------
				
				%\draw[dashed] (0,0) -- (0.5,0);
				\draw (0.5,0) -- (2.5,0);
				\draw[dashed] (2.5,0) -- (3.5,0);
				\draw (3.5,0) -- (4.5,0);
				\draw[dashed] (4.5,0) -- (5.5,0);
                \draw (5.5,0) -- (6.25,0);
                %\draw[dashed] (6.25,0) -- (6.75,0);
				
				\foreach \i/\j in {1.5/,2.5/,3.5/,4.5/,7.5/,8.5/,11.5/,12.5/}{
					\def\x{\i/2}
					\draw (\x,-0.1)--(\x,+0.1);
					\node at (\x,-0.25) {$\j$};
				}
				
				\foreach \i/\j/\z in {1/6/0.35, 1.5/4+0.0833/0.5, 2/4-0.0833/0.05}{
					\def\x{\i}
					\def\y{\j}
					\draw[line width=1,-stealth] (\x,0) -- (\x,\z) to[in=90,out=90] (\y,\z) -- (\y,0);}
				
				\node at(2.9,0.9) {\footnotesize$j$};
				\node at(2.85,1.5) {\footnotesize$i$};
				\node at(3.5,2.1) {\footnotesize$p$};

    \node at (1,2) [shape=rectangle,draw,inner sep=2.5pt]{viii.};
				
			}  
			\\
			$\repsilon_i \repsilon'_i = \rdelta_j \repsilon_p + \rdelta_i \repsilon_j \ralpha_p$ &
			$\repsilon_i \repsilon'_i = \rdelta_j \ralpha_i \repsilon_p + \rdelta_i \repsilon_j \ralpha_p$  & 
   
			$\repsilon_i \repsilon'_i = \rdelta_j + \rdelta_i \repsilon_j \ralpha_p$ &
			$\repsilon_i \repsilon'_i = \rdelta_j \ralpha_i + \rdelta_i \repsilon_j \ralpha_p$  \\
		\end{tabular}
	\end{center}

\end{lemma}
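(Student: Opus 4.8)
\textbf{Proof proposal for \cref{lem:explicit-reln}.}

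The plan is to deduce the exchange relations purely from the combinatorial description of the quiver $Q_D$ given in \cref{def:seed} together with the closed formulas for the domino cluster variables in \cref{prop:explicit} and the sign data in \cref{prop:domino-var-signs-on-tile}. The point is that an exchange relation at a mutable vertex $x$ is determined by its incoming and outgoing arrows in $Q_D$ via \eqref{exchange relation0}: if $x$ has outgoing arrows to $y_1,\dots,y_p$ and incoming arrows from $z_1,\dots,z_q$, then $x\,x' = \prod y_j + \prod z_\ell$. So the ``only'' content is to read off, for each of the four types of mutable domino variable ($\ralpha_i$ when $D_i$ has a sticky child; $\rbeta_i$ when $D_i$ is head-to-tail or has a same-end sticky parent; $\rdelta_i$ and $\repsilon_i$ when $D_i$ has a same-end child), the exact multiset of neighbors in each of the sub-cases enumerated in the pictures, and then to verify the claimed monomial identity among the corresponding chain polynomials. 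The cases \textbf{R1}--\textbf{R3b} are organized exactly along the enumeration of ``local'' configurations in the table of \cref{def:seed}, so the bookkeeping is: for each row of that table, translate the depicted arrows into a product of variables on each side of the exchange relation, then cross-reference with the explicit formulas of \cref{prop:explicit}.

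First I would handle \textbf{R2} (head-to-tail), which is the cleanest: when $D_i$ has a head-to-tail left sibling $D_j$, the vertex $\rbeta_i$ is mutable, and by the first column of the table in \cref{def:seed} its neighbors are exactly $\rgamma_j$ (incoming), and $\ralpha_i,\rdelta_j$ (the outgoing side forming the monomial $\ralpha_i\rdelta_j$). Hence $\rbeta_i\rbeta_i' = \rgamma_j + \ralpha_i\rdelta_j$, and one checks this is a genuine three-term Plücker-type relation among the chain polynomials using \eqref{eq1:quadratic}–\eqref{eq2:quadratic} and the arrow notation of \cref{notation:arrow}; concretely, $\rgamma_j$ is a chain polynomial of the form $\lr{n\leftarrowp_i a_i b_i \br c_i d_i\br c_j d_j\nearrow_j n}$ and the identity follows by expanding along the clause $c_id_i\,|\,c_jd_j$. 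Next I would treat \textbf{R1} (sticky), splitting into sub-cases i–vi according to whether $D_i$ has a sticky same-end parent (affecting whether $\rbeta_i$ appears), whether $D_i$ has a sticky parent $D_p$ at all (affecting whether $\ralpha_p$ appears), and whether $D_j$ is same-end with $D_i$ (affecting whether $\repsilon_i$ appears); in each sub-case the neighbors of $\ralpha_i$ are read from the ``sticky'' column plus the dotted arrow mentioned at the end of \cref{def:seed}, giving $\ralpha_i\ralpha_i' = (\rbeta_i)\repsilon_j(\ralpha_p) + \ralpha_j(\repsilon_i)$. The identity here should reduce, after stripping common frozen factors, to the defining recursion of the domino variables under $\rPsi$ (\cref{domino-var}) combined with the factorizations in \cref{lem:when-promote-var-factor} — this is precisely where the $\ralpha_{\rtop}/\repsilon_{\rtop}$ and $\ralpha_{\rtop}/\rdelta_{\rtop}$ denominators of that lemma are ``absorbed'' into the exchange relation.

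For \textbf{R3a} and \textbf{R3b} (same end), the vertices $\rdelta_i$ and $\repsilon_i$ are mutable; their neighbors are read from the ``same-end child'' column of the table (plus the sticky-child modifications, and plus the contributions of a same-end parent $D_p$ or a right head-to-tail sibling $D_\ell$). This yields $\rdelta_i\rdelta_i' = \rgamma_j\repsilon_i(\rdelta_p)(\rbeta_\ell) + \rgamma_i\rdelta_j(\repsilon_p)$ and $\repsilon_i\repsilon_i' = \rdelta_j(\ralpha_i)(\repsilon_p) + \rdelta_i\repsilon_j(\ralpha_p)$, with the parenthesized factors present exactly under the stated conditions, and I would verify each by the same expansion-of-a-chain-polynomial argument. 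The main obstacle I anticipate is not any single identity but the sheer case bookkeeping: one must be scrupulous that the neighbor-sets extracted from \cref{def:seed} are complete, in particular that no arrows between a mutable domino variable and a frozen domino variable (or a genuine frozen $f_i$, or one of the ``promotion'' frozens in $\mathcal{T}$) have been overlooked, since \cref{def:seed} only draws arrows \emph{among} domino variables and one must confirm via \cref{thm:quiver} / \cref{thm:cluster} that the promoted frozen variables contribute nothing to these particular exchange relations. Once that completeness is established, each of the roughly twenty displayed identities is a routine check using \cref{prop:explicit}, \cref{lem:when-promote-var-factor}, and the quadratic identities \eqref{eq1:quadratic}–\eqref{eq2:quadratic}; I would present one representative verification in full and indicate that the remaining cases are analogous.
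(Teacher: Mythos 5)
Your core idea is right and matches the paper's intent: \cref{lem:explicit-reln} is just a case-by-case unpacking of the local quiver description in \cref{def:seed} into exchange relations, and the paper explicitly presents the lemma as ``equivalent to the local description given in \cref{def:seed}'' without a separate proof environment. However, you overload the lemma with content that actually belongs to later results. The phrase ``verify the claimed monomial identity among the corresponding chain polynomials'' is not part of proving this lemma: the primed variable $\rzeta_i'$ appearing in each relation is \emph{defined} by the exchange formula \eqref{exchange relation0}, so once the arrows incident to each mutable vertex are enumerated, the relations in R1--R3b hold tautologically. The explicit chain-polynomial formulas for the primed variables---which is where Plücker and chain-polynomial expansions are genuinely needed---are the content of \cref{primedvars_prop}, and the statement that R1--R3b are exchange relations for the $\Gr_{4,n}$ cluster structure is \cref{prop:exch-rel-true-on-Gr}, proved via promotion rather than by direct expansion. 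Invoking \cref{lem:when-promote-var-factor} or \eqref{eq1:quadratic}--\eqref{eq2:quadratic} here is doing other lemmas' work.

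Your worry about overlooked arrows from mutable domino variables to frozen variables (the $f_i$'s or the promotion frozens in $\mathcal{T}$) also misfires and would lead to circularity: $Q_D$ is \emph{defined} by the table in \cref{def:seed}, which only draws arrows among domino variables, so there is nothing further to check in $Q_D$ itself. The question of whether $Q_D$ agrees with the restriction of the genuine quiver $\tilde{Q}_D$ at mutable vertices is exactly the content of \cref{thm:quiver}, which is proved \emph{using} \cref{lem:explicit-reln} and \cref{prop:exch-rel-true-on-Gr}, so appealing to it while establishing the present lemma would be circular. Strip out these two pieces, and what remains is exactly the right (and the paper's) argument: go through each row of the table in \cref{def:seed}, compile the neighbors of each mutable vertex including the contributions from all local configurations involving $D_i$ (sticky child, same-end child, head-to-tail sibling, plus the dotted arrow from the sticky-same-end case), note which parenthesized factors appear under which conditions, and write out the resulting exchange relations as labeled in the sub-case pictures.
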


\begin{example} \label{ex:exrels}
The exchange relations for $\Sigma_D$ in \cref{cd-example-again} can be obtained from \cref{lem:explicit-reln}
as:
\begin{align*}
 &\mbox{R1. i), } (i,j)=(6,5):  \ralpha'_6 \ralpha_6= \rbeta_6 \repsilon_5+\ralpha_5, \quad  \mbox{R1. iv), } (p,i,j)=(6,5,4): 
  \ralpha'_5 \ralpha_5= \rbeta_5 \repsilon_4\ralpha_6 +\ralpha_5 \repsilon_5.&\\
& \mbox{R2, } (i,j)=(2,1):  \rbeta'_2 \rbeta_2= \rgamma_1+\ralpha_2 \rdelta_1, \quad  \mbox{R2, } (i,j)=(6,3): 
  \rbeta'_6 \rbeta_6= \rgamma_3+\ralpha_6 \rdelta_3.&\\
& \mbox{R3a ii), } (i,j,\ell)=(3,2,6):   \rdelta'_3 \rdelta_3= \rgamma_2 \repsilon_3 \rbeta_6+\rgamma_3 \rdelta_2, \quad   \mbox{R3a i), } (i,j)=(5,4): 
  \rdelta'_5 \rdelta_5= \rgamma_4 \repsilon_3+\rgamma_3 \rdelta_2.&\\
&\mbox{R3b i), } (i,j)=(3,2):  \repsilon'_3 \repsilon_3= \rdelta_2+\rdelta_3 \repsilon_2,  \quad  \mbox{R3b viii), } (p,i,j)=(6,5,4): 
 \repsilon'_5 \repsilon_5= \rdelta_4 \ralpha_5+\rdelta_5 \repsilon_4 \ralpha_6.&
\end{align*} 
See \cref{ex:primedvars} for expressions of the primed variables and \cref{domino-variables-formulas} for the non-primed ones.
\end{example}

\subsection{Cluster variables obtained from 
$\Sigma_D$ by a single mutation}

There are several ways to show that 
the cluster variables in $\A(\Sigma(D))$ are regular on $\Gr_{4,n}$.  
One way is to show that the cluster variables obtained by any single mutation
of the initial seed 
are regular functions on 
$\Gr_{4,n}$ (then use the \emph{Starfish lemma} 
\cite[Proposition 6.4.1]{FW6}).
We do this in  \cref{primedvars_prop},
which gives explicit formulas for these cluster variables as polynomials in Pl\"ucker coordinates.
We  give a second way in the proof of \cref{thm:quiver},
 which proves the stronger statement that 
each exchange relation in $\A(\Sigma(D))$ is in fact an exchange relation 
in the cluster structure for 
$\Gr_{4,n}$.

\begin{proposition}\label{primedvars_prop}
Let $D \in \mathcal{CD}_{n,k}$, and 
let $\bar{\alpha}'_i, \bar{\beta}'_i, \bar{\delta}'_i, \bar{\varepsilon}'_i$ be as in \cref{lem:explicit-reln}. Then:

\vspace{0.5em}
 \textbf{R1.} $\bar{\alpha}_i'=\lr{a_i,b_i+1,c_j,d_j}$.

\textbf{R2.}
	$\bar{\beta}_i'=\lr{a_j,b_j,b_i \rightarrowp_j n}$.	\vspace{-1em}
 \begin{itemize} 
			 \item[] \textbf{R3a.} i)
$\bar{\beta}_\ell,\bar{\delta}_p,\bar{\epsilon}_p$ are not present: $\bar{\delta}_i'= \begin{cases}
\lr{a_i \, b_i \, d_i|a_j \, b_j| c_i \, d_i  \nearrow_j n} & \text{if $D_j$ not sticky} \\
\lr{a'_j,a_j,b_j,d_i} & \text{if $D_j$ sticky}
\end{cases}$

\item[] \hspace{2.3em} ii) $\bar{\beta}_\ell$ is present: $\bar{\delta}_i'= \lr{ n \leftarrowp_j \, a_j \, b_j|a_\ell \, b_\ell| c_\ell \, d_\ell \nearrow_\ell n}$.
\item[] \hspace{2.3em} iii)
$\bar{\delta}_p,\bar{\varepsilon}_p$ are present: $\bar{\delta}_i'= \lr{ n \leftarrowp_p \, a_p \, b_p|c_i \, d_i| b_j \, a_j \rightarrowp_j n}$.
		\end{itemize}

\textbf{R3b.}
$\bar{\varepsilon}'_i=\lr{a_j,b_j,c_i,\nearrow_i n}$.

\end{proposition}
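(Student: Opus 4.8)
\textbf{Proof plan for \Cref{primedvars_prop}.}

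The plan is to prove the formulas for the primed cluster variables $\bar\alpha_i', \bar\beta_i', \bar\delta_i', \bar\varepsilon_i'$ by the same inductive strategy used to prove \Cref{prop:explicit}: induct on the size of the marker set, peeling off either the penultimate marker or the rightmost top chord $\ctop = (a,b,c,d)$. In the "remove penultimate marker" case nothing changes, so the work is entirely in the "remove rightmost top chord" case, where $D = D_L \bcfw D_R$ and we must track how each primed variable transforms under (rescaled) product promotion $\rPsi = \rPsi_{ac}$. First I would set up the arithmetic: by \Cref{lem:explicit-reln}, each primed variable $\rzeta_i'$ is determined by an exchange relation $\rzeta_i \rzeta_i' = (\text{monomial}) + (\text{monomial})$ in the unprimed domino variables, whose formulas are given by \Cref{prop:explicit}. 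Since $\rPsi$ is a ring homomorphism up to frozen factors (\Cref{thm:promotion2}), applying $\rPsi$ to the exchange relation of $D_L$ or $D_R$ gives the exchange relation of $D$ up to a Laurent monomial in $\mathcal{T}'$; solving for $\rzeta_i'{}^D$ in terms of $\rPsi(\rzeta_i'{}^{D'})$ then shows $\rzeta_i'{}^D$ equals $\rPsi(\rzeta_i'{}^{D'})$ times a controlled frozen factor, and one checks this frozen factor is trivial exactly when the numerator of the promotion does not factor, mirroring \Cref{lem:when-promote-var-factor}.

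Concretely, for each of R1, R2, R3a (subcases i--iii), R3b, I would: (1) write down the inductive formula for $\rzeta_i'{}^{D'}$ as a chain polynomial; (2) apply the promotion rules (a1), (a2), (b1), (b2), (b3) from the remark after \Cref{pro-twistors2}, exactly as in the proof of \Cref{prop:explicit}, to compute the numerator $\nu$ of $\rPsi(\rzeta_i'{}^{D'})$; (3) use \Cref{cor:pos} and its higher-degree extension to decide when $\nu$ is irreducible (hence equals $\rzeta_i'{}^D$) versus when it factors off a piece of $\mathcal{T}'$; and (4) match the resulting chain polynomial against the stated formula, verifying in particular that promoting correctly inserts $\ctop$'s markers into the arrow-chains $\nearrow_j$ and $\lchn_j$ of \Cref{notation:arrow} — this is precisely the "add $D_k$ to the chain of ancestors" phenomenon already isolated at the end of the proof of \Cref{prop:explicit}. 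The base case (three markers, no chords) is vacuous since there are no chords to have primed variables. I would also handle the degenerate cases (upper promotion $a=1$, and $a=2,3$, $c=b+1$) by the same arguments as in \Cref{sec:degenerate}, noting only which frozen variables coincide.

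The main obstacle will be R3a, especially subcases ii) and iii), where $\bar\delta_i'$ is a genuinely higher-degree chain polynomial and the relevant exchange relation (from \Cref{lem:explicit-reln} R3a) involves up to five factors $\rgamma_j \repsilon_i \rdelta_p \rbeta_\ell$ and $\rgamma_i \rdelta_j \repsilon_p$. Here one must simultaneously: use the formula for $\bar\delta_i'{}^{D'}$ from the inductive hypothesis, apply $\rPsi$ to the whole exchange relation, use that $\rPsi(\rgamma_i), \rPsi(\repsilon_i), \dots$ themselves may pick up frozen factors from $\mathcal{T}'$ (as catalogued in \Cref{lem:when-promote-var-factor}), and check that all these frozen factors cancel consistently so that $\bar\delta_i'{}^D$ emerges as a genuine cluster variable with the stated chain-polynomial form. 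The bookkeeping is essentially the same as in the forward and backward directions of \Cref{claim:divisible} inside the proof of \Cref{lem:when-promote-var-factor}, and I expect it can be organized by the same case analysis (whether $D_j$ is sticky, whether $D_i$ or $D_j$ is same-end with $\ctop$, whether a head-to-tail sibling survives promotion). Once R3a is done, R1, R2, R3b are comparatively short: R1 gives a single Plücker coordinate $\lr{a_i\,b_i+1\,c_j\,d_j}$ and one only needs to see that this is fixed or minimally altered by promotion; R2 and R3b produce chain polynomials whose transformation under $\rPsi$ is the already-analyzed "$\br xyn\rangle \mapsto \br xy\br ba\br cdn\rangle$" substitution.
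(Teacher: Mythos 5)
Your plan is viable but it is not the route the paper takes. The paper's proof of this proposition is a direct verification: for each relation it takes the exchange relation from \cref{lem:explicit-reln} (already known to hold by \cref{prop:exch-rel-true-on-Gr}), substitutes the explicit chain polynomials of \cref{prop:explicit} for the unprimed variables, and expands the claimed formula for the primed variable using the definition of chain polynomials together with the five-term Pl\"ucker identity of \cref{lemma_identities}; the primed variable is then read off as the cofactor of $\rzeta_i$ in a one- or two-line expansion (e.g.\ in R1.i one expands $\ralpha_j=\lr{b_i{+}1\,c_j\,d_j\br b_i\,a_i\br c_i\,d_i\nearrow_i n}$ and recognizes $\rbeta_i\repsilon_j-(-\ralpha_i')\ralpha_i$). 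Your promotion--induction instead re-derives the formulas through the quasi-homomorphism machinery. What the paper's route buys is that all the hard structural work (irreducibility, frozen-factor cancellation) has already been done in \cref{lem:when-promote-var-factor} and \cref{prop:exch-rel-true-on-Gr}, so nothing new about promotion needs to be established; what your route would buy is a uniform explanation of \emph{why} the primed formulas have the arrow-chain shape they do, at the cost of substantially more bookkeeping.

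Two concrete repairs your plan needs. First, the base case of your induction is not vacuous: the genuinely new instance at each step is the one where $D_i$ is itself the rightmost top chord being removed, and there the formulas must be verified directly --- this is exactly \cref{lem:quiver_top_chord}, which you never invoke; without it the induction has nothing to stand on. Second, \cref{prop:invariant}, \cref{cor:pos} and \cref{lem:when-promote-var-factor} are stated for the domino variables and rely on their specific index structure; the primed variables such as $\lr{n\leftarrowp_p a_p\,b_p\br c_i\,d_i\br b_j\,a_j\rightarrowp_j n}$ are new chain polynomials with different index patterns, so you would need to prove analogues of those results (which clauses can contain $n$ or $d$, when the promoted numerator factors) before step (3) of your plan can be executed. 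Both gaps are fillable, but they are where the real work of your approach lies.
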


\begin{example}\label{ex:primedvars}
The variables $\bar{\alpha}'_i, \bar{\beta}'_i, \bar{\delta}'_i, \bar{\varepsilon}'_i$ in \cref{primedvars_prop} for \cref{cd-example} are as follows. See also \cref{ex:exrels} for the corresponding exchange relations.
We will denote $(10,\ldots,15)$ as $(A,\ldots,F)$.
\begin{equation*}
 \mbox{R1. i), } (i,j)=(6,5):  \ralpha'_6=\lr{8\,A\,C\,D}, \quad \mbox{R1. iv), } (p,i,j)=(6,5,4): 
  \ralpha'_5=\lr{9\,B\,C\,D}.
\end{equation*}
For rule R2, we have the pairs $(i,j)\in \{(2,1),(6,3)\}$, which gives respectively:
\begin{equation*}
 \rbeta'_2=\lr{3\,4\,6 \rightarrowp_1 F}=\lr{3 \, 4 \, 6 |2 \, 1|8 \, 9 \, F}, \quad \rbeta'_6=\lr{1\,2\,9 \rightarrowp_3 F}=\lr{1\,2\,9\,F}.
\end{equation*} 
\begin{equation*}
\mbox{R3a ii), } (i,j,\ell)=(3,2,6):  \rdelta'_3=\lr{F \leftarrowp_2 5\, 6|8\, 9|D\,E \nearrow_6 F}=\lr{F\,8\,9|6\, 5|8\,9|D\,E\,F}.
\end{equation*} 
\begin{equation*}
\mbox{R3a i), } (i,j)=(5,4):  \rdelta'_5=\lr{9\,A\,B\,D}.
\end{equation*} 
Finally, for rule R3b, we have the pairs $(i,j)\in \{(3,2),(5,4)\}$, which gives respectively:
\begin{equation*}
  \repsilon'_3=\lr{5\,6\,8 \nearrow_3 F}=\lr{5\,6\,8\,F}, \quad \repsilon'_5=\lr{A\,B\,C \nearrow_5 F}=\lr{A\,B\,C|9\,8|D\,E\,F}.
\end{equation*} 
\end{example}

In the following proof, we will use the following identities:
\begin{lemma} \label{lemma_identities} Let $a,b,c,d,e,x,y,z \in [n]$, then
 \begin{itemize}
     \item[i)] $\lr{n \leftarrowp_a b\,c\,d}=-\lr{b\,c\,d \rightarrowp_a n}$;
     \item[ii)] $\lr{x \, y \,z \,a}\lr{b \,c \,d \,e}=\lr{x \,y\,z\,b}\lr{a\,c\,d\,e}-\lr{x\,y\,z\,c}\lr{a\,b\,d\,e}+\lr{x\,y\,z\,d}\lr{a\,b\,c\,e}-\lr{x\,y\,z\,e}\lr{a\,b\,c\,d}$, and we will say we used the Pl\"ucker identity for the five marked vectors in $\lr{x\,y \,z \,\underline{a}}\lr{\underline{b} \,\underline{c} \,\underline{d} \,\underline{e}}$.
 \end{itemize}   
\end{lemma}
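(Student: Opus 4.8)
The two identities are elementary facts about Pl\"ucker coordinates, and the plan is to establish them separately.

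For (i), I would first suppose $D_a$ is not sticky, so $\leftarrowp_a=\nwarrow_a$ and $\rightarrowp_a=\nearrow_a$. Unwinding \cref{notation:arrow}, the chain polynomials $\lr{n\leftarrowp_a b\,c\,d}$ and $\lr{b\,c\,d\rightarrowp_a n}$ are built from the \emph{same} list of clauses---one end $3$-clause containing $b,c,d$, the alternating $2$-clauses $b_{(1)}\,a_{(1)},c_{(1)}\,d_{(1)},\dots,b_{(m)}\,a_{(m)}$ coming from the ancestor chain of $D_a$, and the other end $3$-clause containing $c_{(m)},d_{(m)},n$---but read in the two opposite orders, with these end clauses appearing as $n\,c_{(m)}\,d_{(m)}$ versus $c_{(m)}\,d_{(m)}\,n$ (which differ by a cyclic shift, an even permutation). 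By the Remark following \cref{def:chain_polynomials}, a permutation of the indices of a chain polynomial that preserves its clause shape multiplies the polynomial by the sign of that permutation; so it remains to compute the sign of the permutation reversing a list of clauses of sizes $(3,2,\dots,2,3)$ (here $2m-1$ twos, for $m\ge 1$). Such a reversal inverts exactly the pairs of indices lying in distinct clauses, so its sign is $(-1)^{\sum_{i<j}s_is_j}$, where $s_1,\dots,s_{2m+1}$ are the clause sizes; using $\sum_{i<j}s_is_j=\tfrac12\big((\sum_i s_i)^2-\sum_i s_i^2\big)$ with $\sum_i s_i=4m+4$ and $\sum_i s_i^2=8m+14$ gives exponent $8m^2+12m+1$, which is odd. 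Hence $\lr{n\leftarrowp_a b\,c\,d}=-\lr{b\,c\,d\rightarrowp_a n}$. (When the ancestor chain is empty the two sides are $\lr{n\,b\,c\,d}$ and $\lr{b\,c\,d\,n}$, and the identity is immediate.) If instead $D_a$ is sticky then $\leftarrowp_a=\rightarrowp_a=-a'_a$, and both sides collapse, after moving $-a'_a$ and $n$ into position, to a single Pl\"ucker coordinate with its four indices reversed; the identity then follows from the antisymmetry of Pl\"ucker coordinates.

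For (ii), recall that a point of $\Gr_{4,n}$ is represented by a $4\times n$ matrix, so the columns $v_a,v_b,v_c,v_d,v_e$ are five vectors in a $4$-dimensional space and hence linearly dependent; Cramer's rule writes this dependence as
\begin{equation*}
\lr{b\,c\,d\,e}\,v_a-\lr{a\,c\,d\,e}\,v_b+\lr{a\,b\,d\,e}\,v_c-\lr{a\,b\,c\,e}\,v_d+\lr{a\,b\,c\,d}\,v_e=0.
\end{equation*}
Applying the linear functional $v\mapsto\lr{x\,y\,z\,v}=\det[v_x\,v_y\,v_z\,v]$ to this relation produces exactly the claimed identity (equivalently, this is the Grassmann--Pl\"ucker relation obtained by Laplace expansion). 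The only point in the whole lemma requiring genuine care is the sign bookkeeping in (i)---in particular which permutation's sign is the relevant one, and the handling of the degenerate empty-chain and sticky cases; (ii) is entirely standard.
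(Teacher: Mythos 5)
The paper states this lemma without proof, so there is no argument in the text to compare against. Your treatment of (ii)---five vectors $v_a,\dots,v_e$ in a $4$-dimensional space are dependent via the Cramer/Laplace relation, and pairing with the linear functional $\lr{x\,y\,z\,\cdot}$ gives the claimed identity---is the standard Grassmann--Pl\"ucker argument and is correct.

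For (i) the conclusion is right, and the arithmetic $\sum_{i<j}s_is_j = 8m^2+12m+1$ (odd for all $m\ge 0$) is correct, but two of the supporting claims are off and are worth fixing. First, it is \emph{not} true that every index permutation preserving the clause sizes multiplies a chain polynomial by its sign. For instance, swapping the two end $3$-clauses of $\lr{a\,b\,c\br d\,e\br f\,g\br h\,i\br j\,k\,l}$ without reversing the middle yields a polynomial involving $\lr{a\,b\,c\,i}$ and $\lr{a\,b\,c\,h}$ rather than $\lr{a\,b\,c\,d}$ and $\lr{a\,b\,c\,e}$, so it is not a scalar multiple of the original. The Remark following \cref{def:chain_polynomials} only licenses within-clause permutations and the full \emph{reversal} of the clause order, which is all your argument uses; that the reversal contributes precisely $(-1)^{\sum_{i<j}s_is_j}=-1$ can also be checked directly from \cref{def:chain_polynomials} by pairing the $t\in\{0,1\}^s$ term of one expansion with the $u_r=1-t_{s+1-r}$ term of the reversed expansion, which gives the uniform ratio $(-1)^{s}\cdot(-1)^{s+1}=-1$, consistent with your parity count. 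Second, in the sticky case the two Pl\"ucker coordinates $\lr{(-a'_a)\,b\,c\,d}$ and $\lr{b\,c\,d\,(-a'_a)}$ differ by a $4$-\emph{cycle} (a cyclic shift), not a reversal; reversal of four indices is an \emph{even} permutation and would produce sign $+1$, not the needed $-1$. The identity does hold because the cyclic shift is odd, but the permutation you name is not the one that makes the sign work out.
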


\begin{proof}[Proof of \cref{primedvars_prop}]
	We will provide detailed proofs of Cases (R1) and (R2), below.
	Case (R3) can be proved in a similar fashion.

\noindent{\bf Case R1. i)} $D_i$ has a sticky not same-end child $D_j$. $D_i$ is not a sticky child. 

In this case, the exchange relation is 
\begin{equation*}
\ralpha_i \ralpha_i' = 
		\rbeta_i \repsilon_j +
		 \ralpha_j,   
\end{equation*}
moreover $(a_i,b_i,b_i+1)=(a'_j,a_j,b_j)$. Then
\begin{equation*}
  \ralpha_j=\lr{b_j, c_j, d_j \nearrow_j n}=\lr{b_i+1, c_j, d_j \nearrow_j n}=\lr{b_i+1, c_j, d_j |b_i \, a_i| c_i \, d_i \nearrow_i n}, 
\end{equation*}
as $|c_j, d_j \nearrow_j n \rangle=| c_j, d_j |b_i \, a_i| c_i \, d_i \nearrow_i n \rangle$, in this case. From expanding the chain polynomial above:
\begin{equation*}
    \ralpha_j=\underbrace{\lr{b_i+1, c_j, d_j \, b_i}}_{-\repsilon_j} \underbrace{\lr{a_i \, c_i \, d_i \nearrow_i n}}_{\rbeta_i}-\underbrace{\lr{b_i+1, c_j, d_j \, a_i}}_{-\ralpha'_i} \underbrace{\lr{b_i \, c_i \, d_i \nearrow_i n}}_{\ralpha_i}.
\end{equation*}

\noindent{\bf Case R1. ii)} $D_i$ is not a sticky child and it has a sticky same-end child $D_j$. In this case, the exchange relation is 
\begin{equation*}
 \ralpha_i \ralpha_i' = 
		\rbeta_i \repsilon_j +
		 \ralpha_j \repsilon_i,   
\end{equation*}
moreover $(a_i,b_i,b_i+1,c_i,d_i)=(a'_j,a_j,b_j,c_j,d_j)$. Then
\begin{equation*}
 \underbrace{\lr{a_i \, c_i \, d_i \nearrow_i n}}_{\rbeta_i} \underbrace{\lr{b_i, b_i+1, c_i, d_i}}_{\repsilon_j}+\underbrace{\lr{b_i+1, c_i, d_i \nearrow_j n}}_{\ralpha_j} \underbrace{\lr{a_i, b_i, c_i, d_i}}_{\repsilon_i}= \underbrace{\lr{b_i \, c_i \, d_i \nearrow_j n}}_{\ralpha_i} \underbrace{\lr{a_i, b_i+1, c_i, d_i}}_{\ralpha'_i},
\end{equation*}
where we used the Pl\"ucker identity for the $5$ highlighted vectors in $\lr{\underline{a_i} \, c_i \, d_i \nearrow_i n} \lr{\underline{b_i}, \underline{b_i+1}, \underline{c_i}, \underline{{d_i}}}$, and $|c_i, d_i \nearrow_j n \rangle=|c_i, d_i \nearrow_i n \rangle$, in this case.

\noindent{\bf Case R1. iii)} $D_i$ has a sticky non same-end child $D_j$ and a sticky non same-end parent $D_p$. In this case, the exchange relation is 
\begin{equation*}
    \ralpha_i \ralpha_i' = 
		\rbeta_i \repsilon_j \ralpha_p +
		 \ralpha_j,
\end{equation*}
moreover $(a_p,b_p)=(a'_i,a_i)$ and $(a_i,b_i,b_i+1)=(a'_j,a_j,b_j)$. Then
\begin{equation*}
 \ralpha_j = \lr{b_j, c_j, d_j \nearrow_j n}=\lr{b_i+1, c_j, d_j \nearrow_j n} =
  \lr{b_i+1, c_j, d_j|b_i \, a_i| c_i \, d_i | a_i \, a'_i | c_p \, d_p \nearrow_p n},   
\end{equation*}
as $|c_j, d_j \nearrow_j n \rangle=|c_j, d_j|b_i \, a_i| c_i \, d_i | b_p \, a_p | c_p \, d_p \nearrow_p n \rangle$, in this case. From expanding the chain polynomial  above:
\begin{equation*}
 -\underbrace{\lr{b_i+1, c_j, d_j,a_i}}_{-\ralpha'_i} \underbrace{\lr{b_i \, c_i \, d_i | a_i \, a'_i | c_p \, d_p \nearrow_p n}}_{\ralpha_i}-\underbrace{\lr{b_i+1, c_j, d_j,b_i}}_{-\repsilon_j} \underbrace{ \lr{a_i \, c_i \, d_i \,a'_i}}_{-\rbeta_i} \underbrace{\lr{a_i \, c_p \, d_p \nearrow_p n}}_{\ralpha_p},   
\end{equation*}
as $|c_i, d_i|a'_i \, a_i| c_p \, d_p  \nearrow_p n \rangle=|c_i \, d_i \nearrow_i n \rangle$, in this case.

\noindent{\bf Case R1. iv)} $D_i$ has a sticky same-end child $D_j$ and a sticky non same-end parent $D_p$. In this case, the exchange relation is 
\begin{equation*}
 \ralpha_i \ralpha_i' = 
		\rbeta_i \repsilon_j \ralpha_p +
		 \ralpha_j \repsilon_i,   
\end{equation*}
moreover $(a_p,b_p)=(a'_i,a_i),(a_i,b_i,b_i+1)=(a'_j,a_j,b_j)$ and $(c_j,d_j)=(c_i,d_i)$. Then
\begin{align*}
&& \underbrace{\lr{b_i+1, c_i, d_i|a_i \, a'_i|c_p \, d_p \nearrow_p n}}_{\ralpha_j}\underbrace{\lr{a_i, b_i, c_i, d_i}}_{\repsilon_i} = \\ &=& \lr{a_i, c_i, d_i|a_i \, a'_i|c_p \, d_p \nearrow_p n} \underbrace{\lr{b_i+1, b_i, c_i, d_i}}_{-\repsilon_j}-\underbrace{\lr{b_i, c_i, d_i|a_i \, a'_i|c_p \, d_p \nearrow_p n}}_{\ralpha_i} \underbrace{\lr{b_i+1,a_i, c_i, d_i}}_{-\ralpha'_i},
\end{align*}
where we used the Pl\"ucker identity for the $5$ marked vectors in $\lr{\underline{b_i+1}, c_i, d_i|a_i \, a'_i|c_p \, d_p \nearrow_p n} \lr{\underline{a_i}, \underline{b_i}, \underline{c_i},\underline{d_i}}$ and 
$|c_i, d_i|a_i \, a'_i| c_p \, d_p \nearrow_p n \rangle=|c_i, d_i|b_p \, a_p| c_p \, d_p \nearrow_p n \rangle=|c_i, d_i \nearrow_i n \rangle$. Finally, we have:
\begin{equation*}
  \lr{a_i, c_i, d_i|a_i \, a'_i|c_p \, d_p \nearrow_p n}=\lr{a'_i \, a_i \, d_i|a_i \, c_i|c_i \, d_i \nearrow_p n}=-\underbrace{\lr{a'_i \, a_i \, d_i \, c_i}}_{\rbeta_i} \underbrace{\lr{a_i \, c_i \, d_i \nearrow_p n}}_{\ralpha_p}.
\end{equation*}

\noindent{\bf Case R1. v)} $D_i$ has a sticky non same-end child $D_j$ and a sticky same-end parent $D_p$. In this case, the exchange relation is
\begin{equation*}
\ralpha_i \ralpha_i' = 
		 \repsilon_j \ralpha_p +
		 \ralpha_j,    
\end{equation*}
moreover $(a_p,b_p)=(a'_i,a_i)$ and $(a_i,b_i,b_i+1,c_i,d_i)=(a'_j,a_j,b_j,c_j,d_j)$. Then
\begin{equation*}
 \ralpha_j = \lr{b_j, c_j,  d_j \nearrow_j n}  =   \lr{b_i+1, c_j, d_j|b_i \, a_i|c_i \, d_i \nearrow_p}, 
\end{equation*}
as $|c_j, d_j \nearrow_j n \rangle=|c_j, d_j|b_i \, a_i|c_i \, d_i \nearrow_p n \rangle$, in this case. From expanding the chain polynomial above:
\begin{equation*}
\underbrace{\lr{b_i+1, c_j, d_j \, b_i}}_{-\repsilon_j} \underbrace{\lr{a_i \, c_i \, d_i \nearrow_p n}}_{\ralpha_p}-\underbrace{\lr{b_i+1, c_j, d_j \, a_i}}_{-\ralpha'_i} \underbrace{\lr{b_i \, c_i \, d_i \nearrow_p n}}_{\ralpha_i},   
\end{equation*}
as $\nearrow_p n$ equals $\nearrow_i n$, in this case.

\noindent{\bf Case R1. vi)} $D_i$ has a sticky same-end child $D_j$ and a sticky same-end parent $D_p$.  In this case, the exchange relation is
\begin{equation*}
\ralpha_i \ralpha_i' = 
		 \repsilon_j \ralpha_p +
		 \ralpha_j \repsilon_i,    
\end{equation*}
moreover $(a_p,b_p)=(a'_i,a_i),(a_i,b_i,b_i+1)=(a'_j,a_j,b_j)$ and $(c_p,d_p)=(c_j,d_j)=(c_i,d_i)$. Then
\begin{equation*}
\underbrace{\lr{b_i+1, c_i, d_i \nearrow_p n}}_{\ralpha_j} \underbrace{\lr{a_i \, b_i \, c_i \,d_i}}_{\repsilon_i} = \underbrace{\lr{a_i, c_i, d_i \nearrow_p n}}_{\ralpha_p} \underbrace{\lr{b_i+1, b_i, c_i,d_i}}_{-\repsilon_j} - \underbrace{\lr{b_i, c_i, d_i \nearrow_p n}}_{\ralpha_i} \underbrace{\lr{b_i+1, a_i, c_i,d_i}}_{-\ralpha'_i},    
\end{equation*}
where we used the Pl\"ucker identity on the $5$ highlighted vectors in $\lr{\underline{b_i+1}, c_i, d_i \nearrow_p n} \lr{\underline{a_i} \, \underline{b_i} \, \underline{c_i} \,\underline{d_i}}$ and used the fact that $|\ldots \nearrow_p n \rangle, |\ldots \nearrow_i n \rangle, |\ldots \nearrow_j n \rangle$ are all equal, in this case.

\noindent {\bf Case R2.} $D_i$ has a head-to-tail sibling $D_j$ on its left. In this case, the exchange relation is 
 \begin{equation*}
 \rbeta_i \rbeta_i' = \rgamma_j+\ralpha_i \rdelta_j,    
 \end{equation*}
moreover $(c_j,d_j)=(a_i,b_i)$. Then, expanding the following chain polynomial
\begin{equation*}
\underbrace{\lr{ n \leftarrowp_j \, a_j \, b_j| a_i \, b_i| c_i \, d_i \nearrow_i n}}_{\rgamma_j}= \underbrace{\lr{ n \leftarrowp_j \, a_j \, b_j \, a_i}}_{-\rdelta_j} \underbrace{\lr{b_i \, c_i \, d_i \nearrow_i n}}_{\ralpha_i} - \underbrace{\lr{ n \leftarrowp_j \, a_j \, b_j \, b_i}}_{-\rbeta'_i} \underbrace{\lr{a_i \,  c_i \, d_i \nearrow_i n}}_{\rbeta_i},  
\end{equation*}
 % \begin{equation*}
 % \rgamma_j=\lr{ n \leftarrowp_j \, a_j \, b_j| a_i \, b_i| c_i \, d_i \nearrow_i n}, \, \ralpha_i=\lr{b_i \, c_i \, d_i \nearrow_i n}, \, \rdelta_j=\lr{a_j \, b_j \, a_i \rightarrowp_j n},  \rbeta_i=\lr{a_i \, c_i \, d_i \rightarrowp_i n}    
 % \end{equation*}
as $\rightarrowp_i$ equals $\nearrow_i$ in this case, and $\lr{ n \leftarrowp_j \, a_j \, b_j \, x}=-\lr{a_j \, b_j \, x \rightarrowp_j n}$. 

\end{proof}

\subsection{The proof of \cref{thm:quiver}}\label{sec:proofquiver}
We next turn to the proof of \cref{thm:quiver}.
In what follows, we say that a relation $xx'=M+M'$ is \emph{an exchange relation for $\Gr_{4,n}$} if there is a seed $\Sigma$ for $\Gr_{4,n}$ such that $xx'=M+M'$ is the exchange relation of $x$ in $\Sigma$. 

As a first step towards proving \Cref{thm:quiver}, we 
prove the following lemma.

\begin{lemma}\label{lem:quiver_top_chord} 
Suppose $D_i=(a_i,b_i,c_i,d_i)
$ is the rightmost 
	top chord of $D$.  
Then every exchange relation in 
\Cref{lem:explicit-reln} is an exchange relation for $\Gr_{4,n}$.
In particular, in this case, the relations become the following:
\begin{enumerate}
\item[R1.] (Sticky, not same-end) 
		Suppose $D_j =(b_i, b_j, c_j, d_j) 
		$ where $d_j < c_i$ is a sticky child of $D_i$.
		Then 
		\begin{equation*}
			\underbrace{\lr{b_i c_i d_i n}}_{\ralpha_i}\underbrace{\lr{a_i b_j c_j d_j}}_{\ralpha_i'} = 
			\underbrace{\lr{a_i c_i d_i n}}_{\rbeta_i} \underbrace{\lr{b_i b_j c_j d_j}}_{\repsilon_j} + 
			\underbrace{\lr{b_j c_j d_j \br b_i a_i  \br c_i d_i n}}_{\ralpha_j}.
		\end{equation*}
	\item[R1'.] (Sticky same-end) 
		  Suppose $D_j =(b_i, b_j, c_i, d_i)
		  $ is a sticky same-end child of $D_i$. Then 
		\begin{equation*}
		\underbrace{\lr{b_i c_i d_i n}}_{\ralpha_i} \underbrace{\lr{a_i b_j c_i d_i}}_{\ralpha_i'} = 
		\underbrace{\lr{a_i c_i d_i n}}_{\rbeta_i} \underbrace{\lr{b_i b_j c_i d_i}}_{\repsilon_j}+ 
		\underbrace{\lr{b_j c_i d_i n}}_{\ralpha_j} \underbrace{\lr{a_i b_i c_i d_i}}_{\repsilon_i}.
		\end{equation*}
    \item[R2.] (Head-to-tail) Suppose
		$D_j=(a_j, b_j,a_i,b_i)
		$ is a head-to-tail sibling of $D_i$ to its left.
	Then \begin{equation*} 	
		\underbrace{\lr{a_i c_i d_i n}}_{\rbeta_i} \underbrace{\lr{a_j b_j b_i n}}_{\rbeta_i'} = 
		\underbrace{\lr{a_j b_j n \br a_i b_i \br c_i d_i n}}_{\rgamma_j} 
		+\underbrace{ \lr{b_i c_i d_i n}}_{\ralpha_i} \underbrace{\lr{a_j b_j a_i n}}_{\rdelta_j}.
           \end{equation*}
   \item[R3a.] (Same end, not sticky)
	   Suppose that $D_j=(a_j, b_j,c_i,d_i)
	   $ is a same-end nonsticky child of $D_i$ (so $b_i < a_j$).
		Then 
	\begin{equation*}
	\underbrace{\lr{a_i b_i c_i n}}_{\rdelta_i} \underbrace{\lr{a_j b_j d_i \br a_i b_i \br c_i d_i n}}_{\rdelta_i'}
	= \underbrace{\lr{a_j b_jn \br c_i d_i \br a_i b_i n} }_{\rgamma_j}\underbrace{\lr{a_i b_i c_i d_i}}_{\repsilon_i}
	+ \underbrace{\lr{a_i b_i d_i n}}_{\rgamma_i} \underbrace{\lr{a_j b_j c_i \br b_i a_i \br c_i d_i n}}_{\rdelta_j}.
        \end{equation*}
   \item[R3a'.] (Same end sticky)
	   Suppose $D_j =(b_i, b_j, c_i, d_i)
	   $ is a sticky same-end child of $D_i$.
		Then 
	\begin{equation*}
		\underbrace{\lr{a_i b_i c_i n}}_{\rdelta_i}\underbrace{\lr{a_i b_i b_j d_i}}_{\rdelta_i'} = 
		\underbrace{\lr{a_i b_i b_j n}}_{\rgamma_j} \underbrace{ \lr{a_i b_i c_i d_i}}_{\repsilon_i} + 
		\underbrace{\lr{a_i b_i d_i n}}_{\rgamma_i} \underbrace{\lr{a_i b_i b_j c_j}}_{\rdelta_j}.
        \end{equation*}
	\item[R3b.] (Same end, not sticky)
	   Suppose $D_j=(a_j, b_j,c_i,d_i)
	   $ is a same-end nonsticky child of $D_i$. Then
	\begin{equation*}
		\underbrace{\lr{a_i b_i c_i d_i}}_{\repsilon_i} \underbrace{\lr{a_j b_j c_i n}}_{\repsilon_i'} = 
		\underbrace{\lr{a_j b_j c_i \br b_i a_i \br c_i d_i n}}_{\rdelta_j} + 
		\underbrace{\lr{a_i b_i c_i n}}_{\rdelta_i} \underbrace{\lr{a_j b_j c_i d_i}}_{\repsilon_j}.
        \end{equation*}
	\item[R3b'.] (Same end sticky)
	     Suppose $D_j =(b_i, b_j, c_i, d_i)
	     $ is a sticky same-end child of $D_i$. Then
	\begin{equation*}
\underbrace{\lr{a_i b_i c_i d_i}}_{\repsilon_i} \underbrace{\lr{b_i b_j c_i n}}_{\repsilon_i'} = 
	\underbrace{\lr{a_i b_i b_jc_i}}_{\rdelta_j} \underbrace{\lr{b_i c_i d_i n}}_{\ralpha_i} +
\underbrace{	\lr{a_i b_i c_i n}}_{\rdelta_i} \underbrace{\lr{b_i b_j c_i d_i}}_{\repsilon_j}.
        \end{equation*}
\end{enumerate}
\end{lemma}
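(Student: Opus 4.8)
The plan is to argue in three stages: specialize the formulas of \cref{prop:explicit} and \cref{primedvars_prop}, check the resulting polynomial identities, and then realize each as an exchange relation for $\Gr_{4,n}$.

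\emph{Stage 1 (specialization).} Since $D_i$ is the rightmost top chord, it has no ancestors and is not a sticky child, so in the notation of \cref{notation:arrow} both $\nearrow_i n$ and $\rightarrowp_i n$ collapse to the single marker $n$; moreover $D_i$ has neither a same-end parent nor a head-to-tail sibling to its right, since such a chord would itself be a top chord lying to the right of $D_i$. Feeding this into \cref{prop:explicit} gives $\ralpha_i=\lr{b_i c_i d_i n}$, $\rbeta_i=\lr{a_i c_i d_i n}$, $\rgamma_i=\lr{a_i b_i d_i n}$, $\rdelta_i=\lr{a_i b_i c_i n}$, $\repsilon_i=\lr{a_i b_i c_i d_i}$, and into \cref{primedvars_prop} (together with the \cref{prop:explicit} formulas for the relevant child chord $D_j$, which is sticky, or a left head-to-tail sibling, or a same-end child) gives the primed variables. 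Substituting these into the relations of \cref{lem:explicit-reln}, and deleting the sub-cases that mention a parent $D_p$ or a right head-to-tail sibling $D_\ell$ of $D_i$ (impossible here), produces exactly the seven displayed relations R1, R1', R2, R3a, R3a', R3b, R3b'; this proves the ``in particular'' clause.

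\emph{Stage 2 (the identities).} Each of the seven relations is a polynomial identity on $\Gr_{4,n}$. One expands the quadratic and cubic chain polynomials using \cref{def:chain_polynomials}, so that comparing \eqref{eq1:quadratic} and \eqref{eq2:quadratic} supplies the needed Pl\"ucker relation, and then applies \cref{lemma_identities}; in each case the verification reduces to the cancellation of two terms by the alternating property of Pl\"ucker coordinates. For example, in R2, expanding $\rgamma_j=\lr{a_j b_j n \br a_i b_i \br c_i d_i n}$ by \eqref{eq1:quadratic} and using $\lr{a_j b_j n a_i}=-\lr{a_j b_j a_i n}$ cancels the $\ralpha_i\rdelta_j$ term and leaves $\rbeta_i\rbeta_i'$; likewise in R1, expanding $\ralpha_j=\lr{b_j c_j d_j\br b_i a_i\br c_i d_i n}$ cancels against $\rbeta_i\repsilon_j$ and leaves $\ralpha_i\ralpha_i'$. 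The verifications of R1', R3a', R3b' are three-term Pl\"ucker relations, and R3a follows after two analogous cancellations; these are routine.

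\emph{Stage 3 (exchange relations for $\Gr_{4,n}$), the main obstacle.} It remains to exhibit, for each relation $xx'=M+M'$, a seed of the cluster structure on $\Gr_{4,n}$ in which $x$ (namely $\ralpha_i$ for R1, R1'; $\rbeta_i$ for R2; $\rdelta_i$ for R3a, R3a'; $\repsilon_i$ for R3b, R3b') is mutable with exactly the neighbors encoded by $M$ and $M'$. Each relation involves only the markers $a_i,b_i,c_i,d_i,n$ together with at most four markers of $D_j$, all bunched next to $D_i$, so after a cyclic rotation and iterated marker-inclusions --- both of which carry exchange relations to exchange relations by \cref{cor:cycrefl} and \cref{lem:add-marker-embed-gr} --- it suffices to realize the relation in a Grassmannian $\Gr_{4,m}$ with $m\le 8$. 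For R1', R3a', R3b' (six markers) the relation is literally a three-term Pl\"ucker relation of $\Gr_{4,6}$, realized in a cyclic rotation of the rectangles seed. For R1, R2, R3a, R3b the monomial $M$ or $M'$ contains a genuine quadratic cluster variable ($\ralpha_j$, $\rgamma_j$, $\rdelta_j$, or $\rdelta_i'$), each irreducible by \cref{cor:pos}, and here one produces the required seed for $\Gr_{4,7}$ or $\Gr_{4,8}$ from a reduced plabic graph (using the combinatorics of \cref{app:plabic}), if necessary followed by a single mutation that introduces the quadratic variable; since $\Gr_{4,6}$, $\Gr_{4,7}$, $\Gr_{4,8}$ are of finite cluster type, the quadratic cluster variables and their quiver neighborhoods are explicit, so one checks directly that the exchange relation obtained is the desired one. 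The bulk of the work is drawing the correct plabic graph and carrying out this last check in each of the four quadratic cases.
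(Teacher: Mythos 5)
Your proposal is correct and its crux (Stage 3) is essentially the paper's own argument: the paper proves the lemma by observing that each relation lives in a small Grassmannian $\Gr_{4,N}$ on the at most eight indices of $D_i,D_j$, checks there by an exhaustive finite-type search, and lifts to $\Gr_{4,n}$ via \cref{lem:add-marker-embed-gr}. Your Stages 1--2 (specializing \cref{prop:explicit} and \cref{primedvars_prop} and verifying the polynomial identities by hand) are not needed for the paper's proof of this lemma --- those identities are established elsewhere in the proof of \cref{primedvars_prop} --- but they are harmless and correct.
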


\begin{proof} Let $N$ be the set of 7 indices appearing in the domino variables of $D_i, D_j$. One can check computationally that the relations listed are exchange relations for $\Gr_{4, N} \cong \Gr_{4,7}$, for example by searching through all seeds. Using \cref{lem:add-marker-embed-gr}, this shows that the relations are exchange relations for $\Gr_{4, n}$ for any $[n] \supset N$.
\end{proof}

\begin{definition}\label{def:stable-under-promotion}
Let $D \in \CD$ be a chord diagram with rightmost top chord $\ctop=(a,b,c,d)$ and let $D_L, D_R$ be as in \cref{domino-var}.

Let $R$ be a relation from \cref{def:seed} for $\tilde{D}$, where $\tilde{D}=D_L$ or $\tilde{D}=D_R$ and suppose it is an exchange relation for $\Gr_{4,n}$. We say that 
$R$ is 
\emph{stable under promotion (induced by $\ctop$)} if 
relation $R$ with the corresponding domino variables of 
$D$ is also an exchange relation for $\Gr_{4,n}$.
\end{definition}
For example, if
 \begin{equation} \label{sample}
 \rbeta_i^{\tilde{D}} (\rbeta_i^{\tilde{D}})' = 
\rgamma_j^{\tilde{D}}+\ralpha_i^{\tilde{D}} \rdelta_j^{\tilde{D}}
 \end{equation} 
is an exchange relation for $\Gr_{4,n}$
and 
 \begin{equation*}
	\rbeta_i^{{D}} (\rbeta_i^{{D}})' = 
	\rgamma_j^{{D}}+\ralpha_i^{{D}} \rdelta_j^{{D}}
\end{equation*} 
is as well, then we say \eqref{sample} is stable under promotion (induced by $\ctop$).

\begin{proposition}\label{prop:exch-rel-true-on-Gr}
Let $D$ be a chord diagram on $[n]$. The relations (R1), (R2), (R3a) and (R3b) in 
\Cref{lem:explicit-reln} are exchange relations for $\Gr_{4,n}$. 
\end{proposition}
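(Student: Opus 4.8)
\textbf{Proof plan for \cref{prop:exch-rel-true-on-Gr}.}

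The plan is to induct on the number of chords of $D$, using the decomposition of $D$ into its left and right subdiagrams $D_L, D_R$ relative to the rightmost top chord $\ctop = (a,b,c,d)$, and showing at each stage that the exchange relations survive the operations that build $D$ from $D_L$ and $D_R$. The base case (no chords) is vacuous. For the inductive step, we consider the two ways $D$ is built in \cref{domino-var}: either by removing the penultimate marker (in which case all domino variables, hence all relations, are literally unchanged, and we are done by induction), or by removing the rightmost top chord, which is the substantive case.

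In the substantive case, the relations of $\Sigma_D$ split into three families. First, the relations involving only domino variables indexed by $\ctop$ together with a single chord $D_j$ adjacent to $\ctop$ (a sticky child, head-to-tail sibling, or same-end child of $\ctop$): these are precisely the relations enumerated in \cref{lem:quiver_top_chord}, which already establishes that each such relation is an exchange relation for $\Gr_{4,n}$ (by reducing to a finite check in $\Gr_{4,7}$ and then applying \cref{lem:add-marker-embed-gr} to embed into $\Gr_{4,n}$). Second, relations of $\Sigma_{D_L}$ (resp. $\Sigma_{D_R}$) involving only chords of $D_L$ (resp. $D_R$): by the inductive hypothesis these are exchange relations for $\Gr_{4,N_L}$ (resp. $\Gr_{4,N_R}$). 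We must show they remain exchange relations for $\Gr_{4,n}$ after applying $\rPsi$, i.e. that they are \emph{stable under promotion} in the sense of \cref{def:stable-under-promotion}. This is where \cref{thm:promotion2} does the heavy lifting: product promotion is a cluster quasi-homomorphism $\mathcal{A}(\Sigma_0) \to \mathcal{A}(\Fr(\Sigma_1))$, so it carries exchange relations of $\mathcal{A}(\Sigma_0)$ to exchange relations of $\mathcal{A}(\Fr(\Sigma_1))$ up to frozen factors, and by \cref{prop:similar} this persists through arbitrary mutation sequences. Using the fact (from the proof of \cref{thm:promotion2}) that $\mathcal{A}(\Fr(\Sigma_1))$ is a subcluster algebra of $\C[\Gr_{4,n}^\circ]$, and that $\rPsi$ is $\Psi$ with the $\mathcal{T}'$-monomial stripped off, the promoted relations are genuine $\Gr_{4,n}$ exchange relations. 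The one subtlety is the factorizations recorded in \cref{lem:when-promote-var-factor}: when $\ctop$ has a sticky child, the numerator of $\Psi_{ac}(\rzeta_i^{D'})$ picks up an $\ralpha_{\rtop}$ factor, so the ``obvious'' promoted relation must be divided through by the appropriate power of $\ralpha_{\rtop}$; one checks case by case (tracking the cases (1),(2),(3) of \cref{lem:when-promote-var-factor}) that after this rescaling the relation is exactly a relation appearing in \cref{def:seed} for $D$, hence one already handled or handled by the third family. Third, there are relations of $\Sigma_D$ that genuinely mix a chord of $D_L$ or $D_R$ with $\ctop$ and are new to $D$ — these are exactly the relations of the form R2 with $D_i$ (or its sibling) a top chord of a subdiagram, R3a/R3b at the interface, etc.; one identifies each such relation with one of the explicit relations in \cref{lem:explicit-reln} and verifies via \cref{primedvars_prop} (which gives the primed variables as honest Pl\"ucker-coordinate polynomials / chain polynomials) that it matches a $\Gr_{4,n}$ exchange relation, either directly from \cref{lem:quiver_top_chord} or by promotion from a smaller diagram.

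So the skeleton is: (i) reduce to the case $d = n-1$; (ii) partition the relations of $\Sigma_D$ into the three families above; (iii) dispatch the $\ctop$-local family by \cref{lem:quiver_top_chord}; (iv) dispatch the ``interior'' $D_L$- and $D_R$-families by combining the inductive hypothesis with the quasi-homomorphism property of $\rPsi$ (\cref{thm:promotion2}, \cref{prop:similar}), being careful to rescale by $\ralpha_{\rtop}$-powers exactly as dictated by \cref{lem:when-promote-var-factor}; (v) dispatch the interface relations using \cref{primedvars_prop} and \cref{lem:quiver_top_chord}. I expect the main obstacle to be step (iv), and specifically the bookkeeping around \cref{lem:when-promote-var-factor}: one must check that for each of the cases where a promoted domino variable factors, the exchange relation of $\Sigma_{D'}$ it participates in, after clearing the frozen factor $\ralpha_{\rtop}$ from every monomial, reproduces \emph{verbatim} one of the relations R1.i--vi, R2, R3a.i--iii, R3b.i--viii of \cref{lem:explicit-reln} for $D$ (with the optional variables $\rbeta_i, \ralpha_p, \repsilon_i, \rdelta_p, \repsilon_p, \ldots$ toggled on or off correctly). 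This is essentially a (finite but intricate) cross-check between the promotion formulas and the local quiver rules of \cref{def:seed}, and it is the only place where the argument is not purely formal.
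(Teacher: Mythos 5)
Your proposal takes essentially the same route as the paper: reduce to $\Gr_{4,7}$-checkable relations via \cref{lem:quiver_top_chord}, propagate by product promotion using \cref{thm:promotion2}, and control the discrepancies via \cref{lem:when-promote-var-factor}. The organizational difference is that you phrase the argument as a diagram-size induction with a three-way family partition of the relations, whereas the paper works per relation: for a fixed relation involving chord $D_i$, strip away outer markers and top chords until $D_i$ is the rightmost top chord, apply \cref{lem:quiver_top_chord}, then re-apply the promotions $f_{a_q c_q}\circ\Psi_{a_q c_q}$ one step at a time and track stability. Your ``third family'' (interface relations mixing $\ctop$ with a chord of $D_L$ or $D_R$) is in fact subsumed by the first two, since the auxiliary variables ($\rbeta_\ell$, $\rdelta_p$, $\repsilon_p$, $\ralpha_p$, \dots) indexed by $\ctop$ only appear during the promotion step and are produced automatically by the bookkeeping; the paper never needs a separate interface family, and it does not invoke \cref{primedvars_prop} in this proof (that result is used for an alternate regularity argument, not this one).

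The one substantive place where your sketch understates the work: you locate the bookkeeping entirely in the numerator factorizations of \cref{lem:when-promote-var-factor} (``dividing through by $\ralpha_{\rtop}$''), but that is only half the mechanism. The new auxiliary factors $\rdelta_p$, $\repsilon_p$, $\rbeta_\ell$ that appear in R1.iii--vi, R3a.ii--iii, R3b.iii--viii actually arise when the two monomials $\Psi(M)$ and $\Psi(M')$ of the promoted exchange relation acquire \emph{different} denominators and must be cleared to a common one. The paper controls this with \cref{obs:same-denominator}, which gives sufficient conditions for the denominators to agree, and uses \cref{lem:when-promote-var-factor} to certify that (except in the sticky cases, where a common $\ralpha_{\rtop}$ cancels) the resulting numerators are irreducible, hence match the formulas in \cref{lem:explicit-reln} verbatim. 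Without some analogue of \cref{obs:same-denominator} your step (iv) would leave the origin of the new factors unexplained; this is a real, if repairable, gap in your write-up rather than in your idea.
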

\begin{proof}

We will verify this using promotion. Because promotion is a quasi-cluster homomorphism (\cref{thm:promotion2}), we can obtain exchange relations for $\Gr_{4,n}$ by promoting exchange relations for $\Gr_{4, N_L}$ and $\Gr_{4, N_R}$. Using the notation of \cref{thm:promotion2}, let $x$ be a mutable variable of $\A(\Sigma_0)$ in a seed $\Sigma$ and $u$ the corresponding cluster variable of $\A(\Fr(\Sigma_1))$ in a seed $\Sigma'$ (which is also a seed for $\Gr_{4,n}$). If $xx'=M+ M$ is the exchange relation for $x$ in $\Sigma$ and $u u'=N+N$ is the exchange relation in $\Sigma'$, then $\Psi_{ac}(M + M')$ differs from $N+N'$ by a Laurent monomial in $\mathcal{T'}$. To clear this monomial, we write $\Psi_{ac}(M+M')$ over a common denominator, then take the numerator and remove all common factors in $\mathcal{T'}$. Call the resulting binomial $f_{ac}(\Psi_{ac}(M+M'))$; it is equal to $N + N'$.

Consider a chord $D_i$ in $D$.
By repeatedly removing penultimate markers which are not in any chord or removing the rightmost top chord and taking either $D_L$ or $D_R$ (defined as in \cref{thm:cluster}), we obtain a chord diagram $D'$ where $D_i$ is the rightmost top chord. Suppose the rightmost top chords we removed in this process were, in order, $(a_1,b_1,c_1,d_1),\dots,
(a_{p},b_{p},c_{p},d_p)$.
	Using \cref{lem:when-promote-var-factor},
we will show that each relation
in \Cref{def:seed} is obtained from an exchange relation in 
\Cref{lem:quiver_top_chord} by the sequence $f_{a_1 c_1} \circ \Psi_{a_1 c_1} \circ \dots \circ f_{a_p c_p} \circ \Psi_{a_p c_p}$. By the previous paragraph, this will imply each relation in \cref{def:seed} is an exchange relation for $\Gr_{4,n}$.

The following observation will be useful. It follows immediately from the definition of promotion.

\begin{observation}\label{obs:same-denominator}
	Suppose $D$ is a chord diagram on $N$ with largest markers $x,y:=x+1,n$ and choose $a, c$ such that $a<c-1, c<n-1$ and either $x \leq a$ or $a+1 \leq x \leq c$. Let $M, M'$ be any monomials in the domino variables of $D$ with no clauses containing $y$ but not $x$, the same number of clauses containing both $y$ and $n$, and the same number of clauses with $n$ but not $y$. Then the denominators of $\Psi_{ac}(M)$ and $\Psi_{ac}(M')$ are equal. 
\end{observation}

	{\bf [Case (R2.)]}
 Suppose that $D_i$ and $D_j$ are 
chords which are head-to-tail siblings, with $D_j$ to the left of $D_i$.
Since $D_i$ is a rightmost top chord in $D'$, \Cref{lem:quiver_top_chord} implies that 
\begin{equation}\label{R2}
	\rbeta_i^{D'} (\rbeta_i^{D'})' = 
	\rgamma_j^{D'}+\ralpha_i^{D'} 
 \rdelta_j^{D'} \quad \text{or} \quad \lr{a_i c_i d_i n} (\rbeta_i^{D'})' = \lr{n a_j b_j \br a_i b_i \br c_id_i n} + \lr{b_i c_i d_i n} \lr{a_j b_j a_i n}
 \end{equation}
is an exchange relation for $\Gr_{4,n}$.
Now we show inductively that \eqref{R2} is stable under the promotions $\Psi_{a_p c_p}, \dots, \Psi_{a_1 c_1}$. Suppose it is stable up until $\Psi=\Psi_{a_q c_q}$, corresponding to adding the rightmost top chord $D_q$. Let $\tilde{D}$ be the chord diagram obtained from $D'$ by adding chords $D_p, D_{p-1}, \dots, D_{q+1}$ (and the appropriate chord diagrams beneath or to the left). By induction, \eqref{R2} holds for $\tilde{D}$. Using \cref{prop:explicit}, the terms on the right hand side satisfy the conditions of \cref{obs:same-denominator}. More specifically, $\ralpha_i^{\tilde{D}}= \lr{b_i c_i d_i \rchn_i n}$,
\begin{equation*}
	\rgamma_j^{\tilde{D}}= \begin{cases}
		 \lr{n \lchn_j a_j b_j \br a_i b_i \br c_id_i \rchn_j n} & \text{$D_j$ not sticky}\\
		  \lr{ a_j a_j' b_j \br a_i b_i \br c_id_i \rchn_j n}& \text{$D_j$ sticky}
	\end{cases} \quad \text{and} \quad
\rdelta_j^{\tilde{D}}= \begin{cases}
	\lr{a_j b_j a_i \rchn_j n} & \text{$D_j$ not sticky}\\
	\lr{a_j' a_j b_j a_i}& \text{$D_j$ sticky}\\
\end{cases}.
\end{equation*}
There are no clauses containing the penultimate marker $d$ of $\tilde{D}$ but not $d-1$. In $\rgamma_j^{\tilde{D}}$, the clause $\br c_id_i \rchn_j n \rangle$ is equal to $\br c_id_i \rchn_i n \rangle$ because $D_j, D_i$ are siblings. Also, $b_j a_i \rchn_j n$ in $\rdelta_j^{\tilde{D}}$ involves the same chain of ancestors as $n \lchn_j a_j b_j$ in $\rgamma_j^{\tilde{D}}$, so if this chain is nontrivial, the end clause is the same in each. This verifies the conditions of \cref{obs:same-denominator}.

So after promotion $\Psi_{a_q c_q}$, both terms have the same denominator. By \cref{lem:when-promote-var-factor}, nontrivial factorization occurs in the numerator only if $D_j$ is a sticky child of $D_q$. In this case, the numerator of $\Psi(\rgamma_j^{\tilde{D}})$ is $\ralpha_q \rgamma_j$ and the numerator of $\Psi(\rdelta_j^{\tilde{D}})$ is $\ralpha_q \rdelta_j$, so $\ralpha_q$ is a common factor of the numerator. So applying $f_{a_q c_q}$ gives \eqref{R2} for the chord diagram with $D_q$ added. Thus \eqref{R2} is stable under promotion by $D_q$.

{\bf [Case (R3a.)]}
 Suppose that $D_i$ is a same-end parent of $D_j$.  
\Cref{lem:quiver_top_chord} implies that 
\begin{equation}\label{R3} 
	\rdelta_i^{D'} (\rdelta_i^{D'})' = 
	\rgamma_j^{D'} \repsilon_i^{D'}  +
	\rgamma_i^{D'} \rdelta_j^{D'}
 \end{equation}
is an exchange relation for $\Gr_{4,n}$.
It is straightforward to verify that \eqref{R3} is stable under any sequence of promotions corresponding to chords which are not same-end parents or head-to-tail siblings with $D_i$. (Note that this implies none of the chords in the sequence are same-end or head-to-tail with $D_i$.) Indeed, using \cref{prop:explicit}, the assumptions of \cref{obs:same-denominator} hold for each promotion in the sequence, so both terms of the right-hand side have the same denominator. There are no factorizations in the numerator unless the chord $D_q$ added is sticky to $D_i$. Then by \cref{lem:when-promote-var-factor}, both $\Psi(\rgamma_j)$ and $\Psi(\rgamma_i)$ have a factor of $\ralpha_p$ and there are no other factorizations. So we see \eqref{R3} is stable.

Now suppose one of the chords $D_1, \dots, D_p$ is a same-end parent of $D_i$. This chord is necessarily $D_p$, so we are
applying product promotion $\Psi=\Psi_{a_p c_p}$ to \eqref{R3}.
The denominator of
$\Psi(\rgamma_j^{D'})$ is
$\epsilon_p$, 
and the denominator of $\Psi(\rgamma_i^{D'})$ is $\delta_p$; $\Psi$ fixes both $\rdelta_j^{D'}$ and $\repsilon_i^{D'}$. 
If $D_p$ is not a sticky parent, then by 
 \cref{lem:when-promote-var-factor}, 
 when we apply promotion 
to the right-hand side of \eqref{R3}, no extra factors will 
appear in the numerators. If $D_p$ is a sticky parent, then by 
\cref{lem:when-promote-var-factor} (1), the numerator of $\Psi(\rgamma_i^{D'})$ is $\ralpha_{p} \rgamma_i^{D''}$; by \cref{lem:when-promote-var-factor} (3), the numerator of $\Psi(\rgamma_j^{D'})$ is $\ralpha_{p} \rgamma_j^{D''}$; and no other factorizations occur.
In either case, after applying $f_{a_p c_p}$, we obtain the exchange relation 
\begin{equation}\label{R3next}
	\rdelta_i^{D''} (\rdelta_i^{D''})' = 
		\rgamma_j^{D''} \repsilon_i^{D''} \rdelta_p^{D''} +
		\rgamma_i^{D''} \rdelta_j^{D''} \repsilon_p^{D''},
 \end{equation}
where $D''$ is the diagram obtained from $D'$ by adding $D_p$ and chords to its left. Further, \eqref{R3next} is stable under all subsequent promotions, again using \cref{prop:explicit} to check \cref{obs:same-denominator} and \cref{lem:when-promote-var-factor} to check that any factorization in numerators contributes the same factor to each term. 

Now suppose instead one of the chords $D_1, \dots, D_p$ is a head-to-tail sibling with $D_i$ to its right. Again, this chord must be $D_p$. Note that $\Psi_{a_p c_p}$ acts nontrivially
on a domino variable of \eqref{R3} only if that domino variable has a clause
containing $b_r=d_i$ but not $a_r=c_i$. 
Using \Cref{prop:explicit}, the only such domino variable of \eqref{R3} is
$\rgamma_i^{D'}=\lr{a_i, b_i, b_p,n}$, 
and hence when we apply promotion to 
$\rgamma_i^{D'}$ we introduce a 
new denominator of $\beta_p$.  There is no factorization in any numerators, so applying $f_{a_p c_p}$, 
we obtain \begin{equation}\label{R3nextnext} 
	\rdelta_i^{D''} (\rdelta_i^{D''})' = 
	\rgamma_j^{D''} \repsilon_i^{D''} \rbeta_p^{D''} +
	\rgamma_i^{D''} \rdelta_j^{D''},
 \end{equation}
where $D''$ is the chord diagram obtained from $D'$ by adding $D_p$
and chords below it. The relation \eqref{R3nextnext} is stable under all subsequent promotions, by a similar argument as previously. 

The proofs of relations (R1) and (R3b) are similar to the proofs
above, so we omit them. 
\end{proof}

\begin{proof}[Proof of \cref{thm:quiver}]

	Let $D \in \CD$ and $\Sigma_D$ be the seed defined in \cref{def:seed}. By \cref{thm:cluster}, $\txx(D)$ is an extended cluster for $\Gr_{4,n}$ and  $\widetilde{\Sigma}_D= (\txx(D), \tQ_D)$ is the unique seed of $\Gr_{4,n}$ with that cluster. We need to show that for any $x \in \Mut(Z_D)$, the exchange relation of $x$ is the same in $\Sigma_D$ and $\widetilde{\Sigma}_D$.
	
	Recall that the mutable variables $\Mut(Z_D)$ are precisely those appearing on the left hand side of relations (R1), (R2), (R3a), (R3b). Fix $x \in\Mut(Z_D)$, let $R$ be its exchange relation in $\Sigma_D$, and let $U$ be all neighbors of $x$ in $\Sigma_D$. By \cref{prop:exch-rel-true-on-Gr}, there is a seed $\Sigma'$ of $\Gr_{4,n}$ containing $\{x\} \cup U$ and $R$ is the exchange relation of $x$ in $\Sigma'$. Now, by \cite[Theorem 10]{CL}, any seed of $\Gr_{4,n}$ containing $\{x\} \cup U$ may be obtained from $\Sigma'$ by a sequence of mutations avoiding $\{x\} \cup U$, and thus $x$ has exchange relation $R$ in any seed containing $\{x\} \cup U$. Since $\widetilde{\Sigma}_D$ contains $\{x\} \cup U$, $x$ also has exchange relation $R$ in $\widetilde{\Sigma}_D$. We have shown the first sentence of the theorem statement. The second sentence follows immediately.
\end{proof}

\subsection{Signed seeds and the proof of \cref{thm:sign-definite}}\label{sec:thmsign}

The following notion of \emph{signed seed} will
be useful for proving \Cref{thm:sign-definite}.

\begin{definition}[Signed seed]\label{def:signedseed}
	Given a seed $((x_1,\dots,x_s),Q)$, 
	let $(\sigma_1,\dots,\sigma_s)\in \{1,-1\}^s$
	be an assignment of signs to each cluster and frozen variable.
	We denote the vertices of  $Q$ by
	$v_1,\dots,v_s$ (preserving this labeling after mutation), and 
	we write $\sigma(x_i)$ and $\sigma(v_i)$ to denote 
	$\sigma_i$.  For $v_i$ a mutable vertex of $Q$, let
	$$\inn_{\sigma}^Q(v_i) = \prod_{v_j \to v_i} \sigma_j \text{ and }
	\out_{\sigma}^Q(v_i) = \prod_{v_i \to v_j} \sigma_j.$$
	We say that %the triple 
	$\tilde{\Sigma} = ((x_1,\dots,x_s),(\sigma_1,\dots,\sigma_s),Q)$
	is a \emph{signed seed} if 
	for each mutable vertex $v_i$, %we have 
	\begin{equation}\label{eq:signed}
		\inn_{\sigma}^Q(v_i) = \out_{\sigma}^Q(v_i). %\text{ (\textrm{mod} 2)}.
	\end{equation}
\end{definition}

\begin{definition}
	We define mutation of signed seeds by
	$$\mu_k(\tilde{\Sigma}) = 
	(\mu_k(x_1,\dots,x_s), 
	(\sigma'_1,\dots,\sigma'_s), \mu_k(Q)),$$
	where $\sigma'_k 
	= \sigma_k \cdot \inn_{\sigma}^Q(v_k),$ %  - \sigma_k \text{ (\textrm{mod} 2)},$
	and $\sigma'_i = \sigma_i$ for $i\neq k$.
\end{definition}

\begin{remark} The definition of signed seeds and their mutation is motivated by the following. Say $x_1, \dots, x_s$ are real nonzero numbers with signs $\sigma_1, \dots, \sigma_s$. When \eqref{eq:signed} holds, then for any $i$, the number
 $$x'_i = \frac{\prod_{i\to j} x_j + \prod_{j \to i} x_j}{x_i}$$
has sign
	$\sigma'_i = %\sigma(x'_i) = 
	\sigma_i \cdot \inn_{\sigma}^Q(v_i) % - \sigma_i  \text{ (\textrm{mod} 2)}
	= \sigma_i \cdot \out_{\sigma}^Q(v_i).$ % - \sigma_i \text{ (\textrm{mod} 2)}.$$
	So if $\sigma_i$ is the sign of a cluster variable $x_i$ when evaluated on some point $p$, signed seed mutation determines the sign of an arbitrary cluster variable on $p$.
\end{remark}

\begin{remark}
There is a convenient reformulation of the notion of signed seed
in terms of the \emph{exchange matrix} $B(Q)$ associated to a quiver
$Q$ with $s$ vertices, of which $r\leq s$ are mutable.  
Recall that the \emph{exchange matrix}
$B:=(b_{ij})$ of $Q$ is the $s\times r$ 
	matrix 
	defined by 
$$b_{ij} = \begin{cases}
\# (\text{arrows }i\to j) &\text{ if there is an arrow }i\to j\\
  - \# (\text{ arrows }j\to i) &\text{ otherwise. }
\end{cases}$$
Thus we can write 
	$$\inn_{\sigma}^Q(v_i) = \prod_{b_{ji}>0} (\sigma_j)^{b_{ji}}
	\text{ and }
	\out_{\sigma}^Q(v_i) = \prod_{b_{ji}<0} (\sigma_j)^{b_{ji}}.$$
	It follows that 
	\eqref{eq:signed} is equivalent to the statement that 
	\begin{equation}\label{eq2:signed}
		\prod_{j} (\sigma_j)^{b_{ji}} = 1.
	\end{equation}
\end{remark}

\begin{proposition}
If $\tilde{\Sigma}$ is a signed seed, then 
$\mu_k(\tilde{\Sigma})$ is a signed seed
for
any mutable direction $k$.
\end{proposition}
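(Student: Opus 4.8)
The plan is to verify the defining condition \eqref{eq:signed} for $\mu_k(\tilde\Sigma)$ at every mutable vertex $v_i$, using the exchange-matrix reformulation \eqref{eq2:signed}, namely that a signed seed is exactly one for which $\prod_j (\sigma_j)^{b_{ji}}=1$ for all mutable $i$. This turns the statement into a purely combinatorial/linear-algebra identity: writing $B'=(b'_{ij})$ for the exchange matrix of $\mu_k(Q)$ (given by the Fomin--Zelevinsky mutation rule $b'_{ij}=-b_{ij}$ if $i=k$ or $j=k$, and $b'_{ij}=b_{ij}+\tfrac12(|b_{ik}|b_{kj}+b_{ik}|b_{kj}|)$ otherwise) and $\sigma'=(\sigma'_1,\dots,\sigma'_s)$ for the mutated signs ($\sigma'_k=\sigma_k\cdot\inn^Q_\sigma(v_k)$, $\sigma'_i=\sigma_i$ otherwise), I must show $\prod_j (\sigma'_j)^{b'_{ji}}=1$ for each mutable $i$.

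First I would dispose of the case $i=k$: here $b'_{jk}=-b_{jk}$, and the $\sigma'_j$ entering the product are all $\sigma_j$ for $j\neq k$ (the self-entry $b'_{kk}=0$ contributes nothing), so $\prod_j(\sigma'_j)^{b'_{jk}}=\prod_{j\neq k}(\sigma_j)^{-b_{jk}}=\big(\prod_j(\sigma_j)^{b_{jk}}\big)^{-1}=1$ by the signed-seed hypothesis applied to $v_k$. Next, the case $i\neq k$ with $b_{ik}=0$: then $b'_{ji}=b_{ji}$ for all $j\neq k$ and $b'_{ki}=-b_{ki}$; but since $B$ is skew-symmetric on the mutable part and $Q$ has no arrows between $i$ and $k$ when $b_{ik}=0$, in fact $b'_{ki}=b_{ki}=0$, and moreover $\sigma'_k$ appears with exponent $0$, so the product is unchanged and equals $1$.

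The main case is $i\neq k$ with $b_{ik}\neq 0$. Here I compute
\[
\prod_j (\sigma'_j)^{b'_{ji}}
= (\sigma'_k)^{b'_{ki}}\prod_{j\neq k}(\sigma_j)^{b'_{ji}}.
\]
Using $b'_{ji}=b_{ji}+\tfrac12(|b_{jk}|b_{ki}+b_{jk}|b_{ki}|)$ for $j\neq k$ and $b'_{ki}=-b_{ki}$, I split the product over $j\neq k$ as $\prod_{j\neq k}(\sigma_j)^{b_{ji}}$ times the "correction" $\prod_{j\neq k}(\sigma_j)^{\tfrac12(|b_{jk}|b_{ki}+b_{jk}|b_{ki}|)}$. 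The first factor is $\big(\prod_j(\sigma_j)^{b_{ji}}\big)\cdot(\sigma_k)^{-b_{ki}}=(\sigma_k)^{-b_{ki}}$ by the hypothesis at $v_i$. For the correction factor, I would observe that $\tfrac12(|b_{jk}|b_{ki}+b_{jk}|b_{ki}|)$ is, up to sign, $|b_{jk}|\cdot\min(b_{ki},0)\cdot(\pm)$ — more precisely it equals $\mathrm{sgn}(b_{ki})\cdot\max(0,\pm b_{jk})\cdot|b_{ki}|$ depending on the common sign pattern, so that the correction is $\prod_{j: \text{arrows }j\to k \text{ (or }k\to j)}(\sigma_j)^{(\text{something})\cdot|b_{ki}|}$, which up to reindexing is $\big(\inn^Q_\sigma(v_k)\big)^{\pm b_{ki}}$ or $\big(\out^Q_\sigma(v_k)\big)^{\pm b_{ki}}$ — and these are equal by the signed-seed condition at $v_k$. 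Putting the pieces together with $\sigma'_k=\sigma_k\cdot\inn^Q_\sigma(v_k)$ raised to $b'_{ki}=-b_{ki}$, the factors of $\inn^Q_\sigma(v_k)$ and $\sigma_k$ cancel in pairs, leaving $1$.

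The main obstacle I anticipate is bookkeeping the signs in the "correction factor" computation: the term $\tfrac12(|b_{jk}|b_{kj}+b_{jk}|b_{kj}|)$ behaves differently according to the four sign combinations of $(b_{jk},b_{ki})$, and one must carefully check that in each case the accumulated exponent on $\sigma_j$ reassembles into exactly $\inn^Q_\sigma(v_k)$ or $\out^Q_\sigma(v_k)$ raised to a power determined by $b_{ki}$, with no stray vertices or double-counting (in particular, vertices $j$ with $b_{jk}=0$ must drop out cleanly). A clean way to organize this, which I would adopt, is to work additively: take a formal logarithm, i.e. replace each $\sigma_j\in\{\pm1\}$ by an element $\epsilon_j\in\mathbb{Z}/2$, so that all the "products raised to powers" become $\mathbb{Z}/2$-linear combinations and the signed-seed condition becomes $\sum_j b_{ji}\epsilon_j\equiv 0\pmod 2$ for all mutable $i$. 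Over $\mathbb{Z}/2$ the mutation rule for $B$ simplifies dramatically (the $\tfrac12(\cdots)$ term becomes $|b_{ik}||b_{kj}|=b_{ik}b_{kj}\bmod 2$), and the identity $\sum_j b'_{ji}\epsilon'_j\equiv 0$ reduces to a one-line manipulation using $\epsilon'_k=\epsilon_k+\sum_{j}[b_{jk}>0]\epsilon_j$ together with the two hypotheses $\sum_j b_{ji}\epsilon_j\equiv 0$ and $\sum_j b_{jk}\epsilon_j\equiv 0$. This reformulation is exactly what makes the routine part routine, and it is the step I would write out in full.
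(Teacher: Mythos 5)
Your first, multiplicative computation is essentially the paper's proof: the paper assumes WLOG that $b_{ki}>0$, splits $\prod_j(\sigma'_j)^{b'_{ji}}$ according to whether $j=k$, $b_{jk}>0$, or $b_{jk}\le 0$, extracts two copies of $\inn_{\sigma}^Q(v_k)^{b_{ki}}$, and uses that these square to $1$. Your version keeps both signs of $b_{ki}$ and observes the correction factor is $\inn_\sigma^Q(v_k)^{\pm b_{ki}}$ or $\out_\sigma^Q(v_k)^{\pm b_{ki}}$ according to $\sgn(b_{ki})$, then uses $\inn=\out$; this is equivalent in outline and correct, at the cost of slightly more bookkeeping than the paper's WLOG reduction.

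However, the $\mathbb{Z}/2$ reformulation --- the part you say you would actually write out in full --- rests on a false identity. The claim that modulo $2$ the term $\tfrac12\bigl(|b_{jk}|b_{ki}+b_{jk}|b_{ki}|\bigr)$ ``becomes $|b_{jk}||b_{ki}|=b_{jk}b_{ki}$'' is wrong: this term equals $\sgn(b_{ki})\max(0,b_{jk}b_{ki})$, so modulo $2$ it equals $b_{jk}b_{ki}$ only when $\sgn(b_{jk})=\sgn(b_{ki})$, and is $0$ otherwise. With your sign-blind formula, $\sum_j b'_{ji}\epsilon'_j$ reduces (after applying the two hypotheses at $v_i$ and $v_k$) to $b_{ki}\cdot\sum_{j:\,b_{jk}>0}b_{jk}\epsilon_j \pmod 2$, which need not vanish. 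Concretely, take a quiver on mutable vertices $1,2$ and a frozen vertex $3$ with arrows $1\to2$, $3\to1$, $2\to3$, and set $\epsilon_1=\epsilon_2=\epsilon_3=1$; this is a signed seed, yet mutating at $k=1$ and checking $i=2$ your formula gives $1\neq0$. The $\mathbb{Z}/2$ calculation does go through if you carry the sign-dependent correction: the extra term is then $b_{ki}\sum_{j:\,\sgn(b_{jk})=\sgn(b_{ki})}b_{jk}\epsilon_j$, which, combined with the $b_{ki}\sum_{j:\,b_{jk}>0}b_{jk}\epsilon_j$ coming from $\epsilon'_k$, gives $0$ mod $2$ in both sign cases (trivially when $b_{ki}>0$, and via $\sum_j b_{jk}\epsilon_j\equiv0$ when $b_{ki}<0$). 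A smaller inaccuracy: $\epsilon'_k=\epsilon_k+\sum_j[b_{jk}>0]\,b_{jk}\,\epsilon_j$; your formula omits the multiplicity $b_{jk}$, which matters if the quiver has multiple arrows.
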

\begin{proof}
We will show that after we mutate
in direction $k$, \eqref{eq:signed} still holds for
$\mu_k(\tilde{\Sigma})$.  Let $Q'$ denote $\mu_k(Q)$.  
In $Q'$, \eqref{eq:signed} clearly holds at vertex $v_k$ itself,
and also for any vertex $v_i$ which is not
adjacent to $v_k$.
So let $v_i$ be a mutable vertex which is adjacent to $v_k$, i.e. 
$b_{ki} \neq 0$: we 
need to verify \eqref{eq:signed}.

Without loss of generality we suppose that $b_{ki}>0$.
(The argument
when $b_{ki}<0$  is similar,
so we omit it.)  
Let $B'=(b'_{ij})$
be the exchange matrix of $Q'$.
We will verify \eqref{eq2:signed}, that is, 
we will show that $\prod_j (\sigma')^{b'_{ji}} = 1$.
	Note that since $b_{ki}>0$, when we mutate at $k$, there are three cases:
	$$b'_{ji} = \begin{cases}
		-b_{ji} & \text{ if }j=k\\
		b_{ji} + b_{jk}b_{ki} & \text{ if }b_{jk}>0\\
		b_{ji} & \text{ otherwise}.
	\end{cases} $$
	Now we have that 

\begin{align*}
\prod_j (\sigma')^{b'_{ji}} & = 
		(\sigma'_k)^{b'_{ki}} \cdot \prod_{j\neq k, b_{jk}>0} (\sigma_j)^{b'_{ji}} \cdot \prod_{j\neq k, b_{jk}\leq 0} (\sigma_j)^{b_{ji}}\\
		&= (\sigma_k \inn_{\sigma}^Q(v_k))^{-b_{ki}} \cdot
		\prod_{j\neq k, b_{jk}>0} (\sigma_j)^{b_{ji}+b_{jk} b_{ki}} \cdot
		\prod_{j\neq k, b_{jk} \leq 0} (\sigma_j)^{b_{ji}} \\
		&= \left( \sigma_k^{b_{ki}} \cdot \prod_{j\neq k, b_{jk}>0} 
		\sigma_j^{b_{ji}} \cdot \prod_{j\neq k, b_{jk}\leq 0} \sigma_j^{b_{ji}} \right) \cdot 
		\inn_{\sigma}^Q(v_k)^{b_{ki}} \cdot
		\prod_{j \neq k, b_{jk}>0} (\sigma_j)^{b_{jk} b_{ki}} \\
		&= \prod_j (\sigma_j)^{b_{ji}} \cdot 
		\inn_{\sigma}^Q(v_k)^{b_{ki}} \cdot
		 \inn_{\sigma}^Q(v_k)^{b_{ki}}=1.
\end{align*}
\end{proof}

\begin{proposition}[$\Sigma_D$ is a signed seed]\label{prop:signed-seed}
Use \cref{prop:domino-var-signs-on-tile}
to assign a sign in $\{+,-\}$ to each domino variable of 
$\Sigma_D$.
Then with these signs, the seed
$\Sigma_D$ in \Cref{def:seed} is a signed 
seed.\footnote{Each mutable variable of 
$\Sigma_D$ is incident only to domino variables,
so for the purpose of checking the signed seed property,
	it doesn't matter what sign we associate to 
the other frozen variables of $\Sigma_D$.}
\end{proposition}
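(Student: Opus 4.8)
The plan is to use the fact, just established, that the signed-seed condition \eqref{eq:signed} is preserved under mutation: it therefore suffices to verify it at every mutable vertex of the single seed $\Sigma_D$. The first step I would take is a reformulation. If $x_k x_k' = M^+ + M^-$ is the exchange relation at a mutable vertex $v_k$, then as monomials $M^+ = \prod_{v_k \to v_j} x_j$ and $M^- = \prod_{v_j \to v_k} x_j$ (products over arrows, counted with multiplicity), so $\out^Q_\sigma(v_k) = \sgn(M^+)$ and $\inn^Q_\sigma(v_k) = \sgn(M^-)$, where $\sgn$ of a monomial means the product of the signs of its factors with multiplicity. Thus \eqref{eq:signed} at $v_k$ is exactly the assertion $\sgn(M^+) = \sgn(M^-)$; a repeated factor contributes $(\pm1)^2 = 1$, so multiplicities are irrelevant and everything can be done with exponents modulo $2$. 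By \Cref{def:seed}, and using \cref{rmk:beta=alpha} to identify $\rbeta_i$ with $\ralpha_p$ when $D_i$ is a sticky same-end child, the mutable variables of $\Sigma_D$ are precisely the left-hand sides of the relations R1, R2, R3a, R3b of \Cref{lem:explicit-reln}, and (by the footnote to the proposition) only the signs of the domino variables are relevant. So the task reduces to checking, for each relation in \Cref{lem:explicit-reln} and each of its sub-cases, that the two monomials on the right have equal sign once every domino variable is given the sign prescribed by \Cref{prop:domino-var-signs-on-tile}.

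To carry this out I would substitute the formulas for $\sgn(\ralpha_i), \sgn(\rbeta_i), \sgn(\rgamma_i), \sgn(\rdelta_i)$ and $\sgn(\repsilon_i) = +1$; each relation then collapses to a congruence modulo $2$ among the quantities $\after, \below, \sticky$ attached to the two or three chords $D_i, D_j$ (and, where present, $D_p$ or $D_\ell$) indexing the relation. Two simplifications do most of the work: the identity $\after(D)\cdot(\sticky(D) - \nonsticky(D)) \equiv \after(D) \pmod 2$, and the vanishing sign of every $\repsilon$. The remaining congruences I would isolate as short sub-lemmas about chord diagrams: if $D_j$ is the sticky child of $D_i$ then $\after(D_i) - \after(D_j) = 1$; if $D_j$ is a head-to-tail sibling of $D_i$ to its left then $\after(D_i) - \after(D_j) \equiv 1 + \below(D_j) \pmod 2$ (the chords counted by $\after(D_i) - \after(D_j)$ are $D_j$ together with the chords below $D_j$, by non-crossing); and if $D_j$ is a same-end child of $D_i$ then, writing $S$ for the set of chords starting strictly between $a_i$ and $a_j$, one has $\after(D_i) - \after(D_j) = 1 + |S|$ and $\below(D_i) = 1 + |S| + \below(D_j)$, whence $\after(D_i) - \after(D_j) \equiv \below(D_i) + \below(D_j) \pmod 2$; analogous comparisons with $\below(D_p)$ are needed when $D_i$ itself has a same-end parent $D_p$. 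Feeding these facts into R1 (its six variants), R2, R3a (three variants) and R3b (eight variants) should verify $\sgn(M^+) = \sgn(M^-)$ in every case.

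The genuine difficulty here is organizational rather than conceptual: there are on the order of twenty sub-cases, and since the formulas for $\sgn(\rbeta_i)$ and $\sgn(\rgamma_i)$ are themselves case-split (sticky same-end child; right head-to-tail sibling vs.\ same-end parent vs.\ neither), the main care is in matching each monomial factor of \Cref{lem:explicit-reln} to the correct geometric configuration — for instance, observing that a same-end child of $D_i$ can never also have a head-to-tail sibling to its right (so the relevant branch of the $\rgamma$-formula is pinned down). I expect no individual case to be hard; the discipline is in tracking which optional factors occur and how the $\sticky$/$\nonsticky$ toggle interacts with $\after$. As a sanity check one can note that, by \Cref{thm:quiver}, each of these relations is an honest exchange relation for $\Gr_{4,n}$, so in principle the required parities are forced by the geometry; the computation I propose simply confirms that the combinatorial sign rule of \Cref{prop:domino-var-signs-on-tile} is internally compatible with the quiver of \Cref{def:seed}.
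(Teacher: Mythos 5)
Your proposal is correct, but it takes a genuinely different route from the paper. You reduce the signed-seed condition to the statement that the two monomials on the right of each exchange relation in \cref{lem:explicit-reln} have equal sign (which is also the paper's reformulation), but you then verify this \emph{directly}: you substitute the closed-form signs of \cref{prop:domino-var-signs-on-tile} into all sub-cases and reduce each to a parity congruence among $\after$, $\below$, $\sticky$, which you discharge with three chord-diagram identities (sticky child: $\after(D_i)-\after(D_j)=1$; left head-to-tail sibling: $\after(D_i)-\after(D_j)=1+\below(D_j)$; same-end child: $\after(D_i)-\after(D_j)\equiv\below(D_i)+\below(D_j)$). I checked these identities and several representative cases (R1.i, R2, R3a.iii) and the parities do close up; your observation that a same-end child cannot simultaneously have a right head-to-tail sibling is the right way to pin down the correct branch of the $\rgamma$-formula. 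The paper instead argues recursively: it verifies the sign balance only for the rightmost-top-chord instances of the relations (\cref{lem:quiver_top_chord}), computing those signs via \cref{lem:signs}, and then shows that product promotion preserves the balance because promotion rescales pure functionaries by frozen factors of fixed sign and exchange relations are pure. Your version buys a self-contained combinatorial consistency check of \cref{prop:domino-var-signs-on-tile} against \cref{def:seed} that never touches the analytic machinery, at the cost of an organizational case analysis; the paper's version is shorter per case and reuses the promotion apparatus (\cref{lem:signs}, \cref{lem:when-promote-var-factor}) that is already needed to establish \cref{prop:domino-var-signs-on-tile} in the first place. One small caveat: your closing remark that the parities are ``forced by the geometry'' because the relations are exchange relations for $\Gr_{4,n}$ is not by itself a proof (an exchange relation need not have sign-definite monomials on an arbitrary subset); it is fine as a sanity check, as you present it, but the actual force comes from your explicit verification.
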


\begin{example} 
	The seed $\Sigma_D$ in \cref{cd-example-again}, with the signs given in \cref{ex:stdBCFW_signs} is a signed seed. It is easy to verify that $\mbox{in}(\rzeta_i)=\mbox{out}(\rzeta_i)$, for each mutable $\rzeta_i \in \Mut(Z_D)$. For example:
\begin{align*}
&\mbox{in}(\ralpha_5)=\mbox{sgn}(\ralpha_6) \, \mbox{sgn}(\rbeta_5) \, \mbox{sgn}(\repsilon_4)= (+1)(+1)(+1)=+1= (+1)(+1)=\mbox{sgn}(\ralpha_4) \, \mbox{sgn}(\repsilon_5)=\mbox{out}(\ralpha_5).\\
&\mbox{in}(\ralpha_6)=\mbox{sgn}(\rbeta_6) \, \mbox{sgn}(\repsilon_5)= (-1)(+1)=-1=\mbox{sgn}(\ralpha_5)=\mbox{out}(\ralpha_6).\\
\end{align*}
\end{example}

\begin{proof}[Proof of \cref{prop:signed-seed}] 
Recall that \cref{prop:domino-var-signs-on-tile} recursively follows from \cref{lem:signs}. We will use \cref{lem:signs} recursively in this proof. Because \cref{lem:signs} and \cref{prop:domino-var-signs-on-tile} are consistent, the signs in this proof obtained by using \cref{lem:signs} are the same as those given in \cref{prop:domino-var-signs-on-tile}.

We start by showing that for each relation in 
\Cref{lem:quiver_top_chord}, which concerns the case where 
chord $D_i$ is the rightmost top chord of our chord diagram,
each of the 
two monomials on the right-hand side
has the same sign, viewed as functions
on the associated tile of the amplituhedron. We then show using \cref{lem:signs} that this property is preserved under promotion.

In what follows, suppose that $D_i=(a,b,c,d)$ is the rightmost top chord, and let $\ell:= k_R+1$ be the number of chords in the subdiagram
consisting of $D_i$ and all chords below it.

In Case (R2), we have that $\sgn(\ralpha_i)=(-1)^{\ell-1}$ by \cref{lem:signs}.
Before we added $D_i$ to the chord diagram, we had that 
$\rgamma_j=\lr{a_j,b_j,b,n}$,
with sign $+1$, and 
$\rdelta_j=\lr{a_j,b_j,a,n}$, with sign $-1$, again by \cref{lem:signs}.
Since both $\rgamma_j$ and $\rdelta_j$ contain an odd number of $n$'s,
and come from $D_L$, their promotions pick up a sign of $(-1)^{\ell}$ by \cref{lem:signs}. Note that $\rdelta_j$ is fixed by promotion, while $\rgamma_j$ is affected by rule (a2). By \cref{lem:when-promote-var-factor}, $\Psi(\rgamma_j)= \rgamma_j/\lr{acdn}$. Since $\lr{acdn}= \rbeta_i$ has sign $(-1)^\ell$, we will have 
$\sgn(\rgamma_j) = (-1)^{2\ell}=+1$,
and $\sgn(\rdelta_j)=(-1)(-1)^{\ell}.$
It follows that $\rgamma_j$ will have sign $+1$
and $\ralpha_i \rdelta_j$ will have sign $+1$,
 so both monomials
on the right-hand side are positive. 

Similar arguments yield the signs of the monomials in the exchange
	relations for all other cases.

In Case (R1), we have that 
	$\sgn(\rbeta_i)=(-1)^{\ell}$,
	$\sgn(\repsilon_j)=1$,
	and $\sgn(\ralpha_j)=(-1)^{\ell-2}$.

	In Case (R1'), we have that 
	$\sgn(\rbeta_i)=(-1)^{\ell}$,
	$\sgn(\repsilon_j)=
	\sgn(\repsilon_i)=1$,
	and $\sgn(\ralpha_j)=(-1)^{\ell-2}$.

 In Case (R3a), we have that $\sgn(\rgamma_j)=\sgn(\rdelta_j)=-1$
	and $\sgn(\repsilon_i)=\sgn(\rgamma_i)=1$. 
	
	In Case (R3a'), we have that 
	$\sgn(\rgamma_j)=\sgn(\repsilon_i)=\sgn(\rgamma_i)=\sgn(\rdelta_j)=1$.%,

	In Case (R3b), we have that 
	$\sgn(\rdelta_j)=\sgn(\rdelta_i)=-1$ and 
	$\sgn(\repsilon_j)=1$.%,

	In Case (R3b'), we have that 
	$\sgn(\rdelta_j)=(-1)^{\ell}$, $\sgn(\ralpha_i)=(-1)^{\ell-1}$,
	$\sgn(\repsilon_j)=1$ and 
	$\sgn(\rdelta_i)=-1$.
To verify the signed seed 
property, it suffices to show that when we apply promotion
to an exchange relation where we know that both monomials on the right-hand side
have the same sign, we obtain a relation in which both monomials on the right-hand
side still have the same sign. But this follows from 
\cref{lem:signs}, and the fact that 
cluster exchange relations for the Grassmannian are
pure.
\end{proof}

\begin{proof}[Proof of \Cref{thm:sign-definite}]
The first statement just follows from \cref{cor:cluster-sign-description}. 

We turn to the second statement.	
 \cref{prop:domino-var-signs-on-tile} shows that each domino variable $x \in \txx(D)$ has a fixed sign $s_x$ on $\gto{D}$. All other variables in $\txx(D)$ are obtained by repeatedly applying rescaled product promotion to a frozen variable of some $\Gr_{4, N}$. 
	By \cref{lem:sign_of_bdry_twistors,rmk:lots-strong-sign}, each such variable $x \in \txx(D)$ has a fixed sign $s_x$ on $\gto{D}$. The signed seed property (\cref{prop:signed-seed}) shows that all cluster variables $s$ in $\A(\Sigma_D)$ have a fixed sign $s_x$ on $\gto{D}$. This completes the proof of the second statement of the theorem.

We turn to the third statement. The fact that the two subsets of $\Gr_{k,k+4}$ 
on the right-hand side of \cref{thm:sign-definite} coincide
is a consequence of the signed seed property 
	(\cref{prop:signed-seed}). The above paragraph asserts that 
	$\gto{D} \subseteq \{Y \in \Gr_{k,k+4}: s_x \cdot x(Y)>0 \text{ for all }x \in \txx(D)\}.$ 
	So all that remains is to show the reverse inclusion. This argument is identical to the proof of \cref{cor:cluster-sign-description}. 
\end{proof}

\subsection{Algorithm for determining 
the quiver of a BCFW tile}

In this section we describe an algorithm for 
constructing the quiver of a standard BCFW tile.
As we explain in \cref{rem:extend}, 
it is possible to generalize this algorithm
to the case of general BCFW tiles, but for simplicity
we focus on standard tiles here.
This algorithm relies on a few computational ingredients.

Recall the definitions of the mutation
sequences $\pmb{\cyc_{k,n}}$ and $\pmb{\cyc_{k,n}^{-1}}$ from 
\cref{lem:cyclic}.  In what follows, 
we apply $\pmb{\cyc_{4,n-1}}$ to $\Sigma_{4,n}^{-2}$ by 
regarding the rightmost \emph{two} columns of variables as frozen.
\begin{lemma}[Adding a new marker $d=n-1$]\label{cor:n-1} 
Let $c=n-2$ and $d=n-1$.
Suppose that $\pmb{\mu}$ is a mutation sequence from the rectangles seed
for $\Gr_{4,\{1,\dots,c,\widehat{d},n\}}$ 
(see \cref{fig:G37-Le-quiver})
	to the seed $\tSigma_{D'}$ (from \cref{thm:cluster})
associated to $D'$,
where $D'$ is a chord diagram
on markers $\{1,\dots,c,\widehat{d},n\}$. 
Then 
$\pmb{\mu} \circ 
	\pmb{\cyc}_{4,n-1} \circ 
	\pmb{\cyc}_{4,n}^{-2}(\Sigma_{4,n})=
 \tSigma_{D}$, i.e. 
$\pmb{\mu} \circ \pmb{\cyc}_{4,n-1} \circ \pmb{\cyc}_{4,n}^{-2}$
is a mutation sequence 
from the rectangles seed $\Sigma_{4,n}$
to $\tSigma_{D}$, where $D$
is the chord diagram obtained from $D'$ by adding a new marker $d$. 
\end{lemma}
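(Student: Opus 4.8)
The plan is to reduce \cref{cor:n-1} to the structural description of $\tSigma_D$ in \cref{thm:cluster} (case (2), ``Remove penultimate marker'') together with the known mutation sequences relating cyclic shifts of the rectangles seed (\cref{lem:cyclic}). Recall that when $d = n-1 < c+1$ does not lie on any chord of $D$, \cref{thm:cluster}(2) says $\tSigma_D$ is obtained from $\tSigma_{D'}$ by adjoining the four frozen variables of $\Gr_{4,n}$ involving $n-1$, namely $f_{n-4}, f_{n-3}, f_{n-2}, f_{n-1}$ (with $c = n-2$, $d = n-1$ in the current notation). So the content of the lemma is really about the \emph{base point}: one must show that $\pmb{\cyc}_{4,n-1} \circ \pmb{\cyc}_{4,n}^{-2}$ carries the rectangles seed $\Sigma_{4,n}$ to a seed which, after discarding/freezing the columns involving $d$, is exactly (a cyclic rotation of) the rectangles seed $\Sigma_{4,\{1,\dots,c,\widehat d,n\}}$ for $\Gr_{4,\{1,\dots,c,\widehat d, n\}}$ on which $\pmb{\mu}$ acts.

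First I would make precise the embedding of cluster structures: by \cref{lem:add-marker-embed-gr} with $J = [n]$, $j = n-1 = d$, $I = J \setminus \{d\}$, the inclusion $\iota\colon \C[\Gr_{4,I}] \to \C[\Gr_{4,n}]$ sends seeds to subseeds, and — reading the proof of that lemma — in an appropriate cyclic rotation of the rectangles seed for $\Gr_{4,n}$, the Pl\"ucker coordinates containing $d$ are exactly the frozen variables in the rightmost column, while the second-from-rightmost column contains $f_{d-3}^I, f_{d-2}^I, f_{d-1}^I, f_{d+1}^I = f_{n-4}, f_{n-3}, f_{n-2}, f_n$ (here $f_{d+1}^I$ wraps around to involve $n$ since $d$ is omitted). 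The correct cyclic rotation is the one that places $d = n-1$ last among the ``vertical steps,'' which is precisely $\Sigma_{4,n}^{-2}$ cyclically shifted appropriately; concretely, I would check that $\pmb{\cyc}_{4,n}^{-2}(\Sigma_{4,n}) = \Sigma_{4,n}^{-2}$ is the seed whose rightmost two columns are the frozen Pl\"uckers involving $n-1$ together with the ``would-be'' frozen column for $I$, matching the setup in the proof of \cref{lem:add-marker-embed-gr}. This is a direct inspection of \cref{def:rectangles} and \cref{fig:G37-Le-quiver}.

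Next I would invoke the crucial reindexing step: applying $\pmb{\cyc}_{4,n-1}$ — i.e. the cyclic-shift mutation sequence of \cref{lem:cyclic} but performed on the $(n-1)$-index Grassmannian obtained by \emph{regarding the rightmost two columns as frozen} — is by \cref{lem:cyclic} exactly a mutation equivalence between $\Sigma_{4,n-1}$-type seeds. Since in our ambient $\Gr_{4,n}$ seed the rightmost two columns are frozen (they are the four $n-1$-Pl\"uckers, which are genuinely frozen in $\Gr_{4,n}$, plus the column $f_{n-4},f_{n-3},f_{n-2},f_n$ which we treat as frozen for the purpose of this sub-mutation), the sequence $\pmb{\cyc}_{4,n-1}$ does not touch those columns and hence, on the complementary $(n-1)$-index sub-rectangles-seed, realizes the cyclic shift promised by \cref{lem:cyclic}. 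The net effect is that $\pmb{\cyc}_{4,n-1}\circ\pmb{\cyc}_{4,n}^{-2}(\Sigma_{4,n})$ has as its non-$d$ part a cyclic rotation of $\Sigma_{4,\{1,\dots,c,\widehat d,n\}}$ with the frozen variables of $\Gr_{4,\{1,\dots,c,\widehat d,n\}}$ in the right place, and its four $d$-columns are precisely the four frozen Pl\"uckers of $\Gr_{4,n}$ involving $n-1$. Applying $\pmb{\mu}$, which by hypothesis carries the rectangles seed for $\Gr_{4,\{1,\dots,c,\widehat d,n\}}$ to $\tSigma_{D'}$ and which (as a mutation sequence on the $(n-1)$-index part) does not touch the $d$-columns, yields exactly $\tSigma_{D'}$ with the four $d$-frozens adjoined — and by \cref{thm:cluster}(2) this is $\tSigma_D$. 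Since cluster seeds are determined by their clusters (\cref{thm:GSV}), it suffices to check agreement of extended clusters, which the above accounts for.

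The main obstacle I anticipate is \textbf{bookkeeping the cyclic rotations consistently}: \cref{lem:cyclic} is stated for $\pmb{\cyc}(\Sigma_{k,n}) = \Sigma_{k,n}^1$, so one must carefully track which rotation $\Sigma_{4,n}^{-2}$ is, verify it is the one in which the $d$-Pl\"uckers occupy the rightmost column(s) in the sense needed by \cref{lem:add-marker-embed-gr}, and confirm that performing $\pmb{\cyc}_{4,n-1}$ ``inside'' the $(n-1)$ non-$d$ columns genuinely matches the $(n-1)$-Grassmannian cyclic shift rather than some shifted variant — in particular that the marker $n$ plays the role of the largest index in both the $n$- and $(n-1)$-index pictures, consistently with \cref{index-sets}. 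A secondary check is that $\pmb{\mu}$, a priori a mutation sequence for $\Gr_{4,\{1,\dots,c,\widehat d,n\}}$, extends verbatim to the $\Gr_{4,n}$ seed because all its mutations avoid the frozen $d$-columns; this is immediate from \cref{lem:add-marker-embed-gr} but should be stated. Once the rotation bookkeeping is pinned down, the rest is a formal consequence of \cref{thm:cluster}, \cref{lem:cyclic}, \cref{lem:add-marker-embed-gr}, and \cref{thm:GSV}.
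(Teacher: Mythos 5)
Your proposal is correct and takes essentially the same route as the paper's proof: $\pmb{\cyc}_{4,n}^{-2}$ puts the four Pl\"uckers containing $d=n-1$ into the rightmost frozen column of $\Sigma_{4,n}^{-2}$, $\pmb{\cyc}_{4,n-1}$ applied to the remaining quiver recovers the rectangles seed for $\Gr_{4,\{1,\dots,c,\widehat{d},n\}}$, and $\pmb{\mu}$ then yields $\tSigma_{D'}$ together with those four frozens, which is $\tSigma_D$ by \cref{thm:cluster}(2). The additional appeals to \cref{lem:add-marker-embed-gr} and \cref{thm:GSV} are consistent with, but not needed beyond, the paper's argument.
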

\begin{proof}
	By \cref{lem:cyclic},
	$\pmb{\cyc}_{4,n}^{-2}(\Sigma_{4,n}) = \Sigma_{4,n}^{-2}$,
	and hence only the rightmost column of frozens contain the 
	index $d=n-1$.  If we ignore those four frozen variables,
	the remaining quiver is a cyclic shift of the rectangles seed
	for $\Gr_{4,\{1,2,\dots,\hat{d},n\}}$.  If 
	we apply
	$\pmb{\cyc}_{4,n-1}$ to that remaining quiver we get exactly
	the rectangles seed for $\Gr_{4,\{1,2,\dots,\hat{d},n\}}$.  
	Therefore we can now apply $\pmb{\mu}$ to 
	obtain 
	$\tSigma_{D'}$ together with the four frozen variables containing $d=n-1$
	in the rightmost column.  
	This is the seed $\tSigma_{D}$.
\end{proof}

\begin{lemma}[A mutation sequence from the rectangles seed $\Sigma_{4,n}$ to $\Sigma_1=\Sigma_1^{a,c}$]
\label{lem:Fr1}
Let $a,b,c,d,n$ be as in 
 \cref{not:LR_cluster}, and let
 $n_R = n-a$.
The following is a 
mutation sequence from the rectangles seed
$\Sigma_{4,n}$ to the seed $\Sigma_1 = \Sigma^{a,c}_1$ 
shown in \cref{fig:seed_sigma_one}: 

First mutate down each column, going from right to left, skipping the vertices labeled by rectangles 
\[                 3 \times (n_R-2),1 \times (n_R-3), 2 \times (n_R-3), 3 \times (n_R-3), 1 \times (n_R-4).\]
Then again mutate down each column, going from right to left, skipping the vertices labeled by rectangles 
\[2 \times (n_R-2),3 \times (n_R-2),1 \times (n_R-3), 2 \times (n_R-3), 3 \times (n_R-3)\]
and all rectangles which   are contained in a $1 \times (n_R-4)$ or a $3 \times 1$ rectangle. 
 
After this sequence, the variables of $\Sigma_1^{a,c}$ label the vertices of the quiver as shown in 
    \cref{fig:box-to-fr-sigma-one}.
\end{lemma}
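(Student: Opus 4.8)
The plan is to verify the prescribed mutation sequence by tracking its effect on both the quiver and the extended cluster of the rectangles seed $\Sigma_{4,n}$, and then comparing the outcome, vertex by vertex, with \cref{fig:box-to-fr-sigma-one} (equivalently, with $\Sigma_1^{a,c}$ of \cref{fig:seed_sigma_one}). Two general facts make this tractable. First, the relevant mutations of $Q_{4,n}$ are all ``square moves'' on the dual plabic graph: the exchange relation is a three-term Pl\"ucker relation, so each such mutation replaces a Pl\"ucker coordinate by another Pl\"ucker coordinate with an explicitly computable index, and the bookkeeping reduces to combinatorics of $4$-subsets of $[n]$. Second, by \cref{thm:GSV} a seed of $\Gr_{4,n}$ is determined by its cluster alone, so it suffices to compute the final cluster and check that it equals the extended cluster of the target seed; the quiver then follows, although we will in any case be able to read it off directly from the square-move description.

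To organize the computation I would first peel off the ``bulk'' of the sequence using \cref{lem:cyclic} together with \cref{lem:add-marker-embed-gr}. The instruction ``mutate down each column, going from right to left'' is the column analogue of the row sweep $\pmb{\cyc}$; restricted to the sub-quiver spanned by the rectangles avoiding the last few columns, it implements a cyclic-shift-type mutation of the rectangles seed for a smaller Grassmannian on $N_R$ markers (this is where the indexing $n_R = n - a$ in the statement comes from), by \cref{lem:cyclic} and \cref{lem:add-marker-embed-gr}. The prescribed set of skipped rectangles --- $3\times(n_R-2)$, the column $n_R-3$, most of the column $n_R-4$, and so on, with the enlarged skip set on the second pass --- is precisely the collection of vertices near the corner indexed by $a,b,c,d$ that must be held fixed so that the ambient cyclic shift restricts correctly and so that the frozen variables $\lr{a\,b\,c\,d}$, $\lr{a\,b\,c\,n}$, $\lr{a\,b\,d\,n}$, $\lr{a\,c\,d\,n}$, $\lr{b\,c\,d\,n}$ come out in the right places. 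After removing this bulk, what remains is a bounded number of genuinely local mutations in the neighborhood of the columns indexed by $a,b,c,d$; these I would carry out by hand, tracking the Pl\"ucker relations explicitly and checking that the single non-Pl\"ucker entry, the chain polynomial $\lr{1\,2\,n \br a\,b \br c\,d\,n}$, appears exactly where \cref{fig:seed_sigma_one} places it. This last point is consistent with the proof of \cref{thm:promotion2}, where that chain polynomial is produced from a Pl\"ucker seed by the single mutation at $\lr{1\,a\,d\,n}$.

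A more transparent route, which I would run in parallel, is via plabic graphs: $\Sigma_{4,n}$ is the target-labeled seed of the standard rectangles reduced plabic graph $G_{4,n}$, and $\Sigma_1'$ --- which differs from $\Sigma_1$ by the single mutation at $\lr{1\,a\,d\,n}$ --- is the target-labeled seed of the reduced plabic graph of \cref{fig:promotion-plabic}. Each square-move mutation is a square move on the plabic graph, so the lemma reduces to checking that the prescribed double column sweep of square moves carries $G_{4,n}$ to the graph of \cref{fig:promotion-plabic} (up to that last mutation, which produces the chain polynomial and lands on $\Sigma_1$). Square moves are easy to visualize, so this becomes a picture-chase, and it also gives an independent check on the quiver arrows of the final seed. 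For the inductive flavor of the argument one can set up an induction on $n-a$, with a small explicit Grassmannian as base case and the ``add a marker'' move of \cref{cor:n-1} as the inductive step.

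The main obstacle is precisely the bookkeeping: the sequence has length linear in $n$, and one must check that every intermediate mutation really is a square move (so no higher-degree cluster variable is created prematurely), that the skipped-vertex prescription matches the arrow structure of the target quiver, and that the degenerate cases flagged after \cref{thm:promotion2} --- upper promotion ($a=1$), $a=2$, $a=3$, and $c=b+1$ --- are handled by collapsing the relevant vertices as in \cref{sec:degenerate}. None of this is conceptually deep, but it is where all the real effort lies, and I would present it as the above induction so that each inductive step involves only a bounded amount of local checking on top of an already-established cyclic shift.
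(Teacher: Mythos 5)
The first thing to flag is that the paper itself does not prove this lemma. Immediately after the statement, the authors write that \cref{lem:Fr1} ``was tested extensively via computer checks'' and ``is experimental,'' and they point the rigor-demanding reader to \cref{rem:notexperimental}, which invokes the Oh--Speyer result that any two Pl\"ucker seeds of $\Gr_{4,n}$ are connected by square moves avoiding their common Pl\"ucker coordinates. So there is no proof in the paper to compare your proposal against --- only a certification-by-computer and a cited alternative.

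Your proposal is a reasonable roadmap for a genuine proof, and the two parallel routes you describe (tracking Pl\"ucker indices through square moves vs.\ chasing the dual plabic graph) are indeed the natural strategies, and almost certainly the content of the authors' computer check. The observation that $\Sigma_1'$ (one mutation away from $\Sigma_1$) is the target-labeled seed of the plabic graph in \cref{fig:promotion-plabic} is the right anchor for the endpoint of the sequence, and \cref{thm:GSV} does let you check only the cluster rather than the quiver. However, as you acknowledge, what you have is a plan, not a proof: you have not carried out the bookkeeping, you have not verified that every intermediate mutation is a square move (no premature non-Pl\"ucker variable), and you have not confirmed that the skip sets correctly produce the frozen boundary $\mathcal{T}$ in the right positions. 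In particular, the claim that the column sweep is ``the column analogue of $\pmb{\cyc}$'' needs care: $\pmb{\cyc}$ in \cref{lem:cyclic} is a row sweep, and the column sweep here is not literally a cyclic shift of the whole seed but a restricted operation with vertices held fixed, so one cannot simply quote \cref{lem:cyclic}/\cref{lem:add-marker-embed-gr} --- the restriction and the boundary effect near columns $a,\dots,d$ must be analyzed explicitly, which is exactly where the work is. Similarly, the degenerate cases of \cref{sec:degenerate} ($a=1,2,3$ and $c=b+1$) each change which vertices coincide and therefore change the skip set, so each needs its own check. In short: your outline is consistent with how one would verify the lemma, but it is not yet a proof, and the paper does not supply one either --- it relies on computation plus \cref{rem:notexperimental}.
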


\begin{figure}  
		\includegraphics[width=\textwidth]{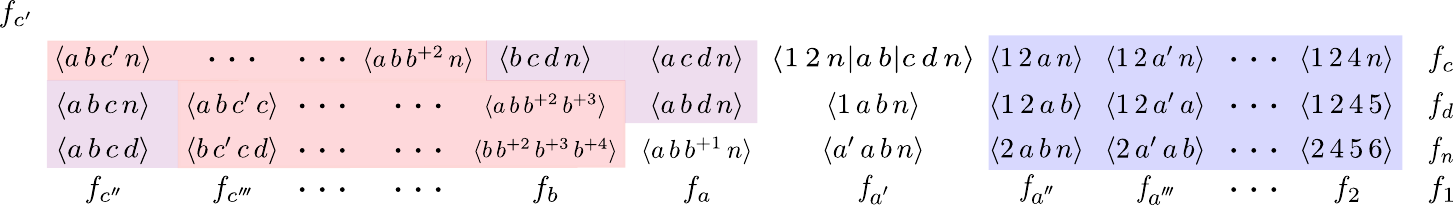}

   \caption{ The elements of the extended cluster of $\Sigma_1$ of \cref{fig:seed_sigma_one}
 arranged according to the vertices of the rectangle quiver of $\Sigma_{4,n}$, from which they are obtained by applying the mutation sequence of \cref{lem:Fr1}. \label{fig:box-to-fr-sigma-one} }
	\end{figure}

We note that while 
\cref{lem:Fr1} 
was tested extensively via computer checks, it 
is experimental.  The reader who does not want to rely on experimental 
statements can use 
the (less-efficient) procedure for 
going from $\Sigma_{4,n}$ to $\Sigma_1$ explained in \cref{rem:notexperimental}.

\begin{remark}\label{rem:notexperimental}
By \cite{OhSpeyer}, given any 
two Pl\"ucker seeds $\Sigma$ and $\Sigma'$
for $\Gr_{4,n}$, one can find a sequence
of mutations at $4$-valent vertices that does not
involve mutating at any Pl\"ucker coordinate that 
$\Sigma$ and $\Sigma'$ have in common.  Thus 
a (less-efficient) alternative to 
 to \cref{lem:Fr1}
is to start at the rectangles seed and randomly 
mutate at $4$-valent vertices labeled by 
Pl\"ucker coordinates which are not in $\Sigma_1$,
until one arrives at the seed $\Sigma_1$.
\end{remark}

\begin{proposition}[A mutation sequence from $\Fr(\Sigma_1)$ to $\tilde{\Sigma}_D$]
\label{prop:combinesequence} Let $D\in \CD$ be a chord diagram and $D_L,D_R$ its left and right subdiagrams as in \cref{def:leftright}.
If $\pmb{\mu}^{(L)}$ 
and $\pmb{\mu}^{(R)}$ are mutation sequences from
$\Sigma_0^L$ and
	$\Sigma_0^R$  
	(shown in \cref{fig:seed_sigma_zero_promoted})
	to $\tilde{\Sigma}_{D_L}$ and 
	 $\tilde{\Sigma}_{D_R}$, respectively,
then 
$\pmb{\mu}^{(R)} 
\pmb{\mu}^{(L)}$ 
is a mutation sequence from 
$\Fr(\Sigma_1)$ to $\tilde{\Sigma}_D$.
\end{proposition}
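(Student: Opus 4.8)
The plan is to prove \cref{prop:combinesequence} by assembling it from the pieces already established in this section, tracking carefully how the seeds $\Sigma_0 = \Sigma_0^L \sqcup \Sigma_0^R$, its promotion $\Psi(\Sigma_0)$, and the target seed $\Fr(\Sigma_1)$ are related. The key observation is that by \cref{thm:promotion2}, product promotion $\Psi = \Psi_{ac}$ is a cluster quasi-homomorphism from $\mathcal{A}(\Sigma_0)$ to $\mathcal{A}(\Fr(\Sigma_1))$; combined with \cref{prop:similar}, this means that applying a mutation sequence $\pmb{\mu}$ to $\Sigma_0$ and then promoting yields the same seed as first promoting $\Sigma_0$ to $\Psi(\Sigma_0)$ (which, after rescaling by the frozen factors in $\mathcal{T}'$ and freezing the variables in $\mathcal{T}$, is precisely $\Fr(\Sigma_1)$) and then applying the ``same'' mutation sequence $\pmb{\mu}$ in $\mathcal{A}(\Fr(\Sigma_1))$. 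So the two mutation sequences $\pmb{\mu}^{(L)}$ and $\pmb{\mu}^{(R)}$, which by hypothesis take $\Sigma_0^L$ to $\tilde{\Sigma}_{D_L}$ and $\Sigma_0^R$ to $\tilde{\Sigma}_{D_R}$ respectively, can be viewed as mutation sequences on the relevant subquivers of $\Fr(\Sigma_1)$.

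First I would make precise the identification of $\Fr(\Sigma_1)$ with (a frozen/rescaled version of) $\Psi(\Sigma_0)$: \cref{fig:seed_sigma_zero_promoted} and \cref{fig:seed_sigma_one} show that $\Fr(\Sigma_1)$ is obtained from $\Psi(\Sigma_0)$ by removing the frozen factors $r_n, r_d$ (which are Laurent monomials in $\mathcal{T}'$) and additionally freezing the variables in $\mathcal{T}$. Since these operations do not change which mutations are legal at mutable vertices nor the combinatorics of the mutable subquiver, a mutation sequence acting on the mutable vertices of $\Psi(\Sigma_0)$ acts identically on $\Fr(\Sigma_1)$, up to the frozen factors tracked by rescaled product promotion $\rPsi$. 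Next I would observe that $\pmb{\mu}^{(L)}$ only involves mutable vertices coming from the ``$N_L$'' connected component and $\pmb{\mu}^{(R)}$ only involves those from the ``$N_R$'' component, so the two sequences act on disjoint sets of vertices and hence commute; this justifies writing the composite as $\pmb{\mu}^{(R)} \pmb{\mu}^{(L)}$ without ambiguity. Applying $\pmb{\mu}^{(L)}$ to the $N_L$-part of $\Fr(\Sigma_1)$ transforms it, via the quasi-homomorphism correspondence, into $\rPsi(\txx(D_L))$ together with its quiver — which is exactly the $N_L$-contribution to $\tilde{\Sigma}_D$ described in Case (3) of \cref{thm:cluster} — and similarly for $\pmb{\mu}^{(R)}$.

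Finally I would check that after performing both $\pmb{\mu}^{(L)}$ and $\pmb{\mu}^{(R)}$ we land exactly on $\tilde{\Sigma}_D$. By \cref{thm:cluster} Case (3), the extended cluster $\txx(D)$ equals $\{\rbeta_k, \rgamma_k, \rdelta_k, \repsilon_k\} \cup \OPsi(\txx(D_L) \cup \txx(D_R)) \cup \{f_c, f_d, f_{a-1}\}$; the first four variables together with the three frozen Pl\"ucker coordinates are precisely the variables in $\mathcal{T}$ that we kept frozen in $\Fr(\Sigma_1)$ (after unfreezing is not needed, since they are frozen in $\tilde\Sigma_D$'s associated subseed $\Sigma_D$ — though one should note $\tilde\Sigma_D$ itself only freezes $f_1,\dots,f_n$, so a small bookkeeping check comparing $\Fr(\Sigma_1)$'s frozen set $\mathcal{T}$ with the frozen set of $\tilde\Sigma_D$ is needed, using that $\mathcal{T} \setminus \{f_c,f_d,f_{a-1}\}$ consists of cluster variables that remain after mutation). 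The remaining variables $\OPsi(\txx(D_L) \cup \txx(D_R))$ are produced by $\pmb{\mu}^{(R)}\pmb{\mu}^{(L)}$ as explained above. Since by \cref{thm:GSV} a seed for $\Gr_{4,n}$ is determined by its cluster, matching clusters suffices to conclude that $\pmb{\mu}^{(R)}\pmb{\mu}^{(L)}(\Fr(\Sigma_1)) = \tilde{\Sigma}_D$.

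The main obstacle I anticipate is the careful bookkeeping of frozen factors and of which variables are frozen versus mutable at each stage: product promotion introduces denominators (the $r_x$ factors) that $\rPsi$ strips off, and the sets $\mathcal{T}$, $\mathcal{T}'$ appearing in \cref{thm:promotion2} must be correctly matched against the frozen variables $f_c, f_d, f_{a-1}$ and the new domino variables $\rbeta_k, \rgamma_k, \rdelta_k, \repsilon_k$ appearing in \cref{thm:cluster}. In particular one must confirm that the quasi-homomorphism correspondence of \cref{prop:similar}, which a priori only guarantees proportionality of cluster variables (equality up to a Laurent monomial in frozens), becomes genuine equality once one passes to $\rPsi$ and uses the explicit normalization in \cref{def:rPsi}; this is essentially the content of \cref{thm:cluster}'s Case (3) already, so it should go through, but it is where the argument requires the most care. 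The degenerate cases ($a=1$, i.e.\ upper promotion, and $a=2,3$, and $c=b+1$) from \cref{sec:degenerate} would need a brief parallel remark, since then one of $\Sigma_0^L, \Sigma_0^R$ is trivial or some of the frozen variables coincide.
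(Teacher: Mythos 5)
Your proposal is correct and follows essentially the same route as the paper's proof: apply \cref{thm:promotion2} and \cref{prop:similar} to transport the hypothesis that $\pmb{\mu}^{(L)}, \pmb{\mu}^{(R)}$ take $\Sigma_0^L \sqcup \Sigma_0^R$ to $\tilde\Sigma_{D_L} \sqcup \tilde\Sigma_{D_R}$ across the quasi-homomorphism, rescale by $\rPsi$ to upgrade proportionality to equality, note the frozen variables of $\Fr(\Sigma_1)$ persist untouched, and conclude by matching extended clusters via \cref{thm:GSV}. The ``bookkeeping concern'' you flag at the end is handled in the paper exactly as you anticipate (passing from $\propto$ to $=$ by applying $\OPsi$), so your worry is not a gap but just the point where the explicit normalization of \cref{def:rPsi} and the description of $\txx(D)$ in \cref{thm:cluster} do the work.
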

\begin{proof}
	By \cref{thm:promotion2} and \cref{prop:similar},
	the seed 
$\Sigma_0^L \sqcup \Sigma_0^R$
	 is similar to 
	$\Fr(\Sigma_1)$, and hence
	the seed $\pmb{\mu}^{(R)} 
	\pmb{\mu}^{(L)}(\Sigma_0^L \sqcup \Sigma_0^R) = 
	\tilde{\Sigma}_{D_L} \sqcup \tilde{\Sigma}_{D_R}$
	 is similar to 
	$\pmb{\mu}^{(R)} 
	\pmb{\mu}^{(L)}(\Fr(\Sigma_1))$, that is,
	$\Psi(\tilde{\Sigma}_{D_L} \sqcup \tilde{\Sigma}_{D_R})
	\propto 
	\pmb{\mu}^{(R)} 
	\pmb{\mu}^{(L)}(\Fr(\Sigma_1))$.
	Therefore when we apply the rescaled product promotion map, we obtain
	$\overline{\Psi}(\tilde{\Sigma}_{D_L} \sqcup \tilde{\Sigma}_{D_R})
	= 
	\pmb{\mu}^{(R)} 
	\pmb{\mu}^{(L)}(\Fr(\Sigma_1))$.
It follows that 
$\pmb{\mu}^{(R)} 
\pmb{\mu}^{(L)}(\Fr(\Sigma_1))$ contains the 
 cluster variables
$\OPsi(\txx(D_L) \cup \txx(D_R))$.
The seed
$\pmb{\mu}^{(R)} 
\pmb{\mu}^{(L)}(\Fr(\Sigma_1))$ also contains the 
variables
$\{\rbeta_k, \rgamma_k, \rdelta_k,\repsilon_k\} \cup 
\{f_c, f_d, f_{a-1}\}$ 
	because these are frozen
	in $\Fr(\Sigma_1)$.  Therefore
$\pmb{\mu}^{(R)} 
\pmb{\mu}^{(L)}(\Fr(\Sigma_1))$ contains the entire
	cluster $\txx(D)$, defined in 
	\cref{thm:cluster}, and hence coincides
	with $\tSigma_D$.
\end{proof}

By combining the previous results, we obtain a recursive construction of the 
mutation sequence from 
the rectangles seed $\Sigma_{4,n}$ to $\tilde{\Sigma}_D$.

\begin{theorem}[Recursive algorithm for $\tSigma_D$]\label{thm:algo}
Let $\tSigma_D$ be the seed associated to a chord diagram $D\in \CD$.
As before, let $c=n-2$ and $d=n-1$.
The following is an inductive algorithm for constructing the mutation sequence from
	the rectangles seed $\Sigma_{4,n}$ to $\tilde{\Sigma}_D$.
\begin{itemize}
\item Suppose that $D$ has no chord ending 
 at $(c,d)$.  Let $D'$ be the 
chord diagram on $\{1,2,\dots,c,\widehat{d},n\}$
obtained from $D$ by removing the penultimate marker $d$.
By induction we know the mutation sequence from the 
rectangles seed for $\Gr_{4,\{1,\dots,c, \widehat{d},n\}}$ to 
$\tSigma_{D'}$,
so by \cref{cor:n-1}, we can  obtain the mutation sequence 
from the rectangles seed $\Sigma_{4,n}$ to $\tSigma_D$.
\item  
Suppose that the rightmost top chord 
ends at $(c,d)$, that is,
		$D_{\rtop}=(a,b,c,d)$.
		Let $N_L = \{1,2,\dots,a,b,n\}$
and $N_R = \{b,\dots,c,d,n\}$, with $D_L$ and $D_R$ the 
left and right subdiagrams of $D$.
Then \cref{prop:combinesequence} gives a mutation sequence from 
	$\Fr(\Sigma_1)$ to 
	$\tilde{\Sigma}_D$, and 
	\cref{lem:Fr1} gives a mutation sequence from 
	the rectangles seed $\Sigma_{4,n}$ to $\Sigma_1$, so by concatenating them,
	we obtain a mutation sequence from 
	$\Sigma_{4,n}$ to 
	$\tilde{\Sigma}_D$.
\end{itemize}
\end{theorem}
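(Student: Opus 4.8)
\textbf{Proof proposal for \cref{thm:algo}.}

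The plan is to prove the statement by induction on the number of markers of the chord diagram $D$, following exactly the two cases in the statement, and assembling the mutation sequence from the three building blocks already established: \cref{cor:n-1} (adding a penultimate marker not used by any chord), \cref{lem:Fr1} (a mutation sequence from the rectangles seed $\Sigma_{4,n}$ to $\Sigma_1^{a,c}$), and \cref{prop:combinesequence} (combining the left and right mutation sequences into one from $\Fr(\Sigma_1)$ to $\tSigma_D$). The base case is $n=4$: the rectangles seed $\Sigma_{4,4}$ consists of the single frozen Pl\"ucker coordinate $\lr{1234}$, which is exactly $\txx(D)$ when $D$ has no chords, so the empty mutation sequence works. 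For the inductive step one uses \cref{thm:cluster}, which defines $\tSigma_D$ recursively in precisely the two cases appearing in \cref{thm:algo}, so the recursion of the algorithm mirrors the recursive definition of the seed.

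In the first case, $D$ has no chord ending at $(c,d)=(n-2,n-1)$, and in particular (since the rightmost top chord ends at $(c_k,d_k)$ with $d_k<n-1$, or there are no chords) the penultimate marker $d=n-1$ lies in no chord. Let $D'$ be $D$ with the marker $d$ removed. By the inductive hypothesis we have a mutation sequence $\pmb\mu$ from the rectangles seed for $\Gr_{4,\{1,\dots,c,\widehat d,n\}}$ to $\tSigma_{D'}$. Applying \cref{cor:n-1} verbatim (with this $c,d$), the composite $\pmb\mu\circ \pmb{\cyc}_{4,n-1}\circ \pmb{\cyc}_{4,n}^{-2}$ is a mutation sequence from $\Sigma_{4,n}$ to $\tSigma_{D}$. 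I would just note that the hypothesis of \cref{cor:n-1} — that $\pmb\mu$ goes from the rectangles seed on $\{1,\dots,c,\widehat d,n\}$ to $\tSigma_{D'}$ — is exactly what induction provides, and that the seed $\tSigma_D$ produced matches the Case (2) (``remove penultimate marker'') output of \cref{thm:cluster}, since adding back the four frozen variables $f_{n-4},f_{n-3},f_{n-2},f_{n-1}$ containing $d$ is precisely what the last column of $\pmb{\cyc}_{4,n}^{-2}(\Sigma_{4,n})$ contributes.

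In the second case, the rightmost top chord is $D_{\rtop}=(a,b,c,d)$ with $d=n-1$, so we are in the ``remove rightmost top chord'' case of \cref{thm:cluster}, with $N_L=\{1,\dots,a,b,n\}$, $N_R=\{b,\dots,c,d,n\}$, and subdiagrams $D_L,D_R$. Here I would invoke \cref{lem:Fr1} to get a mutation sequence from $\Sigma_{4,n}$ to $\Sigma_1=\Sigma_1^{a,c}$, then freeze the variables in $\mathcal T$ to reach $\Fr(\Sigma_1)$ (this freezing changes no mutable structure, only which vertices are declared frozen). By induction applied to the smaller diagrams $D_L$ and $D_R$ — whose marker sets $N_L,N_R$ are strictly smaller than $N$ — we obtain mutation sequences $\pmb\mu^{(L)},\pmb\mu^{(R)}$ from the rectangles seeds $\Sigma_0^L,\Sigma_0^R$ (which \cref{fig:seed_sigma_zero} exhibits as cyclic shifts of rectangles seeds, hence reachable as in the inductive hypothesis after the trivial identification of cyclic rotations of rectangles seeds) to $\tSigma_{D_L},\tSigma_{D_R}$. \cref{prop:combinesequence} then shows $\pmb\mu^{(R)}\pmb\mu^{(L)}$ carries $\Fr(\Sigma_1)$ to $\tSigma_D$; concatenating with the sequence from \cref{lem:Fr1} finishes the induction.

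The main obstacle, and the point I would be most careful about, is the bookkeeping that makes the inductive hypothesis applicable to $D_L$ and $D_R$: the algorithm's induction is on diagrams with marker set $[m]$ for some $m$, but $D_L,D_R$ live on general index sets $N_L,N_R$, and the seeds $\Sigma_0^L,\Sigma_0^R$ of \cref{fig:seed_sigma_zero} are not literally the rectangles seed $\Sigma_{4,|N_L|}$ but a cyclic rotation of it. One must therefore check that \cref{thm:cluster} (and hence the algorithm) is invariant under the relabeling $N_L\to\{1,\dots,|N_L|\}$ and under the $\pmb{\cyc}$ mutation sequences relating rectangles seeds to their cyclic rotations (\cref{lem:cyclic}); this is exactly the content of the remark after \cref{thm:cluster} that the construction extends to general index sets. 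Once that is granted, every step is a direct citation of \cref{cor:n-1}, \cref{lem:Fr1}, and \cref{prop:combinesequence}, and no new computation is required.
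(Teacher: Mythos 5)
Your proposal is correct and follows essentially the same route as the paper, which treats the theorem as an immediate assembly of \cref{cor:n-1}, \cref{lem:Fr1}, and \cref{prop:combinesequence} and gives no separate proof. Your added care about the base case and about reconciling the inductive hypothesis (rectangles seeds on $[m]$) with the cyclically shifted rectangles seeds $\Sigma_0^L,\Sigma_0^R$ on general index sets via \cref{lem:cyclic} addresses exactly the bookkeeping the paper leaves implicit.
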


\begin{remark}\label{rem:extend}
It is possible to generalize \cref{thm:algo}
to the case of general BCFW tiles.
Recall from \cref{def:recipe}
that general BCFW cells are built by a sequence
of operations consisting of the BCFW product,
inserting a marker, performing a cyclic shift,
and performing a reflection.  Since 
we have mutation sequences corresponding to the 
	cyclic shift (cf. \cref{lem:cyclic}) 
	and reflection (cf. \cref{lem:reflect}),  
	it is possible to extend \cref{thm:algo}
	to produce a mutation sequence from 
	the rectangles seed to the seed associated
	to any general BCFW tile.
\end{remark}

\section{Proofs about the BCFW product}\label{sec:BCFWcells}

Recall the BCFW map and BCFW product from \Cref{def:bcfw-map,def:butterfly}. In this section, we show the effect of the BCFW product at the level of matroids under appropriate hypotheses (cf. \cref{not:coindipendence}). Under the same hypotheses, we show that the BCFW map is injective, and describe the closure and boundary of $S_L \bcfw S_R$. We also verify every BCFW cell satisfies these hypotheses.

\begin{lemma}[Positroids and BCFW product]\label{lem:matroid_under_bcfw} 
	Let $S_L \subset {\Gr}^{\ge0}_{k_L, N_L}$ and $S_R \subset {\Gr}^{\ge0}_{k_R, N_R}$ be positroid cells as in \cref{not:coindipendence}, with associated
	positroids
	$\pos_L$ and $\pos_R$, and plabic graphs $G_L$ and $G_R$. 
Let $\pos_L'$ and $\pos_R'$ be the positroids corresponding to $G'_L$ and $G'_R$
	(shown at the right of 
	 \cref{fig:bcfw-bases-graph}); note that $G'_L$ and $G'_R$ can be regarded as
	 subgraphs of $G_L \bcfw G_R$.
	
Then the bases of the positroid $\pos_L \bcfw \pos_R$ of $S_L \bcfw S_R$ are exactly the sets $I_L \sqcup \{f\} \sqcup I_R$ where $I_L, f, I_R$ are disjoint and satisfy one of the following:
	\begin{center}
	\begin{tabular}%\label{table:positroid_bases}
 {|c | c|c|c|}
		\hline
		&$I_L$ & $f$ & $I_R$\\
		\hline
		(1)&$I_L \in \pos_L, ~ b \notin I_L$& $a$ &$I_R \in \pos_R'$\\
		\hline
		(2)&$I_L \in \pos_L$& $b$ &$I_R \in \pos_R'$\\
				\hline
		(3)&$I_L \in \pos_L'$& $c$ &$I_R \in \pos_R,~d \notin I_R$\\
		\hline
		(4)&$I_L \in \pos_L'$& $d$ &$I_R \in \pos_R$\\
		\hline
		(5)&$I_L \in \pos_L'$& $n$ &$I_R \in \pos_R$\\
		\hline
		(6)&$I_L \in \pos_L'$& $n$ &$(I_R \setminus \{c\}) \cup \{d\} \in \pos_R$\\
		\hline
	\end{tabular}
\end{center}

\end{lemma}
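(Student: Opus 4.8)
\textbf{Proof strategy for Lemma~\ref{lem:matroid_under_bcfw}.}

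The plan is to compute the positroid of $G_L \bcfw G_R$ directly from the combinatorics of perfect orientations and path matrices, using the explicit path matrix already obtained in the proof of \cref{prop:butterfly-matrix-same}. Recall that in that proof we fixed acyclic perfect orientations $\O_L, \O_R$ of $G_L, G_R$ in which $\{a,b,n\}$ and $\{b,c,d,n\}$ are sinks (possible by the coindependence hypothesis of \cref{not:coindipendence} together with \cref{rem:co} and \cref{acycliclemma}), and we wrote down the path matrix of $G_L \bcfw G_R$ and row-reduced it to the matrix in \eqref{eq:path-mtx}. A $k$-subset $J$ is a basis of $\pos_L \bcfw \pos_R$ precisely when the corresponding maximal minor of this matrix is a nonzero Laurent polynomial in the edge weights. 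Since the matrix \eqref{eq:path-mtx} has block structure (the middle row indexed by the source $a$ connecting the $\mtx_L$-block on columns $N_L$ to the $\mtx_R$-block on columns $N_R$), I would expand the minor along the structure of that middle row: any basis must contain exactly one of the ``bridge'' columns and the remaining columns split between the left and right blocks. This naturally produces a sum over the cases $f \in \{a,b,c,d,n\}$, and in each case the nonvanishing condition factors into a condition on the left columns and a condition on the right columns.

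First I would set up notation: let $G'_L, G'_R$ be the subgraphs of $G_L \bcfw G_R$ depicted on the right of \cref{fig:bcfw-bases-graph} (one-vertex extensions of $G_L$, $G_R$ by the boundary vertices of the butterfly), with positroids $\pos'_L, \pos'_R$; I would record the explicit relationship between $\pos_L$ and $\pos'_L$ (and likewise on the right) in terms of which Plücker coordinates are gained or lost by the extension — this is a small, local plabic-graph computation. Then for each choice of $f \in \{a,b,c,d,n\}$ I would identify, from the paths available in the orientation of \cref{fig:orientation-butterfly}, exactly which column-subsets $I_L$ of $N_L$ and $I_R$ of $N_R$ can be completed through that bridge column to a nonzero minor. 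The cases $f=a$ and $f=b$ should give $I_L \in \pos_L$ (with the extra constraint $b \notin I_L$ when $f=a$, since $a$ and $b$ cannot both be ``used'' on the left) and $I_R \in \pos'_R$; the cases $f=c,d$ should be the mirror image; and the two appearances of $f=n$ correspond to the two ways the rightmost ``butterfly'' column interacts with $B_c' = B_c + w B_d'$ and $B_d' = B_d + v B_n$, which is exactly why case (6) has the twisted condition $(I_R \setminus \{c\}) \cup \{d\} \in \pos_R$.

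The main obstacle I anticipate is bookkeeping the exact form of $\pos'_L$ versus $\pos_L$ and making sure the six cases are exhaustive and mutually consistent — in particular verifying that a set $I_L \sqcup \{f\} \sqcup I_R$ counted in two different cases (e.g. appearing via $f=n$ in both (5) and (6)) really is a basis, and that no basis is missed. I would handle this by computing the minor explicitly using the Lindström–Gessel–Viennot lemma (each maximal minor of a path matrix is a positive sum over families of vertex-disjoint paths), so that nonvanishing is equivalent to existence of a suitable vertex-disjoint path family in the oriented butterfly; the enumeration of such families routing through each bridge column then gives the table directly. A secondary subtlety is the acyclicity of the chosen orientations, needed so that the path-family description of the minors is valid (no cancellation); this is already guaranteed by \cref{acycliclemma} under the coindependence hypotheses, which I would invoke rather than reprove. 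Finally, I would note that the statement about $\dim(S_L \bcfw S_R)$ and well-definedness is already established in \cref{prop:butterfly-matrix-same}, so here only the identification of the bases is needed, and the whole argument is ``local'' in the sense that it only uses the structure of the butterfly gadget plus the bases of $\pos_L, \pos_R, \pos'_L, \pos'_R$.
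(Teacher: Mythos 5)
Your core plan --- characterize the bases of $\pos_L \bcfw \pos_R$ by enumerating vertex-disjoint path families in an acyclic perfect orientation of the butterfly, via LGV --- is sound and is essentially dual to what the paper does. The paper's proof works directly with source sets of perfect orientations (invoking \cref{prop:positroid} rather than LGV): it observes that in any perfect orientation of $G_L\bcfw G_R$ either the green or the blue highlighted edges of \cref{fig:bcfw-bases-graph} form a directed path, splits into four sub-cases according to the orientation of the remaining butterfly edges, and reads off the six rows of the table from the resulting restrictions to $G_L$, $G_R$, $G'_L$, $G'_R$. Since a vertex-disjoint path family from $I_\O\setminus J$ to $J\setminus I_\O$ is exactly the data needed to reverse edges and obtain the perfect orientation with source set $J$, your LGV enumeration would land on the same case analysis with different bookkeeping; neither approach is more general than the other.

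Where I would push back is on your proposed intermediate step of Laplace-expanding the row-reduced matrix \eqref{eq:path-mtx} along the middle row. First, the claim that ``any basis must contain exactly one of the bridge columns'' is false as stated: for instance, in case (1) a basis may have $n\in I_L$, $f=a$, and $b,c,d\in I_R$, so it meets $\{a,b,c,d,n\}$ in all five positions. The decomposition $I_L\sqcup\{f\}\sqcup I_R$ distinguishes one element $f$ but places no such restriction on $I_L,I_R$. Second, although each full $k\times k$ minor of \eqref{eq:path-mtx} equals the corresponding minor of the honest path matrix (the row operations are unitriangular) and is therefore subtraction-free, the individual cofactors in a single-row Laplace expansion of the \emph{row-reduced} matrix need not be sign-coherent, so nonvanishing cannot be read off term by term. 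Your fallback --- applying LGV to the original path matrix with the acyclic orientation from \cref{acycliclemma} --- avoids both problems and is the right way to make the argument rigorous. Finally, in either language make sure you prove both inclusions: that every set in the table is a basis \emph{and} that every basis appears in the table. The orientation/LGV classification gives both at once provided you genuinely classify all perfect orientations (or all path families), not merely exhibit some.
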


\begin{proof}
 
	We prove the lemma by analyzing the possible source sets of perfect orientations of $G_L \bcfw G_R$, which by \cref{prop:positroid} and \cref{thm:positroidcell},
	give rise to the bases of 
	$\pos_L \bcfw \pos_R$.
	
	\begin{figure}
		\includegraphics[width=\textwidth]{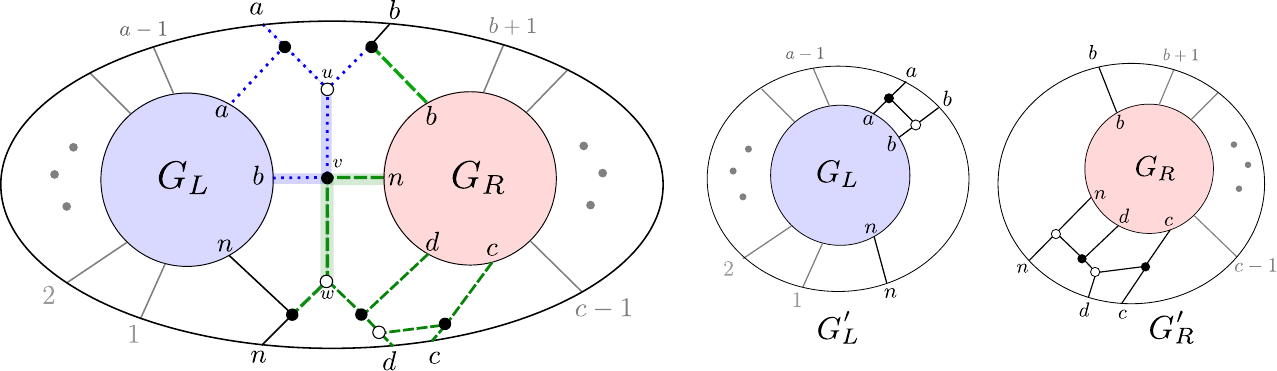}
		
		\caption{\label{fig:bcfw-bases-graph}
On the left, $G_L \bcfw G_R$ with edge colorings used in the proof of \cref{lem:matroid_under_bcfw}. Center and right, the graphs $G_L', G_R'$ mentioned in the statement of the same result.}
	\end{figure}
	In any perfect orientation $\O$ of $G_L \bcfw G_R$, 
	either the dashed green highlighted edges or the dotted blue 
	 highlighted edges must form a directed path and the other highlighted edges of \cref{fig:bcfw-bases-graph} must be oriented towards the vertex $v$. If the blue highlighted edges form a directed path, then $\O$ restricted to the blue edges is a perfect orientation of $G_L'$; similarly with the green edges and $G_R'$.

Consider a perfect orientation $\O$ of $G_L \bcfw G_R$.

\noindent \textbf{Case 1}: Suppose the green highlighted edges are a directed path in $\O$. Then, depending on the orientation of the edges incident to $u$, we are in one of the cases in \cref{fig:bcfw-bases-case1} where edges $e,e'$ form a directed path. 
	\begin{figure}
	\includegraphics[width=\textwidth]{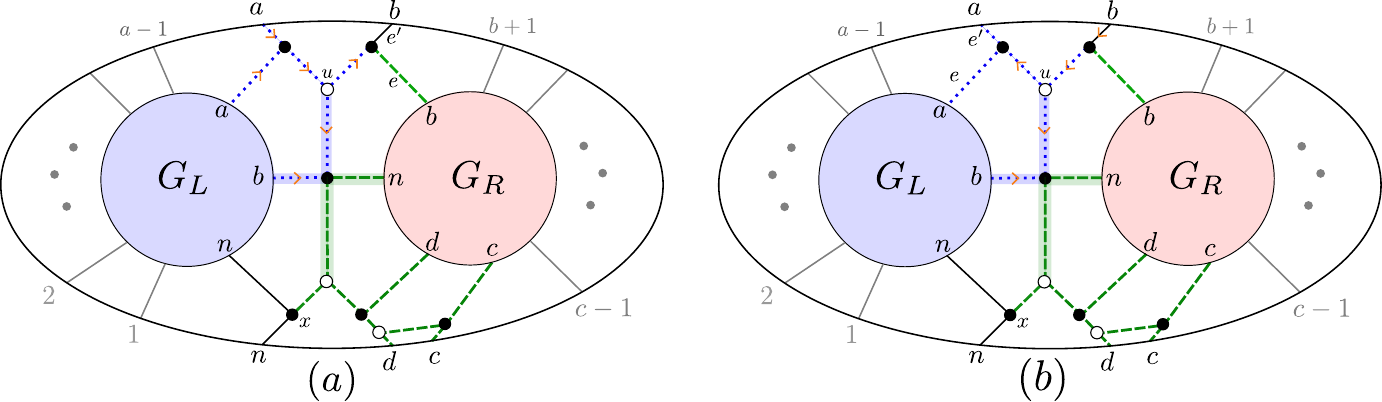}
	\caption{\label{fig:bcfw-bases-case1}}
\end{figure}
In (a), the restriction $\O_L$ of $\O$ to $G_L$ is a perfect orientation where $a,b$ are sinks. The restriction $\O_R'$ of $\O$ to $G_R'$ is a perfect orientation whose source set must be disjoint from that of $\O_L$ (because only one edge may be oriented away from vertex $x$, $n$ may not be a source in both $\O_L$ and $\O_R$). Any such perfect orientations of $G_L, G_R'$ may arise as $\O_L, \O_R'$. The sources of $\O$ are precisely $a$, the sources of $\O_L$ and the sources of $\O_R'$. So this case shows that all sets in (1) are bases of $\pos_L \bcfw \pos_R$. Using similiar logic, case (b) shows that all sets in (2) are bases of $\pos_L \bcfw \pos_R$.

\noindent \textbf{Case 2}: Suppose the blue highlighted edges are a directed path in $\O$. Then, depending on the orientation of the edges incident to $w$, we are in one of the cases in \cref{fig:bcfw-bases-case2}.
\begin{figure}
	\includegraphics[width=\textwidth]{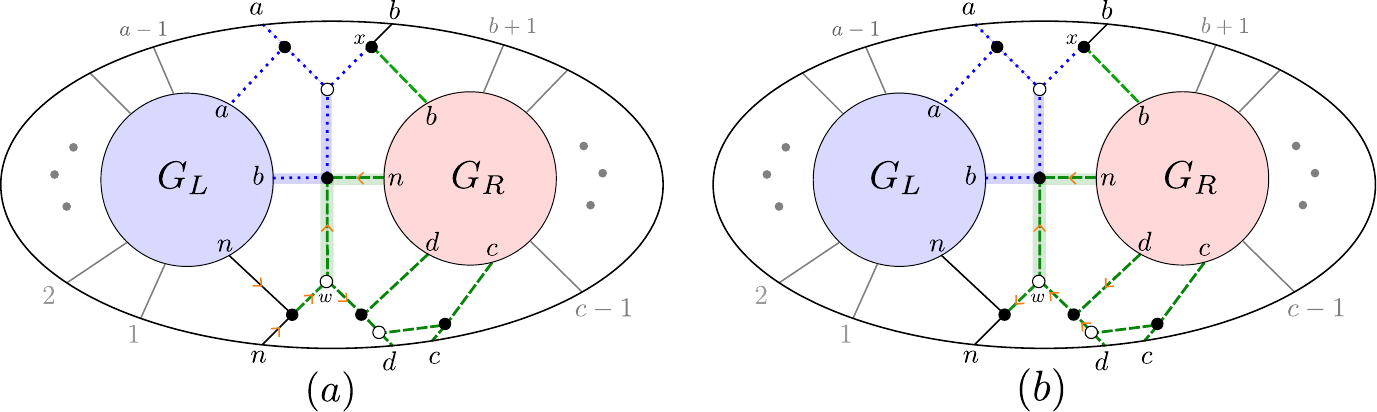}
	\caption{\label{fig:bcfw-bases-case2}}
\end{figure}

In (a), the restriction $\O_L'$ of $\O$ to $G_L'$ has $n$ as a sink. The restriction $\O_R$ of $\O$ to $G_R$ also has $n$ as a sink, and has source set disjoint from that of $\O_L'$, again because only one edge may be oriented away from $x$. These are the only conditions on $\O_L'$ and $\O_R$. Fixing $\O_L'$ and $\O_R$ with sources $I_L, I_R$, one can find $\O$ with sources $I_L \cup \{n\} \cup I_R$. If $d \in I_R$ and $c$ is not, there is another choice of $\O$, with sources $I_L \cup \{n\} \cup (I_R \setminus \{d\} \cup \{c\} )$. This covers cases (5) and (6), and there are no further possibilities for $\O$. In (b), $\O_R$ has sinks $d,n$ and $\O_L', \O_R$ have disjoint source sets. The sources of $\O$ are the sources of $\O_L', \O_R$ and either $c$ or $d$. This covers cases (3) and (4).

\end{proof}

\begin{proposition}\label{prop:bcfw-map-injective} 
Let $S_L \subset {\Gr}^{\ge0}_{k_L, N_L}$ and $S_R \subset {\Gr}^{\ge0}_{k_R, N_R}$ be positroid cells as in \cref{not:coindipendence}.
	Then $\mbcfw$ is injective on $S_L \times \chR \times S_R$.
\end{proposition}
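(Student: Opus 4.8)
\textbf{Proof strategy for Proposition~\ref{prop:bcfw-map-injective}.}
The plan is to invert the BCFW map explicitly on the image $S_L \bcfw S_R$ by reading off the matrices $A \in S_L$, $B \in S_R$ and the point $[\alpha:\beta:\gamma:\delta:\varepsilon] \in \chR$ from a suitably normalized representative of the output. By \cref{prop:butterfly-matrix-same}(1) (whose proof precedes this statement), every point of $S_L \bcfw S_R$ is the rowspan of a matrix of the form \eqref{eq:path-mtx}, i.e.
\[
\begin{bmatrix}
A_1 & \cdots & A_{a-1} & A_a' & A_b & 0 & \cdots & 0 & 0 & 0 & (-1)^{k_R+1} A_n \\
0 & \cdots & 0 & 1 & t & 0 & \cdots & 0 & (-1)^{k_R} uvw & (-1)^{k_R}uv & (-1)^{k_R}u\\
0 & \cdots & \cdots & 0 & B_b & B_{b+1} & \cdots & B_{c-1} & B_c' & B_d' & B_n
\end{bmatrix},
\]
with $A_a' = A_a + \tfrac1t A_b$, $B_d' = B_d + v B_n$, $B_c' = B_c + w B_d'$, and the substitution $t = \beta/\alpha$, $u = \varepsilon/\alpha$, $v = \delta/\varepsilon$, $w = \gamma/\delta$ (after rescaling the middle row by $\alpha$). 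So the content of the proposition is exactly that these data are uniquely determined by the rowspan.

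The key steps, in order. First I would fix a point $V \in S_L \bcfw S_R$ and show that any matrix representing $V$ can be row-reduced to a matrix of the exact shape above, with the middle row (indexed by the source $a$) singled out: the middle row is characterized as the unique (up to scalar) row of $V$ that is supported only on columns $\{a,b\} \cup \{c,d,n\}$ and has nonzero entries in the $c,d,n$ columns — this uses the coindependence hypotheses on $S_L, S_R$ (so columns $b, n$ of $A$ and columns $b, c, d, n$ of $B$ behave as expected) together with \cref{lem:matroid_under_bcfw} to control which Plücker coordinates vanish. Second, once the middle row is pinned down, the ratios $u:v:w$ (equivalently $\varepsilon:\delta:\gamma$ relative to $\alpha$) are read directly off its entries in columns $c, d, n$, and since the overall scaling of $[\alpha:\beta:\gamma:\delta:\varepsilon]$ is projective this recovers $u, v, w$ and hence $\gamma, \delta, \varepsilon$ in terms of $\alpha$. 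Third, having fixed the middle row, subtract its multiples to clear columns $c, d, n$ from the other rows; the bottom block then recovers $B_b, \dots, B_{c-1}$ directly, and $B_c', B_d'$ together with the now-known $v, w$ recover $B_c, B_d$, hence $B \in S_R$ uniquely (note $B_n$ is recovered from the middle row). Fourth, the top block gives $A_1, \dots, A_{a-1}$, $A_a', A_b$ directly and $(-1)^{k_R+1} A_n$ in the last column; comparing $A_n$ with the middle-row data recovers $t = \beta/\alpha$ (hence $\beta$ in terms of $\alpha$), and then $A_a = A_a' - \tfrac1t A_b$ recovers $A \in S_L$ uniquely. Finally, collecting these, the preimage $(A, [\alpha:\beta:\gamma:\delta:\varepsilon], B)$ is unique, and one checks it indeed lies in $S_L \times \chR \times S_R$ (positivity of the recovered parameters follows because $V$ lies in the cell $S_L \bcfw S_R$ and the path-matrix entries are positive along a perfect orientation, cf.\ the weighting in \cref{fig:orientation-butterfly}).

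The main obstacle I expect is Step~1: giving a clean, representative-independent characterization of the middle row and of which columns are forced to vanish in the three blocks. This is where the coindependence assumptions of \cref{not:coindipendence} and the explicit basis description of \cref{lem:matroid_under_bcfw} do the real work — one must argue that the zero pattern of \eqref{eq:path-mtx} (columns $a{+}1,\dots$ up to $b{-}1$... more precisely the zero columns between the $A$-block and the $B$-block, and the zeros in the first and third rows) is intrinsic to $V$, not an artifact of the chosen representative. Once the block structure and the distinguished middle row are established, the remaining steps are routine linear algebra: each parameter is recovered by a division of two explicit entries, and uniqueness is immediate. A convenient way to organize this is to note that the map $\mbcfw$ restricted to $S_L \times \chR \times S_R$ is, by \cref{prop:butterfly-matrix-same}(1), a continuous surjection onto the cell $S_L \bcfw S_R$ of the same dimension $\dim S_L + \dim S_R + 4$ (once injectivity is known), so one may even deduce injectivity from invariance-of-domain-type arguments after checking it generically; but the explicit inversion above is cleaner and avoids dimension-counting subtleties.
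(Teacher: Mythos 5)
Your strategy is essentially the paper's: both proofs invert $\mbcfw$ explicitly on $S_L\bcfw S_R$, and you correctly identify the crux as making the block structure and the "middle row" intrinsic to the rowspan $V$ rather than to a chosen representative. The difference is that the paper discharges this obstacle with invariant data rather than a normal form, and it is worth seeing how. For the five parameters, instead of normalizing a matrix and reading off entries, the paper picks bases $I_L$ of $\pos_L$ avoiding $\{a,b,n\}$ and $I_R$ of $\pos_R$ avoiding $\{b,c,d,n\}$ (these exist by the coindependence hypotheses), notes via \cref{lem:matroid_under_bcfw} that $I_f:=I_L\sqcup\{f\}\sqcup I_R$ is a basis for each $f\in\{a,b,c,d,n\}$, and computes $\lr{I_f}_M=\zeta\cdot Q$ with $Q\neq 0$ independent of $f$; hence $[\alpha:\beta:\gamma:\delta:\varepsilon]=[\lr{I_a}_V:\lr{I_b}_V:\lr{I_c}_V:\lr{I_d}_V:\lr{I_n}_V]$, which is manifestly representative-independent. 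For the blocks, the paper sets $V_L':=V\cap\Span(e_1,\dots,e_a,e_b,e_n)$ and $V_R':=V\cap\Span(e_b,\dots,e_c,e_d,e_n)$ and proves $V=w\oplus V_L'\oplus V_R'$ with $\dim V_L'=k_L$, $\dim V_R'=k_R$; the key point is that $V_L'\cap V_R'\subset V\cap\Span(e_b,e_n)=0$ because the basis $I_a$ avoids both $b$ and $n$. Then $A=V_L'\cdot y_a(-\alpha/\beta)$ and $B=V_R'\cdot y_c(-\gamma/\delta)\cdot y_d(-\delta/\varepsilon)$. Note that your step of "subtracting multiples of the middle row to clear columns $c,d,n$" does not by itself separate the $A$-rows from the $B$-rows inside the remaining $(k-1)$-dimensional space — that separation is exactly what the two coordinate-subspace intersections and the direct-sum dimension count provide — so to complete your outline you would end up reproducing the paper's argument at precisely the point you flag as the main obstacle.
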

\begin{proof} 
	
	By \cref{prop:butterfly-matrix-same} (1), the image $\mbcfw(S_L \times \Gr_{1,5}^{>0} \times S_R)$ is equal to the cell $S_L \bcfw S_R$.  
	Pick a point $V \in S_L \bcfw S_R$ and choose any preimage $(A, [\alpha: \beta:\gamma:\delta:\epsilon], B)$. Let $M$ be the representative matrix of $V$ resulting from this choice; that is $M$ is the matrix $(*)$ in \cref{fig:promotion-matrix}.
	
	We first express $[\alpha: \beta:\gamma:\delta:\epsilon]$ in terms of Pl\"ucker coordinates of $V$. Let $I_L$ be a basis for $\pos_L$ avoiding $\{a, b, n\}$ and let $I_R$ be a basis for $\pos_R$ avoiding $\{b, c, d, n\}$. Because adding bridges does not remove bases (cf. \cref{thm:bridges-and-lollipops}), \cref{lem:matroid_under_bcfw} implies that $I_f:=I_L \sqcup\{f\}\sqcup I_R$ is a basis of $\pos_L \bcfw \pos_R$ for $f \in \{a, b, c, d, n\}$. Now, the submatrix of $M$ on columns $I_f$ has a single non-zero entry in the $(k_L+1)$th row (which is one of $\alpha, \beta, \pm\gamma, \pm\delta, \pm\epsilon$),
	which is in column $f$. Deleting row $a$ and column $f$ from the submatrix gives a block-diagonal matrix with determinant $\pm Q \neq 0$. So we have
	\begin{equation*}
		\lr{I_a}_M= \alpha \cdot Q, \qquad \lr{I_b}_M= \beta \cdot Q, \qquad \lr{I_c}_M= \gamma \cdot Q, \qquad \lr{I_d}_M= \delta \cdot Q, \qquad \lr{I_n}_M= \epsilon \cdot Q,
	\end{equation*}
	or in other words that
	\begin{equation}\label{eq:param-from-image}
		[\alpha: \beta: \gamma:\delta:\epsilon]=[\lr{I_a}_V: \lr{I_b}_V: \lr{I_c}_V:\lr{I_d}_V:\lr{I_n}_V].
	\end{equation}
	
	Now, we recover $A,B$. Let 
	\begin{equation*}
		V_L' := V \cap \Span(e_1, \dots, e_a, e_b, e_n) \text{ and }V_R':= V \cap \Span(e_b, \dots, e_c, e_d, e_n).
	\end{equation*}
	We claim that $V_L'=A \cdot y_a(\frac{\alpha}{\beta})$ and $V_R'= B \cdot y_d(\frac{\delta}{\epsilon}) \cdot y_c(\frac{\gamma}{\delta})$ (that is, $V_L'$ and $V_R'$ are elements of $S_{\pos_L'}$ and $S_{\pos_R'}$, using the notation of \cref{lem:matroid_under_bcfw}). The containment $\supset$ is clear in both cases. For the containment $\subset$, it suffices to show $\dim V_L' = k_L$  and $\dim V_R '= k_R$. 
	
	Any vector in $V_L' \cap V_R'$ would be in $V \cap \Span(e_b, e_n)$. However, if there existed $v \in V \cap \Span(e_b, e_n)$, then all nonzero Pl\"ucker coordinates of $V$ would contain either $b$ or $n$. The basis $I_a$ constructed above contains neither $b$ nor $n$, so such a $v$ cannot exist. Thus $V_L' \cap V_R' $ is trivial. Let $w$ denote the span of the $(k_L+1)$st row of $M$. Note that $w$ has trivial intersection with $V_L'$ and with $V_R'$ since $\alpha, \beta, \gamma, \delta, \epsilon$ are all nonzero. So we have
	\[ V=w + V_L' + V_R'= w \oplus V_L' \oplus V_R'\]
	and $\dim V_L '+V_R'= k-1=k_L + k_R$. Since $V_L'$ and $V_R'$ have dimensions at least $k_L$ and $k_R$, respectively, we conclude that $\dim V_L' = k_L$  and $\dim V_R' = k_R$ as desired. 
	
	Since $V_L'=A \cdot y_a(\frac{\alpha}{\beta})$ and $V_R'= B \cdot y_d(\frac{\delta}{\epsilon}) \cdot y_c(\frac{\gamma}{\delta})$, we have 
	\begin{equation}\label{eq:A,B-from-image}
		A=V_L' \cdot y_a\left(-\frac{\alpha}{\beta}\right) \qquad \text{and} \qquad B=  V_R' \cdot y_c\left(-\frac{\gamma}{\delta}\right)  \cdot y_d\left(-\frac{\delta}{\epsilon}\right).
	\end{equation}
	
	Combining \eqref{eq:param-from-image} and \eqref{eq:A,B-from-image}, we see that the preimage of $V$ under $\mbcfw$ is uniquely determined by $V$; in other words, $\mbcfw$ is injective.
\end{proof}

\begin{remark}\label{rmk:def-of-w}
	The line $w$ in the proof of \cref{prop:bcfw-map-injective} can also be defined as $w:=V \cap \Span(e_a, e_b, e_c, e_d, e_n)$. Indeed, the containment $\subset$ is clear. If the right hand side had dimension at least 2, this would imply the existence of a vector in $V_L'\cap \Span(e_a, e_b, e_n)$ or $V_R' \cap \Span(e_b,e_c,e_d,e_n)$. Then all nonzero Pl\"ucker coordinates of $V$ contain either $a,b,$ or $n$ in the first case, or $b,c,d$ or $n$ in the second case. But in the first case, the Pl\"ucker coordinate $\lr{I_c}$ defined in the proof---or in the second case $\lr{I_a}$ defined in the proof---does not contain any of these indices.
\end{remark}

Now we turn to the closure and boundary of $S_L \bcfw S_R$, which almost have a product structure. This will be used when analyzing the facets of BCFW tiles. 
We need the following  lemma.

\begin{lemma}\label{lem:chord-coind-or-no}
Let $S_L \subset {\Gr}^{\ge0}_{k_L, N_L}$ and $S_R \subset {\Gr}^{\ge0}_{k_R, N_R}$ be positroid cells as in \cref{not:coindipendence}, then $P:=\mbcfw(S_L \times \chR \times S_R)=S_L \bcfw S_R$ and each element of $\binom{\{a,b,c,d,n\}}{4}$ is coindependent for $P$. 
\end{lemma}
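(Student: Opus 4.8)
The equality $P = S_L \bcfw S_R$ is exactly \cref{prop:butterfly-matrix-same}~(1), so the content of the lemma is the coindependence claim. I will prove that each of the five sets in $\binom{\{a,b,c,d,n\}}{4}$ is coindependent for $P$, that is, that $P$ has a nonzero Pl\"ucker coordinate avoiding each such $4$-set. By \cref{def:coindependent} it suffices to exhibit, for each $I \in \binom{\{a,b,c,d,n\}}{4}$, a basis of the positroid $\pos_L \bcfw \pos_R$ disjoint from $I$; and by \cref{lem:matroid_under_bcfw} every basis has the form $I_L \sqcup \{f\} \sqcup I_R$ with $(I_L, f, I_R)$ in one of the six listed types. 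So the plan is: use the coindependence hypothesis on $S_L$ and $S_R$ to produce suitable $I_L$ and $I_R$, and then choose the ``bridge index'' $f$ to dodge the remaining element of $\{a,b,c,d,n\}$.

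\textbf{Key steps.} First, fix notation: by \cref{not:coindipendence}, $\{a,b,n\}$ is coindependent for $S_L$, so there is a basis $I_L^0 \in \pos_L$ with $I_L^0 \cap \{a,b,n\} = \emptyset$; likewise $\{b,c,d,n\}$ is coindependent for $S_R$, so there is a basis $I_R^0 \in \pos_R$ with $I_R^0 \cap \{b,c,d,n\} = \emptyset$. Next I observe (via \cref{thm:bridges-and-lollipops}, exactly as in the proof of \cref{prop:bcfw-map-injective}) that since $G_L'$, $G_R'$ are obtained from $G_L$, $G_R$ by adding bridges, $I_L^0$ is also a basis of $\pos_L'$ and $I_R^0$ is also a basis of $\pos_R'$; moreover $I_L^0$ still avoids $\{a,b,n\}$ and $I_R^0$ still avoids $\{b,c,d,n\}$ as subsets of the larger ground sets. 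Then, for each of the five choices of $I = \{a,b,c,d,n\} \setminus \{g\}$ with $g \in \{a,b,c,d,n\}$, I build a basis of $\pos_L \bcfw \pos_R$ avoiding $I$, i.e.\ meeting $\{a,b,c,d,n\}$ only (possibly) at $g$:
\begin{itemize}
\item if $g = a$: use type (1), namely $I_L^0 \sqcup \{a\} \sqcup I_R^0$ (here $b \notin I_L^0$ as required, $I_R^0 \in \pos_R'$), which avoids $\{b,c,d,n\}$;
\item if $g = b$: use type (2), $I_L^0 \sqcup \{b\} \sqcup I_R^0$, which avoids $\{a,c,d,n\}$;
\item if $g = c$: use type (3), $I_L^0 \sqcup \{c\} \sqcup I_R^0$ (here $d \notin I_R^0$ as required, $I_L^0 \in \pos_L'$), which avoids $\{a,b,d,n\}$;
\item if $g = d$: use type (4), $I_L^0 \sqcup \{d\} \sqcup I_R^0$, which avoids $\{a,b,c,n\}$;
\item if $g = n$: use type (5), $I_L^0 \sqcup \{n\} \sqcup I_R^0$, which avoids $\{a,b,c,d\}$.
\end{itemize}
In each case the three pieces are pairwise disjoint (they live on $N_L \setminus \{a,b,n\}$, the singleton, and $N_R \setminus \{b,c,d,n\}$ respectively, which overlap only in $\{b,n\}$, already excluded) and satisfy the stated membership conditions, so \cref{lem:matroid_under_bcfw} certifies that each is a genuine basis of $\pos_L \bcfw \pos_R$. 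A basis is a witness to a nonzero Pl\"ucker coordinate on all of $P$, giving coindependence of the corresponding $4$-set for $P$.

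\textbf{Main obstacle.} There is no deep obstacle here; the proof is essentially bookkeeping on top of \cref{lem:matroid_under_bcfw} and the bridge lemma \cref{thm:bridges-and-lollipops}. The one point requiring a little care is verifying the ``$b \notin I_L$'' side condition in type (1) and the ``$d \notin I_R$'' side condition in type (3); both hold automatically because $I_L^0$ was chosen to avoid $\{a,b,n\} \ni b$ and $I_R^0$ to avoid $\{b,c,d,n\} \ni d$. I should also double-check that when $k_L = 0$ (so $\pos_L$ is a point and every subset is coindependent by convention) the argument still goes through: then $I_L^0 = \emptyset$ and all the displayed bases are simply $\{f\} \sqcup I_R^0$, still valid by \cref{lem:matroid_under_bcfw}; symmetrically if $k_R = 0$. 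This degenerate case is the only thing I would flag explicitly in the writeup.
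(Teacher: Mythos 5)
Your proof is correct and takes essentially the same approach as the paper's (choose $I_L$ avoiding $\{a,b,n\}$, $I_R$ avoiding $\{b,c,d,n\}$, and feed the complementary bridge index $f$ into \cref{lem:matroid_under_bcfw}); the paper's version is just terser, leaving implicit the case-by-case verification and the fact that $\pos_L \subset \pos_L'$ and $\pos_R \subset \pos_R'$ (which the paper only spells out later in the proof of \cref{cor:4bidden}).
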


\begin{proof}
	For the first statement, take $J\in\binom{\{a,b,c,d,n\}}{4}$ and write $\{f\}=\{a,b,c,d,n\}\setminus J.$ Choose a basis $I_R$ for $S_R,$ avoiding $b,c,d,n,$ and a basis $I_L$ for $S_L$ avoiding $a,b,n.$ Then by \cref{lem:matroid_under_bcfw} $I=I_L\cup I_R\cup\{f\}$ is a basis for $P.$ 

\iffalse
For the second statement, if $\{b,c,d,n\}$ is not coindependent for $S_R$ then for every $V_R\in S_R$ there is a non zero vector $v\in V_R \cap \Span(e_b, e_c, e_d, e_n)$. Indeed, in any matrix representative $M$ of $V_R$, the submatrix $M'$ on columns ${b+1}, \dots, {c-1}$ has no nonzero Pl\"ucker coordinates and thus has rank at most $k_R-1$. So one may do invertible row operations on $M$ to create a row of zeros in $M'$; this row vector is precisely such a $v$. 

The vector\[v'=y_c(\beta/\gamma).y_d(\gamma/\delta).v\] is non zero and lies in $V=\mbcfw(V_L,[\alpha:\beta:\gamma:\delta:\epsilon], V_R)$ for all $\alpha,\beta,\gamma,\delta,\epsilon,V_L.$ Also $w=\alpha e_a+\beta e_b+(-1)^{k_R}(\gamma e_c+\delta e_d+\epsilon e_n)$ belongs to $V.$ Because $v'$ is non zero then for each $4$-tuple $J$ from $\{a,b,c,d,n\},$
one can find a linear combination of $w,v$ whose support is contained in $J.$ Thus, $J$ is not coindependent for $V$.

Similarly, if $\{a,b,n\}$ is not coindependent for $S_L$ then every $V_L\in S_L$ 
contains a non zero vector $v_L \in V_L \cap \Span(e_a, e_b, e_n)$. It is easy to see that also the vector
\[v'_L=y_a(\alpha/\beta).v_L\in y_a(\alpha/\beta).V_L\subset V\]
is supported on $a,b,n$ and is non zero. As above, one may find a linear combination of $v_L', w$ whose support is contained in $J$ for any $J \in \binom{\{a,b,c,d,n\}}{4}$.
\fi

\end{proof}

For a subset $S \subset \Grk$, we use $\overline{S}$ to denote its closure in the Euclidean topology on $\Grk$.

\begin{lemma}[Boundary of $S_L \bcfw S_R$]\label{lem:boundaries_before_amp_map} 
Let $C_{abcdn} \subset \Grk$ be the union of cells 
for which some element of $\binom{\{a,b,c,d,n\}}{4}$ is not coindependent. 
Let $S:=S_L \bcfw S_R$, with $S_L,S_R$ as in \cref{not:coindipendence}. Then
	\begin{equation}\label{eq:bdries-before-amp}
		\overline{S}= (\overline{S} \cap C_{abcdn}) \sqcup S \sqcup \bigsqcup_{(S_L', S_R') \in \mathcal{C}}  S_L' \bcfw S'_R,
	\end{equation}
where the union is over the collection $\mathcal{C}$ %$Q$ 
of pairs $(S_L',S_R') \neq (S_L, S_R)$ such that $S_L' \subset \overline{S_L} ,S_R' \subset \overline{S_R}$ and $\{a,b,n\}$ and $\{b,c,d,n\}$ are coindependent for $S_L'$ and $S_R'$ respectively.
Thus, \[\partial\overline{S}=(\partial \overline{S} \cap C_{abcdn}) \sqcup \bigsqcup_{(S_L', S_R') \in \mathcal{C}} S_L' \bcfw S'_R.\]
\end{lemma}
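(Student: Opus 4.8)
The plan is to describe $\overline{S}$ by combining two ingredients: the parametrization of $S = S_L \bcfw S_R$ coming from $\mbcfw$ (\cref{prop:butterfly-matrix-same}), and the positroid stratification of $\Grk$. First I would recall that $\mbcfw$ restricts to a homeomorphism $S_L \times \chR \times S_R \xrightarrow{\sim} S$, and that by \cref{cor:4bidden} (or \cref{lem:chord-coind-or-no}) the coindependence hypotheses of \cref{not:coindipendence} persist on the boundary strata we care about. The key point is that $\overline{S}$ is a closed subset of $\Grk$, hence a union of positroid cells; so I must identify exactly which cells $S'$ satisfy $S' \subseteq \overline{S}$, and then sort these cells into the three groups on the right-hand side of \eqref{eq:bdries-before-amp}: those contained in $C_{abcdn}$, the cell $S$ itself, and the cells of the form $S_L' \bcfw S_R'$ with $(S_L',S_R')$ ranging over $\mathcal{C}$.

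The heart of the argument is the following two-sided containment. For the inclusion $\supseteq$: given $(S_L', S_R') \in \mathcal{C}$, the hypotheses of \cref{not:coindipendence} hold for $S_L', S_R'$, so $\mbcfw$ is well-defined and injective on $S_L' \times \chR \times S_R'$ with image $S_L' \bcfw S_R'$; since $\overline{S_L'} \subseteq \overline{S_L}$, $\overline{S_R'} \subseteq \overline{S_R}$, and $\mbcfw$ is continuous, taking closures shows $S_L' \bcfw S_R' \subseteq \overline{S}$. (The same continuity argument applied to the full closure shows $\overline{S} \subseteq \mbcfw(\overline{S_L} \times \overline{\chR} \times \overline{S_R})$.) For the inclusion $\subseteq$: take any point $V \in \overline{S}$ not lying in $C_{abcdn}$, so every $J \in \binom{\{a,b,c,d,n\}}{4}$ is coindependent for $V$. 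Write $V$ as a limit of points $V_m = \mbcfw(A_m, t_m, B_m)$ with $A_m \in S_L$, $B_m \in S_R$, $t_m \in \chR$. Using the coindependence of $V$ exactly as in the proof of \cref{prop:bcfw-map-injective} — that is, the Pl\"ucker coordinates $\lr{I_a}_V, \dots, \lr{I_n}_V$ are not all zero and extract the projective parameter $t = [\alpha:\beta:\gamma:\delta:\epsilon]$, and $V_L' := V \cap \Span(e_1,\dots,e_a,e_b,e_n)$, $V_R' := V \cap \Span(e_b,\dots,e_c,e_d,e_n)$ recover $A, B$ — I would show $t$ is a genuine point of $\chR$ (all five coordinates of the relevant Pl\"uckers nonzero, which is where non-membership in $C_{abcdn}$ is used), that $\dim V_L' = k_L$, $\dim V_R' = k_R$, and that $(V_L', V_R')$ are limits of $(A_m, B_m)$ up to the appropriate $y$-twists; hence $A := V_L' \cdot y_a(-\alpha/\beta) \in \overline{S_L}$ and $B := V_R'\cdot y_c(-\gamma/\delta) \cdot y_d(-\delta/\epsilon) \in \overline{S_R}$, with $V = \mbcfw(A,t,B)$. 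Letting $S_L' \ni A$ and $S_R' \ni B$ be the positroid cells containing them, the fact that $V \notin C_{abcdn}$ forces $\{a,b,n\}$ coindependent for $S_L'$ and $\{b,c,d,n\}$ coindependent for $S_R'$ (otherwise one of the required $\lr{I_f}_V$ vanishes), so $(S_L', S_R') \in \mathcal{C} \cup \{(S_L,S_R)\}$ and $V \in S_L' \bcfw S_R'$. This establishes \eqref{eq:bdries-before-amp}, with disjointness of the union following because distinct pairs $(S_L',S_R')$ give cells with distinct positroids (by \cref{lem:matroid_under_bcfw}, the positroid of $S_L' \bcfw S_R'$ determines $S_L'$ and $S_R'$), and none of these cells meets $C_{abcdn}$ since all elements of $\binom{\{a,b,c,d,n\}}{4}$ are coindependent for them by \cref{lem:chord-coind-or-no}.

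The boundary statement then follows immediately: $\partial \overline{S} = \overline{S} \setminus S$ (since $S$ is open in its closure, being a cell), and intersecting both sides of \eqref{eq:bdries-before-amp} with the complement of $S$ removes only the middle term, giving $\partial\overline{S} = (\partial\overline{S} \cap C_{abcdn}) \sqcup \bigsqcup_{(S_L',S_R') \in \mathcal{C}} S_L' \bcfw S_R'$, where I note $\partial\overline{S}\cap C_{abcdn} = \overline{S}\cap C_{abcdn}$ because $S$ itself is disjoint from $C_{abcdn}$ (again by \cref{lem:chord-coind-or-no}). The main obstacle I anticipate is the limiting argument in the $\subseteq$ direction: one must check carefully that the rational expressions for $t$, $V_L'$, $V_R'$ in terms of $V$ — which are only a priori defined on $S$ — extend continuously to all of $\overline{S} \setminus C_{abcdn}$, i.e. that the relevant denominators (certain Pl\"ucker coordinates, or the quantity $Q$ in the proof of \cref{prop:bcfw-map-injective}) do not vanish there, and that the dimension count $\dim V_L' = k_L$, $\dim V_R' = k_R$ (proved via $V = w \oplus V_L' \oplus V_R'$) still goes through when $V$ is merely a boundary point rather than an interior point. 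This is essentially a robustness check on the proof of \cref{prop:bcfw-map-injective}, and I expect it to work verbatim once one observes that every Pl\"ucker coordinate used there is nonzero precisely by the coindependence hypothesis that defines $C_{abcdn}^c$.
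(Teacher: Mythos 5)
Your proof takes a genuinely different route from the paper's. The paper proves this lemma combinatorially: invoking \cref{thm:closure}, it characterizes $\overline{S}$ as the union of cells obtained by deleting edges from the plabic graph $G_L \bcfw G_R$, and then argues via perfect orientations that (i) deleting edges inside $G_L$ (resp.\ $G_R$) in a way that destroys coindependence of $\{a,b,n\}$ (resp.\ $\{b,c,d,n\}$) lands in $C_{abcdn}$, and (ii) deleting any edge of the central ``butterfly'' also lands in $C_{abcdn}$. Your approach is instead analytic: write a boundary point $V$ as a limit of $\mbcfw(A_m, t_m, B_m)$ and recover the triple $(A,t,B)$ from $V$ via the intersection subspaces $V'_L, V'_R, w$ from the proof of \cref{prop:bcfw-map-injective}. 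Both are legitimate, and for what it is worth the authors' own source contains a commented-out proof that matches your outline closely; the combinatorial argument is what made it into the paper, probably because it avoids the limiting subtleties entirely and stays within the positroid formalism.

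There is one spot where your reasoning is not quite right as stated. You claim that $V \notin C_{abcdn}$ forces ``all five coordinates of the relevant Pl\"uckers'' $\lr{I_a}_V, \dots, \lr{I_n}_V$ (with $I_f = I_L \sqcup \{f\} \sqcup I_R$ for the \emph{fixed} bases $I_L, I_R$ chosen in the proof of \cref{prop:bcfw-map-injective}) to be nonzero. That does not follow. Non-membership in $C_{abcdn}$ only says that for each $J \in \binom{\{a,b,c,d,n\}}{4}$ there is \emph{some} basis of $V$'s positroid avoiding $J$; it does not say the particular sets $I_f$ remain bases when $V$ sits in a boundary stratum, and in general they need not. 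The correct way to run your argument is to work directly with the subspace $w := V \cap \Span(e_a, e_b, e_c, e_d, e_n)$ (and with $V'_L, V'_R$), using upper semicontinuity of the dimension of intersection with a fixed subspace: along a sequence in $S$ these have dimensions $1, k_L, k_R$, so for the limit $V$ they have dimensions $\geq 1, k_L, k_R$, and they pairwise intersect trivially inside $V$ forces equality unless some pair meets nontrivially --- in which case one produces a nonzero vector of $V$ supported on at most four of $\{a,b,c,d,n\}$, i.e.\ $V \in C_{abcdn}$. When $\dim w = 1$, the requirement that $V \notin C_{abcdn}$ forces each of the five coordinates of the spanning vector of $w$ to be nonzero (otherwise $w \subset \Span$ of four of the $e_f$'s and again $V \in C_{abcdn}$); this is how one legitimately concludes $t \in \chR$, rather than through the particular Pl\"uckers $\lr{I_f}_V$. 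With that repair your argument goes through; the rest of the outline (recovering $A, B$ by the $y$-twists, reading off the cells $S'_L, S'_R$, and the disjointness via \cref{lem:matroid_under_bcfw}) is sound.
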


\begin{proof}
By \cref{thm:closure},
$\overline{S}$ is the union of 
$S$  
%$\bigsqcup_{S_L' \subset \overline{S_L}, S_R' \subset \overline{S_R}}  S_L' \bcfw S'_R,$ 
together with any cells we get by deleting one or more of the edges
of $G_L \bcfw G_R$. 
%the ``butterfly.''

Let $G$ be the plabic graph $G_L \bcfw G_R$.  
We now show that if we delete some edges of $G_L$ (respectively $G_R$), 
obtaining a graph $G'_L$ (respectively, $G'_R$) corresponding to a 
cell $S'_L$  in which $\{a,b,n\}$ (respectively,
$\{b,c,d,n\}$) fails to be coindependent, then the cell $S'$ 
corresponding to the resulting
graph $G'$ will lie in $C_{abcdn}$.
Suppose that $\{a,b,n\}$ fails to be coindependent for 
$G'_L$.   Then any perfect orientation $\O$ of $G'_L$ must have
a source at $a$, $b$, or $n$.  Suppose $\O$ has a source at $a$.
Then the edge of $G$ emanating from the vertex of $G_L$ labeled $a$ 
must be directed towards $a$.  But now any extension of $\O$ to 
a perfect orientation of $G'$ must have a source at the external
vertex labeled $a$.  
Similarly, if the perfect orientation $\O$ of $G'_L$ has a source 
	at $b$, then any extension of $\O$ to a perfect orientation of 
	$G'$ must have a source 
	at either the external vertex $a$ or $b$ of $G'$.
Finally, if $\O$ has a source at the vertex $n$ of $G'_L$,
then any extension of $\O$ must have a source at the external vertex
	$n$ of $G'$. This shows that $\{a,b,n\}$ fails to be coindependent
	for $S'$ so $S'$ lies in $C_{abcdn}.$  The proof
for $G_R$ is analogous.

Finally  we can check by hand that if we delete any edge of the butterfly,
some element of $ {\{a,b,c,d,n\} \choose 4}$ will fail to 
be coindependent, where we use the characterization of 
\cref{rem:co}.  For example, if we delete the $u-v$ edge in 
\cref{fig:bcfw-bases-graph}, then it is impossible
to find a perfect orientation in which $a$ and $b$ are sinks.
If we delete the $v-w$ edge, it is impossible to find a perfect orientation
in which $c,d,n$ are sinks.  The other cases are similar.
\end{proof}

Finally, we verify that BCFW cells always fulfill the assumptions of \cref{prop:butterfly-matrix-same,prop:bcfw-map-injective,lem:boundaries_before_amp_map}.

\begin{definition}
	We say that $P\subset\Grk$ is \emph{4-coindependent} if every 4-element subset of $[n]$ is co-independent for $P$. 
\end{definition}

The BCFW product preserves 4-coindependence, which means that taking the BCFW product of two BCFW cells is the same as applying $\mbcfw$ to them.
\begin{corollary}\label{cor:4bidden}
	If $S_L,S_R$ are 4-coindependent, then $S_L \bcfw S_R$ is as well.
	Therefore all BCFW cells are 4-coindependent.
\end{corollary}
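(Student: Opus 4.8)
\textbf{Proof proposal for \cref{cor:4bidden}.}

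The plan is to deduce this corollary directly from the machinery already developed, in two steps. First, for the statement that $S_L \bcfw S_R$ is $4$-coindependent whenever $S_L$ and $S_R$ are: the hypothesis that $S_L, S_R$ are $4$-coindependent implies in particular that $\{a,b,n\}$ is coindependent for $S_L$ and $\{b,c,d,n\}$ is coindependent for $S_R$, so $S_L, S_R$ satisfy the hypotheses of \cref{not:coindipendence}. By \cref{prop:butterfly-matrix-same}(1), $S_L \bcfw S_R = \mbcfw(S_L \times \chR \times S_R)$, so \cref{lem:chord-coind-or-no} applies and tells us that each element of $\binom{\{a,b,c,d,n\}}{4}$ is coindependent for $S_L \bcfw S_R$. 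It remains to upgrade this to \emph{all} $4$-element subsets of $[n]$. For a $4$-element subset $J \subseteq [n]$, we want a basis $I$ of $\pos_L \bcfw \pos_R$ with $I \cap J = \emptyset$. The idea is to use \cref{lem:matroid_under_bcfw}: pick bases $I_L$ of $\pos_L$ avoiding $J \cap N_L$ (possible by $4$-coindependence of $S_L$, since $|J \cap N_L| \le 4$) and $I_R$ of $\pos_R$ avoiding $J \cap N_R$, together with some marker $f \in \{a,b,c,d,n\} \setminus J$; since $|J| = 4 < 5$, such an $f$ exists, and one of the six cases of \cref{lem:matroid_under_bcfw} then produces a basis $I_L \sqcup \{f\} \sqcup I_R$ of $\pos_L \bcfw \pos_R$ disjoint from $J$. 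The one subtlety is matching the constraints ``$b \notin I_L$'' (case 1), ``$d \notin I_R$'' (case 3), or the twist ``$(I_R \setminus \{c\}) \cup \{d\}$'' (case 6): one chooses $f$ and the case to accommodate whether $b, c, d$ lie in $J$, but since $4$-coindependence lets us avoid any $4$ prescribed indices on each side, there is always enough freedom.

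Second, for the claim that \emph{all} BCFW cells are $4$-coindependent: this is an induction on the recursive structure of \cref{def:BCFW_cell}. The base case $\Gr^{>0}_{0,n}$ is $4$-coindependent by the convention in \cref{def:coindependent} (when $k=0$, all subsets are coindependent). For the inductive step, one checks that each of the three operations ``insert zero,'' ``cyclic shift or reflect,'' and ``BCFW product'' preserves $4$-coindependence. Inserting a zero column only enlarges the index set and cannot destroy coindependence of an existing $4$-subset (and any new $4$-subset involving the new index is coindependent since the inserted column is zero, hence not in any basis). Cyclic shift and reflection act on positroids by permuting the ground set (cf.\ \eqref{eq:cyc-refl-on-plucker}), so they take $4$-coindependent cells to $4$-coindependent cells. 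The BCFW product case is exactly the first part of this corollary. Hence every BCFW cell is $4$-coindependent.

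The main obstacle I anticipate is the bookkeeping in the first step: verifying that for an arbitrary $4$-subset $J$ of $[n]$ one can always select $f \in \{a,b,c,d,n\} \setminus J$ and bases $I_L, I_R$ on the two sides so that the resulting triple lands in one of the six admissible configurations of \cref{lem:matroid_under_bcfw}, respecting the side conditions ``$b \notin I_L$'' and ``$d \notin I_R$.'' This is a finite case analysis on which of $b, c, d$ belong to $J$, and $4$-coindependence of $S_L$ and $S_R$ (allowing avoidance of any $4$ indices on each side) supplies exactly the slack needed; once that is spelled out the corollary follows immediately. Everything else is routine, since the hard analytic content has already been packaged into \cref{prop:butterfly-matrix-same}, \cref{lem:matroid_under_bcfw}, and \cref{lem:chord-coind-or-no}.
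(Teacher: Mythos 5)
Your high-level strategy matches the paper's: reduce to showing that the BCFW product preserves $4$-coindependence, then induct on the recursive construction of BCFW cells (that second part is essentially identical to what the paper writes). However, for the BCFW product step the proposal leaves a genuine gap — it waves at a "finite case analysis" and asserts that $4$-coindependence "supplies exactly the slack needed," but this is not automatic, and the sketch as written would fail if attempted naively.

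Two things are missing. First, you say "pick bases $I_L$ of $\pos_L$ avoiding $J\cap N_L$ and $I_R$ of $\pos_R$ avoiding $J\cap N_R$," but this does not ensure $I_L$ and $I_R$ are disjoint: the ground sets $N_L$ and $N_R$ overlap in $\{b,n\}$, so if both bases contain $n$ (or both contain $b$), they intersect and no case of \cref{lem:matroid_under_bcfw} applies. The paper handles this by adding extra elements to the "avoid" lists — e.g.\ forcing $b$ out of both bases, or $n$ out of $I_L$ — but then the constraint set on one side can exceed four elements, which is the limit that $4$-coindependence controls. This is where the real bookkeeping lives: when $J$ is split between $N_L$ and $N_R$ with $b\notin J$, the paper observes that $|J\cap N_L|+|J\cap N_R|\le 5$, so one side has at most two forced elements, leaving room to add both $n$ and $b$ and still stay at size $\le 4$. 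This counting argument is the content, and your proposal does not contain it. Second, several cases of \cref{lem:matroid_under_bcfw} require $I_L\in\pos_L'$ or $I_R\in\pos_R'$ rather than $\pos_L,\pos_R$ (the modified positroids from the graphs $G_L',G_R'$), and the proposal never mentions these. The paper uses them precisely because they allow the basis on one side to contain $b,c,d,$ or $n$, which is needed when those markers are not in $J$; this in turn requires the preliminary observation (also absent from the proposal) that $\pos_L'\supseteq\pos_L$ and $\pos_R'\supseteq\pos_R$ as matroids, so $\pos_L'$ and $\pos_R'$ inherit $4$-coindependence. Without these ingredients the "enough freedom" claim does not go through, so the proof as proposed is incomplete rather than merely terse.
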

\begin{proof}
	We first note that 4-coindependence is preserved under cyclic rotation, reflection, and adding a zero column, and all positroid cells for $k=0$ are 4-coindependent. So the second statement follows from the first.
	
	Let $\pos_L, \pos_R, \pos_L', \pos_R'$ be as in \cref{lem:matroid_under_bcfw}. 
	Using \cref{prop:positroid}, we 
	note that if $J \in \pos_L$, then $J \in \pos_L'$ as well, and similarly for $\pos_R, \pos_R'$. Thus our assumptions imply $\pos_L'$ and $\pos_R'$ are 4-coindependent.
	
	Let $B\in\binom{[n]}{4}$ be a quadruple of indices. We will show that there exists $I\in\binom{[n]\setminus B}{k}$ which is a basis of $\pos_L \bcfw \pos_R$. Let $B_L = B \cap N_L$ and $B_R = B \cap N_R$. 
	\begin{enumerate}
		\item\label{it:1_for_4bidden} If $B \subset N_R$, then by 4-coindependence for $\pos_R'$, there is a basis $I_R$ of $\pos_R'$ avoiding $B$. By assumption, $\pos_L$ has a basis $I_L$ which avoids $a, b, n$ and thus is disjoint from $I_R$ and from $B$. Set $I= I_L \sqcup \{a\} \sqcup I_R$. By \cref{lem:matroid_under_bcfw} (1), $I$ is a basis of $\pos_L \bcfw \pos_R$, and it avoids $B$ by construction.

		\item\label{it:2_for_4bidden} If $B \subset N_L$, a similar argument as above (now using \cref{lem:matroid_under_bcfw}(4)) gives a basis $I=I_L \sqcup \{d\} \sqcup I_R$ which avoids $B$, where $I_L \in \pos_L'$ avoids $B$ and $I_R \in \pos_R$ avoids $b,c,d,n$.
		
		\item Suppose $B$ is not contained in $N_R$ or $N_L$ and $b \notin B$. This implies that $|B_L|+|B_R| \leq 5$, so one of $B_L, B_R$ has size at most 2. Add $n$ to the smaller of the two; denote the new sets by $B_L'$ and $B_R'$. Let $I_R$ be a basis of $\pos_R'$ avoiding $B_R' \cup \{b\}$ and let $I_L$ be a basis of $\pos_L$ avoiding $B_L' \cup \{b\}$. Then $I = I_R\sqcup \{b\} \sqcup I_L $ is a basis by \cref{lem:matroid_under_bcfw} (2) and avoids $B$.
		
		\item \label{it:3_for_4bidden} Suppose $B$ is not contained in $N_R$ or $N_L$ and $b \in B$. Then for some $f \in \{c,d,n\}$, we have $f\notin B$. If possible, choose $f=d$ or $f=n$; otherwise, let $f=c$ (that is, only set $f=c$ if $d, n \in B$). Then let $I_L$ be a basis of $\pos_L'$ avoiding $B_L \cup \{n\}$ and let $I_R$ be a basis of $\pos_R$ avoiding $B_R \cup \{f\}$. The sets $I_L, I_R$ are disjoint, since both avoid $b$ and $I_L$ avoids $n$. They are both disjoint from $\{f\}$ by construction, and if $f=c$, then $d \notin I_R$. So by \cref{lem:matroid_under_bcfw} (3), (4) or (5), $I = I_R\sqcup \{f\} \sqcup I_L$ is a basis.
	\end{enumerate}
\end{proof}

\section{The proof that BCFW cells give tiles}\label{sec:imagesBCFWcells}

The main goal of this section is to 
show how to invert the amplituhedron map on the 
image of a BCFW cell, thus proving 
 \cref{thm:BCFW-tile-and-sign-description}, which shows 
that BCFW cells map injectively into the amplituhedron. 
In 
\cref{sec:evolve}, \cref{sec:proof1}, and \cref{sec:proof2}
we will explain how signs of functionaries 
on tiles evolve when we apply the operations of cyclic shift, reflection,
and promotion.
In \cref{subsub:canonical_BCFW_form} we will 
 explain a useful way of writing the rows of the 
BCFW matrix.  
Finally in 
\cref{subsec:inverse_problem}
we will prove 
 \cref{thm:BCFW-tile-and-sign-description}.

\subsection{How signs of functionaries evolve}\label{sec:evolve}
We need the following definitions, which strengthen the notion of the \emph{sign} of a functionary on a tile (c.f. \cref{def:signs}).

\begin{definition}\label{def:strongly_positive}
A rational functionary $F$ has \emph{strong sign} $+1$ or is \emph{strongly positive} on the image of $S$ if for all $C\in S$, $F(CZ)$ can be written as a ratio of polynomials in the Pl\"ucker coordinates of $C$ and maximal minors of $Z$ with all coefficients of sign $+1$.
A rational functionary has \emph{strong sign} $-1$ or is \emph{strongly negative} on the image of $S$ if it is the negative of a functionary with strong sign $+1$ on the image of $S$.
\end{definition}

Note that if $F$ has strong sign $s$ on the image of $S$, it also has sign $s$ on the image of $S$. The converse may not be true. 

Say $S$ is a cell and $S'$ is obtained from $S$ by applying an operation from \cref{def:BCFW_cell}. The following two theorems 
(which will be proved in \cref{sec:proof1} and \cref{sec:proof2})
show that if a functionary $F$ has fixed sign on the image of $S$, then there is a related functionary $F'$ with a fixed sign on the image of $S'$.

\begin{theorem}[Signs under dihedral group and $\mbox{pre}$]\label{thm:signs-under-cyc-rot-pre}
	Let $S \subset \Grk$ be a positroid cell. Suppose that $F$ is a pure rational functionary with (strong) sign $s$ on $\gto{S}$. Then
	\[
	\begin{cases}
		F\\
		\cyc^{-*}F\\
		\refl^* F
	\end{cases} \text{ has (strong) sign }
	\begin{cases}
	s\\
	(-1)^{k \deg_n F}s\\
	s
\end{cases} \text{ on the image of }
	\begin{cases}
	\pre_I S\\
	{\cyc S}\\
	{\refl S}
\end{cases}.
\]
\end{theorem}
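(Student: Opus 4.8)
\textbf{Proof plan for \cref{thm:signs-under-cyc-rot-pre}.}

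The plan is to reduce each of the three cases to the behaviour of twistor coordinates under the corresponding operation, using the identification of twistor coordinates with Pl\"ucker coordinates of $\Gr_{4,n}$ from \cref{sec:Pluckertwistor} together with the explicit action of $\cyc$ and $\refl$ on Pl\"ucker coordinates in \eqref{eq:cyc-refl-on-plucker}. The cleanest way to organize the argument is to first establish, for each operation $\mathrm{op} \in \{\pre_I, \cyc, \refl\}$ on positroid cells, how a \emph{single} twistor coordinate $\llrr{Y Z_{i_1}\cdots Z_{i_4}}$ evaluated at a point $\tZ(C')$ with $C' \in \mathrm{op}(S)$ compares to a twistor coordinate at $\tZ(C)$ for $C \in S$. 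Then the statement for a general pure functionary $F$ follows by substituting these comparisons into any representation of $F$ as a polynomial in twistor coordinates, and tracking the overall sign that comes out.

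First I would handle $\mathrm{op} = \pre_I$. Inserting zero columns does not change the row span's relation to $Z$ in any essential way: if $C' = \pre_I(C)$ and we write $Z'$ for the corresponding $(n+|I|)\times(k+4)$ matrix, then $C'Z' = CZ''$ where $Z''$ is $Z$ with the rows indexed by $I$ deleted. Concretely, $\tZ(C')$ and $\tZ(C)$ (for a suitably restricted $Z$) represent the same point of $\Gr_{k,k+4}$, and the twistor coordinates involving only indices outside $I$ agree on the nose; twistor coordinates using an index in $I$ are irrelevant since the relevant row of $Z$ is not used. Since $F$ is pure and we may assume (after \cref{rem:Z}, using ``for all $Z$'') that $F$ does not involve indices in $I$ in a way that forces new signs, the functionary $F$ has the same (strong) sign on $\gto{\pre_I S}$ as on $\gto{S}$, with no sign change. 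The ``strong sign'' version follows because the polynomial-in-Pl\"ucker-coordinates-of-$C$ representation witnessing strong positivity for $S$ restricts verbatim to a representation for $\pre_I S$ (the Pl\"ucker coordinates of $C'$ are exactly those of $C$).

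Next, $\mathrm{op} = \cyc$. By \cref{def:dihedral}, if $C' = \cyc\, C$ then $\lr{I}_{C'} = \lr{I-1}_C$; this shifts all $n$ column indices by $1$ modulo $n$ and introduces a sign $(-1)^{k-1}$ only when the shift ``wraps around.'' Matching this with the map $\cyc^{-*}$ on functionaries (cf. \cref{rmk:cyc-refl-on-functionaries}), I would show that $(\cyc^{-*}F)(\tZ(C'Z))$ equals $F(\tZ(CZ))$ up to an overall power of $-1$, and that this power is exactly $k\deg_n F$: each occurrence of the index $n$ in a monomial of $F$ corresponds, after applying $\cyc^{-*}$, to an index $1$ in $F$ acting on $C'$, and picking that up through $\lr{I-1}_C = \pm\lr{I}_{C'}$ costs a sign $(-1)^{k-1}$ per wrap; bookkeeping these over all $\deg_n F$ occurrences and simplifying $(-1)^{(k-1)\deg_n F}$ against the way purity forces the total index count to balance yields $(-1)^{k\deg_n F}$. (The key point making this clean is that $F$ is pure, so $\deg_n F$ is well-defined and the sign depends only on it.) The reflection case $\mathrm{op} = \refl$ is analogous but easier: $\lr{I}_{\refl C} = \lr{n+1-I}_C$ involves a permutation of indices but no residual sign beyond the one already built into the definition of $\refl$ via $P_{k,1,\binom{k}{2}}$, which is exactly absorbed so that $\refl^*F$ has the \emph{same} sign as $F$.

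The main obstacle I anticipate is the sign bookkeeping in the cyclic case: correctly accounting for the $(-1)^{k-1}$ factors coming from the wrap-around in $\cyc_{k,n}$, reconciling them with the convention (\cref{rem:convention}) that indices in Pl\"ucker coordinates are listed in increasing order, and verifying that the net exponent collapses to $k\deg_n F$ rather than something like $(k-1)\deg_n F$. I would pin this down by first checking it on a single twistor coordinate $\llrr{Z_{i_1}Z_{i_2}Z_{i_3}Z_{i_4}}$ (i.e.\ the case $\deg_n F \in \{0,1\}$, $F$ a single Pl\"ucker/twistor coordinate), where the computation is a direct application of \eqref{eq:cyc-refl-on-plucker} and \eqref{eq:cauchy-binet}, and then extending multiplicatively to monomials and additively to sums, using that every monomial of a pure $F$ has the same multidegree so the sign factor is uniform across terms and can be pulled out. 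The ``strong sign'' refinement is handled in parallel: the Cauchy--Binet expansion \eqref{eq:cauchy-binet} expresses each (shifted) twistor coordinate as a sum over $J$ of $\lr{J}_{C'}$ times a minor of $Z$, and since $Z$ is totally positive and the $C'$-side Pl\"ucker coordinates are nonnegative on the cell, the strong-sign witness for $S$ transports to one for $\mathrm{op}(S)$ after the uniform sign correction.
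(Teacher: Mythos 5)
Your overall strategy (reduce to the behaviour of a single twistor coordinate, then extend to pure functionaries) is the same as the paper's, and your treatment of $\pre_I$ and of the strong‑sign refinement is essentially correct. But there is a genuine gap in the cyclic (and reflection) case: you assert that $(\cyc^{-*}F)(\tZ(C'Z))$ equals $F(\tZ(CZ))$ up to a uniform sign, \emph{with the same matrix $Z$ on both sides}. That identity is false. For example with $k=1$, $n=6$, $F=\llrr{2345}$ and $C'=\cyc C$, Cauchy–Binet \eqref{eq:cauchy-binet} gives $F(CZ)=c_1\lr{12345}_Z+c_6\lr{23456}_Z$ while $(\cyc^{-*}F)(C'Z)=\llrr{C'Z\,Z_{3456}}=c_6\lr{13456}_Z+c_1\lr{23456}_Z$; the coefficients of $c_1$ and $c_6$ are different minors of $Z$, so the two values are not proportional. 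The shift $\lr{I}_{C'}=\lr{I-1}_C$ only matches up with $\cyc^{-*}$ on twistor coordinates if you simultaneously replace $Z$ by $[\cyc_{k,n}]Z'$ (so that $Z_j=Z'_{j+1}$ and $Z_n=(-1)^{k-1}Z'_1$, and $CZ=C'Z'$ as points of $\Gr_{k,k+4}$). This is exactly where the ``for all $Z$'' quantifier in \cref{def:signs} is used: one pairs an arbitrary $(C',Z')$ with $C'\in\cyc S$ against the specific $(C,[\cyc_{k,n}]Z')$ with $C\in S$, invokes the hypothesis at that shifted $Z$, and only then reads off the sign of $\cyc^{-*}F$ at $(C',Z')$.

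Without this change of $Z$ your argument cannot close for the plain (non‑strong) sign statement, because a pure functionary with fixed sign may have cancellation among its monomials: knowing the sign of $F$ at $CZ$ tells you nothing term‑by‑term, so your plan of ``extending multiplicatively to monomials and additively to sums'' only transports strong signs, not signs. The fix is small but essential: state and prove the twistor‑matching lemma in the form ``for every $C'\in \mathrm{op}(S)$ and every positive $Z'$ there exist $C\in S$ and positive $Z$ with $\llrr{CZ\,Z_I}=\pm\llrr{C'Z'\,Z'_{\mathrm{op}(I)}}$,'' taking $Z=[\cyc_{k,n}]Z'$ for $\cyc$ and $Z=[\refl_n]Z'P_{k+4,1,\binom{k+4}{2}}$ for $\refl$ (note $\refl$ also requires twisting $Z'$, contrary to your claim that no adjustment is needed there). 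Your sign bookkeeping $(-1)^{k\deg_nF}$ is then recoverable exactly as you sketch, with the extra $-1$ per wrap coming from reordering $Z'_1$ to the front of the twistor.
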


If $B \subset \{-1,0, 1\}^r$, we define
$\cl(B) \subset \{-1,0,1\}^r$ to be minimal set of tuples which contains $B$ and is closed under the operation of changing an entry from $\pm 1$ to $0$. In the following, with a slight abuse of notation, we will often denote the promotion $\Psi(F)$ of a functionary $F$ as $\Psi F$.

\begin{theorem}[Signs under promotion]\label{prop:vanishing_and_sign_of_functionaries_under_promotion}
	Let $S_L \subset \Gr_{k_L, N_L}^{\ge 0}, S_R \subset \Gr_{k_R, N_R}^{\ge 0}$ be positroid cells as in \cref{not:coindipendence}.
	Let $F$ be a pure rational functionary with indices contained in $N_L$ (resp. $N_R$). 
	\begin{enumerate}
		\item If $F$ vanishes on the image of $S_L$ (resp. $S_R$), then $\Psi_{a c} F$ vanishes on the image of $S_L \bcfw S_R$.
\item\label{it:strong_sign} If 
$F$ has strong sign $s \in \{\pm 1\}$ on the image of $S_L$ (resp. $S_R$), then $\Psi_{ac} F$ has strong sign $(-1)^{(k_R+1) \deg_n F}s$ (resp. $s$) on the image of $S_L \bcfw S_R$. 
\item\label{it:for_separation_theorem}
Let $(F_1,\ldots,F_r)$ be an $r$-tuple of pure rational functionaries with indices in $N_L$ (resp. $N_R$). Denote by $A\subseteq\{-1,0,1\}^r$ the collection of sign vectors
$\sgn (F_1(Y,Z), \dots, F_r(Y,Z))$,
where $Z$ varies over $\Mat_{N_L, k_L +4}^{>0}$ and $Y$ varies over $\gto{S_L}$ (resp. $Z \in \Mat_{N_R, k_R +4}^{>0}, Y \in \gto{S_R}$). Let 
$$A' =\{((-1)^{(k_R+1) \deg_n F_1}s_1,\ldots,(-1)^{(k_R+1) \deg_n F_r}s_r):(s_1,\ldots,s_r)\in A\} \quad (\mbox{resp. } A'=A).$$

Then for all $Z$ and all $Y \in \gto{S_L \bcfw S_R}$, $\sgn (\Psi_{ac}F_1,\ldots,\Psi_{ac}F_r) \in \cl(A').$ 
\end{enumerate}
\end{theorem}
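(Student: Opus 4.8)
\textbf{Proof plan for Theorem~\ref{prop:vanishing_and_sign_of_functionaries_under_promotion}.}
The three parts of the theorem should all follow from a single structural observation about how product promotion interacts with the matrix form of $S_L\bcfw S_R$ coming from \cref{prop:butterfly-matrix-same}. The plan is to reduce everything to a statement about strong signs of a functionary evaluated on the explicit path matrix $(*)$ of \cref{fig:promotion-matrix}, and then to induct on the structure of $F$ through the Laplace/twistor expansion. First I would treat the case $F\subset N_R$, which is the cleaner one: given $C\in S_L\bcfw S_R$, write $C$ (after invertible row operations) as the matrix in \eqref{eq:path-mtx}, so that $C$ restricted to the columns of $N_R$ is, up to the column substitutions $d\mapsto B_d'$, $c\mapsto B_c'$, $n\mapsto$ a combination of $B$-columns, exactly a matrix representing a point $B\in S_R$ with parameters substituted in. The key point is that these column substitutions are precisely the substitutions \eqref{eq:promotionvectors2} and \eqref{eq:promotionvectors3} defining $\Psi_{ac}$ (up to the positive frozen factors $\lr{acdn}/\lr{abcd}$ etc., which do not affect strong signs), once one identifies twistor coordinates with Pl\"ucker coordinates of the $B$-amplituhedron via \cref{lem:coordinates}. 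Thus $\Psi_{ac}F$ evaluated on $CZ$ equals $F$ evaluated on the twistor coordinates of $BZ'$ for an appropriate positive $Z'$, times a positive frozen monomial — and the strong-sign statement with $s$ unchanged is immediate.

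For the case $F\subset N_L$, the same reduction applies but now the relevant column substitution is $b\mapsto b-\tfrac{\lr{bcdn}}{\lr{acdn}}a$ from \eqref{eq:promotionvectors1}, and in the matrix \eqref{eq:path-mtx} the row indexed by $a$ carries the sign $(-1)^{k_R+1}$ in its $n$-th entry and the $N_L$-rows carry a sign $(-1)^{k_R+1}$ on the $n$-column (see the entries $(-1)^{k_R+1}A_n$, etc.). Tracking these signs through a monomial in twistor coordinates: each twistor coordinate $\llrr{I}$ with $n\in I$ picks up a factor $(-1)^{k_R+1}$, and each $\llrr{I}$ with $n\notin I$ is unaffected; since $F$ is pure, the total sign change is $(-1)^{(k_R+1)\deg_n F}$. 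This is exactly the recursive sign bookkeeping already used in \cref{def:twistor-mtx} and in \cref{lem:signs}, so I would phrase it as: the matrix representative for $S_L\bcfw S_R$ restricted to $N_L$ columns equals $\pre$ of (a matrix for $S_L$ with column substitution \eqref{eq:promotionvectors1}) after diagonal rescaling by $\mathrm{diag}(1,\dots,1,(-1)^{k_R+1})$ on the $N_L$-block and clearing the $a$-row. Part~(1) (vanishing) is then the specialization $s=0$: if $F$ vanishes identically on $\gt{S_L}$ for all positive $Z$, then since the substituted matrix still represents a point of $S_L$ for an appropriate positive $Z'$ (this is the content of \cref{prop:butterfly-matrix-same} together with the fact that $\Psi$ is induced by column operations preserving positivity), $\Psi_{ac}F$ vanishes on $\gt{S_L\bcfw S_R}$.

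For part~(3), I would argue as follows. Fix $Y\in\gto{S_L\bcfw S_R}$ and $Z$; by the reduction above there is a point $Y_L\in\gto{S_L}$ (resp. $Y_R\in\gto{S_R}$) and a positive $Z'$ such that each $\Psi_{ac}F_j(Y,Z)$ equals $F_j(Y_L,Z')$ times a positive frozen monomial times the sign $(-1)^{(k_R+1)\deg_n F_j}$ (resp. just $F_j(Y_R,Z')$ in the $N_R$ case). The only subtlety is that the positive $Z'$ obtained this way may have fewer columns — it lives over the index set $N_L$ (resp. $N_R$) — and a priori not every such $Z'$ and $Y_L$ arises, but we only need the inclusion, so it suffices that \emph{some} $(Y_L,Z')$ with $Z'$ totally positive arises; this again comes from \cref{prop:butterfly-matrix-same}. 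Hence the sign vector $\sgn(\Psi_{ac}F_1,\dots,\Psi_{ac}F_r)$ at $(Y,Z)$ equals the image under the sign-twist $A\to A'$ of the sign vector $\sgn(F_1,\dots,F_r)$ at $(Y_L,Z')\in\gto{S_L}\times\Mat^{>0}$, which lies in $A'$ — so certainly in $\cl(A')$. (The closure $\cl$ is needed only because we state part~(3) for arbitrary positroid cells $S_L,S_R$, not just those on which the $F_j$ have fixed sign; for degenerate positive $Z$ or boundary phenomena one might only achieve $Z'$ in the closure $\overline{\Mat^{>0}}$, forcing some entries to $0$.) I expect the main obstacle to be precisely this last point: making rigorous that the auxiliary matrix $Z'$ extracted from the column substitutions can always be taken totally positive (or at worst totally nonnegative, explaining the $\cl$), and checking that the positive frozen monomials that appear — ratios of $\lr{acdn},\lr{abcd},\lr{abcn},\lr{abdn},\lr{bcdn}$ — are genuinely nonvanishing and of definite sign on $\gt{S_L\bcfw S_R}$, which is where \cref{lem:sign_of_chord_twistors} / \cref{lem:sign_of_bdry_twistors} must be invoked.
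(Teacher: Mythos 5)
Your high-level intuition — that product promotion is encoded in the column operations of the butterfly matrix, so that $\Psi_{ac}F$ evaluated on $CZ$ ought to match $F$ evaluated on some $BZ'$ — is correct and is indeed the structural fact underlying the paper's argument (it is essentially \cref{lem:evaluation-after-bullet}, which is in turn quoted from \cite[Section 4]{even2021amplituhedron}). But the paper does \emph{not} attempt the direct reduction you propose on $S_L\bcfw S_R$ with arbitrary $S_L$, and the reason is exactly the obstacle you flag at the end and then prematurely wave away: the auxiliary matrix $Z'$ you extract is strictly positive only when $S_L$ (resp.\ $S_R$) is the top cell. When $S_L$ is a proper subcell, the matrix representative $A$ has vanishing Pl\"ucker coordinates, and the Cauchy--Binet computation that establishes positivity of the extracted $Z'$ (see the proof of \cref{lem:opennes}, where a nonnegative matrix $C'_L$ is stacked with the positive $Z$) produces only a \emph{nonnegative} $Z'$. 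Consequently your claim that ``the strong-sign statement with $s$ unchanged is immediate'' is not warranted: a nonnegative $Z'$ with some zero minors can make $F(BZ',Z')$ vanish, and strong sign in the sense of \cref{def:strongly_positive} requires nonvanishing, not just nonnegativity. The same gap infects your opening statement for part~(3), where you first assert the sign vector lands in $A'$ and only later concede it may land in $\cl(A')$.

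The paper avoids this entirely by sandwiching: it proves the $Z'$-extraction statement only for $S_R^\bulR=\Gr^{>0}_{k_L,N_L}\bcfw S_R$ (where the top-cell assumption makes $Z'$ genuinely positive), obtaining ``sign $s$ or $0$'' on $S_L\bcfw S_R\subseteq\overline{S_R^\bulR}$; it then rules out the ``$0$'' alternative by also proving strong sign $s$ on the much smaller cell $S_\bulR=S'\bcfw S_R\subseteq\overline{S_L\bcfw S_R}$, where $S'$ is a zero-dimensional cell whose unique basis avoids $\{a,b,n\}$. The two inclusions $S_\bulR\subseteq\overline{S_L\bcfw S_R}\subseteq\overline{S_R^\bulR}$ are what force the strict sign, and this is the step that is missing from your plan. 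If you want to pursue the direct route, you would need an independent argument that $\Psi_{ac}F$ is not identically zero on $S_L\bcfw S_R$ — which is what the auxiliary cell $S_\bulR$ supplies — or you would need to show that the positivity of the BCFW parameters $t,u,v,w$ of $C\in S_L\bcfw S_R$ suffices to keep the relevant minors of $Z'$ strictly positive even when $A$ is degenerate, which is not at all obvious and is not addressed in your plan.
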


\begin{lemma}\label{lem:sign_of_chord_twistors}
Let $S_L \bcfw S_R \subset \Grk$, with $S_L,S_R$ as in \cref{not:coindipendence}.  Then the twistor coordinates 
	\[(-1)^{k_R}\llrr{bcdn}, \quad (-1)^{k_R+1}\llrr{acdn}, \quad \llrr{abdn},\quad -\llrr{abcn}, \quad \llrr{abcd}\]
	have strong sign $+1$ on the image of $S_L \bcfw S_R$.
\end{lemma}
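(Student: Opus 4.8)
\textbf{Proof strategy for \cref{lem:sign_of_chord_twistors}.}

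The plan is to exhibit an explicit matrix representative for a point of $S_L \bcfw S_R$ and read off the relevant twistor coordinates directly from its Pl\"ucker expansion. Recall from \cref{prop:butterfly-matrix-same} and its proof that every point $V \in S_L \bcfw S_R$ is the rowspan of the matrix $(*)$ of \cref{fig:promotion-matrix}, obtained by applying $\mbcfw$ to $(A, [\alpha:\beta:\gamma:\delta:\varepsilon], B)$ with $A \in S_L$, $B \in S_R$, and all parameters positive. As in the proof of \cref{prop:bcfw-map-injective}, for $f \in \{a,b,c,d,n\}$ the four-element set $\{a,b,c,d,n\} \setminus \{f\}$ is coindependent for $S_L \bcfw S_R$ (this is \cref{lem:chord-coind-or-no}), so the corresponding twistor coordinate is generically nonzero and we only need to pin down its sign.

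First I would compute, using the twistor expansion lemma \cref{rk:Lapexpansion}, the expansion of each of $\llrr{Y Z_b Z_c Z_d Z_n}$, $\llrr{Y Z_a Z_c Z_d Z_n}$, $\llrr{Y Z_a Z_b Z_d Z_n}$, $\llrr{Y Z_a Z_b Z_c Z_n}$, $\llrr{Y Z_a Z_b Z_c Z_d}$ in terms of the Pl\"ucker coordinates $\lr{J}_V$ with $J$ disjoint from the respective index set, times maximal minors of $Z$. By \cref{lem:matroid_under_bcfw}, the only bases $J$ of $S_L \bcfw S_R$ disjoint from (say) $\{b,c,d,n\}$ are of the form $I_L \sqcup \{a\} \sqcup I_R$ with $I_L \in \pos_L$ avoiding $b,n$ and $I_R \in \pos_R'$ avoiding $b,c,d,n$ (case (1) of that lemma); for the other four sets one uses cases (2)--(6) similarly. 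The key point is that, from the shape of the matrix $(*)$, the Pl\"ucker coordinate $\lr{I_L \sqcup \{f\} \sqcup I_R}_V$ factors (up to the sign $(-1)^{k_R}$ or $(-1)^{k_R+1}$ coming from column order in the butterfly, and up to a fixed power of the parameters $t,u,v,w$) as a product of a Pl\"ucker coordinate of $A \in S_L$ (equivalently of $G_L'$) and one of $B \in S_R$ (equivalently of $G_R'$), each of which is nonnegative since $A$, $B$ are totally nonnegative and $\pos_L'$, $\pos_R'$ are their respective positroid closures. Thus each twistor coordinate is, after multiplying by the stated sign, a sum over bases of manifestly nonnegative terms (nonnegative Pl\"ucker coordinates of $C$ times positive parameters times maximal minors of $Z$, which are positive by $Z \in \Mat_{n,k+4}^{>0}$), with at least one term strictly positive by coindependence. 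This is exactly the definition of strong sign $+1$ (\cref{def:strongly_positive}).

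The sign bookkeeping is the main obstacle: one must carefully track (i) the sign $(-1)^{k_R}$ or $(-1)^{k_R+1}$ arising from the placement of the row indexed by $a$ and the columns $c,d,n$ in the butterfly matrix $(*)$, (ii) the Laplace-expansion sign $\lr{j_1,\dots,j_k,i_1,\dots,i_m}_Z$ which depends on where $\{a,b,c,d,n\}$ sit relative to the basis $J$, and (iii) the fact that reordering $\{a,b,c,d,n\}$ inside a twistor bracket (e.g.\ from $Z_b Z_c Z_d Z_n$ to increasing order) may introduce a sign. The cleanest way to handle this is to do the computation once for $\llrr{abcd}$ (where no parameter substitution from $t,u,v,w$ interferes with the $Z$-columns and the matrix $(*)$ restricted to the relevant rows is block-triangular), obtaining strong sign $+1$, and then relate the other four brackets to it either by the same direct computation or by noting that the five brackets $\llrr{abcd}, -\llrr{abcn}, \llrr{abdn}, (-1)^{k_R+1}\llrr{acdn}, (-1)^{k_R}\llrr{bcdn}$ are, up to positive parameters, exactly the five maximal minors of the ``middle row'' block of $(*)$ together with the blocks from $A$ and $B$; the alternating signs $+,-,+,-,+$ are precisely those of a Laplace expansion along a single row with entries $\alpha, \beta, (-1)^{k_R}\gamma, (-1)^{k_R}\delta, (-1)^{k_R}\varepsilon$ (after the row operations of \eqref{eq:path-mtx}), all of which are positive up to the global $(-1)^{k_R}$ factor. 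Once the five signs are matched to the stated normalizations, strong positivity follows uniformly, completing the proof.
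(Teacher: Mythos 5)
Your proposal is correct and follows essentially the same route as the paper: the paper's proof likewise expands each twistor via the Cauchy--Binet formula of \cref{rk:Lapexpansion}, observes using \cref{lem:matroid_under_bcfw} that every nonzero term carries the stated sign, and invokes \cref{lem:chord-coind-or-no} to guarantee the sum is nonempty. Your write-up simply makes explicit the sign bookkeeping (via the structure of the matrix in \eqref{eq:path-mtx}) that the paper dismisses as ``straightforward to see.''
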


\begin{proof}
	Expand each twistor $\llrr{ijlh}$ as in \eqref{eq:cauchy-binet}. We obtain a sum of products of Pl\"ucker coordinate of the matrix $C \in S_L \bcfw S_R$, and the matrix $Z$. The sum is over all Pl\"ucker coordinates of $C$ which are non zero and intersect the set $\{a,b,c,d,n\}$ precisely in the singleton $\{a,b,c,d,n\}\setminus\{i,j,l,h\}.$ It is straightforward to see that all these terms come with the sign in the lemma using \cref{lem:matroid_under_bcfw}. \cref{lem:chord-coind-or-no} shows that the summation is non-empty.
\end{proof}
\begin{lemma}
\label{lem:sign_of_bdry_twistors}
Let $S$ be a BCFW cell of $\Grk$. For $i, j \in [n-1]$ with $i +1<j$, the twistor $\llrr{i,i+1,j,j+1}$ is strongly positive on $\gto{S}$. For $i \in \{2, \dots, n\}$, the twistor $\llrr{i, i+1, n, 1}$ has strong sign $(-1)^{k-1}$ on $\gto{S}$.
\end{lemma}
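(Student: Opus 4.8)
\textbf{Plan for the proof of \cref{lem:sign_of_bdry_twistors}.}
The statement asserts strong sign properties for two families of ``boundary-adjacent'' twistor coordinates on a general BCFW tile. The natural approach is induction on the number of steps in a recipe $\rcp$ for the cell $S$, mirroring the recursive structure of \cref{def:BCFW_cell} (base case $k=0$; and the inductive steps $\pre_I$, $\cyc$, $\refl$, and the BCFW product $\bcfw$). The key tools are \cref{thm:signs-under-cyc-rot-pre} and \cref{prop:vanishing_and_sign_of_functionaries_under_promotion}, which track how strong signs of functionaries evolve under exactly these operations, together with \cref{lem:sign_of_chord_twistors} which handles the five ``butterfly'' twistors $\llrr{bcdn},\llrr{acdn},\llrr{abdn},\llrr{abcn},\llrr{abcd}$ that are created at each $\bcfw$ step.

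The base case $k=0$ is vacuous since there are no such twistors to consider (or they vanish identically, depending on convention). For the inductive steps $\pre_I$, $\cyc$, $\refl$: a twistor $\llrr{i\,i+1\,j\,j+1}$ (resp.\ $\llrr{i\,i+1\,n\,1}$) on the new cell pulls back under the relevant dihedral operation to a twistor of the same combinatorial shape (a pair of cyclically-consecutive pairs, one of which may involve the wraparound $n,1$) on the smaller cell, so the inductive hypothesis applies; one then reads off the sign change from \cref{thm:signs-under-cyc-rot-pre}, carefully bookkeeping the factor $(-1)^{k\deg_n F}$ for $\cyc$ and checking that a consecutive-pairs twistor has $\deg_n = 1$ exactly when $n$ is one of its indices, which is precisely the case that produces the $(-1)^{k-1}$ sign in the statement. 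For the $\bcfw$ step, write $S = S_L \bcfw S_R$ with the usual indices $a,b,c,d,n$: a twistor $\llrr{i\,i+1\,j\,j+1}$ either (i) has all four indices in $N_L$ or all four in $N_R$, in which case it is the promotion $\Psi$ of the corresponding twistor on $S_L$ or $S_R$ (using that $\Psi$ fixes Pl\"ucker coordinates not involving the promoted vectors, cf.\ the remark after \cref{pro-twistors2}) and we invoke \cref{prop:vanishing_and_sign_of_functionaries_under_promotion}\eqref{it:strong_sign} to get the strong sign with the stated twist, or (ii) it is one of the five butterfly twistors, handled by \cref{lem:sign_of_chord_twistors}, or (iii) it straddles the split in a way that forces some $4$-subset of $\{a,b,c,d,n\}$, in which case the twistor still decomposes via the promotion substitutions and one checks directly (as in the proof of \cref{lem:sign_of_chord_twistors}, using \cref{lem:matroid_under_bcfw}) that all surviving terms in the Cauchy--Binet expansion \eqref{eq:cauchy-binet} carry the same sign.

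The main obstacle I anticipate is the careful sign bookkeeping in case (iii) of the $\bcfw$ step and in verifying that the parity $(-1)^{k-1}$ (resp.\ $(-1)^{k_R+1}$) coming out of promotion and cyclic rotation always lands on the claimed value $(-1)^{k-1}$ for the wraparound twistor $\llrr{i\,i+1\,n\,1}$. Concretely, after an $\mbcfw$ step one has $k = k_L + k_R + 1$, and the promotion twist in \cref{prop:vanishing_and_sign_of_functionaries_under_promotion} is $(-1)^{(k_R+1)\deg_n F}$; one must check this composes correctly with the $(-1)^{k_L\deg_n(\cdot)}$-type factors accumulated inside $S_L$'s recipe so that the net sign on a degree-one-in-$n$ twistor is $(-1)^{k-1}$. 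I would organize this by proving a slightly stronger inductive statement that records, for each of these twistors, both its strong sign and its $n$-degree, so the parity arithmetic is transparent at each step. The non-wraparound twistors $\llrr{i\,i+1\,j\,j+1}$ with $i+1<j<n$ have $\deg_n = 0$, so all the dihedral and promotion twists are trivial and strong positivity simply propagates, which is the easy half.
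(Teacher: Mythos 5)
Your inductive strategy is a genuinely different route from the paper, but as written it has two structural gaps in the $\bcfw$ step, and fixing them essentially collapses the induction into the paper's direct argument.

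First, in your case (i) the claim that a twistor with all indices in $N_R$ ``is the promotion $\Psi$ of the corresponding twistor on $S_R$'' fails whenever the twistor involves $d$ or $n$. For example $\llrr{i,i{+}1,d,n}=\llrr{i,i{+}1,n{-}1,n}$ is a twistor of the required shape with indices in $N_R$, but $\Psi$ does not fix it: $\Psi(\lr{i,i{+}1,d,n})$ is a chain polynomial divided by $\lr{abcn}$, a different functionary. \cref{prop:vanishing_and_sign_of_functionaries_under_promotion} only tells you the sign of $\Psi F$ on the big tile given the sign of $F$ on the small one; to get the sign of $\llrr{i,i{+}1,d,n}$ itself you would need to exhibit $G$ with $\Psi G=\llrr{i,i{+}1,d,n}$, which you do not have. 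Second, case (iii) is mischaracterized: a straddling twistor need not be a $4$-subset of $\{a,b,c,d,n\}$. With $N_L=\{1,\dots,b\}\cup\{n\}$ and $N_R=\{b,\dots,n\}$, the twistors $\llrr{1,2,b,b{+}1}$ (for $b>3$) and $\llrr{a,b,j,j{+}1}$ (for $b<j<c$) straddle the split, are not butterfly twistors, and are not covered by \cref{lem:sign_of_chord_twistors} or by promotion. Your proposed fallback for these --- expand via \eqref{eq:cauchy-binet} and check all terms have the same sign --- is exactly the paper's entire proof, applied uniformly.

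Indeed, the paper dispenses with induction altogether: for any $C\in S$ and positive $Z$, expand $\llrr{i,i{+}1,j,j{+}1}$ (or $\llrr{i,i{+}1,n,1}$) by the twistor expansion lemma as $\sum_J\lr{J}_C\,\lr{J\cup\{i,i{+}1,j,j{+}1\}}_Z$. Each $\lr{J}_C\geq 0$, and the sign of the sorted $Z$-minor is the same for every $J$: each element of $J$ slides past a consecutive pair an even number of times, while the wraparound index $1$ contributes $k+3$ transpositions, giving the uniform sign $(-1)^{k-1}$ in the second case. Nonvanishing of the sum is exactly $4$-coindependence of $S$, which is \cref{cor:4bidden}. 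If you want to salvage your induction, note that the only thing it would be buying you is nonvanishing, and that is already supplied by \cref{cor:4bidden}; the sign uniformity is a one-line permutation-parity computation that needs no recursion.
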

\begin{proof}
The proof is almost identical to the proof of the previous lemma. Again we expand the twistor coordinates using \eqref{eq:cauchy-binet}, and observe that all possible nonzero terms terms come with the sign indicated in the lemma statement. As long as there is at least one Pl\"ucker coordinate of $C$ which does not use the indices $i,i+1,j,j+1$ and is nonzero, the summation is non zero. But such a coordinate exists, as $S$ is $4$-coindependent, by \cref{cor:4bidden}.
\end{proof}

\subsection{The proof of \cref{thm:signs-under-cyc-rot-pre}}\label{sec:proof1}
In this section and the next, if $F$ is a functionary, we will sometimes write $F(Y, Z)$ where $Y \in \Gr_{k, k+4}$ and $Z \in \Mat_{n, k+4}^{>0}$. We also sometimes write $\llrr{Y Z_I}$ for the twistor coordinate $\llrr{I}$. Recall that in $\llrr{I}= \llrr{Y Z_I}$, the elements of $I$ are listed in \emph{increasing} order.

The following lemma relates twistor coordinates before and after applying $\pre_i, \cyc, \refl$.

\begin{lemma}\label{lem:twistor-after-pre-cyc-rot}
Let $N$ denote the index set of $S$, and let $S'$ denote one of $\pre_i S, \cyc S, \refl S$. Choose $C' \in S'$, and $Z'$ a positive matrix of the appropriate size. Then there exists $C \in S$ and $Z$ positive so that for all $I \in \binom{N}{4}$,
	\[\llrr{CZ~Z_I}= \begin{cases}
		\llrr{C'Z'~Z'_I} & \text{ if }S'=\pre_i S\\
		(-1)^{k \cdot |I \cap \{n\}|} \cyc^{-*}\llrr{C'Z'~Z'_I} & \text{ if } S'=\cyc S\\
		\refl^* \llrr{C'Z'~Z'_I} & \text{ if }S'=\refl S.\\
	\end{cases}\]
\end{lemma}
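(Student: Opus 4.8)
The plan is to verify the three cases of \cref{lem:twistor-after-pre-cyc-rot} separately, using the Twistor expansion lemma (\cref{rk:Lapexpansion}, equation \eqref{eq:cauchy-binet}) to reduce everything to a statement about Pl\"ucker coordinates of the matrices $C, C'$ and minors of $Z, Z'$. In each case we must produce from the given $C' \in S'$ and $Z'$ an appropriate $C \in S$ and positive $Z$. The natural choice is dictated by the definitions: for $S' = \pre_i S$, we take $C$ to be $C'$ with its $i$-th (zero) column deleted and $Z$ to be $Z'$ with its $i$-th row deleted; for $S' = \cyc S$, we take $C$ so that $C' = \cyc(C)$ and correspondingly cyclically permute the rows of $Z$; for $S' = \refl S$, we take $C$ with $C' = \refl(C)$ and reverse (with the appropriate sign) the rows of $Z$. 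The main point to check is that these choices of $Z$ are again totally positive matrices of the correct size, and that $C$ genuinely lies in $S$ — both of which follow from the definitions of $\pre_i$, $\cyc$, $\refl$ in \cref{def:pre,def:dihedral} and the fact (\cref{def:BCFW_cell}) that $S$ is obtained from $S'$ by the inverse operation.

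First I would handle the $\pre_i$ case, which is the cleanest. If $C' = \pre_i(C)$, then every maximal minor $\lr{J}_{C'}$ with $i \in J$ vanishes, and $\lr{J}_{C'} = \lr{J}_C$ (up to the sign bookkeeping of \cref{rem:convention}, which matches on both sides) when $i \notin J$. Deleting row $i$ of $Z'$ to form $Z$, the $4 \times 4$ minors of $Z'$ on rows not using $i$ equal the corresponding minors of $Z$, and since all indices $I \subset N$ avoid $i$ by hypothesis, expanding both $\llrr{CZ\, Z_I}$ and $\llrr{C'Z'\, Z'_I}$ via \eqref{eq:cauchy-binet} gives identical sums term-by-term (the terms with $i \in J$ in the $C'$ expansion contribute zero). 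That $Z$ is still positive is immediate since a submatrix of $Z'$ obtained by deleting one row has its maximal minors among the maximal minors of $Z'$, all positive; and $C \in S$ by \cref{def:BCFW_cell} (``insert zero'').

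Next I would do the $\cyc$ case. Here I would use \eqref{eq:cyc-refl-on-plucker}: if $C' = \cyc(C)$ then $\lr{I}_{C'} = \lr{I-1}_C$, and similarly if $Z' = [\cyc_{k+4,n}] \cdot \text{(something)}$ — more precisely I would take $Z$ so that $Z'$ is the cyclic shift of $Z$, tracking that the sign $(-1)^{k+3}$ appearing when the last row wraps around to the top of $Z'$ combines with the sign $(-1)^{k-1}$ from shifting the columns of $C$, and the total effect on a term $\lr{J}_C \lr{J \cup I}_Z$ of the expansion is precisely the sign $(-1)^{k |I \cap \{n\}|}$ and the relabeling $I \mapsto I - 1$, i.e. the pullback $\cyc^{-*}$. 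The bookkeeping of the wrap-around sign is the one genuinely fiddly part: when an index equal to $n$ appears in $I$, cycling sends it ``past'' position $1$, and one must carefully count how many transpositions this costs in the determinant defining the twistor coordinate. I expect this sign computation — getting the exponent $k |I \cap \{n\}|$ exactly right and consistent with the definition $\cyc^{-*}$ in \cref{rmk:cyc-refl-on-functionaries} — to be the main obstacle, though it is a finite and mechanical check. The $\refl$ case is analogous but easier, since $\refl$ is an involution and the sign matrix $P_{k,1,\binom{k}{2}}$ in \cref{def:dihedral} together with the reversal $[\refl_n]$ produces exactly the substitution $I \mapsto n+1-I$ with no residual sign on the twistor coordinate once one accounts for the matching sign factors on $C$ and $Z$; I would verify this by the same term-by-term comparison of \eqref{eq:cauchy-binet} expansions, using $\lr{I}_{\refl M} = \lr{n+1-I}_M$.

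Finally I would note that in all three cases $Z$ being positive of the correct size is exactly what makes $\llrr{CZ\, Z_I}$ a legitimate twistor coordinate, so the identities we derived are the asserted ones. I would close by remarking that these identities are precisely the input needed in \cref{thm:signs-under-cyc-rot-pre} (and its strong-sign refinement), since they show the sign of a functionary on $\gto{S}$ transports to the sign of the pulled-back functionary on $\gto{S'}$ up to the stated factor $(-1)^{k \deg_n F}$ in the cyclic case.
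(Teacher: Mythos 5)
Your proposal is correct and follows essentially the same route as the paper: in each case one takes the obvious $C$ with $C'$ its image, transports the operation onto $Z$ so that $CZ=C'Z'$, and then tracks the signs coming from the wrap-around row of $[\cyc_{k,n}]$ and from reordering indices into increasing order; the exponent $k\,|I\cap\{n\}|$ arises exactly as you predict, as $(-1)^{k-1}$ from the last row of $[\cyc_{k,n}]$ times $(-1)^3$ from moving $Z'_1$ to the front of the four twistor rows. One small correction: in the cyclic case the matching choice is $Z=[\cyc_{k,n}]Z'$ (so that $C'Z'=C[\cyc_{k,n}]Z'=CZ$), not ``$Z'$ is the cyclic shift of $Z$'' as you wrote — the requirement $CZ=C'Z'$ forces the direction, and with it positivity of $Z$ follows since its maximal minors equal those of $Z'$ up to the sign $(-1)^{k-1}\cdot(-1)^{k+3}=1$. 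Also note that once $CZ=C'Z'$ is established at the matrix level, the paper simply rewrites the four rows $Z_I$ in terms of $Z'$ inside the determinant defining the twistor, which avoids the heavier term-by-term Cauchy--Binet comparison (and the attendant signs on minors of $Z$ versus $Z'$) that you sketch for the $\pre_i$ case; both are valid.
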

\begin{proof}
	If $S'= \pre_i S$, then $Z' \in \Mat_{N \cup \{i\}, k+4}^{>0}$. The matrices $C$ and $Z$ are respectively defined by deleting the $i$th column of $C'$ and the $i$th row of $Z'$. It is clear that $CZ=C'Z'$ and for all $x\in N$, $Z_x=Z_x'$. The equality of twistor coordinates follows.
	
	For the remaining cases, we assume $N=[n]$ to simplify notation. If $S'= \cyc S$, then we define $C \in S$ to be the unique element such that $\cyc C=C'$. Recall from the definition of cyclic shift (cf. \cref{def:dihedral}) that $\cyc C= C \cdot [\cyc_{k,n}]$ where $ [\cyc_{k,n}]$ is an $n \times n$ matrix. We define $Z:= [\cyc_{k,n}]Z'$, so that $C'Z'= C [\cyc_{k,n}]Z= C Z $. Note that $Z$ is again a matrix with positive maximal minors.
	
	So we have $\llrr{CZ~Z_I}= \llrr{C'Z'~Z_I}$. We now rewrite the right hand twistor entirely in terms of $Z'$. From the definition of $[\cyc_{k,n}]$, we have that $Z_i=Z'_{i+1}$ for $i<n$ and $Z_n= (-1)^{k-1} Z'_1$. If $I=\{u<v<x<y\}$ does not contain $n$, then 
	$$\llrr{C'Z'~Z_I}= \llrr{C'Z'~Z'_{u+1}Z'_{v+1}Z'_{x+1}Z'_{y+1}}=\cyc^{-*} \llrr{C'Z' ~ Z'_I}$$ where the last equality holds because the indices are in increasing order. If $I=\{u<v<x<n\}$, then 
	$$\llrr{C'Z'~Z_I}=(-1)^{k-1} \llrr{C'Z'~Z'_{u+1}Z'_{v+1}Z'_{x+1}Z'_{1}}= (-1)^{k} \llrr{C'Z'~Z'_1Z'_{u+1}Z'_{v+1}Z'_{x+1}}$$
	and the final twistor is $(-1)^k \cyc^{-*} \llrr{C'Z' ~ Z'_I}$.
	
	If $S'=\refl S$, the proof is quite similar. Recall that $\refl$ is an involution. Define $C:= \refl C'$, so that $C'= \refl C= P_{ k,1,\binom{k}{2} } C [\refl_n]$ (cf. \cref{def:dihedral} for the definitions of $P_{k, i, s}$ and $[\refl_n]$). We choose $Z= [\refl_n] Z' P_{k+4, 1, \binom{k+4}{2}}$, which again has positive maximal minors. With these choices, $C'Z'$ is equal to $CZ$ with the first row multiplied by $(-1)^{\binom{k}{2}}$ and the first column multiplied by $(-1)^{\binom{k+4}{2}}$. Note that $Z_{i}$ is $Z'_{n-i+1}$ with first entry multiplied by $(-1)^{\binom{k+4}{2}}$. This implies that for $I=\{u<v<x<y\}$, 
	\[\llrr{CZ Z_u Z_v Z_x Z_y}= \llrr{C'Z' Z'_{n+1-u} Z'_{n+1-v} Z'_{n+1-x} Z'_{n+1-y}}= \llrr{C'Z' Z'_{n+1-y} Z'_{n+1-x} Z'_{n+1-v} Z'_{n+1-u}}\]
	where the first equality uses the fact that the parity of $\binom{k}{2}$ depends only on $k \mod 4$. Since the indices of the rightmost twistor are in increasing order, it is equal to $\refl^* \llrr{C'Z' Z'_I}$, as desired.
\end{proof}

\begin{proof}[Proof of \cref{thm:signs-under-cyc-rot-pre}]
	We first consider the case when $F$ has sign $s \in \{0, +1, -1\}$ on the image of $S$. We will give the argument for the sign of $\cyc^{-*}F$ on the image of $\cyc S$; the others are very similar. Note that by definition, $\cyc^{-*}F$ has an expression in terms of the cyclic shifts of twistor coordinates $\cyc^{-*}\llrr{I}$. Pick $C' \in \cyc S$ and $Z'$ positive, and let $C \in S$ and $Z$ be the matrices from \cref{lem:twistor-after-pre-cyc-rot}. Applying \cref{lem:twistor-after-pre-cyc-rot} separately to each twistor coordinate in $F$, each term gets a sign of $(-1)^{k \deg_n F}$ so we see that $F(CZ, Z)= (-1)^{k \deg_n F} \cyc^* F(C'Z', Z')$. By assumption, $F(CZ, Z)$ has sign $s$, and so $\cyc^* F(C'Z', Z')$ has sign $s$ as well. Since $C'$ and $Z'$ were chosen arbitrarily, this shows $\cyc^* F$ has sign $s$ on the image of $\cyc S$.
	
	We now consider the case when $F$ has strong sign $s \in \{+1, -1\}$ on the image of $S$. That is, as a function from $S \times \Mat_{n, k+4}^{>0}$, $F=f(C,Z)/g(C,Z)$ where $f,g$ are polynomials in the Pl\"ucker coordinates of $C$ and the maximal minors of $Z$, all coefficients of $f$ are sign $s$ and all coefficients of $g$ are positive. We again will only give the argument for the strong sign of $\cyc^{-*}F$ on the image of $\cyc S$; the others are very similar. \cref{lem:twistor-after-pre-cyc-rot} implies that for any $C' \in \cyc S$ and $Z'$ positive, $(-1)^{k \deg_n F} \cyc^{-*}F(C'Z', Z')$ is equal to $F(CZ, Z)$, where $C,Z$ are as constructed in the lemma. Thus we have $$(-1)^{k \deg_n F} \cyc^{-*}F(C'Z', Z')=f(C,Z)/g(C,Z).$$ 
	What remains is to rewrite $f(C,Z)$ and $g(C,Z)$ in terms of the Pl\"ucker coordinates and maximal minors of $C', Z'$ without changing the signs of any coefficients. This is straightforward: since $C'= \cyc C$, $\lr{I}_C = \lr{I+1}_{C'}$, and, similarly, since $Z= \cyc Z'$, $\lr{I}_Z= \lr{I-1}_{Z'}$.
\end{proof}

\subsection{The proof of \cref{prop:vanishing_and_sign_of_functionaries_under_promotion}}\label{sec:proof2}
We will treat the case when the indices of $F$ are contained in $N_R$. 
The proof for the other case is similar; we remark on the  differences at the end of this section.

\begin{notation} \label{not:bullet}
For a positroid $S_R$ in $\Gr^{\ge0}_{k_R,N_R}$, we set 
$S_R^{\bulR}=\Gr^{>0}_{k_L,N_L} \bcfw S_R$, and for a positroid $S_L$ of $\Gr^{\ge0}_{k_L,N_L}$, we set $S_L^{\bulL}=S_L\bcfw \Gr^{>0}_{k_R,N_R}.$
\end{notation} 
 
Using \cref{not:bullet}, \cref{lem:matroid_under_bcfw} implies that $S_L \bcfw S_R \subset \overline{S_R^\bulR}$. Indeed, let $\pos_L,\pos_L'$ be as in \cref{lem:matroid_under_bcfw}. Every element of $\pos_L, \pos_L'$ is a basis for $\Gr^{>0}_{k_L,N_L}$, so by \cref{lem:matroid_under_bcfw}, every basis for $S_L \bcfw S_R$ is a basis of $S_R^\bulR$.
	
	The proof will utilize the following lemmas. 
	
		\begin{lemma}\label{lem:9_6_elt}
		$S_R^\bulR$ can be constructed in the following way.
		Start with $S_R.$
		\begin{itemize}
			\item Apply the upper BCFW map to $S_R.$
			\item Perform $\inc_1,\ldots,\inc_{k_L},$ and then $\pre_{k_L+1},\ldots,\pre_{a-1}.$
			\item Apply $y_{n}(t_1), y_{1}(t_2), \dots y_{k_L-1}(t_{k_L-1})$ in that order.
			\item For each $h=1,\ldots,b-k_L,$ apply %for $i=k_L,\ldots,1,$ in that order, 
			the operations $x_{k_L+h-1}(t_{k_L;h}),x_{k_L+h-2}(t_{k_L-1;h}), \dots, x_{h}(t_{1;h})$ in that order.
		\end{itemize}
	\end{lemma}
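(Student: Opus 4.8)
\textbf{Plan for the proof of Lemma~\ref{lem:9_6_elt}.}
The statement describes $S_R^\bulR=\Gr^{>0}_{k_L,N_L}\bcfw S_R$ as the result of a concrete sequence of operations applied to $S_R$: first an application of the upper BCFW map, then a sequence of include/zero-column insertions to expand the index set from $N_R$ to $[n]$, and finally a sequence of bridge operations $y_i(t)$ and $x_i(t)$ that build up a totally positive block on the indices $N_L$. The plan is to verify this by a direct comparison of the two plabic graphs involved: the ``butterfly'' graph $G_L\bcfw G_R$ with $G_L$ a reduced plabic graph for $\Gr^{>0}_{k_L,N_L}$, versus the plabic graph obtained from $G_R$ by the listed operations. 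Since $\Gr^{>0}_{k_L,N_L}$ has a particularly simple reduced plabic graph (the one with a single source set, built entirely out of bridges added to a collection of lollipops, see \cref{thm:bridges-and-lollipops}), the left-hand factor of the butterfly can be realized as a sequence of bridge additions, and the claim reduces to checking that the bridges in the butterfly construction correspond, after the T-duality-free presentation used throughout the paper, exactly to the $y$- and $x$-bridges in the order given.

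Concretely, I would proceed as follows. First, recall from \cref{def:butterfly} and \cref{fig:butterfly} the shape of $G_L\bcfw G_R$, and specialize $G_L$ to be the standard reduced graph for $\Gr^{>0}_{k_L,N_L}$ on the index set $N_L=\{1,2,\dots,a,b,n\}$. Second, apply the upper BCFW map to $S_R$ (legitimate since, when $k_L=0$ and $a=1$, the set $\{a,b,n\}$ is automatically coindependent, so \cref{prop:butterfly-matrix-same} applies); this produces the ``butterfly with no left factor,'' i.e. the subgraph of $G_L\bcfw G_R$ obtained by deleting $G_L$ and the middle row construction attached to it. Third, use $\inc$ and $\pre$ to pass from the index set of the upper-BCFW output to the full index set $[n]$, inserting the $k_L$ ``active'' columns $1,\dots,k_L$ and then the zero columns $k_L+1,\dots,a-1$; here one needs to track carefully that $\inc$ inserts a column that becomes part of the support (matching a boundary vertex that will be bridged) while $\pre$ inserts a genuine zero column. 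Fourth, add the bridges: the $y_i(t)$ operations for $i\in\{n,1,\dots,k_L-1\}$ correspond to the vertical edges of the butterfly on the ``$a$-row,'' and the nested sequences $x_{k_L+h-1}(t_{k_L;h}),\dots,x_h(t_{1;h})$ for $h=1,\dots,b-k_L$ reconstruct the bridge decomposition of the $\Gr^{>0}_{k_L,N_L}$ factor column-by-column. At each stage I would invoke \cref{thm:bridges-and-lollipops} (adding a bridge does not remove bases and has the expected effect on the boundary measurement) together with the matroid description in \cref{lem:matroid_under_bcfw} to confirm that the resulting positroid agrees with that of $S_R^\bulR$.

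The bookkeeping of indices and bridge orders is the main obstacle. The subtlety is that the $x$- and $y$-bridges must be applied in precisely the nested order stated so that (i) each bridge is added between two boundary vertices that are currently ``adjacent'' in the cyclic sense required for a valid bridge, and (ii) the cumulative effect is the totally positive cell $\Gr^{>0}_{k_L,N_L}$ rather than some non-maximal positroid. Verifying (i) and (ii) amounts to checking that the standard BCFW-type parametrization of $\Gr^{>0}_{k_L,N_L}$ by $\binom{k_L}{2}+k_L(\text{something})$ many bridge parameters matches the count $\;(k_L-1)+\sum_{h=1}^{b-k_L}k_L\;$ of parameters $t_\bullet$ appearing in the lemma, and that the supports line up. I expect this to be a somewhat tedious but entirely mechanical induction on $k_L$ (or equivalently on the number of bridges), with the base case $k_L=0$ being the pure upper-BCFW statement already covered by \cref{prop:butterfly-matrix-same}. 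Once the plabic-graph identity is established, the equality $S_R^\bulR=\Gr^{>0}_{k_L,N_L}\bcfw S_R$ of positroid cells follows immediately from the fact that a reduced plabic graph determines its cell.
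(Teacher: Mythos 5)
Your plan is essentially the same as the paper's argument: the paper defers to the analogous lemma in \cite{even2021amplituhedron} (which compares the nonzero Pl\"ucker coordinates of the two descriptions) and gives the plabic-graph version of the argument pictorially in \cref{fig:S-bullet-plabic}, which is exactly the bridge-by-bridge reconstruction of the left factor of the butterfly that you propose. Your sketch correctly identifies the key ingredients (\cref{def:butterfly}, \cref{thm:bridges-and-lollipops}, \cref{lem:matroid_under_bcfw}), and the ``tedious but mechanical'' bookkeeping you defer is precisely what the figure encodes.
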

	The proof is completely analogous to the proof of \cite[Lemma 9.6]{even2021amplituhedron}, which goes by comparing the collections of non zero Pl\"ucker coordinates for the two descriptions. See \cref{fig:S-bullet-plabic} for a plabic graph proof of the lemma.
	
	\begin{figure}
		\centering
		\includegraphics[width=\textwidth]{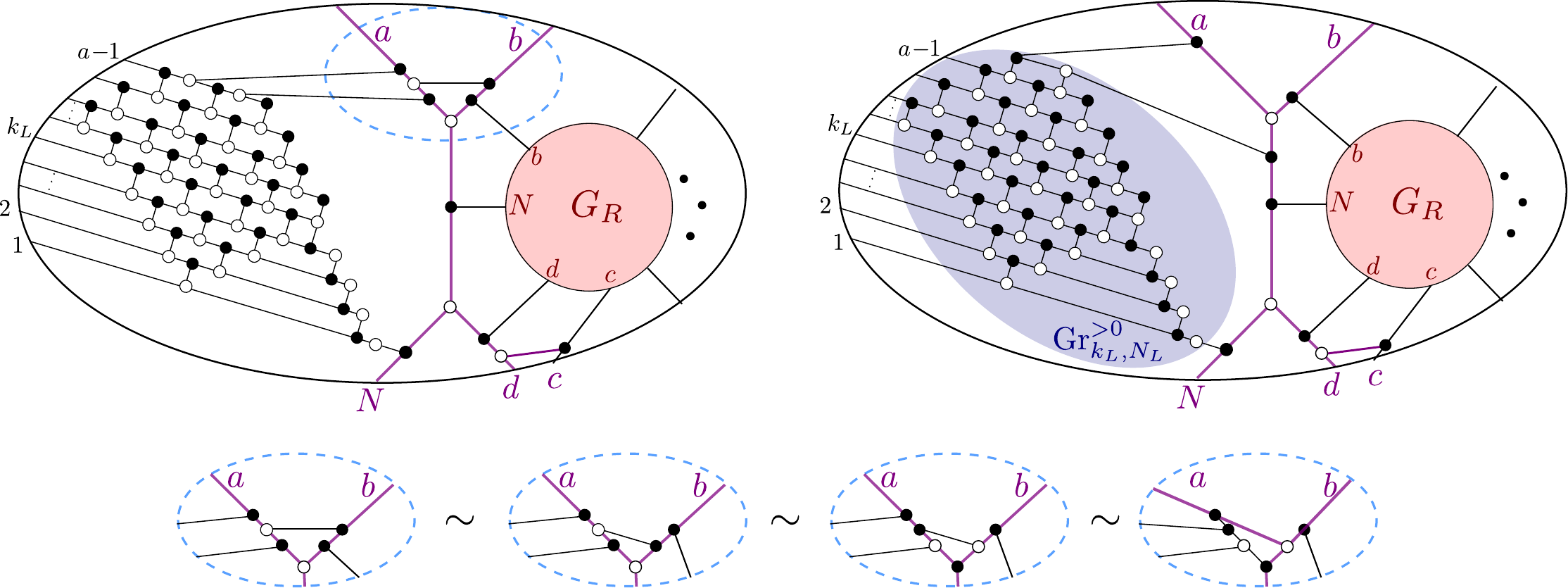}
		\caption{On the upper left, the plabic graph constructed by the sequence of operations in \cref{lem:9_6_elt}. Performing the moves in the bottom row produces the graph on the upper right, which is of the form $G \bcfw G_R$ where $G$ a plabic graph for $\Gr_{k_L,N_L}^{>0}$. The graph on the left is thus a plabic graph for $S_R^\bulR$.}
		\label{fig:S-bullet-plabic}
	\end{figure}
	
	\begin{lemma}\label{lem:evaluation-after-bullet} 
		Let $C \in S_R^\bulR$ and $Z \in \Mat_{n, k+4}^{>0}$. Then there exists $C' \in S_R$ and $Z' \in  \Mat_{N_R, k_R+4}^{>0}$ such that for any functionary $F$ with indices in $N_R$, $\Psi_{a c}F(CZ, Z)= F(C'Z', Z').$
	\end{lemma}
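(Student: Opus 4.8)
\textbf{Proof proposal for Lemma~\ref{lem:evaluation-after-bullet}.}
The plan is to exploit the explicit construction of $S_R^\bulR$ given in \cref{lem:9_6_elt}, which realizes $S_R^\bulR$ by applying to $S_R$, in order: the upper BCFW map, a sequence of $\inc$ and $\pre$ operations, and then a sequence of one-parameter subgroups $y_j(t)$ and $x_j(t)$ (BCFW bridges). Given $C \in S_R^\bulR$ and $Z \in \Mat_{n,k+4}^{>0}$, we will track how a twistor coordinate $\llrr{YZ_I}$ with $Y = CZ$ decomposes. First I would use the bridge structure: the rows of $C$ coming from the $x_j,y_j$ operations let us perform row operations on $Y=CZ$ so that the $k_L+1$ extra rows (beyond those of $S_R$) can be brought to a standard form, exactly as in the proof of \cref{prop:butterfly-matrix-same} and \cref{prop:bcfw-map-injective}. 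Concretely, the extra rows together with the $k_R$ rows coming from $S_R$ span $Y$, and the ``$S_R$ part'' is $C'Z'$ for a uniquely determined $C' \in S_R$, where $Z'$ is obtained from $Z$ by the substitutions that mirror \eqref{eq:promotionvectors2}, \eqref{eq:promotionvectors3}: namely the row of $Z$ indexed by $n$ gets replaced by $Z_n - \tfrac{\lr{abcn}_Z}{\lr{abcd}_Z}Z_d + \tfrac{\lr{abdn}_Z}{\lr{abcd}_Z}Z_c$ (equivalently $\tfrac{\lr{acdn}_Z}{\lr{abcd}_Z}Z_b - \tfrac{\lr{bcdn}_Z}{\lr{abcd}_Z}Z_a$) and the row indexed by $d$ gets replaced by $Z_d - \tfrac{\lr{abdn}_Z}{\lr{abcn}_Z}Z_c$, with all other rows of $Z$ restricted to indices in $N_R$ kept unchanged. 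That $Z'$ has positive maximal minors follows because these substitutions are precisely product promotion on vectors (as in \cref{def:upper-promotion}), applied to a point of $\Gr^{>0}_{k_R+4,N_R}$; positivity of the relevant minors is exactly the content of \cref{cor:pos} and \cref{lem:sign_of_chord_twistors}.

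The key computation is then to check, for each $I \in \binom{N_R}{4}$, that $\llrr{Y Z_I} = \Psi_{ac}(\llrr{\,\cdot\,})$ evaluated at $(C'Z',Z')$, i.e. that the twistor $\llrr{YZ_I}$ equals the functionary obtained by the rules (b1), (b2), (b3) of \cref{pro-twistors2} applied to $\llrr{Z'_I}$-twistors of $C'Z'$. This is a linear-algebra identity: since $I \subset N_R$ and the extra rows of $Y$ are supported (after the bridge row-reductions) so as to be disjoint in the relevant Pl\"ucker indices from the $S_R$-rows, the Laplace/Cauchy--Binet expansion of $\llrr{YZ_I}$ (cf. \cref{rk:Lapexpansion}) collapses to an expansion over bases $J$ of $S_R$ disjoint from $I$, weighted by the $4$-minors of $Z$ on $J \cup I$. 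Re-expressing those $4$-minors of $Z$ in terms of the $4$-minors of $Z'$ introduces exactly the promotion substitution, because $Z$ and $Z'$ differ only in rows $d,n$ by the combinations above, and the $4\times 4$ determinants expand multilinearly. Because every functionary $F$ with indices in $N_R$ is a polynomial in such twistors $\llrr{I}$, and $\Psi_{ac}$ is by definition the ring homomorphism induced by the above substitution, the identity for each twistor upgrades to $\Psi_{ac}F(CZ,Z) = F(C'Z',Z')$ for arbitrary $F$.

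The main obstacle I anticipate is the bookkeeping in the first step: verifying that one can actually choose $C'$ and the row operations so that the ``$S_R$-block'' of $Y$ is genuinely of the form $C'Z'$ with $C' \in S_R$ (and not merely in $\overline{S_R}$ or some larger cell), and that the supports line up so the Cauchy--Binet expansion collapses cleanly. This is essentially the same disentangling argument used in \cref{prop:bcfw-map-injective} and \cref{rmk:def-of-w} — one isolates $V_R' := Y \cap \Span(e_b,\dots,e_c,e_d,e_n)$ (after a suitable coordinate change implementing the bridges) and shows it has dimension $k_R$, hence is $B \cdot y_c(\cdot)y_d(\cdot)$ for a unique $B$ representing a point of $S_R$ — so the work is to adapt that argument to the present setting where the $S_L$ factor is the full positive cell $\Gr^{>0}_{k_L,N_L}$. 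Once this structural statement is in hand, the determinant identity is a routine (if slightly tedious) multilinear expansion, and the passage from twistors to general functionaries $F$ is immediate from the definition of $\Psi_{ac}$ as an algebra homomorphism.
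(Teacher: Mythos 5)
Your overall skeleton is the right one — induct along the construction of $S_R^\bulR$ given in \cref{lem:9_6_elt}, which is also what the paper does (by citing the step-by-step results from \cite{even2021amplituhedron}). However, the explicit construction of $Z'$ you propose has a genuine gap.

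The target $Z'$ must live in $\Mat_{N_R,\,k_R+4}^{>0}$, i.e.\ it has $|N_R|$ rows and $k_R+4$ columns, whereas $Z$ has $n$ rows and $k+4 = k_L + k_R + 5$ columns. Your recipe (restrict $Z$ to rows in $N_R$ and modify the rows indexed by $d$ and $n$ by a promotion-type substitution) produces a matrix with the wrong number of columns and never explains where the $k_L + 1$ missing columns go. Relatedly, the coefficients $\lr{a\,b\,c\,d}_Z$, $\lr{a\,b\,c\,n}_Z$, etc.\ that you use in the substitution are not well-defined scalars: rows $a,b,c,d$ of $Z$ form a $4 \times (k+4)$ matrix, which has no single determinant unless $k=0$. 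The promotion substitutions \eqref{eq:promotionvectors2}, \eqref{eq:promotionvectors3} are formulated for a $4 \times n$ matrix representing a point of $\Gr_{4,n}$ (or, via the B-amplituhedron identification, for the intersection $V^\perp \cap W$), not for the $n\times(k+4)$ matrix $Z$ directly. For the same reason, \cref{cor:pos} and \cref{lem:sign_of_chord_twistors} do not apply to justify positivity of your proposed $Z'$ — the former is about $4\times 4$ Pl\"ucker coordinates for $\Gr_{4,n}$, and the latter is about signs of twistor coordinates of $Y$, not maximal minors of a modified $Z$.

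The actual construction must also \emph{quotient} by a $(k_L+1)$-dimensional subspace of $\R^{k+4}$ (corresponding to the $\Gr^{>0}_{k_L,N_L}$ factor plus the bridge row), and this is what brings the column count down from $k+4$ to $k_R+4$. This quotient is already visible in the proof of \cref{lem:opennes}, where the quotient matrix $Z^{V_L}$ appears, and it is the main content of the cited \cite[Lemma 4.22, 4.28]{even2021amplituhedron}. Once that quotient is set up correctly, the twistor identity $\Psi_{ac}F(CZ,Z) = F(C'Z',Z')$ does follow by the kind of multilinear expansion you sketch, and the passage to general $F$ is immediate because $\Psi_{ac}$ is a ring homomorphism — that part of your argument is fine. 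But without the quotient step, the construction of $(C',Z')$ does not go through, and your reduction to the disentangling argument of \cref{prop:bcfw-map-injective} does not supply it: that proposition recovers $A$, $B$, and the BCFW parameters from $V$, but it produces no candidate $(k_R+4)$-column matrix $Z'$ with positive minors. Filling this in amounts to re-doing the work in \cite[Section 4]{even2021amplituhedron}, which is why the paper's proof proceeds by citation rather than by a self-contained direct construction.
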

This follows from \cite[Section 4]{even2021amplituhedron}. For the reader's convenience, we outline a proof.
\begin{proof}
	We will show the lemma holds for each intermediate cell obtained in the process in \cref{lem:9_6_elt}. The first step in \cref{lem:9_6_elt} is the upper BCFW map. Let $T:= \mbcfw(\chR \times S_R)$, choose $C \in T$ and choose a positive matrix $Z$. The proof of \cite[Lemma 4.28]{even2021amplituhedron} together with \cite[Lemma 4.22]{even2021amplituhedron} define an element $C' \in S_R$ and positive matrix $Z'$ with the following property: for any functionary $F$ with indices in $N_R$, there is a functionary $G$ such that $F(C'Z', Z')= G(CZ,Z)$. \cite[Lemma 4.35]{even2021amplituhedron} implies that $G = \Psi_{a c}F$; this is spelled out more explicitly in \cite[Example 4.40]{even2021amplituhedron}. Note that by \cref{lem:sign_of_chord_twistors}, the twistor coordinates in \cite[Lemma 4.35]{even2021amplituhedron} are all nonzero on the image of $\mbcfw(\chR \times S_R)$, so one may divide in \cite[Lemma 4.35]{even2021amplituhedron} to obtain precisely the formulas defining promotion.
	
	Now, let $T$ denote a cell obtained partway through the process described in \cref{lem:9_6_elt}, and $T'$ the cell obtained in the previous step. Fix $C \in T$ and $Z$ a positive matrix of the appropriate dimensions. \cite[Lemmas 4.18, 4.21 and 4.24]{even2021amplituhedron} construct $C' \in T'$ and $Z'$ a positive matrix of the appropriate dimensions with the following property: 
	
	\noindent $(**):$ if $I \in \binom{N_R}{4}$ or $I=\{a,b,x,y\}$ where $x,y \in N_R$, then the twistor coordinates $\llrr{C'Z' ~Z'_I}$ and $\llrr{CZ~ Z_I}$ are equal.
	
	Consider any functionary $F$ with indices contained in $N_R$. All twistor coordinates in $\Psi_{ac} F$ are of the sort appearing in $(**)$, so we have that  $\Psi_{a c} F(C'Z', Z')= \Psi_{a c} F(CZ, Z)$. Taking $T=S_R^\bulR$ and iterating, we obtain the lemma statement.
\end{proof}

The final lemma involves functionaries with strong sign on the image of a cell, and tracks how the strong sign evolves under various operations.

		\begin{lemma}\label{lem:strong_pos} We adopt the notation and assumptions of \cref{prop:vanishing_and_sign_of_functionaries_under_promotion} and suppose $F$ has indices contained in $N_L$ (resp. $N_R$). 
		\begin{enumerate}
			\item\label{it:strong_pos_under_pre_inc}
			Suppose $F$ has strong sign $s$ on the image of $S_L$ (resp. $S_R$). 
			Then for $i \notin N_L$ (resp. $i \notin N_R$), $F$ has strong sign $s$ on the image of $\pre_i(S_L)$ (resp. $\pre_i(S_R)$). Additionally, $F$ has strong sign $(-1)^{\deg_n F} s$ (resp. $s$) on the image of $\inc_i (S_L)$ (resp. $\inc_i (S_R)$). 
			\item\label{it:strong_pos_under_x_y} Suppose $F$ has strong sign $s$ on the image of $S_L$ (resp. $S_R$). Choose $i \notin N_L$ (resp. $i \notin N_R$). Then $F$ has strong sign $s$ on the image of $x_i(\R_+).S_L$ and $y_{i-1}(\R_+).S_L$ (resp. $x_i(\R_+).S_R$ and $y_{i-1}(\R_+).S_R$).
			\item \label{it:strong_pos_under_upper_emb} If $F$ has strong sign $s$ on the image of $S_L$, then $\Psi_{a c} F$ has strong sign $(-1)^{\deg_n F} s$ on the image of $\mbcfw(S_L \times \chR \times \Gr_{0, \{b,c,d,n\}}^{>0})$.  If $F$ has strong sign $s$ on the image of $S_R$, then $\Psi_{a c} F$ has strong sign $s$ on the image of $\mbcfw(\chR\times S_R)$. 
		\end{enumerate}
	\end{lemma}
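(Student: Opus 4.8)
\textbf{Plan for the proof of Lemma \ref{lem:strong_pos}.}

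The plan is to prove the three parts in order, each by expanding functionaries via the Twistor expansion lemma \eqref{eq:cauchy-binet} and tracking how the combinatorics of nonzero Pl\"ucker coordinates --- together with the explicit matrix representatives --- affects the signs of coefficients. For part \eqref{it:strong_pos_under_pre_inc}, the key observation is that $\pre_i$ inserts a zero column, so a matrix representative $C'$ of a point in $\pre_i(S_L)$ restricts (on the nonzero columns) to a matrix representative of a point in $S_L$, and the corresponding $Z'$ is obtained from $Z$ by deleting row $i$; then $C'Z' = CZ$ and $Z'_x = Z_x$ for $x \in N_L$, so any functionary $F$ with indices in $N_L$ evaluates identically and the strong sign is preserved verbatim. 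For $\inc_i$ (inserting a ``new'' column that is a copy-with-sign of an existing one, via a bridge), a matrix representative is obtained from one for $S_L$ by a column operation; writing out \eqref{eq:cauchy-binet} for $F$ shows each monomial in Pl\"ucker coordinates of $C$ and minors of $Z$ is unchanged except possibly for a global sign $(-1)^{\deg_n F}$ coming from the interaction of the new index with $n$ in the determinant expansion. I would import the precise bookkeeping from \cite[Lemmas 4.18, 4.21, 4.24]{even2021amplituhedron}, which carry out exactly this analysis.

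For part \eqref{it:strong_pos_under_x_y}, the operations $x_i(t)$ and $y_{i-1}(t)$ with $t > 0$ add a positive multiple of one column of $C$ to an adjacent one (on the source side, resp. the sink side, of a perfect orientation). On matrix representatives this is a unipotent column operation with positive entry, so each Pl\"ucker coordinate $\lr{J}_{C}$ of the new matrix is a \emph{nonnegative} linear combination of Pl\"ucker coordinates of the old matrix; substituting into \eqref{eq:cauchy-binet} and collecting, every coefficient of $F$ written in Pl\"ucker coordinates of the original $C$ and minors of $Z$ retains its sign $s$. (Here one uses that the relevant $x_i, y_{i-1}$ with $i \notin N_L$, resp. $i \notin N_R$, only involve columns in the index set of the cell, so no index outside the support is introduced and $\deg_n F$ plays no role.) This is the most routine of the three parts.

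Part \eqref{it:strong_pos_under_upper_emb} is the crux and will be the main obstacle. Here one must combine \cref{lem:9_6_elt} --- which decomposes $S_R^{\bulR} = \Gr^{>0}_{k_L, N_L} \bcfw S_R$ into an upper BCFW step followed by a sequence of $\inc, \pre, x_i, y_i$ operations --- with \cref{lem:evaluation-after-bullet}, and then iterate the sign bookkeeping from parts \eqref{it:strong_pos_under_pre_inc} and \eqref{it:strong_pos_under_x_y}. Concretely: for the upper BCFW map $\mbcfw(\chR \times S_R)$, I would invoke the proof of \cite[Lemma 4.28]{even2021amplituhedron} together with \cite[Lemmas 4.22, 4.35]{even2021amplituhedron} to produce, for each $C \in \mbcfw(\chR \times S_R)$ and positive $Z$, an explicit $C' \in S_R$ and positive $Z'$ with $\Psi_{ac}F(CZ, Z) = F(C'Z', Z')$ \emph{as an identity of rational functions in the Pl\"ucker coordinates of $C$ and minors of $Z$} --- crucially not just as numbers --- so that strong sign $s$ of $F$ on $\gto{S_R}$ transfers to strong sign $s$ of $\Psi_{ac}F$ on the image of $\mbcfw(\chR \times S_R)$. (The relevant twistor denominators are nonvanishing by \cref{lem:sign_of_chord_twistors}, justifying the division that turns \cite[Lemma 4.35]{even2021amplituhedron} into the promotion formulas.) For the second assertion, $\mbcfw(S_L \times \chR \times \Gr^{>0}_{0, \{b,c,d,n\}})$ is $S_L^{\bulL}$ in the notation of \cref{not:bullet}, and the analogous decomposition (the left-hand mirror of \cref{lem:9_6_elt}) expresses it via an upper BCFW step and $\inc/\pre/x/y$ operations; chaining the sign contributions, the only one producing a nontrivial factor is the $\inc$ step interacting with the index $n$, yielding the claimed $(-1)^{\deg_n F}$. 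The delicate point throughout is that strong sign is a statement about \emph{coefficients of a polynomial expansion}, not merely about the sign of a value, so at every step one must verify that the substitution being performed is a \emph{monomial-positive} (or globally sign-$(-1)^{\deg_n F}$) change of variables; keeping the expansions symbolic, rather than evaluating, is what makes the induction go through. Finally, since the proof for indices in $N_L$ versus $N_R$ differs only in which side of the butterfly the operations act on --- and hence only in whether the index $n$ is ``crossed'' by an $\inc$, producing the $(-1)^{\deg_n F}$ versus trivial discrepancy --- I would write the $N_R$ case in full and indicate the $N_L$ case by this symmetry, exactly as the surrounding results are organized.
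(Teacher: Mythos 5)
Your plan correctly identifies the paper's overall strategy — reduce the strong-sign bookkeeping to elementary operations and import the matrix/twistor identities of \cite{even2021amplituhedron} — but two of your steps have real gaps. In part \eqref{it:strong_pos_under_x_y} you assert that each Pl\"ucker coordinate of the bridged matrix is a nonnegative linear combination of Pl\"ucker coordinates of the original, and conclude the sign $s$ survives. This faces the wrong direction: strong sign on the image of $x_i(\R_+).S$ demands a formula for $F$ in Pl\"ucker coordinates of the \emph{new} matrix $C'\in x_i(\R_+).S$ (and of $Z'$), so what you actually need is that Pl\"ucker coordinates of the \emph{preimage} $C\in S$ are sign-controlled expressions in those of $C'$. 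The naive inversion $C=C'x_i(-t)$ introduces minus signs; the paper avoids this by invoking the positivity of the inverse bridge map from \cite[Lemma 7.11]{lam2015totally} (Pl\"ucker coordinates of $C$ are positive Laurent polynomials in those of $C'$), together with the observation that $t$ is itself a positive ratio of Pl\"ucker coordinates of $C'$. The fact you state does not get you there.

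In part \eqref{it:strong_pos_under_upper_emb} the identification of $\mbcfw(S_L \times \chR \times \Gr^{>0}_{0,\{b,c,d,n\}})$ with $S_L^{\bulL}$ is incorrect: $S_L^{\bulL}=S_L\bcfw \Gr^{>0}_{k_R,N_R}$ has $k_R$ possibly nonzero and $N_R$ possibly larger, whereas the lemma concerns only the single $k_R=0$ step. Reaching for \cref{lem:evaluation-after-bullet} is also circuitous, since that lemma is used downstream in the proof of \cref{prop:vanishing_and_sign_of_functionaries_under_promotion}, which relies on the present lemma. The paper instead writes out $\mbcfw(\chR\times S_R)=y_c(\R_+).y_d(\R_+).y_n(\R_+).x_a(\R_+).\inc_a(S_R)$ directly, applies parts (1) and (2) to obtain strong sign $s$ on $S_1:=y_n(\R_+).x_a(\R_+).\inc_a(S_R)$, and then handles the $y_d,y_c$ steps by choosing $Z':=y_c(t)Z$ and $Z'':=y_d(s)Z'$ so that $C''Z''=CZ$ and the relevant twistor coordinates become exactly the $\Psi_{ac}$-promoted ones, re-expanding again by \cite[Lemma 7.11]{lam2015totally} as in part (2); the $S_L$ case gives the extra $(-1)^{\deg_n F}$ from the $\inc$ step. (Also a small slip: $\inc_i$ corresponds to a white lollipop, not a bridge; the bridges are $x_i$, $y_{i-1}$.)
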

Again, the proof of this lemma relies on various results from \cite[Section 4]{even2021amplituhedron}.
\begin{proof}

	Throughout, we will show the results only for $S:=S_R$, except in a few places where there is a difference between the arguments.  So we assume that on the image of $S_R$, $F= f(C,Z)/g(C,Z)$ where $f,g$ are polynomials in the Pl\"ucker coordinates of $C,Z$, the coefficients of $f$ are all of sign $s$ and the coefficients of $g$ are all positive. We will sometimes write $F(Y,Z)$ to make the dependence of $F$ on $Z$ clear. We also write $\llrr{Y~Z_I}$ instead of $\llrr{uvxy}$.
	
	\noindent \textbf{Proof of (1):}
	The case of $\pre_i S$ is shown in \cref{thm:signs-under-cyc-rot-pre}.
	
	The argument for $\inc_i S$ uses \cite[Lemma 4.21]{even2021amplituhedron}, which shows that for any $Z'$ and $C':=\inc_{i}(C)$, there is a $Z$ such that for each twistor $\llrr{I}$ appearing in $F$, $\llrr{C'Z'~Z'_I}= \llrr{CZ~Z_I}$. (There is no sign because $[\ell] \cap I$ is empty, since $I \subset N_R$.) We have $\lr{I}_C= \lr{I \cup \{i\}}_{C'}$ and by the proof of \cite[Lemma 4.21]{even2021amplituhedron}, the same statement holds for $Z, Z'$. So adding $i$ to each Pl\"ucker coordinate and minor appearing in $f$ and $g$ gives a formula for $F$ on the image of $\inc_{i} (S)$.
	
	The argument for $\inc_i (S_L)$ (assuming $F$ has strong sign $s$ on $S_L$) is exactly the same, except that if $n \in I$, then $\llrr{C'Z'~Z'_I}= -\llrr{CZ~Z_I}$. So $F(C'Z', Z')= (-1)^{\deg_n F}F(CZ, Z)$, and the strong sign of $F$ on the image of $\inc_\ell(S_L)$ is $(-1)^{\deg_n F}s$.
	
	\noindent \textbf{Proof of (2):}
	The arguments for $x_i$ and $y_{i-1}$ are very similar, so we will only give the $x_i$ argument here. Let $S'= x_i(\R_+).S_R$. 
	
	First, if $S=S'$, then the statement is clearly true. So we will assume $S \neq S'$, and thus $S \subset \overline{S'}$. 
	
	Every $C'\in S'$ can be written uniquely as $Cx_i(t)$ for $t>0,~C\in S$. The parameter $t$ is the ratio $\lr{I\cup\{i+1\}}_{C'}/\lr{I\cup\{i\}}_{C'}$ where $I\subset [n]\setminus\{i,i+1\}$ is any subset such that $I\cup\{i\}$ is a basis of $S$ but $I\cup\{i+1\}$ is not. Such $I$ exists since the matroid of $S'$ strictly contains that of $S.$
	
	Fix $Z'$. Let $Z= x_i(t)Z'$, so that $CZ= Cx_i(t)Z'= C'Z'$. By \cite[Lemma 4.24]{even2021amplituhedron}, $\llrr{C'Z'~Z_I'}= \llrr{CZ~Z_I}$ for all twistors appearing in $F$, because $i \notin I$. This implies that $F(C'Z', Z')=F(CZ, Z)= f(C, Z)/ g(C,Z)$. What remains is to re-express the right hand side in terms of Pl\"uckers of $C'$ and $Z'$, without changing the signs of any coefficients.
	
	It follows from the proof of \cite[Lemma 7.11]{lam2015totally} that the Pl\"ucker coordinates of $C$ are Laurent polynomials in the Pl\"ucker coordinates of $C'$ with positive coefficients. Maximal minors of $Z$ are positive polynomials in the maximal minors of $Z'$ and in $t$, which is the ratio of two Pl\"ucker coordinates of $C'$. Thus we may rewrite $f(C, Z)/ g(C,Z)$ in terms of maximal minors of $Z'$ and Pl\"ucker coordinates of $C'$ without changing the sign of any coefficient.
	
	\noindent \textbf{Proof of (3):}
	Note that $\mbcfw(\chR\times S)= \mbcfw(\Gr_{0, \{a,b,n\}}^{>0} \times \chR\times S)$ can also be described as
	\[S_3=y_c(\R_+).y_d(\R_+).y_n(\R_+).x_a(\R_+).\inc_a(S).\]
	Write \[S_1=y_n(\R_+).x_a(\R_+).\inc_a(S),~~S_2=y_d(\R_+).S_1,~~S_3=y_c(\R_+).S_2.\]
	
	Note that $F$ has strong sign $s$ on the image of $S_1$ by (1) and (2).

	For $C\in S_3$, $C=y_c(t).C'$ for unique $t>0$ and $C'\in S_2$. Similarly, $C'=y_d(s).C''$ for unique $s>0$ and $C''\in S_1.$ Note that
	$t,s$ can be computed as $\lr{I\cup\{c\}}_C/\lr{I\cup\{d\}}_C$, and $\lr{I\cup\{d\}}_C/\lr{I\cup\{n\}}_C$ respectively for any $I\in\binom{b+1,\ldots,c-1}{k_L} $ such that $I$ is a basis for $S$. Such an $I$ is guaranteed to exist because $\{b,c,d,n\}$ is coindependent for $S$.
	
	Fix $Z$. Let $Z':= y_c(t)Z$ and $Z'':= y_d(s)Z'$, so that $C''Z''=CZ$. Note that $\llrr{C''Z''~Z_I''}=\llrr{CZ~Z_I''}= \Psi_{a c}\llrr{CZ~Z_I}$. This implies that $\Psi_{a c}F (CZ, Z)= F(C''Z'', Z'')$. Since $F$ has strong sign $s$ on the image of $S_1$, the right hand side is equal to $p(C'', Z'')/q(C'', Z'')$ where $p,q$ are polynomials in the Pl\"uckers of $C'', Z''$, all coefficients of $p$ have sign $s$ and all coefficients of $q$ are positive. Again, what remains is to re-express this in terms of the Pl\"uckers of $C,Z$ without changing the signs of coefficients. Again, the proof of \cite[Lemma 7.11]{lam2015totally} shows that the Pl\"ucker coordinates of $C''$ are positive Laurent polynomials in the Pl\"ucker coordinates of $C'$, which are themselves positive Laurent polynomials in the Pl\"ucker coordinates of $C$. Since additionally $s,t$ are positive ratios of Pl\"ucker coordinates of $C$, we are done.
	
	The argument for the strong sign of $\Psi_{ac}F$ on the image of 
	\[\mbcfw(S_L \times \chR \times \Gr_{0, \{b,c,d,n\}}^{>0})= y_a(\R_+).y_{b}(\R_+).y_c(\R_+).x_d(\R_+)\circ  \inc_d \circ \pre_{c}(S_L)\]
	is very similar. The only difference is that (1), (2) imply $F$ has strong sign $(-1)^{\deg_n F}s$ on the image of $S_1:=y_{b}(\R_+).y_c(\R_+).x_d(\R_+)\circ  \inc_d \circ \pre_{c}(S_L).$ Then essentially the same argument as above shows that $\Psi_{a c}F$ also has strong sign $(-1)^{\deg_n F}s$ on the image of $y_a(\R_+). S_1= \mbcfw(S_L \times \chR \times \Gr_{0, \{b,c,d,n\}}^{>0}).$
	
\end{proof}
	
	With these lemmas in hand, we proceed to the proof of \cref{prop:vanishing_and_sign_of_functionaries_under_promotion}.
	
	\begin{proof}[Proof of \cref{prop:vanishing_and_sign_of_functionaries_under_promotion}]
We assume that the indices in $F$ are contained in $N_R$, as the other case is similar. At the end of the proof 
we will discuss the differences in the other case.
	%	when the indices are contained in $N_L$.
		
			\noindent \textbf{Item (1):} Suppose $F$ vanishes on the image of $S_R$. By \cref{lem:evaluation-after-bullet}, $\Psi_{ac}F$ evaluated on a point of $S_R^\bulR$ is equal to $F$ evaluated on a point of $S_R$ (using a different matrix $Z$). Thus, $\Psi_{ac}F$ vanishes on the image of $S_R^\bulR$, and also on the image of $\overline{S_R^\bulR}$. Since $S_L \bcfw S_R \subset \overline{S_R^\bulR}$, $\Psi_{ac}F$ vanishes on $S_L \bcfw S_R$, as desired.
			
			\noindent \textbf{Item (2):} Suppose $F$ has strong sign $s$ on the image of $S_R$. Since $\{a, b, n\}$ is co-independent for $S_L$, we may find $1\leq i_1<i_2<\ldots i_{k_L}<a$ such that $B=\{i_1, i_2, \dots, i_{k_L}\}$ is a basis for $S_L$. Let $S' \subset \Gr_{k_L, N_L}^{\geq 0}$ be the positroid cell whose only basis is $B$.
			Define
			\begin{equation}\label{eq:S_bullet}
				S_\bulR :=S' \bcfw S_R = \pre_{N_L \setminus (I \cup \{a, b, n\})} \circ \inc_{i_{k_L}} \circ \cdots \circ \inc_{i_2} \circ \inc_{i_1}(\mbcfw(\chR \times S_R)).\end{equation}
			
			Since the bases of $S'$ are a subset of the bases in $S_L$, it is easy to see using \cref{lem:matroid_under_bcfw} that every basis of $S_\bulR $ is a basis of ${S_L \bcfw S_R}$. So we have
			\begin{equation}\label{eq:sandwich}
				S_\bulR\subseteq \overline{S_L \bcfw S_R} \subseteq \overline{S_R^\bulR}.
			\end{equation}
Using \cref{lem:strong_pos} on $N_R, S_R$, we will show that $\Psi_{ac}F$ has strong sign $s$ on the images of both $S_R^\bulR$ and $S_\bulR.$ 

We deal with the sign of $\Psi_{ac} F$ on the image of $S_R^\bulR$ first. In \cref{lem:9_6_elt}, we apply the upper BCFW map, $\inc_\ell, \pre_\ell$ for $\ell \notin N_R$, and $x_i, y_{i-1}$ for $ i \notin N_R$. So by \cref{lem:strong_pos}, $\Psi_{a c}F$ has strong sign $s$ on the image of $S_R^\bulR$.

Since the operations applied in \eqref{eq:S_bullet} are a subset of those applied in \cref{lem:9_6_elt}, an identical argument shows that $\Psi_{a c}F$ has strong sign $s$ on the image of $S_\bulR$.

By \eqref{eq:sandwich}, $\Psi_{ac}F$ having strong sign $s$ on the image of $S_R^\bulR$ implies that either $\Psi_{ac}F$ has strong sign $s$ or it is identically zero on the image of $S_L \bcfw S_R$. The second possibility is ruled out by $\Psi_{ac}$ having strong sign $s$ on the image of $S_\bulR$. This completes the argument.

\noindent \textbf{Item (3):} Again we prove the statement for $S_R$. We first study the possible sign profiles of $(\Psi F_1,\ldots,\Psi F_r)$ on points of $\gto{S_R^\bulR}.$ \cref{lem:evaluation-after-bullet} implies that for every $Y\in\gto{S_R^\bulR}$ there exists $C'\in S_R$ and a positive matrix $Z'\in \Mat_{N_R,k_R+4}^{>0}$ such that 
\[\sgn (\Psi F_1(Y,Z),\ldots,\Psi F_r(Y,Z))=
\sgn (F_1(C'Z',Z'),\ldots, F_r(C'Z',Z')\in A.\]
Thus, the possible sign profiles for points of $\gto{S_R^\bulR},$ and all possible positive $Z$ also lie in the set $A.$

Since $\gto{S}\subseteq\gt{S_R^\bulR}=\overline{\gto{S_R^\bulR}},$ every $Y\in\gto{S}$ is the limit of a sequence $(Y_n)_{n=1}^\infty\subset\gto{S_R^\bulR}.$ By passing to a subsequence we may assume that the sequence of sign profiles
\[\sgn (\Psi F_1(Y_n),\ldots,\Psi F_r(Y_n))_{n=1}^\infty\subseteq A\] 
is the constant sequence. It is immediate to see that the limit of point with a sign profile in $A$ has a sign profile in $\cl(A).$ 

\noindent \textbf{Changes for $S_L$:} 
There is a version of  
\cref{lem:9_6_elt} for $S^\bulL_L$, where the first step is the ``lower embedding" $\mbcfw(S_L \times \chR \times \Gr_{0, \{b,c,d,n\}}^{>0})$ of \cite[Definition 3.7]{even2021amplituhedron}. There is also a version of \cref{lem:evaluation-after-bullet}, which has an added sign of $(-1)^{(k_R +1) \deg_n F}$ on the right hand side. 
This is because for any functionary $F$, the evaluation of $F$ on a point in the image of $S_L$ is the same as the evaluation of $(-1)^{\deg_n F} s \Psi_{ac}F$ on a point in the image of $\mbcfw(S_L \times \chR)$ (see \cite[Example 4.38]{even2021amplituhedron}). Then each of the subsequent $k_R$ applications of $\inc$ change the sign by $(-1)^{\deg_n \Psi F} = (-1)^{\deg_n F}$. From here, the arguments for (1),(2),(3) are identical.
	\end{proof}

\subsection{The rows of the BCFW matrix}\label{subsub:canonical_BCFW_form} 
In this section, we set up a particular way of writing the rows 
$v_i^\rcp$ (for $1\leq i \leq k$) 
of the BCFW matrix $\mtx_\rcp$. This will be very useful when we invert $\tZ$ on $\gto{\rcp}$. Recall from \cref{def:dihedral} the matrices $[\cyc_{k,n}], [\refl_n], P_{k, i, s}$.

\begin{definition}\label{def:vs}
	Let $S_\rcp \subset \Grk$ be the BCFW cell associated to recipe $\rcp$.  Recall the 
	notation $\st$ and $\rcpp$ from \cref{not:L-and-R}. For each coordinate $\czeta_i \in \coord_\rcp$
	(cf \cref{def:BCFW-coords-mtx}), where $1\leq i \leq k$ and $\czeta\in \{\alpha,\beta,\gamma,\delta,\epsilon\}$, we recursively define a row vector $v_{\czeta_i}^\rcp \in \R^n$: 
	\begin{enumerate}
		\item If $\st=(a_k, b_k, c_k, d_k, n_k)$, then for $i=k$, we set
		\[v_{\calpha_k}^\rcp= e_{a_k}, \quad v_{\cbeta_k}^\rcp= e_{b_k}, \quad v_{\cgamma_k}^\rcp= (-1)^{k_R}e_{c_k}, \quad v_{\cdelta_k}^\rcp= (-1)^{k_R}e_{d_k}, \quad v_{\cepsilon_k}^\rcp= (-1)^{k_R}e_{n_k}, \quad \]
		and for $i<k$, we set
		\begin{equation*}
			v_{\czeta_i}^\rcp= \begin{cases}
				v_{\czeta_i}^{L} \cdot y_{a_k}(\frac{\alpha_k}{\beta_k}) P_{n,n, k_R+1} & \text{ if the $i$th step tuple is in }\rcp_L\\
				v_{\czeta_i}^{R} \cdot y_{d_k}(\frac{\delta_k}{\epsilon_k}) y_{c_k}(\frac{\gamma_k}{\delta_k}) &\text{ if the $i$th step tuple is in }\rcp_R.
		\end{cases}\end{equation*}
		\item If $\st=\cyc$, then $v_{\czeta_i}^\rcp = v_{\czeta_i}^\rcpp \cdot [\cyc_{k,n}]$.
		\item If $\st=\pre_{I_k}$, then $v_{\czeta_i}^\rcp = v_{\czeta_i}^\rcpp$.
		\item If $\st = \refl$, then 
		\[v_{\czeta_i}^\rcp = 
		\begin{cases}
			(-1)^{\binom{k}{2}}	v_{\czeta_i}^\rcpp \cdot [\refl_n] &\text{ if the $i$th step tuple indexes the top row of }\mtx_\rcp\\
			v_{\czeta_i}^\rcpp \cdot [\refl_n] & \text{ else}.
		\end{cases}
		\]
	\end{enumerate}
	Note that we implicitly embed $\R^{N_L} \hookrightarrow \R^n$ and $\R^{N_R} \hookrightarrow \R^n$ in (1) above, so $y_i(t)$ is $n \times n$. 
	
	Finally for $1 \leq i \leq k$ we  define 
	$v_i^\rcp
	:= \calpha_i v_{\calpha_i}^\rcp + \cbeta_i v_{\cbeta_i}^\rcp + \cgamma_i v_{\cgamma_i}^\rcp + \cdelta_i v_{\cdelta_i}^\rcp + \cepsilon_i v_{\cepsilon_i}^\rcp$. If the recipe is clear from context, we will drop the superscript
	$\rcp$.  \end{definition}
From the definition of $\mtx_\rcp$ (\cref{def:BCFW-coords-mtx}) and \cref{cor:dim}, we have the following lemma.

\begin{lemma}\label{lem:vs-unique}
	Let $S_\rcp \subset \Grk$ be a BCFW cell. For $i=1, \dots, k$, the vector $v_i^\rcp $ is precisely the row of the BCFW matrix $\mtx_\rcp$ indexed by the $i$th step tuple. In particular, for each $V \in S_\rcp$, there is a unique choice of $([\calpha_i: \cbeta_i: \cgamma_i: \cdelta_i: \cepsilon_i]) \in (\chR)^k$ such that $V = \Span(v_1^\rcp, \dots, v_k^\rcp)$.
\end{lemma}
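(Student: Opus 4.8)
\textbf{Proof plan for \cref{lem:vs-unique}.}

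The statement has two parts: first, that the row vector $v_i^\rcp$ as defined recursively in \cref{def:vs} equals the row of $\mtx_\rcp$ indexed by the $i$th step-tuple; and second, that this yields a unique parametrization of each $V \in S_\rcp$ by BCFW parameters. The second part is immediate once the first is established: \cref{cor:dim} already asserts that the map $(\chR)^k \to S_\rcp$ sending the collection of BCFW parameters to $\mtx_\rcp$ (identified with its rowspan) is a bijection, so uniqueness of the parameters is exactly the injectivity in \cref{cor:dim}. Thus the content of the lemma is really the identification $v_i^\rcp = (\text{$i$th row of }\mtx_\rcp)$, and the plan is to prove this by induction on the number of steps in the recipe $\rcp$, matching the recursive structure of both \cref{def:BCFW-coords-mtx} (which defines $\mtx_\rcp$) and \cref{def:vs} (which defines $v_i^\rcp$).

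First I would set up the induction. The base case is the trivial recipe, where $\mtx_\rcp$ is a $0 \times N$ matrix and there are no step-tuples, so there is nothing to check. For the inductive step, I would split into cases according to the final step $\st$ of $\rcp$, exactly as in \cref{not:L-and-R}. When $\st \in \{\pre_{I_k}, \cyc, \refl\}$, we have $\mtx_\rcp = \st(\mtx_\rcpp)$ by \cref{def:BCFW-coords-mtx}, and I would check that applying $\st$ to a matrix acts row-by-row in precisely the way \cref{def:vs}(2),(3),(4) prescribes: $\pre_{I_k}$ inserts zero columns (not changing the rows as vectors once we identify $\R^{N\setminus I_k} \hookrightarrow \R^N$), $\cyc$ is right-multiplication by $[\cyc_{k,n}]$ on every row, and $\refl$ is right-multiplication by $[\refl_n]$ together with left-multiplication by $P_{k,1,\binom{k}{2}}$, which scales exactly the top row by $(-1)^{\binom{k}{2}}$. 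Comparing with the recursion in \cref{def:vs} and invoking the inductive hypothesis on $\mtx_\rcpp$ closes these cases. The only subtlety is bookkeeping about which step-tuple indexes the top row after a $\refl$ step, which is why \cref{def:vs}(4) distinguishes that case; I would verify this matches the abuse-of-notation convention in \cref{def:dihedral} for how $\refl$ acts on matrices.

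The main case, and the one I expect to be the real obstacle, is $\st = (a_k, b_k, c_k, d_k, n_k)$, where $\mtx_\rcp = \mbcfw(\mtx_L, [\calpha_k:\cbeta_k:\cdelta_k:\cgamma_k:\cepsilon_k], \mtx_R)$ is the matrix $(*)$ of \cref{fig:promotion-matrix}. Here I would directly read off the rows of $(*)$: the ``middle'' row indexed by the $k$th step-tuple is $\calpha_k e_{a_k} + \cbeta_k e_{b_k} + (-1)^{k_R}(\cgamma_k e_{c_k} + \cdelta_k e_{d_k} + \cepsilon_k e_{n_k})$ in the notation of \cref{def:bcfw-map} — which is exactly $v_k^\rcp$ given the definitions of $v_{\calpha_k}^\rcp, \dots, v_{\cepsilon_k}^\rcp$ in \cref{def:vs}(1). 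For $i < k$, the rows of $(*)$ coming from $\mtx_L$ are the rows of $\mtx_L$ with the substitution $A_{a_k} \mapsto A'_{a_k} = A_{a_k} + \frac{\calpha_k}{\cbeta_k}A_{b_k}$, i.e. right-multiplication by the matrix $y_{a_k}(\frac{\calpha_k}{\cbeta_k})$, and then placed in a row of the larger matrix together with a possible overall sign $(-1)^{k_R+1}$ on the $n_k$-column entry (coming from the row operations performed to pass from the raw path matrix to $(*)$, as in the proof of \cref{prop:butterfly-matrix-same}); this sign is recorded by the $P_{n,n,k_R+1}$ factor in \cref{def:vs}(1). Similarly the rows coming from $\mtx_R$ are right-multiplied by $y_{d_k}(\frac{\cdelta_k}{\cepsilon_k})y_{c_k}(\frac{\cgamma_k}{\cdelta_k})$ to account for $B'_{d_k}, B'_{c_k}$. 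The delicate points — where I expect to spend the most care — are (a) getting the sign conventions and the ordering of the $y_i$ factors exactly right, cross-checking against \cref{fig:promotion-matrix} and the explicit matrix $(*)$ and $\eqref{eq:path-mtx}$ in the proof of \cref{prop:butterfly-matrix-same}; and (b) confirming that the permutation of the roles of $\cgamma_k$ and $\cdelta_k$ in the input tuple $[\calpha_k:\cbeta_k:\cdelta_k:\cgamma_k:\cepsilon_k]$ versus the named vectors $v_{\cgamma_k}^\rcp, v_{\cdelta_k}^\rcp$ is handled consistently. Once these sign and ordering checks are done, the inductive hypothesis applied to $\mtx_L$ and $\mtx_R$ gives that the pre-substitution rows are $v_i^L$ and $v_i^R$ respectively, and \cref{def:vs}(1) then reproduces the rows of $(*)$ exactly. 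This completes the induction, and the final sentence of the lemma follows by combining with the bijectivity statement of \cref{cor:dim}.
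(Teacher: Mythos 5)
Your plan is correct, and it is essentially the detailed unpacking of the paper's own (one-sentence) justification: the paper simply asserts that the first statement follows by comparing \cref{def:vs} with \cref{def:BCFW-coords-mtx}, and the second from \cref{cor:dim}. Your observation about the ordering $[\calpha_k:\cbeta_k:\cdelta_k:\cgamma_k:\cepsilon_k]$ in \cref{def:BCFW-coords-mtx} versus the natural ordering is astute — cross-checking against \cref{ex:bcfw_matrix} shows the intended convention is $\gamma$ paired with column $c$ and $\delta$ with column $d$ (as in \cref{def:vs}), so that listing is a typo and your handling of it is correct.
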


\subsection{The inverse problem for BCFW cells: 
the proof of \cref{thm:BCFW-tile-and-sign-description}}
\label{subsec:inverse_problem}

In this subsection we will show how to invert the amplituhedron map on the image of a BCFW cell, thus proving that BCFW cells map injectively into the amplituhedron. We will use the following key lemma.

\begin{lemma}
	\label{lem:solve_params_5}
	Let $k \geq 1$ and let $Y \in \Mat_{k \times (k+4)}$ and $Z \in \Mat_{5 \times (k+4)}$,
	with row vectors $Y_1,\dots,Y_k$, and $Z_1,\dots,Z_5$, respectively. Define 
	$$v:= \llrr{2345} Z_1 - \llrr{1345} Z_2 + \llrr{1245} Z_3 - \llrr{1235} Z_4 + \llrr{1234} Z_5.$$
	Suppose at least one of the $5$ twistor coordinates
	$\llrr{2345}, \llrr{1345}, \llrr{1245}, \llrr{1235}, \llrr{1234} $ is nonzero. Then $\Span(Y_1,\ldots, Y_k) \cap \Span(Z_1,\ldots, Z_{5}) = \Span(v)$, and in particular is the trivial vector space if and only if $v=0$.
\end{lemma}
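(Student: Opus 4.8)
\textbf{Proof plan for Lemma~\ref{lem:solve_params_5}.}
The plan is to show two things: first, that $v \in \Span(Y_1,\dots,Y_k) \cap \Span(Z_1,\dots,Z_5)$, and second, that this intersection is at most one-dimensional. For the first point, note that $v$ is manifestly a linear combination of $Z_1,\dots,Z_5$, so $v \in \Span(Z_1,\dots,Z_5)$ is immediate. To see $v \in \Span(Y_1,\dots,Y_k)$, I would stack $Y$ on top of $Z$ and observe that $v$ is, up to sign, the vector of signed maximal minors of the $k \times (k+4)$ matrix $Y$ ``bordered'' appropriately: concretely, the $i$-th entry of $v$ can be written (via cofactor/Laplace expansion) as $\det$ of the $(k+4)\times(k+4)$ matrix whose rows are $Y_1,\dots,Y_k$ followed by $Z_1,\dots,Z_4,Z_5$ but with one column removed and... more cleanly: for each standard basis index $j$, the coordinate $v_j$ equals $\det M^{(j)}$ where $M^{(j)}$ is obtained from the matrix with rows $Y_1,\dots,Y_k,Z_1,\dots,Z_5$ and deleting row... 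Actually the clean statement is that $v = \sum_{\ell} (-1)^{\ell} \llrr{Z_{\{1,\dots,5\}\setminus \ell}} Z_\ell$ is orthogonal (in the sense of the wedge / Cramer) to any vector already spanned by the $Z$'s only if... Let me instead use the direct argument: by the twistor expansion (analogue of \cref{rk:Lapexpansion}), the vector $v$ lies in the span of the $Y_i$ precisely because each coordinate $v_m$ is an alternating sum that can be recognized as $\det$ of the matrix $[Y_1;\dots;Y_k;Z_{i_1};\dots;Z_{i_4}]$ expanded along a column — more precisely, $v$ is the unique (up to scalar) vector such that appending it to $Y$ keeps the rank at $k$ while appending it to $Z$ keeps the rank at $5$; this is standard for the $m=4$ case and appears e.g. in \cite{even2021amplituhedron, karp2020decompositions}.

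The precise computation I would carry out: let $W$ be the $(k+4)$-dimensional ambient space and consider the two subspaces $U := \Span(Y_1,\dots,Y_k)$ (dimension $\le k$) and $V := \Span(Z_1,\dots,Z_5)$ (dimension $\le 5$). Generically $\dim U = k$, $\dim V = 5$, and $\dim(U+V) = k+4$, forcing $\dim(U \cap V) = 1$ by the dimension formula $\dim(U\cap V) = \dim U + \dim V - \dim(U+V)$. In this generic situation, one checks $v \ne 0$ (at least one twistor is nonzero, so $v$ is a nontrivial combination of the linearly independent $Z_\ell$), and $v$ is the required spanning vector. The substance of the proof is to handle this without genericity assumptions and with only the hypothesis that one of the five twistors is nonzero. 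I would argue: WLOG $\llrr{1234} \ne 0$, so $Z_1,Z_2,Z_3,Z_4$ are linearly independent and $Y_1,\dots,Y_k$ together span a complement-ish situation. The key algebraic identity is that $v$, viewed as a column vector, satisfies: the matrix $[Y_1;\dots;Y_k;v]$ has all $(k+1)\times(k+1)$ minors equal to zero (so $v \in U$), because each such minor expands as a twistor-weighted combination of the Plücker relations for $Y$; and $v \in V$ is clear. This shows $\Span(v) \subseteq U \cap V$.

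For the reverse inclusion $U \cap V \subseteq \Span(v)$: suppose $w \in U \cap V$, $w \ne 0$. Write $w = \sum c_\ell Z_\ell$. Since $w \in U$, appending $w$ to $Y$ does not increase rank; but $\dim V \le 5$ and if $\dim(U\cap V) \ge 2$ then picking two independent vectors in it, both lying in $V = \Span(Z_1,\dots,Z_5)$, one finds a nonzero combination lying in $\Span$ of only four of the $Z_\ell$'s while still being in $U$ — and then all five twistors $\llrr{i_1 i_2 i_3 i_4}$ (being, up to sign, the Plücker coordinates of $U^\perp$ paired against the $Z$'s, or equivalently the coefficients extracting $w$ from the $Z_\ell$) would have to vanish, contradicting the hypothesis. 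This is the step I expect to be the main obstacle: making rigorous the claim ``$\dim(U\cap V)\ge 2$ forces all five twistors to vanish.'' I would handle it by the following clean reformulation: the five twistor coordinates $\llrr{i_1i_2i_3i_4}$ are exactly (up to a common sign) the coordinates of the vector $w_0 \in V$, expressed in the basis $Z_1,\dots,Z_5$ of a $5$-dimensional $V$, of a vector characterized by $w_0 \perp$ (dual of $U$), i.e. $w_0$ spans the line $U \cap V$ when that intersection is a line; when $\dim(U\cap V) \ge 2$ or $\dim V \le 4$, Cramer's rule shows all these $5\times 5$-type determinants collapse, giving all twistors zero. Thus under the nonvanishing hypothesis $\dim V = 5$ and $\dim(U\cap V) \le 1$, and $v \ne 0$ gives equality. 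Conversely $v = 0 \iff U \cap V = 0$ follows since $v$ spans the intersection whenever the intersection is a line and $v$ is the ``determinantal'' witness forced to be zero exactly when $U,V$ are in direct sum (dimension count $k + 5 > k+4$ would be violated, so in fact $v=0$ can only happen when $\dim U < k$ or the spans overlap trivially — I would spell out that $v = 0$ iff $\dim(U+V) = \dim U + \dim V$, i.e. the sum is direct, i.e. $U \cap V = \{0\}$). Assembling these gives the lemma.
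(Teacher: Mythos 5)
Your route is genuinely different from the paper's: the paper disposes of the main case (when $Z$ has rank $5$) by citing \cite[Lemma 4.34]{even2021amplituhedron}, and only argues directly in the degenerate case $\operatorname{rank} Z = 4$, where it shows $v=0$ by relating the five twistors to a kernel vector of $Z$. You instead aim for a self-contained argument: $v \in \Span(Z_1,\dots,Z_5)$ trivially, $v \in \Span(Y_1,\dots,Y_k)$ by a determinantal identity, and a dimension count bounding the intersection. That skeleton is sound, but two steps need repair.

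First, your claim that the nonvanishing hypothesis forces $\dim \Span(Z_1,\dots,Z_5) = 5$ (equivalently, that ``$\dim V \le 4$ \dots gives all twistors zero'') is false: each twistor involves only four of the $Z_\ell$ at a time. For instance, take $Z_1,\dots,Z_4$ independent and transverse to $\Span(Y_1,\dots,Y_k)$ with $Z_5 = Z_4$; then $\llrr{1234}\neq 0$ while $\operatorname{rank}Z = 4$. This is exactly the degenerate case the paper treats separately, and your final assembly (``thus $\dim V=5$ \dots and $v \neq 0$'') breaks there. Second, the load-bearing step $v \in \Span(Y_1,\dots,Y_k)$ is only gestured at, and the proposed justification via ``Pl\"ucker relations for $Y$'' is not the right mechanism. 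The clean argument is the cofactor identity for the $(k+5)\times(k+4)$ matrix $M$ with rows $Y_1,\dots,Y_k,Z_1,\dots,Z_5$: the vector $w$ with $w_i=(-1)^i\det(M_{\hat\imath})$ lies in the left kernel of $M$, and isolating its last five entries exhibits $\pm v$ as a linear combination of $Y_1,\dots,Y_k$, unconditionally. Once that identity is in place your proof actually streamlines: the false rank claim becomes unnecessary, since in the rank-$4$ case the dimension count (using $\dim\Span(Y)=k$ and $\dim(\Span(Y)+\Span(Z))=k+4$, both consequences of the hypothesis) gives $\Span(Y)\cap\Span(Z)=\{0\}$, and then $v\in\Span(Y)\cap\Span(Z)$ forces $v=0$ with no further work — recovering the paper's degenerate case. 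When instead the intersection is a line, the same dimension formula does give $\dim\Span(Z)=5$, and only then is your ``some coefficient is nonzero, hence $v\neq 0$'' conclusion valid.
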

\begin{proof}
	The assumption on twistor coordinates
	implies that $Y$ has rank $k$, $Z$ has rank either $4$ or $5$ and that $\Span(Y_1,\ldots, Y_k, Z_1,\ldots, Z_{5}) \subset \R^{k+4}$ has dimension exactly $k+4$. If $Z$ has rank 5, the lemma now follows from \cite[Lemma 4.34]{even2021amplituhedron}.
	
	If $Z$ has rank 4, then $\Span(Y_1,\ldots, Y_k) \cap \Span(Z_1,\ldots, Z_{5})$ is trivial for dimension reasons. We will show $v=0$. Since $Z$ is rank 4, we can find a nontrivial linear combination
	\[\sum_{j=1}^5 x_j Z_j =0.\]
	Now, fix $i \in [5]$ so that $x_i \neq 0$. We claim $\llrr{Y Z_{[5] \setminus \{i\}}} \neq 0$. Indeed, if $\llrr{Y Z_{[5] \setminus \{i\}}}=0$, then we would have
	$$ \sum_{i=1}^{k} a_i Y_i = \sum_{[5]\setminus \{i\}} x_j'Z_j.$$
	Since the intersection of $\Span(Y_1, \dots, Y_k)$ and $\Span(Z_1, \dots, Z_5)$ is trivial, each side of the above linear combination must be equal to 0. But this shows the left nullspace of $Z$ has dimension at least 2, a contradiction. So $\llrr{Y Z_{[5] \setminus \{i\}}} \neq 0$.
	
	Choose $j \neq i$ and apply the linear functional $\llrr{Y Z_{[5] \setminus \{i,j\}}u}$ to the linear relation above. Then
	\[x_j \llrr{Y Z_{[5] \setminus \{i,j\}} Z_j} + x_i \llrr{Y Z_{[5] \setminus \{i,j\} } Z_j} =0\]
	or, rearranging, 
	\[\frac{x_j}{x_i}= (-1)^{j-i} \frac{\llrr{Y Z_{[5] \setminus \{j\}}}}{\llrr{Y Z_{[5] \setminus \{i\}}}}.\]
	This shows $v=0$.
\end{proof}

Before using \cref{lem:solve_params_5} to invert $\tZ$, we need an alternate formulation of the twistor matrix $\twmt_\rcp(Y)$ for $S_\rcp$. Recall from \cref{def:twistor-mtx} that the entries of $\twmt_\rcp(Y)$ are rational functions in the recursively defined coordinate functionaries $\czeta_i(Y)$. We show that these functionaries can be expressed differently using the vectors $v_{\czeta_i}^\rcp$.

 \begin{proposition}\label{lem:coord-functionaries-as-twistors}
 	Let $S_\rcp \in \Grk$ be a BCFW cell and $\{\czeta_i(Y)\}$ its collection of coordinate functionaries. Let $v_{\zeta_i}:=v_{\zeta_i(Y)}^\rcp$ be the vectors from \cref{def:vs} with $\czeta_j$ set to equal $ \czeta^\rcp_j(Y)$. Then for $i=1, \dots, k$, we have 
 	 \begin{align}\label{eq:coords-as-twistors}
 		\galp_i(Y)=&\llrr{v_{\beta_i}Z,v_{\gamma_i}Z,v_{\delta_i}Z,v_{\epsilon_i}Z} \quad \gbet_i(Y)=-\llrr{v_{\alpha_i}Z,v_{\gamma_i}Z,v_{\delta_i}Z,v_{\epsilon_i}Z} \quad
 		\ggam_i(Y)=\llrr{v_{\alpha_i}Z,v_{\beta_i}Z,v_{\delta_i}Z,v_{\epsilon_i}Z}\\
 		\notag&\gdel_i(Y)=-\llrr{v_{\alpha_i}Z,v_{\beta_i}Z,v_{\gamma_i}Z,v_{\epsilon_i}Z} \quad
 		\geps_i(Y)= \llrr{v_{\alpha_i}Z,v_{\beta_i}Z,v_{\gamma_i}Z,v_{\delta_i}Z}.
 	\end{align}
 \end{proposition}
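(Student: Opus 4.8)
The plan is to induct on the number of step-tuples in the recipe $\rcp$, in parallel with the recursive definitions of the coordinate functionaries $\czeta_i^\rcp(Y)$ (\cref{def:twistor-mtx}) and of the vectors $v_{\czeta_i}^\rcp$ (\cref{def:vs}). The base case is when $\st=(a_k,b_k,c_k,d_k,n_k)$ and $i=k$: here $v_{\calpha_k}=e_{a_k}$, $v_{\cbeta_k}=e_{b_k}$, $v_{\cgamma_k}=(-1)^{k_R}e_{c_k}$, $v_{\cdelta_k}=(-1)^{k_R}e_{d_k}$, $v_{\cepsilon_k}=(-1)^{k_R}e_{n_k}$, so that for instance $\llrr{v_{\beta_k}Z,v_{\gamma_k}Z,v_{\delta_k}Z,v_{\epsilon_k}Z}=(-1)^{3k_R}\llrr{b_k\,c_k\,d_k\,n_k}=(-1)^{k_R}\llrr{b_k\,c_k\,d_k\,n_k}=\calpha_k^\rcp(Y)$, matching \cref{def:twistor-mtx}; the other four and the signs $(-1)^{k_R}$ in the $\beta,\gamma,\delta,\epsilon$ slots work out identically by expanding the $4\times 4$ determinant of the chosen $e$'s. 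So the base case amounts to a short sign bookkeeping check.

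For the inductive step, I would handle each possible final step $\st$ separately. When $\st\in\{\pre_{I_k},\cyc,\refl\}$, both sides of \eqref{eq:coords-as-twistors} transform by the same operation: the left side because $\czeta_i^\rcp(Y)$ is defined from $\czeta_i^\rcpp(Y)$ by $\pre_{I_k}$ (identity), $(-1)^{s'}\cyc^{-*}$, or $\refl^*$ (\cref{def:twistor-mtx}); the right side because $v_{\czeta_i}^\rcp$ is obtained from $v_{\czeta_i}^\rcpp$ by right-multiplication by $I$, $[\cyc_{k,n}]$, or (up to the $(-1)^{\binom k2}$ factor on the top row) $[\refl_n]$ (\cref{def:vs}). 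The key point is that these matrix multiplications on the $v$'s induce exactly $\cyc^{-*}$ and $\refl^*$ on the twistor coordinates $\llrr{v_{\czeta_i}Z,\dots}$ — this is precisely the content of \cref{lem:twistor-after-pre-cyc-rot} applied to each slot — and one must check the sign factors $(-1)^{s'}$ and $(-1)^{\binom k2}$ accumulated on the two sides agree. Since $\deg_n$ of a twistor coordinate $\llrr{v_{\czeta_i}Z,\dots}$ counts the number of slots whose $v$-vector involves $e_n$, and $s'=k\deg_n\czeta_i^\rcpp(Y)$ in \cref{def:twistor-mtx} while the $v$-side picks up one $(-1)^{k}$ per such slot under $[\cyc_{k,n}]$ (with the $(-1)^{k-1}$ from the last column of $[\cyc_{k,n}]$ combining with the reordering sign), the two agree; the $\refl$ case is similar using that $[\refl_n]$ acts on the four-element index set by $I\mapsto n+1-I$.

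The substantive case is $\st=(a_k,b_k,c_k,d_k,n_k)$ with $i<k$, say the $i$th step-tuple lies in $\rcp_R$ (the $\rcp_L$ case is analogous, with the extra factor $(-1)^{(k_R+1)\deg_{n_k}}$ and the matrix $y_{a_k}(\alpha_k/\beta_k)P_{n,n,k_R+1}$ instead of $y_{d_k}(\frac{\delta_k}{\epsilon_k})y_{c_k}(\frac{\gamma_k}{\delta_k})$). By induction, \eqref{eq:coords-as-twistors} holds for $S_R$ with vectors $v_{\czeta_i}^R$ and functionaries $\czeta_i^R(Y)$, which live on $\Gr_{4,N_R}$. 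On the left, $\czeta_i^\rcp(Y)=\Psi_{a_kc_k}(\czeta_i^R)(Y)$. On the right, $v_{\czeta_i}^\rcp=v_{\czeta_i}^R\cdot y_{d_k}(\frac{\delta_k}{\epsilon_k})y_{c_k}(\frac{\gamma_k}{\delta_k})$ with $\czeta_k$ set equal to the coordinate functionaries, i.e.\ $\gamma_k/\delta_k$ becomes $\cgamma_k^\rcp(Y)/\cdelta_k^\rcp(Y)=-\llrr{a_kb_kd_kn_k}/\llrr{a_kb_kc_kn_k}$ and $\delta_k/\epsilon_k$ becomes $-\llrr{a_kb_kc_kn_k}/\llrr{a_kb_kc_kd_k}$. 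I would then expand, say, $\llrr{v_{\beta_i}^\rcp Z,v_{\gamma_i}^\rcp Z,v_{\delta_i}^\rcp Z,v_{\epsilon_i}^\rcp Z}$ multilinearly in each argument, using that $v_{\czeta_i}^\rcp Z = v_{\czeta_i}^R Z - \frac{\cdelta_k^\rcp}{\cepsilon_k^\rcp}(v_{\czeta_i}^R)_{d_k}Z_{n_k}-\tfrac{\cgamma_k^\rcp}{\cdelta_k^\rcp}(\cdots)Z_{d_k}$ — in other words each row of the twistor matrix $\twmt_R(Y)$ gets the substitutions $d_k\mapsto d_k-\frac{\lr{a_kb_kd_kn_k}}{\lr{a_kb_kc_kn_k}}c_k$, $n_k\mapsto n_k-\dots$ that define product promotion on $\widehat{\Gr}_{4,N_R}$ in \eqref{eq:promotionvectors2},\eqref{eq:promotionvectors3}, now interpreted on twistor coordinates via the Pl\"ucker/twistor dictionary. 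The multilinear expansion of the $4\times4$ determinant is precisely the action of $\Psi_{a_kc_k}$ as described by rules (b1),(b2),(b3), once one tracks the common denominator $\lr{a_kb_kc_kn_k}$ or $\lr{a_kb_kc_kd_k}$ that $\Psi$ introduces. The hardest part will be this bookkeeping: showing that the determinant expansion of $\llrr{v_{\czeta_i}^\rcp Z,\dots}$ matches $\Psi_{a_kc_k}(\czeta_i^R)(Y)$ term-by-term including all signs, and in particular that when $v_{\czeta_i}^R$ already involves $e_{c_k},e_{d_k},e_{n_k}$ (so $\Psi$ acts nontrivially or some numerators factor as in \cref{rem:factors}), nothing is lost. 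I expect this to reduce, after the multilinearity expansion, to exactly checking the identities (a1)--(a2) and (b1)--(b3) of the remark following \cref{pro-twistors2}, which are the definition of $\Psi$; so the proof is really a careful unwinding of two recursive definitions against each other, with \cref{lem:twistor-after-pre-cyc-rot} handling the dihedral steps and the defining formulas of $\Psi$ handling the product step.
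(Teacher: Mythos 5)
Your proposal is correct and follows essentially the same route as the paper: induction mirroring the two recursive definitions, with the product case $i=k$ checked directly from $v_{\czeta_k}Z=\pm Z_{a_k},\dots,\pm Z_{n_k}$, and the dihedral and promotion steps handled by observing that right-multiplying the $v$'s by $[\cyc_{k,n}]$, $[\refl_n]$, or $y_{a_k}(\cdot)P_{n,n,k_R+1}$ (resp.\ $y_{d_k}(\cdot)y_{c_k}(\cdot)$) and then by $Z$ implements exactly the substitutions on the rows of $Z$ that define $\cyc^{-*}$, $\refl^*$, and $\Psi_{a_kc_k}$ on pure functionaries. The only cosmetic difference is that the paper carries out this substitution computation directly by writing $v^{\rcpp}=\sum F_je_j$ rather than invoking \cref{lem:twistor-after-pre-cyc-rot} (which concerns simultaneously changing $C$ and $Z$, not the fixed-$Y$ situation here), and purity of the functionaries makes the term-by-term matching with rules (b1)--(b3) automatic, so the factorization worry is moot.
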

\begin{proof}
	We induct on the number of steps in $\rcp$. The base case of 0 steps is trivially true. We now need to show that if \eqref{eq:coords-as-twistors} holds for $\rcpp$ (or $\rcp_L, \rcp_R$), then it holds for $\rcp$.
	
	If $\st= \pre_{I_k}$, then the coordinate functionaries are the same for $\rcp$ and $\rcpp$. This shows that the vectors for $\rcp, \rcpp$ are the same as well, since they are defined using the same formulas and are evaluated on the same functionaries. So \eqref{eq:coords-as-twistors} holds for $\rcp$ if it holds for $\rcpp$. 
	
	The arguments for $\st=\refl$ and $\st=\cyc$ are similar to each other, so we give only the latter case here. By \cref{def:twistor-mtx}, when we go from $\rcpp$ to $\rcp$, the left hand sides of \eqref{eq:coords-as-twistors} change by 
	\[F \mapsto \tilde{F} := (-1)^{k \deg_n F} \cyc^{-*} F.\]
 We will show the right hand sides also change by this operation. Notice that for a pure functionary $F$, $\tilde{F}$ is obtained by substituting
 \[Z_n \mapsto (-1)^{k-1}Z_1 \quad \text{and} \quad Z_j \mapsto Z_{j+1} \text{ for } j<n.\]
 This can be seen by writing $F$ in terms of twistors $\llrr{J}$ with elements of $J$ written in increasing order, as is our convention.
	
	Note that $v_{\zeta_i}^\rcp$ is obtained from $v_{\zeta_i}^\rcpp$ by right multiplying by $[\cyc_{k,n}]$ and replacing the coordinate functionaries $\czeta_j^\rcpp(Y)$ with the coordinate functionaries $\czeta_j^\rcp(Y)$. 
	That is, if
	\[v_{\zeta_i}^\rcpp = F_1 e_1 + \dots + F_{n-1}e_{n-1}+F_n e_n\]
	where $F_i$ are rational functionaries, then 
		\[v_{\zeta_i}^\rcp = \tilde{F}_1e_2 + \dots+  \tilde{F}_{n-1}e_n + (-1)^{k-1} \tilde{F}_n e_1.\]
		Right multiplying by $Z$, we see that replacing $v_{\zeta_i}^\rcpp Z$ by $v_{\zeta_i}^\rcp Z$ in the right hand sides of \eqref{eq:coords-as-twistors} has the effect substituting $Z_n$ with $(-1)^{k-1}Z_1$ and $Z_j$ with $Z_{j+1}$ for all other $j$, as desired.
	
	Now, if $\st=(a, b, c, d, n)$, \eqref{eq:coords-as-twistors} holds for $i=k$ by definition, since 
	\[v_{\calpha_k}Z= Z_{a} \quad v_{\cbeta_k}Z= Z_{b} \quad v_{\cgamma_k}Z= (-1)^{k_R}Z_{c} \quad v_{\cdelta_k}Z= (-1)^{k_R}Z_{d}\quad v_{\cepsilon_k}Z= (-1)^{k_R}Z_{n}.\]
	If the $i$th step tuple of $\rcp$ is in $\rcp_L$, then the coordinate functionaries change by 
	\[F \mapsto \tilde{F}:= (-1)^{(k_R+1)\deg_n F}\Psi_{a c} F \]
	 when we pass from $\rcp_L$ to $\rcp$. We will show this is also how the right hand sides of  \eqref{eq:coords-as-twistors} change. The argument if the $i$th 4-step tuple of $\rcp$ is in $\rcp_R$ is very similar, and is left to the reader.
	
	Note that $v_{\zeta_i}^\rcp$ is obtained from $v_{\zeta_i}^L$ by right multiplication by $y_{a}(\frac{\alpha_k(Y)}{\beta_k(Y)}) P_{n,n, k_R+1}$ and then replacing the coordinate functionaries $\czeta_j^L(Y)$ with $\czeta_j^\rcp(Y)$. That is, if 
		\[v_{\zeta_i}^L = F_1 e_1 + \dots + F_{a}e_{a}+ F_{b}e_{b}+F_{n} e_{n}\]
	where $F_i$ are rational functionaries, then 
		\[v_{\zeta_i}^\rcpp = \tilde{F}_1 e_1 + \dots + \tilde{F}_{a} e_a + \tilde{F}_{b} \left(e_b+  \frac{\alpha_k(Y)}{\beta_k(Y)} e_a\right)+(-1)^{k_R +1}\tilde{F}_{n} e_{n}.\]
		Recalling that $\frac{\alpha_k(Y)}{\beta_k(Y)} = -\frac{\llrr{bcdn}}{\llrr{acdn}}$, we see that replacing $v_{\zeta_i}^L Z$ by $v_{\zeta_i}^\rcp Z$ in the right hand sides of \eqref{eq:coords-as-twistors} has the effect of substituting
		\[ Z_b \mapsto Z_{b}-\frac{\llrr{bcdn}}{\llrr{acdn}} Z_a \quad \text{and} \quad Z_n \mapsto (-1)^{k_R+1}Z_n \]
		which is precisely the map $F \mapsto \tilde{F}$ since all functionaries appearing are pure.
\end{proof}

We now partially invert the $\tZ$-map on a BCFW cell. That is, we show there is a subset of $\gto{\rcp}$ on which $\tZ:S_\rcp \to \gto{\rcp}$ is invertible. Later we show that this subset is the entire open BCFW tile $\gto{\rcp}$.

\begin{proposition}\label{prop:preimage-if-coord-fcn-pos}
	Let $S_\rcp \in \Grk$ be a BCFW cell. Let $Y \in \Gr_{k, k+4}$ be a point such that $\czeta_i^\rcp(Y)>0$ for all coordinate functionaries. Then $\twmt_\rcp(Y) \in S_\rcp$ and is the unique element of $\tZ^{-1}(Y) \cap S_\rcp$. In particular, 
	\begin{equation}\label{eq:loci-is-subset} \gto{\rcp} \supseteq \{Y \in \Gr_{k, k+4}: \czeta_i^\rcp(Y)>0 \text{ for all coordinate functionaries}\}.\end{equation}
\end{proposition}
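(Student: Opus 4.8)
The plan is to proceed by induction on the number of steps in the recipe $\rcp$, mirroring the recursive structure of \cref{def:recipe}, \cref{def:BCFW-coords-mtx}, \cref{def:twistor-mtx}, and \cref{def:vs}. The base case is the trivial recipe ($k=0$), where both $S_\rcp$ and $\gt{\rcp}$ are a point and there is nothing to prove. For the inductive step, let $\st$ be the final step of $\rcp$; there are four cases corresponding to $\st \in \{\pre_{I_k}, \cyc, \refl, (a_k,b_k,c_k,d_k,n_k)\}$, and we use \cref{not:L-and-R} throughout.

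The heart of the argument is the key computation: given $Y$ with all coordinate functionaries $\czeta_i^\rcp(Y)>0$, we want to exhibit a genuine preimage in $S_\rcp$. The idea is to set $M := \twmt_\rcp(Y)$, i.e. the BCFW matrix $\mtx_\rcp$ with each BCFW parameter $\czeta_i$ replaced by the coordinate functionary $\czeta_i^\rcp(Y)$. First I would observe that since all $\czeta_i^\rcp(Y)$ are strictly positive, \cref{cor:dim} (applied with the BCFW parameters taking these positive values) shows $M$ represents a genuine point of $S_\rcp$, via the parametrization $(\chR)^k \to S_\rcp$. The uniqueness of the parameters for a point of $S_\rcp$ (stated after \cref{cor:dim} and in \cref{lem:vs-unique}) means it suffices to show $\tZ(M) = Y$, i.e. that the row span of $MZ$ equals $Y$ as a point of $\Gr_{k,k+4}$. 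Here I would use \cref{lem:coord-functionaries-as-twistors}, which re-expresses each coordinate functionary $\czeta_i^\rcp(Y)$ as a $4\times 4$ twistor determinant built from the vectors $v_{\czeta_i}^\rcp$ (evaluated with $\czeta_j = \czeta_j^\rcp(Y)$), combined with \cref{lem:solve_params_5}: the $i$th row of $MZ$ is, up to the overall structure, the vector $v$ appearing in \cref{lem:solve_params_5} applied to $Y$ and the five relevant rows $v_{\czeta_i}^\rcp Z$ — and the content of \eqref{eq:coords-as-twistors} is precisely that this vector lies in $Y$. Because the row space of $MZ$ has dimension $k$ (as $M \in S_\rcp$ and $\tZ$ lands in $\Gr_{k,k+4}$) and is contained in the $k$-plane $Y$, it must equal $Y$.

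The inductive bookkeeping needed to make the twistor-matrix/vector formalism consistent across steps is where the four cases enter: for $\st = \pre_{I_k}$ the coordinate functionaries and vectors are literally unchanged, so one cites the inductive hypothesis directly; for $\st = \cyc$ or $\refl$ one uses \cref{thm:signs-under-cyc-rot-pre} together with \cref{lem:twistor-after-pre-cyc-rot} to track how positivity of coordinate functionaries transports under the dihedral action (the signs $(-1)^{s'}$, $(-1)^{\binom{k}{2}}$ in \cref{def:twistor-mtx}, \cref{def:vs} are designed exactly so that everything stays consistent); and for $\st = (a_k,b_k,c_k,d_k,n_k)$ one uses \cref{prop:vanishing_and_sign_of_functionaries_under_promotion}(2) and \cref{lem:sign_of_chord_twistors} to see that $\Psi_{a_kc_k}$ (with the appropriate signs) carries positivity of the $L$- and $R$-coordinate functionaries to positivity of the promoted ones on $S_L \bcfw S_R$, while $\alpha_k^\rcp(Y), \dots, \epsilon_k^\rcp(Y)$ being positive is exactly \cref{lem:sign_of_chord_twistors}. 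Once $\tZ(M) = Y$ with $M \in S_\rcp$ is established, uniqueness of the preimage in $S_\rcp$ is immediate from the injectivity of the parametrization in \cref{cor:dim} (two preimages would give the same point of $S_\rcp$ with different BCFW parameters), and the containment \eqref{eq:loci-is-subset} follows because every such $Y$ is exhibited as $\tZ$ of a point of $S_\rcp$, hence lies in $\tZ(S_\rcp) = \gto{\rcp}$.

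The main obstacle I anticipate is verifying cleanly that the abstract formulas of \cref{def:vs} for $v_{\czeta_i}^\rcp$ — which involve the $y$-Chevalley generators, the matrices $[\cyc_{k,n}], [\refl_n], P_{k,i,s}$, and embeddings $\R^{N_L}, \R^{N_R} \hookrightarrow \R^n$ — genuinely reconstruct the rows of $\mtx_\rcp$ after substitution of the coordinate functionaries, and that the twistor determinants in \cref{lem:coord-functionaries-as-twistors} line up sign-for-sign with the defining signs in \cref{def:twistor-mtx} when fed into \cref{lem:solve_params_5}. This is essentially a matter of carefully matching the recursive sign conventions at each of the four step types against the parallel conventions already set up in \cref{lem:vs-unique} and \cref{lem:coord-functionaries-as-twistors}; since \cref{lem:coord-functionaries-as-twistors} has already done the hard sign-tracking, the remaining work is to chain it with \cref{lem:solve_params_5} and confirm the rank argument, which should be routine but notation-heavy.
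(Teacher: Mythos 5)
Your argument for producing a preimage follows the paper's route: set $M:=\twmt_\rcp(Y)$, invoke \cref{cor:dim} (positivity of the coordinate functionaries puts the parameter tuple in $(\chR)^k$) to see $M \in S_\rcp$, then combine \cref{lem:coord-functionaries-as-twistors} with \cref{lem:solve_params_5} to see each row $v_iZ$ of $MZ$ is a nonzero vector of $Y$, and finish with a dimension count. That part is sound, and your anticipated "main obstacle" is exactly what \cref{lem:coord-functionaries-as-twistors} has already resolved.

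However, your uniqueness argument has a genuine gap. You claim uniqueness is "immediate from the injectivity of the parametrization in \cref{cor:dim} (two preimages would give the same point of $S_\rcp$ with different BCFW parameters)." This is backwards: two preimages $C, C' \in S_\rcp$ of $Y$ are a priori \emph{different} points of $S_\rcp$ with \emph{different} BCFW parameter tuples — the parametrization being bijective is perfectly consistent with that, and offers no contradiction. Injectivity of $\tZ$ on $S_\rcp$ is precisely what needs to be proved, not a consequence of the parametrization being a bijection. The paper closes this gap with a separate downward induction: for $i=k$, the vectors $v_{\czeta_k}$ are independent of the parameters, so they agree for $C$ and $C'$; by \cref{lem:solve_params_5} the $k$th rows of $CZ$ and $C'Z$ both span the one-dimensional space $Y\cap\Span(v_{\alpha_k}Z,\dots,v_{\epsilon_k}Z)$, hence are proportional, forcing $[\calpha_k:\dots:\cepsilon_k]=[\calpha_k':\dots:\cepsilon_k']$; then, since $v_{\czeta_i}$ depends only on parameters with index $>i$, the argument propagates down to $i=1$. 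You need this (or an equivalent) argument — the uniqueness does not come for free.

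A secondary point: the "inductive bookkeeping" you describe using \cref{thm:signs-under-cyc-rot-pre}, \cref{prop:vanishing_and_sign_of_functionaries_under_promotion}, and \cref{lem:sign_of_chord_twistors} is not actually needed for this proposition. Those results establish that the coordinate functionaries are \emph{positive on} $\gto{\rcp}$, which is the content of the \emph{reverse} inclusion proved later in \cref{thm:BCFW-tile-and-sign-description}. For the present proposition the entire recursive/sign-tracking burden is already carried by \cref{lem:coord-functionaries-as-twistors}, and the surrounding proof need not be inductive at all.
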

\begin{proof}
	First, since all coordinate functionaries are positive on $Y$, $[\calpha_i(Y): \cbeta_i(Y): \cgamma_i(Y):\cdelta_i(Y):\cepsilon_i(Y)]_{i=1}^k$ is an element of $(\chR)^k$. The matrix $C:=\twmt_\rcp(Y)$ is precisely the image of this point under the map in \cref{cor:dim}, so is in $S_\rcp$. Writing the rows of $\twmt_\rcp(Y)$ using the vectors $v_{\czeta_i}$, we see the vectors $v_{\czeta_i}$ are all evaluated on the coordinate functionaries of $Y$, so are precisely the vectors appearing in \cref{lem:coord-functionaries-as-twistors}.
	
	 Now, by \cref{lem:vs-unique}, row $i$ of $C$ is equal to 
	 \[v_i = \calpha_i(Y) v_{\alpha_i}+\cbeta_i(Y) v_{\beta_i}+\cgamma_i(Y)v_{\gamma_i}+\cdelta_i(Y)v_{\delta_i}+\cepsilon_i(Y)v_{\epsilon_i}\]
	 so by \cref{lem:coord-functionaries-as-twistors}, row $i$ of $CZ$ is equal to 
	 \begin{align*}
	 	v_i Z= &\llrr{v_{\beta_i}Z,v_{\gamma_i}Z,v_{\delta_i}Z,v_{\epsilon_i}Z} v_{\alpha_i}Z
	 -\llrr{v_{\alpha_i}Z,v_{\gamma_i}Z,v_{\delta_i}Z,v_{\epsilon_i}Z} v_{\beta_i}Z
	 +\llrr{v_{\alpha_i}Z,v_{\beta_i}Z,v_{\delta_i}Z,v_{\epsilon_i}Z}v_{\gamma_i}Z\\
	\notag &-\llrr{v_{\alpha_i}Z,v_{\beta_i}Z,v_{\gamma_i}Z,v_{\epsilon_i}Z}v_{\delta_i}Z
	 + \llrr{v_{\alpha_i}Z,v_{\beta_i}Z,v_{\gamma_i}Z,v_{\delta_i}Z}v_{\epsilon_i}Z.
 \end{align*}
Note that $v_iZ$ is a nonzero vector and is also exactly the vector $v$ from \cref{lem:solve_params_5} applied to $Y$ and the matrix with rows $v_{\alpha_i}Z, v_{\beta_i}Z, v_{\gamma_i}Z,v_{\delta_i}Z,v_{\epsilon_i}Z$. All of the twistor coordinates appearing as coefficients are nonzero by assumption, thus by \cref{lem:solve_params_5}, the vector $v_iZ$ spans the subspace $Y \cap \Span(v_{\alpha_i}Z, v_{\beta_i}Z, v_{\gamma_i}Z,v_{\delta_i}Z,v_{\epsilon_i}Z)$ and in particular is a nonzero vector in $Y$. This implies that the rowspan of $CZ$ is contained in $Y$. Since both subspaces are $k$-dimensional, they must be equal. Thus $C$ is a preimage of $Y$.

Now, suppose $C' \in S_D$ is another preimage of $Y$, with BCFW parameters $\{\czeta_i'\}$ and vectors $v_{\czeta_i'}$. We will show that in fact $C'$ has the same BCFW parameters as $C$, starting with $i=k$. For $i=k$, we have $v_{\czeta_k} = v_{\czeta_k'}$ because these vectors are the same for all elements of $S_D$. 

The $k$th row of $C'Z$ is
 \[v_k'Z= \calpha_k' v_{\alpha_k'}Z+\cbeta_k' v_{\beta_k'}Z+\cgamma_k' v_{\gamma_k'}Z+\cdelta_k' v_{\delta_k'}Z+\cepsilon_k' v_{\epsilon_k'}Z= \calpha_k' v_{\alpha_k}Z+\cbeta_k' v_{\beta_k}Z+\cgamma_k' v_{\gamma_k}Z+\cdelta_k' v_{\delta_k}Z+\cepsilon_k' v_{\epsilon_k}Z\]
 and is a nonzero element of $Y$. Also, $v_k'Z \in \Span(v_{\alpha_k}Z, v_{\beta_k}Z, v_{\gamma_k}Z,v_{\delta_k}Z,v_{\epsilon_k}Z)$, so must be proportional to $v_k Z$, which spans $Y \cap \Span(v_{\alpha_k}Z, v_{\beta_k}Z, v_{\gamma_k}Z,v_{\delta_k}Z,v_{\epsilon_k}Z)$. Thus 
 \[[\calpha_k: \cbeta_k: \cgamma_k:\cdelta_k:\cepsilon_k] = [\calpha_k': \cbeta_k': \cgamma_k':\cdelta_k':\cepsilon_k'].\]
 
 Now, suppose $\czeta_j=\czeta_j'$ for all $j>i$. This implies that $v_{\czeta_i}=v_{\czeta_i'}$ since $v_{\czeta_i}$ depends only on the BCFW parameters indexed by $j>i$. Repeating the argument above with the $i$th row of $CZ'$ shows that 
  \[[\calpha_i: \cbeta_i: \cgamma_i:\cdelta_i:\cepsilon_i] = [\calpha_i': \cbeta_i': \cgamma_i':\cdelta_i':\cepsilon_i'].\]
\end{proof}

To prove
\cref{thm:BCFW-tile-and-sign-description}, 
we just need to show the reverse inclusion in \eqref{eq:loci-is-subset}. That is, for every point $C \in S_\rcp$, we need to show that $CZ$ has positive coordinate functionaries. To do so, we analyze the signs of functionaries on $\gto{\rcp}$ before and after promotion, cyclic shift, and reflection, using results from 
\cref{sec:evolve}.

\begin{proof}[Proof of \cref{thm:BCFW-tile-and-sign-description}]
	It suffices to show the reverse inclusion in \eqref{eq:loci-is-subset}, since the desired statement is already known for the set on the right hand side. That is, we need to show that for all $Z \in \Mat_{n,k+4}^{>0}$,
	\begin{equation}\label{eq:loci-is-supset}
		\gto{\rcp} \subseteq \{Y \in \Gr_{k, k+4}: \czeta_i^\rcp(Y)>0 \text{ for all coordinate functionaries}\}
	\end{equation}
We will show that all coordinate functionaries for $\rcp$ have strong sign $+1$ on the image of $S_\rcp$, and so in particular are positive on $\gto{\rcp}$.

We proceed by induction. The base case is $k=0$ or $\rcp = \emptyset$, which is trivially true since there are no coordinate functionaries for $\rcp$.

Now, suppose $\st\in \{\pre_{I_k}, \refl\}$. By induction, the coordinate functionaries of $\rcpp$ are strongly positive on the image of $S_\rcpp$. By \cref{def:twistor-mtx}, the coordinate functionaries for $\rcp$ are obtained from those for $\rcpp$ by doing nothing (if $\st=\pre_{I_k}$) or applying $\refl^*$ (if $\st=\refl$). In either case, \cref{thm:signs-under-cyc-rot-pre} show that the coordinate functionaries for $\rcp$ have strong sign $+1$ on the image of $S_\rcp$.

If $\st = \cyc$, then $\czeta_i^\rcp (Y)= (-1)^{k \deg_n \czeta_i^\rcp(Y)} \cyc^{-*} \czeta_i^\rcp(Y)$. Again, the inductive hypothesis and \cref{thm:signs-under-cyc-rot-pre} show that this functionary has strong sign $+1$ on the image of $S_\rcp$.

If $\st = (a,b,c,d,n)$, then \cref{lem:sign_of_chord_twistors} shows that each of the five twistors $\czeta_k^{\rcp}(Y)$ have strong sign $+1$ on the image of $S_\rcp$. \cref{prop:vanishing_and_sign_of_functionaries_under_promotion},~\cref{it:strong_sign} and the inductive hypothesis shows that the remaining coordinate functionaries have strong sign $+1$ on the image of $S_\rcp$.
\end{proof}

Since the map $Y \mapsto \twmt_\rcp(Y)$ of \cref{thm:BCFW-tile-and-sign-description} amounts to a continuous inverse of $\tZ$ on $\gto{\rcp}$, we have the following corollary.
\begin{corollary}\label{cor:open_map}
	The amplituhedron map, restricted to a BCFW cell, is open. 
\end{corollary}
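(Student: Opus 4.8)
The plan is to deduce openness directly from \cref{thm:BCFW-tile-and-sign-description}, which already furnishes a two-sided inverse of $\tZ$ on $\gto{\rcp}$. Fix a recipe $\rcp$ with BCFW cell $S_\rcp$ and a matrix $Z\in\Mat_{n,k+4}^{>0}$. By \cref{thm:BCFW-tile-and-sign-description}, $\tZ$ restricts to a bijection $S_\rcp\to\gto{\rcp}$ whose inverse sends $Y$ to the rowspan of the twistor matrix $\twmt_\rcp(Y)$; the first step is to check that this inverse is continuous on $\gto{\rcp}$. The entries of the BCFW matrix $\mtx_\rcp$ are rational functions of the BCFW parameters $\coord_\rcp$ whose denominators, by induction on the recipe using the explicit form of $\mbcfw$ in \cref{fig:promotion-matrix} (where the only denominators introduced at a BCFW step are $\beta_k,\delta_k,\varepsilon_k$) together with \cref{def:BCFW-coords-mtx} (the remaining operations $\pre_{I_k},\cyc,\refl$ merely permute or negate columns), are monomials in $\coord_\rcp$. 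Substituting $\czeta_i\mapsto\czeta_i^\rcp(Y)$ as in \cref{def:twistor-mtx}, the entries of $\twmt_\rcp(Y)$ become rational functions of $Y$ (with $Z$ fixed) whose denominators are signed monomials in the coordinate functionaries $\czeta_i^\rcp(Y)$; since $\czeta_i^\rcp(Y)>0$ for all $i$ whenever $Y\in\gto{\rcp}$, again by \cref{thm:BCFW-tile-and-sign-description}, these denominators are nonvanishing on $\gto{\rcp}$. Hence $Y\mapsto\twmt_\rcp(Y)$ is a continuous map from $\gto{\rcp}$ into the set of $k\times n$ matrices, taking values in matrices of full rank (since its rowspan lies in $S_\rcp$), so it descends to a continuous map $\gto{\rcp}\to\Grk$. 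Therefore $\tZ\colon S_\rcp\to\gto{\rcp}$ is a homeomorphism.

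The second step is to show that $\gto{\rcp}$ is open in $\Gr_{k,k+4}$ (and hence, being contained in $\Ank$, open in $\Ank$ as well). By \cref{thm:BCFW-tile-and-sign-description}, $\gto{\rcp}=\{Y\in\Gr_{k,k+4}:\czeta_i^\rcp(Y)>0\text{ for all coordinate functionaries of }S_\rcp\}$, a finite intersection. Writing each pure rational functionary as $\czeta_i^\rcp=P_i/Q_i$ with $P_i,Q_i$ honest functionaries (polynomials in the twistor coordinates, i.e.\ sections of a fixed line bundle on $\Gr_{k,k+4}$ once $Z$ is chosen), one has $\{Y:\czeta_i^\rcp(Y)>0\}=\{Y:P_i(Y)Q_i(Y)>0\}$, which is the complement of a real algebraic hypersurface and hence open. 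A finite intersection of open sets is open, so $\gto{\rcp}$ is open.

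Combining the two steps: if $U\subseteq S_\rcp$ is open, then $\tZ(U)$ is open in $\gto{\rcp}$ because $\tZ|_{S_\rcp}$ is a homeomorphism onto $\gto{\rcp}$, and $\gto{\rcp}$ is open in $\Gr_{k,k+4}$, so $\tZ(U)$ is open in $\Gr_{k,k+4}$ (equivalently in $\Ank$); this is precisely the assertion that $\tZ|_{S_\rcp}$ is an open map. As an alternative route one could bypass the explicit inverse and invoke invariance of domain, once one knows that $S_\rcp$ is homeomorphic to $\R^{4k}=\R^{\dim\Gr_{k,k+4}}$ (which follows e.g.\ from \cref{cor:dim}, since $\chR\cong\R^4$) and that $\tZ$ is injective on $S_\rcp$; I would nonetheless prefer the first route, as the explicit inverse makes continuity of $\tZ^{-1}$ transparent and avoids any appeal to algebraic topology.

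I do not expect a genuine obstacle here: essentially all the content is already packaged in \cref{thm:BCFW-tile-and-sign-description}. The one point requiring mild care is the inductive bookkeeping that the denominators occurring in $\twmt_\rcp(Y)$ are monomials in the $\czeta_i^\rcp(Y)$, so that they are nonzero on $\gto{\rcp}$ and the inverse is continuous there; this follows straightforwardly from the recursive definitions in \cref{def:BCFW-coords-mtx} and \cref{def:twistor-mtx} and the shape of the matrix $(*)$ in \cref{fig:promotion-matrix}.
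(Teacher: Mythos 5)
Your proposal is correct and follows essentially the same route as the paper, whose proof is the single sentence preceding the corollary: $Y\mapsto\twmt_\rcp(Y)$ is a continuous inverse of $\tZ$ on $\gto{\rcp}$. You fill in two points that the paper leaves implicit and that are genuinely needed: (i) the denominators occurring in the entries of $\twmt_\rcp(Y)$ are (monomials in) the coordinate functionaries, hence nonvanishing on $\gto{\rcp}$, so the inverse really is continuous there; and (ii) $\gto{\rcp}$ is open in $\Gr_{k,k+4}$ — which is necessary, since a continuous two-sided inverse only shows $\tZ|_{S_\rcp}$ is a homeomorphism onto its image, not that the image is open — and you deduce this from the sign description in \cref{thm:BCFW-tile-and-sign-description} (invariance of domain would also work, as you note). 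One small slip in wording: $\{Y:P_i(Y)Q_i(Y)>0\}$ is open because it is the preimage of $(0,\infty)$ under the continuous function $P_iQ_i$, not because it is "the complement of a real algebraic hypersurface" (that would be $\{P_iQ_i\neq 0\}$); the conclusion is of course unaffected.
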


\begin{corollary}\label{cor:bdry-covered-by-bdry-image}
	Let $S_\rcp$ be a BCFW cell. Then $\partial \gt{\rcp} \subset \tZ(\partial \overline{S}_\rcp).$
\end{corollary}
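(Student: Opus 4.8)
The plan is to prove $\partial Z_\rcp \subseteq \tZ(\partial \overline{S}_\rcp)$ by contrapositive, using the openness of the amplituhedron map on $S_\rcp$ (\cref{cor:open_map}) together with the injectivity statement from \cref{thm:BCFW-tile-and-sign-description}. Suppose $Y \in Z_\rcp = \overline{\tZ(S_\rcp)}$ but $Y \notin \tZ(\partial \overline{S}_\rcp)$. Since $\overline{S}_\rcp = S_\rcp \sqcup \partial \overline{S}_\rcp$ and $Z_\rcp = \tZ(\overline{S}_\rcp)$ (the latter because $\tZ$ is continuous and $\overline{S}_\rcp$ is compact, so $\tZ(\overline{S}_\rcp)$ is closed and contains $\tZ(S_\rcp)$, hence equals its closure), we must have $Y \in \tZ(S_\rcp)$, i.e. $Y = \tZ(C)$ for some $C \in S_\rcp$.

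First I would use \cref{cor:open_map}: the restriction $\tZ|_{S_\rcp}$ is an open map, so $\tZ(S_\rcp)$ is an open subset of $\Gr_{k,k+4}$ — but more relevantly, $\tZ$ maps any open neighborhood of $C$ in $S_\rcp$ to an open neighborhood of $Y$. Concretely, pick a small open ball $U \subseteq S_\rcp$ around $C$; then $\tZ(U)$ is open and contains $Y$, so $Y$ is an interior point of $\tZ(S_\rcp) \subseteq Z_\rcp$. Therefore $Y \notin \partial Z_\rcp$. This is the whole argument: every point of $Z_\rcp$ is either in $\tZ(\partial \overline{S}_\rcp)$ or is an interior point of $Z_\rcp$; contrapositively, boundary points of $Z_\rcp$ lie in $\tZ(\partial \overline{S}_\rcp)$.

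The one subtlety to handle carefully is that ``interior'' and ``boundary'' here should be taken relative to the correct ambient space. Since $S_\rcp$ has dimension $4k$ and, by \cref{thm:BCFW-tile-and-sign-description}, $\tZ|_{S_\rcp}$ is injective with $\tZ(S_\rcp) = \gto{\rcp}$ a subset of $\Gr_{k,k+4}$, the relevant notion is that $\gto{\rcp}$ is an open subset of $Z_\rcp$ in the subspace topology, equivalently $Z_\rcp \setminus \gto{\rcp} = \partial Z_\rcp$ where $\partial$ denotes the topological boundary of $Z_\rcp$ as a subset of $\Gr_{k,k+4}$ (when $k = n-4$, $Z_\rcp$ is full-dimensional and this is literal interior/boundary; otherwise one works within the manifold-with-boundary structure, but \cref{cor:open_map} is exactly the statement that makes this precise, since openness of $\tZ|_{S_\rcp}$ is openness into $\Gr_{k,k+4}$ or its appropriate submanifold). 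The main (minor) obstacle is thus just being careful that \cref{cor:open_map}'s openness is into the same space in which we measure $\partial Z_\rcp$; once that is granted the proof is immediate.

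I would write it as: by \cref{cor:open_map}, $\tZ|_{S_\rcp}$ is open, so $\gto{\rcp} = \tZ(S_\rcp)$ is open in $\Gr_{k,k+4}$ and every point of $\gto{\rcp}$ is interior to $Z_\rcp$; since $Z_\rcp = \tZ(\overline{S}_\rcp) = \gto{\rcp} \cup \tZ(\partial \overline{S}_\rcp)$, any $Y \in \partial Z_\rcp$ cannot lie in the open set $\gto{\rcp}$, hence $Y \in \tZ(\partial \overline{S}_\rcp)$. This closes the argument.
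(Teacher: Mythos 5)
Your proof is correct and takes essentially the same route as the paper's own one-line argument: use the openness of $\tZ|_{S_\rcp}$ (\cref{cor:open_map}) to conclude $\gto{\rcp}$ is open in $\Gr_{k,k+4}$, hence contained in the interior of $\gt{\rcp}$, and combine with $\gt{\rcp} = \gto{\rcp} \cup \tZ(\partial\overline{S}_\rcp)$. The ``subtlety'' you raise about relative versus absolute interior is a non-issue here: since $\dim S_\rcp = 4k = \dim\Gr_{k,k+4}$ for every BCFW cell (not just when $k=n-4$, which governs full-dimensionality in the \emph{domain} $\Gr_{k,n}^{\geq 0}$, not the codomain), the tile $\gt{\rcp}$ is always full-dimensional in $\Gr_{k,k+4}$, and $\partial$ in the corollary is simply the ordinary topological boundary there — which is also exactly what makes invariance of domain (or the explicit inequality description of $\gto{\rcp}$ in \cref{thm:BCFW-tile-and-sign-description}) apply to give the openness in the first place.
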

\begin{proof}
	Since $\tZ$ restricts to a homeomorphism on $S_\rcp$, $\gto{\rcp}$ is open and thus is contained in the interior of $\gt{\rcp}$. Since $\gt{\rcp}$ is the image of $\overline{S}_\rcp$, the boundary of $\gt{\rcp}$ is contained in the image of  $\partial \overline{S}_\rcp$.
\end{proof}

We also show that each coordinate cluster variable of $\rcp$ (c.f. \cref{def:generalcluster}) has a strong sign on the image of $S_{\rcp}$.

\begin{corollary}\label{cor:clust-var-strong-sign}
	Let $S_\rcp$ be a BCFW cell and let $\rzeta_i \in \Irr(\rcp)$ be a coordinate cluster variable. Then for some $s \in \{\pm 1\}$, $\rzeta_i$ has strong sign $s$ on the image of $S_\rcp$.
\end{corollary}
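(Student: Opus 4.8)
The plan is to prove this by induction on the number of step-tuples in the recipe $\rcp$, mirroring the structure of the proof of \cref{thm:BCFW-tile-and-sign-description}. The base case ($k=0$ or $\rcp = \emptyset$) is vacuous, since $\Irr(\rcp)$ is empty. For the inductive step, I would use \cref{not:L-and-R} and split into cases according to the final step $\st$ of the recipe.

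First, if $\st \in \{\pre_{I_k}, \cyc, \refl\}$: by \cref{def:generalcluster}, each coordinate cluster variable $\rzeta_i^\rcp$ is obtained from the corresponding $\rzeta_i^\rcpp$ by applying the identity, $\cyc^{-*}$, or $\refl^*$ respectively. By the inductive hypothesis, $\rzeta_i^\rcpp$ has strong sign $s$ on the image of $S_\rcpp$. Then \cref{thm:signs-under-cyc-rot-pre} (which controls the evolution of strong signs under exactly these three operations) shows that $\rzeta_i^\rcp$ has strong sign $\pm s$ on the image of $S_\rcp$; the precise sign is recorded by the formula in that theorem, but for the present statement we only need the existence of a definite strong sign in $\{\pm 1\}$. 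Note that $\rzeta_i^\rcp$ is indeed a functionary (equivalently, a polynomial in twistor coordinates under the identification of \cref{sec:Pluckertwistor}), so ``strong sign on the image of $S_\rcp$'' makes sense.

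Second, if $\st = (a_k, b_k, c_k, d_k, n_k)$: for $i = k$, the five coordinate cluster variables $\ralpha_k^\rcp, \dots, \repsilon_k^\rcp$ are the twistor coordinates $\llrr{b_k c_k d_k n_k}$, $\llrr{a_k c_k d_k n_k}$, $\llrr{a_k b_k d_k n_k}$, $\llrr{a_k b_k c_k n_k}$, $\llrr{a_k b_k c_k d_k}$. By \cref{lem:sign_of_chord_twistors}, each of these has strong sign $\pm 1$ on the image of $S_L \bcfw S_R = S_\rcp$. For $i \neq k$, the variable $\rzeta_i^\rcp$ is $\rPsi_{a_k c_k}(\rzeta_i^L)$ or $\rPsi_{a_k c_k}(\rzeta_i^R)$, which by \cref{def:rPsi} differs from $\Psi_{a_k c_k}(\rzeta_i^L)$ (resp. $\Psi_{a_k c_k}(\rzeta_i^R)$) only by a Laurent monomial in $\mathcal{T'} = \{\lr{a_k b_k c_k n_k}, \lr{a_k b_k c_k d_k}, \lr{b_k c_k d_k n_k}, \lr{a_k c_k d_k n_k}\}$. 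By the inductive hypothesis, $\rzeta_i^L$ (resp. $\rzeta_i^R$) has strong sign $s$ on the image of $S_L$ (resp. $S_R$), so by \cref{prop:vanishing_and_sign_of_functionaries_under_promotion}\,\cref{it:strong_sign}, $\Psi_{a_k c_k}(\rzeta_i^L)$ (resp. $\Psi_{a_k c_k}(\rzeta_i^R)$) has strong sign $(-1)^{(k_R+1)\deg_{n_k} \rzeta_i^L}s$ (resp. $s$) on the image of $S_\rcp$. The monomial in $\mathcal{T'}$ removed to pass from $\Psi$ to $\rPsi$ is a Laurent monomial in twistor coordinates each of which has strong sign $\pm 1$ on the image of $S_\rcp$ by \cref{lem:sign_of_chord_twistors}; dividing or multiplying a strongly-signed functionary by such a monomial preserves having a definite strong sign (a strongly positive functionary times or divided by a strongly positive one is strongly positive, and sign factors are harmless). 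Hence $\rzeta_i^\rcp$ has strong sign $\pm 1$ on the image of $S_\rcp$, completing the induction.

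I expect the only delicate point to be the last observation in the $(a_k,b_k,c_k,d_k,n_k)$ case: one must check carefully that passing from $\Psi$ to $\rPsi$ by clearing a Laurent monomial in $\mathcal{T'}$ really preserves the strong-sign property, rather than merely the ordinary-sign property. This is where \cref{lem:sign_of_chord_twistors} is essential, since it gives \emph{strong} (not just ordinary) signs for exactly the twistors in $\mathcal{T'}$. Everything else is a routine bookkeeping of signs already handled by \cref{thm:signs-under-cyc-rot-pre} and \cref{prop:vanishing_and_sign_of_functionaries_under_promotion}. One could also observe that this corollary is essentially a restatement, for the coordinate cluster variables, of the strong-positivity of coordinate functionaries established in the proof of \cref{thm:BCFW-tile-and-sign-description}, since each coordinate functionary is a signed Laurent monomial in the coordinate cluster variables and vice versa; but carrying out the direct induction as above is cleaner.
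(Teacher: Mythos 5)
Your proposal is correct and follows essentially the same route as the paper's proof: induction on the number of step-tuples, using \cref{thm:signs-under-cyc-rot-pre} for the $\pre, \cyc, \refl$ cases, \cref{lem:sign_of_chord_twistors} for the degree-$k$ variables, and \cref{prop:vanishing_and_sign_of_functionaries_under_promotion}~(\ref{it:strong_sign}) plus the observation that $\rPsi$ differs from $\Psi$ by a Laurent monomial in the strongly-signed twistors of $\mathcal{T}'$. The paper's version is terser but invokes the same three ingredients in the same order; your expanded check that clearing a Laurent monomial in strongly-signed factors preserves the strong-sign property is exactly the justification the paper leaves implicit, so no gap exists.
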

\begin{proof}
	We proceed by induction. Recall that $\rzeta_i^\rcp$ is either one of the twistor coordinates in \cref{lem:sign_of_chord_twistors}; or it is equal to one of $\rzeta_i^\rcpp, \cyc^{-*} \rzeta_i^\rcpp, \refl^{*} \rzeta_i^\rcpp$; or it is the rescaled product promotion of $\rzeta_i^L$ or $\rzeta_i^R$. In the first case, the corollary is true by \cref{lem:sign_of_chord_twistors}. In the second, it is true by \cref{thm:signs-under-cyc-rot-pre}. In the third case, \cref{prop:vanishing_and_sign_of_functionaries_under_promotion} tells us that $\Psi (\rzeta_i^L)$ (or $\Psi( \rzeta_i^R)$, as appropriate) has a strong sign on the image of $S_\rcp$. By definition, $\rzeta_i= \rPsi( \rzeta_i^L)$ differs from $\Psi (\rzeta_i^L)$ by a Laurent monomial in the twistors appearing in \cref{lem:sign_of_chord_twistors}. Since each of these twistors has a strong sign on the image of $S_\rcp$, this implies that $\rzeta_i$ also has a strong sign on the image of $S_\rcp$.
\end{proof}

\begin{remark}\label{rmk:lots-strong-sign}
	While \cref{cor:clust-var-strong-sign} is stated for the coordinate cluster variables of $\Irr(\rcp)$, the proof holds for any functionary which can be obtained by repeatedly applying $\rPsi$, $\cyc^{-*}$, and $\refl$ to a cluster variable with strong sign on the image of the appropriate BCFW cell. In particular, twistor coordinates whose indices are cyclically consecutive in $N$ have a strong sign on the image of every BCFW cell with marker set $N$. Repeatedly applying $\rPsi$, $\cyc^{-*}$, and $\refl$ to such a twistor coordinate will produce a functionary with strong sign on the image of the appropriate BCFW cell.
\end{remark}

\section{BCFW tilings}\label{sec:BCFWtilings}
Recall the notion of \emph{tiling} from \cref{def:tiling}.
In this section we will prove the \emph{BCFW tiling conjecture}
(see \cref{thm:BCFWtiling}), which was implicit in \cite{arkani-hamed_trnka}, 
and which gives  a large class of tilings of the amplituhedron
$\Ank$ using BCFW tiles.

\begin{notation}\label{not:bcfw_tiling}
 Throughout this section we use \cref{rem:Z}, \cref{not:bcfwmap} and fix $k\geq0,n\geq k+4$. Moreover, we define $b_{min}:=2$ if $k_L=0$ and otherwise $b_{min}:=k_L+3$. 
\end{notation}

\begin{definition}[BCFW collections]\label{def:BCFWoutput}

We say that a collection $\mathcal{T}$ of $4k$-dimensional BCFW cells in 
$\Gr_{k,n}^{\geq 0}$  
	is a \emph{BCFW collection of cells} for 
$\mathcal{A}_{n,k,4}$ if it has the following recursive form:
\begin{itemize}
\item If $n=k+4$, $\mathcal{T}$ is the single BCFW cell 
		$\Gr_{k,n}^{>0}$.
\item If $k=0$, $\mathcal{T}$ is the single trivial BCFW cell
		$\Gr^{>0}_{0,n}$.
\item If $\mathcal{T}=\{S_\rcp\}$ is a BCFW collection of cells, 
	so is $\refl \mathcal{T}:=\{\refl S_\rcp\}$ and $\cyc^r \mathcal{T}:=\{\cyc^r S_\rcp\}$.
\item Otherwise $$\mathcal{T} = 
	\mathcal{T}_{pre} \sqcup \bigsqcup_{b,k_L,k_R} \mathcal{T}_{k_L,k_R,b,n},$$
	where
		\begin{itemize}
\item $b$ ranges from 
	$b_{min}$ to $n-3-k_R$, and $b_{min}, k_L,k_R$ as in \cref{not:bcfw_tiling}, 
\item 
$\mathcal{T}_{pre}=\{\pre_{d}(S^i)|i\in\mathcal{F}\},$ where 
			$\{S^i|i\in\mathcal{F}\}$ is a BCFW collection of cells for 
				$\mathcal{A}_{[n]\setminus\{d\},k, 4}$,
\item  $\mathcal{T}_{k_L,k_R,b,n}$ has the form
				$\{S_L^i \bcfw S_R^j \ \vert \ i\in \mathcal{D}, j\in \mathcal{E}\}$
				where 
				$\{S_L^i 
				\ \vert \ i\in \mathcal{D}\}$ and 
				$\{S_R^j \ \vert \ 
				j\in \mathcal{E}\}$ are BCFW collections of cells for 
				$\mathcal{A}_{N_L,k_L,4}$ 
				and $\mathcal{A}_{N_R,k_R,4}$.
		\end{itemize}
		\end{itemize}
	See \cref{fig:bcfw-tiling-ex} for examples of BCFW collections.
	\end{definition}

\cref{thm:BCFWtiling} is the second main result of our paper.
The statement was conjectured
in \cite{arkani-hamed_trnka}, and in the case of 
the standard
BCFW collection, was proven in 
\cite{even2021amplituhedron}. 

\begin{figure}
	\includegraphics[width=0.8\textwidth]{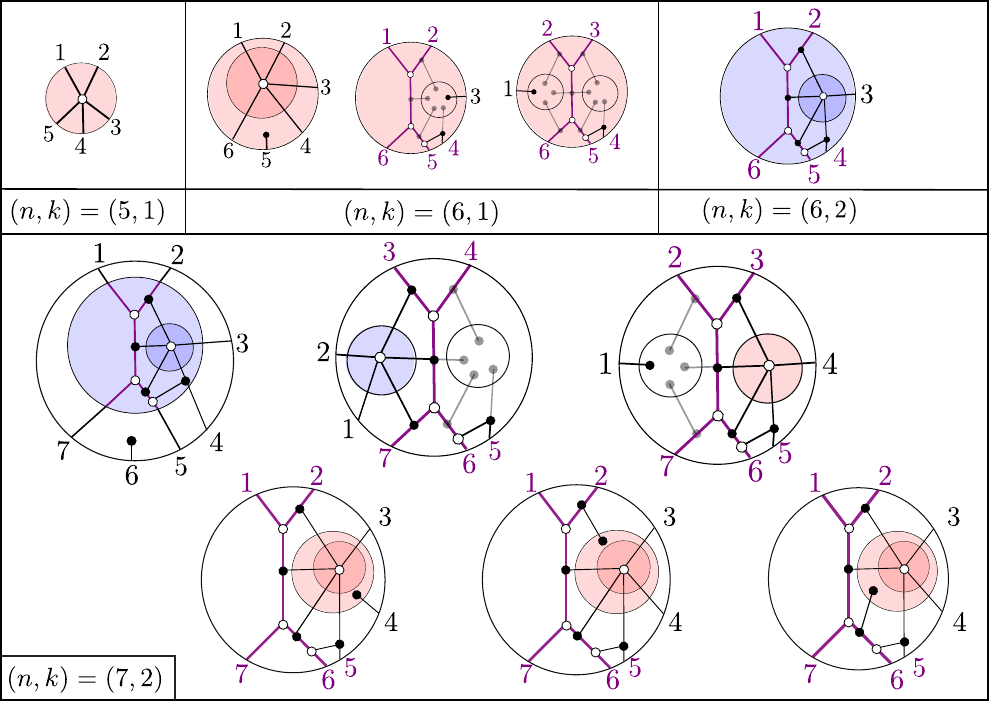}
	\caption{Some BCFW collections in terms of plabic graphs. For $(n,k)=(5,1)$ and $(n, k)= (6,2)$, there 
	is a unique BCFW collection which contains the unique BCFW cell $\Gr^{>0}_{1,\{1,2,3,4,5\}}$ and $\Gr_{0,\{6,1,2\}} \bowtie \Gr_{1,\{2,3,4,5,6\}}$, respectively. For $(n,k)=(6,1)$, we show a BCFW collection where $\T_{pre}$ and each $\T_{k_L, k_R,b,n}$ consists of a single cell. 
		For $(n,k)=(7,2)$ we show a $6$-element BCFW collection.
	 \label{fig:bcfw-tiling-ex}}
\end{figure}

\begin{theorem}[BCFW tilings]\label{thm:BCFWtiling}
Let $\{S_1,\dots,S_{\ell}\}$ be a BCFW collection of cells for $\mathcal{A}_{n,k,4}$. Then for all $Z$, the corresponding collection of tiles $\{Z_{S_1},\dots,Z_{S_{\ell}}\}$
is a tiling of the amplituhedron $\mathcal{A}_{n,k,4}(Z)$, which we refer to as a \emph{BCFW tiling}. 
\end{theorem}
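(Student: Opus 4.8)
The plan is to prove \cref{thm:BCFWtiling} by induction on $n$, following the recursive structure of \cref{def:BCFWoutput} and mirroring the strategy of \cite{even2021amplituhedron}. The three things we must verify for a BCFW collection $\{S_1, \dots, S_\ell\}$ are: (i) each $Z_{S_i}$ is a tile, (ii) the open tiles $\gto{S_i}$ are pairwise disjoint, and (iii) $\bigcup_i Z_{S_i} = \Ank$. Item (i) is already done: it is exactly \cref{thm:BCFW-tile-and-sign-description}, which shows $\tZ$ is injective on each general BCFW cell. So the work is in (ii) and (iii).

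First I would dispose of the base cases ($n = k+4$, where $\Gr_{k,n}^{>0}$ maps onto the full amplituhedron, and $k=0$, where the amplituhedron is a point) and the dihedral reductions: since $\cyc$ and $\refl$ descend to homeomorphisms of $\Grk$ commuting with $\tZ$ up to the action on $Z$ (cf.\ \cref{def:dihedral} and the discussion of \cref{lem:twistor-after-pre-cyc-rot}), a BCFW tiling for $\mathcal{T}$ gives one for $\cyc^r \mathcal{T}$ and $\refl\,\mathcal{T}$. This leaves the substantive recursive case $\mathcal{T} = \mathcal{T}_{pre} \sqcup \bigsqcup_{b,k_L,k_R} \mathcal{T}_{k_L,k_R,b,n}$. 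The key geometric fact to establish is a ``stratification of the amplituhedron by the last twistor'': for generic $Y \in \Ank$, exactly one of the following holds — either $Y$ lies in the image of $\pre_d S$ for cells $S$ on $[n]\setminus\{d\}$ (this happens on a region governed by the vanishing/sign of certain twistors involving $n-1$), or $Y \in \gto{S_L \bcfw S_R}$ for a unique triple $(b, k_L, k_R)$ and unique cells $S_L, S_R$. Concretely, I would use \cref{thm:BCFW-tile-and-sign-description} together with \cref{lem:solve_params_5}: a point $Y$ is in $\gto{S_L \bcfw S_R}$ iff the five twistors $\llrr{bcdn}, \llrr{acdn}, \llrr{abdn}, \llrr{abcn}, \llrr{abcd}$ have the prescribed signs (which pins down $b$ and forces $Y$ to lie in the image of the upper/lower BCFW embeddings) and then $Y$, pulled back via the twistor matrix, recursively lies in the appropriate tiles of the smaller amplituhedra $\mathcal{A}_{N_L, k_L, 4}$ and $\mathcal{A}_{N_R, k_R, 4}$.

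Then for disjointness (ii): two open tiles from different ``branches'' (one from $\mathcal{T}_{pre}$, one from some $\mathcal{T}_{k_L,k_R,b,n}$, or from $\mathcal{T}_{k_L,k_R,b,n}$ and $\mathcal{T}_{k_L',k_R',b',n}$ with $(b,k_L,k_R) \ne (b',k_L',k_R')$) are separated by the sign vector of the five distinguished twistors, using \cref{lem:sign_of_chord_twistors} and \cref{prop:vanishing_and_sign_of_functionaries_under_promotion}(3) (the ``separation'' statement involving $\mathrm{cl}(A')$) to check that the boundary behavior is consistent. Within a single branch $\mathcal{T}_{k_L,k_R,b,n} = \{S_L^i \bcfw S_R^j\}$, the BCFW map $\mbcfw$ is an isomorphism onto its image (\cref{prop:butterfly-matrix-same}(2)) and $\tZ$ composed with it factors through the product of the two amplituhedron maps for $N_L$ and $N_R$ (this is the content of \cref{lem:evaluation-after-bullet} applied on both sides); so disjointness of $\gto{S_L^i \bcfw S_R^j}$ and $\gto{S_L^{i'} \bcfw S_R^{j'}}$ follows from the inductive disjointness of $\{\gto{S_L^i}\}$ in $\mathcal{A}_{N_L,k_L,4}$ and of $\{\gto{S_R^j}\}$ in $\mathcal{A}_{N_R,k_R,4}$. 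For covering (iii): a dimension/degree count shows the union of closed tiles is a closed subset of $\Ank$ of full dimension, so it suffices to show it contains a dense open subset; the stratification argument above, run in reverse (given generic $Y$, read off its twistor signs to determine which branch it belongs to, then invert recursively), produces a preimage. This is where \cref{cor:open_map} (the amplituhedron map restricted to a BCFW cell is open) and \cref{cor:bdry-covered-by-bdry-image} are needed: to glue the inductive covers along shared boundaries one must know boundaries of tiles come from boundaries of cells.

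The main obstacle I expect is the precise bookkeeping in the ``last-twistor stratification'': showing that the sign conditions on $\llrr{bcdn}, \llrr{acdn}, \llrr{abdn}, \llrr{abcn}, \llrr{abcd}$ partition (up to measure zero) the amplituhedron into exactly the pieces indexed by $(b, k_L, k_R)$ together with the $\pre_d$-piece, with no overlaps and no gaps. In \cite{even2021amplituhedron} this was carried out for the standard collection; the subtlety here is that the general collection allows cyclic shifts, reflections, and zero-column insertions interleaved at every recursive stage, so one must check the stratification is compatible with these operations — which is exactly why \cref{thm:signs-under-cyc-rot-pre} and \cref{prop:vanishing_and_sign_of_functionaries_under_promotion} were proved in the preceding section. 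The cleanest way to organize this is probably to prove a single stronger inductive statement that simultaneously asserts tiling, gives the explicit sign-vector description of each open tile (via \cref{cor:cluster-sign-description}), and records that the tiles glue along facets cut out by the coordinate cluster variables, so that the gluing data is available at each inductive step.
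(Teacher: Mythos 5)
Your overall plan is sound and matches the paper's architecture: import the coarse decomposition of $\Ank$ into the pieces $\tZ(C_{\pre})$ and $\tZ(C_{k_L,k_R,b,n})$ from \cite{even2021amplituhedron} (\cref{thm:coarse_decomposition}), prove disjointness via separating families of functionaries (\cref{prop:vanishing_and_sign_of_functionaries_under_promotion}(3), \cref{lem:sign_of_chord_twistors}), and prove covering recursively. Your disjointness argument is essentially the paper's \cref{prop:separation}: the three cases you describe (both in $\mathcal{T}_{\pre}$, both in the same $\mathcal{T}_{k_L,k_R,b,n}$, or in different pieces) match, and you correctly invoke the $\cl(A')$ closure mechanism for cross-branch separation. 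One subtlety you do not flag: separating two tiles living in \emph{different} coarse pieces requires knowing that $\tZ(C_{\pre})$ and $\tZ(C_{k_L,k_R,b,n})$ are \emph{open} in $\Gr_{k,k+4}$, so that an open overlap of two tiles would force an open overlap of the coarse pieces, contradicting \cref{thm:coarse_decomposition}. Since $C_{k_L,k_R,b,n} = \Gr_{k_L,N_L}^{>0}\bcfw\Gr_{k_R,N_R}^{>0}$ is generally \emph{not} a BCFW cell (the factors $\Gr^{>0}_{k_L,N_L}$, $\Gr^{>0}_{k_R,N_R}$ are BCFW cells only for extremal $k$), your \cref{cor:open_map} does not apply to it and a separate submersion argument (\cref{lem:opennes} in the paper) is needed.

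The covering argument has a genuine gap. ``Closed subset of full dimension'' does not give covering (think of a closed ball inside a bigger ball), and the proposed fix---exhibit a dense open subset and invert recursively---is exactly the missing content, not an observation. The nontrivial point is: given $Y\in \gt{C_{k_L,k_R,b,n}}$, produce cells $S_L,S_R$ from the recursive collections with $Y\in \gt{S_L\bcfw S_R}$. The paper does this via the bullet cells $S_L^{\bulL}=S_L\bcfw\Gr^{>0}_{k_R,N_R}$ and $S_R^{\bulR}=\Gr^{>0}_{k_L,N_L}\bcfw S_R$: one first shows (\cref{lem:decomp_of_S-bullet}, built from \cref{thm:BaoHe}) that the images $\{\gt{(S_R^i)^{\bulR}}\}$ cover $\gt{C_{k_L,k_R,b,n}}$ and similarly on the left, and then shows the key inclusion $\overline{\gto{S_L^{\bulL}}\cap\gto{S_R^{\bulR}}}\subseteq\gt{S_L\bcfw S_R}$ (\cref{lem:BCFW_cell_as_intersection}), which relies on the strong-sign results \cref{prop:vanishing_and_sign_of_functionaries_under_promotion} and again on openness of the bullet images. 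Without this intersection lemma, the ``factor through the two amplituhedron maps'' picture does not close: a point of $\gt{C_{k_L,k_R,b,n}}$ has well-defined left and right ``projections'' only once you know it sits in an actual BCFW tile, which is what you are trying to prove. A second, smaller omission: the induction naturally produces \emph{fibered} collections (the left tiling may depend on which right tile you are in), so the paper proves the stronger \cref{thm:triangulations} for quasi-BCFW collections and deduces \cref{thm:BCFWtiling}; your framing should at minimum acknowledge that the recursive step hands you this more general structure.
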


\begin{table}[h] 
\centering 
\begin{tabular}{|l||c|c|c|c|c|c|} % creating 10 columns
\hline % inserting double-line
\diagbox{$n$}{$k$} & $0$ & $1$ & $2$ & $3$ & $4$ & $5$  \\
\hline\hline % inserts single-line
$5$ & $1$ & $1$ & $ $ & $ $ & $ $ & $ $  \\
\hline
 $6$ & $1$ & $ 2$ & $ 1$ & $ $ & $ $ & $ $  \\
 \hline % inserts single-line
 $7$ & $1$ & $7$ & $ 7$ & $ 1$ & $ $ & $ $ \\
\hline % inserts single-line
 $8$ & $1$ & $40$ & $2624$ & $40 $ & $ 1$ & $ $ \\
\hline 
$9$ & $1$ & $297$ & $>10^7$ & $>10^7$ & $297$ & $ 1$ \\
\hline
\end{tabular}
\vspace{1em}
\caption{Number of BCFW tilings of $\mathcal{A}_{n,k,4}(Z)$} % title name of the table
\label{tab:BCFW_tilings}
\end{table}

\cref{thm:BCFWtiling} is a special case of the more 
general \cref{thm:triangulations}, which we will prove below. 

\begin{definition}\label{def:pretiling}
	A collection $\mathcal{T}$ of $4k$-dimensional
BCFW cells in $\Gr_{k,n}^{\geq 0}$ is 
called a
\emph{quasi-BCFW collection of cells}
for $\mathcal{A}_{n,k,4}$ 
if there is a decomposition
	\[\mathcal{T}=\mathcal{T}_{\pre}\sqcup\bigsqcup_{k_L, k_R, b}{\mathcal{T}_{k_L,k_R,b,n}},\]
	where
\begin{itemize}
 \item $b$ ranges from 
	$b_{min}$ to $n-3-k_R$, and $b_{min}, k_L,k_R$ as in \cref{not:bcfw_tiling},
	\item 
 $\mathcal{T}_{pre}=\{\pre_{d}(S^i)|i\in\mathcal{F}\},$ where the collection
			$\{S^i|i\in\mathcal{F}\}$ gives a tiling of $\mathcal{A}_{[n]\setminus\{d\},k, 4}$ for all $Z$,
\item 

$\mathcal{T}_{k_L,k_R,b,n}$ has either of the following forms:

\begin{itemize}
	\item $\{S^{i,j}_L \bcfw S_R^i \ \vert \ 
		i \in \mathcal{D}, j\in \mathcal{E}_i \}$, 
		 where the collection $\{S_R^i \subset \Gr_{k_R, N_R}^{\geq 0} \ \vert \ 
			i\in \mathcal{D}\}$ gives a 
			tiling 
			of $\Ampl_{N_R,k_R,4}$ for all $Z$,  and for every $i$ the collection
  $\{S_L^{i,j} \subset \Gr_{k_L,N_L}^{\geq 0}	\ \vert \ j\in \mathcal{E}_i\}$ gives a tiling of $\Ampl_{N_L,k_L,4}$ for all $Z$, 
	\item 
	 $\{S^{i}_L \bcfw S_R^{i,j} \ \vert \ 
		S^{i}_L \subset \Gr_{k_L,N_L}^{\geq 0}, 
		S_R^{i,j} \subset \Gr_{k_R, N_R}^{\geq 0}, 
		i\in \mathcal{D}, j\in \mathcal{E}_j\},$ 
and the conditions on the right and left side in the above point are interchanged.
\end{itemize}
		In the first case we say that $\mathcal{T}_{k_L,k_R,b,n}$ is \emph{fibred on the right side}, and in the second we say it is \emph{fibred on the left side.}
	\end{itemize}
\end{definition}

\begin{figure}
	\includegraphics[width=1.0\textwidth]{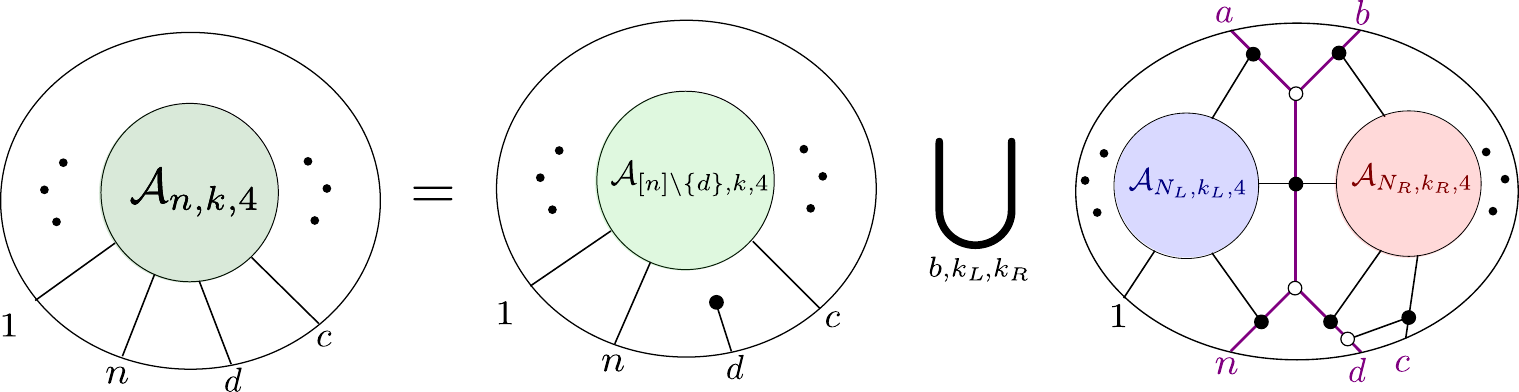}
	\caption{\label{fig:bcfw_tiling} 
	BCFW tiling for $\mathcal{A}_{n,k,4}$. On the right hand side: the first term is obtained by tiling $\mathcal{A}_{[n] \setminus \{d\},k,4}$ (from $\mathcal{T}_{pre}$); the second term is the union over $b,k_L,k_R$ as in \cref{def:pretiling} of the collections of tiles obtained by tiling $\mathcal{A}_{N_L,k_L,4}$ and $\mathcal{A}_{N_R,k_R,4}$ independently (from $\mathcal{T}_{k_L,k_R,b,n}$).}
\end{figure}

\begin{remark}
  \cref{def:pretiling} is similar to \cref{def:BCFWoutput}.
However, while the tiles involved in \cref{def:pretiling} are all BCFW tiles,
their collection does not need to be produced via the recursion in \cref{def:BCFWoutput}, but instead is obtained from tilings of amplituhedra with smaller $n$ and $k$.  
\end{remark}

\begin{theorem}\label{thm:triangulations}
Let $\mathcal{T} = \{S_1,\dots,S_{\ell}\}$ be a quasi-BCFW collection of cells for $\mathcal{A}_{n,k,4}$.  Then for all $Z$,
 the corresponding collection of tiles $\{Z_{S_1},\dots,Z_{S_{\ell}}\}$ 
 is a tiling of the amplituhedron $\Ank$.
\end{theorem}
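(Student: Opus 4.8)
The plan is to prove \cref{thm:triangulations} by induction on $n+k$, mirroring the recursive structure of \cref{def:pretiling} and closely following the strategy of \cite{even2021amplituhedron} for the standard case. The base cases are $n=k+4$ (where $\mathcal{A}_{k+4,k,4}(Z) = \Gr_{k,k+4}$ and the single tile $\Gr_{k,n}^{>0}$ maps onto it, since $\tZ$ is injective on it by \cref{thm:BCFW-tile-and-sign-description}) and $k=0$ (where the amplituhedron is a point). For the inductive step, we are given a quasi-BCFW collection $\mathcal{T} = \mathcal{T}_{\pre} \sqcup \bigsqcup_{k_L,k_R,b} \mathcal{T}_{k_L,k_R,b,n}$, and we must verify the three conditions of \cref{def:tiling}: each $Z_{S_i}$ is a tile (immediate from \cref{thm:BCFW-tile-and-sign-description}, since every cell in a quasi-BCFW collection is a BCFW cell); the open tiles are pairwise disjoint; and their union is all of $\Ank$.

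First I would handle the \emph{covering} statement. The key geometric input is a decomposition of the amplituhedron itself: I would show that $\Ank = \tZ(\pre_d(\Gr_{k,[n]\setminus d}^{\geq 0})) \cup \bigcup_{k_L,k_R,b} \tZ(\Gr_{k_L,N_L}^{\geq 0} \bcfw \Gr_{k_R,N_R}^{\geq 0})$, i.e. that the images of the ``big'' cells $\pre_d(\Gr^{\ge0})$ and $\Gr^{\ge0}_{k_L,N_L}\bcfw\Gr^{\ge0}_{k_R,N_R}$ cover everything. This is essentially the $m=4$ amplituhedron version of the BCFW recursion at the level of spaces, and is proved in \cite{even2021amplituhedron} for the relevant cells; I would either cite it or re-derive it using that any point of $\Grk$ can be moved into one of these cells after possibly degenerating, combined with the openness of $\tZ$ on BCFW cells (\cref{cor:open_map}) and a dimension/boundary argument (\cref{cor:bdry-covered-by-bdry-image}). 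Granting this, covering follows by induction: the image of $\pre_d(\Gr^{\ge0}_{k,[n]\setminus d})$ is covered by $\{Z_{S^i}\}_{i\in\mathcal{F}}$ since $\{S^i\}$ tiles $\mathcal{A}_{[n]\setminus d,k,4}$ (and $\tZ$ commutes appropriately with $\pre_d$), and the image of each butterfly cell $\Gr^{\ge0}_{k_L,N_L}\bcfw\Gr^{\ge0}_{k_R,N_R}$ is covered by the $Z_{S_L^{i,j}\bcfw S_R^i}$ (or the left-fibred variant) using \cref{prop:butterfly-matrix-same}, which identifies this cell with $S_L\times\chR\times S_R$, together with the inductive hypothesis applied to the tilings of $\mathcal{A}_{N_L,k_L,4}$ and $\mathcal{A}_{N_R,k_R,4}$, and the fact that the BCFW map is compatible with these product structures.

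Next I would prove \emph{disjointness of open tiles}. This splits into several cases according to which pieces $\mathcal{T}_{\pre}$, $\mathcal{T}_{k_L,k_R,b,n}$ two tiles come from. The main tool is a separating functionary: for two distinct pieces I want a twistor-coordinate functionary (or chain functionary) that has opposite strict signs on the two open tiles, so they cannot intersect. For tiles within a single $\mathcal{T}_{k_L,k_R,b,n}$, I would use \cref{prop:butterfly-matrix-same} to reduce to disjointness inside $\mathcal{A}_{N_L,k_L,4}\times\mathcal{A}_{N_R,k_R,4}$ — here \cref{prop:vanishing_and_sign_of_functionaries_under_promotion}, part~(3), is crucial: it lets us push a separating functionary on one of the factors through product promotion $\Psi$ to a functionary on $\Gr_{k,k+4}$ whose sign behavior we control, and in the right-fibred case one first separates by the $S_R^i$'s and then, within a fixed $i$, by the $S_L^{i,j}$'s. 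For tiles coming from different $b$ (or from $\mathcal{T}_{\pre}$ versus a $\mathcal{T}_{k_L,k_R,b,n}$), I would exhibit an explicit twistor coordinate — of the form $\llrr{b\,c\,d\,n}$, $\llrr{a\,b\,c\,d}$, or a cyclically-consecutive twistor — which \cref{lem:sign_of_chord_twistors} and \cref{lem:sign_of_bdry_twistors} show has a definite nonzero sign on one family and either vanishes on or has the opposite sign on the other, exactly as in \cite[Section 9-10]{even2021amplituhedron}. Finally, disjointness after applying $\cyc^r$ or $\refl$ is immediate since these are homeomorphisms of $\Ank$ (up to relabeling $Z$, via \cref{lem:twistor-after-pre-cyc-rot}), and a quasi-BCFW collection is closed under them by definition.

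The main obstacle I anticipate is the disjointness argument across the different summands $\mathcal{T}_{k_L,k_R,b,n}$ as $b$ and $(k_L,k_R)$ vary, and between those and $\mathcal{T}_{\pre}$: one must produce a single functionary that cleanly separates each pair, and check it really changes sign rather than just vanishing on a positive-codimension set — a subtlety that requires the \emph{strong sign} refinement (\cref{def:strongly_positive}) and the strong-positivity propagation results (\cref{cor:clust-var-strong-sign}, \cref{prop:vanishing_and_sign_of_functionaries_under_promotion}(2)), since mere sign-definiteness on the open tile does not by itself preclude two tiles meeting along a shared boundary stratum where the functionary vanishes. I would resolve this by combining the covering statement with a counting/measure argument: once we know the tiles cover $\Ank$ and are top-dimensional with injective $\tZ$, it suffices to show no two \emph{open} tiles overlap on a full-dimensional set, and the separating functionaries (which vanish on overlaps) together with the fact that each open tile is cut out by strict inequalities (\cref{thm:BCFW-tile-and-sign-description}) force any purported overlap to be lower-dimensional, hence empty on the open tiles. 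This reduces the whole theorem to the inductive hypothesis plus the sign lemmas already established, completing the proof; \cref{thm:BCFWtiling} then follows since every BCFW collection is in particular a quasi-BCFW collection.
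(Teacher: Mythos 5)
Your high-level framework (induction on $n+k$, decompose $\mathcal{T}$ into $\mathcal{T}_{\pre}\sqcup\bigsqcup\mathcal{T}_{k_L,k_R,b,n}$, use the coarse decomposition of the amplituhedron, separate by functionaries) is the right one, and your treatment of disjointness \emph{within} a single $\mathcal{T}_{k_L,k_R,b,n}$ via \cref{prop:vanishing_and_sign_of_functionaries_under_promotion}(3) matches the paper. However, there are two genuine gaps.

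The first, and more serious, is in the covering argument. You write that the image of $\Gr^{\ge0}_{k_L,N_L}\bcfw\Gr^{\ge0}_{k_R,N_R}$ is covered by the $Z_{S_L^{i,j}\bcfw S_R^i}$ using \cref{prop:butterfly-matrix-same} ``together with\ldots the fact that the BCFW map is compatible with these product structures.'' No such compatibility exists: \cref{prop:butterfly-matrix-same} identifies $S_L\bcfw S_R$ with $S_L\times\chR\times S_R$ \emph{inside} $\Grk$, but the amplituhedron map $\tZ\colon\Grk\to\Gr_{k,k+4}$ does not factor through any product $\Gr_{k_L,k_L+4}\times\Gr_{k_R,k_R+4}$, and the tilings given by the inductive hypothesis live in those two separate small Grassmannians. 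The paper's actual mechanism is the construction of the ``bullet'' cells $S_R^\bulR=\Gr^{>0}_{k_L,N_L}\bcfw S_R$ and $S_L^\bulL=S_L\bcfw\Gr^{>0}_{k_R,N_R}$ (\cref{not:bullet}), together with (i) \cref{lem:decomp_of_S-bullet}, which shows, via \cref{lem:9_6_elt} and iterated application of the Bao--He push-forward results (\cref{thm:BaoHe}), that $\{(S_R^i)^\bulR\}$ covers $\gt{C_{k_L,k_R,b,n}}$, and (ii) \cref{lem:BCFW_cell_as_intersection}, which establishes $\gto{S_L^\bulL}\cap\gto{S_R^\bulR}\subseteq\gt{S_L\bcfw S_R}$ by a coordinate-functionary argument. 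The fibred structure of $\mathcal{T}_{k_L,k_R,b,n}$ is precisely what is needed to intersect these two families of bullet covers, and none of this can be replaced by an appeal to products.

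The second gap is in the ``Case 3'' disjointness, i.e. tiles from different summands. You propose explicit twistor/chain functionaries via \cref{lem:sign_of_chord_twistors,lem:sign_of_bdry_twistors} and then a vague counting/lower-dimensionality argument. The paper instead uses a purely topological argument that requires $\tZ(C_{\pre})$ and $\tZ(C_{k_L,k_R,b,n})$ to be \emph{open} subsets of the amplituhedron (\cref{lem:opennes}), which is a nontrivial submersion argument (the cells $C_{k_L,k_R,b,n}$ and, more seriously, the intermediate $S^\bulR$-type cells are of dimension larger than $4k$, so \cref{cor:open_map} does not apply and one cannot invoke injectivity). Given openness, an open nonempty overlap $\gto{S_1}\cap\gto{S_2}$, which lies in $\overline{\tZ(C_1)}\cap\overline{\tZ(C_2)}$, would necessarily meet $\tZ(C_1)\cap\tZ(C_2)$, contradicting the coarse decomposition of \cref{thm:coarse_decomposition}. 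Your proposed resolution does not prove the openness of the coarse images, and the ``any purported overlap is lower-dimensional, hence empty'' step is not established from what you've written: to run it you would need a functionary with a strict sign on each open tile, and the strong-sign machinery you cite propagates signs along a single BCFW recipe, not across two tiles built from different summands with different index data. You would, in effect, need to re-derive something equivalent to \cref{lem:opennes}.
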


Our starting point for proving \cref{thm:triangulations}
is the following theorem, proven in \cite[Theorem 9.5, Lemma 9.7]{even2021amplituhedron}
and the comment after the proof of \cite[Proposition 9.9]{even2021amplituhedron}.
\begin{theorem}
	\label{thm:coarse_decomposition}

	Let $Z \in \Mat_{n,k+4}^{>0}$ and
 $$C_{\pre}:=\pre_{d}(\Gr^{>0}_{k,[n]\setminus\{d\}}) \text{ and }
	C_{k_L,k_R,b,n}
	:=\Gr^{>0}_{k_L,N_L} \bcfw 
	\Gr^{>0}_{k_R,N_R}.$$
	Then the sets 
	$$\{\tZ(C_{\pre})\} \cup 
	\{\tZ(C_{k_L,k_R,b,n}) \ \vert \ 
	k_L, k_R \geq 0,   \, k_L+k_R = k-1, \,
	b_{min}\leq b\leq n-3-k_R \}$$
	are pairwise disjoint and their union is dense in
	$\Ampl_{n,k,4}(Z).$

In particular, on $\tZ(C_{\pre})$ all twistor
coordinates $\llrr{i,i+1,n-2,n}$ with $i \in [1,n-4]$ are positive, 
while on each $\tZ(C_{k_L,k_R,b,n})$ at least one such twistor coordinate
	is negative. In addition, for all $b\neq b'$ the quadratic functionaries $\llrr{b'-1,b',n \br n-2, d \br b,b-1,n}$ have opposite signs on $\tZ(C_{k_L,k_R,b,n})$
	and $\tZ(C_{k_L,k_R,b',n}).$

\end{theorem}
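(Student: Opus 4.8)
The plan is to exhibit the listed sets as the closures of the (open) chambers of $\Ampl_{n,k,4}(Z)$ cut out by the signs of the \emph{separating functionaries} from the ``In particular'' clause: the linear twistors $T_i:=\llrr{i\,i{+}1\,n{-}2\,n}$ for $1\le i\le n-4$, and the quadratics $Q_{b'}:=\llrr{b'{-}1\,b'\,n \br n{-}2\,d \br b\,b{-}1\,n}$. First I would prove the claimed sign behaviour; pairwise disjointness is then immediate, and the bulk of the work is density.

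\smallskip
\noindent\textbf{Step 1 (signs).} For $C\in C_{\pre}$ a representative matrix has a zero column at $d=n-1$, so in the Twistor expansion \eqref{eq:cauchy-binet} of $T_i$ every surviving summand is $\pm\lr{J}_C\,\lr{J\cup\{i,i{+}1,n{-}2,n\}}_Z$ with $n-1\notin J$; since $i\le n-4$ one checks by a parity count that the sign of the permutation re-sorting $(j_1,\dots,j_k,i,i{+}1,n{-}2,n)$ is $+1$ for every admissible $J$ (the indices $i,i{+}1$ contribute equal counts and $n{-}2,n$ contribute zero), so all summands are $\ge 0$; the sum is nonzero because $\{i,i{+}1,n{-}2,n\}$ is coindependent for $C_{\pre}$. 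Hence $T_i>0$ on $\tZ(C_{\pre})$ for all $i$. For $C\in C_{k_L,k_R,b,n}$ I would use the explicit representative $(*)$ of \cref{fig:promotion-matrix} (equivalently \eqref{eq:path-mtx}) together with the basis list of \cref{lem:matroid_under_bcfw}: for $i=a=b-1$ the only bases $J$ disjoint from $\{a,b,n{-}2,n\}$ have the form $I_L\sqcup\{d\}\sqcup I_R$, the re-sorting permutation has an \emph{odd} number of cross-inversions for every such $J$, and the sum is nonzero by $4$-coindependence (\cref{cor:4bidden}); hence $T_{b-1}<0$ there, giving ``at least one negative.'' The same Cauchy--Binet bookkeeping applied to the two twistor factors of $Q_{b'}$ shows that all its summands share a common sign on $\tZ(C_{k_L,k_R,b,n})$, and that this sign flips according to $b'<b$ versus $b'>b$, which is the opposite-sign statement.

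\smallskip
\noindent\textbf{Step 2 (disjointness).} If $Y$ lay in $\tZ(C_{\pre})\cap\tZ(C_{k_L,k_R,b,n})$ then all $T_i(Y)>0$ yet $T_{b-1}(Y)<0$ by Step~1 --- impossible; so $\tZ(C_{\pre})$ is disjoint from the rest. For $b\ne b'$, $Q_{b'}$ has opposite nonzero signs on $\tZ(C_{k_L,k_R,b,n})$ and $\tZ(C_{k_L',k_R',b',n})$, so these are disjoint. For a fixed $b$ and distinct $(k_L,k_R)$, I would note that $\tZ$ of the BCFW product depends on its input $(A,\lambda,B)$ only through the ``$N_L$-part'' $A\,Z|_{N_L}$, the point $\lambda\in\chR$, and the ``$N_R$-part'' $B\,Z|_{N_R}$, so membership pins down the rank of the $N_L$-part, which equals $k_L$; by induction on $n$ this rank is recoverable from $Y$, ruling out two different values of $k_L$.

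\smallskip
\noindent\textbf{Step 3 (density) --- the main obstacle.} I would argue by induction on $n$. By Step~2, $\tZ(C_{\pre})=\Ampl_{[n]\setminus\{n-1\},k,4}(Z')$ with $Z'$ obtained by deleting row $n-1$ of $Z$ (still totally positive), so this piece is $4k$-dimensional by the inductive hypothesis. For a generic $Y\in\Ampl_{n,k,4}(Z)$ (in the image of the open cell $\tZ(\Grk)$, off a lower-dimensional locus) consider $s(Y):=(\sgn T_1(Y),\dots,\sgn T_{n-4}(Y))$. If $s(Y)=(+,\dots,+)$ then $Y\in\tZ(C_{\pre})$ --- this uses that the $(n-1)$-marker amplituhedron is, in the chart where these twistors are positive, cut out exactly by that positivity, which one also establishes inductively. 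Otherwise the location of the first sign change in $s(Y)$, refined by further twistor signs, determines $b$ and the split $(k_L,k_R)$; one then builds a preimage of $Y$ in $C_{k_L,k_R,b,n}$ by inverting the BCFW product through the matrix $(*)$, recovering the $N_L$- and $N_R$-parts from their $\tZ$-images via the inductive hypothesis applied to $\Ampl_{N_L,k_L,4}$ and $\Ampl_{N_R,k_R,4}$. Points with an ``inconsistent'' classification (no valid $b$ or split) lie in $\partial\Ampl_{n,k,4}(Z)$ and form a lower-dimensional set. Hence $\tZ(C_{\pre})\cup\bigcup\tZ(C_{k_L,k_R,b,n})$ contains a dense open subset of $\Ampl_{n,k,4}(Z)$, which together with Step~2 proves the theorem. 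I expect this density/preimage step to be the hard part: it demands the inductive characterization of the amplituhedron by twistor-sign conditions and a careful analysis of the fibers of $\tZ$ over the exceptional locus --- precisely where the work of \cite{even2021amplituhedron} is concentrated.
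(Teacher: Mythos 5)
You should first note that the paper does not prove this theorem at all: it is imported verbatim from \cite[Theorem 9.5, Lemma 9.7]{even2021amplituhedron} (together with the remark after Proposition 9.9 there), so your proposal is being measured against that external proof rather than against an argument in this paper. The parts of your sketch that are solid are the sign claims for the \emph{linear} twistors: positivity of $\llrr{i\,i{+}1\,n{-}2\,n}$ on $\tZ(C_{\pre})$ is exactly the parity count underlying \cref{lem:sign_of_bdry_twistors} (the zero column at $d=n-1$ makes $n-2$ and $n$ effectively consecutive), and negativity of $\llrr{b{-}1\,b\,n{-}2\,n}=\llrr{a\,b\,c\,n}$ on $\tZ(C_{k_L,k_R,b,n})$ is precisely the fourth sign in \cref{lem:sign_of_chord_twistors}. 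Step 2's conclusion that $\tZ(C_{\pre})$ is disjoint from the products then follows.

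The remaining steps have genuine gaps. First, the sign of the quadratic $Q_{b'}$ is not obtained by ``Cauchy--Binet bookkeeping applied to the two twistor factors'': a difference of products of twistors can change sign even when every individual twistor has fixed sign. One must expand the entire quadratic via \cref{rk:Lapexpansion} and show that \emph{all} resulting terms (sums over pairs of bases of products of two Pl\"ucker coordinates of $C$ with two minors of $Z$) carry a common sign; this ``strong sign'' analysis of a degree-two functionary is the actual content of \cite[Lemma 9.7]{even2021amplituhedron} and you have asserted it rather than proved it. Second, for fixed $b$ and distinct splits $(k_L,k_R)$ your disjointness argument (``the rank of the $N_L$-part is recoverable from $Y$'') is not a proof: you would need $\dim\bigl(Y\cap\Span(Z_i:i\in N_L)\bigr)=k_L$ exactly, and since $\Span(Z_i:i\in N_L)$ and $\Span(Z_i:i\in N_R)$ intersect nontrivially whenever $|N_L|+|N_R|=n+3>k+4$, the inequality $\ge k_L$ does not upgrade to equality for free. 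Third, the density step --- which is the heart of the theorem --- presupposes what must be proved: that the sign vector of a generic $Y$ singles out a consistent $(b,k_L,k_R)$ and that the BCFW product can then be inverted over $Y$. The cited reference establishes this not by pointwise classification but by a combination of the inverse-map machinery (the analogue of \cref{thm:BCFW-tile-and-sign-description} for the cells $\Gr^{>0}_{k_L,N_L}\bcfw\Gr^{>0}_{k_R,N_R}$), a boundary-dimension analysis, and a topological covering argument. As written, your proposal is a reasonable outline of the intended strategy, but all three of its nontrivial verifications are deferred.
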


\begin{definition}
We say that two open tiles $\gto{S_1},\gto{S_2}$
are \emph{separated by a family $F_1,\ldots, F_N$ of functionaries} if each $F_i$ has a strong sign on one of $\gto{S_1},\gto{S_2},$ and, there are two disjoint sets $A,B\subset \{-1,0,1\}^N$ such that for all $Z$
	\[(\sgn F_1(Y),\sgn F_2(Y),\ldots,\sgn F_N(Y))\in A \text{ for all } Y\in \gto{S_1},\]
%while
	\[\text{while }(\sgn F_1(Y),\sgn F_2(Y),\ldots,\sgn F_N(Y))\in B \text{ for all }Y\in \gto{S_2}.\]
\end{definition}

\begin{remark} \label{rk:famfunc}
If $\gto{S_1},\gto{S_2}$ are separated by a family of functionaries, then $Z_{S_1}^{\circ}\cap Z_{S_2}^{\circ}=\emptyset$ for all $Z.$    
\end{remark}

\begin{corollary}\label{cor:separation of BCFW cells}
	Two BCFW tiles $\gt{S_1}, \gt{S_2}$ satisfy $\gto{S_1}\cap\gto{S_2}=\emptyset$ for all $Z$ if and only if they are separated by a family of functionaries.
\end{corollary}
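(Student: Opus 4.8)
\textbf{Proof proposal for \cref{cor:separation of BCFW cells}.}

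The ``if'' direction is immediate from \cref{rk:famfunc}: a separating family of functionaries forces $\gto{S_1}\cap\gto{S_2}=\emptyset$ for all $Z$, since the sign vectors on the two open tiles lie in the disjoint sets $A$ and $B$. So the content is entirely in the ``only if'' direction: assuming $\gto{S_1}\cap\gto{S_2}=\emptyset$ for all $Z$, I must produce an explicit separating family. The plan is to induct on the recipes of $S_1$ and $S_2$, mirroring the recursive structure of \cref{def:BCFW_cell}, and to use \cref{thm:coarse_decomposition} as the source of the ``first'' separating functionaries at the top of the recursion.

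First I would handle the base cases and the easy reductions. If $k=0$ there is a unique BCFW cell and nothing to prove; if $n=k+4$ likewise. Next, because $\cyc$ and $\refl$ act by invertible linear maps that permute/sign twistor coordinates (\cref{rmk:cyc-refl-on-functionaries}), and because \cref{thm:signs-under-cyc-rot-pre} says strong signs are preserved (up to a known sign) under $\cyc^{-*},\refl^*$ and $\pre_I$, a separating family for a pair of cells descends to a separating family for the corresponding pair after applying any of $\cyc,\refl,\pre_I$ simultaneously to both. So I may reduce to the case where neither recipe ends in $\cyc$, $\refl$, or $\pre_I$ applied to both; i.e.\ at least one of $S_1,S_2$ is obtained by a top-level BCFW product, or by a $\pre_d$ insertion where the other is a product (the two families in \cref{thm:coarse_decomposition}).

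The heart of the argument uses \cref{thm:coarse_decomposition}. Write $S_1\subset \overline{C}$, $S_2\subset \overline{C'}$ where $C,C'$ are among the ``coarse pieces'' $C_{\pre}$ and $C_{k_L,k_R,b,n}$. If $C\neq C'$, then \cref{thm:coarse_decomposition} already gives a separating family: either one of the twistors $\llrr{i\,i{+}1\,n{-}2\,n}$ has opposite (strong) signs on $\tZ(C)$ and $\tZ(C')$, or (when the two pieces are $C_{k_L,k_R,b,n}$ and $C_{k_L,k_R,b',n}$ with $b\neq b'$) the quadratic functionary $\llrr{b'-1\,b'\,n\br n-2\,d\br b\,b-1\,n}$ does; since $\gto{S_i}\subset \tZ(C_i)$ and these functionaries have strong signs there (by the strong-sign propagation results, \cref{thm:signs-under-cyc-rot-pre} and \cref{prop:vanishing_and_sign_of_functionaries_under_promotion}, applied to the cyclically-consecutive twistors of \cref{lem:sign_of_bdry_twistors}), we are done. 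If instead $C=C'$, then both $S_1$ and $S_2$ live inside the \emph{same} coarse piece, which is of the form $\pre_d(\Gr^{>0}_{k,[n]\setminus\{d\}})$ or $\Gr^{>0}_{k_L,N_L}\bcfw\Gr^{>0}_{k_R,N_R}$. In the first sub-case, strip the $\pre_d$ (using \cref{thm:signs-under-cyc-rot-pre}, which shows $\pre_d$ preserves strong signs) and induct on $n$. In the second sub-case $S_i=S_L^{(i)}\bcfw S_R^{(i)}$, and since $\mbcfw$ is injective (\cref{prop:butterfly-matrix-same}, \cref{prop:bcfw-map-injective}) with the preimage recoverable from twistor coordinates, the disjointness $\gto{S_1}\cap\gto{S_2}=\emptyset$ forces $\gto{S_L^{(1)}}\cap\gto{S_L^{(2)}}=\emptyset$ or $\gto{S_R^{(1)}}\cap\gto{S_R^{(2)}}=\emptyset$ (as subsets of the smaller amplituhedra, for all $Z$); by induction one of these pairs has a separating family $\{F_j\}$, and then $\{\rPsi(F_j)\}$ (or $\{\Psi(F_j)\}$) separates $\gto{S_1}$ from $\gto{S_2}$, using \cref{prop:vanishing_and_sign_of_functionaries_under_promotion}\,(\ref{it:for_separation_theorem}), which is precisely engineered to track how sign vectors behave under promotion, together with \cref{lem:sign_of_chord_twistors} to control the frozen factors that distinguish $\rPsi$ from $\Psi$.

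The main obstacle is the $C=C'$, product sub-case: one must argue carefully that disjointness of the \emph{image} tiles $\gto{S_1},\gto{S_2}$ in $\mathcal{A}_{n,k,4}$ descends to disjointness of one of the factor-pairs of image tiles in the smaller amplituhedra, \emph{uniformly in $Z$}. The clean way to do this is via the inverse map: by \cref{thm:BCFW-tile-and-sign-description} and the analysis in \cref{subsec:inverse_problem}, a point $Y\in\gto{S_L^{(i)}\bcfw S_R^{(i)}}$ determines, through the coordinate functionaries / the $\mbcfw$-decomposition $V = w\oplus V_L'\oplus V_R'$ of \cref{prop:bcfw-map-injective}, well-defined points in the smaller amplituhedra, and the promoted functionaries $\rPsi(F_j)$ evaluate on $Y$ exactly as $F_j$ evaluates on the corresponding smaller point (this is the computation underlying \cref{lem:evaluation-after-bullet} and \cref{prop:vanishing_and_sign_of_functionaries_under_promotion}). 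Feeding this through, a sign-vector separation downstairs becomes one upstairs. The bookkeeping of signs (the $(-1)^{(k_R+1)\deg_n F}$ twists and the closure operation $\cl(\cdot)$) is the part that needs genuine care but is exactly what \cref{prop:vanishing_and_sign_of_functionaries_under_promotion}\,(\ref{it:for_separation_theorem}) is designed to supply; everything else is routine induction.
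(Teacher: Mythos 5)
Your ``if'' direction is exactly the paper's (\cref{rk:famfunc}), but your ``only if'' direction takes a far more elaborate route than the paper does, and the elaborate route has a gap.

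The paper's proof is a one-liner: take as the family the union of the coordinate functionaries of $S_1$ and of $S_2$. By \cref{thm:BCFW-tile-and-sign-description} and its proof, each coordinate functionary of $S_i$ has strong sign $+1$ on $\gto{S_i}$, so the ``each $F_j$ has a strong sign on one of the tiles'' clause is satisfied. Let $A$ (resp.\ $B$) be the set of sign vectors realized on $\gto{S_1}$ (resp.\ $\gto{S_2}$) over all $Z$. Every vector in $A$ has the $S_1$-block identically $+1$, every vector in $B$ has the $S_2$-block identically $+1$, so any common vector is all $+1$; but an all-$+1$ vector realized by some $(Y,Z)$ with $Y\in\gto{S_1}$ forces $Y\in\gto{S_2}$ as well (again by the positivity characterization in \cref{thm:BCFW-tile-and-sign-description}), contradicting disjointness at that $Z$. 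No induction on recipes, no coarse decomposition, no promotion machinery is needed.

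The genuine gap in your argument is the step you flag yourself: in the $C=C'$ product sub-case you assert that $\gto{S_1}\cap\gto{S_2}=\emptyset$ for all $Z$ forces $\gto{S_L^{(1)}}\cap\gto{S_L^{(2)}}=\emptyset$ for all $Z_L$ or $\gto{S_R^{(1)}}\cap\gto{S_R^{(2)}}=\emptyset$ for all $Z_R$, but nothing you cite establishes this. The material in \cref{lem:evaluation-after-bullet}, \cref{prop:vanishing_and_sign_of_functionaries_under_promotion}, and the $w\oplus V_L'\oplus V_R'$ decomposition goes in the opposite direction: given a big $(Y,Z)$ and a \emph{fixed} cell $S_i$, one produces a small $(Y',Z')$; it does not say that an arbitrary pair $(Y_L,Z_L)$ together with an arbitrary $(Y_R,Z_R)$ can be ``glued'' into a big $(Y,Z)$ lying in both image tiles. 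Moreover, the small point one recovers from $Y$ depends on the cell (it is read off from the coordinate functionaries of $S_i$, which differ between $S_1$ and $S_2$), so even the meaning of ``the corresponding smaller point'' is cell-dependent; only the middle line $w$ (determined by $Y\cap\Span(Z_a,Z_b,Z_c,Z_d,Z_n)$) is manifestly common. Your reduction at the top of the recursion also assumes the two recipes' final steps can be made to match, which a priori they need not. The paper sidesteps all of this by noticing that \cref{thm:BCFW-tile-and-sign-description} already exhibits each open tile as the positivity locus of explicit strongly-signed functionaries, which is exactly what the definition of ``separated by a family'' asks for.
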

This corollary is an immediate consequence of \cref{rk:famfunc}, and \cref{thm:BCFW-tile-and-sign-description} and its proof, as we can take the collection of coordinates functionaries
for the two open tiles $\gto{S_1},\gto{S_2}$ as the list of functionaries.

As we already proved BCFW cells give tiles in \cref{thm:BCFW-tile-and-sign-description}, the following two results are enough to complete the proof of \cref{thm:triangulations}. 

\begin{proposition}\label{prop:separation}
Let $\mathcal{T}$ be a quasi-BCFW collection of cells for $\mathcal{A}_{n,k,4}$. 
Then $\gto{S_1} \cap \gto{S_2} = \emptyset$, for any pair of cells $S_1,S_2$ in $\mathcal{T}$.
\end{proposition}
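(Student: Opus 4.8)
\textbf{Proof proposal for \cref{prop:separation}.}

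The plan is to proceed by induction on $n+k$, following the recursive structure of \cref{def:pretiling}. Write $\mathcal{T}=\mathcal{T}_{\pre}\sqcup\bigsqcup_{k_L,k_R,b}\mathcal{T}_{k_L,k_R,b,n}$, and take two cells $S_1,S_2\in\mathcal{T}$. First I would reduce to the case where $\mathcal{T}$ has no $\cyc$ or $\refl$ applied on top of it: since $\cyc$ and $\refl$ are homeomorphisms of $\Grk$ that commute with $\tZ$ (after the corresponding action on $Z$, cf.\ \cref{lem:twistor-after-pre-cyc-rot} and \cref{thm:signs-under-cyc-rot-pre}), $\gto{S_1}\cap\gto{S_2}=\emptyset$ for all $Z$ if and only if the same holds for the rotated/reflected cells, so we may assume $\mathcal{T}$ is in the ``last bullet'' form of \cref{def:pretiling}.

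The heart of the argument is a case analysis on which pieces $S_1,S_2$ come from, using \cref{thm:coarse_decomposition} to separate cells from different pieces and the inductive hypothesis together with \cref{prop:vanishing_and_sign_of_functionaries_under_promotion}\,\eqref{it:for_separation_theorem} to separate cells within a piece. Concretely: (i) if $S_1\in\mathcal{T}_{\pre}$ and $S_2\in\mathcal{T}_{k_L,k_R,b,n}$, then by \cref{thm:coarse_decomposition} all twistors $\llrr{i,i+1,n-2,n}$ are positive on $\gto{S_1}$ (since $\gto{S_1}\subset\tZ(C_{\pre})$) while at least one is negative on $\gto{S_2}$; the strong-sign statements in \cref{thm:signs-under-cyc-rot-pre} and \cref{prop:vanishing_and_sign_of_functionaries_under_promotion} guarantee these signs are genuinely fixed, giving a separating family. (ii) If $S_1\in\mathcal{T}_{k_L,k_R,b,n}$ and $S_2\in\mathcal{T}_{k_L',k_R',b',n}$ with $(b,k_L,k_R)\neq(b',k_L',k_R')$: if $b\ne b'$ the quadratic functionary $\llrr{b'-1\,b'\,n\br n-2\, d\br b\,b-1\,n}$ from \cref{thm:coarse_decomposition} has opposite fixed signs on the two tiles; if $b=b'$ but $(k_L,k_R)\ne(k_L',k_R')$ one separates inside $\tZ(C_{k_L,k_R,b,n})$ versus $\tZ(C_{k_L',k_R',b,n})$ again via \cref{thm:coarse_decomposition} (these coarse pieces are disjoint). (iii) If $S_1,S_2\in\mathcal{T}_{\pre}$, both are of the form $\pre_d(S^i)$, $\pre_d(S^j)$ with $\{S^i\}$ a tiling of $\mathcal{A}_{[n]\setminus\{d\},k,4}$; by the inductive hypothesis applied to that tiling, $\gto{S^i}\cap\gto{S^j}=\emptyset$ for all $Z$, and $\cref{thm:signs-under-cyc-rot-pre}$ (the $\pre_I$ case) transports a separating family up to $\pre_d(S^i),\pre_d(S^j)$. (iv) The main case: $S_1,S_2$ both in the same $\mathcal{T}_{k_L,k_R,b,n}$, say fibred on the right, so $S_1=S_L^{i_1,j_1}\bcfw S_R^{i_1}$, $S_2=S_L^{i_2,j_2}\bcfw S_R^{i_2}$. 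If $i_1\ne i_2$, then $\gto{S_R^{i_1}}\cap\gto{S_R^{i_2}}=\emptyset$ for all $Z$ by the inductive hypothesis on $\mathcal{A}_{N_R,k_R,4}$; by \cref{cor:separation of BCFW cells} there is a separating family $F_1,\dots,F_N$ of functionaries with indices in $N_R$, and applying $\Psi_{ac}$ and \cref{prop:vanishing_and_sign_of_functionaries_under_promotion}\,\eqref{it:for_separation_theorem} yields a family $\Psi_{ac}F_1,\dots,\Psi_{ac}F_N$ whose sign vectors on $\gto{S_1}$ and $\gto{S_2}$ lie in disjoint closed sets $\cl(A')$, $\cl(B')$ with $A'\cap B'=\emptyset$ — the key point being that $\cl$ of two disjoint sets that arise as sign-vector sets of tiles remain disjoint, because the promotion of a nonvanishing functionary stays nonvanishing by \cref{lem:sign_of_chord_twistors}, so no genuine sign is lost in the closure. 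If $i_1=i_2=:i$ but $j_1\ne j_2$, then $\{S_L^{i,j}\}_{j\in\mathcal{E}_i}$ is a tiling of $\mathcal{A}_{N_L,k_L,4}$, so by induction $\gto{S_L^{i,j_1}}\cap\gto{S_L^{i,j_2}}=\emptyset$ for all $Z$; again take a separating family with indices in $N_L$ and push forward through $\Psi_{ac}$ via \cref{prop:vanishing_and_sign_of_functionaries_under_promotion}\,\eqref{it:for_separation_theorem}, now picking up the extra sign $(-1)^{(k_R+1)\deg_n F_\ell}$ which is harmless as it is a fixed relabeling of the sign sets. The fibred-on-left case is symmetric. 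In every case we have exhibited a family of functionaries separating $\gto{S_1}$ and $\gto{S_2}$, so by \cref{rk:famfunc} their open tiles are disjoint for all $Z$.

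The main obstacle I anticipate is case (iv), specifically verifying that pushing a separating family through product promotion genuinely preserves disjointness of the sign-vector sets. The subtlety is that \cref{prop:vanishing_and_sign_of_functionaries_under_promotion}\,\eqref{it:for_separation_theorem} only places the promoted sign vectors in $\cl(A')$ rather than $A'$, so one must rule out that a sign vector in $A'$ and a sign vector in $B'$ become equal after some coordinates are zeroed out. Here I would use that the original separating family can be taken to be the coordinate functionaries of the two BCFW cells $S_R^{i_1},S_R^{i_2}$ (via \cref{thm:BCFW-tile-and-sign-description} and \cref{cor:separation of BCFW cells}), which are \emph{strongly} positive on their respective open tiles; combined with \cref{lem:sign_of_chord_twistors} ensuring the relevant chord twistors $\llrr{bcdn},\dots,\llrr{abcd}$ are nonvanishing on $\gto{S_1\bcfw S_R}$, the rescaled promotions $\rPsi(F_\ell)$ retain a strict sign, so no coordinate is actually zeroed and $\cl(A')\cap\cl(B')$ reduces to $A'\cap B'=\emptyset$. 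Making this last step airtight — essentially choosing the separating family so that strong-sign-ness survives promotion — is the crux, and everything else is bookkeeping with \cref{thm:coarse_decomposition} and the induction.
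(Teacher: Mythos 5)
Your overall strategy (case analysis plus separating families of functionaries pushed through promotion) is the right one, and your cases (iii) and the setup of (iv) match the paper. But there are two genuine gaps.

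First, for cells lying in \emph{different} coarse pieces (your cases (i) and (ii)), you transfer sign information from the open dense sets $\tZ(C_{\pre})$, $\tZ(C_{k_L,k_R,b,n})$ of \cref{thm:coarse_decomposition} to tiles contained only in their \emph{closures}. This is not automatic: a functionary that is strictly negative on $\tZ(C_{k_L,k_R,b,n})$ may vanish identically on a sub-tile $\gto{S_L\bcfw S_R}$ sitting in $\partial\,\tZ(C_{k_L,k_R,b,n})$. Your case (i) can be repaired by citing \cref{lem:sign_of_chord_twistors} directly (the single twistor $\llrr{b-1,b,n-2,n}=\llrr{abcn}$ is strongly negative on the product tile and strongly positive on any $\pre_{n-1}$-tile), but in case (ii) with $b=b'$ and $(k_L,k_R)\neq(k_L',k_R')$ you name no functionary at all, and \cref{thm:coarse_decomposition} provides none; "the coarse pieces are disjoint" does not separate tiles in their closures. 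The paper handles all of case (3) topologically: by \cref{cor:open_map} the open tiles are open, by \cref{lem:opennes} the coarse pieces are open, so a nonempty (open) intersection of the two tiles would meet $\tZ(C_1)\cap\tZ(C_2)$ in a nonempty open set, contradicting \cref{thm:coarse_decomposition}. You need this (or an equivalent) for your case (ii).

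Second, in case (iv) your resolution of the $\cl$ subtlety is not correct. You claim that because the coordinate functionaries of $S_R^{i_1}$ are strongly positive on $\gto{S_R^{i_1}}$, "no coordinate is actually zeroed" after promotion, so $\cl(A')\cap\cl(B')=A'\cap B'$. But the relevant question is the sign of the promoted coordinate functionaries of $S_R^{i_1}$ on the \emph{other} tile $\gto{S_L^{i_2,j_2}\bcfw S_R^{i_2}}$, where they need not be nonvanishing; and in general $\cl$ of disjoint sign sets need not stay disjoint (e.g.\ $\{(+,-)\}$ and $\{(+,+)\}$ both contribute $(+,0)$ to their closures). The paper's argument is sharper: split the family so that $F_1,\dots,F_M$ are strongly positive on $\gto{S_R^{i}}$ and $F_{M+1},\dots,F_N$ on $\gto{S_R^{i'}}$; by \cref{prop:vanishing_and_sign_of_functionaries_under_promotion}\,\eqref{it:strong_sign} these strong signs survive promotion \emph{exactly} (no closure) on the respective product tiles, so the sign vectors on $\gto{S_1}$ lie in $\{+1\}^M\times\cl(U)$ and those on $\gto{S_2}$ in $\cl(V)\times\{+1\}^{N-M}$; any common vector must be all $+1$, and since $\cl$ only adds vectors containing zeros, the all-$+1$ vector lies in $\cl(U)\cap\cl(V)$ only if it lies in $U\cap V$, which separation forbids. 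You should replace your "no zeros appear" step with this observation.
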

\begin{proposition}\label{prop:surj}
Let $\mathcal{T}$ be a quasi-BCFW collection of cells for $\mathcal{A}_{n,k,4}$. 
Then	$\Ampl_{n,k,4}(Z)=\bigcup_{S\in\mathcal{T}} Z_S.$
\end{proposition}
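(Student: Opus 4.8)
\textbf{Proof proposal for \cref{prop:surj}.} The plan is to argue by induction on $n$, carrying $k$ along; the base cases are $k=0$ (where $\mathcal{A}_{n,0,4}(Z)$ is a point, covered by the single trivial tile) and $n=k+4$ (where the only collection is $\{\Gr_{k,n}^{>0}\}$ and $Z_{\Gr_{k,n}^{>0}}=\overline{\tZ(\Gr_{k,n}^{\ge0})}=\mathcal{A}_{n,k,4}(Z)$). The inclusion $\bigcup_{S\in\mathcal{T}}Z_S\subseteq\mathcal{A}_{n,k,4}(Z)$ is automatic since each $Z_S=\overline{\tZ(S)}$, so only the reverse inclusion needs proof. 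Each tile $Z_S$ is closed, hence so is $\bigcup_{S\in\mathcal{T}}Z_S$; by \cref{thm:coarse_decomposition} the amplituhedron is the closure of the disjoint union of $\tZ(C_{\pre})$ and the sets $\tZ(C_{k_L,k_R,b,n})$, so it suffices to show $\tZ(C_{\pre})\subseteq\bigcup_{S\in\mathcal{T}_{\pre}}Z_S$ and $\tZ(C_{k_L,k_R,b,n})\subseteq\bigcup_{S\in\mathcal{T}_{k_L,k_R,b,n}}Z_S$ for each admissible $(k_L,k_R,b)$.

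For the $\tZ(C_{\pre})$ piece, I would observe that inserting a zero column in position $d$ and then right-multiplying by $Z$ agrees with right-multiplying by the matrix $Z'$ obtained from $Z$ by deleting row $d$; since the maximal minors of $Z'$ are among those of $Z$, $Z'$ is again positive, so $\tZ_Z\circ\pre_d=\tZ_{Z'}$ on $\Gr_{k,[n]\setminus\{d\}}^{\ge0}$. Therefore $\overline{\tZ_Z(C_{\pre})}=\mathcal{A}_{[n]\setminus\{d\},k,4}(Z')$ and $Z_{\pre_d(S^i)}=\overline{\tZ_Z(\pre_d S^i)}=\overline{\tZ_{Z'}(S^i)}$. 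Since $\{S^i\}$ is a tiling of $\mathcal{A}_{[n]\setminus\{d\},k,4}$, the inductive hypothesis applied at $n-1$ gives $\bigcup_i\overline{\tZ_{Z'}(S^i)}=\mathcal{A}_{[n]\setminus\{d\},k,4}(Z')\supseteq\tZ_Z(C_{\pre})$, as wanted.

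The $\tZ(C_{k_L,k_R,b,n})$ piece is the heart of the argument. Here I would use the identification $C_{k_L,k_R,b,n}=\Gr_{k_L,N_L}^{>0}\bcfw\Gr_{k_R,N_R}^{>0}=\mbcfw(\Gr_{k_L,N_L}^{>0}\times\chR\times\Gr_{k_R,N_R}^{>0})$ from \cref{prop:butterfly-matrix-same}. Given $Y\in\tZ(C_{k_L,k_R,b,n})$, the five twistors $\llrr{bcdn},\llrr{acdn},\llrr{abdn},\llrr{abcn},\llrr{abcd}$ are nonzero by \cref{lem:sign_of_chord_twistors} and determine the middle parameters $[\alpha:\beta:\gamma:\delta:\varepsilon]\in\chR$. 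Using the ``bullet'' construction $\Gr_{k_L,N_L}^{>0}\bcfw S_R$ together with the evaluation-of-functionaries machinery of \cref{sec:imagesBCFWcells} (\cref{lem:evaluation-after-bullet}, \cref{lem:coord-functionaries-as-twistors}, \cref{prop:preimage-if-coord-fcn-pos}), I would produce a point $Y_R\in\mathcal{A}_{N_R,k_R,4}(Z_R)$ for a canonically associated positive matrix $Z_R$; since $|N_R|<n$ and $\{S_R^i\}$ tiles $\mathcal{A}_{N_R,k_R,4}$ for all $Z$, the inductive hypothesis gives $Y_R\in Z_{S_R^i}$ for some $i$. For that $i$ one then builds a point $Y_L\in\mathcal{A}_{N_L,k_L,4}(Z_L^{(i)})$ for an associated positive $Z_L^{(i)}$ depending on the chosen right data — this dependence is exactly why the collection is required to be ``fibred on the right side'' — and applies the inductive hypothesis again (using that $\{S_L^{i,j}\}$ tiles $\mathcal{A}_{N_L,k_L,4}$ for all $Z$) to get $Y_L\in Z_{S_L^{i,j}}$ for some $j$. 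Finally one reassembles: the preimages $A\in\overline{S_L^{i,j}}$ and $B\in\overline{S_R^i}$ together with $[\alpha:\beta:\gamma:\delta:\varepsilon]$ give a preimage $V=\mbcfw(A,[\alpha:\beta:\gamma:\delta:\varepsilon],B)$ of $Y$ that lies in $\overline{S_L^{i,j}\bcfw S_R^i}$ by \cref{prop:butterfly-matrix-same} and \cref{lem:boundaries_before_amp_map}, so $Y\in Z_{S_L^{i,j}\bcfw S_R^i}$. The left-fibred case is handled symmetrically, extracting $Y_L$ first. Combined with \cref{prop:separation} and \cref{thm:BCFW-tile-and-sign-description}, this yields \cref{thm:triangulations} and hence \cref{thm:BCFWtiling}.

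The main obstacle is steps (ii) and (iv) above: setting up the auxiliary points $Y_L,Y_R$ and matrices $Z_L,Z_R$ so that applying $\tZ$ on the two factors and then the BCFW map recovers $Y$ exactly, and verifying that the reassembled matrix $V$ genuinely lands in the closure $\overline{S_L^{i,j}\bcfw S_R^i}$ (invoking \cref{prop:butterfly-matrix-same} on the interiors and \cref{lem:boundaries_before_amp_map} on the boundary strata). One must also check positivity of the auxiliary matrices $Z',Z_R,Z_L^{(i)}$, which is what makes the ``for all $Z$'' formulation of the hypotheses in \cref{def:pretiling} essential for the induction to close.
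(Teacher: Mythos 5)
Your top-level plan --- reduce to the dense coarse decomposition of \cref{thm:coarse_decomposition}, handle $\tZ(C_{\pre})$ by identifying $\tZ_Z\circ\pre_d$ with the amplituhedron map for the positive matrix obtained from $Z$ by deleting row $d$, and handle each $\tZ(C_{k_L,k_R,b,n})$ by combining tilings of the two factors in the order dictated by the fibration --- matches the paper's skeleton, and your treatment of the $\pre$ piece is exactly \cref{thm:BaoHe}(1). (Note also that no induction on $n$ is actually needed here: the quasi-BCFW hypothesis already supplies tilings of the factor amplituhedra.) The divergence, and the gap, is in the butterfly piece.

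Your disassembly/reassembly step does not go through as stated. The natural ``right-hand datum'' attached to $Y\in\tZ(C_{k_L,k_R,b,n})$ is not a point of $\mathcal{A}_{N_R,k_R,4}(Z_R)$ but the $(k_R+1)$-plane $Y/Y_L$ inside the $(k_R+5)$-dimensional quotient $\R^{k+4}/Y_L$ (see the proof of \cref{lem:opennes}); and even after fixing a preimage $V=\mbcfw(A,\zeta,B)$ and manufacturing $Y_R=BZ_R$, replacing $B$ by another point $B'\in\overline{S_R^i}$ with $B'Z_R=BZ_R$ need not give $\tZ(\mbcfw(A,\zeta,B'))=Y$: the right block of $VZ$ is $B$ times a matrix with $k+4>k_R+4$ columns, so it does not factor through $BZ_R$. \cref{lem:evaluation-after-bullet} runs in the opposite direction (from a point of $S_R^{\bulR}$ it produces matching functionary values; it does not factor the amplituhedron map). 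The paper sidesteps disassembly entirely: via \cref{lem:9_6_elt} and the operations of \cref{thm:BaoHe} it shows (\cref{lem:decomp_of_S-bullet}) that the one-sided products $\{(S_R^i)^{\bulR}\}_i$ and, for each $i$, $\{(S_L^{i,j})^{\bulL}\}_j$ each cover $\gt{C_{k_L,k_R,b,n}}$, and then \cref{lem:BCFW_cell_as_intersection} (a pure sign-description argument) gives $\overline{\gto{(S_L^{i,j})^{\bulL}}\cap\gto{(S_R^i)^{\bulR}}}\subseteq\gt{S_L^{i,j}\bcfw S_R^i}$. If you want to keep a pointwise framing, the repair is to replace ``reassemble a preimage'' with ``verify that all coordinate functionaries of $S_L^{i,j}\bcfw S_R^i$ are nonnegative at $Y$,'' which is what that intersection lemma accomplishes.
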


In the rest of the section, we proceed with proving \cref{prop:separation}, and then \cref{prop:surj}.

\begin{proof}[Proof of Proposition \ref{prop:separation}]
	By the definition of a BCFW collection, 
	every $S\in \mathcal{T}$ belongs to either $\mathcal{T}_{\pre}$ or to $\mathcal{T}_{k_L,k_R,b,n}.$

\noindent \textbf{Case (1):}	Consider $S_1,S_2\in\mathcal{T}.$
 If $S_1,S_2\in\mathcal{T}$ both belong to $\mathcal{T}_{\pre},$ then they can be written as
	$$S_1=\pre_d(S'_1),\text{ and }S_2=\pre_d(S'_2),$$where by construction and \cref{cor:separation of BCFW cells} there is a family of functionaries $F_1,\ldots,F_n$ separating $S'_1,S'_2$. The same functionaries are easily seen to separate $S_1,S_2$ using \cref{thm:signs-under-cyc-rot-pre}. Hence the claim follows by \cref{cor:separation of BCFW cells}.
 
\noindent \textbf{Case (2):}	If $S_1, S_2 \in \mathcal{T}_{k_L,k_R,b,n}$, 
	then 
	without loss of generality, 
	$$S_1=S_L^{i,j} \bcfw S_R^i\text{ and } S_2=S_L^{i',j'} \bcfw S_R^{i'},$$ 
	where $\{S_R^i\}$ gives a tiling of $\mathcal{A}_{N_R,k_R,4}(Z_R)$ for all $Z_R$ and $\{S_L^{i,j}\},~\{S_L^{i',j'}\}$ for fixed $i,i'$ gives a tiling of $\mathcal{A}_{N_L,k_L,4}(Z_L)$ for all $Z_L$.
 
Since $S_1\neq S_2$ either $i\neq i'$ or $i=i'$ but $j\neq j'.$ We treat the first case, the second is treated similarly. By \cref{cor:separation of BCFW cells}, $\gto{S_R^i},\gto{S_R^{i'}}$ can be separated by a family $F_1,\ldots,F_N$ of functionaries. Using \cref{it:strong_sign}, \cref{it:for_separation_theorem} of \cref{prop:vanishing_and_sign_of_functionaries_under_promotion}, we will show that $\gto{S_1}$ and $\gto{S_2}$ can be separated by the family $\Psi({F}_1),\ldots,\Psi({F}_N)$ of functionaries and so are disjoint for all $Z$ by \cref{cor:separation of BCFW cells}. 
Without loss of generality, let $F_1,\ldots,F_M$ have a strong sign $+$ on $\gto{S_R^i}$ and $F_{M+1},\ldots, F_N$ have a strong sign $+$ on $\gto{S_R^{i'}}.$
This means that on $\gto{S_R^i}$ the signs of $(F_1,\ldots,F_N)$ are of the form \[(+1,\ldots,+1,s_{M+1},\ldots,s_N),~\text{where }(s_{M+1},\ldots,s_N)\in U\subseteq\{\pm1,0\}^{\{M+1,\ldots,N\}},\]while on $\gto{S_R^{i'}}$ these signs are all of the form
\[(s_1,\ldots,s_M,+1,\ldots,+1),~\text{where }(s_1,\ldots,s_M)\in V\subseteq\{\pm1,0\}^{\{1,\ldots,M\}}.\]
Since $F_1, \dots, F_N$ is separating, at most one of $U,V$ contains the all $+1$ vector. By ~\cref{it:strong_sign} and \cref{it:for_separation_theorem} of \cref{prop:vanishing_and_sign_of_functionaries_under_promotion}, on every point of $\gto{S_1}$ the promotions $(\Psi(F_1),\ldots,\Psi(F_N))$ have a sign vector of the form
\[(+1,\ldots,+1,s_{M+1},\ldots,s_N),~\text{where }(s_{M+1},\ldots,s_N)\in \cl(U)\subseteq\{\pm1,0\}^{\{M+1,\ldots,N\}},\]while on every point of $\gto{S_2}$ the sign vector is of the form \[(s_1,\ldots,s_M,+1,\ldots,+1),~\text{where }(s_1,\ldots,s_M)\in \cl(V)\subseteq\{\pm1,0\}^{\{1,\ldots,M\}}.\]

Notice that a sign vector is simultaneously of the form $(+1,\ldots,+1,s_{M+1},\ldots,s_N)$ and of the form $(s_1,\ldots,s_M,+1,\ldots,+1)$ if and only if it is the all $+1$ vector. At most one of $\cl(U), \cl(V)$ contains the all $+1$ vector, since at most one of $U, V$ contains the all $+1$ vector. 
 
Thus, the sets  
\[\{(+1,\ldots,+1,s_{M+1},\ldots,s_N): (s_{M+1},\ldots,s_N)\in \text{cl}(U)\},\]\[\{(s_1,\ldots,s_M,+1,\ldots,+1): (s_1,\ldots,s_M)\in \text{cl}(V)\}\]do not intersect.

\noindent \textbf{Case (3):} 
Suppose $S_1$ and $S_2$ belong to different sets among
$\mathcal{T}_{\pre}, \{\mathcal{T}_{k_L, k_R, b}\}_{k_L,k_R,b}$. Denote these sets by $\mathcal{T}_1,\mathcal{T}_2$ respectively, and let $C_1,C_2$ such that $C_i=C_{\pre}$ if $\mathcal{T}_i=\mathcal{T}_{\pre}$, and $C_i=C_{k_L,k_R,b,n}$ if $\mathcal{T}_i=\mathcal{T}_{k_L,k_R,b,n}$ ($C_{pre}$ and $C_{k_L,k_R,b,n}$ are as in \cref{thm:coarse_decomposition}.)

Assume towards contradiction that for some $Z$ the intersection $\gto{S_1}\cap\gto{S_2}$ is non empty. Then, since $\gto{S_1},\gto{S_2}$ are open (\cref{cor:open_map}), also their intersection $U\neq\emptyset$ is open. \cref{lem:opennes} below shows that also $\tZ(C_1),\tZ(C_{2})$
are open. Since 
\[\gto{S_1}\subseteq\overline{\tZ(C_{1})},~\gto{S_2}\subseteq\overline{\tZ(C_{2})},\]this implies $U$ is contained in $\overline{\tZ(C_{1})}\cap\overline{\tZ(C_{2})},$ and hence intersects ${\tZ(C_{1})}\cap{\tZ(C_{2})}$ in an open nonempty set. But this contradicts \cref{thm:coarse_decomposition}.

\end{proof}

\begin{lemma}\label{lem:opennes} Let $S_L^\triangleright,~S_R^\triangleleft$ be as in \cref{not:bullet}. Then:
	
	\begin{itemize}\item the sets $\gto{S_L^\triangleright}$ and $\gto{S_R^\bulR}$ are open;
		\item the sets $\tZ(C_{\pre})$ and $\tZ(C_{k_L,k_R,b,n})$ are open.
	\end{itemize}
\end{lemma}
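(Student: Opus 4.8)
The plan is to deduce both bullets from the single statement that the amplituhedron map is an \emph{open} map on the relevant locally closed pieces of $\Grk$, and to get that openness from the fact that $\tZ$ is a submersion wherever it is regular.

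First I would reduce the second bullet to the first plus one direct computation. Comparing \cref{not:bullet} with \cref{thm:coarse_decomposition}, the cell $C_{k_L,k_R,b,n}=\Gr^{>0}_{k_L,N_L}\bcfw\Gr^{>0}_{k_R,N_R}$ is precisely $S_R^{\bulR}$ for the choice $S_R=\Gr^{>0}_{k_R,N_R}$, so $\tZ(C_{k_L,k_R,b,n})=\gto{S_R^{\bulR}}$ is open by the first bullet. For $C_{\pre}=\pre_d(\Gr^{>0}_{k,[n]\setminus\{d\}})$, if $C=\pre_d(C')$ then $CZ=C'Z'$ where $Z'\in\Mat_{n-1,k+4}^{>0}$ is obtained from $Z$ by deleting its $d$-th row (still totally positive, and $n-1\ge k+4$ since this case only arises when $n>k+4$). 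Hence $\tZ(C_{\pre})=\tilde{Z'}\bigl(\Gr^{>0}_{k,[n]\setminus\{d\}}\bigr)$ is the image of the open top cell under the amplituhedron map on $n-1$ points.

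The general fact I would record is that $V\mapsto VZ$ is a submersion at every $V$ with $VZ$ of rank $k$: because $Z\colon\R^{m}\to\R^{k+4}$ is surjective and restricts to an isomorphism from the row span of $V$ onto that of $VZ$, the induced differential $d\tZ_V\colon\mathrm{Hom}(V,\R^{m}/V)\to\mathrm{Hom}(VZ,\R^{k+4}/VZ)$ is onto. The locus of such $V$ is Zariski open and contains all of $\Grk$ (this is just the rank-$k$ requirement built into the definition of the amplituhedron), so $\tZ$ is a smooth submersion on an open neighbourhood of $\Grk$, and submersions are open maps. Applying this to the open subset $\Gr^{>0}_{k,[n]\setminus\{d\}}\subseteq\Gr_{k,[n]\setminus\{d\}}(\R)$ with the matrix $Z'$ shows $\tZ(C_{\pre})$ is open.

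For the first bullet, write $S_R^{\bulR}=\mbcfw\bigl(\Gr^{>0}_{k_L,N_L}\times\chR\times S_R\bigr)$ (by \cref{prop:butterfly-matrix-same} this domain is a smooth manifold mapped bijectively onto $S_R^{\bulR}$); it then suffices to check that $\tZ\circ\mbcfw$ is a submersion onto $\Gr_{k,k+4}$ on that domain, for then its image $\tZ(S_R^{\bulR})$ is open. By the differential computation above this amounts to transversality of the tangent spaces of $S_R^{\bulR}$ to the fibres of $\tZ$ (which, over the image, are open subsets of copies of $\Gr_{k,n-4}$). On dimension grounds $\dim S_R^{\bulR}\ge4k$ — using $k_L\le a-2$ and $k_R\le c-b-1$ from \cref{kRkLconditions} — so this is plausible, and I expect this transversality verification to be the main obstacle: concretely I would differentiate the explicit matrix $(*)$ of \cref{fig:promotion-matrix} with respect to the entries of the two positive cells and the parameters $[\alpha\!:\!\beta\!:\!\gamma\!:\!\delta\!:\!\varepsilon]$, and check that, after right multiplication by $Z$, these directions span all of $T_{CZ}\Gr_{k,k+4}$. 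A more combinatorial alternative, avoiding the differential-geometric estimate, is to build $S_R^{\bulR}$ step by step as in \cref{lem:9_6_elt} — an upper BCFW map followed by insertions $\inc_i,\pre_i$ and bridge moves $x_i(\R_+),y_{i-1}(\R_+)$ — noting that each step alters $\tZ(S_R^{\bulR})$ only through a corresponding modification of $Z$, and feeding this through \cref{lem:evaluation-after-bullet}. The case of $S_L^{\bulL}$ is entirely analogous.
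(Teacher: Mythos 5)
Your overall strategy---openness follows from showing $\tZ$ restricts to a submersion on the relevant sets---is the same as the paper's, and two of your reductions are sound and even tidier than the paper's version. Identifying $C_{k_L,k_R,b,n}=S_R^{\bulR}$ with $S_R=\Gr^{>0}_{k_R,N_R}$ correctly folds the second bullet into the first. Your treatment of $C_{\pre}$ is a nice elementary argument: $\pre_d$ is absorbed by deleting the $d$-th row of $Z$, and $\tZ'$ is a submersion at every $V$ with $\mathrm{rank}(VZ')=k$ because the induced map $\R^{n-1}/V\to\R^{k+4}/VZ'$ is onto (and the image of the totally positive cell avoids the degenerate locus since twistors are strictly positive there). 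The paper instead cites \cite[Theorem 1.5]{even2021amplituhedron} for this piece, so your route here is arguably more self-contained.

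The genuine gap is the first bullet, which is where all of the work lives. You correctly observe that the global submersivity of $\tZ$ on $\Gr_{k,n}$ does not pass to the submanifold $S_R^{\bulR}$---one still has to show that the tangent space of $S_R^{\bulR}$ surjects onto $T_{Y}\Gr_{k,k+4}$---and you call this ``the main obstacle,'' but you then stop at ``I would differentiate\ldots and check'' without executing the check. The dimension count $\dim S_R^{\bulR}\ge 4k$ is necessary but nowhere near sufficient: the tangent space of $S_R^{\bulR}$ could a priori intersect the fibres of $\tZ$ nontrivially without filling the whole target. This is exactly what the paper's proof is structured to rule out: it factors $\tZ|_{S_R^{\bulR}}$ through a fibre bundle $F\to\Gr_{k_L}(W_L)$ (with fibre $\Gr_{k_R+1}(\R^{k+4}/W)$) and separately checks that the $V_L$-directions surject onto the base and the $V_R$-directions surject onto the fibre, the latter step reducing to injectivity of the quotient amplituhedron map on a BCFW cell. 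Without that decomposition (or an equivalent explicit rank computation of the differential of the matrix $(*)$ in \cref{fig:promotion-matrix} after right multiplication by $Z$), your argument is an unproved assertion precisely at the point the lemma is about. Your proposed ``combinatorial alternative'' via \cref{lem:9_6_elt} and \cref{lem:evaluation-after-bullet} is also not clearly viable as stated: \cref{lem:evaluation-after-bullet} relates the \emph{evaluation} of a promoted functionary at a point of $S_R^{\bulR}$ to its evaluation at a point of $S_R$, which controls signs of functionaries but does not by itself give a strict-inequality description of $\gto{S_R^{\bulR}}$ (which may have positive-codimensional image fibres since $\dim S_R^{\bulR}$ can exceed $4k$), nor does it directly give openness of the image.
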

\begin{proof}
	We start with the first statement. We prove it for $\gto{S_R^\bulR},$ the proof for $\gto{S_L^\bulL}$ is similar. We will show that the amplituhedron map is submersive on $S_R^\bulR,$ and since submersions are open it will prove our claim.
	
	Note that for $|N_L| \leq k_L +4$, $S_R^\bulR$ is a BCFW cell, hence $\tZ|_{S_R^\bulR}$ is open and submersive by \cref{cor:open_map}.

	Let $|N_L| > k_L +4$ and denote $W_L=\text{Span}(Z_i:i\in N_L),$ $d=\dim(W_L).$ Since every $k+4$ rows of $Z$ are linearly independent, $d=\min(N_L,k+4)$  
	
	Let $\pi:F\to\Gr_{k_L}(W_L)$ be the fiber bundle whose fiber over $W \in \Gr_{k_L}(W_L)$ is \[F_W=\pi^{-1}(W)=\Gr_{k_R+1}(\R^{k+4}/W).\]
	The amplituhedrom map $\tZ:S_R^\bulR\to \Gr_{k,k+4}$ decomposes as
	\[S_R^\bulR\to F\to\Gr_{k,k+4},\]where the map $\Phi:F\to\Gr_{k,k+4}$ is given by\[\Phi(U,W)=U+W,\]
	and the map $\Xi:S_R^\bulR\to F$ is given by
	\[\Xi(V)=(\tZ(V)/(V_LZ),V_LZ),\]
	where $V_L=V\cap \text{Span}(\mathbf{e}_i:i\in N_L).$
	
	We will show that these maps are well defined and submersive. This will imply that also their composition, $\tZ|_{S_R^\bulR}$ is a submersion.
	\\\textbf {The map $\Phi$:} $\Phi$ is well defined, since if $U_1,U_2$ are different liftings of a $(k_R+1)$-subspace of $\R^{k+4}/W$ to $k$-spaces $U'_1,U'_2\subseteq \R^{k+4},$ then
	\[U'_1+W=U'_2+W\in\Gr_{k,k+4}.\] It surjective, since any $k-$space $V\subseteq\R^{k+4}$ intersects $W_L$ in a subspace of dimension at least $d-4\geq k_L.$ 
	Thus we can decompose $V$ (not uniquely) as $U+W$ where $W$ is a $k_L-$space in $W_L\cap V.$ $\Phi$ is then seen to be everywhere submersive, by standard arguments.
	\\\textbf{The map $\Xi$:}
	Let $V$ be an element of $S_R^\bulR,$ by the proof of \cref{prop:bcfw-map-injective} and \cref{rmk:def-of-w}, $V$ can be written uniquely as
	\[V=V_L\oplus V_R,\]
	where $V_L\in\pre_{b+1,\ldots,d-1}\Gr^{>0}_{k_L,N_L}$ and $V_R$ belongs to the BCFW cell $S'_R$ of $\pre_{1,\ldots,a-1}\Gr^{\geq 0}_{k_R+1,N_R}$ obtained by applying the upper BCFW map with indices $a,b,c,d,n$ to $S_R,$ and then adding zero columns.

	Let $Y_L:=V_LZ$, then $Y_L\subseteq W_L.$
	In addition, $V_L\in\pre_{b+1,\ldots,d-1}\Gr^{>0}_{k_L,N_L}\subseteq\Gr^{\geq0}_{k_L,n}$. On $\Gr^{\geq0}_{k_L,n},$ right multiplication by the positive matrix $Z$ is just the amplituhedron map from $\Gr^{\geq0}_{k_L,n}$ to $\Gr_{k_L,k+4}$. Hence $Y_L \in \mathcal{A}_{n,k_L,k+4-k_L}(Z)$, so in particular $Y_L$ is of dimension $k_L$ and $Y_L\in\Gr_{k_L}(W_L)$.

	Since $\Xi(V)=(Y/Y_L,Y_L),$ $\Xi$ indeed maps $S_R^\bulR$ to $F.$
	
	Let's denote by $T_p M$ the tangent space of the differentiable manifold $M$ at the point $p$. We can conclude that $\Xi$ is a submersion if we show that:
	\begin{enumerate}
		\item \label{item:proof1} at $V\in S_R^\bulR$, the $V_L$ directions of $T_VS_R^\bulR$ surject on the base directions $T_{Y_L}\Gr_{k_L}(W_L)$;
		\item \label{item:proof2} at $V\in S_R^\bulR$, the $V_R$ directions surject on the fiber directions $T_{Y/Y_L}\Gr_{k_R+1}(\R^{k+4}/Y_L)$.
	\end{enumerate}
	\cref{item:proof1} can be proved by using standard arguments. We will now prove  \cref{item:proof2}.
	
	In order to show that $\Xi$ is a submersion, we will show that at $V\in S_R^\bulR,$ the $V_L$ directions of $T_VS_R^\bulR$ surject on the base directions $T_{Y_L}\Gr_{k_L}(W_L),$ while the $V_R$ directions subrject on the fiber directions $T_{Y/Y_L}\Gr_{k_R+1}(\R^{k+4}/Y_L).$

		Let us fix $V_L$ and note that 
		\[Y_R:=Y/Y_L\subseteq \R^{k+4}/Y_L\]is a $k_R+1$ space, which lies in the image of $S'_R$ under the map
		\begin{equation}\label{eq:quotient_ampli}\Gr^{\geq0}_{k_R+1,N_R}\to\Gr_{k_R+1}(\R^{k+4}/Y_L),\end{equation}obtained by the right multiplication of 
		$Z^{V_L},$ whose rows are 
		\[Z_i/Y_L,~i=a,b,b+1,\ldots,n.\]
		Once fixing an identification of $\R^{k+4}/Y_L$ with $\R^{(k_R+1)+4},$ this map becomes the amplituhedron map, as long as $Z^{V_L}$ is a positive matrix under this identification.
		Note that different identifications leave the maximal minors of $Z^{V_L}$ and the twistor coordinates of a point in $\Gr_{k_R+1}(\R^{k+4}/Y_L)$ invariant, up to a common multiplicative scalar.
		In particular, whether a functionary vanishes at a given point is independent of the identification with $\R^{k_R+5}.$
		
		We now show that all Pl\"ucker coordinates of $Z^{V_L}$ have the same sign, hence there is an identification which makes this matrix positive.
		Using linear algebra, it is easy to show that the Pl\"ucker coordinates $\langle I \rangle_{Z^{V_L}}$ equal (up to multiplication of a common scalar) the  determinants $\det(Y'_L|Z'_I)$ of the $(k+4)\times (k+4)$ matrix obtained by stacking $Y'_L=C_LZ$, where $C_L$ is a matrix representative of $V_L$, with a lift $Z'_I$ of the rows $I$ of $Z^{V_L}$ to $\R^{k+4}$. Here $I$ runs over subsets of $\{a,\ldots,n\}$ of size $k_R+5=k+4-k_L$.   
	
		The determinant $\det(Y_L|Z'_I)$ can be expressed as the determinant of the matrix $C'_LZ,$ where $C'_L$ is the $(k+4)\times n$ matrix whose first $k_L$ rows are $C_L,$ and the remaining rows are the standard basis elements $\mathbf{e}_i,~i\in I.$
		$C'_L$ is non-negative, $Z$ is positive, and by \cref{rk:Lapexpansion} the determinant of $C'_LZ,$ is positive. Hence $\det(Y_L|Z'_I)$ and the Pl\"ucker coordinates $\langle I \rangle_{Z^{V_L}}$ are positive.
		
		We can now show that the map in \cref{eq:quotient_ampli} is submersive: its inverse map is smooth near $Y/Y_L,$ since we can run the same pre-image algorithm of \cref{thm:BCFW-tile-and-sign-description} for the cell $S'_R,$ in analogy with the usual amplituhedron map. The algorithm does not stop as long as certain functionaries are nowhere zero, see the proof of \cref{thm:BCFW-tile-and-sign-description}. Recall that whether a functionary is zero is independent of the identification with $\R^{k_R+5}.$

		The proof regarding $\tilde{Z}(C_{k_L,k_R,b,n})$ is similar to the one of $\gto{S_R^\bulR}$, but one can use standard arguments to prove both the analogue of \cref{item:proof1} and \cref{item:proof2}.
		
		Finally, we complete the proof regarding $C_{\pre}$. The fact that the amplituhedron map from $\Gr^{>0}_{k,[n]\setminus\{n-1\}}$ to $\Gr_{k,k+4}$ is a submersion was shown in \cite[Theorem 1.5]{even2021amplituhedron}. It is straightforward to conclude that also the amplituhedron map restricted to $C_{\pre}=\pre_{n-1}\Gr^{>0}_{k,[n]\setminus\{n-1\}}$ submerses.
		
	\end{proof}

In order to prove \cref{prop:surj},  we need the following results from \cite[Proposition 3.3, Corollary 3.2, Lemma 3.1]{BaoHe}:

\begin{theorem}\label{thm:BaoHe} Let $\{S_\ell\}_{\ell \in \mathcal{C}}$ be a collection of positroid cells in $\Gr^{\ge0}_{k,[n]\setminus\{i\}}$. 
	\begin{enumerate}
		\item\label{it:BH_pre}
		If  $\{\gt{S_\ell}\}_{\ell \in \mathcal{C}}$ tiles $\tZ(\Gr_{k,[n]\setminus\{i\}}^{\geq 0})$ for all  $Z$, 
			then 
		$\{\gt{\pre_i(S_\ell)}\}_{\ell \in \mathcal{C}}$ tiles 
  ${\tZ(\pre_i(\Gr^{\ge0}_{k,[n]\setminus\{i\}}))}$ 
			for all $Z$
		\item\label{it:BH_inc}
		If $\{\gt{S_\ell}\}_{\ell \in \mathcal{C}}$ 
			tiles $\tZ(\Gr_{k,[n]\setminus\{i\}}^{\geq 0})$ for all $Z,$ 
			then 		$\{\gt{\inc_i(S_\ell)}\}_{\ell \in \mathcal{C}}$ tiles $\tZ({\inc_i(\Gr^{\ge0}_{k+1,[n]\setminus\{i\}})})$ for all $Z$
		\item\label{it:BH_cyc}
			Let  $\{S_j\}_{j \in \mathcal{D}}$ be a collection of positroid cells in $\Grk$.
		If $\{\gt{S_j}\}_{j\in \mathcal{D}}$ tiles $\Ampl_{n,k,4}(Z)$ for all $Z,$ 
			then $\{\gt{\cyc_i(S_j)}\}_{j \in \mathcal{D}}$ tiles $\Ampl_{n,k,4}(Z)$ for all $Z.$
		\item\label{it:BH_xy}
		Let $S$ be a positroid cell in $\Grk,$ and 
			let $S_1,\ldots,S_r$ be 
			positroid cells contained in $\overline{S}.$  \\
	  If the union of $\gt{S_1},\ldots,\gt{S_r}$ equals $\gt{S}$ 
		for all $Z$, then for any $j \in [n]$ we have:
     \begin{equation*}
	     Z_{x_j(\R_+).S}=\bigcup_{i=1}^r{Z_{x_j(\R_+).S_i}} \qquad \text{and} \qquad Z_{y_j(\R_+).S}=\bigcup_{i=1}^r Z_{y_j(\R_+).S_i}, \quad \mbox{for all } Z.
     \end{equation*}
	\end{enumerate}
\end{theorem}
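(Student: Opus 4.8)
\textbf{Plan of proof of \cref{thm:BaoHe}.}

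The four statements of \cref{thm:BaoHe} are stability results asserting that if a collection of positroid cells tiles an amplituhedron, then applying one of the elementary operations $\pre_i$, $\inc_i$, $\cyc$, or $x_j(\R_+)\cdot\,$ / $y_j(\R_+)\cdot\,$ to each cell yields a collection whose images tile the corresponding transformed amplituhedron. The overall plan is to cite \cite{BaoHe} directly, as indicated in the statement, but to record here why each assertion holds so the excerpt is self-contained. Each of the four parts has the same logical skeleton: (i) the operation is a stratification-respecting map at the level of positroid cells, so boundaries behave predictably; (ii) the operation commutes (up to an explicit homeomorphism of ambient Grassmannians) with the amplituhedron map $\tZ$; and (iii) the injectivity and covering properties of a tiling are preserved under pushforward by a homeomorphism. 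I would organize the write-up around these three points applied in turn.

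First I would handle \cref{it:BH_pre} and \cref{it:BH_inc} together, since $\pre_i$ and $\inc_i$ are closely related: $\pre_i$ inserts a zero column and $\inc_i$ inserts a ``lollipop''-type column increasing $k$. In both cases one uses the Cauchy--Binet/twistor expansion (\cref{rk:Lapexpansion}) to see that inserting a zero column at position $i$ in $C$ and deleting row $i$ from $Z$ produces the same point $CZ$; hence $\tZ \circ \pre_i$ agrees with $\tZ$ computed on the smaller index set, after identifying the relevant matrices $Z$ (this is exactly the content of \cref{lem:twistor-after-pre-cyc-rot}, first case). Together with the fact that $\pre_i$ is a stratification-preserving homeomorphism from $\overline{S}$ onto $\overline{\pre_i S}$ (used already in the proof of \cref{thm:clusteradjacency}), injectivity of $\tZ$ on each $S_\ell$ transfers to injectivity on $\pre_i(S_\ell)$, disjointness of open tiles transfers, and the union statement transfers because $\tZ(\pre_i(\Gr^{\ge 0}_{k,[n]\setminus\{i\}}))$ is literally $\tZ$ of the union of the $\overline{S_\ell}$. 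The $\inc_i$ case is analogous, with an extra sign $(-1)^{k\deg_n F}$ bookkeeping when $i$ interacts with the largest index, exactly as tracked in \cref{thm:signs-under-cyc-rot-pre} and \cref{lem:strong_pos}\ref{it:strong_pos_under_pre_inc}.

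Next I would do \cref{it:BH_cyc}: the cyclic shift $\cyc$ is an automorphism of $\Grk$ preserving the positroid stratification (\cref{def:dihedral}, \cref{cor:cycrefl}), and by \cref{lem:twistor-after-pre-cyc-rot} (second case) one has, for suitable choices of the positive matrix, $\tZ(\cyc C) = $ an image under a diffeomorphism of $\Gr_{k,k+4}$ of $\tZ(C)$; equivalently $\tZ \circ \cyc_i$ agrees with $\tZ$ up to a relabeling of the rows of $Z$, which sends $\Mat_{n,k+4}^{>0}$ to itself. Since this is a bijection $\Grk \to \Grk$ and a diffeomorphism on the target, it carries tilings to tilings: injectivity, disjointness of open tiles, and covering of $\Ank$ are each manifestly preserved. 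For \cref{it:BH_xy}, I would argue that for $T \in \{x_j(\R_+),\, y_j(\R_+)\}$, the operation $T\cdot\,$ on a positroid cell $S$ corresponds to adding a BCFW-bridge, which satisfies $\overline{T\cdot S} = \bigcup_i \overline{T\cdot S_i}$ whenever $\overline{S} = \bigcup_i \overline{S_i}$ at the level of closures (this is a consequence of $T$ acting continuously and the closure relations among the bridged cells, cf.\ the closure/boundary analysis of \cref{thm:closure} and the bridge lemma \cref{thm:bridges-and-lollipops}). Pushing forward by $\tZ$ and using $\tZ(\overline{T\cdot S}) = \gt{T\cdot S}$ then gives $Z_{T\cdot S} = \bigcup_i Z_{T\cdot S_i}$ for all $Z$.

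The main obstacle, and the reason \cite{BaoHe} is cited rather than reproving everything, is that the tiling conclusions involve not just set-theoretic image equalities but the stronger assertion that $\tZ$ remains \emph{injective} on each transformed cell and that the open tiles remain \emph{pairwise disjoint} for \emph{every} $Z$. The covering (``union'') part is the easy part — it follows from pushing forward closures under $\tZ$. The injectivity and disjointness parts require knowing that the commuting-square identities $\tZ \circ (\text{operation}) = (\text{ambient homeomorphism}) \circ \tZ$ hold \emph{with compatible choices of } $Z$, i.e.\ that varying $Z$ over all of $\Mat^{>0}$ on one side corresponds to varying $Z$ over all of $\Mat^{>0}$ on the other; verifying this carefully for $x_j, y_j$ (where the bridge parameter mixes into the matrix $Z$) is the delicate bookkeeping step, and is precisely where the arguments of \cite[Proposition 3.3, Corollary 3.2, Lemma 3.1]{BaoHe} do the work. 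In the write-up I would therefore state the four items, sketch the commuting-square identities via \cref{lem:twistor-after-pre-cyc-rot} and the bridge lemmas, and defer the full verification to \cite{BaoHe}.
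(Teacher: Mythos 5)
The paper offers no proof of this theorem at all: it is imported verbatim from \cite{BaoHe} (cited there as Proposition 3.3, Corollary 3.2, and Lemma 3.1), which is exactly what you propose to do, so your approach coincides with the paper's. (Your supplementary sketches are broadly sound, but note two points: in item (4) the hypothesis is an equality of \emph{images under $\tZ$}, not of cell closures, so the argument must absorb the bridge parameter into the matrix $Z$ and invoke the ``for all $Z$'' hypothesis — the mechanism you only identify in your final paragraph — rather than the cell-level closure decomposition you sketch first; and the $\inc_i$ case is not merely ``$\pre_i$ with a sign,'' since it raises $k$ and hence changes the target Grassmannian, requiring a quotient/projection argument.)
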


Recall from \cref{not:bullet} the definition of $S^\bulL$ and $S^\bulR$.

\begin{lemma}\label{lem:decomp_of_S-bullet}
	Let $\mathcal{C}$ be a collection of BCFW cells.
	If $\{\gt{S}\}_{S \in \mathcal{C}}$  tiles 
	$\mathcal{A}_{N_L, k_L, 4}$ 
	(respectively, $\mathcal{A}_{N_R, k_R, 4}$),
	then
	$\{\gt{S^\bulL}\}_{S \in \mathcal{C}}$ (respectively, $\{\gt{S^\bulR }\}_{S \in \mathcal{C}}$) covers   $\gt{C_{k_L,k_R,b,n}}$ 
	for all $Z$.
\end{lemma}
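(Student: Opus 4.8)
The statement asserts that if a collection of BCFW tiles covers the amplituhedron on the smaller index set $N_L$ (respectively $N_R$), then after taking the BCFW product with the full positive Grassmannian on the complementary side, the resulting tiles cover $\gt{C_{k_L,k_R,b,n}}$. The plan is to proceed by reducing $S \mapsto S^{\bulL}$ (respectively $S \mapsto S^{\bulR}$) to a sequence of elementary operations whose effect on tilings is controlled by \cref{thm:BaoHe}. Concretely, I would invoke \cref{lem:9_6_elt} (and its $\bulL$-analogue mentioned at the very end of the proof of \cref{prop:vanishing_and_sign_of_functionaries_under_promotion}), which expresses $S_R^{\bulR}$ as the result of applying, to $S_R$: the upper BCFW map, then a sequence of $\inc_i$ and $\pre_i$ operations, then a sequence of $y_j(\R_+)$ and $x_j(\R_+)$ operations. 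For the $\bulL$ side, the first step is instead the ``lower embedding'' $\mbcfw(S_L \times \chR \times \Gr_{0,\{b,c,d,n\}}^{>0})$ followed by an analogous string of $\inc, \pre, y, x$ operations.

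The key steps, in order: First, observe that $C_{k_L,k_R,b,n} = \Gr^{>0}_{k_L, N_L}\bcfw \Gr^{>0}_{k_R,N_R} = (\Gr^{>0}_{k_R,N_R})^{\bulR}$ (and also $= (\Gr^{>0}_{k_L,N_L})^{\bulL}$), so the target $\gt{C_{k_L,k_R,b,n}}$ is exactly $\gt{(\Gr^{>0}_{k_R,N_R})^{\bulR}}$. Second, note that the upper BCFW map on a cell $S_R$ coincides with $\mbcfw(\chR \times S_R)$, which by the $k_L=0$, $a=1$ special case can be written as a composition of $\inc, \pre, y_j(\R_+), x_j(\R_+)$ operations starting from $S_R$ — this is precisely the content of \cref{lem:9_6_elt} after noting that ``apply the upper BCFW map'' unpacks into $\inc$/$\pre$/$y$/$x$ steps as in the plabic-graph picture of \cref{fig:S-bullet-plabic}. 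Third, I would run an induction on the length of this sequence of operations. If $\{\gt{S}\}_{S \in \mathcal{C}}$ tiles $\mathcal{A}_{N_R,k_R,4}$, then: by \cref{thm:BaoHe}\eqref{it:BH_inc} the images $\{\gt{\inc_i S}\}$ tile $\tZ(\inc_i(\Gr_{k_R+1, N_R\setminus\{i\}}^{\ge0}))$; by \cref{thm:BaoHe}\eqref{it:BH_pre} the images $\{\gt{\pre_i S}\}$ tile $\tZ(\pre_i(\ldots))$; and by \cref{thm:BaoHe}\eqref{it:BH_xy} applying $x_j(\R_+)$ or $y_j(\R_+)$ to a collection of cells whose images union to $\gt{S}$ produces a collection whose images union to $\gt{x_j(\R_+).S}$ (resp. $\gt{y_j(\R_+).S}$). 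Chaining these through the whole sequence of \cref{lem:9_6_elt}, the images of the cells obtained from $\mathcal{C}$ at each stage cover the image of the corresponding cell obtained from $\Gr^{>0}_{k_R,N_R}$; at the final stage this says $\{\gt{S^{\bulR}}\}_{S\in\mathcal{C}}$ covers $\gt{(\Gr^{>0}_{k_R,N_R})^{\bulR}} = \gt{C_{k_L,k_R,b,n}}$. The $\bulL$ case is identical, using the lower-embedding description instead.

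One subtlety to handle carefully: \cref{thm:BaoHe}\eqref{it:BH_inc}, \eqref{it:BH_pre} produce \emph{tilings} of the images of $\inc_i$ and $\pre_i$ of a smaller positive Grassmannian, not of an arbitrary cell, whereas along the way in \cref{lem:9_6_elt} the $x_j, y_j$ operations are applied to cells that are not positive Grassmannians. For the covering statement this is fine, since \cref{thm:BaoHe}\eqref{it:BH_xy} only requires that the images of $S_1,\ldots,S_r$ \emph{union to} $\gt{S}$ (not that they form a tiling), and a tiling is in particular a covering; so I only need to propagate the ``covers'' property, not the ``tiles'' property, past the $x_j$/$y_j$ steps — and the $\inc_i/\pre_i$ steps at the start of the sequence do give genuine tilings of the relevant images, which a fortiori are coverings. \textbf{The main obstacle} I anticipate is bookkeeping: verifying that the precise sequence of operations in \cref{lem:9_6_elt} (the upper BCFW map, then the listed $\inc_i$'s, $\pre_i$'s, and the doubly-indexed family of $x$'s and $y$'s) is applied \emph{uniformly} to every cell of $\mathcal{C}$ and to $\Gr^{>0}_{k_R,N_R}$ — i.e. that the operations do not depend on the cell — so that the inductive application of \cref{thm:BaoHe} is legitimate at each step; this is exactly the point where one must appeal to the fact that $\mathcal{C}$ consists of subcells of $\Gr^{>0}_{k_R,N_R}$ (they all sit inside the same cell being operated on) and that \cref{lem:9_6_elt}'s recipe is intrinsic to the index set $N_R$, not to the individual positroid.
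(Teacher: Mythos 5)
Your overall approach is the same as the paper's: express $S\mapsto S^{\bulR}$ (resp.\ $S^{\bulL}$) as a fixed sequence $\mathfrak{s}$ of operations of the form $\pre_i,\inc_i,x_j(\R_+),y_j(\R_+)$ using \cref{lem:9_6_elt} (with the upper BCFW map unpacked as $y_c(\R_+).y_d(\R_+).y_n(\R_+).x_a(\R_+).\inc_a$, from the proof of \cref{prop:vanishing_and_sign_of_functionaries_under_promotion}(3)), apply the same $\mathfrak{s}$ to $\Gr^{\ge0}_{k_R,N_R}$, and propagate coverings through each step via \cref{thm:BaoHe}. Your observation that $\mathfrak{s}$ depends only on the index set, hence acts uniformly on every $S\in\mathcal{C}$ and on the top cell, is the key uniformity fact that makes the chaining legitimate.

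Your resolution of the subtlety you flag is, however, not accurate as stated. You assert the $\inc_i,\pre_i$ steps are all ``at the start of the sequence,'' so that \cref{thm:BaoHe}\eqref{it:BH_pre},\eqref{it:BH_inc}, stated only for the full positive Grassmannian, apply directly there. But after unpacking the upper BCFW map, the sequence reads $\inc_a$, then $x_a,y_n,y_d,y_c$, \emph{then} $\inc_1,\dots,\inc_{k_L},\pre_{k_L+1},\dots,\pre_{a-1}$, then further $y$'s and $x$'s. The second batch of $\inc$'s and $\pre$'s thus comes \emph{after} several $x,y$-type steps, by which point the ambient cell $y_c(\R_+).y_d(\R_+).y_n(\R_+).x_a(\R_+).\inc_a(\Gr^{\ge0}_{k_R,N_R})$ is not the full positive Grassmannian (except in the extreme case $k_R=|N_R|-4$). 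One also cannot freely commute these $\inc/\pre$ steps past the preceding $x,y$ steps: the action of $y_n$ depends on which index is cyclically next after $n$ in the current index set — it is $a$ before the columns $1,\dots,a-1$ are inserted, and $1$ afterward. What is actually needed, and what the paper's one-sentence proof implicitly invokes, is the general form of \cref{thm:BaoHe}\eqref{it:BH_pre},\eqref{it:BH_inc}: that $\pre_i$ and $\inc_i$ preserve coverings of $\gt{S}$ for an arbitrary positroid cell $S$, not just for $\Gr^{\ge0}$. This extension is routine ($\pre_i$ amounts to restricting $Z$ to the rows indexed by $[n]\setminus\{i\}$, so $\gt{\pre_i S}$ for $Z$ equals $\gt{S}$ for the restricted $Z'$, and every positive $Z'$ extends to a positive $Z$; the $\inc_i$ case is similar), and substituting it for your ``all at the start'' claim closes the argument.
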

\begin{proof}
  By \cref{lem:9_6_elt} this statement follows by subsequent applications of \cref{thm:BaoHe}. More specifically, there is a sequence $\mathfrak{s}$ of operations of the form $\pre_i,\inc_i,x_i(\R_+),y_i(\R_+)$, such that for each $S \in \C$ $\mathfrak{s}.S=S^\bulR$ and also $\mathfrak{s}.\Gr^{\ge0}_{k_R,N_R}=\Gr^{\ge0}_{k_L,N_L} \bcfw \Gr^{\ge0}_{k_R,N_R}$. This means that $Z_{\mathfrak{s}.\Gr^{\ge0}_{k_R,N_R}}=\cup_{S \in \C} Z_{\mathfrak{s}.S}$. An analogous argument works for $S^\bulL$.
\end{proof}

\begin{lemma}\label{lem:BCFW_cell_as_intersection}
	For every BCFW cell $S_R$ in $\Gr^{\ge0}_{k_R,N_R},$ and every BCFW cell $S_L$ in $\Gr^{\ge0}_{k_L,N_L},$
 \[\overline{\gto{S_L^\bulL}\cap\gt{S_R^\bulR}}=\overline{\gt{S_L^\bulL}\cap\gto{S_R^\bulR}}=\overline{\gto{S_L^\bulL}\cap\gto{S_R^\bulR}}\subseteq\gt{S_L\bcfw S_R}\]

\end{lemma}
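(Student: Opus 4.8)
The plan has two parts: a short topological argument for the two displayed equalities, and a reconstruction argument for the inclusion $\overline{\gto{S_L^\bulL}\cap\gto{S_R^\bulR}}\subseteq\gt{S_L\bcfw S_R}$.

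\textit{The equalities.} By \cref{lem:opennes} the sets $\gto{S_L^\bulL}$ and $\gto{S_R^\bulR}$ are open, and by \cref{defn_tile} each open tile $\gto{S}$ is dense in its closure $\gt{S}$. I will use the elementary fact that if $U$ is open and $D$ is dense in $\overline{D}$, then $U\cap\overline{D}\subseteq\overline{U\cap D}$ (any neighborhood of a point of $U\cap\overline D$ meets $U$ in an open set, which meets $\overline D$, hence meets $D$). Applying this with $(U,D)=(\gto{S_L^\bulL},\gto{S_R^\bulR})$ gives $\gto{S_L^\bulL}\cap\gt{S_R^\bulR}\subseteq\overline{\gto{S_L^\bulL}\cap\gto{S_R^\bulR}}$, and with $(U,D)=(\gto{S_R^\bulR},\gto{S_L^\bulL})$ gives $\gt{S_L^\bulL}\cap\gto{S_R^\bulR}\subseteq\overline{\gto{S_L^\bulL}\cap\gto{S_R^\bulR}}$. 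Taking closures and combining with the trivial inclusions $\gto{S_L^\bulL}\cap\gto{S_R^\bulR}\subseteq\gto{S_L^\bulL}\cap\gt{S_R^\bulR}$ and $\subseteq\gt{S_L^\bulL}\cap\gto{S_R^\bulR}$ yields the two equalities.

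\textit{Setup and reconstruction for the inclusion.} Since $\gt{S_L\bcfw S_R}$ is closed it suffices to show $\gto{S_L^\bulL}\cap\gto{S_R^\bulR}\subseteq\gt{S_L\bcfw S_R}$. Fix $Y$ in this set. By \cref{prop:butterfly-matrix-same} (applied with the top cell $\Gr^{>0}_{k_R,N_R}$, resp.\ $\Gr^{>0}_{k_L,N_L}$, which is coindependent on the relevant quadruple), $S_L^\bulL=\mbcfw(S_L\times\chR\times\Gr^{>0}_{k_R,N_R})$ and $S_R^\bulR=\mbcfw(\Gr^{>0}_{k_L,N_L}\times\chR\times S_R)$, so $Y=\tZ(V)=\tZ(V')$ with $V=\mbcfw(A,[\alpha:\beta:\gamma:\delta:\epsilon],B)$, $A\in S_L$, $B\in\Gr^{>0}_{k_R,N_R}$, and $V'=\mbcfw(A',[\alpha':\beta':\gamma':\delta':\epsilon'],B')$, $A'\in\Gr^{>0}_{k_L,N_L}$, $B'\in S_R$. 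The ``corner'' twistors $\llrr{bcdn},\llrr{acdn},\llrr{abdn},\llrr{abcn},\llrr{abcd}$ are nonzero on $Y$ (\cref{lem:sign_of_chord_twistors}) and, as in \cref{def:twistor-mtx}, recover the parameters of the last BCFW step up to scaling; reading them off from $V$ and from $V'$ shows $[\alpha:\cdots:\epsilon]=[\alpha':\cdots:\epsilon']$, a datum of $Y$ alone. Next, the $\bulL$-version and $\bulR$-version of \cref{lem:evaluation-after-bullet} (together with the extension to $S_L^\bulL,S_R^\bulR$ of the preimage algorithm of \cref{thm:BCFW-tile-and-sign-description} indicated in the proof of \cref{lem:opennes}) say that the twistors of $Y$ with indices in $N_L$ (resp.\ $N_R$), transported through $\Psi_{ac}$, are the twistors of a point of $\gto{S_L}$ (resp.\ $\gto{S_R}$) for a suitable positive matrix; by injectivity of $\tZ$ on the BCFW cells $S_L$ and $S_R$ (\cref{thm:BCFW-tile-and-sign-description}) these points are $A$ and $B'$. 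Now set $V'':=\mbcfw(A,[\alpha:\beta:\gamma:\delta:\epsilon],B')$, so that $V''\in S_L\bcfw S_R$.

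\textit{Checking $\tZ(V'')=Y$, and the main obstacle.} To finish one uses that $\tZ$ of a butterfly product $\mbcfw(\widehat A,[\widehat\zeta],\widehat B)$ is determined by $[\widehat\zeta]$ together with the left and right amplituhedron images $\tZ_{Z_L}(\widehat A)$ and $\tZ_{Z_R}(\widehat B)$ for the $[\widehat\zeta]$-modified positive matrices $Z_L,Z_R$, and that for fixed $[\widehat\zeta]$ this assignment is injective (a dimension count $4+4k_L+4k_R=4k$ makes this plausible; it is the content of the amplituhedron-map analysis underlying \cref{thm:BCFW-tile-and-sign-description} and \cite{even2021amplituhedron}). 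Since $V$, $V'$, $V''$ share $[\widehat\zeta]$, and $\tZ(V)=\tZ(V')$ forces their left and right amplituhedron images to agree, $V''$ has the same $[\widehat\zeta]$ and the same left and right amplituhedron images as $V$, whence $\tZ(V'')=\tZ(V)=Y$. Thus $Y\in\tZ(S_L\bcfw S_R)=\gto{S_L\bcfw S_R}\subseteq\gt{S_L\bcfw S_R}$; indeed this proves the slightly stronger $\gto{S_L^\bulL}\cap\gto{S_R^\bulR}\subseteq\gto{S_L\bcfw S_R}$. The delicate ingredient is precisely this ``factorization, injective for fixed step-parameters'' statement together with the two one-sided evaluation lemmas: the cells $S_L^\bulL$ and $S_R^\bulR$ are designed to sit between $S_L\bcfw S_R$ and the thick ``one top cell'' products on which those tools from \cite{even2021amplituhedron} and \cref{sec:imagesBCFWcells} apply, and everything else — the topology, the parameter read-off, and the assembly of $V''$ — is routine once they are in hand.
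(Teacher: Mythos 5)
Your handling of the two equalities is correct and is essentially the paper's argument: the paper simply cites the openness of $\gto{S_L^\bulL}$ and $\gto{S_R^\bulR}$ from \cref{lem:opennes}, and your density argument is the standard way to spell that out.

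The inclusion, however, has a genuine gap, and you have flagged it yourself. Everything hinges on the claim that $\tZ(\mbcfw(\widehat A,[\widehat\zeta],\widehat B))$ is determined by, and determines, the triple consisting of $[\widehat\zeta]$ together with the left and right amplituhedron images of $\widehat A$ and $\widehat B$. Neither direction of this ``factorization'' statement is proved in the paper, and the dimension count $4+4k_L+4k_R=4k$ cannot substitute for a proof: equal dimensions say nothing about whether the assignment is well defined (i.e.\ that $\tZ(\mbcfw(A,[\zeta],B))$ depends on $A$ only through $\tZ_{Z_L}(A)$), nor about its injectivity for fixed $[\widehat\zeta]$. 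Without both directions you cannot conclude that the hybrid point $V''=\mbcfw(A,[\zeta],B')$ maps to $Y$, so the inclusion is not established. A secondary soft spot is your appeal to a $\bulL$-analogue of \cref{lem:evaluation-after-bullet} to recover $A$ from $Y$; the paper only sketches that analogue.

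The paper's proof avoids reconstruction entirely by running the positivity test of \cref{thm:BCFW-tile-and-sign-description}: $\gto{S_L\bcfw S_R}$ is exactly the locus where its $5k$ coordinate functionaries are positive, and by \cref{def:twistor-mtx} these split into the five corner twistors of the last step, the promotions of the coordinate functionaries of $S_L$, and the promotions of those of $S_R$. By \cref{lem:sign_of_chord_twistors} the corner twistors are positive on $\gto{S_L\bcfw S_R}$, $\gto{S_L^\bulL}$ and $\gto{S_R^\bulR}$, and by \cref{prop:vanishing_and_sign_of_functionaries_under_promotion} --- applied not only to $S_L\bcfw S_R$ but also to $S_L\bcfw\Gr^{>0}_{k_R,N_R}=S_L^\bulL$ and $\Gr^{>0}_{k_L,N_L}\bcfw S_R=S_R^\bulR$, which satisfy the coindependence hypotheses --- the promoted left functionaries are positive on $\gto{S_L^\bulL}$ and the promoted right ones on $\gto{S_R^\bulR}$. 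Hence every point of $\gto{S_L^\bulL}\cap\gto{S_R^\bulR}$ satisfies all $5k$ positivity conditions and lies in $\gto{S_L\bcfw S_R}$; taking closures finishes the proof. I recommend replacing your reconstruction argument with this one, since all of its ingredients are already available as stated results.
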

\begin{proof}
 The equalities hold since $\gto{S_L^\bulL},~\gto{S_R^\bulR}$ are open, by \cref{lem:opennes}.
  We now show the inclusion.
	Let $F_1,\ldots,F_{5k}$ be the coordinate functionaries of $\gto{S_L\bcfw S_R}$ 
 where the first $5k_L$ are the promotions of the coordinate functionaries of $S_L,$ the last $5k_R$ are the promotions of the coordinate functionaries of $S_R$ and the remaining $5$ are the (signed) twistors introduced in the $k$-th step-tuple. 
By \cref{lem:sign_of_chord_twistors}, the $5$ (signed) twistors are positive on $\gto{S_L\bcfw S_R},\gto{S_L^\bulL},\gto{S_R^\bulR}.$ By \cref{prop:vanishing_and_sign_of_functionaries_under_promotion}, the first $5k_L$ are positive on $\gto{S_L\bcfw S_R},\gto{S_L^\bulL}$ and the last $5k_R$ are positive on $\gto{S_L\bcfw S_R},\gto{S_R^\bulR}.$
 By \cref{thm:BCFW-tile-and-sign-description}
  every point for which all the $F_i$ are positive belongs to $\gto{S_L \bcfw S_R}.$ Thus, every point of $\gto{S_L^\bulL}\cap\gto{S_R^\bulR}$ belongs to $\gto{S_L\bcfw S_R}.$ We finish by taking closures.
\end{proof}
\begin{proof}[Proof of Proposition \ref{prop:surj}]
	By our assumptions, and \cref{it:BH_pre} of \cref{thm:BaoHe},
	, $\mathcal{T}_{\pre}$ gives a tiling of $\gt{\overline{C}_{\pre}},$ for all $Z$. We will now show that the collection $\{Z_S\}_{S \in \mathcal{T}_{k_L,k_R,b,n}}$ covers the subspace $\gt{{C}_{k_L,k_R,b,n}}$. The result will then follow from Theorem \ref{thm:coarse_decomposition}.
	
	Fix $k_L,k_R,b,n$ and suppose  without loss of generality that $\mathcal{T}_{k_L,k_R,b,n}$ is fibred on the right side:
 \begin{equation*}
    \mathcal{T}_{k_L,k_R,b,n}= \{S^{i,j}_L \bcfw S_R^i \ \vert \ i \in \mathcal{D}, j\in \mathcal{E}_i \},
 \end{equation*}
where the collection $\{\gt{S_R^i} \ \vert \ 
				i\in \mathcal{D}\}$ tiles of $\Ampl_{N_R,k_R,4}(Z_R)$ for all $Z_R$ and for every $i$ the collection
  $\{\gt{S_L^{i,j}}	\ \vert \ j\in \mathcal{E}_i\}$ covers $\Ampl_{N_L,k_L,4}(Z_L)$ for all $Z_L$.
 By \cref{lem:decomp_of_S-bullet}, $\{\gt{(S_R^i)^\bulR} \ \vert \ 
				i\in \mathcal{D}\}$ covers  $\gt{C_{k_L,k_R,b,n}}$ for all $Z$. Similarly, for every $i,$ $\{(S_L^{i,j})^\bulL	\ \vert \ j\in \mathcal{E}_i\}$ gives a tiling of $\gt{C_{k_L,k_R,b,n}}$ for all $Z$. Thus, for all $Z,$ and every $i\in \mathcal{D}.$
	\[\gt{(S_R^i)^\bulR}=\overline{\gto{(S_R^i)^\bulR}}=\overline{\bigcup_{j\in \mathcal{E}_i}\gto{(S_R^i)^\bulR}\cap\gt{(S_L^{i;j})^\bulL}}\subseteq\overline{\bigcup_{j\in \mathcal{E}_i}\gt{S_L^{i,j}\bcfw S_R^i}}=\bigcup_{j\in \mathcal{E}_i}\gt{S_L^{i,j}\bcfw S_R^i},\]
	where the inclusion is by \cref{lem:BCFW_cell_as_intersection} and the last equality uses that $Z_S$ are closed sets. Therefore 
	\[\gt{C_{k_L,k_R,b,n}}\subseteq\bigcup_{i \in \mathcal{D},j \in \mathcal{E}_i}\gt{S_L^{i,j}\bcfw S_R^i}.\]
 This completes the proof.
\end{proof}

\appendix
\section{Background on plabic graphs}\label{app:plabic}

In \cite{postnikov}, 
Postnikov 
classified the cells of the positive Grassmannian, 
using equivalence classes of \emph{reduced plabic graphs}, 
and \emph{decorated (or affine) permutations}. 
Here we review some of this technology, following \cite{FW7},
\cite{postnikov}, and \cite{PSWv1}.

\begin{definition}[Plabic graphs]
   \label{def:plabic}
A {\it planar bicolored graph} (or ``plabic graph'')
is a planar graph $G$ properly embedded into a closed disk, such that 
		each internal vertex is colored black or white;
		each internal vertex is connected by 
		a path to some boundary vertex;
		there are (uncolored) vertices lying on the 
		boundary of the disk labeled $1,\dots, n$
		for some positive $n$;
and each of the boundary vertices is incident to a single 
		edge.
See Figure \ref{G25} for an example.

\end{definition}
\vspace{-.4cm}
\begin{figure}[h]
\centering
\includegraphics[height=1in]{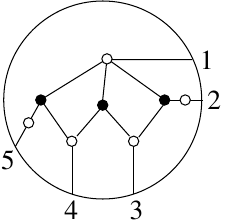}
\caption{A plabic graph}
\label{G25}
\end{figure}
\vspace{-.4cm}

If $G$ has an internal  leaf which is incident to a boundary 
vertex, 
we call this a  
\emph{lollipop}.  
\emph{We will require that our plabic graphs have no internal leaves
except for lollipops.}

There is a natural set of local transformations (moves) of plabic graphs:

(M1) \emph{Square move} (or \emph{urban renewal}).  If a plabic graph has a square formed by
four trivalent vertices whose colors alternate,
then we can switch the
colors of these four vertices.

(M2) \emph{Contracting/expanding a vertex}.
Two adjacent internal vertices of the same color can be merged or unmerged.

(M3) \emph{Middle vertex insertion/removal}.
We can remove/add degree $2$ vertices.

See \cref{fig:M1} for depictions of these three moves.

\begin{figure}[h]
\centering
\includegraphics[height=.5in]{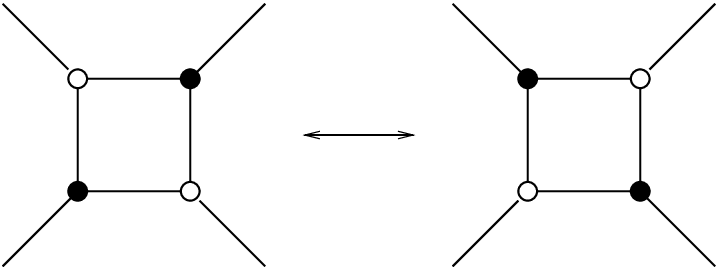}
\hspace{.3in}
\raisebox{6pt}{\includegraphics[height=.3in]{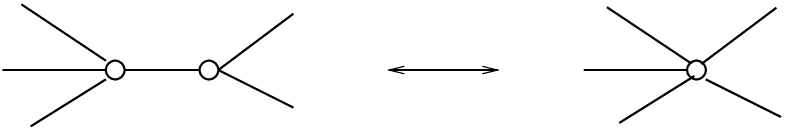}}
\hspace{.3in}
\raisebox{16pt}{\includegraphics[height=.07in]{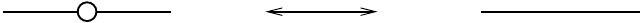}}
\caption{%
	Local moves (M1), (M2), (M3) on plabic graphs.}
\label{fig:M1}
\end{figure}
%\vspace{-.5cm}
\begin{definition}\label{def:move}
Two plabic graphs are  \emph{move-equivalent} if they can be obtained
from each other by moves (M1)-(M3).  The \emph{move-equivalence class}
of a given plabic graph $G$ is the set of all plabic graphs which are move-equivalent
to $G$.
A plabic graph 
is called \emph{reduced} if there is no graph in its move-equivalence
	in which there is a \emph{bubble}, that is, 
	two adjacent vertices $u$ and $v$ 
	which are connected by 
		more than one edge.  
\end{definition}

 \begin{definition}[Decorated permutations]\label{defn:decperm}
 A \emph{decorated permutation} on $[n]$ is a bijection $\pi : [n] \to [n]$ whose fixed points are each coloured either black (loop) or white (coloop). We denote a black fixed point $i$ by $\pi(i) = \underline{i}$, and a white fixed point $i$ by $\pi(i) = \overline{i}$.
 An \emph{anti-excedance} of the decorated permutation $\pi$ is an element $i \in [n]$ such that either $\pi^{-1}(i) > i$ or $\pi(i)=\overline{i}$.  We say that a decorated permutation
on $[n]$ is of
\emph{type $(k,n)$} if it has $k$ anti-excedances.
 \end{definition}

\begin{definition}
   \label{def:rules}
	Let $G$ be a reduced plabic graph as above with boundary vertices $1,\dots, n$. For each boundary vertex $i\in [n]$, we follow a path along the edges of $G$ starting at $i$, turning (maximally) right at every internal black vertex, and (maximally) left at every internal white vertex. This path ends at some boundary vertex $\pi(i)$. By \cite[Proposition 7.4.22]{FW7}, 
the fact that $G$ is reduced implies that each fixed point of $\pi$ is attached to a lollipop; 
we color each fixed point by the color of its lollipop. In this way we obtain the \emph{decorated permutation} $\pi_G = \pi$ of $G$. We say that $G$ is of {\itshape type} $(k,n)$, where $k$ is the number of anti-excedances of $\pi_G$. 
\end{definition}

The decorated trip permutation of the plabic graph $G$ of 
\cref{G25}
is $\pi_G = (3,4,5,1,2)$, which has 
$k = 2$ anti-excedances.

\begin{theorem}[Fundamental theorem of reduced plabic graphs] \cite[Theorem 7.4.25]{FW7} \cite{postnikov}
Let $G$ and $G'$ be reduced plabic graphs.  The following statements are equivalent:
	\begin{itemize}
		\item $G$ and $G'$ are move-equivalent;
		\item $G$ and $G'$ have the same decorated trip permutation.
	\end{itemize}
\end{theorem}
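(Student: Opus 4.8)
The plan is to prove the two implications separately, with the reverse direction ($\Leftarrow$) being where essentially all the work lies.

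\textbf{Easy direction.} First I would show that move-equivalence implies equality of decorated trip permutations, which amounts to checking that each of the local moves (M1)--(M3) preserves $\pi_G$. This is a purely local verification: a trip entering the region affected by a move exits it in the same way before and after, since the ``rules of the road'' (turn maximally right at black vertices, maximally left at white vertices) are unchanged outside the local patch, and one checks by hand the finitely many ways a trip can traverse an alternating square (for (M1)), a pair of merged/split like-colored vertices (for (M2)), and a degree-two vertex (for (M3)). One also notes that lollipops are untouched by (M1)--(M3), so the colors of the fixed points are preserved as well. Hence $\pi_G$ is a move-equivalence invariant, and in particular is well defined on the equivalence class.

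\textbf{Hard direction.} For the converse I would use canonical representatives. The plan is: (a) produce, for each decorated permutation $\pi$ of type $(k,n)$, a distinguished reduced plabic graph $G_\pi$ --- for instance the plabic graph read off from the Le-diagram (equivalently, from a wiring-diagram / BCFW-bridge construction) associated to $\pi$ --- with $\pi_{G_\pi}=\pi$; and (b) show that every reduced plabic graph $G$ with $\pi_G=\pi$ is move-equivalent to $G_\pi$. Given (a) and (b), if $\pi_G=\pi_{G'}$ then both graphs are move-equivalent to $G_{\pi_G}$, hence to each other. For step (b) the cleanest route is an induction via \emph{bridge removal}: if the decorated permutation is not the ``all lollipops'' permutation, one selects a suitable adjacent pair $\{i,i+1\}$ so that $\pi\cdot(i\ i{+}1)$ has strictly fewer inversions, shows that any reduced $G$ with $\pi_G=\pi$ is move-equivalent to a graph displaying a \emph{bridge} on $\{i,i+1\}$ (a simple trivalent configuration attached to boundary vertices $i$ and $i{+}1$) whose removal yields a reduced graph $G'$ with $\pi_{G'}=\pi\cdot(i\ i{+}1)$, applies the inductive hypothesis to $G'$, and reattaches the bridge. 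The base case (all fixed points, each a colored lollipop) is immediate because the colors then determine the graph up to (M1)--(M3). Two auxiliary facts are used throughout: that the number of faces is invariant under (M1)--(M3) and equals one more than the dimension of the associated positroid cell (so reducedness is detectable and bridge removal genuinely decreases complexity), and the combinatorial characterization of reducedness in terms of trips --- no trip crosses itself, and no two trips have a ``bad'' double crossing --- which is exactly what guarantees that a bridge can always be exposed.

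\textbf{Main obstacle.} The real difficulty is step (b), and inside it the claim that a reduced $G$ with a given non-trivial trip permutation can always be brought, by (M1)--(M3), into a form displaying a bridge on a chosen descent pair without increasing the face count --- equivalently, that bridge decompositions of reduced graphs exist and are compatible with the moves. This rests on the trip-theoretic characterization of reducedness, whose proof is itself delicate; I would quote that characterization from \cite{postnikov} or \cite{FW7} rather than reprove it here, and then assemble the induction above together with the easy direction to conclude the equivalence.
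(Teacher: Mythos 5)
The paper does not prove this statement: it is quoted verbatim from the literature (Postnikov's Theorem 13.4, Theorem 7.4.25 in \cite{FW7}) as background for the plabic-graph machinery in the appendix, so there is no internal proof to compare against. Judged against the proofs in those cited sources, your outline is the standard one: the forward direction is the finite local check you describe, and the reverse direction is proved either by an induction on bridge removal reducing to a canonical (Le-diagram) representative, which is essentially your plan, or, as in \cite{FW7}, by translating plabic graphs into Thurston's triple/strand diagrams and invoking his theorem that minimal triple diagrams with the same connectivity are related by swap moves. Your plan is therefore a correct high-level account of one of the known routes.

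The genuine gap is that the step you flag as the ``main obstacle'' --- that any reduced $G$ with $\pi_G=\pi$ can be transformed by (M1)--(M3) to display a bridge at a chosen descent $\{i,i+1\}$ --- \emph{is} the theorem; everything else in your sketch is routine. Your justification for it (``this is exactly what the trip-theoretic characterization of reducedness guarantees'') is too quick on two counts. First, deriving bridge exposure from the no-self-intersection/no-bad-double-crossing criteria is itself a substantial argument (several pages in Postnikov or in Karpman's treatment), not a direct consequence. Second, there is a risk of circularity: in the standard references the trip-theoretic characterization of reducedness and the fundamental theorem are proved together or in an order that varies by source, so you must be explicit that you are taking the characterization as an independent input (with a proof that does not already use move-equivalence invariance of $\pi_G$ beyond your easy direction). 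As a plan the proposal is sound; as a proof it defers the entire content to a lemma it does not establish.
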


%\begin{figure}
%	\includegraphics[height=3cm]{}
%	\caption{Left: arc diagram of $(4,5,6,3,1,2)$; right: two arcs forming a crossing.}
%	\label{fig:crossing}
%\end{figure}
%
%\begin{definition}
%Let $\pi$ be a decorated permutation, viewed as a collection of 
%arcs on a circle, as in \cref{fig:crossing}, and let 
%$(i, \pi(i))$ and $(j,\pi(j))$ for $i\neq j$ be a pair of arcs.
%	We say that this pair of arcs is a \emph{crossing}
%	if $\pi(i) \in [j,\pi(j)]^{cyc}$ and $i\in [\pi(j),j]^{cyc}$,
%	where $[a,b]^{cyc}$ denotes the \emph{cyclic interval}
%	from $a$ to $b$ (traveling clockwise around the circle).
%	(Note that in a crossing, vertex $i$ is allowed to coincide
%	with $\pi(j)$, and vertex $j$ is allowed to coincide with $\pi(i)$.)
%	And we say that this pair is a \emph{simple crossing}
%	if there is no other chord 
%	$(\ell,\pi(\ell))$ such that $\ell \in [i,j]^{cyc}$
%	and $\pi(\ell)\in [\pi(i),\pi(j)]^{cyc}$.
%\end{definition}
%
%\begin{lemma}\cite[Lemma 18.9]{postnikov}\label{lem:edgeremoval}
%Let $G$ be a reduced plabic graph with trip permutation $\pi$,
% let $e$ be an edge of $G$,
%and let $T_1: i \to \pi(i)$ and $T_2: j \to \pi(j)$ 
%be the two trips in $G$ that pass through $e$ 
%(the trips will pass through this edge in two different directions).
% Then $G \setminus \{e\}$ is 
%	reduced if and only if the pair $(i,\pi(i))$
%	and $(j,\pi(j))$ is a simple crossing in $\pi$.
%\end{lemma}

\begin{definition}[Perfect orientation]\label{def:orientation}
A {\it perfect orientation\/} $\O$ of a plabic graph $G$ is a
choice of orientation of each edge such that each
black internal vertex $u$ is incident to exactly one edge
directed away from $u$; and each white internal vertex $v$ is incident
to exactly one edge directed towards $v$.
A plabic graph 
is called {\it perfectly orientable\/} if it has a perfect orientation.
Let $G_\O$ denote the directed graph associated with a perfect orientation $\O$ of $G$. The {\it source set\/} $I_\O \subset [n]$ of a perfect orientation $\O$ is the set of boundary vertices $i$ for which $i$
	is a source of the directed graph $G_\O$.  The complementary set of \emph{sinks} 
	will be denoted by $\bar I_\O$.
\end{definition}

\begin{proposition}\cite[Proposition 11.7, Lemma 11.10]{postnikov}\label{prop:positroid}
Let $G$ be a plabic graph.  Then there is a matroid 
$\mathcal{M}(G)$ called a \emph{positroid} whose bases
are precisely the subsets
$$\{I \ \vert \ I = I_{\O} \text{ for some perfect 
orientation }\O\text{ of }G\}.$$

\end{proposition}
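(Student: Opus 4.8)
The plan is to realize the candidate collection of bases as the set of nonzero Pl\"ucker coordinates of an explicit point of $\Gr_{k,n}$, so that the matroid axioms come for free from realizability. First I would pin down that every perfect orientation $\mathcal{O}$ of $G$ has the same number of sources. Since $G$ is reduced of type $(k,n)$ (and reduced plabic graphs are perfectly orientable), one fixes one perfect orientation $\mathcal{O}_0$ and observes that any other perfect orientation is obtained from $\mathcal{O}_0$ by reversing the edge-set of a \emph{flow} — a union of directed paths and cycles compatible with the perfect-orientation condition at every internal vertex — an operation which preserves the number of sources. Hence $|I_{\mathcal{O}}| = |I_{\mathcal{O}_0}| = k$ for all $\mathcal{O}$, where $k$ is the number of anti-excedances of $\pi_G$.

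Next I would assign generic positive weights to the edges of $G$ and form the path (boundary-measurement) matrix $A$ with respect to $\mathcal{O}_0$: a full-rank $k\times n$ matrix whose columns indexed by $I_{\mathcal{O}_0}$ form a signed identity, representing a point $V\in\Gr_{k,n}^{\geq 0}$. The key lemma, a Lindstr\"om--Gessel--Viennot computation adapted to plabic graphs, expresses each maximal minor $\Delta_J(A)$ as a signed sum over flows from $I_{\mathcal{O}_0}$ to $J$ of the product of the edge weights along the flow; for a reduced plabic graph with positive weights there is no cancellation, so $\Delta_J(A)\neq 0$ if and only if some flow from $I_{\mathcal{O}_0}$ to $J$ exists. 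Finally, the flip construction gives a bijection between flows from $I_{\mathcal{O}_0}$ to $J$ and perfect orientations with source set $J$ (reverse the edges used by the flow, and conversely). Therefore $\{J : \Delta_J(A)\neq 0\} = \{I_{\mathcal{O}} : \mathcal{O}\ \text{a perfect orientation of}\ G\}$, and the left-hand side is the column matroid of $A$, which is a matroid; taking $\mathcal{M}(G)$ to be this matroid proves the proposition and also justifies calling it a \emph{positroid}, as $V$ lies in $\Gr_{k,n}^{\geq 0}$.

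The main obstacle is the no-cancellation statement together with the flow interpretation of minors: showing that for a reduced plabic graph the signed flow sum for each $\Delta_J(A)$ has all terms of one sign requires the careful sign bookkeeping of the path matrix (orientations of paths, the conventions in the boundary-measurement map) combined with the combinatorics of reducedness. An alternative, weight-free route is to verify the symmetric basis-exchange axiom directly: given perfect orientations $\mathcal{O},\mathcal{O}'$ with source sets $I,I'$ and $i\in I\setminus I'$, consider the set of edges on which $\mathcal{O}$ and $\mathcal{O}'$ disagree, which decomposes into directed paths and cycles in $\mathcal{O}$; reversing along the path of this symmetric difference emanating from $i$ yields a perfect orientation with source set $(I\setminus\{i\})\cup\{j\}$ for some $j\in I'\setminus I$. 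Either way, the crux is understanding precisely how two perfect orientations of a fixed $G$ differ from one another.
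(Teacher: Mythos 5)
Your proposal is, in outline, exactly Postnikov's argument, which is what the paper cites for this proposition (it gives no independent proof): realize the candidate collection as the set of nonvanishing Pl\"ucker coordinates of a boundary-measurement point, use the subtraction-free flow expansion of minors, and biject flows with source-set-changing reversals to recover perfect orientations. Both the realizability route and the direct basis-exchange route you sketch at the end appear in Postnikov's Section~11 (Proposition~11.7 and Lemma~11.10, respectively), so this is not a genuinely different proof but a faithful reconstruction.

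One substantive slip: you restrict to \emph{reduced} plabic graphs, both when asserting the existence of a perfect orientation and when justifying no-cancellation. The proposition must hold (and in Postnikov does hold) for arbitrary perfectly orientable plabic graphs, and the paper actually needs this extra generality --- in \cref{thm:closure}, boundary cells of $\overline{S_G}$ come from graphs obtained by deleting edges of $G$, and these are in general \emph{not} reduced, yet the paper still speaks of their positroids and uses \cref{prop:positroid} on them (e.g.\ in \cref{lem:matroid_under_bcfw} and \cref{lem:boundaries_before_amp_map}). The no-cancellation in the LGV/flow expansion is a consequence of planarity together with the sign convention built into the boundary measurement map, not of reducedness; reducedness only guarantees that $\dim S_G$ matches the face count, which is irrelevant here. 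You should also fix an \emph{acyclic} perfect orientation $\mathcal{O}_0$ so that the flow sum is finite (a polynomial rather than a rational function); acyclic perfect orientations exist for any perfectly orientable planar graph, not only reduced ones. With those two adjustments --- drop ``reduced,'' add ``acyclic'' --- the argument is correct as stated.
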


\begin{lemma} \cite[Lemma 3.2]{PSWv1} \label{acycliclemma}
Each reduced plabic graph $G$ on $[n]$ has an acyclic perfect orientation. Moreover,
for each total order $<_i$ on $[n]$ defined by 
$i <_i i+1 <_i \dots <_i n <_i 1 <_i \dots <_i i-1$,
$G$ has an acyclic perfect orientation $\O$ whose source set $I_{\O}$
is the lexicographically minimal basis with respect to $<_i$.
\end{lemma}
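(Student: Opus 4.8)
The plan is to reduce the statement to a fact about the matroid underlying $G$ together with a cycle-reversal procedure. First, by \cref{prop:positroid} the source sets $\{I_\O\}$ of perfect orientations of $G$ are exactly the bases of the positroid $\mathcal{M}(G)$, which is a matroid; hence $\mathcal{M}(G)$ has a unique basis $I$ that is lexicographically minimal with respect to the order $<_i$, produced by the matroid greedy algorithm (scan $[n]$ in the order $<_i$, adding each element that keeps the set independent in $\mathcal{M}(G)$). In particular there is \emph{some} perfect orientation $\O_0$ of $G$ with $I_{\O_0}=I$. It remains to show we can modify $\O_0$, without changing its source set, so that it becomes acyclic.

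The key modification move is reversal of a directed cycle. If $\O$ is a perfect orientation and $C$ is a directed cycle of $G_\O$, let $\O'$ reverse every edge of $C$: at each internal vertex the number of edges of $C$ entering equals the number leaving (each is $0$, or $1$ and $1$), so the in-/out-degree conditions of \cref{def:orientation} survive, $\O'$ is again a perfect orientation, and since $C$ contains no boundary edge (boundary vertices have degree one) we get $I_{\O'}=I_\O$. So it suffices to reach an acyclic perfect orientation from $\O_0$ by a finite sequence of directed-cycle reversals. The natural scheme: while $G_\O$ has a directed cycle, use planarity of the embedding to pick an \emph{innermost} directed cycle $C$ (the closed disk it bounds contains in its interior no edge of any other directed cycle) and reverse $C$; iterate. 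One must exhibit a monovariant that strictly decreases under this step — a candidate is the number of bounded faces of $G$ lying inside some directed cycle of $G_\O$, the point being that reversing an innermost cycle un-encloses the faces it was enclosing while creating no new enclosures. An alternative, less computational route is to pass through plabic moves: $G$ is move-equivalent to the Le-diagram ($\Gamma$-diagram) plabic graph $G_\pi$ attached to its decorated permutation, which carries the manifestly acyclic Le-orientation, and one checks that each of the moves (M1)--(M3) of \cref{def:move} carries an acyclic perfect orientation with a prescribed source set to one of the same kind (moves (M2), (M3) are trivial; the square move (M1) requires a short case check on the four external edges of the square).

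Putting the pieces together: apply the cycle-reversal procedure (or the move-equivalence argument) to $\O_0$; by the previous paragraph the result $\O$ is acyclic and still has $I_\O=I$, the lex-min basis with respect to $<_i$. Taking $i$ arbitrary yields the first assertion, and for a fixed $i$ this is the ``moreover'' clause. The main obstacle is precisely the termination/acyclicity step: either verifying that reversing an innermost directed cycle strictly improves a planarity-based monovariant (careful bookkeeping of which faces are enclosed by directed cycles before and after), or, equivalently, carrying out the reduction through plabic moves and confirming that the square move preserves the existence of an acyclic perfect orientation with given source set.
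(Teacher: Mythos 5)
The matroid-theoretic setup is fine: by \cref{prop:positroid} the source sets of perfect orientations of $G$ are exactly the bases of the positroid $\mathcal{M}(G)$, so the $<_i$-lex-min basis $I$ (produced by the greedy algorithm) is realized by some perfect orientation $\O_0$. The gap is in the acyclicity step, and it is not a detail to be filled in but a structural flaw in the chosen modification move: reversing a directed cycle $C$ produces an orientation in which the same edge set $C$, traversed in the opposite direction, is again a directed cycle. Consequently your candidate monovariant --- the number of bounded faces enclosed by some directed cycle --- is literally unchanged by the operation, since $C$ and its reversal enclose the same faces; the iteration would just toggle $C$ back and forth. No sequence of cycle reversals can terminate at an acyclic perfect orientation, and no monovariant can rescue this.

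Your alternative route through plabic moves is closer to what is actually done, but the deferred "short case check" for the square move --- that it carries an acyclic perfect orientation with prescribed source set to one of the same kind --- is the entire content of the argument, and in addition one must realize the $<_i$-lex-min basis for every cyclic order $i$, not just the standard one, which requires working with a cyclically rotated Le-diagram or pipe-dream graph. The usual proofs instead \emph{construct} the acyclic orientation directly: for instance, build $G$ from lollipops by successively adding BCFW bridges (cf.\ \cref{thm:bridges-and-lollipops}) and orient each new bridge edge so that acyclicity and the designated source set are maintained at every stage, or read off the canonical acyclic orientation of the Le-diagram plabic graph for the cyclically rotated decorated permutation, whose source set is the $<_i$-lex-min basis by construction. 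Either way, an acyclic orientation must be exhibited, not obtained by massaging $\O_0$ with reversals.
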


Given a perfect orientation $\O$ of a reduced plabic graph $G$,
one can 
 give a parameterization of an associated cell $S_G$ of $\Grk$; the cell depends only 
on the move-equivalence class of $G$, not on $G$ or the choice of perfect orientation. 
 The parameterization is particularly simple 
when $\O$ is acyclic.

\begin{definition}[Path matrix]\label{def:pathmatrix}
Let $G$ be a reduced plabic graph on $[n]$
and let $\O$ be an acyclic perfect orientation of $G$.  Set $k=|I_{\O}|$.
Let us associate a variable $x_e$ with each edge of $G$.
 For $i\in I_\O$ and $j\in \bar I_\O$,
define the {\it boundary measurement\/} $M_{ij}$ as the following expression:
$$
	M_{ij}:=\sum_{P} \prod_{e\in P} x_e,
$$
where the sum is over all directed paths in $G_\O$ that start 
and end at the boundary vertices $i$ and $J$,
and the product 
is over all edges $e$ in $P$.
The \emph{path matrix} $A = A(G,\O) = (a_{ij})$
is the unique $k \times n$ matrix such that 
\begin{itemize}
\item The $k \times k$ submatrix of $A$ in the column set $I_\O$ is the identity matrix.
\item For any 
	$i\in I_\O$ and $j\in \bar I_\O$, the entry $a_{ij}$ equals $\pm M_{ij}$.
\item All Pl\"ucker coordinates of $A$ are nonnegative.
\end{itemize}
\end{definition}

\begin{theorem}\cite{postnikov}\label{thm:positroidcell}
Let $G$ and $\O$ be as in \cref{def:pathmatrix}.
Then for any positive real values of the edge variables $x_e$,
the realizable matroid associated to the path matrix $A(G,\O)$ is the 
{positroid}
$\mathcal{M}_G$ from 
\cref{prop:positroid}.
Let $S_G \subset \Grk$ denote
the set of all $k$-planes in $\R^n$ spanned by the
path matrices $A = A(G,\O)$, as each edge variable $x_e$ ranges over $\R_{>0}$.
Then $S_G$ is homeomorphic to an open ball, of dimension 
$r(G)-1$, where $r(G)$ is the number of regions of $G$,
and $S_G$ is called a \emph{positroid cell}.
We have that $\Grk$ is a disjoint union of positroid cells.
\end{theorem}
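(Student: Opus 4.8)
The plan is to follow Postnikov's original argument, organized around the \emph{boundary measurement map}. First I would establish that the path matrix $A(G,\O)$ of \cref{def:pathmatrix} is well-defined. Since $\O$ is acyclic, for each source $i\in I_\O$ and sink $j\in\bar I_\O$ there are only finitely many directed paths from $i$ to $j$, so each boundary measurement $M_{ij}$ is a genuine polynomial in the edge variables $x_e$ (not a formal power series), and the sign conventions making all maximal minors nonnegative are pinned down by the rule in \cite{postnikov}. This gives a well-defined map from $(\R_{>0})^{E(G)}$ into $\Gr_{k,n}$; one checks directly that its image lies in $\Grk$.

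Then I would identify the matroid. The key computation is a Lindström--Gessel--Viennot-type lemma: for $J\in\binom{[n]}{k}$, the Pl\"ucker coordinate $\lr{J}_{A(G,\O)}$ equals (up to the global sign built into the normalization) a sum over families of pairwise vertex-disjoint directed paths from $I_\O$ to $J$, weighted by products of edge variables, and --- crucially, because $G$ is reduced and $\O$ is acyclic --- all terms in this sum carry the \emph{same} sign, so there is no cancellation. Hence $\lr{J}$ is a nonzero polynomial with positive coefficients precisely when such a disjoint path family exists, which by \cref{prop:positroid} is exactly the condition that $J$ be a basis of $\mathcal{M}_G$. This proves that the matroid of $A(G,\O)$ is $\mathcal{M}_G$, and in particular that $S_G\subseteq\Grk$.

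Next comes the parametrization and the ball statement. Using the gauge transformations at internal vertices (rescaling all edges at a vertex), one shows that two edge-weightings give the same point of $\Grk$ if and only if they differ by a product of gauge transformations; quotienting, the effective parameter space has dimension $|E(G)|-|V_{\mathrm{int}}(G)|$, which by Euler's formula for the planar graph $G$ embedded in the disk equals $r(G)-1$. Independence of the move-equivalence class follows by checking that (M1)--(M3) preserve the image. The map from the reduced parameter space to $S_G$ is then shown to be a homeomorphism onto its image: injectivity comes from a direct inversion formula recovering edge ratios from ratios of Pl\"ucker coordinates, and using the source set $I_\O$ to put the path matrix in column-reduced form exhibits $S_G$ as an open ball.

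Finally, for the disjoint union: distinct positroids give disjoint cells, since a cell determines which Pl\"ucker coordinates vanish and hence determines the matroid. Surjectivity --- that every $V\in\Grk$ lies in some $S_G$ --- follows because the totally nonnegative matroid of $V$ is a positroid (part of Postnikov's classification via decorated permutations), so one builds a reduced plabic graph $G$ with $\mathcal{M}_G$ equal to that positroid and checks $V\in S_G$ via the inversion formula. The main obstacle I anticipate is the sign-coherence in the Lindström--Gessel--Viennot step: verifying that the reducedness and acyclic-orientation hypotheses really do force every monomial in each Pl\"ucker coordinate to have the same sign requires a careful topological argument about non-crossing path families in the disk, and this is the technical heart on which both ``$S_G\subseteq\Grk$'' and the clean combinatorial description of the matroid rest.
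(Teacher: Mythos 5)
This theorem is stated in the paper as a cited result from \cite{postnikov}; the paper offers no proof of its own since it is background material in the appendix on plabic graphs. Your sketch is a reasonable outline of Postnikov's argument: the structural skeleton (boundary measurement map, quotient by gauge transformations, Euler-characteristic count $|E(G)| - |V_{\mathrm{int}}(G)| = r(G)-1$, matroid identification via non-crossing path families, and then the disjoint-union claim via uniqueness of the matroid of a point together with Postnikov's classification of totally nonnegative matroids as positroids) matches the actual route.

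One attribution you should fix: the sign-coherence of the Lindstr\"om--Gessel--Viennot-type expansion --- that every monomial appearing in $\lr{J}_{A(G,\O)}$ carries the same sign --- is a consequence of planarity of $G$ in the disk alone (together with the acyclicity of $\O$) and holds for \emph{arbitrary} perfectly orientable plabic graphs, not just reduced ones. Reducedness is not what powers this step; it is what guarantees that the gauge quotient is actually $r(G)-1$-dimensional and that the boundary measurement map is injective (i.e., that $\dim S_G = r(G)-1$ rather than strictly smaller, as happens for non-reduced $G$). Separating these two roles matters because, as the paper itself notes after the theorem statement, a non-reduced perfectly orientable graph still parametrizes a positroid cell --- just one of smaller dimension --- so the "technical heart" you identify is really two distinct technical hearts: a planarity argument for sign-coherence, and a separate reducedness argument for the dimension/injectivity claim. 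Your surjectivity step also silently invokes the substantial fact that every totally nonnegative matroid is a positroid realizable by a reduced plabic graph (the Grassmann necklace / decorated permutation correspondence), which is fine for a sketch but should be acknowledged as a nontrivial input rather than a formality.
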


If $G$ is a perfectly orientable graph which fails to be reduced,
it still gives rise to a positroid cell $S_G$; however, 
its dimension will be less than $r(G)$.

We can perform certain operations, called \emph{gauge transformations}, 
which preserve the boundary measurements $M_{ij}$.  The following lemma is 
easy to verify.
\begin{lemma}[Gauge transformations]\label{lem:gauge}
Let $G$ and $\O$ be as in \cref{def:pathmatrix}, and let $N$
denote \emph{network} consisting of 
 $G_\O$ together with the edge weights $x_e$.
Pick a collection of positive real numbers
$t_v >0$ associated to each internal vertex $v$, and 
for each boundary vertex $i$, 
set $t_{i} = 1$.  Let $N'$ be the network obtained from $N$ by replacing 
each edge weight $x_e$ (where $e=(u,v)$ is an edge directed from $u$ to $v$)
by $x'_e = x_e t_u t_v^{-1}$.  In other words, for each interval vertex $v$
we multiply by $t_v$ the weights of all edges directed away from $v$,
and we divide by $t_v$ the weights of all edges directed towards $v$.  Then
the network $N'$ has the same boundary measurements as the network $N$.
\end{lemma}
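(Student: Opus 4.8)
The plan is to prove the sharper statement that \emph{every individual boundary measurement is unchanged}, and in fact unchanged term by term over directed paths. Since the boundary measurements of $N$ and $N'$ are exactly the quantities $M_{ij}$ (for $i\in I_\O$ a source and $j\in\bar I_\O$ a sink) which by \cref{def:pathmatrix} control the entries of the path matrix, this is precisely the content of \cref{lem:gauge}.

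First I would fix a source $i\in I_\O$ and a sink $j\in\bar I_\O$. By \cref{def:pathmatrix}, $M_{ij}=\sum_P\prod_{e\in P}x_e$, where $P$ ranges over directed paths in $G_\O$ from $i$ to $j$; because $\O$ is acyclic this is a finite sum, and — crucially — the indexing set of paths depends only on the directed graph $G_\O$, which is identical for $N$ and $N'$ (regauging changes only the edge weights, not the graph or its orientation). Hence it suffices to show $\prod_{e\in P}x'_e=\prod_{e\in P}x_e$ for each such $P$.

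The key computation is a telescoping one. Write $P$ as a vertex sequence $i=w_0,w_1,\dots,w_\ell=j$, with $e_k$ the edge of $P$ oriented from $w_{k-1}$ to $w_k$; along a directed path each edge is traversed in the direction of its orientation, so the rule $x'_e=x_e\,t_u t_v^{-1}$ for $e=(u,v)$ oriented from $u$ to $v$ gives $x'_{e_k}=x_{e_k}\,t_{w_{k-1}}t_{w_k}^{-1}$. Then
\[
\prod_{k=1}^{\ell}x'_{e_k}=\Bigl(\prod_{k=1}^{\ell}x_{e_k}\Bigr)\cdot\frac{t_{w_0}t_{w_1}\cdots t_{w_{\ell-1}}}{t_{w_1}t_{w_2}\cdots t_{w_\ell}}=\Bigl(\prod_{k=1}^{\ell}x_{e_k}\Bigr)\cdot\frac{t_{w_0}}{t_{w_\ell}}=\Bigl(\prod_{k=1}^{\ell}x_{e_k}\Bigr)\cdot\frac{t_i}{t_j}.
\]
Since every boundary vertex is incident to a single edge, boundary vertices can occur in $P$ only as its two endpoints; thus $w_1,\dots,w_{\ell-1}$ are all internal vertices (carrying no normalization constraint) while $w_0=i$ and $w_\ell=j$ are boundary vertices with $t_i=t_j=1$ by hypothesis. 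Therefore $\prod_{e\in P}x'_e=\prod_{e\in P}x_e$, and summing over all $P$ gives $M'_{ij}=M_{ij}$.

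Carrying this out for every pair $(i,j)$ shows that $N'$ and $N$ have the same boundary measurements, which is the lemma; if desired one may append the one-line remark that, because the path matrix $A(G,\O)$ is uniquely determined by its boundary measurements together with the sign normalization of \cref{def:pathmatrix}, the networks $N$ and $N'$ represent the same point of $\Grk$. I do not anticipate any genuine obstacle here: the only points requiring care are the orientation/traversal bookkeeping in the formula $x'_e=x_e\,t_u t_v^{-1}$, and the observation that the interior vertices of a boundary-to-boundary directed path are internal vertices of $G$ — which is exactly what makes the telescoped product collapse to $t_i/t_j=1$.
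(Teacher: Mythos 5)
Your proof is correct and is precisely the standard telescoping argument implicit in the paper's remark that the lemma is ``easy to verify''; the key observations (the path set depends only on $G_\O$, boundary vertices can appear only as endpoints since they have degree one, and the $t_v$ factors cancel along each directed path leaving $t_i/t_j=1$) are exactly the right ones.
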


\begin{theorem}\cite[Theorem 18.5]{postnikov}\label{thm:closure}
If $S_G$ 
is a cell associated to plabic graph $G$,
then 
every cell in the closure of $S_G$ comes from a plabic graph 
obtained from $G$ by deleting some edges, and 
conversely,
if we delete some edges of $G$, obtaining a perfectly orientable graph $H$,
it corresponds 
 to a cell $S_H$ in the 
closure of $S_G$.
\end{theorem}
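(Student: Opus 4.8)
The plan is to establish the two matching inclusions $\bigsqcup_H S_H \subseteq \overline{S_G}$ and $\overline{S_G} \subseteq \bigsqcup_H S_H$, where $H$ runs over the perfectly orientable graphs obtained from $G$ by deleting edges, working throughout with the path-matrix parametrization of \Cref{def:pathmatrix} and its gauge invariance (\Cref{lem:gauge}). Fix an acyclic perfect orientation $\mathcal{O}$ of $G$; the path matrix $A(G,\mathcal{O})$ has entries that are subtraction-free polynomials in the edge weights (no cycles arise, since deleting edges cannot create one), so it depends polynomially on the weights $x_e \geq 0$, and $S_G$ is the set of row spans obtained as the $x_e$ range over $\mathbb{R}_{>0}$.

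For the inclusion $\bigsqcup_H S_H \subseteq \overline{S_G}$: let $H = G \setminus F$ be perfectly orientable and let $q \in S_H$ be represented by positive weights $(x_e)_{e \notin F}$. Form the family in $S_G$ that keeps these weights and assigns weight $t > 0$ to each $e \in F$. Every directed path in $G$ using an edge of $F$ contributes a monomial divisible by $t$, so as $t \to 0^{+}$ the boundary measurement data of $G$ degenerates to that of $H$; since $H$ is perfectly orientable the limiting path matrix has rank $k$, and full rank is an open condition, so the corresponding points of $S_G$ converge to $q$. Hence $S_H \subseteq \overline{S_G}$. (Gauge invariance, \Cref{lem:gauge}, lets one move freely between perfect orientations of $G$ and of $H$ here.)

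For the reverse inclusion: take $p = \lim_m p_m$ with $p_m = \mu_G(x^{(m)})$, $x^{(m)} \in \mathbb{R}_{>0}^{E(G)}$. The tuples $x^{(m)}$ need not converge — edge weights may both blow up and vanish — and the crux is to use gauge transformations to remove the blow-up. Passing to logarithms, the gauge group acts on $\mathbb{R}^{E(G)}$ by an integral linear map, and one argues, via a tropical/compactness analysis of how the boundary measurement map degenerates (together with $\Gr_{k,n}(\mathbb{R})$ being compact), that after a gauge transformation and passing to a subsequence one may take $x^{(m)} \to x^{(\infty)} \in [0,\infty)^{E(G)}$ with every nonzero coordinate finite. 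Set $F = \{e : x^{(\infty)}_e = 0\}$ and $H = G \setminus F$; then $p$ lies in the limit cell determined by $H$. If $H$ is perfectly orientable this cell is $S_H$; if not — say a black vertex has lost its only outgoing edge — then the offending edges and vertices carry no directed paths, so deleting them and applying moves (M2), (M3) produces a perfectly orientable plabic subgraph $H'$ of $G$ (still an edge-deletion of $G$) with the same limit cell, and $p \in S_{H'}$.

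Finally, these two inclusions, together with the fact that $\Grk$ is a disjoint union of positroid cells (\Cref{thm:positroidcell}), show that $\overline{S_G}$ is exactly the union of the cells $S_H$ over perfectly orientable edge-deletions $H$ of $G$, which is the assertion. I expect the main obstacle to be the normalization step in the reverse inclusion: proving that the gauge freedom always suffices to arrange that only vanishing — never blow-up — of edge weights occurs in the limit. This is in essence a statement about the tropicalization of the boundary measurement map and the cone generated by the gauge action, and it is also what forces the bookkeeping (local moves (M2), (M3) and deletion of dead edges/vertices) needed to convert a possibly non-reduced, non-perfectly-orientable limit subgraph into a genuine plabic subgraph without altering the associated cell.
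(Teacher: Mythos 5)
This statement is cited from Postnikov's paper (Theorem~18.5 there) and is not proved in the present paper, so there is no internal proof to compare against; I can only assess your proposal on its own terms.

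Your forward inclusion $\bigsqcup_H S_H\subseteq\overline{S_G}$ is essentially right in spirit, but there is a step you glide over: if you fix an acyclic perfect orientation $\mathcal{O}$ of $G$ and simply scale the weights on $F$ to zero, the restriction of $\mathcal{O}$ to $H=G\setminus F$ need not be a perfect orientation of $H$ (a black vertex whose unique outgoing edge lies in $F$, for example, becomes defective), and then the limiting path matrix may not represent a point of $S_H$ even though it has rank $k$. You need the additional fact that whenever $H$ is perfectly orientable, one can extend a perfect orientation of $H$ to a perfect orientation of $G$; with that choice of $\mathcal{O}$ the degeneration $t\to 0^+$ does land in $S_H$, because the nonzero boundary measurements of the limit are exactly those of $H$.

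The real gap is in the reverse inclusion, and you identify it yourself. The claim that ``after a gauge transformation and passing to a subsequence one may take $x^{(m)}\to x^{(\infty)}\in[0,\infty)^{E(G)}$'' is precisely the hard content of the theorem, not a preliminary reduction. Gauge transformations act by a fixed integral linear map on the logarithms of the weights, and it is not a priori clear that the orbit of a sequence whose image in $\Grk$ converges must meet a bounded region of $\mathbb{R}^{E(G)}$ after truncating the $-\infty$ directions; one has to argue, via the explicit form of the boundary measurements as subtraction-free rational functions and the combinatorics of the gauge cone, that any unbounded direction either is absorbed by gauge or forces some Pl\"ucker coordinate of $p$ to vanish in a way controlled by an edge deletion. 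Without carrying out that tropical analysis, the proof as written does not establish $\overline{S_G}\subseteq\bigsqcup_H S_H$; it only restates the problem. The final paragraph's reduction of a non-perfectly-orientable $H$ to a perfectly orientable $H'$ via (M2)/(M3) is fine once the limiting weight profile exists, but it is conditional on the missing normalization step. In short: the architecture is correct and the forward inclusion can be repaired cheaply, but the reverse inclusion needs the compactness/tropicalization lemma actually proved, not merely invoked.
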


Any positroid cell $S$ can be built up using certain atomic operations, $x_i(t), y_i(t), \inc_i, \pre_i$, which we now review, see e.g. \cite{lam2015totally}. Recall from \cref{def:pre} the definition of $\pre_i$. 

\begin{definition}
Let $E_{a,b}$ be the $n \times n$ matrix whose $(a,b)$ entry is $1$ and all other entries are zero. For $i=1, \dots, n-1$, we define the upper triangular matrix
$x_i(t):= I + t E_{i, i+1} $ and the lower triangular matrix $y_i(t):= I + t E_{i+1, i}$. We define $x_n(t), y_n(t)$ as in \cite[Definition 3.4]{even2021amplituhedron}. For fixed $t$, we have maps $x_i(t),y_i(t): \Gr_{k,n} \to \Gr_{k,n}$ to itself, which are induced by right multiplication:
\[x_i(t): C \mapsto C x_i(t) \qquad \text{and} \qquad y_i(t): C \mapsto C y_i(t).\]
For $i \in [n]$, we also define the map $\inc_i: \Mat_{k, [n] \setminus \{i\}} \to \Mat_{k, n}$ by 
\[\inc_i : \begin{bmatrix}
	\vline &   & \vline & \vline&  & \vline \\
	C_1 & \cdots & C_{i-1} & C_{i+1} & \cdots & C_n \\
		\vline & & \vline & \vline&   & \vline
\end{bmatrix} \mapsto 
\begin{bmatrix}
	0 & \cdots& 0 & 1 & 0& \cdots & 0\\ 
		\vline &  & \vline &0& \vline&  & \vline \\
-C_1 & \cdots & -C_{i-1} & \vdots& C_{i+1} & \cdots & C_n \\
		\vline &  & \vline & 0& \vline&  & \vline \\
\end{bmatrix}.\]
This descends to a map $\inc_i: \Gr_{k, [n]\setminus \{i\}} \to \Gr_{k, n}$.
\end{definition}

The maps $x_i(t), y_i(t), \inc_i, \pre_i$ have plabic graph analogues, namely adding \emph{bridges} and  \emph{lollipops}. 

\begin{definition}
	Let $G$ be a planar graph. The operation of \emph{adding a black-white bridge at $i$} modifies $G$ as follows.
	\begin{center}
		\includegraphics[width=0.3\textwidth]{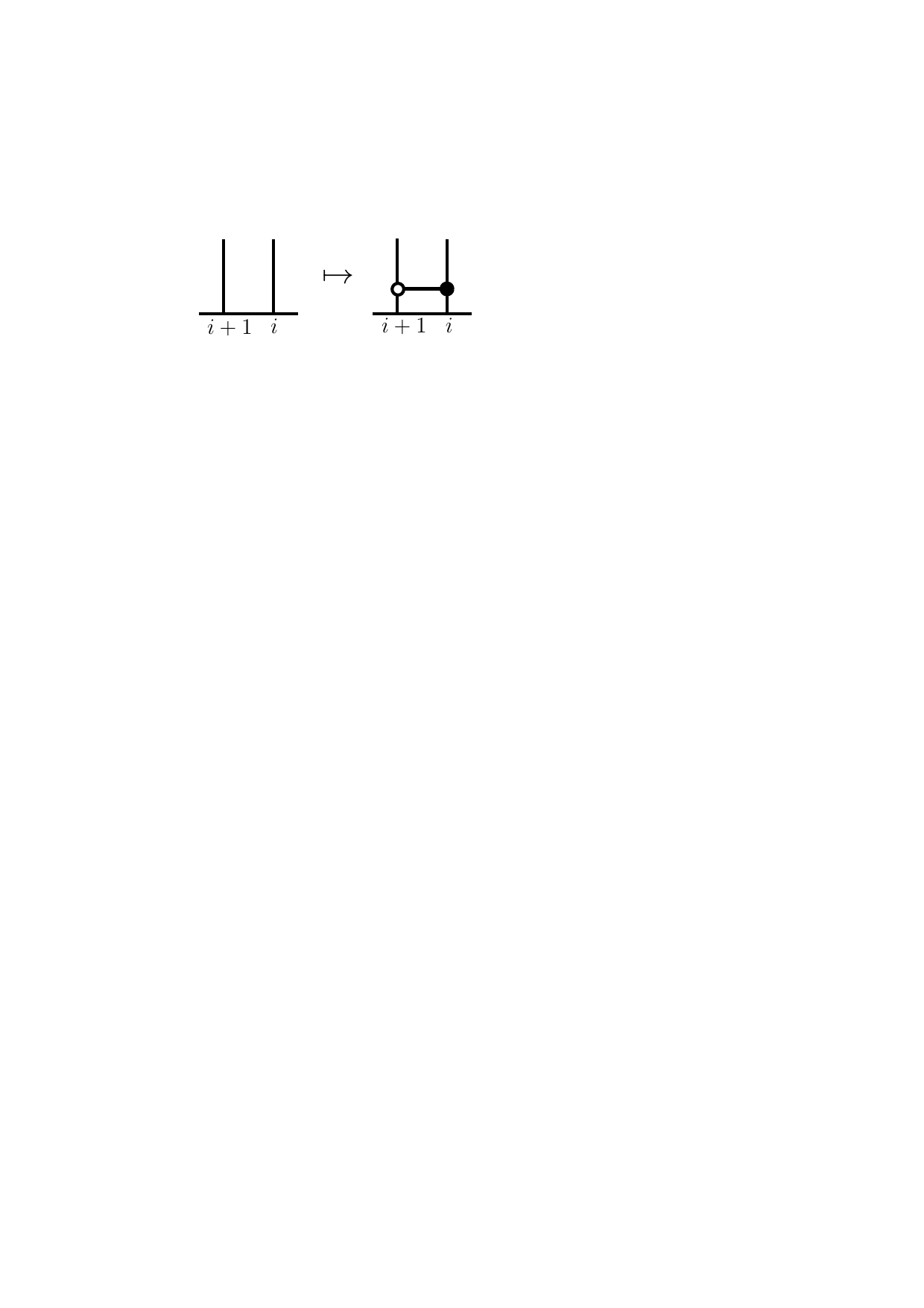}
	\end{center}	
	There is an analagous operation of \emph{adding a white-black bridge at $i$}, where the white vertex is at $i$ and the black vertex is at $i+1$. For $i$ not in the index set of $G$, the operation of \emph{adding a (black or white) lollipop} adds a new boundary vertex $i$ to $G$ and adds an interior (black or white) leaf adjacent to $i$. The boundary vertex $i$ is placed so that the boundary labels of $G$ are cyclically increasing when read clockwise.
\end{definition}

\begin{theorem}
	\cite[Proposition 2.5 and Section 2.6]{Karpman}
	\label{thm:bridges-and-lollipops}
	\begin{enumerate}
		\item Let $S_G \subset \Grk$ be a positroid cell. Then 
		\[x_i(\R_+).S_G:= \{Cx_i(t): C \in S_G, t \in \R_+\} \qquad \text{and} \qquad y_i(\R_+). S_G:=\{Cy_i(t): C \in S_G, t \in \R_+\} \]
		are positroid cells of $\Grk$ whose postroid contains that of $S_G$. A plabic graph corresponding to these cells is, respectively, $G$ with a white-black bridge added at $i$ and $G$ with a black-white bridge added at $i$.
		\item Let $S_G \subset \Gr_{k, [n] \setminus \{i\}}.$ Then $\pre_i S_G \subset \Grk$ and $\inc_i S_G \subset \Gr_{k+1, n}^{\ge 0}$ are positroid cells. A plabic graph corresponding to these cells is, respectively, $G$ with a black lollipop added at $i$ and $G$ with a white lollipop added at $i$.
	\end{enumerate}

\end{theorem}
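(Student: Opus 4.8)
The plan is to treat the two parts separately. Part (2) (lollipops) is essentially bookkeeping with the trip rule and the path matrix, so I would dispatch it first; part (1) (bridges) requires a genuine path‑counting identity and is the technical heart.

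For part (2): a black lollipop at $i$ is an internal leaf adjacent to the boundary vertex $i$, which lies on no directed path in any perfect orientation; hence in the path matrix of $G$‑with‑a‑black‑lollipop the column $i$ is identically zero, and for the corresponding orientation $\O$ of $G$ on $[n]\setminus\{i\}$ it equals $\pre_i\bigl(A(G,\O)\bigr)$. By \cref{thm:positroidcell} the set of row spans is a positroid cell, and since deleting/inserting a zero column preserves nonnegativity and the set of nonzero Pl\"ucker coordinates, this cell is exactly $\pre_i S_G$. For a white lollipop at $i$, the trip rule makes $i$ a white fixed point $\overline{i}$, i.e.\ an anti‑excedance, so the rank increases by one; one then checks directly that the path matrix of $G$‑with‑a‑white‑lollipop equals $\inc_i$ applied to the path matrix of $G$ — the sign conventions built into the definition of $\inc_i$ are precisely those needed to keep every maximal minor nonnegative — so the resulting cell is $\inc_i S_G$. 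I would also record the decorated‑permutation change to confirm dimensions are consistent.

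For part (1): fix an acyclic perfect orientation $\O$ of $G$ (it exists by \cref{acycliclemma}), so $S_G$ is parametrized by the path matrices $A(G,\O)$ with positive edge weights. Let $G'$ be $G$ with a bridge of one color added at $i$, and give the new bridge edge weight $t>0$; extend $\O$ to a perfect orientation $\O'$ of $G'$ with the same source set. A directed source‑to‑sink path in $G'_{\O'}$ either avoids the bridge — so it is a path of $G_\O$ — or crosses it exactly once; summing these contributions yields $A(G',\O')=A(G,\O)\cdot x_i(t)$, and $A(G,\O)\cdot y_i(t)$ for the bridge of the opposite color, where $x_i(t),y_i(t)$ are exactly the elementary matrices in the statement. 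The one delicate point here is the boundary vertex $n$: the wrap‑around in the definitions of $x_n(t),y_n(t)$ forces extra signs that must be matched. This gives $x_i(\R_{>0}).S_G\subseteq S_{G'}$; conversely, any point of $S_{G'}$ is the row span of some $A(G',\O')$ with all weights positive, and deleting the bridge edge produces a positive‑weight network for $G$, so that point lies in $x_i(\R_{>0}).S_G$. Hence $x_i(\R_{>0}).S_G=S_{G'}$ is a positroid cell. Finally, since $G'$ is $G$ with an edge added, every perfect orientation of $G$ extends to $G'$, so $\mathcal{M}(G')\supseteq\mathcal{M}(G)$; and if the bridge happens to be reducing, $G'$ is move‑equivalent to $G$ and the statement holds trivially with $x_i(\R_+).S_G=S_G$.

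The main obstacle is the bridge step: verifying on the nose that adding a bridge multiplies the path matrix by the precise elementary matrix $x_i(t)$ or $y_i(t)$ — getting the bridge orientation, the correspondence between bridge color and $x_i$ versus $y_i$, and especially the cyclic sign corrections at vertex $n$ all exactly right — together with confirming (via the trip rule, or by appeal to \cite{postnikov}) that $G'$ is again a plabic graph whose decorated permutation differs from that of $G$ by an adjacent transposition, so the expected dimension count is consistent.
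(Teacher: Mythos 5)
The paper does not prove this statement: it is quoted as background with a citation to \cite[Proposition 2.5 and Section 2.6]{Karpman}, so there is no in-paper proof to compare against. Your sketch is the standard argument from that literature --- the path-matrix factorization $A(G',\O')=A(G,\O)\cdot x_i(t)$ (resp.\ $y_i(t)$) obtained by splitting source-to-sink paths according to whether they cross the bridge, and the direct column/row computation for lollipops --- and it is essentially correct, including the caveats you flag about the wrap-around signs at $n$, the sign conventions in $\inc_i$, and the degenerate case of a reducing bridge.
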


\bibliographystyle{alpha}
	\bibliography{ClusterTilesPromotion}

\newcommand{\etalchar}[1]{$^{#1}$}
\begin{thebibliography}{CIKLFP13}

\bibitem[AHBC{\etalchar{+}}11]{Arkani-Hamed:2010zjl}
Nima Arkani-Hamed, Jacob~L. Bourjaily, Freddy Cachazo, Simon Caron-Huot, and
  Jaroslav Trnka.
\newblock {The All-Loop Integrand For Scattering Amplitudes in Planar N=4 SYM}.
\newblock {\em JHEP}, 01:041, 2011.

\bibitem[AHBC{\etalchar{+}}16]{abcgpt}
Nima Arkani-Hamed, Jacob Bourjaily, Freddy Cachazo, Alexander Goncharov,
  Alexander Postnikov, and Jaroslav Trnka.
\newblock {\em Grassmannian geometry of scattering amplitudes}.
\newblock Cambridge University Press, Cambridge, 2016.

\bibitem[AHBL17]{Arkani-Hamed:2017tmz}
Nima Arkani-Hamed, Yuntao Bai, and Thomas Lam.
\newblock {Positive Geometries and Canonical Forms}.
\newblock {\em JHEP}, 11:039, 2017.

\bibitem[AHT14]{arkani-hamed_trnka}
Nima Arkani-Hamed and Jaroslav Trnka.
\newblock The amplituhedron.
\newblock {\em J. High Energy Phys.}, 10:33, 2014.

\bibitem[AHTT18]{AHTT}
Nima Arkani-Hamed, Hugh Thomas, and Jaroslav Trnka.
\newblock Unwinding the amplituhedron in binary.
\newblock {\em J. High Energy Phys.}, (1):016, front matter+40, 2018.

\bibitem[BCFW05]{BCFW}
Ruth Britto, Freddy Cachazo, Bo~Feng, and Edward Witten.
\newblock Direct proof of the tree-level scattering amplitude recursion
  relation in {Y}ang-{M}ills theory.
\newblock {\em Phys. Rev. Lett.}, 94(18):181602, 4, 2005.

\bibitem[BH15]{Bai:2014cna}
Yuntao Bai and Song He.
\newblock {The Amplituhedron from Momentum Twistor Diagrams}.
\newblock {\em JHEP}, 02:065, 2015.

\bibitem[BH19]{BaoHe}
Huanchen Bao and Xuhua He.
\newblock The m=2 amplituhedron, 2019.
\newblock Preprint, \texttt{arxiv:1909.06015}.

\bibitem[Bou10]{Bourjaily:2010wh}
Jacob~L. Bourjaily.
\newblock {Efficient Tree-Amplitudes in N=4: Automatic BCFW Recursion in
  Mathematica}.
\newblock 11 2010.

\bibitem[BT23]{Basso:2023bwv}
Benjamin Basso and Alexander~G. Tumanov.
\newblock {Wilson Loop Duality and OPE for Super Form Factors of Half-BPS
  Operators}.
\newblock 8 2023.

\bibitem[CHCM23]{Caron-Huot:2023wdh}
Simon Caron-Huot, Frank Coronado, and Beatrix M\"uhlmann.
\newblock {Determinants in self-dual $ \mathcal{N} $ = 4 SYM and twistor
  space}.
\newblock {\em JHEP}, 08:008, 2023.

\bibitem[CIKLFP13]{CKLP}
Giovanni Cerulli~Irelli, Bernhard Keller, Daniel Labardini-Fragoso, and
  Pierre-Guy Plamondon.
\newblock Linear independence of cluster monomials for skew-symmetric cluster
  algebras.
\newblock {\em Compos. Math.}, 149(10):1753--1764, 2013.

\bibitem[CL20]{CL}
Peigen Cao and Fang Li.
\newblock The enough {$g$}-pairs property and denominator vectors of cluster
  algebras.
\newblock {\em Math. Ann.}, 377(3-4):1547--1572, 2020.

\bibitem[DFG18]{Drummond:2017ssj}
James Drummond, Jack Foster, and {\" O}mer G{\" u}rdo{\u g}an.
\newblock {Cluster Adjacency Properties of Scattering Amplitudes in $N=4$
  Supersymmetric Yang-Mills Theory}.
\newblock {\em Phys. Rev. Lett.}, 120(16):161601, 2018.

\bibitem[DFG19]{Drummond:2018dfd}
James Drummond, Jack Foster, and {\"O}mer G{\"u}rdo{\u g}an.
\newblock {Cluster adjacency beyond MHV}.
\newblock {\em J. High Energy Physics}, 03:086, 2019.

\bibitem[ELP{\etalchar{+}}24]{companion}
Chaim Even{-Zohar}, Tsviqa Lakrec, Matteo Parisi, Melissa Sherman-Bennett,
  Ran~J Tessler, and Lauren Williams.
\newblock A cluster of results on amplituhedron tiles.
\newblock 2024.
\newblock preprint in preparation.

\bibitem[ELT21]{even2021amplituhedron}
Chaim Even{-Zohar}, Tsviqa Lakrec, and Ran~J Tessler.
\newblock The amplituhedron {B}{C}{F}{W} triangulation.
\newblock {\em full version of preprint arXiv:2112.02703}, 2021.

\bibitem[Fra16]{Fraser}
Chris Fraser.
\newblock Quasi-homomorphisms of cluster algebras.
\newblock {\em Adv. in Appl. Math.}, 81:40--77, 2016.

\bibitem[Fra20]{Fraser2}
Chris Fraser.
\newblock Braid group symmetries of {G}rassmannian cluster algebras.
\newblock {\em Selecta Math. (N.S.)}, 26(2):Paper No. 17, 51, 2020.

\bibitem[FWZa]{FW6}
Sergey Fomin, Lauren Williams, and Andrei Zelevinsky.
\newblock Introduction to cluster algebras: Chapter 6.
\newblock arXiv:2008.09189 [math.CO].

\bibitem[FWZb]{FW7}
Sergey Fomin, Lauren Williams, and Andrei Zelevinsky.
\newblock Introduction to cluster algebras: Chapter 7.
\newblock arXiv:2106.02160 [math.CO].

\bibitem[FWZc]{FWZ}
Sergey Fomin, Lauren Williams, and Andrei Zelevinsky.
\newblock Introduction to cluster algebras: Chapters 1-3.
\newblock arXiv:1608.05735 [math.CO].

\bibitem[FWZd]{FWZ4}
Sergey Fomin, Lauren Williams, and Andrei Zelevinsky.
\newblock Introduction to cluster algebras: Chapters 4-5.
\newblock arXiv:1707.07190 [math.CO].

\bibitem[FZ00]{fz-intel}
Sergey Fomin and Andrei Zelevinsky.
\newblock Total positivity: tests and parametrizations.
\newblock {\em Math. Intelligencer}, 22(1):23--33, 2000.

\bibitem[FZ02]{FZ1}
Sergey Fomin and Andrei Zelevinsky.
\newblock Cluster algebras. {I}. {F}oundations.
\newblock {\em J. Amer. Math. Soc.}, 15(2):497--529 (electronic), 2002.

\bibitem[GGS{\etalchar{+}}14]{Golden:2013xva}
John Golden, Alexander~B. Goncharov, Marcus Spradlin, Cristian Vergu, and
  Anastasia Volovich.
\newblock {Motivic Amplitudes and Cluster Coordinates}.
\newblock {\em JHEP}, 01:091, 2014.

\bibitem[GLS13]{GLS}
Christof Geiss, Bernard Leclerc, and Jan Schr\"{o}er.
\newblock Factorial cluster algebras.
\newblock {\em Doc. Math.}, 18:249--274, 2013.

\bibitem[GP23]{Gurdougan2020ClusterPI}
\"{O}mer G\"{u}rdo\u{g}an and Matteo Parisi.
\newblock Cluster patterns in {L}andau and leading singularities via the
  amplituhedron.
\newblock {\em Ann. Inst. Henri Poincar\'{e} D}, 10(2):299--336, 2023.

\bibitem[HL21]{He:2020uhb}
Song He and Zhenjie Li.
\newblock {A note on letters of Yangian invariants}.
\newblock {\em JHEP}, 02:155, 2021.

\bibitem[Hod13]{hodges}
Andrew Hodges.
\newblock Eliminating spurious poles from gauge-theoretic amplitudes.
\newblock {\em J. High Energy Phys.}, (135), 2013.

\bibitem[Kar16]{Karpman}
Rachel Karpman.
\newblock Bridge graphs and {D}eodhar parametrizations for positroid varieties.
\newblock {\em J. Combin. Theory Ser. A}, 142:113--146, 2016.

\bibitem[KD20]{green}
Bernhard Keller and Laurent Demonet.
\newblock A survey on maximal green sequences.
\newblock In {\em Representation theory and beyond}, volume 758 of {\em
  Contemp. Math.}, pages 267--286. Amer. Math. Soc., [Providence], RI, [2020]
  \copyright 2020.

\bibitem[KL20]{Kojima:2020tjf}
Ryota Kojima and Cameron Langer.
\newblock {Sign Flip Triangulations of the Amplituhedron}.
\newblock {\em JHEP}, 05:121, 2020.

\bibitem[KW19]{karpwilliams}
Steven~N. Karp and Lauren~K. Williams.
\newblock The {$m=1$} amplituhedron and cyclic hyperplane arrangements.
\newblock {\em Int. Math. Res. Not. IMRN}, 5:1401--1462, 2019.

\bibitem[KWZ20]{karp2020decompositions}
Steven~N Karp, Lauren~K Williams, and Yan~X Zhang.
\newblock Decompositions of amplituhedra.
\newblock {\em Annales de l’Institut Henri Poincar{\'e} D}, 7(3):303--363,
  2020.

\bibitem[Lam15]{lam2015totally}
Thomas Lam.
\newblock Totally nonnegative {G}rassmannian and {G}rassmann polytopes.
\newblock {\em arXiv preprint arXiv:1506.00603}, 2015.

\bibitem[Lam22]{Lam:2022yly}
Thomas Lam.
\newblock {An invitation to positive geometries}.
\newblock {\em arXiv:2208.05407}, 8 2022.

\bibitem[{\L}PSV19]{Lukowski:2019sxw}
Tomasz {\L}ukowski, Matteo Parisi, Marcus Spradlin, and Anastasia Volovich.
\newblock Cluster adjacency for {$m = 2$} {Y}angian invariants.
\newblock {\em J. High Energy Phys.}, (10):158, 10, 2019.

\bibitem[LPW23]{LPW}
Tomasz Lukowski, Matteo Parisi, and Lauren~K Williams.
\newblock {The Positive Tropical {G}rassmannian, the Hypersimplex, and the m =
  2 Amplituhedron}.
\newblock {\em International Mathematics Research Notices},
  2023(19):16778--16836, 03 2023.

\bibitem[Lus94]{lusztig}
G.~Lusztig.
\newblock Total positivity in reductive groups.
\newblock In {\em Lie theory and geometry}, volume 123 of {\em Progr. Math.},
  pages 531--568. Birkh\"auser Boston, Boston, MA, 1994.

\bibitem[MS16]{marshscott}
R.~J. Marsh and J.~S. Scott.
\newblock Twists of {P}l\"{u}cker coordinates as dimer partition functions.
\newblock {\em Comm. Math. Phys.}, 341(3):821--884, 2016.

\bibitem[MSSV19]{Mago:2019waa}
Jorge Mago, Anders Schreiber, Marcus Spradlin, and Anastasia Volovich.
\newblock {Yangian invariants and cluster adjacency in $ \mathcal{N} $ = 4
  Yang-Mills}.
\newblock {\em J. High Energy Physics}, 10:099, 2019.

\bibitem[MSSV20]{Mago:2020kmp}
Jorge Mago, Anders Schreiber, Marcus Spradlin, and Anastasia Volovich.
\newblock {Symbol alphabets from plabic graphs}.
\newblock {\em JHEP}, 10:128, 2020.

\bibitem[OS17]{OhSpeyer}
Su~Ho Oh and David~E. Speyer.
\newblock Links in the complex of weakly separated collections.
\newblock {\em J. Comb.}, 8(4):581--592, 2017.

\bibitem[Pos06]{postnikov}
Alexander Postnikov.
\newblock Total positivity, {G}rassmannians, and networks.
\newblock {\em arXiv:math/0609764}, 2006.

\bibitem[PSBW23]{PSW}
Matteo Parisi, Melissa Sherman-Bennett, and Lauren~K. Williams.
\newblock The {$m=2$} amplituhedron and the hypersimplex: signs, clusters,
  tilings, {E}ulerian numbers.
\newblock {\em Comm. Amer. Math. Soc.}, 3:329--399, 2023.

\bibitem[PSW07]{PSWv1}
Alexander Postnikov, David Speyer, and Lauren Williams.
\newblock Matching polytopes, toric geometry, and the non-negative part of the
  {G}rassmannian, 2007.
\newblock preprint, {\tt arXiv:0706.2501v1}.

\bibitem[Sco06]{scott}
Jeanne Scott.
\newblock Grassmannians and cluster algebras.
\newblock {\em Proc. London Math. Soc. (3)}, 92(2):345--380, 2006.

\end{thebibliography}
\end{document}